\newcommand{\Black}{\textcolor{black}}
\theoremstyle{plain}
\newtheorem{theorem}{Theorem}
\newtheorem{lemma}[theorem]{Lemma}
\newtheorem{definition}[theorem]{Definition}
\newtheorem{proposition}[theorem]{Proposition}
\newtheorem{remark}[theorem]{Remark}
\newtheorem{corollary}[theorem]{Corollary}
\let\hide\iffalse
\numberwithin{equation}{section}
\numberwithin{theorem}{section}
\newcommand{\eqdef }{\overset{\mbox{\tiny{def}}}{=}}
\newcommand{\rth}{{\mathbb{R}^3}}
\newcommand{\R}{\mathbb{R}}
\newcommand{\ph}{\bm{\varphi}}
\newcommand{\A}{\mathbf{A}}
\newcommand{\E}{\mathbf{E}}
\newcommand{\Ei}{\mathbf{E}_{i}}
\newcommand{\Eone}{\mathbf{E}_{1}}
\newcommand{\Etwo}{\mathbf{E}_{2}}
\newcommand{\Eoned}{\E^{(1)}}
\newcommand{\Etwod}{\E^{(2)}}
\newcommand{\Est}{\E_{\textup{st}}}
\newcommand{\Bst}{\B_{\textup{st}}}
\newcommand{\Eonedst}{\E_{\textup{st}}^{(1)}}
\newcommand{\Etwodst}{\E_{\textup{st}}^{(2)}}
\newcommand{\p}{\partial}
\newcommand{\Boned}{\B^{(1)}}
\newcommand{\Btwod}{\B^{(2)}}
\newcommand{\Bonedst}{\B_{\textup{st}}^{(1)}}
\newcommand{\Btwodst}{\B_{\textup{st}}^{(2)}}
\newcommand{\Eth}{\mathbf{E}_{3}}
\newcommand{\Eione}{\mathbf{E}^{(1)}_{\pm,\textup{par},i}}
\newcommand{\Eitwo}{\mathbf{E}^{(2)}_{\pm,\textup{par},i}}
\newcommand{\Ethone}{\mathbf{E}^{(1)}_{\pm,\textup{par},3}}
\newcommand{\Ethtwo}{\mathbf{E}^{(2)}_{\pm,\textup{par},3}}
\renewcommand{\paragraph}[1]{\medskip\noindent\textbf{#1.} }
\newcommand{\Eipar}{\mathbf{E}_{\textup{par},i}}
\newcommand{\Ethpar}{\mathbf{E}_{\textup{par},3}}
\newcommand{\Eihomo}{\mathbf{E}_{\textup{hom},i}}
\newcommand{\Ethhomo}{\mathbf{E}_{\textup{hom},3}}
\newcommand{\Bihomo}{\mathbf{B}_{\textup{hom},i}}
\newcommand{\Bthhomo}{\mathbf{B}_{\textup{hom},3}}
\newcommand{\B}{\mathbf{B}}
\newcommand{\Bi}{\mathbf{B}_{i}}
\newcommand{\Bone}{\mathbf{B}_{1}}
\newcommand{\Btwo}{\mathbf{B}_{2}}
\newcommand{\Bth}{\mathbf{B}_{3}}
\newcommand{\vZ}{v^0_\pm}
\newcommand{\vZio}{v^0_\iota}
\newcommand{\energy}{\mathbb{E}_{\pm}}
\newcommand{\tb}{t_{\pm,\mathbf{b}}}
\newcommand{\tf}{t_{\pm,\mathbf{f}}}
\newcommand{\tblo}{t^{l+1}_{\pm,\mathbf{b}}}
\newcommand{\tbk}{t^{k}_{\pm,\mathbf{b}}}
\newcommand{\tflo}{t^{l+1}_{\pm,\mathbf{f}}}
\newcommand{\tbst}{t_{\pm,\textup{st},\mathbf{b}}}
\newcommand{\tfst}{t_{\pm,\textup{st},\mathbf{f}}}
\newcommand{\tbstlo}{t^{l+1}_{\pm,\textup{st},\mathbf{b}}}
\newcommand{\tbstk}{t^{k}_{\pm,\textup{st},\mathbf{b}}}
\newcommand{\tfstlo}{t^{l+1}_{\pm,\textup{st},\mathbf{f}}}
\newcommand{\xb}{x_{\pm,\mathbf{b}}}
\newcommand{\xblo}{x_{\pm,\mathbf{b}}^{l+1}}
\newcommand{\xbst}{x_{\pm,\textup{st},\mathbf{b}}}
\newcommand{\xbstlo}{x_{\pm,\textup{st},\mathbf{b}}^{l+1}}
\newcommand{\vb}{v_{\pm,\mathbf{b}}}
\newcommand{\vbst}{v_{\pm,\textup{st},\mathbf{b}}}
\newcommand{\vblo}{v_{\pm,\mathbf{b}}^{l+1}}
\newcommand{\vbstlo}{v_{\pm,\textup{st},\mathbf{b}}^{l+1}}
\newcommand{\fl}{F^{l}_\pm}
\newcommand{\fone}{F_{\pm}^{(1)}}
\newcommand{\ftwo}{F_{\pm}^{(2)}}
\newcommand{\foneio}{F_{\iota}^{(1)}}
\newcommand{\ftwoio}{F_{\iota}^{(2)}}
\newcommand{\fonest}{F_{\pm,\textup{st}}^{(1)}}
\newcommand{\fonestio}{F_{\iota,\textup{st}}^{(1)}}
\newcommand{\ftwost}{F_{\pm,\textup{st}}^{(2)}}
\newcommand{\ftwostio}{F_{\iota,\textup{st}}^{(2)}}
\newcommand{\fm}{F^{k}_\pm}
\newcommand{\fmst}{F^{k}_{\pm,\textup{st}}}
\newcommand{\fmmstio}{F^{k-1}_{\iota,\textup{st}}}
\newcommand{\fzerost}{F^{0}_{\pm,\textup{st}}}
\newcommand{\fmmnstio}{F^{k-n}_{\iota,\textup{st}}}
\newcommand{\fzerostio}{F^{0}_{\iota,\textup{st}}}
\newcommand{\fmm}{F^{k-1}_\pm}
\newcommand{\fn}{F^{n}_\pm}
\newcommand{\fnst}{F^{n}_{\pm,\textup{st}}}
\newcommand{\fnmstio}{F^{n-1}_{\iota,\textup{st}}}
\newcommand{\fnm}{F^{n-1}_\pm}
\newcommand{\flo}{F^{l+1}_\pm}
\newcommand{\fpl}{f^{l}_\pm}
\newcommand{\fplo}{f^{l+1}_\pm}
\newcommand{\fploio}{f^{l+1}_\iota}
\newcommand{\fstl}{F_{\pm,\textup{st}}^{l}}
\newcommand{\fstlo}{F_{\pm,\textup{st}}^{l+1}}
\newcommand{\Elo}{\mathbf{E}^{l+1}}
\newcommand{\Epl}{\mathcal{E}^{l}}
\newcommand{\Eplo}{\mathcal{E}^{l+1}}
\newcommand{\Blo}{\mathbf{B}^{l+1}}
\newcommand{\Bpl}{\mathcal{B}^{l}}
\newcommand{\Bplo}{\mathcal{B}^{l+1}}
\newcommand{\Blobf}{\mathbf{B}^{l+1}}
\newcommand{\Elbf}{\mathbf{E}^{l}}
\newcommand{\Emmbf}{\mathbf{E}^{k-1}}
\newcommand{\Enmbf}{\mathbf{E}^{n-1}}
\newcommand{\Bmmbf}{\mathbf{B}^{k-1}}
\newcommand{\Bnmbf}{\mathbf{B}^{n-1}}
\newcommand{\Embfst}{\mathbf{E}^{k}_{\textup{st}}}
\newcommand{\Bmbfst}{\mathbf{B}^{k}_{\textup{st}}}
\newcommand{\Emmbfst}{\mathbf{E}^{k-1}_{\textup{st}}}
\newcommand{\Enmbfst}{\mathbf{E}^{n-1}_{\textup{st}}}
\newcommand{\Bmmbfst}{\mathbf{B}^{k-1}_{\textup{st}}}
\newcommand{\Bnmbfst}{\mathbf{B}^{n-1}_{\textup{st}}}
\newcommand{\Emtbf}{\mathbf{E}^{k-2}}
\newcommand{\Entbf}{\mathbf{E}^{n-2}}
\newcommand{\Bmtbf}{\mathbf{B}^{k-2}}
\newcommand{\Bntbf}{\mathbf{B}^{n-2}}
\newcommand{\Emm}{\mathbf{E}^{k-1}}
\newcommand{\Enm}{\mathbf{E}^{n-1}}
\newcommand{\Bmm}{\mathbf{B}^{k-1}}
\newcommand{\Bnm}{\mathbf{B}^{n-1}}
\newcommand{\Elobf}{\mathbf{E}^{l+1}}
\newcommand{\Blbf}{\mathbf{B}^{l}}
\newcommand{\Bstlobf}{\mathbf{B}_{\textup{st}}^{l+1}}
\newcommand{\Estlbf}{\mathbf{E}_{\textup{st}}^{l}}
\newcommand{\Estlobf}{\mathbf{E}_{\textup{st}}^{l+1}}
\newcommand{\Bstlbf}{\mathbf{B}_{\textup{st}}^{l}}
\let\hide\iffalse
\newcommand{\FS}{\mathcal{F}^l_{\pm}}
\newcommand{\ZS}{\mathcal{Z}_\pm}
\newcommand{\XS}{\mathcal{X}_\pm}
\newcommand{\VS}{\mathcal{V}_\pm}
\newcommand{\ZSl}{\mathcal{Z}^{l}_\pm}
\newcommand{\XSl}{\mathcal{X}^{l}_\pm}
\newcommand{\VSl}{\mathcal{V}^{l}_\pm}
\newcommand{\ZSlo}{\mathcal{Z}^{l+1}_\pm}
\newcommand{\XSlo}{\mathcal{X}^{l+1}_\pm}
\newcommand{\VSlo}{\mathcal{V}^{l+1}_\pm}
\newcommand{\XSk}{\mathcal{X}^{k}_\pm}
\newcommand{\VSk}{\mathcal{V}^{k}_\pm}
\newcommand{\hVSlo}{\hat{\mathcal{V}}^{l+1}_\pm}
\newcommand{\Zlo}{Z_\pm^{l+1}}
\newcommand{\Xlo}{X_\pm^{l+1}}
\newcommand{\Vlo}{V_\pm^{l+1}}
\newcommand{\Xk}{X_\pm^{k}}
\newcommand{\Vk}{V_\pm^{k}}
\newcommand{\Zst}{Z_{\pm,\textup{st}}}
\newcommand{\Xst}{X_{\pm,\textup{st}}}
\newcommand{\Vst}{V_{\pm,\textup{st}}}
\newcommand{\Vhatst}{\hat{V}_{\pm,\textup{st}}}
\newcommand{\Vhatlo}{\hat{V}_\pm^{l+1}}
\newcommand{\Fp}{f_\pm}
\newcommand{\Ep}{\mathcal{E}}
\newcommand{\Bp}{\mathcal{B}}
\title[Global solution of 3D Vlasov--Maxwell System]{
  Long-Time Dynamics of the 3D Vlasov--Maxwell System with Boundaries
}
\author[J.W. Jang]{Jin Woo Jang$^\dagger$}
\address{$^\dagger$ Department of Mathematics, POSTECH (Pohang University of Science and Technology), Pohang 37673, Republic of Korea. \href{mailto:jangjw@postech.ac.kr}{jangjw@postech.ac.kr} }
\author[C. Kim]{Chanwoo Kim$^\ddagger$}
\address{$^\ddagger$Department of Mathematics, University of Wisconsin-Madison, Madison, WI, 53706,
USA,  \href{mailto:chanwookim.math@gmail.com}{chanwoo.kim@wisc.edu} } 
   \def\MR#1{}
\begin{document}

\makeatletter\def\Hy@Warning#1{}\makeatother

\date{\today}


\begin{abstract}

\Black{We construct global-in-time classical solutions to the nonlinear Vlasov--Maxwell system in a three-dimensional half-space beyond the vacuum scattering regime in a physical setting of the solar wind model. Our approach combines the construction of stationary solutions to the associated boundary-value problem with a proof of their asymptotic dynamical stability in $L^\infty$ under small perturbations, providing a new framework for understanding long-time wave-particle interactions in the presence of boundaries and interacting magnetic fields. A key difficulty is that the transport dynamics remains coupled on arbitrarily long time scales to a wave component whose sharp $t^{-1}$ pointwise decay is neither integrable in time nor improved by the boundary. We resolve this obstruction by establishing a new decay mechanism that converts the spatial localization of particle--field interactions into temporal decay through the light-cone geometry of wave propagation. This enables us to close the nonlinear field--particle feedback and establish sharp $t^{-1}$ decay for both the particle distribution and the electromagnetic fields. To the best of our knowledge, this work presents the first construction of asymptotically stable non-vacuum steady states under general perturbations in the full three-dimensional nonlinear Vlasov--Maxwell system.}

\end{abstract}

\setcounter{tocdepth}{2}

\maketitle

\thispagestyle{empty}
 
\tableofcontents

\section{Introduction}\label{sec.intro}

Understanding the long-time behavior of solutions to the Vlasov equations is a central problem in collisionless plasma physics\Black{\cite{MR2863910,MR4436197}}. In particular, the construction of \textit{\textbf{space-inhomogeneous equilibria}} and the proof of their \textit{\textbf{stability}}, especially \textit{\textbf{in the presence of magnetic fields}}, remain largely open. Considerable progress has been made in stability analysis by Guo–Strauss~\cite{GuoStrauss95CPAM, GuoStrauss95}, Guo~\cite{YG97, YG99}, Lin~\cite{MR1849267, MR2119867}, Lin–Strauss~\cite{LinStrauss07, LinStrauss}, Guo–Lin~\cite{GuoLinBGK}, \Black{and Han-Kwan–Nguyen–Rousset~\cite{MR4963539},} yet the three-dimensional nonlinear Vlasov–Maxwell system exhibits substantial additional difficulties. 
Two fundamental obstacles remain. First, the global-in-time existence theory near nontrivial stable equilibria is still unresolved, due to the delicate coupling between fields and particle distributions; see, however, \cite{MR1463042, WangRVM1
} for global existence results obtained under certain symmetry assumptions. Second, classical stability criteria, such as the Penrose criterion~\cite{Penrose, GuoStrauss95}, do not extend to or directly address the nonlinear problem involving magnetic fields and spatially inhomogeneous equilibria~\cite{GuoStrauss95CPAM}.

Motivated by solar wind models~\cite{ELLS}, we study the long-time behavior of the three-dimensional \textbf{Vlasov-Maxwell system} under an ambient gravitational field for two-species distributions \(F_\pm : [0,\infty)\times \Omega \times \mathbb{R}^3\), where \(\Omega\) is \(\mathbb{R}^3_+\). 
The system reads
\begin{equation}\label{2speciesVM}
    \begin{split}
         \partial_t F_\pm +\hat{v}_\pm \cdot \nabla_x F_\pm \pm   \left(\textup{e}\E+\textup{e}\frac{\hat{v}_\pm}{c}\times \B\mp m_\pm g\hat{e}_3\right)\cdot \nabla_v F_\pm = 0,\\
          \frac{1}{c}\partial _t \E-\nabla_x\times \B= - \frac{4\pi}{c}  J  ,\ \ 
        \nabla_x \cdot \E=4\pi  \rho    , \\ \frac{1}{c}\partial_t \B+\nabla_x\times \E=0 ,
\ \ 
        \nabla_x \cdot \B=0,
    \end{split}
\end{equation}
where $v$ is the relativistic momentum and the relativistic velocity $\hat{v}_\pm $ is defined as $\hat{v}_\pm \eqdef \frac{c v}{v^0_\pm}$ with the relativistic particle energy $cv^0_\pm = \sqrt{m_\pm^2c^4+c^2|v|^2}$. Here, $m_+$ and $m_-$ stand for the mass of a proton (ion) (with the charge +1e) and an electron (with the charge -1e), respectively, and $g>0$ denotes the gravitational constant, acting in the downward direction $\hat e_3 \eqdef (0,0,-1)^\top$. The electric charge density and current flux are defined, respectively, by
\begin{equation}\label{rhoJ_dyn}
 \rho  \eqdef\int_\rth \textup{e} (  F_+- F_-)dv, \  \ \   J \eqdef  \int_\rth \textup{e} (\hat{v}_+ F_+- \hat{v}_-F_-)dv . 
\end{equation}
They solve the continuity equation \begin{equation}
    \label{continuity eq}\partial_t  {\rho}  + \nabla_x \cdot  {J}  = 0.
\end{equation}


We consider the physical situation that plasma particles \textit{evaporate} from the surface of the star (exobase). Under this interface, we have a plasma sea, which is a \textit{perfect conductor} that has zero resistance. Hence the natural macroscopic boundary condition of the electromagnetic fields is the following perfect conductor boundary condition: 
\begin{align}
\mathbf{E}_1|_{\partial\Omega}=0=\mathbf{E}_2|_{\partial\Omega},\textup{ and }
\mathbf{B}_3|_{\partial\Omega} =0.\label{perfect cond. boundary}
\end{align}
For the conditions on the particle velocity distribution $F_\pm$ at the boundary $x_3=0$, we further split the momentum domain $\mathbb{R}^3$ into incoming, outgoing, and grazing momenta, respectively. On the incoming boundary, we impose the inflow boundary condition with prescribed profiles $G_\pm$. From a physical perspective, a more realistic condensate–evaporate boundary condition could be adopted without causing significant analytical difficulties; however, for the sake of clarity and simplicity we impose the inflow formulation,
\begin{equation}\label{inflow boundary} 
    F_\pm(t,x,v) = G_\pm(x_\parallel, v), \quad (x,v) \in \gamma_-,
\end{equation}
where the incoming set is defined by $  
    \gamma_- \eqdef \{\, x_3 = 0 \ \text{and} \ v_3 > 0 \,\}$. We assume that the inflow boundary data $G_\pm$ (which may be interpreted physically as measurements taken at a low altitude) and their first-order derivatives in $x_\parallel$ and $v$ vanish rapidly as $|x_\parallel|$ and $|v|$ tend to infinity; in particular, they may be taken to be exponentially localized in both variables. 
  Since the phase space allows particles with arbitrarily large momentum, their exit times are not uniformly bounded, and some particles may remain in the domain for arbitrarily long times. At the same time, the particle dynamics remains coupled to the Maxwell field, whose wave component exhibits only the slow $t^{-1}$ decay and receives no improvement from the boundary. Consequently, the problem cannot be reduced to a finite-time transport analysis and instead requires a genuinely global-in-time treatment of the coupled Vlasov--Maxwell dynamics, much as in the problem without boundaries.

A key feature of our setting is the choice of boundary conditions: perfectly conducting walls for the electromagnetic fields and inflow-type conditions for the particle distribution. Under these assumptions the Vlasov--Maxwell system remains formally \textit{non-dissipative} in total energy, mass, and $L^p$-norms.  
Indeed, there is \textit{no strict macroscopic signature of dissipation} due to boundary fluxes, so even simpler questions, such as uniform-in-time boundedness, become nontrivial. The central contribution of this work is to resolve these difficulties through a new microscopic analysis.

\medskip
\noindent\textbf{Overview of Main Results and Key Insights.} One of the main implications of this paper is that the presence of ambient gravity
\Black{is expected to delineate} the linearly stable regime of the Vlasov--Maxwell
system\Black{: what we rigorously establish is nonlinear asymptotic stability under
sufficiently strong gravity, while the delineation of the linearly stable regime
itself remains heuristic}. In the absence
of gravity, trapped trajectories and unbounded velocity gradients are expected to
produce linearly unstable modes with arbitrarily large growth rates. By contrast, a
sufficiently strong $g>0$, as is most commonly observed in stellar atmospheres, suppresses these instabilities
and places the system in a linearly stable setting. Our results operate precisely in
this stable regime, where we establish the nonlinear stability analysis for the first
time. This novel analysis is consistent with the micro--macro interaction framework
and \Black{the cancellation of the boundary contributions in the potential
representation of the magnetic field} described below, and it underpins both the
uniqueness of steady states and the pointwise $t^{-1}$ decay.

In this paper, we construct a class of steady solutions to the boundary-value problem \eqref{2speciesVM}--\eqref{inflow boundary} and establish their asymptotic dynamic stability under general perturbations of the initial data.\footnote{Convergence in a simpler setting was first numerically observed by Jack Schaeffer in 2005 \cite{MR2187923}.} To the best of our knowledge, this constitutes the first rigorous construction of global-in-time solutions of the nonlinear Vlasov-Maxwell system beyond the vacuum scattering regime in 3D. For this purpose, \textit{the uniqueness theory of the steady problem} is one of the key questions. In Vlasov theory, however, trapped particle trajectories generally preclude uniqueness, especially in the absence of gravity. We establish uniqueness by controlling particle acceleration via moment estimates and weighted regularity techniques, where a subtle exploitation of the ambient gravity plays a crucial role. Our analysis ensures that the weak solution we construct is indeed the Lagrangian solution along the characteristics of the Lorentz force---a force we establish to be Lipschitz through these estimates.  \Black{In this regime, we prove that gravity eliminates trapped trajectories by forcing each particle characteristic to reach the boundary eventually, although the exit times are not uniform over phase space. This particle-wise property, however, does not reduce the evolution to a finite-time transport problem: the inflow continuously replenishes the particle distribution, while the Maxwell field propagates independently of particle exit times. Moreover, the field--particle feedback between the nontrivial steady state and its dynamical perturbation persists for arbitrarily long times. Consequently, neither uniform global-in-time closure nor the sharp $t^{-1}$ decay follows directly from the characteristic geometry.}

The analysis of \textit{asymptotic stability} for the Vlasov--Maxwell system faces
intrinsic challenges: the system is fully hyperbolic, and the electromagnetic fields
are coupled to the particle distribution in a long-range manner. Consequently, the
decay of the fields is not automatically tied to that of the particles and proceeds
only slowly; hence the classical vacuum stability argument of Glassey--Strauss does
not apply, and closing the asymptotic stability loop appears impossible at first
sight. A crucial structural feature we found is a \emph{Coulomb-gauge
\Black{representation of the magnetic field}}, which recasts \Black{the} magnetic
coupling into a benign transport \Black{term carrying the degeneracy factor
$1-|\hat{v}_\pm|^2$, together with a nonlinear term of Glassey--Strauss type, and in
which the boundary contributions cancel under the curl relation $\B=\nabla\times\A$};
this \Black{structure} reconciles field decay with particle dynamics even without
dissipation. Building on this mechanism, our approach further exploits weighted
regularity estimates together with the Lagrangian structure of the dynamics, enabling
precise control of particle trajectories and momentum derivatives in the mean of the
mechanical energy density, while simultaneously tracking the decay of the
electromagnetic fields. The mechanism we identify is inherently
microscopic---visible only from the Lagrangian perspective---and it remains effective
even in the absence of macroscopic dissipation.\footnote{By contrast, the nonlinear
stability of some BGK solutions in the whole space remains a distinct challenging
problem; see~\cite{GuoLinBGK}.}  For the reader's convenience, we present a brief
informal statement of the main results.

\begin{table}[h]
\centering
\begin{tabular}{lccc}
\toprule
 & \textbf{Full Problem} & \textbf{Stationary Problem} & \textbf{Dynamic Perturbation} \\
\midrule
Solution
& $(F_{\pm},\mathbf{E}, \mathbf{B})$ 
& $ (F_{\pm, \mathrm{st}}, \mathbf{E}_{\mathrm{st}}, \mathbf{B}_{\mathrm{st}})$ 
& $(f_{\pm},\mathcal{E}, \mathcal{B})$ \\[0.4em]
 Density, Flux 
 & $\rho,  J$ 
 & $\rho _{\mathrm{st}}, J_{\mathrm{st}}$ 
 & $\varrho, \mathcal{J}$ \\[0.4em]
\bottomrule
\end{tabular}
\end{table}  
 
\noindent\textbf{Informal Statement of Steady Uniqueness. }\itshape
For sufficiently large $g>0$ and $\beta>0$, the stationary two-species Vlasov--Maxwell system in the half-space $\mathbb{R}^3_+$, subject to an exponentially localized $C^1$ inflow boundary \eqref{inflow boundary} and the perfect conductor boundary condition \eqref{perfect cond. boundary}, admits a unique steady solution, where $F_{\pm,\mathrm{st}}$ is locally Lipschitz and $(\mathbf{E}_{\mathrm{st}},\, \mathbf{B}_{\mathrm{st}})$ is Lipschitz. Moreover, the steady states satisfy
\begin{align}\notag
| \nabla_v F_{\pm,\mathrm{st}}(x,v)|     \lesssim  
e^{-\beta \{ c v^0_\pm + m_\pm g  x_3
\}
} 
,  \ \
|\mathbf{E}_{\mathrm{st}}(x)| + |\mathbf{B}_{\mathrm{st}}(x)|   \lesssim 1 
.\notag
\end{align} 
Here, $cv^0_\pm + m_\pm g  x_3$ is the mechanical energy of particle. 

\normalfont
\medskip

\noindent\textbf{Informal Statement of Asymptotic Stability. }\itshape 
Under the same conditions as in the informal statement of steady uniqueness above, suppose the initial perturbations $(f_\pm^{\textup{in}}, \mathcal{E}^{\textup{in}}, \mathcal{B}^{\textup{in}})$ are small in an appropriate weighted $L^\infty$ space. Then there exists a unique global-in-time unsteady solution. Moreover, the perturbative solution decays linearly-in-time \textbf{pointwisely} as
\begin{align*}
     f_\pm(t,x,v)   \lesssim
(1+t)^{-1} 
, \ \
|\mathcal{E} (t,x) | + | \mathcal{B} (t,x) |  \lesssim  
(1+t)^{-1}.
\end{align*}

\normalfont
\smallskip
\Black{The rate $t^{-1}$ is the sharp pointwise decay rate for the homogeneous three-dimensional wave equation, and the half-space boundary provides no additional temporal decay. Thus, the nonlinear field--particle coupling must be closed at this borderline, non-integrable rate.} We emphasize once again that neither modeling ingredient diminishes the strength of
our results. Gravity is \Black{expected to be} \emph{structurally necessary} to
preclude generic linear instabilities in the non-vacuum, spatially inhomogeneous
Vlasov--Maxwell regime. The boundary, on the other hand, removes no difficulty---in
fact, it introduces additional challenges, since energy and charge fluxes may pass
through the boundary, potentially generating arbitrarily large unstable modes. These
features are quantified in Theorems~\ref{thm.ex.st} and~\ref{thm.asymp.rth} and are
developed further in Sections~\ref{sec.exist.steady} and~\ref{sec.boot.decay}. Taken
together, the above informal statements suggest\Black{, at a heuristic level,} a
quantitative threshold between instability and
stability.\footnote{\Black{\label{fn.threshold}Heuristically, for a fixed boundary temperature
$\beta^{-1}>0$, one may expect a threshold $\Gamma(\beta)>0$, decreasing in $\beta$,
such that generic steady states are linearly unstable for $0\le g<\Gamma(\beta)$. We
emphasize that we do not carry out a linearized spectral analysis in this paper and
make no claim about unstable eigenvalues. In particular, we do not specify a precise
rate for $\Gamma(\beta)$: the inverse dependence on $\beta$ is only heuristically
suggested by the characteristic travel-time estimates, the control of the velocity
gradients, and the structure of the retarded representations developed in
Sections~\ref{sec.exist.steady} and~\ref{sec.boot.decay} and
Theorems~\ref{thm.ex.st} and~\ref{thm.asymp.rth}.}}

Our model captures the generic near-stellar regime---the principal source of cosmic plasma---since the overwhelming majority (\(\gtrsim 99\%\)) of baryonic matter is ionized and expelled from stars, where hydrogenic composition and strong gravity govern vertical confinement; see Section~\ref{sec.astro}. In physical contexts such as the solar wind, the existence and asymptotic stability of steady states are of central importance, and our mathematical results underscore the critical role of gravity in sustaining such behavior. Beyond their intrinsic mathematical significance, these results provide a rigorous framework for analyzing a range of physical phenomena, including coronal heating and associated energy cascades, nonlinear instabilities in three dimensions, the long-time dynamics of interstellar plasmas, and the emergence of time- or space-periodic structures within the Vlasov--Maxwell system.

\section{Main Theorems, Difficulties and Our Strategies}\label{sec.main thms diff and strat}

 \subsection{Heuristic Explanation of Main Results, Difficulties, and Ideas}

We now discuss the major challenges of the problem, present our new main idea to overcome them. 
 
A generic difficulty in the Vlasov--Maxwell system stems from its intrinsic instability. Even in the simpler one-dimensional Vlasov--Poisson case, the maximum growth rate of unstable modes can become unbounded if $\nabla_v F_{\mathrm{st}}$ is unbounded, as observed for certain singular profiles \cite{MR4093619, MR3509003}. In boundary value problems, derivatives of the solution may become singular in finite time \cite{MR1354697}, suggesting that, in our setting, the maximum growth rate could potentially become arbitrarily large---a situation further amplified by long-time transversal acceleration in the presence of a magnetic field. To construct steady solutions with bounded $\nabla_v F_{\mathrm{st}}$, we carefully analyze the characteristic flow---a task complicated by the magnetic field---and, in order to accommodate general boundary data, construct the solution using a Lagrangian approach rather than a classical method to find invariant solutions \cite{MR1170533,MR1264300,MR3318798}. By exploiting the regularity we established, we are able to prove uniqueness, which allows us to hope for the construction of dynamic solutions that converge to the steady state. Of course, controlling the maximum growth mode alone is insufficient to fully suppress instabilities; this must be combined with control of particle travel times, as will be addressed in the next step.

Even assuming that the instability has been well controlled, as discussed above, demonstrating decay of perturbations via dispersion remains challenging. This difficulty arises because the interaction between the steady solution (cf. the classical approach of Glassey--Strauss) and the particle/wave fields is nonlinear, and the magnetic field can extend the interaction time. To illustrate this more concretely, consider the perturbation problem:
\begin{align}
    \partial_t \Fp + \hat{v}_\pm \cdot \nabla_x \Fp \pm  
    \left(\textup{e}\E+\textup{e}\frac{\hat{v}_\pm}{c}\times \B \mp m_\pm g \hat{e}_3\right)
    \cdot \nabla_v \Fp 
    &= \mp  \textup{e} \left(\Ep+ \frac{\hat{v}_\pm}{c}\times \Bp\right)
    \cdot \nabla_v F_{\pm,\textup{st}},\label{VM perturbations1}\\
\square (\Ep, \Bp) := \Big(\tfrac{1}{c^2} \p_t^2 - \Delta \Big)(\Ep, \Bp) 
    &= 4 \pi \Big( - \nabla \varrho - \tfrac{1}{c^2} \p_t \mathcal{J}, \tfrac{1}{c} \nabla \times \mathcal{J} \Big),\label{VM perturbations2}
\end{align}
where
\begin{equation}\label{rhoJ_per}
\varrho := \int_{\mathbb{R}^3} \textup{e} (f_+ - f_-) \, dv , \qquad 
\mathcal{J} := \int_{\mathbb{R}^3} \textup{e} (\hat{v}_+ f_+ - \hat{v}_- f_-) \, dv.
\end{equation}

We emphasize again that, in our setting, there is no a priori guarantee that the energy or mass dissipates. A key challenge in the asymptotic stability analysis of the VM system is the presence of additional \textbf{\textit{wave--wave interactions}} at both microscopic and macroscopic levels, along with their feedback mechanism---a phenomenon absent in the Vlasov--Poisson dynamics. Because the decay of the wave field is unfavorably decoupled from that of the particle distribution and therefore proceeds much more slowly, this interaction renders the asymptotic stability problem substantially more difficult than in the Vlasov--Poisson case  \cite{MR4843478}.

\begin{itemize}[leftmargin=0.7em, labelsep=0.3em]
  
    \item\textbf{Macroscopic wave–wave interaction:} 
   In the propagation of the fields, the crucial wave-wave interaction appears in the particle transport contribution (the ``$S$ operator'' in the Glassey--Strauss theory) of the source term in \eqref{VM perturbations2}:
  \begin{align}\label{macroSDint} 
         \textup{c} 
       \textup{e}  \int  
       \frac{dy }{|y -x|} 
       \int_{\mathbb{R}^3} dv\ 
       \frac{\omega + \frac{\hat v_\pm}{c}}{1+ \frac{\hat v_\pm}{c} \cdot \omega}\,\left(\Ep + \frac{\hat v_\pm}{c} \times \Bp\right)\big(t-\frac{|x-y |}{c},y \big)\cdot \nabla_v F_{\pm,\textup{st}}(y ,v).\tag{MaWW} 
    \end{align}Here, $\omega$ denotes the light-cone direction, and the spatial integration is restricted to the half-space part of $B(x;t)$. 
     
    \vspace{5pt}
    
    \item\textbf{Microscopic wave–wave interaction:} 
   In the dynamics of the particle distribution, a key wave-wave interaction appears through the inhomogeneous term of \eqref{VM perturbations1}, naturally expressed in the Lagrangian formulation along the characteristics $\ZS(s;t,x,v)$ 
    (see \eqref{leading char}, for the definition):
    \begin{equation}\label{SD}
    \int 
    \textup{e} \left(\Ep+ \frac{\hat{v}_\pm}{c}\times \Bp\right)  (s,\XS(s) )\cdot \nabla_v F_{\pm,\textup{st}}  (
    \ZS(s)
     )   ds,\tag{MiWW}
    \end{equation}
    where the integration extends until the particle trajectory $\ZS(s)$ exits the boundary. Since the trajectory encodes the effect of waves, the term \eqref{SD} represents a genuine \textit{wave--wave interaction mediated through the particle trajectory in the presence of a steady background distribution}.  

\end{itemize}

Without the collision mechanism\footnote{See global well-posedness and the stability results with collision operator \cite{MR3961294,Guo-VMB,Guo-Strain3,2401.00554}.}, the particle–acceleration feedback loop (see Figure \ref{fig1}) inherent to the Vlasov–Maxwell system can, in principle, trigger uncontrolled acceleration, rendering stability analysis highly delicate or even impossible. A well-known criterion provides a sufficient condition for the loop to close stably \cite{MR816621,MR1877669,MR3248056 
,MR3721415}. In the vacuum perturbation setting, only microscopic wave–particle interactions are present, while wave–wave interactions are entirely absent. This structural simplification allows one to establish existence results, study long-time behavior \cite{MR0919231,schaeffer2004small}, and carry out delicate analysis \cite{bigorgne2020sharp,wang2022propagation}.
By contrast, for perturbations around a nontrivial steady state, one must \textit{rigorously close} the full particle–acceleration feedback loop by controlling both wave–wave interactions and their back-reaction, underscoring the substantially greater analytical challenges. These difficulties are further compounded by the slow decay of the electromagnetic fields, which do not align naturally with the decay of the particle distribution. At first sight, such slow \Black{$t^{-1}$} field decay \Black{of the electromagnetic fields, which is non-integrable in time,} appears insufficient---and potentially destabilizing---within the feedback loop, making its closure far more delicate than in the Vlasov–Poisson or near-vacuum regime. \Black{These challenges are amplified by interactions between boundaries and waves/particles in the presence of an active magnetic field. }

\begin{figure}[H]
\begin{center}
\begin{tikzpicture}[
    every node/.style={font=\normalsize},
    arrow/.style={-{Stealth}, thick},
    centerbox/.style={draw, rectangle, rounded corners, inner sep=8pt, font=\small\bfseries, align=center},
    labelstyle/.style={font=\small, fill=white, align=center, inner sep=2pt}, 
    feedbacklabel/.style={font=\small, fill=white, inner sep=1pt}
]
 
\def\xradius{4.2cm}
\def\yradius{2.2cm}
 
\node (f) at (90:\yradius) {$f_\pm$};
\node (rhoJ) at ({\xradius*cos(210)},{\yradius*sin(210)}) {$(\varrho, \mathcal J)$};
\node (EB)   at ({\xradius*cos(330)},{\yradius*sin(330)}) {$(\mathcal{E}, \mathcal{B})$};
 
\node[centerbox] at (0,-0.5) {Particle Acceleration\\Feedback Loop};
 
\draw[arrow, bend right=25] (f) to (rhoJ);

\draw[arrow, bend right=25] (rhoJ) to 
    node[labelstyle, midway, yshift=-14pt] (labelMacro)
    {\mbox{macroscopic} \\ \mbox{wave--wave interaction}}
    (EB);

\draw[arrow, bend right=25] (EB) to 
    node[labelstyle, midway, yshift=-10pt] (labelMicro)
    {\mbox{microscopic} \\ \mbox{wave--wave interaction}}
    (f);
 
\draw[arrow] 
    (labelMacro.east) 
    .. controls +(2.8,0.3) and +(1.8,-0.4) .. 
    (labelMicro.south east);

\node[feedbacklabel] at (6.2,  -0.5) {macro--micro feedback};

\end{tikzpicture}
\end{center}
\label{fig1}
\end{figure}


 A key new observation in our analysis is that \Black{the boundary contributions to the
magnetic field vanish identically in the half space}. We demonstrate this
\Black{cancellation} by representing the magnetic field using the vector potential in
the Coulomb gauge\Black{, combined with the method of images}. Moreover, since the magnetic part of the Lorentz force $\hat{v}_\pm\times\B$ does no
work \Black{on the particles ($\hat{v}_\pm\cdot(\hat{v}_\pm\times\B)=0$), the magnetic
field does not alter the mechanical energy along the characteristics:
$\frac{d}{ds}\big(cv^0_\pm + m_\pm g x_3\big) = \pm\,\hat{v}_\pm\cdot\E$}; it affects
only the total particle trajectory time and thereby influences the overall
acceleration due to the prolonged interaction with the steady state. \Black{This observation, combined with the
control of the full nonlinear electromagnetic fields along the characteristics,} is
particularly useful when controlling particle travel times, allowing us to conclude
that the travel time is linearly proportional to the particle's mechanical energy:
\[
t_{\pm,\mathbf{b}}(t,x,v)  \lesssim cv^0_\pm + m_\pm g x_3.
\]
Ultimately, the balance between particle travel times and mechanical energy, together
with the boundedness of $\nabla_v F_{\mathrm{st}}$ discussed above, ensures complete
control of the instability. At the same time, by employing the characteristic method
and exploiting the travel times that we have controlled, we can establish exponential
decay of $\nabla_v F_{\mathrm{st}}$ in both velocity and space $$ \exp\{-\beta(c v^0_\pm + m_\pm g x_3
+  |x_\parallel|/2) \}
. $$

Now that the instability has been fully controlled and the asymptotics of the steady profile interacting with the particle–field system have been established, we turn to proving the temporal decay in \eqref{macroSDint} and \eqref{SD}. The key idea is to exploit the highly local nature of transport propagation in order to capture the wave propagation localized around the light cone within the interaction terms \eqref{macroSDint} and \eqref{SD}, which exhibit a specific structural form. In \eqref{macroSDint}, the spatial decay of \(\nabla_v F_{\mathrm{st}}\) is crucial: it allows the \(y\)-integral to be uniformly bounded over the light cone \(|x-y| < ct\); without this decay, the growing volume would prevent temporal decay. We then exploit the a priori linear-in-time bounds on the particle distribution to obtain decay in the retarded time \(t - |x-y|/c\), and combine this with the 3D wave dispersion factor \(1/|y-x|\) to show that \eqref{macroSDint} decays linearly in time. For \eqref{SD}, we combine the linear decay of the fields with the rapid decay of the steady interaction along particle travel times to similarly establish linear decay in time of \eqref{SD}.

\subsection{Main Theorem 1: Unique Solvability of the Steady Problem}

We now state our main theorems. 
The primary part of the paper is on the stability of steady states with J\"uttner-Maxwell upper bound in the three-dimensional half-space $\mathbb{R}^3_+$. To this end, we first prove the existence and uniqueness of steady states with J\"uttner-Maxwell upper bound $(F_{\pm,\textup{st}},\E_{\textup{st}},\B_{\textup{st}})$ to the stationary system \eqref{2speciesVM-steady} for two species.  We consider a stationary problem of 2-species Vlasov--Maxwell system: 
\begin{equation}\label{2speciesVM-steady}
   \begin{split}
        \hat{v}_\pm \cdot \nabla_x F_{\pm,\textup{st}} &\pm   \left(\textup{e}\E_{\textup{st}}+\textup{e}\frac{\hat{v}_\pm}{c}\times \B_{\textup{st}}\mp m_\pm g\hat{e}_3\right)\cdot \nabla_v F_{\pm,\textup{st}} = 0,\\
        \nabla_x\times \B_{\textup{st}}&= \frac{4\pi}{c} J_{\textup{st}} =\frac{4\pi}{c} \int_\rth (\textup{e}\hat{v}_+ F_{+,\textup{st}}-\textup{e}\hat{v}_-F_{-,\textup{st}})dv,\\
        \nabla_x\times \E_{\textup{st}}&=0,\\
        \nabla_x \cdot \E_{\textup{st}}&=4\pi \rho_{\textup{st}}=4\pi \int_\rth ( \textup{e}F_{+,\textup{st}}-\textup{e}F_{-,\textup{st}})dv,\\
        \nabla_x \cdot \B_{\textup{st}}&=0,
    \end{split}
\end{equation}together with the inflow boundary conditions as
\begin{equation}\label{2species-perturbabsorbing.st}
    F_{\pm,\textup{st}}(x,v)=G_\pm(x_\parallel,v),\ \textup{ for } (x,v)\in \gamma_-,
\end{equation}and the perfect conductor boundary condition 
\begin{equation}\label{perfect cond. boundary-perturb.st}
    \mathbf{E}_{\textup{st},1}(x_\parallel,0)=\mathbf{E}_{\textup{st},2}(x_\parallel,0)=0,\ \mathbf{B}_{\textup{st},3}(x_\parallel,0)=0.
\end{equation}

Define the weight 
\begin{equation}\label{weight}
\mathrm{w}_{\pm,\beta}(x,v)=\exp\{\beta(cv^0_\pm+m_\pm g x_3)+\beta |x_\parallel|/2\}, \ \  v^0_\pm=\sqrt{m_\pm^2c^2+|v|^2}, \ \  \beta>1.
\end{equation}
We also define
\begin{equation}
        \label{norm X}|||f|||  \eqdef \| (c\vZ) ^\ell  \nabla_{x_\parallel}f\|_{L^\infty}+\left\|(c\vZ)^\ell \alpha_{\pm}\partial_{x_3}f\right\|_{L^\infty}+\| (c\vZ) ^\ell \nabla_{v}f\|_{L^\infty}, \ \ \text{$\ell >4$,}
    \end{equation} 
where 
\begin{equation}\label{def.alpha intro}
 \alpha_\pm (x,v) : =  \sqrt{\frac{ | \bar\alpha_\pm (x,v)
    |^2}{1+ | \bar\alpha_\pm (x,v)
    |^2}}, \  \  \  \text{with} \  \  \ 
\bar\alpha_\pm (x,v)\eqdef \sqrt{x_3^2+\left|\frac{(\hat{v}_\pm)_3}{c}\right|^2+ \frac{x_3}{ 2c\vZ }.
}\end{equation} 

Now our first main theorem follows: 
    \begin{theorem}[Unique Solvability of the Steady Problem]  \label{thm.ex.st} Fix $g>0$ with 
    $\textcolor{black}{\min\{m_-,m_+\}g\gg 1}$ 
 and choose $\beta>1$ such that $\min\{m_-,m_+\}g \beta^3\gg 1. $ 
    Suppose that the inflow boundary data $G_\pm$ is a $C^1$ exponentially localized:  
\begin{equation}
   \label{inicon5}
    \|\mathrm{w}_{\pm,\beta}(\cdot,0,\cdot)G_\pm(\cdot,\cdot)\|_{L^\infty_{x_\parallel,v}}\text{ and  } \  \|\mathrm{w}^2_{\pm,\beta}(\cdot,0,\cdot)\nabla_{x_\parallel,v}G_\pm(\cdot,\cdot)\|_{L^\infty_{x_\parallel,v}}\le C, \text{ for some $C>0$. }
\end{equation}  Then we construct a unique classical solution to the stationary Vlasov--Maxwell system \eqref{2speciesVM-steady} with the incoming boundary condition \eqref{2species-perturbabsorbing.st} and the perfect conductor boundary condition \eqref{perfect cond. boundary-perturb.st}. This solution solves the continuity equation $\nabla_x\cdot J_{\textup{st}}=0$ and satisfies 
  \begin{equation}
  \begin{split}\label{steady state L infty} \| e^{\frac{\beta}{2} |x_\parallel|}e^{\frac{\beta}{2} c\vZ } e^{\frac{\beta}{2}m_\pm g x_{3}}F_{\pm,\textup{st}}(x,v)\|_{L^\infty}\le C  , \ \
  |\E_{\textup{st}}(x)|+ |\B_{\textup{st}}(x)|\le \min\{m_+,m_-\}\frac{g}{16}\frac{1}{\langle x\rangle^2},\\
  |||F_{\pm,\textup{st}}||| +    \|(\E_{\textup{st}},\B_{\textup{st}})\|_{W^{1,\infty}_{x}( \mathbb{R}^3_+)}
       \lesssim 1  .\end{split}
 \end{equation}
   Furthermore, we obtain the crucial weighted estimate for the momentum derivative: \begin{equation}\label{steady state decay mom.deri}\|\mathrm{w}_{\pm,\beta}\nabla_v F_{\pm,\textup{st}}\|_{L^\infty_{x,v}}\lesssim \|\mathrm{w}^2_{\pm,\beta}(\cdot,0,\cdot)\nabla_{x_\parallel,v}G_\pm(\cdot,\cdot)\|_{L^\infty_{x_\parallel,v}}. \end{equation}  
    \end{theorem}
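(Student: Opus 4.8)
The plan is to construct the steady solution by a fixed-point/iteration scheme on the electromagnetic fields, with the Vlasov equation solved along characteristics (the Lagrangian approach emphasized in the introduction), and then to close the argument with the weighted $L^\infty$ estimates. I would start by setting up the iteration: given a field pair $(\E^k_{\textup{st}},\B^k_{\textup{st}})$ lying in a ball of $W^{1,\infty}(\mathbb{R}^3_+)$ of radius $\sim \min\{m_+,m_-\}g/16$ with the $\langle x\rangle^{-2}$ decay, solve the linear transport equation for $F^{k+1}_{\pm,\textup{st}}$ with inflow data $G_\pm$ by tracing characteristics backward in time until they hit $\{x_3=0,\ v_3>0\}$. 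The crucial point here — and where gravity enters decisively — is that along any characteristic the mechanical energy $v^0_\pm + m_\pm g x_3$ is almost conserved: its time derivative is controlled by $|\E^k_{\textup{st}}|\lesssim g\langle x\rangle^{-2}$, so that for $g$ large and $\beta^3 g \gg 1$ the field perturbation cannot overcome the gravitational confinement. This yields (i) finiteness of the backward exit time $\tb$, with the key bound $\tb \lesssim c\,v^0_\pm + m_\pm g x_3$ advertised in the introduction, obtained by comparing the vertical motion to the ballistic trajectory under $-m_\pm g\hat e_3$ plus a small perturbation; and (ii) propagation of the weight: $\mathrm{w}_{\pm,\beta}(x,v) \lesssim \mathrm{w}_{\pm,\beta}$ evaluated at the exit point, up to a constant depending on $\int_0^{\tb}|\E^k_{\textup{st}}|\,ds \lesssim 1$. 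Composing with the boundary bound \eqref{inicon5} gives the first estimate in \eqref{steady state L infty} and the a priori $L^\infty$ control needed to make $J_{\textup{st}}^{k+1}, \rho_{\textup{st}}^{k+1}$ well-defined and decaying.

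Next I would recover the new fields $(\E^{k+1}_{\textup{st}},\B^{k+1}_{\textup{st}})$ from the elliptic div–curl system \eqref{2speciesVM-steady} with the perfect-conductor boundary conditions \eqref{perfect cond. boundary-perturb.st}. Since $\nabla\times\E_{\textup{st}}=0$ and $\nabla\cdot\E_{\textup{st}}=4\pi\rho_{\textup{st}}$ with $\E_{\textup{st},1}=\E_{\textup{st},2}=0$ on $\partial\Omega$, I can write $\E_{\textup{st}}=-\nabla\phi$ with $\phi$ solving a Neumann-type problem (after even/odd reflection across $x_3=0$), and similarly for $\B_{\textup{st}}$ via a vector potential in the Coulomb gauge (the representation that makes the magnetic wave–wave cancellation transparent later); standard elliptic estimates on the half-space, together with the exponential-in-$x$ decay of $\rho_{\textup{st}},J_{\textup{st}}$ inherited from the weight $\mathrm{w}_{\pm,\beta}$, give $|\E^{k+1}_{\textup{st}}|+|\B^{k+1}_{\textup{st}}|\lesssim \langle x\rangle^{-2}$ with constant controlled by the boundary data $C$. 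Choosing $g$ large enough (this is where $\min\{m_-,m_+\}g\ge 8$ and the smallness of $C/g$ enter) closes the invariance of the ball. For the $W^{1,\infty}$ bound on the fields and the $|||\cdot|||$ bound on $F_{\pm,\textup{st}}$, I would differentiate the characteristic ODEs in $(x_\parallel,v)$ — the anisotropic weight $\alpha_\pm$ in \eqref{def.alpha intro} is designed precisely to absorb the singularity of $\partial_{x_3}$ near the grazing set $\{x_3=0, (\hat v_\pm)_3=0\}$, so that the $\alpha_\pm \partial_{x_3}F_{\pm,\textup{st}}$ component stays bounded — and then use Schauder/Calderón–Zygmund estimates for the fields; a Grönwall argument along trajectories of length $\lesssim \tb \lesssim v^0_\pm + m_\pm g x_3$, together with the gain of one extra power of the weight in the derivative bound \eqref{inicon5}, yields \eqref{steady state decay mom.deri}.

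Uniqueness is then obtained by a contraction estimate on the difference of two solutions in a norm measuring the fields in $L^\infty$ with $\langle x\rangle^{-2}$ decay and the distribution in the weighted $L^\infty$ space: the difference of characteristics is controlled by the difference of Lorentz forces, which in turn is controlled by the difference of $J_{\textup{st}},\rho_{\textup{st}}$ via the elliptic estimates, and the factor acquired is $\lesssim 1/g$, giving a strict contraction for $g$ large. I expect the main obstacle to be the regularity/uniqueness step rather than mere existence: one must show the Lorentz force is genuinely Lipschitz (not just bounded) so that the characteristic flow is well-defined and the weak solution coincides with the Lagrangian one — and this is delicate near the grazing set and because the magnetic field couples the transversal momentum components, potentially amplifying derivatives over the long travel time $\tb$. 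The resolution is exactly the interplay highlighted in the introduction: the $\alpha_\pm$-weighted estimate tames the boundary singularity, the effective linearity of $\B_{\textup{st}}$ (Coulomb-gauge representation) prevents transversal derivative blow-up, and the travel-time bound $\tb \lesssim v^0_\pm + m_\pm g x_3$ balanced against the exponential weight keeps every Grönwall constant finite — all of which hinge on taking $\min\{m_-,m_+\}g\beta^3 \gg 1$.
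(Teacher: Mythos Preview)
Your proposal is essentially correct and follows the same route as the paper: an iteration scheme with the Vlasov equation solved along characteristics, the gravity-driven exit-time bound $\tb\lesssim v^0_\pm+m_\pm g x_3$ to propagate the weight $\mathrm{w}_{\pm,\beta}$, explicit half-space Green's function representations for the fields via scalar and vector potentials with odd/even reflection, the kinetic weight $\alpha_\pm$ to control $\partial_{x_3}F_{\pm,\textup{st}}$ near grazing, and a contraction in the weighted $L^\infty$ norm for uniqueness. Two small corrections: the scalar potential $\phi$ satisfies a \emph{Dirichlet} condition ($\phi=\text{const}$ on $x_3=0$, odd reflection), not Neumann; and the Coulomb-gauge ``wave--wave cancellation'' you invoke is a feature of the \emph{dynamical} magnetic representation and plays no role in the steady construction, where $\B_{\textup{st}}$ is obtained directly from the Poisson equation for $\mathbf{A}_{\textup{st}}$.
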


\begin{remark}
The parameter $\beta$ corresponds to the reciprocal of the boundary temperature.
Thus, the inverse relation between the gravitational constant $g$ and $\beta$ is
natural. \Black{The quantitative condition $g \gtrsim 1/\beta^{3}$ enters our analysis
as a sufficient condition: it is the scale at which the characteristic travel-time
estimates and the control of the velocity gradients close in the proofs of
Theorems~\ref{thm.ex.st} and~\ref{thm.asymp.rth} (cf.\
footnote~\ref{fn.threshold}). 
}
\end{remark}

\subsection{Main Theorem 2: Dynamical Asymptotic Stability
}

 For the dynamical problem \eqref{2speciesVM}, we consider the initial conditions \begin{equation}\label{2speciesIni}
    F_\pm(0,x,v)=F_\pm^{\textup{in}}(x,v),\ \E(0,x)=\E^{\textup{in}}(x),\ \B(0,x)=\B^{\textup{in}}(x),
\end{equation}with the compatibility conditions $\nabla_x \cdot\E^{\textup{in}}(x)=\rho(0,x),$ and $\nabla_x \cdot\B^{\textup{in}}(x)=0.$ 
At the boundary $x_3=0$, we consider the incoming boundary condition \eqref{inflow boundary} where the incoming profile $G_\pm$ is now given by the stationary solution $F_{\pm,\textup{st}}$ obtained in Theorem \ref{thm.ex.st}.

Denote the initial perturbed fields and their $i$-th order temporal derivatives (understood through the system of equations \eqref{2speciesVM}) as $$ \mathcal{E}^{\textup{in}}=\E^{\textup{in}}-\E^{\textup{in}}_{\textup{st}}=(\mathcal{E}_{01},\mathcal{E}_{02},\mathcal{E}_{03})^\top, \ \ \mathcal{B}^{\textup{in}}=\B^{\textup{in}}-\B^{\textup{in}}_{\textup{st}}=(\mathcal{B}_{01},\mathcal{B}_{02},\mathcal{B}_{03})^\top,$$ $$\mathcal{E}^i_0=(\mathcal{E}^i_{01},\mathcal{E}^i_{02},\mathcal{E}^i_{03})^\top, \ \ \mathcal{B}^i_0=(\mathcal{B}^i_{01},\mathcal{B}^i_{02},\mathcal{B}^i_{03})^\top, \ \ \text{for $i=1,2,$}$$ respectively. We suppose that the initial perturbations $\mathcal{E}^{\textup{in}}$ and $\mathcal{B}^{\textup{in}}$ are compactly supported in a ball $B_{R_0}(0)$ for a fixed $R_0>0.$ Furthermore,
 we assume that, for a sufficiently small $c_0>0$ and $\bar \beta> \beta >0$,  
 \begin{equation}
    \label{initial E0i}\left\| e^{m_\pm g \bar \beta x_3}
    (\mathcal{E}_{0}, \mathcal{B}_{0},\mathcal{E}^i_{0} , \mathcal{B}^i_{0}, \nabla_x \mathcal{E}_{0},\nabla_x \mathcal{B}_{0},\nabla_x \mathcal{E}^1_{0},\nabla_x \mathcal{B}^1_{0})
    \right\|_{L^\infty(\mathbb{R}^2\times (0,\infty))}\le  c_0 \min\{m_-,m_+\}g, \ \ i=1,2.
\end{equation} 

We assume that the initial perturbed particle distribution $f^{\textup{in}}_\pm=F^{\textup{in}}_\pm-F_{\textup{st}}$ is compactly supported in $x$ in a ball $B_{R_0}(0)$ for a fixed $R_0>0$. Moreover, we assume that the initial perturbation and its temporal derivative (understood through the Vlasov equation \eqref{2speciesVM}), satisfies 
\begin{equation}
    \label{f initial condition} \|\mathrm{w}_{\pm, \bar \beta} (f^{\textup{in}}_\pm,\partial_tf^{\textup{in}}_\pm)\|_{L^\infty_{x,v}} + |||f_\pm^{\textup{in}}||| < M< + \infty. 
\end{equation}  
%

We now state our main theorem on the asymptotic stability of the steady states.
\begin{theorem}[Asymptotic Stability]\label{thm.asymp.rth}
Let $(F_{\pm,\textup{st}}, \mathcal{E}_{\textup{st}}, \mathcal{B}_{\textup{st}})$ be the steady solution constructed in Theorem \ref{thm.ex.st}. 
Suppose positive parameters $(g,m_\pm, \bar \beta)$ satisfy $\bar \beta>0$, $\textcolor{black}{\min\{m_-,m_+\}g\gg 1}$ and $\min\{m_-,m_+\}\times\min\{g \bar\beta^3,\bar\beta^2\}\gg 1. $  
Let the initial perturbations $(f_\pm^{\textup{in}}, \mathcal{E}^{\textup{in}}, \mathcal{B}^{\textup{in}})$ satisfy the conditions of \eqref{initial E0i} and \eqref{f initial condition}. 

Then we construct a unique classical solution to the dynamical problem \eqref{VM perturbations1}--\eqref{VM perturbations2} with the inflow boundary condition \eqref{inflow boundary} and the perfect conductor condition \eqref{perfect cond. boundary}, such that  
%
$$
|||f_\pm(t)|||<\infty,\quad(\mathcal{E}, \mathcal{B}) \in W^{1,\infty}([0,\infty)\times\mathbb{R}^3_+), \ \ \text{for all $t>0$.}
$$
Moreover, the solution decay linearly in time 
\begin{align}
    &\sup_{t \ge 0}\ (1+t) \left\| e^{\frac{\bar\beta}{2}|x_\parallel| + \frac{\bar\beta}{4}c\vZ + \frac{\bar\beta}{4}m_\pm g x_3} f_\pm(t) \right\|_{L^\infty_{x,v}} \le C_M,\notag\\
    &\sup_{t \ge 0}\ (1+t) \|(\mathcal{E}, \mathcal{B})(t)\|_{L^\infty_{x}} \le \min\{m_+, m_-\} \frac{g}{16}.\notag
\end{align}
Furthermore, the derivatives are controlled as 
\begin{align}\notag
 \|(c v_\pm^0)^\ell \partial_t f_\pm(t)\|_{L^\infty} + |||f_\pm(t)||| 
  +  \|(\mathcal{E}, \mathcal{B})\|_{W^{1,\infty}_{t,x}([0,t]\times \mathbb{R}^3_+)} 
  &\lesssim_t 1 .
\end{align}
\end{theorem}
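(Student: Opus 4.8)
\textbf{Proof Proposal for Theorem \ref{thm.asymp.rth}.}

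The plan is to run a continuity (bootstrap) argument in a weighted $L^\infty$ space for the perturbation $(f_\pm,\mathcal{E},\mathcal{B})$, built on the Lagrangian representation of the Vlasov equation and the Glassey--Strauss-type representation of the Maxwell fields. I would first set up the a priori assumption: on a maximal time interval $[0,T]$, suppose
\begin{align}
  \sup_{0\le t\le T}(1+t)\Big\| \mathrm{w}_{\pm,\bar\beta}^{1/2}f_\pm(t)\Big\|_{L^\infty_{x,v}}
  + \sup_{0\le t\le T}(1+t)\|(\mathcal{E},\mathcal{B})(t)\|_{L^\infty_x} \le 2A,\notag
\end{align}
for a constant $A$ comparable to $C_M$, together with a companion bound on $|||f_\pm(t)|||$ and $\|(\mathcal{E},\mathcal{B})\|_{W^{1,\infty}}$ growing at a controlled (at most polynomial) rate in $t$. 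The goal is to improve the factor $2A$ to $A$, closing the loop. Local existence and propagation of the regularity classes $|||f_\pm(t)|||<\infty$ and $(\mathcal{E},\mathcal{B})\in W^{1,\infty}$ follow from the steady-state regularity in Theorem \ref{thm.ex.st} combined with a standard iteration scheme for Vlasov--Maxwell with inflow and perfect-conductor boundary conditions; the substance is the uniform-in-time decay.

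The core estimates split along the feedback loop of Figure \ref{fig1}. For the field decay, I would write $(\mathcal{E},\mathcal{B})$ via the retarded-potential solution of $\square(\mathcal{E},\mathcal{B})$ with the source in \eqref{VM perturbations2}, decomposing the particle contribution into the ``$T$ operator'' (data/velocity-average part) and the ``$S$ operator'' part, the latter producing exactly the macroscopic wave--wave interaction term \eqref{macroSDint}. For \eqref{macroSDint}, the key is the spatial decay $e^{-\beta(v^0_\pm+m_\pm g x_3+|x_\parallel|/2)}$ of $\nabla_v F_{\pm,\textup{st}}$ from \eqref{steady state decay mom.deri}: integrating in $v$ gives an $L^1_y$-integrable, $x$-independent envelope, so the growing light-cone volume $|x-y|<ct$ is tamed; then the a priori $(1+\tau)^{-1}$ decay of $(\mathcal{E},\mathcal{B})$ at retarded time $\tau=t-|x-y|/c$, paired with the $1/|x-y|$ dispersion kernel, yields a bound $\lesssim A^2\,c_0\,(1+t)^{-1}$ after a standard splitting of the $y$-integral into $|x-y|\lesssim t/2$ and $|x-y|\gtrsim t/2$. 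The initial-data and $T$-operator contributions decay by finite speed of propagation (compact support in $B_{R_0}$) and the linear-in-$t$ decay of the velocity moments of $f_\pm$. Crucially, I would invoke the paper's cancellation observation: representing $\mathcal{B}$ through the Coulomb-gauge vector potential, the wave--wave contributions to the \emph{magnetic} field and the boundary terms cancel, so $\mathcal{B}$ is effectively linear in the loop and only lengthens particle travel times rather than feeding instability.

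For the particle decay, I would integrate \eqref{VM perturbations1} along characteristics $\mathcal{Z}_\pm(s;t,x,v)$ back to the inflow boundary at time $t_{\pm,\mathbf{b}}$, so that $f_\pm(t,x,v)$ equals the boundary value (which matches $F_{\pm,\textup{st}}$, hence contributes zero perturbation from the boundary datum beyond what is already subtracted) plus the Duhamel integral \eqref{SD} of the microscopic wave--wave term. Here the essential inputs are: (i) the travel-time bound $t_{\pm,\mathbf{b}}\lesssim c\,v^0_\pm+m_\pm g x_3$ from the effective linearity of $\mathcal{B}$ and the gravitational confinement; (ii) the exponential decay of $\nabla_v F_{\pm,\textup{st}}$ along trajectories, giving $|\nabla_v F_{\pm,\textup{st}}(\mathcal{Z}_\pm(s))|\lesssim e^{-\frac{\beta}{2}(v^0_\pm+m_\pm g\beta x_3)}$ uniformly and, since the mechanical energy is essentially conserved up to the small field perturbation, an integrable-in-$s$ weight along the trajectory; and (iii) the a priori $(1+s)^{-1}$ field decay. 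Combining, $\eqref{SD}\lesssim \int_0^t (1+s)^{-1}\,(\text{rapidly decaying in }s)\,ds \lesssim A\,(1+t)^{-1}$ against the trajectory weight, and absorbing the weight $\mathrm{w}_{\pm,\bar\beta}^{1/2}$ (which is why $\bar\beta>\beta$ is needed: it leaves room to pay for the weight using the faster steady-state decay). The derivative bounds $|||f_\pm(t)|||$ are closed by differentiating the characteristic ODEs, using that the Lorentz force is Lipschitz (Theorem \ref{thm.ex.st}) and that $\nabla_x\nabla_v F_{\pm,\textup{st}}$, $\alpha_\pm$-weighted normal derivatives are controlled, which keeps the Jacobian of the flow from growing faster than polynomially; these feed back into the field $W^{1,\infty}$ estimate via one more derivative of the retarded representation.

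The main obstacle I expect is \emph{simultaneously} closing the macro--micro feedback: the field decay estimate for \eqref{macroSDint} consumes the a priori particle bound, while the particle estimate via \eqref{SD} consumes the a priori field bound, and both are only $(1+t)^{-1}$ with no spare power, so the constants must be tracked sharply — this is where the smallness $c_0\min\{m_\pm\}g$ in \eqref{initial E0i}, the largeness $\min\{m_\pm\}g\bar\beta^3\gg1$, and the gap $\bar\beta>\beta$ must all be used quantitatively to make the product of the two ``gain'' constants strictly less than $1$. A secondary difficulty is handling the boundary: the inflow condition glues $f_\pm$ to $F_{\pm,\textup{st}}$ only on $\gamma_-$, so one must verify that characteristics reaching the spatial boundary do so in finite, energy-controlled time (no grazing/trapping pathology), for which the ambient gravity $g$ and the travel-time bound are indispensable, and one must check the compatibility/cancellation of boundary terms in the magnetic-field representation so that the perfect-conductor condition \eqref{perfect cond. boundary} is genuinely propagated.
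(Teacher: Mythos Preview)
Your proposal matches the paper's architecture closely: a bootstrap on weighted-$L^\infty$ decay (the paper runs it as an iteration $(f^l,\mathcal{E}^l,\mathcal{B}^l)\to(f^{l+1},\mathcal{E}^{l+1},\mathcal{B}^{l+1})$ and then proves the iterates are Cauchy, rather than a continuity argument on a maximal interval, but the decay estimates are identical), Glassey--Strauss for $\mathcal{E}$ together with the Coulomb-gauge vector-potential representation that kills the $S$-term and boundary terms in $\mathcal{B}$, and Duhamel along characteristics for $f_\pm$ with the travel-time bound.

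One point in your sketch needs correction, because as written it does not give $(1+t)^{-1}$. In your estimate of \eqref{SD} you write $\int_0^t (1+s)^{-1}(\text{rapidly decaying in }s)\,ds\lesssim A(1+t)^{-1}$, treating $|\nabla_v F_{\pm,\textup{st}}(\mathcal{Z}_\pm(s))|$ as ``integrable in $s$''. It is not: mechanical energy is essentially conserved along the trajectory, so by the weight comparison this factor is the \emph{constant} $\approx e^{-\frac{\beta}{2}(v^0_\pm+m_\pm g x_3)}$, and the displayed integral would only be $O(1)$. The actual mechanism (Lemma~\ref{lem.dyna.fplo}) is that the Duhamel interval is $[\max\{0,t-t_{\pm,\mathbf{b}}\},t]$, of length $\le t_{\pm,\mathbf{b}}$; one trades $\langle t\rangle$ for $\langle t_{\pm,\mathbf{b}}\rangle$ (directly when $t\le t_{\pm,\mathbf{b}}$, or via $\langle t\rangle/\langle s\rangle\lesssim\langle t_{\pm,\mathbf{b}}\rangle$ for $s\ge t-t_{\pm,\mathbf{b}}$ after invoking the field bootstrap at time $s$), and the remaining polynomial factor $\langle t_{\pm,\mathbf{b}}\rangle\lesssim v^0_\pm+m_\pm g x_3$ is absorbed by the steady-state exponential. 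This absorption is precisely what forces the weaker exponent $\bar\beta/4$ in the conclusion and is where the largeness of $\min\{m_\pm\}g\bar\beta^3$ enters, so the sharp-constant concern you raise is handled by this trade rather than by a borderline time-integral.
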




\begin{remark}
 Our framework admits natural extensions to astrophysical environments where gravity coexists with large-scale background electromagnetic fields. In such settings, weak external electric and magnetic components may alter particle confinement and transport, yet the stability theory developed here continues to hold under suitable smallness conditions. For a more detailed discussion of these astrophysical applications, see Section~\ref{sec.astro}.
\end{remark}

\noindent\textbf{Notation}: For simplicity, we normalize the physical constants $\textup{e}$ and $\textup{c}$ to $1$ throughout the rest of the paper, while retaining the distinct quantities $m_{+}$ and $m_{-}$, denoting the ion and electron masses, which differ significantly.


 
\section{
Characteristics for Wave and Transport Dynamics
}\label{sec.main.comparison.weight}

\subsection{
New Magnetic Field Representation via the Potentials}\label{sec.4}
We find that a new magnetic field representation 
does not contain \Black{the boundary contributions}, in contrast to the classical Glassey--Strauss representation \Black{in a half space (cf.\cite{MR4414612})}. \Black{It does contain a nonlinear term, which we show coincides with the corresponding nonlinear $S$ term of the classical Glassey--Strauss representation; see Remark \ref{rmk.BS.GS}.} This simplification \Black{of the boundary contributions} results from cancellations occurring under the curl relation $\B = \nabla \times \A$, as demonstrated below.
In the whole space, we adopt the electromagnetic four-potential in the Coulomb gauge \cite{jackson2002gauges}:
\begin{equation}\label{A}
      \B = \nabla \times \A , \  \ \E = - \nabla \ph -  \frac{\partial \A}{\partial t}; \ \ \nabla \cdot \A =0.
\end{equation}
From Gauss's law for electricity \eqref{2speciesVM}, we have that
\begin{equation}\label{ph}
    -\Delta \ph   = 4 \pi \rho. 
\end{equation}
 \begin{lemma}\label{lem.AmpMax}
We rewrite the Amp\`ere-Maxwell law \eqref{2speciesVM}$_3$ as
\begin{equation}\label{waveA}
  \Box \A \eqdef    \partial_t^2 \A - \Delta \A =  4\pi J  -    \nabla \partial_t \ph  = 4\pi \mathbf{P}J,
\end{equation}
where $\mathbf{P}$ is the divergence-free projection:
    $\mathbf{P} J \eqdef J + \nabla (-\Delta)^{-1} \nabla \cdot J.$
\end{lemma}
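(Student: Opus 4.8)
The plan is a direct substitution of the Coulomb-gauge potentials \eqref{A} into the Ampère--Maxwell law, followed by use of the continuity equation \eqref{continuity eq} to identify the scalar-potential contribution as the longitudinal part of $J$. First, writing \eqref{2speciesVM}$_3$ in the normalized form $\nabla_x \times \B - \partial_t \E = 4\pi J$, I would substitute $\B = \nabla \times \A$ and apply the identity $\nabla \times (\nabla \times \A) = \nabla(\nabla \cdot \A) - \Delta \A$, which collapses to $-\Delta \A$ thanks to the Coulomb gauge $\nabla \cdot \A = 0$. Substituting $\E = -\nabla\ph - \partial_t \A$ gives $\partial_t \E = -\nabla \partial_t \ph - \partial_t^2 \A$, so the left-hand side becomes $\partial_t^2 \A - \Delta \A + \nabla \partial_t \ph = \Box \A + \nabla \partial_t \ph$. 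Rearranging yields $\Box \A = 4\pi J - \nabla \partial_t \ph$, which is the middle expression in \eqref{waveA}.

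The next step is to compute $\nabla \partial_t \ph$. Differentiating the Poisson equation \eqref{ph} in time and commuting $\partial_t$ with $\Delta$ gives $-\Delta \partial_t \ph = 4\pi \partial_t \rho$; invoking \eqref{continuity eq}, $\partial_t \rho = -\nabla_x \cdot J$, hence $-\Delta \partial_t \ph = -4\pi \nabla \cdot J$. Solving this Poisson equation in the whole space by the decaying Newtonian potential, $\partial_t \ph = -4\pi (-\Delta)^{-1} \nabla \cdot J$, and therefore $\nabla \partial_t \ph = -4\pi \nabla (-\Delta)^{-1}\nabla \cdot J$. Plugging this into $\Box \A = 4\pi J - \nabla \partial_t \ph$ gives $\Box \A = 4\pi\big(J + \nabla(-\Delta)^{-1}\nabla\cdot J\big) = 4\pi \mathbf{P} J$, as claimed.

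The only genuine subtlety — and the main point to be careful about — is the functional-analytic meaning of the manipulations on $\ph$: commuting $\partial_t$ with $(-\Delta)^{-1}$ and representing $\partial_t\ph$ by the decaying Newtonian potential of $-4\pi\nabla\cdot J$ both require sufficient spatial decay and integrability of $J$, $\rho$, and $\partial_t\rho$, which is supplied here by the exponential localization carried through the a priori estimates on the distributions. I would also note, as an internal consistency check rather than a separate step, that $\mathbf{P}J = J + \nabla(-\Delta)^{-1}\nabla\cdot J$ is exactly the Leray (divergence-free) projection, so the right-hand side of \eqref{waveA} is automatically divergence-free, compatibly with $\nabla\cdot \Box\A = \Box(\nabla\cdot\A) = 0$.
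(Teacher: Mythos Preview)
Your proposal is correct and follows essentially the same approach as the paper's proof: substitute the Coulomb-gauge potentials into the Amp\`ere--Maxwell law using the curl-curl identity and $\nabla\cdot\A=0$, then identify $\nabla\partial_t\ph$ via the Poisson equation \eqref{ph} and the continuity equation \eqref{continuity eq}. Your additional remarks on the functional-analytic justification of $(-\Delta)^{-1}$ and the divergence-free consistency check are sound elaborations that the paper omits.
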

\begin{proof}
Inserting the potential representation \eqref{A} into the Amp\`ere--Maxwell law \eqref{2speciesVM}, we obtain
$$
  -   \partial_t \nabla \phi -   \partial_t^2 \mathbf{A} + \Delta \mathbf{A} = - 4\pi J,
$$
where we have used the Coulomb gauge condition $\nabla \cdot \mathbf{A} = 0$, along with the vector identity
$\nabla \times (\nabla \times \mathbf{A}) = -\Delta \mathbf{A} + \nabla (\nabla \cdot \mathbf{A}) = -\Delta \mathbf{A}$. Next, using the scalar potential formula \eqref{ph} together with the continuity equation \eqref{continuity eq}, we compute
$
  -   \partial_t \nabla \phi
  = - 4\pi \nabla (-\Delta)^{-1} \partial_t \rho
  = 4\pi \nabla (-\Delta)^{-1} \nabla \cdot J.
$
Combining these identities, we obtain the desired equation, completing the proof.\end{proof}
\paragraph{Retarded Solutions}
In the whole space $\mathbb{R}^3$, the inhomogeneous solution (with zero initial data) is given by the Green function.
Suppose $W$ solves $\Box W = U$, $W|_{t=0}=0= \partial_t W|_{t=0}$. Then
\begin{equation}\label{retardU}
W_U(t,x)=\int_{|y-x|\le t}\frac{U(t-  |x-y|, y )}{4 \pi |x-y|} dy.
\end{equation}
We now introduce the potential representation for the magnetic field $\B(t,x)$ as follows.
\begin{proposition}[Representation of Magnetic field]
\label{Prop 4.4}\Black{Suppose that $F_\pm(t,x,\cdot)$ decays sufficiently fast as $|v|\to\infty$.} The magnetic field   $\B(t,x)$ can be written as
\begin{equation}\notag
\B(t,x)   = \B_{J}(t,x) \Black{+ \B_{S}(t,x)} + \B_{\textup{in}}(t,x),
\end{equation}
where
\begin{align}\label{BJ}
\B_{J}(t,x)  &=\sum_{\iota=\pm}  \iota
\int_{|Y|\le t}
\int_{\mathbb{R}^3}      \frac{Y\times \hat{v}_\iota}{ |Y|^3\big(1+ \hat{v}_\iota \cdot  \frac{Y}{|Y|} \big)^2} \left(1-\left|\hat{v}_\iota\right|^2\right)
   F_\iota \left(t- |Y| , Y+x, v\right)
dv
dY   ,\text{ and }\\
\label{Bin}
\B_{\textup{in}} (t,x) &=
 {\B}_{\textup{hom}} (t,x)
 +\sum_{\iota=\pm}\iota   \int_{|Y|=t} \int_{\mathbb{R}^3}
       \frac{Y}{|Y|^2} \times
        \frac{\hat{v}_\iota}{1+ \hat{v}_\iota  \cdot \frac{Y}{|Y|}} \,
      F_\iota (0, Y+x, v ) \, dv \, dS_Y.
        \end{align}
Here,
\begin{align}\notag
    \Box  {\B}_{\textup{hom}} = 0,&  \  \ \  {\B}_{\textup{hom}} |_{t=0} = \B^{\textup{in}}, \   \partial_t  {\B}_{\textup{hom}} |_{t=0} =  - \nabla \times \E^{\textup{in}}  .
\end{align}
{\color{black}
Moreover, the nonlinear term $\B_{S}$ is given by
\begin{equation}\label{BS}
\begin{split}
\B_{S}(t,x)  &=\sum_{\iota=\pm}
\int_{|Y|\le t}\int_{\mathbb{R}^3}
\big(\mathbf{K}_\iota\cdot\nabla_v\big)\left(\frac{Y}{|Y|^2}\times\frac{\hat{v}_\iota}{1+\hat{v}_\iota\cdot\frac{Y}{|Y|}}\right)
F_\iota\left(t-|Y|,Y+x,v\right)dv\,dY\\
&=\sum_{\iota=\pm}\int_{|Y|\le t}\int_{\mathbb{R}^3}
\frac{1}{v^0_\iota\big(1+\hat{v}_\iota\cdot\frac{Y}{|Y|}\big)}\,\frac{Y}{|Y|^2}\times\left[\mathbf{K}_\iota-\frac{\big(\hat{v}_\iota+\frac{Y}{|Y|}\big)\cdot\mathbf{K}_\iota}{1+\hat{v}_\iota\cdot\frac{Y}{|Y|}}\,\hat{v}_\iota\right]
F_\iota\left(t-|Y|,Y+x,v\right)dv\,dY,
\end{split}
\end{equation}
where $\mathbf{K}_\iota\eqdef\big(\E+\hat{v}_\iota\times\B\big)\left(t-|Y|,Y+x\right)$, $v^0_\iota\eqdef\sqrt{m_\iota^2+|v|^2}$, and $\big(\mathbf{K}\cdot\nabla_v\big)\mathbf{G}\eqdef\sum_j\mathbf{K}_j\,\partial_{v_j}\mathbf{G}$.
}
\end{proposition}
\begin{proof}We consider $\nabla \times W_{ 4\pi\mathbf{P} J}$. Note that $\nabla \times  4\pi\mathbf{P} J = 4\pi  \nabla \times  J$. Thus, we derive the form of $\nabla \times W_{ 4\pi\mathbf{P} J}$ as
\begin{align}
  \nabla \times W_{ 4\pi\mathbf{P} J}
 = &  \int_{|Y|\le t}   \frac{Y}{ |Y|^3}  \times  J  ( t- |Y|, Y+x)  dY\label{retardB1}
 \\
&+ \int_{|Y|= t}  \frac{Y}{|Y|^2} \times  J( 0, Y+x ) dS_Y\label{retardB2}
\\
&   +  \int_{|Y|\le t}  \frac{Y}{   |Y|^2} \times  \partial_t J (t- |Y|, Y+x ) dY.\label{retardB_t}
\end{align}
Regarding the temporal derivative integral \eqref{retardB_t}, we use
\begin{align}
     \partial_t F_\pm(t-|Y|, Y+ x, v)
     = &\frac{1}{ 1+ \hat{v}_\pm \cdot \frac{Y}{|Y|}}\Big(
     - \hat{v}_\pm \cdot \nabla_Y [F_\pm(t-|Y|, Y+ x, v)] \Big)\label{F_t1} \\
     &+\frac{\mp 1}{ 1+ \hat{v}_\pm \cdot \frac{Y}{|Y|}}  \nabla_v \cdot  \big[(\E + \hat v_\pm \times \B  )  F_\pm \big](t-|Y|, Y+ x, v)
   .\label{F_t2}
     \end{align}
Applying the integration by parts, we express the contribution of the term \eqref{F_t1} in \eqref{retardB_t} as
\begin{equation}\label{CONT_F_t1=0}
    \begin{split}
       &
      \sum_{\iota=\pm}\iota  \int_{|Y|\le t} \int_{\mathbb{R}^3}
       \hat{v}_\iota \cdot \nabla_Y
       \left( \frac{Y}{|Y|^2} \times
        \frac{\hat{v}_\iota}{1+ \hat{v}_\iota  \cdot \frac{Y}{|Y|}} \right)
        F_\iota \Big(t-|Y|, Y+x, v\Big) \, dv \, dY\\
          & - \sum_{\iota=\pm}\iota \int_{|Y|=t} \int_{\mathbb{R}^3}
       \frac{Y}{|Y|^2} \times
        \frac{\hat{v}_\iota}{1+ \hat{v}_\iota  \cdot \frac{Y}{|Y|}} \,
        \hat{v}_\iota \cdot  \frac{Y}{|Y|} F_\iota (0, Y+x, v ) \, dv \, dS_Y.
    \end{split}
\end{equation}
The last integral of \eqref{CONT_F_t1=0} with initial data $F_\pm(0,\cdot,\cdot)$ result in the initial data terms of \eqref{Bin} after being cancelled by the integral \eqref{retardB2}.   Regarding the first integral in \eqref{CONT_F_t1=0}, we further calculate the derivative of the kernel and obtain that
\begin{equation}
\begin{split}\label{kernelB1}
   & \hat{v}_\pm  \cdot \nabla_Y   \left( \frac{Y}{|Y|^2} \times \frac{\hat{v}_\pm}{1+ \hat{v}_\pm \cdot \frac{Y}{|Y|}}
    \right)
     =    -
    \frac{  \frac{Y}{|Y|} \times \hat{v}_\pm }{|Y|^{2}  \big(1+ \hat{v}_\pm \cdot  \frac{Y}{|Y|} \big)^2}   \Big(
   2\hat{v}_\pm \cdot \frac{Y}{|Y|} +  \left|\hat v_\pm\right|^2
    + \left| \hat v_\pm \cdot \frac{Y}{|Y|}\right|^2
    \Big).
    \end{split}
\end{equation}This together with \eqref{retardB1} results in the representation \eqref{BJ} in the final representation, since
\begin{equation*}
\int_{|Y|\le t}   \frac{Y}{ |Y|^3}  \times  J  ( t- |Y|, Y+x)  dY
    =\sum_{\iota=\pm}\iota \int_{|Y|\le t}\int_\rth   \frac{Y\times \hat{v}_\iota}{ |Y|^3}  F_\iota  ( t- |Y|, Y+x,v)  dvdY,\text{ and }
\end{equation*}
\begin{equation*}
    \frac{Y\times \hat{v}_\pm}{ |Y|^3} -
    \frac{ \frac{Y}{|Y|} \times \hat{v}_\pm }{|Y|^{2}  \big(1+ \hat{v}_\pm \cdot  \frac{Y}{|Y|} \big)^2}   \Big(
   2\hat{v}_\pm \cdot \frac{Y}{|Y|} +  \left|\hat v_\pm\right|^2
    + \left| \hat v_\pm \cdot \frac{Y}{|Y|}\right|^2
    \Big)
    = \frac{Y\times \hat{v}_\pm}{ |Y|^3\big(1+ \hat{v}_\pm \cdot  \frac{Y}{|Y|} \big)^2} \left(1-\left|\hat v_\pm\right|^2\right).
\end{equation*}
On the other hand, applying the integration by parts in $v$, we express the contribution of the term \eqref{F_t2} in \eqref{retardB_t} as
\begin{equation}\begin{split}\label{CONT_F_t2=0}
& -    \int_{|Y|\le t} \frac{Y}{|Y|^2} \times       \int_{\mathbb{R}^3}\sum_{\iota=\pm}
 \frac{ \hat{v}_\iota  }{ 1+ \hat{v}_\iota  \cdot \frac{Y}{|Y|}}  \nabla_v \cdot  \big[ \mathbf{K}_\iota F_\iota   (t- |Y|, Y+ x, v)  \big] dv  dY \\
  & \Black{=       \sum_{\iota=\pm}  \int_{|Y|\le t}  \int_{\mathbb{R}^3}  \big(\mathbf{K}_\iota\cdot\nabla_v\big) \left( \frac{Y}{|Y|^2} \times  \frac{ \hat{v}_\iota   }{ 1+ \hat{v}_\iota  \cdot \frac{Y}{|Y|}}   \right) F_\iota   (t- |Y|, Y+ x, v)\, dv\, dY \,=\, \B_{S}(t,x),}
\end{split}\end{equation}
where we have abbreviated $\mathbf{K}_\pm: = \E +  \hat v_\pm \times \B  $\Black{, evaluated at $(t-|Y|,Y+x)$}.
\Black{Here we have used the componentwise integration by parts: for each fixed $i$, with $\mathbf{G}_\iota\eqdef\frac{Y}{|Y|^2}\times\frac{\hat{v}_\iota}{1+\hat{v}_\iota\cdot\frac{Y}{|Y|}}$ and $\mathbf{W}_\iota\eqdef\mathbf{K}_\iota F_\iota$ decaying as $|v|\to\infty$,
$$
\int_{\mathbb{R}^3} (\mathbf{G}_\iota)_i \, \big( \nabla_v \cdot \mathbf{W}_\iota \big) \, dv
= - \int_{\mathbb{R}^3} \sum_j \partial_{v_j} (\mathbf{G}_\iota)_i \, (\mathbf{W}_\iota)_j \, dv
= - \int_{\mathbb{R}^3} \big[ \big( \mathbf{K}_\iota \cdot \nabla_v \big) \mathbf{G}_\iota \big]_i \, F_\iota \, dv .
$$
We emphasize that the integration by parts produces the directional derivative $(\mathbf{K}_\iota \cdot \nabla_v) \mathbf{G}_\iota$, and not $(\nabla_v \cdot \mathbf{G}_\iota) \, \mathbf{K}_\iota F_\iota$; although the identity $\nabla_v \cdot \mathbf{G}_\iota = 0$ holds, it is not the quantity generated by the integration by parts, and hence the contribution \eqref{CONT_F_t2=0} does not vanish. Computing the kernel derivative directly, using $\partial_{v_j} \hat{v}_{\iota,i} = \frac{1}{v^0_\iota} \left( \delta_{ij} - \hat{v}_{\iota,i} \hat{v}_{\iota,j} \right)$ with $v^0_\iota=\sqrt{m_\iota^2+|v|^2}$, we obtain
\begin{equation}\label{kernelB2}
\big(\mathbf{K}_\iota\cdot\nabla_v\big)\left(\frac{Y}{|Y|^2}\times\frac{\hat{v}_\iota}{1+\hat{v}_\iota\cdot\frac{Y}{|Y|}}\right)
=\frac{1}{v^0_\iota\big(1+\hat{v}_\iota\cdot\frac{Y}{|Y|}\big)}\,\frac{Y}{|Y|^2}\times\left[\mathbf{K}_\iota-\frac{\big(\hat{v}_\iota+\frac{Y}{|Y|}\big)\cdot\mathbf{K}_\iota}{1+\hat{v}_\iota\cdot\frac{Y}{|Y|}}\,\hat{v}_\iota\right],
\end{equation}
which yields the expanded form in \eqref{BS}.} This completes the derivation of the magnetic field representation.
\end{proof}
{\color{black}
\begin{remark}[The nonlinear term $\B_S$ and the Glassey--Strauss $S$ term]\label{rmk.BS.GS}
The nonlinear term \eqref{BS} coincides with the nonlinear ($S$-type) term of the classical Glassey--Strauss representation \cite{MR816621}. Indeed, writing $\omega=\frac{Y}{|Y|}$, the kernel of \eqref{BS} reads
$$
\frac{1}{|Y|}\,\big(\mathbf{K}_\iota\cdot\nabla_v\big)\left(\frac{\omega\times\hat{v}_\iota}{1+\hat{v}_\iota\cdot\omega}\right),
$$
which is precisely the Glassey--Strauss $S$ kernel, generated by the very same operation in both derivations: substituting the Vlasov equation for $\partial_t F_\pm$ inside the retarded integral and integrating the $\nabla_v$-part by parts in $v$, so that the derivative falls on the kernel $\frac{\omega\times\hat{v}_\iota}{1+\hat{v}_\iota\cdot\omega}$ as the directional derivative $(\mathbf{K}_\iota\cdot\nabla_v)$. The agreement is no coincidence: under the Coulomb gauge, $\nabla\times\mathbf{P}J=\nabla\times J$, hence $\Box\B=4\pi\nabla\times J$, which is the same wave equation from which the Glassey--Strauss representation is derived, so the two representations must agree. Note that both species enter \eqref{BS} with the same overall sign, and that the kernel is $O(|Y|^{-2})$, hence locally integrable, with $1+\hat{v}_\iota\cdot\omega\ge 1-|\hat{v}_\iota|>0$.
\end{remark}
\begin{remark}[Comparison of $\B_J$ with the Glassey--Strauss $T$ term]\label{rmk.BJ.BT}
The linear term $\B_J$ in \eqref{BJ} also agrees with the $T$ term of the Glassey--Strauss representation: it carries the same kernel $\frac{Y\times\hat{v}_\iota}{|Y|^3(1+\hat{v}_\iota\cdot\omega)^2}\big(1-|\hat{v}_\iota|^2\big)$, and in particular retains the crucial degeneracy factor $1-|\hat{v}_\iota|^2 = m_\iota^2/(v^0_\iota)^2$, which vanishes as $|\hat{v}_\iota|\to 1$ and provides the decay for large momenta used in the Glassey--Strauss high-velocity criterion. Hence nothing is lost in the $T$ and $S$ kernels. The advantage of the present potential-based derivation lies in its structure: (i) the representation follows from a single integration by parts in $Y$ after taking the curl of the retarded potential, without decomposing the derivatives along the backward light cone (the division lemma of \cite{MR816621}); and (ii) more importantly, the derivation extends naturally to the half space via the method of images at the level of the potential $\A$, in which the boundary contributions cancel under the curl relation $\B=\nabla\times\A$ (see Section \ref{sec.B.halfspace}); in contrast, the direct Green-function representation of the electric field produces the additional boundary ($b2$) terms in \eqref{Ei5} and \eqref{E3}.
\end{remark}
}
\subsection{Potential Representation in the Half Space $\mathbb{R}^3_+$}\label{sec.B.halfspace}
We now consider the half space $\Omega=\mathbb{R}^3_+$. To extend the magnetic field representation $\mathbf{B}(t,x)$ for the Vlasov--Maxwell system from the whole space $\mathbb{R}^3$ to the half space $\Omega = \mathbb{R}^3_+$ under the perfect conductor boundary condition
$$
\mathbf{E}\times n =(\mathbf{E}_2,-\mathbf{E}_1,0)^\top=0,\ \mathbf{B} \cdot n = \mathbf{B}_3 = 0 \quad \textup{on} \quad x_3 = 0,
$$
we will follow the classical \textit{method of images}, combined with the Green's function for the wave equation in the half space with Neumann-type boundary condition on the tangential components of $\mathbf{A}$. The unique determination of the magnetic vector potential $\mathbf{A}$ is guaranteed as follows:
\begin{lemma}[Lemma 1.6 of \cite{MR1142472}]\label{lem.1.6.2}
Define $$H_{\textup{tan}}(\textup{curl}; \Omega) = \{f \in L^2: \nabla \times f \in L^2,\ f\times n|_{\partial\Omega}=0\}.$$
Assume that $ \Omega $ is simply connected. Then a function $ \B \in L^2(\Omega) $ satisfies
\begin{equation*}
    \nabla \cdot \B = 0 \quad \textup{in } \Omega, \
    \B \cdot n = 0 \quad \textup{on } \partial \Omega,
\end{equation*}
if and only if there exists a function $ \mathbf{A}  \in H_{\textup{tan}}(\textup{curl}; \Omega) $ such that
\begin{equation}\label{AB}
       \B
       = \nabla \times \mathbf{A}
       .
\end{equation}
Moreover, the function $ \mathbf{A} $ is uniquely determined (the Coulomb Gauge) if we assume in addition that
\begin{equation}\label{AB_W}
    \nabla \cdot \mathbf{A} = 0, \quad \int_{\partial\Omega} \mathbf{A} \cdot n  \, dS = 0, \quad \mathbf{A} \times n |_{\partial\Omega }
    =0.
\end{equation}
or equivalently $ \mathbf{A} \in H_{\textup{div}}(\textup{curl}; \Omega)$, where $$H_{\textup{div}}(\textup{curl}; \Omega) \eqdef \left\{ v \in H_{\textup{tan}}(\textup{curl}; \Omega): \nabla \cdot v = 0, \quad \int_{\partial\Omega} v \cdot n \, dS = 0 \right\}.$$
\end{lemma}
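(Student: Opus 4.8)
The plan is to reduce the statement to a short chain of scalar elliptic problems: the ``only if'' direction of \eqref{AB} is a Green-identity computation, the ``if'' direction (existence of a tangential vector potential) is a three-step construction — extension, Newtonian potential, surface correction — and the Coulomb-gauge uniqueness is a Dirichlet-problem argument. For the sufficiency, suppose $\mathbf{A}\in H_{\textup{tan}}(\textup{curl};\Omega)$ and $\B=\nabla\times\mathbf{A}$. Then $\nabla\cdot\B=\nabla\cdot(\nabla\times\mathbf{A})=0$ as a distribution, so only the normal trace must be identified, and I would invoke the Green identity
\[
\int_\Omega (\nabla\times\mathbf{A})\cdot\nabla\varphi\,dx = -\int_{\partial\Omega}(\mathbf{A}\times n)\cdot\nabla\varphi\,dS,\qquad \varphi\in H^1(\Omega),
\]
whose boundary term involves only the tangential trace $\mathbf{A}\times n$ and hence vanishes; comparing with $\int_\Omega \B\cdot\nabla\varphi\,dx=-\langle\nabla\cdot\B,\varphi\rangle+\langle\B\cdot n,\varphi\rangle_{\partial\Omega}$ and using $\nabla\cdot\B=0$ yields $\langle\B\cdot n,\varphi|_{\partial\Omega}\rangle=0$ for all $\varphi\in H^1(\Omega)$, i.e. $\B\cdot n=0$ on $\partial\Omega$.

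For the necessity, assume $\B\in L^2(\Omega)$ with $\nabla\cdot\B=0$ and $\B\cdot n=0$, and proceed in three moves. First I would extend $\B$ by zero to $\widetilde\B$ on $\mathbb{R}^3$; the vanishing of $\B\cdot n$ is exactly what prevents a boundary layer when testing against $C_c^\infty(\mathbb{R}^3)$, so $\nabla\cdot\widetilde\B=0$ on $\mathbb{R}^3$. Next I would set $\Psi=(-\Delta)^{-1}\widetilde\B$ componentwise (Newtonian potential, in the homogeneous Sobolev scale appropriate to the unbounded setting) and $\mathbf{A}_0=\nabla\times\Psi$; since $\nabla\cdot\Psi=(-\Delta)^{-1}\nabla\cdot\widetilde\B=0$, one gets $\nabla\times\mathbf{A}_0=-\Delta\Psi+\nabla(\nabla\cdot\Psi)=\widetilde\B$ and $\nabla\cdot\mathbf{A}_0=0$, so the restriction of $\mathbf{A}_0$ to $\Omega$ is a divergence-free vector potential for $\B$, albeit with possibly nonzero tangential trace. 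Finally I would correct that trace: the surface identity $\operatorname{curl}_{\partial\Omega}(\pi_\tau\mathbf{A}_0)=(\nabla\times\mathbf{A}_0)\cdot n=\B\cdot n=0$ together with the simple connectivity of $\partial\Omega$ gives $\pi_\tau\mathbf{A}_0=\nabla_{\partial\Omega}\lambda$ for some scalar $\lambda$ on $\partial\Omega$; letting $\Lambda$ be the harmonic extension of $\lambda$ into $\Omega$ and $\mathbf{A}:=\mathbf{A}_0-\nabla\Lambda$, one obtains $\nabla\times\mathbf{A}=\B$, $\nabla\cdot\mathbf{A}=0$ (since $\Delta\Lambda=0$), $\mathbf{A}\times n=0$, and $\int_{\partial\Omega}\mathbf{A}\cdot n\,dS=\int_\Omega\nabla\cdot\mathbf{A}\,dx=0$. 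Hence $\mathbf{A}\in H_{\textup{tan}}(\textup{curl};\Omega)$ — in fact $\mathbf{A}\in H_{\textup{div}}(\textup{curl};\Omega)$ — which simultaneously proves existence and produces the Coulomb-gauge representative of \eqref{AB_W}.

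For uniqueness in the Coulomb gauge, if $\mathbf{A}_1,\mathbf{A}_2\in H_{\textup{div}}(\textup{curl};\Omega)$ both satisfy $\nabla\times\mathbf{A}_i=\B$, I would set $\mathbf{h}:=\mathbf{A}_1-\mathbf{A}_2$, so that $\nabla\times\mathbf{h}=0$, $\nabla\cdot\mathbf{h}=0$, $\mathbf{h}\times n=0$ and $\int_{\partial\Omega}\mathbf{h}\cdot n\,dS=0$. Simple connectivity of $\Omega$ forces $\mathbf{h}=\nabla\phi$; then $\Delta\phi=0$ in $\Omega$, while $\mathbf{h}\times n=0$ says $\nabla_{\partial\Omega}\phi=0$, i.e. $\phi$ is locally constant on $\partial\Omega$. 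Since $\partial\Omega$ is connected, uniqueness for the Dirichlet problem (the remaining constant/flux being pinned by $\int_{\partial\Omega}\mathbf{h}\cdot n\,dS=0$ and by the decay at infinity) gives $\phi\equiv\mathrm{const}$, hence $\mathbf{h}=0$ and $\mathbf{A}_1=\mathbf{A}_2$.

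The hard part will be the necessity direction: making the surface decomposition $\pi_\tau\mathbf{A}_0=\nabla_{\partial\Omega}\lambda$ rigorous requires the correct trace theory for $H(\textup{curl})$-fields and genuinely uses simple connectivity of $\partial\Omega$, and — because $\Omega=\mathbb{R}^3_+$ is unbounded — the Newtonian potential, the harmonic extension, and all the integrations by parts must be carried out in weighted or homogeneous Sobolev spaces with matching decay at infinity. On a bounded simply connected domain one could instead bypass steps (i)--(iii) by the abstract route, applying Lax--Milgram to $\operatorname{curl}$ on the divergence-constrained subspace of $H_{\textup{tan}}(\textup{curl};\Omega)$, coercive by the Friedrichs inequality $\|\mathbf{v}\|_{L^2}\lesssim\|\nabla\times\mathbf{v}\|_{L^2}$; simple connectivity is precisely what excludes nontrivial harmonic tangential fields and hence secures that inequality.
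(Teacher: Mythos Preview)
Your proposal is correct and, interestingly, you explicitly mention the paper's own route in your closing paragraph. The paper's proof of this lemma (given later as Lemma~\ref{lem.1.6}) takes precisely the abstract variational path you sketch at the end: it poses the problem
\[
\int_{\Omega} (\nabla\times\mathbf{A})\cdot(\nabla\times v)\,dx = \int_{\Omega} \B\cdot(\nabla\times v)\,dx,\qquad \forall v\in H_{\textup{tan}}(\textup{curl};\Omega),
\]
invokes coercivity of the curl--curl form on $H_{\textup{div}}(\textup{curl};\Omega)$ (citing Lemma~1.4 of the referenced monograph), and applies Lax--Milgram to obtain existence and uniqueness in one stroke. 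Your main line of argument is instead fully constructive: zero-extension of $\B$, Newtonian potential $\Psi=(-\Delta)^{-1}\widetilde\B$, then a surface-gradient correction to kill the tangential trace. This buys you an explicit representative and makes the role of simple connectivity geometrically transparent (it enters through the surface Poincar\'e lemma $\operatorname{curl}_{\partial\Omega}(\pi_\tau\mathbf{A}_0)=0\Rightarrow\pi_\tau\mathbf{A}_0=\nabla_{\partial\Omega}\lambda$), at the cost of the trace-theoretic and weighted-Sobolev technicalities you flag for the unbounded half-space. The paper's approach hides those issues inside the Friedrichs-type inequality that underlies coercivity, which is cleaner but less informative about what $\mathbf{A}$ actually looks like.
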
This lemma implies the existence of a unique vector potential $\mathbf{A}$ satisfying both \eqref{AB} and \eqref{AB_W}. It, along with its proof, will be restated and used as Lemma \ref{lem.1.6} in Section \ref{sec.exist.steady} for the construction of steady states.
Through the rest of this section, we derive a \textit{potential representation} of the self-consistent magnetic field $\mathbf{B} $ in the half-space $\mathbb{R}^3_+$ via deriving the representation of the vector potential $\mathbf{A} $ which further satisfies the assumption \eqref{AB_W}. To this end, we first note that the Faraday equation \eqref{2speciesVM}$_3$ implies that
$$
\nabla \times \left(   \partial_t \mathbf{A} + \mathbf{E} \right) = 0.
$$
Therefore, the vector field $   \partial_t \mathbf{A} + \mathbf{E} $ is curl-free. Assuming the spatial domain is simply connected, the Poincar\'e lemma
implies that any curl-free vector field can be written as the gradient of a scalar function. Hence, there exists a scalar potential $ \ph $ such that
$$
  \partial_t \mathbf{A} + \mathbf{E} = \nabla \ph.
$$
Rearranging terms yields the decomposition
\begin{equation}\label{dec_E}
    \mathbf{E} = \nabla \ph -   \partial_t \mathbf{A},
\end{equation}
where $ \ph $ is unique up to an additive constant, since $ \nabla (\ph_1 - \ph_2) = 0 $ holds for any two scalar potentials $ \ph_1 $ and $ \ph_2 $.
Then, from the last condition of \eqref{AB_W} and the perfect boundary condition $\mathbf{E}_1=\mathbf{E}_2=0$ on the boundary $x_3=0$, we have
$$
\begin{bmatrix}
    0 \\
    0 \\
    \E_3
\end{bmatrix}
= \begin{bmatrix}
    \partial_{x_1} \ph\\
   \partial_{x_2} \ph \\
     \partial_{x_3} \ph
\end{bmatrix}
 + \begin{bmatrix}
   0\\
 0 \\
    -   \partial_t A_{ 3}
\end{bmatrix}  \ \ \textup{at} \ \ x_3=0.
$$
Therefore, we conclude that
\begin{equation}\label{ph_Dbdry}
    \ph|_{x_3=0 } = C,
\end{equation}and
\begin{equation}\label{E3_bdry}
(\E_3 -  \partial_{x_3} \ph +   \partial_t A_{ 3} )|_{x_3=0} =0.
\end{equation}
In addition, from the Gauss law for the electricity $\eqref{2speciesVM}_4$ and the boundary condition \eqref{ph_Dbdry}, we derive that
\begin{equation}\notag
    \Delta \ph = 4 \pi \rho , \ \ \ph |_{x_3 =0} = C.
\end{equation}
Therefore, we obtain that $\partial_{x_3} \E_3$ satisfies at the boundary
\begin{align}\notag
    \partial_{x_3} \E_3|_{x_3=0}
   &=  4 \pi \rho .
\end{align}
In addition, inserting \eqref{dec_E} into the Ampere-Maxwell law \eqref{2speciesVM}$_3$, we obtain the following wave equation for the magnetic potential $\mathbf{A}$:
\begin{equation}\label{eq.wave.mag.pot}
  \Box \A =   4\pi J  -    \nabla \partial_t \ph  = 4\pi \mathbf{P}J,
\end{equation}
where $\mathbf{P}$ is the divergence-free projection:
\begin{equation}\notag
    \mathbf{P} J = J + \nabla (-\Delta)^{-1} \nabla \cdot J,
\end{equation}by Lemma \ref{lem.AmpMax}. Also note that
\begin{equation}\notag
  \partial_{x_3}^2 A_i
  = - 4\pi J_i  \  \ \textup{ at $x_3=0$  for  $ i=1,2$, }
\end{equation}which implies
\begin{equation}\notag
    \partial_{x_3} \B_2|_{x_3=0} = - 4\pi J_1, \ \   \partial_{x_3} \B_1|_{x_3=0}   =  4\pi J_2
\end{equation}
Recall that in the whole space, the equation \eqref{eq.wave.mag.pot} is solved using the retarded Green’s function. In the half-space setting, however, we modify the Green’s function by introducing image charges to enforce the boundary conditions.   The last condition in \eqref{AB_W} requires that $A_1 = A_2 = 0$ on the boundary $x_3 = 0$. Moreover, the perfect-conductor boundary condition $\mathbf{B}_3 = 0$ at $x_3 = 0$ corresponds to
\[
(\nabla \times \mathbf{A})_3
= 0,
\]
which is indeed satisfied. Consequently, we represent $A_1$ and $A_2$ by taking the odd extension of the Green’s function across the boundary $x_3=0$.  In addition, under the Coulomb gauge condition $\nabla \cdot \mathbf{A} = 0$, the component $A_3$ formally satisfies a homogeneous Neumann boundary condition at $x_3 = 0$. Therefore, we represent $A_3$ using the even extension of the Green’s function.
Therefore, we extend $\mathbb{R}_+$ to $\mathbb{R}$ and derive the representation by performing the time-variable reduction in the Green function for the wave equation.
Note that we have the Green function of the wave equation for $x\in \rth$ as
\begin{equation}
    G(t,\tau,x,y)=\frac{1}{2\pi}\delta((t-\tau)^2- |x-y|^2 )1_{|x-y|^2\le (t-\tau)^2}.
\end{equation}Then for the Green function in the half space $\mathbb{R}^3_+,$ we consider both odd and even extensions. For the odd extensions, we have the Dirichlet-Green function $\bar{G}$ for $x_3\ge 0$ as
\begin{multline}\label{Green in the half space.odd}
    \bar{G}(t,\tau,x,y)=G(t,\tau,x,y)-G(t,\tau,x,\bar{y})\\
    =\frac{1}{2\pi}\bigg\{\delta((t-\tau)^2- |x-y|^2 )1_{|x-y|^2\le (t-\tau)^2}-\delta((t-\tau)^2- |x-\bar{y}|^2 )1_{|x-\bar{y}|^2\le (t-\tau)^2}\bigg\},
\end{multline}where we define $\bar{y}=(y_1,y_2,-y_3)^\top.$ This odd extension will be used to derive the representation of $A_1$ and $A_2$.
On the other hand, we similarly write the even extension of $G$ and obtain the Neumann-Green function $\tilde{G}$ as
\begin{multline}\label{Green in the half space.even}
    \tilde{G}(t,\tau,x,y)=G(t,\tau,x,y)+G(t,\tau,x,\bar{y})\\
    =\frac{1}{2\pi}\bigg\{\delta((t-\tau)^2- |x-y|^2 )1_{|x-y|^2\le (t-\tau)^2}+\delta((t-\tau)^2- |x-\bar{y}|^2 )1_{|x-\bar{y}|^2\le (t-\tau)^2}\bigg\}.
\end{multline}This even extension will be used to derive the representation of $A_3$.
The particular solutions to the wave equation \eqref{eq.wave.mag.pot} can be represented as follows. Using the extended Green functions \eqref{Green in the half space.odd} and \eqref{Green in the half space.even}, we obtain that for $i=1,2$,
the particular solutions to \eqref{eq.wave.mag.pot} are given by
\begin{align}
    \label{Ai.rep.pre}A_i(t,x)&=4\pi\int_0^t d\tau\int_{\mathbb{R}^3_+}dy\ \bar{G}(t,\tau,x,y)(\mathbf{P}J)_i(\tau,y),\text { and }\\  \label{A3.rep.pre}A_3(t,x)&=4\pi\int_0^t d\tau\int_{\mathbb{R}^3_+}dy\ \tilde{G}(t,\tau,x,y)(\mathbf{P}J)_3(\tau,y).
\end{align}
Computing the delta functions in the Green functions \eqref{Green in the half space.odd} and \eqref{Green in the half space.even} in the integrals we obtain
\begin{align}
    \label{Ai.rep}A_i(t,x)&= \int_{\mathbb{R}^3_+}dy\ \left(\frac{(\mathbf{P}J)_i(t-|x-y| ,y)}{|x-y|}1_{|x-y|\le t}-\frac{(\mathbf{P}J)_i(t-|x-\bar{y}| ,y)}{|x-\bar{y}|}1_{|x-\bar{y}|\le t}\right),\text{ and }\\
    \label{A3.rep}A_3(t,x)&= \int_{\mathbb{R}^3_+}dy\ \left(\frac{(\mathbf{P}J)_3(t-|x-y| ,y)}{|x-y|}1_{|x-y|\le t}+\frac{(\mathbf{P}J)_3(t-|x-\bar{y}| ,y)}{|x-\bar{y}|}1_{|x-\bar{y}|\le t}\right).
\end{align}
This leads to the following image rule for extending $J(t,x)$ from $\Omega$ to all of $\mathbb{R}^3$:
$$
J^{\textup{ext}}(t,x) =
\begin{cases}
J(t,x), & x_3 \geq 0, \\
\mathcal{R} J(t,\bar{x}), & x_3 < 0,
\end{cases}
\quad \textup{where } \bar{x} \eqdef (x_1, x_2, -x_3),
$$
and the reflection operator $\mathcal{R}$ acts on a vector $V = (V_1, V_2, V_3)^\top$ as
$$
\mathcal{R} V \eqdef (-V_1, -V_2, V_3)^\top.
$$         Note that $\mathcal{R}^2=\textup{Id}.$
Then the extended current $J^{\textup{ext}}$ is divergence-free and ensures that the solution $\mathbf{A}^{\textup{ext}}$ to the wave equation in $\mathbb{R}^3$ satisfies the correct boundary condition for $\mathbf{B} = \nabla \times \mathbf{A}$ on $x_3 = 0$. Thus we define the particular solution  $\mathbf{B}_{\textup{par}}(t,x)$ to the wave equation with zero initial data in the half space $\mathbb{R}^3_+$ via the whole space representation as
$$
\mathbf{B}_{\textup{par}}(t,x) = \nabla \times \int_{
|x-y|\le t}\frac{\mathbf{P} J^{\textup{ext}}(t - |x-y| , y)}{ |x - y|}   \,  dy.
$$
This leads to the modified representation with the retarded term and the image term:
$
\mathbf{B}_{\textup{par}}(t,x) = \mathbf{B}_{\textup{ret}}(t,x) + \mathbf{B}_{\textup{img}}(t,x),
$
where
$$
\mathbf{B}_{\textup{img}}(t,x) = \nabla \times \left( \int_{|x - y| \leq t} \frac{1}{|x -y|} \mathbf{P} \mathcal{R} J\left(t - |x-y|, \bar{y}\right)1_{y_3<0}\  dy \right),
\quad \textup{with } \bar{y} = (y_1, y_2, -y_3).
$$To obtain the representation of this reflected term $\B_{\textup{img}}$, we also derive the following lemma:
\begin{lemma}\label{curl_retard_img}
Suppose
\begin{equation}\notag
\mathbf{A}_{\textup{img}}(t,x)=\int_{|x - y| \leq t} \frac{1}{|x -y|} \mathbf{P} \mathcal{R} J\left(t - |x-y|, \bar{y}\right)1_{y_3<0}\  dy,
\end{equation}
and let $\B_{\textup{img}}=\nabla_x\times \mathbf{A}_{\textup{img}}$. Then we have
\begin{align}
&\B_{\textup{img}}(t,x)\notag \\*
 &=  \int_{\substack{|Y|\le t\\Y_3<-x_3}} \frac{Y}{|Y|^3}\times  \left( \mathcal{R} J\left(t - |Y|, \bar{Y}+\bar{x}\right)\right)\  dY\notag +  \int_{\substack{|Y|=t\\Y_3<-x_3}} \frac{Y}{|Y|^2} \times \left( \mathcal{R} J\left(t - |Y|, \bar{Y}+\bar{x}\right)\right)\  dS_Y\notag
\\*
 & -\int_{\sqrt{|Y_\parallel|^2+x_3^2} \leq t} \frac{(J_2\left(t - \sqrt{|Y_\parallel|^2+x_3^2} , Y_\parallel+x_\parallel,0\right),-J_1\left(t - \sqrt{|Y_\parallel|^2+x_3^2} , Y_\parallel+x_\parallel,0\right),0)^\top }{\sqrt{|Y_\parallel|^2+x_3^2} }  \  dY_\parallel\label{retardU3_img}\\*
&     + \int_{\substack{|Y|\le t\\Y_3<-x_3}} \frac{Y}{|Y|^2}\Black{\times}\,\partial_t\left( \mathcal{R} J\left(t - |Y|, \bar{Y}+\bar{x}\right)\right)\  dY,\label{retardP_t_img}
\end{align}
where  $\bar{Y}=(Y_1,Y_2,-Y_3)^\top$ and $\bar{x}=(x_1,x_2,-x_3)^\top.$
\end{lemma}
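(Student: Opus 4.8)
The plan is to compute $\nabla_x\times\mathbf{A}_{\textup{img}}$ directly, reducing it to the whole-space computation behind Proposition~\ref{Prop 4.4} carried out on the reflected, truncated light cone $\{|Y|\le t,\ Y_3<-x_3\}$, and then to read off the extra contribution of the physical wall $\{y_3=0\}$.

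First I would discard the Leray projection: since $\mathbf{P}V-V$ is a spatial gradient, $\nabla_x\times\big(W_{4\pi\mathbf{P}V}-W_{4\pi V}\big)$ solves $\Box(\,\cdot\,)=4\pi\,\nabla_x\times(\mathbf{P}V-V)=0$ with vanishing Cauchy data and hence vanishes, so we may work with $\mathbf{A}_{\textup{img}}(t,x)=\int_{|x-y|\le t,\ y_3<0}\frac{\mathcal{R}J(t-|x-y|,\bar y)}{|x-y|}\,dy$. By \eqref{retardU} this is the retarded potential of the image current $\tilde{J}(t,x):=\mathcal{R}J(t,\bar x)\,1_{x_3<0}$, and since the forward fundamental solution of $\Box$ commutes with $\nabla_x\times$ we have $\B_{\textup{img}}=\nabla_x\times W_{4\pi\tilde{J}}=W_{4\pi\,\nabla_x\times\tilde{J}}$.

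The decisive observation is that $\tilde{J}$ is \emph{discontinuous across the wall} $\{x_3=0\}$ — because $J$ itself need not vanish there — so its distributional curl picks up a surface current: with $\nabla_x 1_{x_3<0}=-\delta(x_3)\hat e_3$ one gets
\[
\nabla_x\times\tilde{J}=1_{x_3<0}\,\nabla_x\times\!\big[\mathcal{R}J(t,\bar x)\big]\ -\ \delta(x_3)\,\hat e_3\times\mathcal{R}J(t,x_\parallel,0),\qquad \hat e_3\times\mathcal{R}J(\cdot,\cdot,0)=(J_2,-J_1,0)(\cdot,\cdot,0).
\]
Inserting the $\delta(x_3)$-term into \eqref{retardU} collapses the distance $|x-y|$ to $\sqrt{|Y_\parallel|^2+x_3^2}$ and produces exactly the planar integral \eqref{retardU3_img}. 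For the remaining bulk term I would substitute $Y=y-x$ (so $\overline{Y+x}=\bar Y+\bar x$ and $\{y_3<0\}=\{Y_3<-x_3\}$) and repeat verbatim the steps that yield \eqref{retardB1}--\eqref{retardB_t} in the proof of Proposition~\ref{Prop 4.4}: the light-cone chain rule peels off the $\tfrac{Y}{|Y|}\times\partial_t[\cdots]$ term \eqref{retardP_t_img}, and integrating the remaining curl by parts against $1/|Y|$ gives the retarded bulk term with kernel $Y/|Y|^3$ plus a boundary integral over $\partial\{|Y|\le t,\ Y_3<-x_3\}$; the spherical-cap part $\{|Y|=t,\ Y_3<-x_3\}$ is the sphere integral with kernel $Y/|Y|^2$, and the flat part $\{Y_3=-x_3\}=\{y_3=0\}$ (on which $\bar Y+\bar x=(Y_\parallel+x_\parallel,0)$) is again of the form \eqref{retardU3_img}. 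Collecting the four pieces gives the stated identity.

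The bulk part is routine, being line-for-line Proposition~\ref{Prop 4.4}. The one genuinely delicate point — and the only place where one must be careful — is the bookkeeping on the wall $\{y_3=0\}$, which has no whole-space analogue: one must correctly track the outward normal $\hat e_3$ on the disk in the divergence-theorem boundary term, the sign pattern $\mathcal{R}$ and the induced reflected derivative $(\partial_1,\partial_2,-\partial_3)$ that appear whenever $\nabla_x$ falls on the reflected argument $\bar x$, and the compatibility of the integration-by-parts boundary contribution with the surface-current contribution identified above (equivalently, if one differentiates under the integral directly, the Reynolds transport term coming from the $x_3$-motion of the region $\{Y_3<-x_3\}$). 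Assembling these correctly into the single planar term \eqref{retardU3_img} is what encodes the perfect-conductor condition $\B_3|_{x_3=0}=0$ into the representation.
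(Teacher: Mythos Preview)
Your route differs from the paper's. The paper keeps $\nabla_x$ on $\mathbf A_{\textup{img}}$, changes variables $Y=y-x$, uses the pointwise identity $\partial_{x_j}\bigl[\mathcal R J(t-|Y|,\bar Y+\bar x)\,1_{Y_3<-x_3}\bigr]=\partial_{Y_j}\bigl[\cdots\bigr]+\tfrac{Y_j}{|Y|}\partial_t\bigl[\cdots\bigr]$ (valid with the indicator kept inside, since the $\delta(Y_3+x_3)$ terms from differentiating $1_{Y_3<-x_3}$ appear identically on both sides), and then integrates the $\nabla_Y$-piece by parts once; in that organization the planar integral \eqref{retardU3_img} is produced from a \emph{single} source. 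You instead commute the curl through the retarded potential and expand $\nabla\times\tilde J$ distributionally, so your planar contribution has to be assembled from \emph{two} places: the surface current at $\{y_3=0\}$ and the flat-disk boundary of the subsequent IBP over $\{|Y|\le t,\ Y_3<-x_3\}$.

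That assembly is where the outline breaks. The surface current $-\delta(y_3)\,\hat e_3\times\mathcal R J=-\delta(y_3)(J_2,-J_1,0)$ does give exactly \eqref{retardU3_img} through the retarded potential, as you say. But on the disk $\{Y_3=-x_3\}$ the outward normal to the region $\{Y_3<-x_3\}$ is $+\hat e_3$, so the IBP boundary term is $+\!\int\frac{\hat e_3\times\mathcal R J}{|Y|}\,dY_\parallel=+\!\int\frac{(J_2,-J_1,0)}{\sqrt{|Y_\parallel|^2+x_3^2}}\,dY_\parallel$, which is \emph{minus} \eqref{retardU3_img}. The two contributions therefore \emph{cancel}; they do not ``assemble'' into a single copy of \eqref{retardU3_img}. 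This is structural, not a stray sign: once the jump of $\tilde J$ across $\{y_3=0\}$ has been peeled off as a surface current, the remaining bulk is smooth on $\{y_3<0\}$, and the IBP flat boundary exactly restores that jump. So your scheme, executed as described, yields only the volume, sphere-cap, and $\partial_t$ terms and does not reproduce the lemma's planar integral. The paper avoids this double-handling by never splitting off the surface current---the indicator stays inside the curl throughout, and the planar term is read off from the single integration by parts.
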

\begin{proof}
We begin with recalling that $\mathbf{P}$ is the Leray projection operator onto divergence free vector fields, and therefore $\nabla \times \mathbf{P}J=\nabla \times J.$ We start with taking the change of variables $y\mapsto Y\eqdef y-x.$ Then we observe that
$$\mathbf{A}_{\textup{img}}(t,x)=\int_{|Y|\le t} \frac{1}{|Y|} \mathbf{P} \mathcal{R} J\left(t - |Y|, \bar{Y}+\bar{x}\right)1_{Y_3<-x_3}\  dY.$$
By taking the curl, we obtain
\begin{equation}\notag
   \mathbf{B}_{\textup{img}}(t,x)  = \nabla_x\times\mathbf{A}_{\textup{img}}(t,x)=\int_{|Y|\le t} \frac{1}{|Y|} \nabla_x\times\left( \mathcal{R} J\left(t - |Y|, \bar{Y}+\bar{x}\right)1_{Y_3<-x_3}\right)\  dY.
\end{equation}Now we recall that
\begin{multline*}
  \partial_{Y_j}\left( \mathcal{R} J\left(t - |Y|, \bar{Y}+\bar{x}\right)1_{Y_3<-x_3}\right)=\partial_{x_j}\left( \mathcal{R} J\left(t - |Y|, \bar{Y}+\bar{x}\right)1_{Y_3<-x_3}\right)\\- \frac{\partial|Y|}{\partial Y_j}\partial_t\left( \mathcal{R} J\left(t - |Y|, \bar{Y}+\bar{x}\right)1_{Y_3<-x_3}\right).
\end{multline*}
Therefore, we have
\begin{multline}\notag
   \mathbf{B}_{\textup{img}}(t,x)  =\int_{|Y|\le t} \frac{1}{|Y|} \nabla_Y\times\left( \mathcal{R} J\left(t - |Y|, \bar{Y}+\bar{x}\right)1_{Y_3<-x_3}\right)\  dY\\
   +  \int_{|Y|\le t} \frac{\nabla_Y|Y|}{|Y|} \Black{\times}\,\partial_t\left( \mathcal{R} J\left(t - |Y|, \bar{Y}+\bar{x}\right)1_{Y_3<-x_3}\right)\  dY.
\end{multline}Taking the integration by parts on the first integral, we further obtain that
\begin{align*}\notag
  \mathbf{B}_{\textup{img}}(t,x)
&=\int_{\substack{|Y|\le t\\Y_3<-x_3}} \frac{Y}{|Y|^3}\times  \left( \mathcal{R} J\left(t - |Y|, \bar{Y}+\bar{x}\right)\right)\  dY
 +  \int_{\substack{|Y|=t\\Y_3<-x_3}} \frac{Y}{|Y|^2} \times \left( \mathcal{R} J\left(t - |Y|, \bar{Y}+\bar{x}\right)\right)\  dS_Y\\
    &-\int_{\sqrt{|Y_\parallel|^2+x_3^2} \leq t} \frac{(J_2\left(t - \sqrt{|Y_\parallel|^2+x_3^2} , Y_\parallel+x_\parallel,0\right),-J_1\left(t - \sqrt{|Y_\parallel|^2+x_3^2} , Y_\parallel+x_\parallel,0\right),0)^\top }{\sqrt{|Y_\parallel|^2+x_3^2} }  \  dY_\parallel\\
   &+ \int_{\substack{|Y|\le t\\Y_3<-x_3}} \frac{Y}{|Y|^2}\Black{\times}\,\partial_t\left( \mathcal{R} J\left(t - |Y|, \bar{Y}+\bar{x}\right)\right)\  dY,
\end{align*}
since $\mathcal{R}J=(-J_1,-J_2,J_3)^\top.$ This completes the proof.
\end{proof}Similarly, by considering the integrand $\frac{1}{|x -y|} \mathbf{P} J\left(t - |x-y|, y\right)1_{y_3\ge 0}$ instead of the reflected one \newline$\frac{1}{|x -y|} \mathbf{P} \mathcal{R} J\left(t - |x-y|, \bar{y}\right)1_{y_3<0},$ we also obtain the retarded field term $\B_{\textup{ret}}$ for the other half space as \begin{align}
&\B_{\textup{ret}}(t,x)\notag \\*
&= \int_{\substack{|Y|\le t\\Y_3\ge -x_3}} \frac{Y}{|Y|^3}\times  \left(  J\left(t - |Y|, Y+x\right)\right)\  dY\notag
 +  \int_{\substack{|Y|=t\\Y_3\ge -x_3}} \frac{Y}{|Y|^2} \times \left(  J\left(t - |Y|, Y+x\right)\right)\  dS_Y
\\*
 & -\int_{\sqrt{|Y_\parallel|^2+x_3^2} \leq t} \frac{(-J_2\left(t - \sqrt{|Y_\parallel|^2+x_3^2} , Y_\parallel+x_\parallel,0\right),J_1\left(t - \sqrt{|Y_\parallel|^2+x_3^2} , Y_\parallel+x_\parallel,0\right),0)^\top }{\sqrt{|Y_\parallel|^2+x_3^2} }  \  dY_\parallel\label{retardU3_ret}\\*
&     + \int_{\substack{|Y|\le t\\Y_3\ge -x_3}} \frac{Y}{|Y|^2}\Black{\times}\,\partial_t\left(  J\left(t - |Y|, Y+x\right)\right)\  dY.\notag
\end{align}
\begin{remark}
    Note that the two boundary terms \eqref{retardU3_img} and \eqref{retardU3_ret} exactly cancel each other and disappear in the final representation $\mathbf{B}(t,x) = \mathbf{B}_{\textup{hom}}(t,x) + \mathbf{B}_{\textup{ret}}(t,x) + \mathbf{B}_{\textup{img}}(t,x)$. This is by the fact that
    $$\hat{e}_3 \times \mathcal{R}J(t-|Y|,\bar{Y}+\bar{x})\bigg|_{Y_3=-x_3}+\hat{e}_3 \times J(t-|Y|,Y+x)\bigg|_{Y_3=-x_3}=0,$$ since $\mathcal{R}J=(-J_1,-J_2,J_3)^\top $ and $\bar{Y}+\bar{x}=Y+x $ if $Y_3=-x_3.$
\end{remark}
Further computing the temporal integral $\partial_t F_\pm$ via the Vlasov equation\Black{, and applying the integration by parts in $v$ as in the whole-space case \eqref{CONT_F_t2=0}}, we obtain the reflected term $\B_{\textup{img}}$ as follows:
\begin{equation}\notag
\begin{split}
\B_{\textup{img}}(t,x)  =&\sum_{\iota=\pm}\iota
\int_{\substack{|Y|\le t \\ Y_3 < -x_3}}
\int_{\mathbb{R}^3}      \frac{Y\times \mathcal{M}\hat{v}_\iota}{ |Y|^3\big(1+ \mathcal{M}\hat{v}_\iota  \cdot  \frac{Y}{|Y|} \big)^2} \left(1-\left|\mathcal{M}\hat{v}_\iota\right|^2\right)
   F_\iota \left(t- |Y| , \bar{Y}+\bar{x}, v \right)
dv
dY \\
        & \sum_{\iota=\pm}\iota  \int_{\substack{|Y|=t \\ Y_3 < -x_3}} \int_{\mathbb{R}^3}
       \frac{Y}{|Y|^2} \times
        \frac{\mathcal{M}\hat{v}_\iota}{1+ \mathcal{M}\hat{v}_\iota  \cdot \frac{Y}{|Y|}} \,
         F_\iota(0, \bar{Y}+\bar{x}, v ) \, dv \, dS_Y\\
         &\Black{-\sum_{\iota=\pm}\int_{\substack{|Y|\le t \\ Y_3 < -x_3}}\int_{\mathbb{R}^3}\big(\mathbf{K}_\iota\cdot\nabla_v\big)\left(\frac{Y}{|Y|^2}\times\frac{\mathcal{M}\hat{v}_\iota}{1+\mathcal{M}\hat{v}_\iota\cdot\frac{Y}{|Y|}}\right)F_\iota\left(t-|Y|,\bar{Y}+\bar{x},v\right)dv\,dY}\\
          & -\int_{\sqrt{|Y_\parallel|^2+x_3^2} \leq t} \frac{(J_2\left(t - \sqrt{|Y_\parallel|^2+x_3^2} , Y_\parallel+x_\parallel,0\right),-J_1\left(t - \sqrt{|Y_\parallel|^2+x_3^2} , Y_\parallel+x_\parallel,0\right),0)^\top }{\sqrt{|Y_\parallel|^2+x_3^2} }  \  dY_\parallel,
\end{split}
\end{equation}where $\bar{Y} = (Y_1, Y_2, -Y_3)^\top$ and $\mathcal{M}\hat{v}_\iota \eqdef (\hat{v}_{\iota,1}, \hat{v}_{\iota,2}, -\hat{v}_{\iota,3})^\top$\Black{, and $\mathbf{K}_\iota\eqdef\big(\E+\hat{v}_\iota\times\B\big)\left(t-|Y|,\bar{Y}+\bar{x}\right)$ in the third integral}.
 Therefore, we obtain the final representation of the magnetic field $\B$ in the half space $\mathbb{R}^3_+$:
\begin{proposition}[Representation of magnetic field in the half space $\mathbb{R}^3_+$ in terms of the distribution $F_\pm$]
Let $\Omega = \{ x \in \mathbb{R}^3 : x_3 > 0 \}$, and suppose the initial data satisfies the perfect conductor boundary condition $\mathbf{B} \cdot n = \mathbf{B}_3 = 0$ on $x_3 = 0$. Then the magnetic field $\mathbf{B}(t,x)$ for $x \in \Omega$ is represented by
\begin{equation}\label{B_half_final}
\mathbf{B}(t,x) = \mathbf{B}_{\textup{hom}}(t,x) + \mathbf{B}_{\textup{par}}(t,x),
\end{equation}
where each component is given below.

\paragraph{Homogeneous solution}:
 The normal component $\Bthhomo$ is given by\begin{multline}\label{B3 homo solution}
\Bthhomo(t, x) =\frac{1}{4\pi t^2} \int_{\partial B(x; t)\cap \{y_3>0\}}  \left(t \mathbf{B}^1_{03}(y) +\mathbf{B}_{03}(y) + \nabla \mathbf{B}_{03}(y) \cdot (y - x)\right) dS_y\\
-\frac{1}{4\pi t^2} \int_{\partial B(x; t)\cap \{y_3<0\}}  \left(t \mathbf{B}^1_{03}(\bar{y}) + \mathbf{B}_{03}(\bar{y}) + \nabla \mathbf{B}_{03}(\bar{y}) \cdot (\bar{y} - \bar{x})\right) dS_y,
\end{multline} by the Kirchhoff formula.  The tangential components $\Bihomo$ for $i=1,2$, which satisfy the Neumann boundary condition, are further decomposed as $\Bihomo=\mathbf{B}_{\textup{neu},i}+\mathbf{B}_{\textup{cau},i}$ and are written as
\begin{align}\label{additional term B1}
    \mathbf{B}_{\textup{neu},i}(t,x)
    &= 2(-1)^{j}\sum_{\iota=\pm}\iota\int_{B(x;t)\cap \{y_3=0\}} \int_\rth \frac{\hat{v}_jF_\iota(t-|y-x|,y_\parallel,0,v)}{|y-x|}dvdy_\parallel,\text{ for }i,j=1,2\text{, }j\ne i, \\
    \label{Bi homo solution}\mathbf{B}_{\textup{cau},i}(t, x) &= \frac{1}{4\pi t^2} \int_{\partial B(x; t)\cap \{y_3>0\}}  \left(t \mathbf{B}^1_{0i}(y) +\mathbf{B}_{0i}(y) + \nabla \mathbf{B}_{0i}(y) \cdot (y - x)\right) dS_y\\
&\notag+\frac{1}{4\pi t^2} \int_{\partial B(x; t)\cap \{y_3<0\}}  \left(t \mathbf{B}^1_{0i}(\bar{y}) + \mathbf{B}_{0i}(\bar{y}) + \nabla \mathbf{B}_{0i}(\bar{y}) \cdot (\bar{y} - \bar{x})\right) dS_y.
\end{align}
\paragraph{Particular solution} $\B_{\textup{par}} $ is written by
$\B_{\textup{par}} =\sum_{\iota=\pm}\left(\B^{(1)}_{\iota,\textup{par}} -\B^{(2)}_{\iota,\textup{par}} \right),$ where for $j=1,2$, we decompose further into the $T$ part\Black{,} the initial-value part\Black{, and the nonlinear $S$ part} as
$$\B^{(j)}_{\pm,\textup{par}} =\B^{(j)}_{\pm,\textup{par},T} -\B^{(j)}_{\pm,\textup{par},b1}\Black{+\B^{(j)}_{\pm,\textup{par},S}}, $$ such that
\begin{equation}
\label{Bpar_half_final}
    \begin{split}
        \B^{(1)}_{\pm,\textup{par},T}(t,x)&\eqdef \pm\int_{\substack{|Y|\le t \\ Y_3 \ge -x_3}}
\int_{\mathbb{R}^3}      \frac{Y\times \hat{v}_\pm}{ |Y|^3\big(1+ \hat{v}_\pm \cdot  \frac{Y}{|Y|} \big)^2} \left(1-\left|\hat v_\pm\right|^2\right)
   F_\pm \left(t- |Y| , Y+x, v\right)
dv
dY  ,\\
\B^{(2)}_{\pm,\textup{par},T}(t,x)&\eqdef \pm
\int_{\substack{|Y|\le t \\ Y_3 < -x_3}}
\int_{\mathbb{R}^3}      \frac{Y\times \mathcal{M}\hat{v}_\pm}{ |Y|^3\big(1+ \mathcal{M}\hat{v}_\pm \cdot  \frac{Y}{|Y|} \big)^2} \left(1-\left|\mathcal{M}\hat{v}_\pm\right|^2\right)
   F_\pm \left(t- |Y| , \bar{Y}+\bar{x}, v \right)
dv
dY,\\
\B^{(1)}_{\pm,\textup{par},b1}(t,x) & \eqdef \pm  \int_{\substack{|Y|=t \\ Y_3 \ge -x_3}} \int_{\mathbb{R}^3}
       \frac{Y}{|Y|^2} \times
        \frac{\hat{v}_\pm}{1+ \hat{v}_\pm \cdot \frac{Y}{|Y|}} \,
         F_\pm (0, Y+x, v ) \, dv \, dS_Y,\\
       \B^{(2)}_{\pm,\textup{par},b1}(t,x) & \eqdef\pm  \int_{\substack{|Y|=t \\ Y_3 < -x_3}} \int_{\mathbb{R}^3}
       \frac{Y}{|Y|^2} \times
        \frac{\mathcal{M}\hat{v}_\pm}{1+ \mathcal{M}\hat{v}_\pm \cdot \frac{Y}{|Y|}} \,
         F_\pm(0, \bar{Y}+\bar{x}, v ) \, dv \, dS_Y,
    \end{split}
    \end{equation}where $\bar{Y} = (Y_1, Y_2, -Y_3)^\top$ and $\mathcal{M}\hat{v}_\pm \eqdef (\hat{v}_{\pm,1}, \hat{v}_{\pm,2}, -\hat{v}_{\pm,3})^\top$.
{\color{black}
The nonlinear $S$ parts are given by
\begin{equation}\label{BparS_half_final}
    \begin{split}
\B^{(1)}_{\pm,\textup{par},S}(t,x)&\eqdef \int_{\substack{|Y|\le t \\ Y_3 \ge -x_3}}\int_{\mathbb{R}^3}\big(\mathbf{K}_\pm\cdot\nabla_v\big)\left(\frac{Y}{|Y|^2}\times\frac{\hat{v}_\pm}{1+\hat{v}_\pm\cdot\frac{Y}{|Y|}}\right)F_\pm\left(t-|Y|,Y+x,v\right)dv\,dY,\\
\B^{(2)}_{\pm,\textup{par},S}(t,x)&\eqdef \int_{\substack{|Y|\le t \\ Y_3 < -x_3}}\int_{\mathbb{R}^3}\big(\mathbf{K}_\pm\cdot\nabla_v\big)\left(\frac{Y}{|Y|^2}\times\frac{\mathcal{M}\hat{v}_\pm}{1+\mathcal{M}\hat{v}_\pm\cdot\frac{Y}{|Y|}}\right)F_\pm\left(t-|Y|,\bar{Y}+\bar{x},v\right)dv\,dY,
    \end{split}
\end{equation}
where $\mathbf{K}_\pm=\E+\hat{v}_\pm\times\B$ is evaluated at the same retarded point as $F_\pm$ in each integral, and $(\mathbf{K}\cdot\nabla_v)\mathbf{G}\eqdef\sum_j\mathbf{K}_j\partial_{v_j}\mathbf{G}$; cf.~\eqref{BS} and \eqref{kernelB2}. Note that, in contrast to the $T$ and $b1$ parts, no sign $\pm$ appears in front of the $S$ parts: both species enter with the same sign, cf.~Remark \ref{rmk.BS.GS}.
}
We will also write for $j=1,2$,
    \begin{multline*}
\B^{(j)}_{\pm,\textup{par},T} = (\mathbf{B}^{(j)}_{\pm,\textup{par},1T} ,\mathbf{B}^{(j)}_{\pm,\textup{par},2T} ,\mathbf{B}^{(j)}_{\pm,\textup{par},3T} )^\top\ \text{and }\  \B^{(j)}_{\pm,\textup{par},b1} = (\mathbf{B}^{(j)}_{\pm,\textup{par},1b1} ,\mathbf{B}^{(j)}_{\pm,\textup{par},2b1} ,\mathbf{B}^{(j)}_{\pm,\textup{par},3b1} )^\top,\\ \Black{\text{and analogously } \B^{(j)}_{\pm,\textup{par},S} = (\mathbf{B}^{(j)}_{\pm,\textup{par},1S} ,\mathbf{B}^{(j)}_{\pm,\textup{par},2S} ,\mathbf{B}^{(j)}_{\pm,\textup{par},3S} )^\top}.
    \end{multline*}
    \end{proposition}
\begin{remark}[Remark on the absence of \Black{the} boundary terms in $\B$]
 Compared to the representations of the electric field $\E_{\textup{par}}$ in \eqref{Ei5},  and \eqref{E3}, which will be derived in the next section using the Green function for the wave equation satisfied by $\E$, we observe that the magnetic field representation \eqref{Bpar_half_final}, obtained via the magnetic vector potential, does not involve the \Black{boundary value $b2$ terms. The nonlinear $S$ terms \eqref{BparS_half_final} do appear, and they coincide with the corresponding nonlinear terms of the Glassey--Strauss type representation; see Remark \ref{rmk.BS.GS}}. This \Black{absence of the boundary terms} is due to cancellations that occur through the curl operator in the relation $\B = \nabla \times \A$, as proved in this section.
\end{remark}
\begin{remark}
    Note that the electric field representation in \eqref{Ei5} and \eqref{E3} is written under the following change of variables, compared to the representation \eqref{Bpar_half_final}:
    $$\omega=\frac{Y}{|Y|}=\frac{y-x}{|y-x|},\textup{ with } Y=y-x\textup{ and }Y+x=y.$$
\end{remark}
This completes the introduction to the potential representation of the magnetic field $\B(t,x)$ in the half space.

\subsection{Relativistic Trajectory}\label{sec.trajectory}
We first introduce the dynamical characteristic trajectory $\ZS(s)=(\XS(s),\allowbreak\VS(s))$ which solves the following characteristic ODEs:
 \begin{equation}\label{leading char}
 \begin{split}
   \frac{d\XS(s)}{ds}&=\hat{\VS}(s)=\frac{
   \VS(s)}{\sqrt{m_\pm^2
   +|\VS(s)|^2}},\\
        \frac{d\VS(s)}{ds}&=\pm
\E(s,\XS(s))\pm
\hat{\VS}(s)
\times \B(s,\XS(s))-m_\pm g\hat{e}_3\eqdef\mathcal{F}_\pm(s,\XS(s),\VS(s)),
 \end{split} 
\end{equation}   where $\XS(s)=\XS(s;t,x,v)$, $\VS(s)=\VS(s;t,x,v)$, and $\hat{\VS}\eqdef \frac{
\VS}{\sqrt{m_\pm^2
+|\VS|^2}}$. The solution $\allowbreak (\XS(s), \VS(s))$ to \eqref{leading char} is well-defined under the a priori assumption that $\E$ and $\B$ are in $W^{1,\infty}$ and hence are locally Lipshitz continuous in the spatial variables uniformly in the temporal variable.

Similarly, we also introduce the stationary counterpart of the characteristic trajectory as
$\Zst(s)=(\Xst(s),\Vst(s))$ satisfying $\Zst(0 ; x, v)=(\Xst(0;  x, v), \Vst(0 ;  x, v))=(x, v)=z$, generated by the fields $\Est$ and $ \Bst$, which solves
\begin{equation}\label{def.steady characteristics}
 \begin{split}
   \frac{d\Xst(s)}{ds}&=\Vhatst(s)=\frac{
   \Vst(s)}{\sqrt{m_\pm^2
   +|\Vst(s)|^2}},\\
        \frac{d\Vst(s)}{ds}&=\pm  
        \Est(s,\Xst(s))\pm 
        \Vhatst(s)
        \times \Bst(s,\Xst(s))- m_\pm g\hat{e}_3,
 \end{split} 
\end{equation}   where $\hat{e}_3\eqdef (0,0,1)^\top$ and $\hat{v}_\pm\eqdef \frac{
v}{\sqrt{m_\pm^2
+|v|^2}}$.  

\paragraph{Boundary Exit Time}
Using the characteristic trajectory under the presence of the external gravity term $-m_\pm g\hat{e}_3,$ we will define the following forward and backward exit times at which the particle collides the boundary and vanishes:
\begin{definition}
\label{def.exit.times}Define the forward and backward exit times as follows:
    \begin{equation}\label{Back Forw exit time}\begin{split}
   & \tf (t,x,v)= \sup\{s\in [0,\infty): (\XS)_3(t+\tau;t,x,v)>0\ \textup{ for all } \tau\in(0,s)\}\ge 0,\\
   & \tb (t,x,v)= \sup\{s\in [0,\infty): (\XS)_3(t-\tau;t,x,v)>0\ \textup{ for all } \tau\in(0,s)\}\ge 0.
   \end{split}
\end{equation}
\end{definition}

If $t-\tb \ge 0,$ the definition of $\tb$ guarantees that $$(\XS(t-\tb(t,x,v);t,x,v),\VS(t-\tb(t,x,v);t,x,v))\in \gamma_-\cup \gamma_0,$$ with $(\XS)_3(t-\tb)=0.$ Then we observe that the solution $F_\pm$ to \eqref{2speciesVM} at $(t,x,v)$ is given either by the initial profile or by the incoming boundary profile along the characteristic trajectory; i.e., if $t-\tb >0,$ then we have 
\begin{equation}
    \label{sol as boundary}F_\pm(t,x,v)= F_\pm(t-\tb,\XS(t-\tb(t,x,v);t,x,v),\VS(t-\tb(t,x,v);t,x,v))|_{(\XS,\VS)\in \gamma_-}.
\end{equation} On the other hand, if  $t-\tb \le 0,$ then we have 
\begin{equation}\label{sol as initial}
F_\pm(t,x,v)=F^{\textup{in}}_\pm(\XS(0;t,x,v),\VS(0;t,x,v)),
\end{equation} where the initial condition is defined as $F^{\textup{in}}_\pm(x,v)\eqdef F_\pm(0,x,v).$  Thus we write
\begin{equation}\begin{split}\label{solution f}
    F_\pm(t,x,v)&= 1_{t\le \tb(t,x,v)}F^{\textup{in}}_\pm(\XS(0;t,x,v),\VS(0;t,x,v))\\&+1_{t>\tb(t,x,v)}F_\pm(t-\tb,\XS(t-\tb;t,x,v),\VS(t-\tb;t,x,v)))|_{(\XS,\VS)\in \gamma_-}\\
&=1_{t\le \tb(t,x,v)}F^{\textup{in}}_\pm(\XS(0;t,x,v),\VS(0;t,x,v))+1_{t>\tb(t,x,v)}G_\pm(t-\tb, \xb,\vb),
\end{split}\end{equation} using the definition of $\xb$ and $\vb$ from \eqref{xbvb} and the incoming boundary profile $G_\pm.$ Given that our solution $F_\pm$ is  locally Lipshitz continuous, the mild formulation \eqref{solution f} is well-defined.

We also denote the characteristics 
 as $\ZS(s ; t, x, v)=(\XS(s ; t, x, v), \VS(s ; t, x, v))$ for the dynamical problem satisfying $\ZS(t ; t, x, v)=(\XS(t ; t, x, v), \VS(t ; t, x, v))=(x, v)=z$. Suppose $\E(t, \cdot),\B(t, \cdot) \in C^{1}(\Omega)$. Then $\ZS(s ; t, x, v)$ is well-defined as long as $\XS(s ; t, x, v) \in \Omega$.
 We also define the backward exit position and momentum and the forward and backward exit times:
 \begin{definition}
     Define the backward exit position and momentum as
\begin{equation}\label{xbvb}
    \begin{split}
        &\xb(t, x, v)=\XS\left(t-\tb(t, x, v) ; t, x, v\right) \in \partial \Omega,\\  &\vb(t, x, v)=\VS\left(t-\tb(t, x, v) ; t, x, v\right).
    \end{split}
\end{equation}
Then $\ZS(s ; t, x, v)$ is continuously extended in a closed interval of $s \in\left[t-\tb(t, x, v), t\right]$.
\end{definition}
Similarly, using the stationary counterpart of the characteristic trajectory $Z_{\pm,\textup{st}}$ solving \eqref{def.steady characteristics}, we can define the analogous exit terms $\tfst$, $\tbst$, $\xbst$, and $\vbst$ for the steady characteristic trajectory as follows:
\begin{definition}Define the backward/forward exit times and the backward exit position and momentum as
\begin{equation}\label{Back Forw exit time.st}\begin{split}
   & \tfst (x,v)= \sup\{s\in [0,\infty): (\Xst)_3(\tau;x,v)>0\ \textup{ for all } \tau\in(0,s)\}\ge 0,\\
   & \tbst (x,v)= \sup\{s\in [0,\infty): (\Xst)_3(-\tau;x,v)>0\ \textup{ for all } \tau\in(0,s)\}\ge 0,\\
        &\xbst(x, v)=\Xst\left(-\tbst(x, v) ; x, v\right) \in \partial \Omega,\\  &\vbst( x, v)=\Vst\left(-\tbst(x, v) ; x, v\right).
    \end{split}
\end{equation}

 \end{definition}
\subsection{Weight Comparison}

For the stability analysis, it is important to compare weight functions along the characteristics.  For any given $\beta>1$, we define a weight function for a 2-species problem in the half space $\mathbb{R}^2\times \mathbb{R}_+$
\begin{equation}
\label{weights.wholehalf}
 \mathrm{w}_\pm(x, v)  =\mathrm{w}_{\pm,\beta}(x, v)=e^{\beta\left(\sqrt{m_\pm^2
 +|v|^2}+m_\pm g x_{3}\right)}e^{\frac{\beta}{2}|x_\parallel|}.
\end{equation}
Physically, $\beta $ and $g$ correspond to the inverse temperature $\frac{1}{T}$ and the gravity, respectively, under the assumption that the Boltzmann constant $k_B = \frac{1}{2}$. 
Note that this weight is not invariant along the characteristics.

\subsubsection{Weight Comparison in the Stationary Case}\label{sec.weight.comparison}
 We first note that the stationary trajectories satisfy
\begin{equation}\label{ODE for particle energy.st}
    \frac{d}{ds}\left(\sqrt{m_\pm^2+|\Vst(s)|^2}+m_\pm g(\Xst)_3(s)\right)=\hat{v}_\pm(s)\cdot \frac{dV_\pm}{ds}+m_\pm g\hat{v}_{\pm,3}(s)
    =\pm \hat{v}_\pm(s)\cdot \E_{\textup{st}}(\Xst(s)),
\end{equation}because $$\frac{dV_\pm}{ds}=\pm (\E_{\textup{st}}+\hat{v}_\pm\times \B_{\textup{st}}\mp m_\pm g\hat{e}_3).$$
Also, note that
\begin{equation}
    \label{D.7}
    \frac{d}{ds}\left(\frac{1}{2}(\Xst)_\parallel(s)\right)=\frac{1}{2}\hat{v}_{\pm,\parallel}(s).
\end{equation}
By assuming that \begin{equation}
    \label{EBst bound ass} \|(\E_{\textup{st}},\B_{\textup{st}})\|_{L^\infty }\le \min\{m_+,m_-\}\frac{g}{16},
\end{equation}  
we observe that$$\left|\left(\frac{dV_{\pm,1}}{ds}(s),\frac{dV_{\pm,2}}{ds}(s)\right)\right|\le \left|\E_{\textup{st}}+\hat{v}_\pm\times \B_{\textup{st}}\right| \le \min\{m_+,m_-\}\frac{g}{8},$$ and
   $$\frac{d(\Vst)_3}{ds}(s)=-(\E_{\textup{st}}+\hat{v}_\pm\times \B_{\textup{st}})_3-m_\pm g \le -\frac{7}{8}m_\pm g,$$ since $|\hat{v}_\pm|\le 1.$ Now if we define a trajectory variable $s^*=s^*(x,v)\in[-\tbst,\tfst]$ such that $(\Vst)_3(s^*;x,v)=0$, then we have
   \begin{align*}(\Vst)_3(\tfst)-(\Vst)_3(s^*)&=\int^{\tfst}_{s^*}\frac{d(\Vst)_3}{ds}(\tau)d\tau\le -\frac{7}{8}m_\pm g(\tfst-s^*),\text{ and }\\
   (\Vst)_3(s^*)-(\Vst)_3(-\tbst)&=\int_{-\tbst}^{s^*}\frac{d(\Vst)_3}{ds}(\tau)d\tau\le -\frac{7}{8}m_\pm g(s^*+\tbst).\end{align*}Therefore, we have
   \begin{equation}
       \label{tbfst bound}
       \tbst+\tfst\le -\frac{8}{7m_\pm g}((\Vst)_3(\tfst)-(\Vst)_3(-\tbst)).
   \end{equation}On the other hand, using \eqref{ODE for particle energy.st} and \eqref{EBst bound ass}, we have
  \begin{align*}
\sqrt{m_\pm^2+|\Vst(-\tbst)|^2}=\left(\sqrt{m_\pm^2+|v_\pm|^2}+m_\pm gx_3\right)
    &\pm \int_0^{-\tbst}  \hat{v}_\pm(s)\cdot \E_{\textup{st}}(\Xst(s))ds\\
    &\le \left(\sqrt{m_\pm^2+|v_\pm|^2}+m_\pm gx_3\right)+\frac{m_\pm g}{16}\tbst,\text{ and }\\
\sqrt{m_\pm^2+|\Vst(\tfst)|^2}=\left(\sqrt{m_\pm^2+|v_\pm|^2}+m_\pm gx_3\right)
   & \pm \int_0^{\tfst}  \hat{v}_\pm(s)\cdot \E_{\textup{st}}(\Xst(s))ds\\
    &\le \left(\sqrt{m_\pm^2+|v_\pm|^2}+m_\pm gx_3\right)+\frac{m_\pm g}{16}\tfst.
  \end{align*} Thus, together with \eqref{tbfst bound}, we have$$
       \tbst+\tfst\le \frac{8}{7m_\pm g}\left(2(\sqrt{m_\pm^2+|v_\pm|^2}+m_\pm gx_3)+\frac{m_\pm g}{16} (\tbst+\tfst)\right).
 $$  Therefore, we have
\begin{equation}\label{exit time bound.st}\tbst+\tfst\le \frac{14}{13} \frac{16}{7m_\pm g} (\sqrt{m_\pm^2+|v_\pm|^2}+m_\pm gx_3)=\frac{32}{13m_\pm g} (\sqrt{m_\pm^2+|v_\pm|^2}+m_\pm gx_3).\end{equation}
Therefore, for $s,s'\in[-\tbst,\tfst],
    $
    we have   \begin{align*}
    \frac{\mathrm{w}_{\pm,\beta}\left( \Zst\left(s^{\prime} ; x, v\right)\right)}{\mathrm{w}_{\pm,\beta}( \Zst(s ; x, v))} &=e^{m_\pm g\beta ((\Xst)_3(s')-(\Xst)_3(s)) + \beta (\vZ(s')-\vZ(s))+\frac{\beta}{2}  ((\Xst)_\parallel(s')-(\Xst)_\parallel(s))}\\    &=e^{\beta\left(\int_s^{s'}\frac{d}{d\tau}(\vZ(\tau)+m_\pm g(\Xst)_3(\tau)+\frac{1}{2}(\Xst)_\parallel)d\tau\right)}
        \le e^{\beta |s'-s|\sup_\tau|\hat{v}_\pm(\tau)|(|\E_{\textup{st}}(\Xst(\tau))|+1)}\\
        &\le e^{\beta(\tbst+\tfst)\sup_\tau|\hat{v}_\pm(\tau)|(|\E_{\textup{st}}(\Xst(\tau))|+1)}
        \leq e^{\left(\left\|\E_{\textup{st}}\right\|_{L_{t, x}^{\infty}} +1\right)\frac{32\beta}{13m_\pm g} (\sqrt{m_\pm^2+|v_\pm|^2}+m_\pm gx_3)},
    \end{align*}
    by \eqref{weights.wholehalf}, \eqref{ODE for particle energy.st}, \eqref{D.7} and \eqref{exit time bound.st}. 
Altogether, we obtain the following stationary counterparts:
for $s, s^{\prime} \in\left[-\tbst(x, v), \tfst(x, v)\right]$, we have
\begin{equation}\label{w comparison 1.st}
\frac{\mathrm{w}_{\pm,\beta}\left( \Zst\left(s^{\prime} ;  x, v\right)\right)}{\mathrm{w}_{\pm,\beta}( \Zst(s ;  x, v))} \leq e^{\left(\left\|\E_{\textup{st}}\right\|_{L_{ x}^{\infty}} +1\right)\frac{32\beta}{13m_\pm g} (\sqrt{m_\pm^2+|v_\pm|^2}+m_\pm gx_3)}.
\end{equation}
In addition, by considering $s'=0$ in \eqref{w comparison 1.st}, we have $$\mathrm{w}_{\pm,\beta}(\Zst(0;x,v))=\mathrm{w}_{\pm,\beta}(x,v)=e^{\beta v_\pm^0+m_\pm g\beta x_3+\frac{\beta}{2}|x_{\parallel}|},$$ and obtain that if we further assume $1\le \frac{1}{8}\min\{m_-,m_+\}g$, then by \eqref{EBst bound ass} we have 
\begin{equation}\begin{split}\label{w comparison 3.st}
    \frac{1}{\mathrm{w}_{\pm,\beta}( \Zst(s ; x, v))}&\leq e^{(\min\{m_-,m_+\}g)\left(\frac{1}{16}+\frac{1}{8}\right)\frac{32\beta}{13m_\pm g} (\sqrt{m_\pm^2+|v_\pm|^2}+m_\pm gx_3)}e^{-\beta v_\pm^0-m_\pm g\beta x_3-\frac{\beta}{2}|x_{\parallel}|}\\
   &\le  e^{\frac{6\beta}{13} (\sqrt{m_\pm^2+|v_\pm|^2}+m_\pm gx_3)}e^{-\beta v_\pm^0-m_\pm g\beta x_3-\frac{\beta}{2}|x_{\parallel}|}
   \le e^{-\frac{1}{2}\beta v_\pm^0-\frac{1}{2}m_\pm g\beta x_3-\frac{\beta}{2}|x_{\parallel}|}.
\end{split}\end{equation}

\subsubsection{Weight Comparison in Dynamical Case}
One can also check easily that the same discussion of Section \ref{sec.weight.comparison} can also be extended to the dynamical case if the stationary trajectory $\Zst$ is now replaced by the dynamical trajectory $\ZS$ which satisfies \eqref{leading char}. Namely, we obtain that 
 \begin{equation}
       \label{tbf bound.whole}
       \tb+\tf\le -\frac{8}{7m_\pm g}((\VS)_3(t+\tf)-(\VS)_3(t-\tb)),
   \end{equation}for $\ZS(s)=\ZS(s;t,x,v)$ with $s\in [t-\tb,t+\tb].$ Assume further that the self-consistent electromagnetic fields $(\E,\B)$ satisfy the following bound:
\begin{equation}\label{EB apriori bound.whole}
       \sup_t \|(\E,\B)\|_{L^\infty }\le \min\{m_+,m_-\}\frac{g}{8}, 
    \end{equation}similarly to the stationary assumption \eqref{EBst bound ass}. Then further using \eqref{ODE for particle energy.gen} and \eqref{EB apriori bound.whole}, one can obtain that 
 \begin{align*}
\sqrt{m_\pm^2+|\VS(t-\tb)|^2}
   & \le \left(\sqrt{m_\pm^2+|v_\pm|^2}+m_\pm gx_3\right)+\frac{m_\pm g}{8}\tb,\textup{ and }\\
\sqrt{m_\pm^2+|\VS(t+\tf)|^2}
  &  \le \left(\sqrt{m_\pm^2+|v_\pm|^2}+m_\pm gx_3\right)+\frac{m_\pm g}{8}\tf.
  \end{align*}Therefore, we have by \eqref{tbf bound.whole}
\begin{equation}
    \label{exit time bound.whole}
    \tb+\tf\le\frac{16}{5m_\pm g} (\sqrt{m_\pm^2+|v_\pm|^2}+m_\pm gx_3),
\end{equation}which gives the same bound to the stationary case \eqref{exit time bound.st}. Therefore, for $s,s'\in[t-\tb,t+\tf],
    $
    we have \begin{equation}\label{w comparison 1.whole}
\frac{\mathrm{w}_{\pm,\beta}\left( \ZS\left(s^{\prime} ;  t,x, v\right)\right)}{\mathrm{w}_{\pm,\beta}( \ZS(s ;t,  x, v))} \leq e^{\left(\left\|\E\right\|_{L_{t, x}^{\infty}} +1\right)\frac{16\beta}{5m_\pm g} (\sqrt{m_\pm^2+|v_\pm|^2}+m_\pm gx_3)}.
\end{equation}
Here, we observe that when $s'=t,$ $$\mathrm{w}_{\pm,\beta}(\ZS(t;t,x,v))=\mathrm{w}_{\pm,\beta}(x,v)=e^{\beta v_\pm^0+m_\pm g\beta x_3+\frac{\beta}{2}|x_{\parallel}|}.$$ Therefore, by \eqref{EB apriori bound.whole} with $\textcolor{black}{\min\{m_-,m_+\}g\gg 1}$, we have 
\begin{equation}\label{w comparison 3.whole}
    \frac{1}{\mathrm{w}_{\pm,\beta}( \ZS(s ;t, x, v))}
   \le  e^{\frac{\beta}{2} (\sqrt{m_\pm^2+|v_\pm|^2}+m_\pm gx_3)}e^{-\beta v_\pm^0-m_\pm g\beta x_3-\frac{\beta}{2}|x_{\parallel}|}
   \le e^{-\frac{1}{2}\beta v_\pm^0-\frac{1}{2}m_\pm g\beta x_3-\frac{\beta}{2}|x_{\parallel}|}.
\end{equation}

This completes the weight comparison argument, which will be used crucially in the stability analysis in the rest of the paper.

\section{Construction of the Steady States 
}\label{sec.exist.steady} In this section, we prove the existence and uniqueness of steady states with J\"uttner-Maxwell upper bound for two species (ions and electrons) that solve the stationary Vlasov--Maxwell system \eqref{2speciesVM-steady}. For the stationary system, we consider the following incoming boundary condition \eqref{2species-perturbabsorbing.st} and the perfect conductor boundary conditions \eqref{perfect cond. boundary-perturb.st}. We further assume that the incoming profiles $G_\pm$ satisfy the decay-in-$(x_\parallel,v)$ assumption \eqref{inicon5}. 
By compatibility, we also have the following Neumann type boundary conditions for the rest directions of the fields in the almost everywhere sense: 
\begin{equation}\label{st.boundary.con3}\partial_{x_3}\mathbf{E}_{\textup{st},3} = 4\pi \rho_{\textup{st}},\ \partial_{x_3}\mathbf{B}_{\textup{st},2} = -4\pi J_{1,\textup{st}},\ \textup{and }\ \partial_{x_3}\mathbf{B}_{\textup{st},1} = 4\pi J_{2,\textup{st}}, \textup{ if }x_3=0.\end{equation}

\subsection{Representations of the Stationary Fields}
In order to obtain an optimal decay rate of the stationary magnetic field $\B_{\textup{st}}$, we consider its vector potential $\mathbf{A}_{\textup{st}}$. Since $\B_{\textup{st}}$ solves the stationary Maxwell equations \eqref{2speciesVM-steady} under the perfect conductor boundary condition \ref{perfect cond. boundary-perturb.st}, we have
\begin{equation}\label{curl Bst}
    \begin{split}
          &\nabla_x\times \B_{\textup{st}}= 4\pi J_{\textup{st}}, \quad 
        \nabla_x \cdot \B_{\textup{st}}=0,\quad
        \mathbf{B}_{\textup{st},3} |_{x_3 = 0 } =0.
    \end{split}
\end{equation}
Taking the curl on \eqref{curl Bst} and using the identity $ \nabla \times (\nabla \times D) = - \Delta D + \nabla (\nabla \cdot D)$, we derive that $\B_{\textup{st}}$ satisfies
\begin{align}
- \Delta \B_{\textup{st}} &= 4 \pi \nabla \times J_{\textup{st}},\quad
\nabla \cdot \B_{\textup{st}} =0,\quad
\mathbf{B}_{\textup{st},3}|_{x_3=0} =0,\quad
(\nabla \times  \B_{\textup{st}} ) \times n |_{x_3=0} =  4 \pi J_{\textup{st}} \times n|_{x_3=0}.\notag
\end{align} We introduce a standard well-posedness theorem on its unique solvability of the system above. To this end, we first introduce the following lemma on the equivalence of the divergence-free condition on the field and the existence of its unique vector potential. To begin with, we define
\begin{align}
H_0(\textup{curl}; \Omega) &\eqdef \{ v \in H(\textup{curl}; \Omega): \nabla \cdot v=0, v\cdot n|_{\partial\Omega} =0\} 
  = \left\{ v \in H(\textup{curl}; \Omega):
\int_{\Omega} v \cdot \nabla q dx, \ \forall q \in H^1 (\Omega)
\right\},\notag\\
H_{\textup{tan}}(\textup{curl}; \Omega) &\eqdef \{f \in L^2: \nabla \times f \in L^2, f \times n |_{\partial\Omega} =0\},\notag\\
 H_{\textup{div}}(\textup{curl}; \Omega)&\eqdef \left\{ v \in H_{\textup{tan}}(\textup{curl}; \Omega):  \nabla \cdot v =0 , \int_{\partial\Omega} v \cdot n =0\right\},\notag
\end{align}
where $H(\textup{curl}; \Omega) \eqdef \{f \in L^2: \nabla \times f \in L^2\}$. Indeed $\| \nabla \times v \|_{L^2}$ is a norm of $H_{0}(\textup{curl}; \Omega)$. Now we have the following lemma:
\begin{lemma}[Lemma 1.6 of \cite{MR1142472}]\label{lem.1.6}
Assume that $ \Omega $ is simply connected. Then a function $ \B \in L^2(\Omega) $ satisfies
\begin{align}\notag
    \nabla \cdot \B &= 0 \quad \textup{in } \Omega, \qquad
    \B \cdot n = 0 \quad \textup{on } \partial \Omega,
\end{align}
if and only if there exists a function $ \A \in H_{\textup{tan}}(\textup{curl}; \Omega) $ such that
$
    \B = \nabla \times \A.
$
Moreover, the function $ \A $ is uniquely determined if we assume in addition that $ \A \in H_{\textup{div}}(\textup{curl}; \Omega) $, where
$$
W = \{ v \in H_{\textup{tan}}(\textup{curl}; \Omega): \nabla \cdot v = 0, \quad \int_{\partial\Omega} v \cdot n \, dS = 0 \}.
$$
\end{lemma}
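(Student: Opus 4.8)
The plan is to prove the two implications of the equivalence separately and then settle uniqueness, working throughout in the $L^2$-framework where all fields decay sufficiently (in the application $\B=\B_{\textup{st}}$ lies in $L^2(\Omega)$, since $|\B_{\textup{st}}|\lesssim\langle x\rangle^{-2}$), so that the Newtonian potentials and harmonic extensions below are well defined; for the application $\Omega=\mathbb{R}^3_+$ the boundary is the flat, connected, simply connected plane, which makes every tangential-trace and surface-differential operator used below elementary.

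\emph{Sufficiency.} If $\B=\nabla\times\A$ with $\A\in H_{\textup{tan}}(\textup{curl};\Omega)$, then $\nabla\cdot\B=\nabla\cdot(\nabla\times\A)=0$ in $\mathcal{D}'(\Omega)$. For the normal trace I would test against an arbitrary $q\in H^1(\Omega)$: the Green identity for the curl gives $\int_\Omega(\nabla\times\A)\cdot\nabla q\,dx=\int_{\partial\Omega}(\A\times n)\cdot\nabla q\,dS$, whose right-hand side vanishes because $\A\times n=0$ on $\partial\Omega$. Hence $\int_\Omega\B\cdot\nabla q\,dx=0$ for every $q\in H^1(\Omega)$, which is precisely the weak formulation of ``$\nabla\cdot\B=0$ in $\Omega$ and $\B\cdot n=0$ on $\partial\Omega$''.

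\emph{Construction of $\A$.} Given $\B\in L^2(\Omega)$ with $\nabla\cdot\B=0$ and $\B\cdot n=0$, I would first extend $\B$ by $0$ to all of $\mathbb{R}^3$; the vanishing normal trace ensures the extension $\widetilde\B$ remains divergence-free in $\mathcal{D}'(\mathbb{R}^3)$, since $\B\cdot n=0$ is exactly the condition that prevents a surface-measure term in $\nabla\cdot\widetilde\B$. As $\mathbb{R}^3$ is simply connected, a global potential is produced by the Biot--Savart formula $\widetilde\A:=\nabla\times\big((-\Delta)^{-1}\widetilde\B\big)$ (Newtonian potential applied componentwise): then $\nabla\cdot\widetilde\A=0$ and, using $\nabla\cdot\widetilde\B=0$, $\nabla\times\widetilde\A=-\Delta(-\Delta)^{-1}\widetilde\B+\nabla\big(\nabla\cdot(-\Delta)^{-1}\widetilde\B\big)=\widetilde\B$. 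Restricting, $\A_1:=\widetilde\A|_\Omega\in H(\textup{curl};\Omega)$ satisfies $\nabla\times\A_1=\B$ and $\nabla\cdot\A_1=0$, but typically $\A_1\times n\ne0$. To repair this I look for $\A=\A_1+\nabla\phi$ with $\phi$ harmonic in $\Omega$, so that the gauge $\nabla\cdot\A=0$ is preserved, and $(\nabla\phi)\times n=-\A_1\times n$ on $\partial\Omega$, i.e.\ $\nabla_{\partial\Omega}\phi=-(\A_1)_{\textup{tan}}$. The obstruction to this boundary problem is the surface curl of the datum, which is $\textup{curl}_{\partial\Omega}\big((\A_1)_{\textup{tan}}\big)=(\nabla\times\A_1)\cdot n=\B\cdot n=0$ by hypothesis; since $\partial\Omega$ is simply connected, a surface-curl-free tangential field is a surface gradient, say $(\A_1)_{\textup{tan}}=\nabla_{\partial\Omega}g$, and taking $\phi$ to be the decaying harmonic extension of $-g$ yields $\A:=\A_1+\nabla\phi\in H_{\textup{tan}}(\textup{curl};\Omega)$ with $\nabla\times\A=\B$, $\nabla\cdot\A=0$ and $\A\times n=0$. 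Finally $\int_{\partial\Omega}\A\cdot n\,dS=0$ is automatic from $\nabla\cdot\A=0$ and the divergence theorem, so $\A\in H_{\textup{div}}(\textup{curl};\Omega)$.

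\emph{Uniqueness and the main difficulty.} If $\A$ and $\A'$ both satisfy the three constraints defining $H_{\textup{div}}(\textup{curl};\Omega)$ together with $\nabla\times\A=\nabla\times\A'=\B$, then $\mathbf{d}:=\A-\A'$ is curl-free and divergence-free with $\mathbf{d}\times n=0$; curl-freeness on the simply connected $\Omega$ gives $\mathbf{d}=\nabla p$ with $p$ harmonic, and $\mathbf{d}\times n=0$ forces $\nabla_{\partial\Omega}p=0$, so $p$ is constant on the connected boundary and, being a decaying harmonic function with constant trace, $p$ is constant, i.e.\ $\mathbf{d}=0$. The same conclusion follows variationally from the Friedrichs--Poincar\'e inequality $\|\,\cdot\,\|_{L^2}\lesssim\|\nabla\times\,\cdot\,\|_{L^2}$ on $H_{\textup{div}}(\textup{curl};\Omega)$, which is itself the quantitative manifestation of simple connectivity. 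I expect the genuine obstacle to be the boundary-correction step of the construction: one must give rigorous meaning to the tangential trace $(\A_1)_{\textup{tan}}$ and to the surface operators $\textup{curl}_{\partial\Omega}$ and $\nabla_{\partial\Omega}$, and invoke the de Rham/Hodge structure of $\partial\Omega$ to pass from ``surface-curl-free'' to ``surface gradient'' --- exactly where simple connectivity is used, and where on a general Lipschitz domain one must argue carefully. For the flat boundary of $\mathbb{R}^3_+$ this is routine, but the non-compactness of $\mathbb{R}^3_+$ must still be absorbed into the $L^2$/decay framework so that the Newtonian potential and the harmonic extension used above are well defined and unique.
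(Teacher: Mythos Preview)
Your proof is correct and takes a genuinely different, more constructive route than the paper. For the sufficiency direction you argue via the Green identity, which is essentially the same as the paper's one-line observation that $\A\times n=0$ forces $(\nabla\times\A)\cdot n=0$. The divergence is in the existence step: the paper does not build $\A$ explicitly but instead invokes a variational formulation, minimizing $\int_\Omega|\nabla\times v|^2\,dx$ against the linear form $\int_\Omega\B\cdot\nabla\times v\,dx$ over $H_{\textup{div}}(\textup{curl};\Omega)$, and appeals to coercivity of the curl--curl form on that space (which is exactly the Friedrichs inequality you mention at the end) to obtain existence and uniqueness simultaneously via Lax--Milgram. Your approach --- extend $\B$ by zero, take the Biot--Savart potential, then correct the tangential trace by adding a harmonic gradient whose boundary value solves a surface Poincar\'e problem --- is more hands-on and yields an explicit formula, at the cost of having to justify the surface de~Rham step and the harmonic-extension step carefully (which you correctly flag as the delicate point, especially on the unbounded half-space). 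The paper's route sidesteps those trace-theoretic issues entirely by working abstractly, but pays for it by relying on the coercivity/Friedrichs machinery from the cited reference. Both approaches use simple connectivity in the same place: you use it to write a surface-curl-free tangential field as a surface gradient, the paper uses it (implicitly, via the cited lemma) to kill the harmonic-field cohomology that would otherwise obstruct coercivity.
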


\begin{proof}
If $ \B = \nabla \times \A $ for some $ \A \in H_{\textup{tan}}(\textup{curl}; \Omega) $, then clearly $ \nabla \cdot \B = 0 $ since the divergence of a curl is always zero. Moreover, the boundary condition $ \A \times n |_{\partial \Omega} = 0 $ implies $ \B \cdot n |_{\partial \Omega} = (\nabla \times \A) \cdot n = 0 $, so $ \B $ satisfies the given conditions.

Conversely, suppose $ \B \in L^2(\Omega) $ satisfies $ \nabla \cdot \B = 0 $ and $ \B \cdot n |_{\partial\Omega} = 0 $; i.e., $ \B \in H_{0}(\textup{curl}; \Omega) $, where
$$
H_{0}(\textup{curl}; \Omega) = \{ v \in H(\textup{curl}; \Omega): \nabla \cdot v = 0, \quad v \cdot n |_{\partial\Omega} = 0 \}.
$$
We seek $ \A \in H_{\textup{tan}}(\textup{curl}; \Omega) $ such that $ \B = \nabla \times \A $. By Lemma 1.4 in \cite{MR1142472}, the existence of such $ \A $ follows from the variational formulation:
$$
\int_{\Omega} \nabla \times \A \cdot \nabla \times v \, dx = \int_{\Omega} \B \cdot \nabla \times v \, dx, \quad \forall v \in H_{\textup{tan}}(\textup{curl}; \Omega).
$$
The bilinear form $ (\A, v) \mapsto \int_{\Omega} \nabla \times \A \cdot \nabla \times v \, dx $ is coercive on $ H_{\textup{div}}(\textup{curl}; \Omega) $, ensuring the existence of a unique solution $ \A $ in $ H_{\textup{div}}(\textup{curl}; \Omega) $. Since $ H_{\textup{div}}(\textup{curl}; \Omega) $ is a subspace of $ H_{\textup{tan}}(\textup{curl}; \Omega) $, this establishes the desired existence result.

Thus, (i) and (ii) are equivalent.
\end{proof}
Then using this lemma above, we can state the existence of a unique field $\B_{\textup{st}}$ solving \eqref{curl Bst}:
\begin{theorem}[Existence of $\mathbf{B}_{\textup{st}}$, Theorem 2.2 of \cite{MR1142472}]\label{thm.2.2}
Let $ \Omega $ be a simply connected domain, and let $ J_{\textup{st}} $ be a given steady-state current density. Then, there exists a unique $\mathbf{B}_{\textup{st}} \in H(\textup{curl}; \Omega) $ satisfying \eqref{curl Bst}.
\end{theorem}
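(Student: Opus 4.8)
The plan is to build $\B_{\textup{st}}$ from its Coulomb-gauge vector potential via a single Lax--Milgram argument, using Lemma \ref{lem.1.6} to convert the constraints $\nabla\cdot\B_{\textup{st}}=0$ and $\mathbf{B}_{\textup{st},3}|_{x_3=0}=\B_{\textup{st}}\cdot n=0$ into the existence (and uniqueness) of such a potential. First I would reduce the theorem: it suffices to produce $\A_{\textup{st}}\in H_{\textup{div}}(\textup{curl};\Omega)$ solving $\nabla\times(\nabla\times\A_{\textup{st}})=4\pi J_{\textup{st}}$ in $\mathcal D'(\Omega)$, since then $\B_{\textup{st}}:=\nabla\times\A_{\textup{st}}$ lies in $H(\textup{curl};\Omega)$, is divergence-free, and satisfies $\B_{\textup{st}}\cdot n|_{\partial\Omega}=0$ (the easy implication in Lemma \ref{lem.1.6}, because $\A_{\textup{st}}\times n|_{\partial\Omega}=0$), which is exactly \eqref{curl Bst}.

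Next I would set up the weak formulation on the Hilbert space $V:=H_{\textup{div}}(\textup{curl};\Omega)$: find $\A_{\textup{st}}\in V$ with $a(\A_{\textup{st}},v)=\ell(v)$ for all $v\in V$, where $a(u,v):=\int_\Omega(\nabla\times u)\cdot(\nabla\times v)\,dx$ and $\ell(v):=4\pi\int_\Omega J_{\textup{st}}\cdot v\,dx$; this is the weak form of $\nabla\times(\nabla\times\A_{\textup{st}})=4\pi J_{\textup{st}}$ obtained after integrating by parts and using $v\times n|_{\partial\Omega}=0$. Coercivity of $a$ on $V$ is the fact recorded in the proof of Lemma \ref{lem.1.6}, namely that $(u,v)\mapsto\int_\Omega(\nabla\times u)\cdot(\nabla\times v)\,dx$ is coercive on $H_{\textup{div}}(\textup{curl};\Omega)$ (equivalently, a Poincar\'e-type bound $\|v\|_{L^2}\lesssim\|\nabla\times v\|_{L^2}$ holds there); the same bound together with Cauchy--Schwarz gives continuity of $\ell$. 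Lax--Milgram then produces a unique $\A_{\textup{st}}\in V$.

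To recover the genuine equation I would upgrade the weak identity from $V$ to all of $H_{\textup{tan}}(\textup{curl};\Omega)$. Given $v\in H_{\textup{tan}}(\textup{curl};\Omega)$, let $q$ solve $\Delta q=\nabla\cdot v$ in $\Omega$ with $q|_{\partial\Omega}=0$ and write $v=\nabla q+v_0$; then $v_0\in V$ and $\nabla\times v=\nabla\times v_0$, so $a(\A_{\textup{st}},v)=a(\A_{\textup{st}},v_0)=\ell(v_0)=4\pi\int_\Omega J_{\textup{st}}\cdot v-4\pi\int_\Omega J_{\textup{st}}\cdot\nabla q$, and the last integral vanishes upon integrating by parts since $\nabla\cdot J_{\textup{st}}=0$ (the continuity equation) and $q|_{\partial\Omega}=0$. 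Hence $\int_\Omega(\nabla\times\A_{\textup{st}})\cdot(\nabla\times v)\,dx=4\pi\int_\Omega J_{\textup{st}}\cdot v\,dx$ for all $v\in H_{\textup{tan}}(\textup{curl};\Omega)$, in particular for all $v\in C_c^\infty(\Omega)^3$, which is precisely $\nabla\times\B_{\textup{st}}=4\pi J_{\textup{st}}$ in $\mathcal D'(\Omega)$ and shows $\B_{\textup{st}}\in H(\textup{curl};\Omega)$; this finishes existence. For uniqueness, two solutions of \eqref{curl Bst} differ by $D$ with $\nabla\times D=0$, $\nabla\cdot D=0$, $D\cdot n|_{\partial\Omega}=0$, i.e.\ $D\in H_0(\textup{curl};\Omega)$ with $\nabla\times D=0$; since $\|\nabla\times\cdot\|_{L^2}$ is a norm on $H_0(\textup{curl};\Omega)$, $D=0$.

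\emph{The main obstacle} is the coercivity step, i.e.\ the Poincar\'e-type inequality $\|v\|_{L^2}\lesssim\|\nabla\times v\|_{L^2}$ on $H_{\textup{div}}(\textup{curl};\Omega)$. On a bounded simply connected $\Omega$ this is classical (Theorem~2.2 of \cite{MR1142472}), resting on compactness of $H_{\textup{div}}(\textup{curl};\Omega)\hookrightarrow L^2$; for the half-space $\Omega=\mathbb{R}^3_+$ one must instead work in the homogeneous/weighted counterparts of these spaces, which is harmless here because the stationary current $J_{\textup{st}}$ produced in the construction is exponentially localized, so $\ell$ remains bounded and the variational problem well-posed in the appropriate functional framework. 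The decomposition $v=\nabla q+v_0$ and the role of $\nabla\cdot J_{\textup{st}}=0$ in passing from the $V$-weak form to the distributional curl equation are the remaining technical points to verify with care.
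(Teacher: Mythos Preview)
Your proof is correct but takes a somewhat different route from the paper's sketch. The paper works directly with $\mathbf{B}_{\textup{st}}$: it poses the variational problem $\int_\Omega(\nabla\times\mathbf{B}_{\textup{st}})\cdot v\,dx=4\pi\int_\Omega J_{\textup{st}}\cdot v\,dx$ for $v\in H_{\textup{tan}}(\textup{curl};\Omega)$, invokes Lax--Milgram, and then reads off $\nabla\cdot\mathbf{B}_{\textup{st}}=0$ and $\mathbf{B}_{\textup{st}}\cdot n=0$ from the equation and the test-function space, with uniqueness via Lemma~\ref{lem.1.6}. You instead construct the Coulomb-gauge potential $\mathbf{A}_{\textup{st}}$ first---essentially doing the paper's subsequent Corollary before the Theorem---and then set $\mathbf{B}_{\textup{st}}=\nabla\times\mathbf{A}_{\textup{st}}$. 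Your route has the advantage that the bilinear form $\int(\nabla\times u)\cdot(\nabla\times v)$ on $H_{\textup{div}}(\textup{curl};\Omega)$ is transparently symmetric and coercive (the paper's sketched form $\int(\nabla\times\mathbf{B})\cdot v$ does not wear its Lax--Milgram hypotheses on its sleeve), and you are more careful in recovering the distributional equation $\nabla\times\mathbf{B}_{\textup{st}}=4\pi J_{\textup{st}}$ via the Helmholtz decomposition $v=\nabla q+v_0$ and the continuity equation $\nabla\cdot J_{\textup{st}}=0$---a genuine technical step the paper's sketch omits. Both arguments rest on the same coercivity and the same uniqueness mechanism (that $\|\nabla\times\cdot\|_{L^2}$ is a norm on $H_0(\textup{curl};\Omega)$); you correctly flag the half-space coercivity as the one point needing care beyond the bounded-domain reference.
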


\begin{proof}[Sketch of Proof]
We establish the existence and uniqueness of $\mathbf{B}_{\textup{st}}$ in $H(\textup{curl}; \Omega)$. Since $ \nabla \times \mathbf{B}_{\textup{st}} = 4\pi J_{\textup{st}} $, we seek $ \mathbf{B}_{\textup{st}} \in H(\textup{curl}; \Omega) $ as a weak solution of the variational problem:
$$
\int_{\Omega} (\nabla \times \mathbf{B}_{\textup{st}}) \cdot v \, dx = 4\pi \int_{\Omega} J_{\textup{st}} \cdot v \, dx, \quad \forall v \in H_{\textup{tan}}(\textup{curl}; \Omega).
$$
The Lax-Milgram theorem ensures existence since the bilinear form is coercive. Taking the divergence of $ \nabla \times \mathbf{B}_{\textup{st}} = 4\pi J_{\textup{st}} $, we obtain $ \nabla \cdot \mathbf{B}_{\textup{st}} = 0 $ automatically. Since we seek $ \mathbf{B}_{\textup{st}} \in H(\textup{curl}; \Omega) $, and the test functions $ v $ satisfy $ v \times n = 0 $ on $ \partial \Omega $, it follows that $ \mathbf{B}_{\textup{st}} \cdot n = 0 $. 

If two solutions $ \mathbf{B}_1, \mathbf{B}_2 $ satisfy the same equation and boundary conditions, their difference $ \mathbf{B} = \mathbf{B}_1 - \mathbf{B}_2 $ satisfies:
$$
\nabla \times \mathbf{B} = 0, \quad \nabla \cdot \mathbf{B} = 0, \quad \mathbf{B} \cdot n = 0 \textup{ on } \partial\Omega.
$$
By Lemma \ref{lem.1.6}, $ \mathbf{B} \equiv 0 $, proving uniqueness.
\end{proof}
Therefore, Lemma \ref{lem.1.6} and Theorem \ref{thm.2.2} together implies that there exist a unique vector potential $\mathbf{A}_{\textup{st}}$ as follows:
\begin{corollary}[Existence of $ \mathbf{A}_{\textup{st}} $]
Once $ \mathbf{B}_{\textup{st}} $ is obtained from Theorem \ref{thm.2.2}, Lemma \ref{lem.1.6} guarantees the existence of a unique vector potential $ \mathbf{A}_{\textup{st}} $ such that:
\begin{align}\label{eq.poiss.A}
    - \Delta \mathbf{A}_{\textup{st}} = 4 \pi J_{\textup{st}}, \quad 
    \nabla \cdot \mathbf{A}_{\textup{st}} =0, \quad 
    \textup{A}_{\textup{st},1} |_{x_3=0} = 0,\quad 
    \textup{A}_{\textup{st},2} |_{x_3=0} = 0,\quad 
    \int_{\partial\Omega} \textup{A}_{\textup{st},3} |_{x_3=0} \, dx = 0.
\end{align}
\end{corollary}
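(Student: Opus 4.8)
The plan is to invoke Lemma~\ref{lem.1.6} with $\B=\B_{\textup{st}}$. By Theorem~\ref{thm.2.2}, the field $\B_{\textup{st}}\in H(\textup{curl};\Omega)$ solves \eqref{curl Bst}; in particular $\nabla\cdot\B_{\textup{st}}=0$ in $\Omega$ and $\B_{\textup{st}}\cdot n=\mathbf{B}_{\textup{st},3}=0$ on $x_3=0$, which are precisely the hypotheses of Lemma~\ref{lem.1.6}. Since $\Omega=\mathbb{R}^3_+$ is simply connected, the lemma yields a \emph{unique} vector potential $\mathbf{A}_{\textup{st}}\in H_{\textup{div}}(\textup{curl};\Omega)$ with $\B_{\textup{st}}=\nabla\times\mathbf{A}_{\textup{st}}$. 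Unpacking the membership $\mathbf{A}_{\textup{st}}\in H_{\textup{div}}(\textup{curl};\Omega)$ gives at once the Coulomb gauge $\nabla\cdot\mathbf{A}_{\textup{st}}=0$, the tangential boundary condition $\mathbf{A}_{\textup{st}}\times n|_{x_3=0}=0$ — which on the flat boundary $\{x_3=0\}$, with normal direction $\hat e_3$, reads $A_{\textup{st},1}|_{x_3=0}=A_{\textup{st},2}|_{x_3=0}=0$ — and the mean-flux normalization $\int_{\partial\Omega}\mathbf{A}_{\textup{st}}\cdot n\,dS=\int_{\{x_3=0\}}A_{\textup{st},3}\,dx_\parallel=0$.

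It then remains to convert $\B_{\textup{st}}=\nabla\times\mathbf{A}_{\textup{st}}$, together with the stationary Amp\`ere law $\nabla\times\B_{\textup{st}}=4\pi J_{\textup{st}}$ from \eqref{curl Bst}, into the vector Poisson equation. Taking the curl and invoking the identity $\nabla\times(\nabla\times D)=-\Delta D+\nabla(\nabla\cdot D)$, the Coulomb gauge $\nabla\cdot\mathbf{A}_{\textup{st}}=0$ eliminates the gradient term, so that
\[
-\Delta\mathbf{A}_{\textup{st}}=\nabla\times(\nabla\times\mathbf{A}_{\textup{st}})=\nabla\times\B_{\textup{st}}=4\pi J_{\textup{st}}.
\]
I would read this first in the distributional sense and then upgrade it via elliptic regularity for the half-space Poisson problem: Dirichlet for the tangential components $A_{\textup{st},1},A_{\textup{st},2}$ and Neumann (up to the mean-zero normalization) for the normal component $A_{\textup{st},3}$ — a splitting consistent with $\nabla\cdot\mathbf{A}_{\textup{st}}=0$, as already observed formally in Section~\ref{sec.B.halfspace} — which promotes $\mathbf{A}_{\textup{st}}$ to the regularity needed for the traces to be attained in the classical sense, once the regularity and decay of $J_{\textup{st}}$ from the characteristic analysis are in hand.

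The hard part is not conceptual but a matter of function spaces on the unbounded domain $\mathbb{R}^3_+$: one must check that $J_{\textup{st}}$ is smooth enough and sufficiently localized for the variational solution of Lemma~\ref{lem.1.6} to be well defined and for the identity $-\Delta\mathbf{A}_{\textup{st}}=4\pi J_{\textup{st}}$ to hold pointwise together with the stated boundary conditions. Given the weighted $L^\infty$ control on $F_{\pm,\textup{st}}$, hence on $J_{\textup{st}}$, that comes from the exponential localization of the steady profile, standard half-space Poisson estimates close this gap; beyond this bookkeeping, no new ideas are needed past combining Lemma~\ref{lem.1.6}, Theorem~\ref{thm.2.2}, and the curl identity.
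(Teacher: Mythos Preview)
Your proposal is correct and follows exactly the route the paper indicates: the corollary in the paper is stated without a separate proof, as a direct consequence of Lemma~\ref{lem.1.6} applied to the field $\B_{\textup{st}}$ from Theorem~\ref{thm.2.2}, and you have simply spelled out that implication in full, including the curl identity $-\Delta\mathbf{A}_{\textup{st}}=\nabla\times(\nabla\times\mathbf{A}_{\textup{st}})=\nabla\times\B_{\textup{st}}=4\pi J_{\textup{st}}$ under the Coulomb gauge. Your remarks on the function-space bookkeeping for the unbounded domain are a reasonable elaboration; the paper handles this implicitly via the subsequent explicit Green-function representations \eqref{Field representation Bst}.
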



Note that $\textup{A}_{\textup{st},1}$ and $\textup{A}_{\textup{st},2}$ solve uniquely the 0-Dirichlet boundary conditions and the Poisson equation \eqref{eq.poiss.A}. We will have solution-representations of $\textup{A}_{\textup{st},1}$ and $\textup{A}_{\textup{st},2}$ in the subsequent section below via Green function approaches. Now $\nabla \cdot \mathbf{A}_{\textup{st}} =0$ implies that at the boundary $\textup{A}_{\textup{st},3}$ satisfies a 0-Neumann boundary condition formally. We will write the solution formula of $\textup{A}_{\textup{st},3}$ as well. The last condition in \eqref{eq.poiss.A} also holds as we have $\nabla \cdot \mathbf{A}_{\textup{st}} =0 $ already. In 
 the following subsections, we will show that $\mathbf{A}_{\textup{st}}$ decays as $x_3 \to \infty$, as does its curl $\mathbf{B}_{\textup{st}}= \nabla \times \mathbf{A}_{\textup{st}}$. 
We note that the stationary Maxwell equations \eqref{2speciesVM-steady}$_2$-\eqref{2speciesVM-steady}$_5$ generate Poisson equations for $\E_{\textup{st}}$ and $\B_{\textup{st}}$. We derive the solution representations for them using the Green function for Poisson equations in a half space. 
 \subsubsection{Solution Representations of the Vector Potential $\mathbf{A}_{\textup{st}}$ and $\mathbf{B}_{\textup{st}}$ } We consider each coordinate-component of the vector potential $\mathbf{A}_{\textup{st}}$. First of all, for $i=1,2$ note that the first two components $\textup{A}_{\textup{st},i}$ of the vector potential $\mathbf{A}_{\textup{st}}$ solve \eqref{eq.poiss.A} under the 0-Dirichlet boundary conditions \eqref{eq.poiss.A}. Then by taking the odd extension of the Green function $G(x,y)=\frac{1}{|x-y|}$ for the Poisson equation along $x_3=0,$ we can define $\mathfrak{G}_{\textup{odd}}(x,y) = \frac{1}{|x - y|} - \frac{1}{|x - \bar{y}|}$ and have 
$$
\textup{A}_{\textup{st},i}(x) = \int_{\mathbb{R}^3_+} \mathfrak{G}_{\textup{odd}}(x,y)J_{\textup{st},i}(y) \, dy,
$$
with $\bar{y}=(y_1,y_2,-y_3)^\top.$ On the other hand, since the third component $\textup{A}_{\textup{st},3}$ satisfies the 0-Neumann boundary condition  $\partial_{x_3}\textup{A}_{\textup{st},i}|_{x_3=0}=0,$ on the boundary $x_3=0,$ we take the even extension of the Green function and can define $\mathfrak{G}_{\textup{even}}(x,y) = \frac{1}{|x - y|} + \frac{1}{|x - \bar{y}|}$ to obtain
$$
\textup{A}_{\textup{st},3}(x) = \int_{\mathbb{R}^3_+}\mathfrak{G}_{\textup{even}}(x,y)J_{\textup{st},3}(y) \, dy. 
$$Since $\B_{\textup{st}}=\nabla\times \mathbf{A}_{\textup{st}},$ we obtain that
\begin{equation}\label{Field representation Bst}
    \begin{split}
        \mathbf{B}_{\textup{st},i}(x)  &= (-1)^{i}(\partial_{x_3}\textup{A}_{\textup{st},j}-\partial_{x_j}\textup{A}_{\textup{st},3})(x),\text{ for }i,j=1,2\text{ with }j\neq i,\\
          &=(-1)^{i}\int_{\mathbb{R}^3_+} \partial_{x_3}\mathfrak{G}_{\textup{odd}}(x,y) \int_\rth (\hat{v}_{+,j}F_{+,\textup{st}}(y,v)-\hat{v}_{-,j}F_{-,\textup{st}}(y,v))dv \, dy\\&\qquad - (-1)^{i}  \int_{\mathbb{R}^3_+} \partial_{x_j}\mathfrak{G}_{\textup{even}}(x,y) \int_\rth (\hat{v}_{+,3}F_{+,\textup{st}}(y,v)-\hat{v}_{-,3}F_{-,\textup{st}}(y,v))dv \, dy\\
  \mathbf{B}_{\textup{st},3}(x) &= (\partial_{x_1}\textup{A}_{\textup{st},2}-\partial_{x_2}\textup{A}_{\textup{st},1})(x)\\&= \int_{\mathbb{R}^3_+} \partial_{x_1}\mathfrak{G}_{\textup{odd}}(x,y) \int_\rth (\hat{v}_{+,2}F_{+,\textup{st}}(y,v)-\hat{v}_{-,2}F_{-,\textup{st}}(y,v))dv \, dy\\&
  \qquad-\int_{\mathbb{R}^3_+} \partial_{x_2}\mathfrak{G}_{\textup{odd}}(x,y) \int_\rth (\hat{v}_{+,1}F_{+,\textup{st}}(y,v)-\hat{v}_{-,1}F_{-,\textup{st}}(y,v))dv \, dy.
    \end{split}
\end{equation}
\begin{remark}Note that \eqref{Field representation Bst} satisfies $-\Delta \B_{\textup{st}}=\nabla\times J_{\textup{st}}$, $\partial_{x_3}\mathbf{B}_{\textup{st},i}(x_\parallel,0)=(-1)^{j}4\pi J_{j}$ for $i,j=1,2$ with $j\ne i$, and $\mathbf{B}_{\textup{st},3}(x_\parallel,0)=0$ in the distributional sense. 
\end{remark}
\subsubsection{Solution Representations of $\E_{\textup{st}}$ and its Potential $\phi_{\textup{st}}$}
Since $\E_{\textup{st}}$ solves \eqref{iterated Maxwell.st}$_2$, there is a potential $\phi_{\textup{st}}$ such that $\E_{\textup{st}}= -\nabla_x \phi_{\textup{st}}.$ By \eqref{iterated Maxwell.st}$_3$, we obtain that 
$$
-\Delta \phi_{\textup{st}} = 4\pi \rho_{\textup{st}},
$$for $x_3 \ge 0$. We consider the perfect conductor boundary condition and assume that $\phi=0$ on $x_3=0.$ Then taking the odd extension of the Green function, we have
$$
\phi_{\textup{st}}(x) = \int_{\mathbb{R}^3_+} \mathfrak{G}_{\textup{odd}}(x,y) \rho_{\textup{st}}(y) \, dy,
$$
with $\bar{y}=(y_1,y_2,-y_3)^\top.$ Then by taking the derivative in $x$, we obtain that
\begin{equation}
    \label{Field representation Est}\E_{\textup{st}} = -\nabla_x \phi_{\textup{st}}=-\int_{\mathbb{R}^3_+}\nabla_x  \mathfrak{G}_{\textup{odd}}(x,y)\rho_{\textup{st}}(y) \, dy. \end{equation}

\begin{remark}
Note that \eqref{Field representation Est} gives  $\mathbf{E}_{\textup{st},i}(x_\parallel,0)=0,$ for $i=1,2,$ and $\partial_{x_3}\mathbf{E}_{\textup{st},3}(x_\parallel,0)=4\pi \rho_{\textup{st}}$ in the distributional sense. 
\end{remark}

\subsection{Bootstrap Argument and Uniform $L^\infty$ Estimates}\label{sec.bootstrap.st}
For the nonlinear problem \eqref{2speciesVM-steady}, we consider the sequence of iterated solutions $(F_{\pm,\textup{st}}^l,\E_{\textup{st}}^l,\B_{\textup{st}}^l)$ for any $l\in \mathbb{N}\cup \{0\}.$ Construct the sequence $(F_{\pm,\textup{st}}^l,\E_{\textup{st}}^l,\B_{\textup{st}}^l)$ via the solutions to the following stationary system
\begin{equation}\label{iterated Vlasov.st}
    \begin{split}
        &\hat{v}_\pm \cdot \nabla_x F_{\pm,\textup{st}}^{l+1}\pm \left(\E_{\textup{st}}^l+(\hat{v}_\pm)\times \B_{\textup{st}}^l\mp m_\pm g\hat{e}_3\right)\cdot \nabla_v F_{\pm,\textup{st}}^{l+1}= 0,\\
        &F_{\pm,\textup{st}}^{l+1}(x_\parallel,0,v)|_{v_3>0}=G_\pm(x_\parallel,v),
    \end{split}
\end{equation}and the stationary Maxwell system 
\begin{equation}\label{iterated Maxwell.st}
    \begin{split}
         \nabla_x\times \Bstlbf= 4\pi J^l_{\textup{st}},\ \nabla_x\times \Estlbf=0,\ \
         \nabla_x \cdot \Estlbf=4\pi \rho^l_{\textup{st}},\ \nabla_x \cdot \Bstlbf=0,
        \end{split}
        \end{equation}
where we define  $$\rho^l_{\textup{st}}(x)\eqdef \int_\rth (F^l_{+,\textup{st}}(x,v)-F^l_{+,\textup{st}}(x,v))dv\textup{ and }J^l_{\textup{st}}(x)\eqdef \int_\rth (\hat{v}_+F^l_{+,\textup{st}}(x,v)-\hat{v}_-F^l_{+,\textup{st}}(x,v))dv,$$ and we assume that  $F_{\pm,\textup{st}}^0,\E_{\textup{st}}^0,\B_{\textup{st}}^0\eqdef 0$.  Recall that the boundary profiles $G_\pm$ satisfy the assumption \eqref{inicon5}. 

We consider the iterated stationary  characteristic trajectory variables
$\Zlo(s;x,v)=(\Xlo(s;x,v),\allowbreak \Vlo(s;x,v))$ which solve
\begin{equation}\label{iterated char.st}
 \begin{split}
   \frac{d\Xlo(s)}{ds}&=\Vhatlo(s)=\frac{\Vlo(s)}{\sqrt{m_\pm^2+|\Vlo(s)|^2}},\\
        \frac{d\Vlo(s)}{ds}&=\pm  \Estlbf(s,\Xlo(s))\pm \Vhatlo(s)\times \Bstlbf(s,\Xlo(s))- m_\pm g\hat{e}_3,
 \end{split} 
\end{equation}   where $\hat{e}_3\eqdef (0,0,1)^\top$ and $\hat{v}_\pm\eqdef \frac{v}{\vZ}=\frac{v}{\sqrt{m_\pm^2+|v|^2}}$. 
Iterating the stationary characteristic trajectory \eqref{Back Forw exit time.st}, we define
\begin{equation}\label{iterated Back Forw exit time.st}\begin{split}
   & \tfstlo (x,v)= \sup\{s\in [0,\infty): (\Xst)_3^{l+1}(\tau;x,v)>0\ \textup{ for all } \tau\in(0,s)\}\ge 0,\\
   & \tbstlo (x,v)= \sup\{s\in [0,\infty): (\Xst)_3^{l+1}(-\tau;x,v)>0\ \textup{ for all } \tau\in(0,s)\}\ge 0\\
        &\xbstlo(x, v)=\Xst^{l+1}\left(-\tbstlo(x, v) ; x, v\right) \in \partial \Omega,\\  &\vbstlo( x, v)=\Vst^{l+1}\left(-\tbstlo(x, v) ; x, v\right).
   \end{split}
\end{equation}
As in the solution in the mild form \eqref{solution f} for the dynamical case, we can also write our solution $\fstlo$ in the steady case as
\begin{equation}\label{solution flo.st}
    \fstlo(x,v)= G_\pm((\Xst)_\parallel^{l+1}(-\tbstlo;x,v),V_{\pm}^{l+1}(-\tbstlo;x,v)).
\end{equation}

Now we obtain the following uniform $L^\infty$ estimates for the iterated sequence $(F_{\pm,\textup{st}}^k,\E_{\textup{st}}^k,\B_{\textup{st}}^k)$ with $k\in \mathbb{N}$ via bootstrap argument:
\begin{proposition}\label{prop.st.boot}
    For any $k\in \mathbb{N}$, we have
\begin{equation}
    \label{bootstrap.assump.st}
    \| e^{\frac{\beta}{2} |x_{\parallel}|}e^{\frac{\beta}{2} \vZ } e^{\frac{1}{2}m_\pm g\beta x_3}F_{\pm,\textup{st}}^k(\cdot,\cdot)\|_{L^\infty}\le C ,\textup{ and }
|\E_{\textup{st}}^{k}(x)|,\ |\B_{\textup{st}}^{k}(x)|\le \min\{m_+,m_-\}\frac{g}{16}\frac{1}{\langle x\rangle^2},
\end{equation}for some $C>0$ with $\min\{m_+,m_-\}g\gg 1$ and $\beta>1$.\end{proposition}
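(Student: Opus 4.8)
The plan is to establish both bounds in \eqref{bootstrap.assump.st} simultaneously by \textbf{induction on the iteration level} $k$, with inductive hypothesis the field estimate at level $k-1$, namely $|\E_{\textup{st}}^{k-1}(x)|,|\B_{\textup{st}}^{k-1}(x)|\le\min\{m_+,m_-\}\frac{g}{16}\langle x\rangle^{-2}$; this holds vacuously at level $0$ since $\E_{\textup{st}}^0=\B_{\textup{st}}^0=0$. Under this hypothesis the pair $(\E_{\textup{st}}^{k-1},\B_{\textup{st}}^{k-1})$ driving the level-$k$ characteristic system \eqref{iterated char.st} satisfies $\|(\E_{\textup{st}}^{k-1},\B_{\textup{st}}^{k-1})\|_{L^\infty}\le\min\{m_+,m_-\}\frac{g}{16}$, i.e.\ the smallness bound \eqref{EBst bound ass} used in the stationary weight-comparison computation of Section~\ref{sec.weight.comparison}. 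Hence that computation goes through verbatim with the steady trajectory $\Zst$ replaced by the level-$k$ iterated trajectory $(\Xst^k,\Vst^k)$: the exit times obey the bound \eqref{exit time bound.st}, $\tbst^k+\tfst^k\le\frac{32}{13m_\pm g}(v^0_\pm+m_\pm g x_3)$, so the characteristic flow is globally solvable and the mild representation \eqref{solution flo.st} is legitimate, and the weight estimates \eqref{w comparison 1.st} and \eqref{w comparison 3.st} hold along the level-$k$ trajectory.

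Evaluating the inflow assumption \eqref{inicon5} at $x_3=0$ gives $|G_\pm(x_\parallel,v)|\le C\,\mathrm{w}_{\pm,\beta}(x_\parallel,0,v)^{-1}$. Substituting the mild formula \eqref{solution flo.st}, using that the level-$k$ backward exit point lies on $\{x_3=0\}$, and applying \eqref{w comparison 3.st} at $s=-\tbst^k$ for the level-$k$ trajectory (legitimate since $1\le\frac18\min\{m_+,m_-\}g$), we obtain
\begin{equation}\notag
|F_{\pm,\textup{st}}^k(x,v)|\le C\,\mathrm{w}_{\pm,\beta}\bigl((\Xst^k)_\parallel(-\tbst^k),0,\Vst^k(-\tbst^k)\bigr)^{-1}\le C\,e^{-\frac{\beta}{2} v^0_\pm-\frac12 m_\pm g\beta x_3-\frac{\beta}{2}|x_\parallel|},
\end{equation}
which is the first bound in \eqref{bootstrap.assump.st}, with $C$ depending only on the data in \eqref{inicon5} and not on $k$. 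Integrating this in $v$, using $\int_{\mathbb{R}^3}e^{-\frac{\beta}{2}v^0_\pm}\,dv\lesssim\beta^{-3}$, $|\hat v_\pm|\le1$, and $\min\{m_+,m_-\}g\ge8>1$, yields the macroscopic estimates $|\rho_{\textup{st}}^k(x)|+|J_{\textup{st}}^k(x)|\lesssim C\beta^{-3}e^{-\frac{\beta}{2}\min\{m_+,m_-\}g x_3-\frac{\beta}{2}|x_\parallel|}\lesssim C\beta^{-3}e^{-\frac{\beta}{2}|x|}$ on $\mathbb{R}^3_+$, and in particular $\|\rho_{\textup{st}}^k\|_{L^1(\mathbb{R}^3_+)}+\|J_{\textup{st}}^k\|_{L^1(\mathbb{R}^3_+)}\lesssim C\,(\beta^6\min\{m_+,m_-\}g)^{-1}$.

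It remains to \textbf{recover the field bound at level $k$}. Insert the source bounds into the Green-function representations \eqref{Field representation Est} and \eqref{Field representation Bst}; since $x,y\in\mathbb{R}^3_+$ forces $|x-\bar y|\ge|x-y|$, the kernels satisfy $|\nabla_x\mathfrak{G}_{\textup{odd}}|,|\nabla_x\mathfrak{G}_{\textup{even}}|\lesssim|x-y|^{-2}$, so $|\E_{\textup{st}}^k(x)|+|\B_{\textup{st}}^k(x)|\lesssim\int_{\mathbb{R}^3_+}|x-y|^{-2}\bigl(|\rho_{\textup{st}}^k(y)|+|J_{\textup{st}}^k(y)|\bigr)\,dy$. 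Split the integral at $|y|=|x|/2$: on $\{|y|<|x|/2\}$ one has $|x-y|>|x|/2$, contributing $\lesssim(\|\rho_{\textup{st}}^k\|_{L^1}+\|J_{\textup{st}}^k\|_{L^1})\langle x\rangle^{-2}$; on $\{|y|\ge|x|/2\}$ bound $e^{-\frac{\beta}{2}|y|}\le e^{-\frac{\beta}{8}|x|}e^{-\frac{\beta}{4}|y|}$ and use $\int_{\mathbb{R}^3}e^{-\frac{\beta}{4}|y|}|x-y|^{-2}\,dy\lesssim1$ to get $\lesssim C\beta^{-3}e^{-\frac{\beta}{8}|x|}\lesssim C\beta^{-3}\langle x\rangle^{-2}$. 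Together this gives $|\E_{\textup{st}}^k(x)|+|\B_{\textup{st}}^k(x)|\le C_\star\,C\,\beta^{-3}\langle x\rangle^{-2}$ for an absolute constant $C_\star$, and the quantitative hypothesis $\min\{m_+,m_-\}g\,\beta^3\gg1$, read as $\min\{m_+,m_-\}g\,\beta^3\ge16C_\star C$, upgrades this to $\le\min\{m_+,m_-\}\frac{g}{16}\langle x\rangle^{-2}$, the field bound at level $k$. This closes the induction.

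The \textbf{main obstacle} is the last step: recovering the \emph{sharp} constant $\min\{m_+,m_-\}g/16$ and the exact decay rate $\langle x\rangle^{-2}$ for $(\E_{\textup{st}}^k,\B_{\textup{st}}^k)$ from the Poisson representations. This forces one to exploit the full \emph{exponential} spatial localization of $(\rho_{\textup{st}}^k,J_{\textup{st}}^k)$, not merely polynomial decay --- the far-field part of the Newtonian-type kernel $|x-y|^{-2}$ is then absorbed by a factor $e^{-c\beta|x|}$, which beats $\langle x\rangle^{-2}$ with room to spare, while the near-field part is controlled by the $L^1$-smallness of the source. The two independent gains --- one factor $\beta^{-3}$ from the momentum integration, one factor $(\beta^3\min\{m_+,m_-\}g)^{-1}$ from the spatial integration --- are exactly what the hypotheses $\min\{m_+,m_-\}g\ge8$ and $\min\{m_+,m_-\}g\beta^3\gg1$ convert into the asserted bound; by contrast, Steps 1--3 reduce to bookkeeping once the weight-comparison identities of Section~\ref{sec.weight.comparison} and the mild formula \eqref{solution flo.st} are available.
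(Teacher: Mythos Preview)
Your proposal is correct and follows essentially the same bootstrap/induction scheme as the paper: assume the field bound at the previous level, use the mild formula \eqref{solution flo.st} together with the weight comparison \eqref{w comparison 3.st} to obtain the weighted $L^\infty$ bound on $F_{\pm,\textup{st}}^k$, then feed the resulting exponential decay of $(\rho_{\textup{st}}^k,J_{\textup{st}}^k)$ into the Green-function representations \eqref{Field representation Est}--\eqref{Field representation Bst} to recover the field bound at level $k$. The only noticeable difference is in the last step: the paper estimates the convolution $\int_{\mathbb{R}^3_+}|\nabla\mathfrak{G}(x,y)|\,e^{-\frac{\beta}{2}|y_\parallel|}e^{-\frac{1}{2}m_\pm g\beta y_3}\,dy$ by invoking the elementary inequality \eqref{asymptotics} directly, whereas you perform an explicit near/far splitting at $|y|=|x|/2$. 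Both arguments yield the same $\langle x\rangle^{-2}$ decay with a prefactor small in $\beta$, and both close under the hypothesis $\min\{m_+,m_-\}g\beta^3\gg1$; your version is slightly more self-contained, while the paper's is marginally sharper in the constant (it picks up an extra $\beta^{-3}$ from the spatial integration).
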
 It is trivial that the solutions are zero and satisfy \eqref{bootstrap.assump.st} when $k=0.$ Assume \eqref{bootstrap.assump.st} holds for $k=l.$ Then we prove that the next sequence element $(F_{\pm,\textup{st}}^{l+1},\E_{\textup{st}}^{l+1},\B_{\textup{st}}^{l+1})$ will satisfy the same upper-bounds \eqref{bootstrap.assump.st}.

\subsubsection{Weighted $L^\infty$ Estimate for the Velocity Distribution $\fstlo$}
Using \eqref{solution flo.st}, we observe that
\begin{multline}\notag
    |\fstlo(x,v)|=|G_\pm((\Xst)_\parallel^{l+1}(-\tbstlo;x,v),\Vlo(-\tbstlo;x,v))|\\
  = \frac{1}{\mathrm{w}_{\pm,\beta}(\Zlo(-\tbstlo ;  x, v))}\|(\mathrm{w}_{\pm,\beta} G_\pm)((\Xst)_\parallel^{l+1}(-\tbstlo;x,v),\Vlo(-\tbstlo;x,v))\|_{L^\infty_{x,v}(\gamma_-)}.
\end{multline}
Using the boundary condition \eqref{inicon5} and the weight comparison \eqref{w comparison 3.st}, we have
\begin{equation}\label{fstlo final est}
    |\fstlo(x,v)|
   \le Ce^{-\frac{1}{2}\beta \vZ} e^{-\frac{1}{2}m_\pm g\beta x_3}e^{-\frac{\beta}{2}|x_{\parallel}|},
\end{equation}where the weight function $\mathrm{w}_{\pm,\beta}$ is defined in \eqref{weights.wholehalf}. This proves the bootstrap assumption \eqref{bootstrap.assump.st} for $\fstlo$.

\subsubsection{$L^\infty $ Estimates for the Steady Fields $\Estlobf$ and $\Bstlobf$}Now, given the estimates \eqref{fstlo final est} for the steady distribution $\fstlo$, we will prove the bootstrap estimates \eqref{bootstrap.assump.st} for the fields $\Estlobf$ and $\Bstlobf$ using the field representations \eqref{Field representation Bst} and \eqref{Field representation Est}.

For $i=1,2,3$, the field components $\mathbf{E}^{l+1}_{\textup{st},i}$ of $\Estlobf$ in \eqref{Field representation Est} solving \eqref{iterated Maxwell.st} satisfy that
$$|\mathbf{E}^{l+1}_{\textup{st},i}(x)|\le   \int_{\mathbb{R}^3_+} \left|\partial_{x_i}  \mathfrak{G}_{\textup{odd}}(x,y)\right|\left| \int_\rth F^{l+1}_{+,\textup{st}}(y,v)dv- \int_\rth F^{l+1}_{-,\textup{st}}(y,v)dv\right| \ dy.$$ Using the estimate \eqref{fstlo final est}, we observe that
\begin{multline}\label{est.Elost mid}
    |\mathbf{E}^{l+1}_{\textup{st},i}(x)| \le \sum_{\pm}2C\int_{\mathbb{R}^3_+} dy\ \left|\partial_{x_i}  \mathfrak{G}_{\textup{odd}}(x,y)\right| e^{-\frac{\beta }{2}|y_\parallel|} e^{-\frac{1}{2}m_\pm g\beta y_3} \int_\rth dv\ e^{-\frac{1}{2}\beta \vZ}\\*
     \lesssim  \sum_{\pm}2\frac{C}{\beta^3}\int_{\mathbb{R}^3_+} dy\ \left|\partial_{x_i}  \mathfrak{G}_{\textup{odd}}(x,y)\right| e^{-\frac{\beta }{2}|y_\parallel|} e^{-\frac{1}{2}m_\pm g\beta y_3} ,
\end{multline}
 where we further used that
\begin{equation}\begin{split}\label{additional beta decay.st}
  \int_{\rth} dv\   e^{-\frac{\beta}{2} \vZ}
     &= \int_{\rth} dv\   e^{-\frac{\beta}{2} \sqrt{m_\pm^2+|v|^2}}
       = 4\pi \int_0^\infty d|v|\  |v|^2e^{-\frac{\beta}{2} \sqrt{m_\pm^2+|v|^2}}
   \\
       &=4\pi \int_{m_\pm}^\infty dz\ z \sqrt{z^2-m^2_\pm} e^{-\frac{\beta}{2} z}\le 4\pi \int_0^\infty dz\  z^2 e^{-\frac{\beta}{2} z}
       =\frac{32\pi}{\beta^3} \int_0^\infty dz'\  z'^2 e^{-z'}\approx \frac{1}{\beta^3}, 
 \end{split}\end{equation}where we made the changes of variables $|v|\mapsto z\eqdef \sqrt{m_\pm^2+|v|^2}$ and $z\mapsto z'\eqdef \frac{\beta}{2}z.$ 
 %
Then since $\left|\partial_{x_i}  \mathfrak{G}_{\textup{odd}}(x,y)\right|\le \frac{1}{|x-y|^2}+\frac{1}{|\bar{x}-y|^2}$ and the upper bound is even in $y_3$, note that 
\begin{multline*}\int_{\mathbb{R}^3_+} dy\ \left|\partial_{x_i}  \mathfrak{G}_{\textup{odd}}(x,y)\right| e^{-\frac{\beta }{2}|y_\parallel|} e^{-\frac{1}{2}m_\pm g\beta y_3}
    \le \int_{\mathbb{R}^3} dy\ \left(\frac{1}{|x-y|^2}+\frac{1}{|\bar{x}-y|^2}\right) e^{-\frac{\beta }{2}|y_\parallel|} e^{-\frac{1}{2}m_\pm g\beta |y_3|}\\
    \lesssim  \frac{1}{m_\pm g\beta^3}\frac{1}{\langle x\rangle^2},
\end{multline*}using the elementary inequality
\begin{equation}
     \label{asymptotics}\int_{\mathbb{R}^3}d z\ \frac{e^{-a|z_\parallel|}e^{-b|z_3|}}{|x_\parallel-z_\parallel|^k+|x_3-z_3|^k}\lesssim \frac{1}{a^2b}\frac{1}{ \langle x\rangle^k},
 \end{equation} for $k<3.$ 
Therefore, in \eqref{est.Elost mid},
choosing $\min\{m_+,m_-\}g\beta^3 \gg 1,$ we have  \begin{equation}
     \label{Est diri final}|\mathbf{E}^{l+1}_{\textup{st},i}(x)|\lesssim \frac{1}{\min\{m_+,m_-\} g\beta^6}\frac{1}{\langle x\rangle^2} \ll \min\{m_+,m_-\}g\frac{1}{\langle x\rangle^2}.
 \end{equation} 
 

 Moreover, since $ \int_\rth |\hat{v}_{+,i}||F_{+,\textup{st}}(y,v)|dv\le \int_\rth |F_{+,\textup{st}}(y,v)|dv,$ $\B^{l+1}_{\textup{st}}(x)$ in \eqref{Field representation Bst} also has the same upper-bound (up to constant) as that of $\E^{l+1}_{\textup{st}}(x)$ and hence 
 $$|\B^{l+1}_{\textup{st}}(x)| \ll \min\{m_+,m_-\}g\frac{1}{\langle x\rangle^2}.$$
Altogether, we have 
\begin{equation}\label{final estimate for flo.st}
|\E_{\textup{st}}^{l+1}(x)|,\  |\B_{\textup{st}}^{l+1}(x)|\le  \min\{m_+,m_-\}\frac{g}{16}\frac{1}{\langle x\rangle^2},
 \end{equation}which closes the bootstrap argument by proving the upper-bounds in  \eqref{bootstrap.assump.st} at the sequential level of $(l+1).$ 
\begin{proof}[Proof of Proposition \ref{prop.st.boot}]
   Proposition \ref{prop.st.boot} now follows by \eqref{fstlo final est} and \eqref{final estimate for flo.st}.
\end{proof}

\subsection{Derivative Estimates}
We can further show that the stationary solution satisfies the following regularity estimates at the sequential level. We first define the following kinetic weight functions:
\begin{definition}
    \begin{equation}
\label{alpha.s.st.def}\tilde{\alpha}_{\pm,\textup{st}}(x,v)\eqdef\sqrt{\frac{\alpha_{\pm,\textup{st}}^2(x,v)}{1+\alpha_{\pm,\textup{st}}^2(x,v)}},
  \end{equation}where $\alpha_{\pm,\textup{st}}$ is defined as
\begin{equation}\label{alpha.st}
    \alpha_{\pm,\textup{st}}(x,v)= \sqrt{x_3^2+\left|(\hat{v}_\pm)_3\right|^2-2 \left((\FS)_3(x_\parallel,0,v)\right)\frac{x_3}{(\vZ)}
}.
\end{equation}
with $(\FS)_{\textup{st}}\eqdef \pm\Estlbf\pm \hat{v}_\pm\times \Bstlbf-m_\pm g \hat{e}_3 .$ 
\end{definition}

Then we have the following derivative estimates associated to the kinetic weight $\tilde{\alpha}_{\pm,\textup{st}}$:
\begin{proposition}\label{prop.deri.st}
    Fix $m>4$ and $R>0.$ 
    Suppose that the boundary data  $G_\pm$ satisfy \begin{equation}\label{stationary G condition}
        \| (\vZ)^m \nabla_{x_\parallel} G_\pm\|_{L^\infty_{x_\parallel,v}}
      +\| (\vZ)^m\nabla_v G_\pm\|_{L^\infty_{x_\parallel,v}}<\infty.
    \end{equation} Consider the corresponding solution sequence $(\fstl,\Estlbf,\Bstlbf)_{l\in\mathbb{N}}$ of \eqref{iterated Vlasov.st}--\eqref{iterated char.st} associated to the boundary data $G_\pm$. Fix any arbitrary $l\in \mathbb{N}$. Define \begin{equation}
        \label{eq.forcing.st}(\FS)_{\textup{st}}\eqdef \pm\Estlbf\pm \hat{v}_\pm\times \Bstlbf-m_\pm g \hat{e}_3 .
    \end{equation}
Suppose that \begin{equation}
    \label{Fl C2 bound.st}\|\nabla_x (\Estlbf,\Bstlbf)\|_{L^\infty}<C_1\text{ and } 
   \|(\FS)_{\textup{st}}\|_{L^\infty}<C_2,
\end{equation} for some $C_1>0$ and $C_2>0.$ 
    Define $\Omega_R = \rth\times [0,R].$ Then
    \begin{equation}\label{fstlo bound}
        \| (\vZ) ^m  \nabla_{x_\parallel}\fstlo\|_{L^\infty(\Omega_R\times \rth)}+\left\|(\vZ)^m \tilde{\alpha}_{\pm,\textup{st}} \partial_{x_3}\fstlo\right\|_{L^\infty(\Omega_R\times \rth)}+\| (\vZ) ^m  \nabla_{v}\fstlo\|_{L^\infty(\Omega_R\times \rth)}\le C_R,
    \end{equation} for some constant $C_R>0$ which depends only on $R$, $C_1, C_2$ and $G_\pm$. 
   Suppose that $-(\FS)_{\textup{st},3}(x_\parallel,0,v)>c_0,$ for some $c_0>0.$ Moreover, the following estimates hold:
   \begin{multline}\label{EBst bound final}
       \|(\E^{l+1}_{\textup{st}},\B^{l+1}_{\textup{st}})\|_{W^{1,\infty}_{x}( \Omega)}\lesssim \|(\vZ)^m F^{l+1}_{\pm,\textup{st}}\|_{L^\infty_{x,v}(\Omega\times \rth)}\\+\|(\vZ)^m\nabla_{x_\parallel}F^{l+1}_{\pm,\textup{st}}\|_{L^\infty_{x,v}( \Omega\times \rth)}
       +\|(\vZ)^m\tilde{\alpha}_{\pm,\textup{st}}(x,v)\partial_{x_3}F^{l+1}_{\pm,\textup{st}}\|_{L^\infty_{x,v}(\Omega\times \rth)},
   \end{multline} where the weight $\tilde{\alpha}_{\pm,\textup{st}}$ is defined as in \eqref{alpha.s.st.def}.
\end{proposition}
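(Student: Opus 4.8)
The plan is to run a Lagrangian/characteristic analysis adapted to the inflow half-space, with the kinetic weight $\tilde{\alpha}_{\pm,\textup{st}}$ absorbing the grazing singularity in the normal direction, and then to feed the resulting weighted bounds for $\fstlo$ into the Green-function representations of the fields.

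First I would differentiate the mild representation \eqref{solution flo.st}, namely $\fstlo(x,v)=G_\pm\big((\Xst^{l+1})_\parallel(-\tbstlo;x,v),\,\Vst^{l+1}(-\tbstlo;x,v)\big)$, by the chain rule. This produces two ingredients: (i) the Jacobian $\partial_{(x,v)}(\Xst^{l+1},\Vst^{l+1})(s)$ of the characteristic flow \eqref{iterated char.st}, and (ii) the gradient $\partial_{(x,v)}\tbstlo$ of the backward exit time. For (i), differentiating \eqref{iterated char.st} in the initial data yields a linear variational ODE whose coefficient matrix is bounded entrywise by $C_1$ (from $\|\nabla_x(\Estlbf,\Bstlbf)\|_{L^\infty}$), by $\|(\Estlbf,\Bstlbf)\|_{L^\infty}\le\min\{m_+,m_-\}\frac{g}{16}$ (from \eqref{bootstrap.assump.st}), and by $\vZ^{-1}$ coming from $D_v\hat{v}_\pm=\vZ^{-1}(\mathrm{Id}-\hat{v}_\pm\otimes\hat{v}_\pm)$. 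A Gronwall estimate on $s\in[-\tbstlo,0]$, together with the exit-time bound \eqref{exit time bound.st} (available thanks to the field bound in \eqref{bootstrap.assump.st}) and the restriction $x_3\le R$ on $\Omega_R$, controls $|\partial_{(x,v)}(\Xst^{l+1},\Vst^{l+1})(s)|$ by a constant depending only on $C_1,C_2,R$; the $\vZ^{-1}$ gain in the $\partial_v$-to-$\partial_x$ coupling has to be tracked so that the large-velocity regime does not spoil the $(\vZ)^m$ weight.

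For (ii), differentiating the defining identity $(\Xst^{l+1})_3(-\tbstlo;x,v)=0$ gives
\[
\partial_{(x,v)}\tbstlo \;=\; \left.\frac{\partial_{(x,v)}(\Xst^{l+1})_3}{\partial_s (\Xst^{l+1})_3}\right|_{s=-\tbstlo},
\]
whose denominator, the normal velocity $\partial_s(\Xst^{l+1})_3$ at the wall, degenerates on the grazing set --- this is the one genuine obstacle. The remedy is a velocity-lemma-type comparison for \eqref{iterated char.st}: under the hypothesis $-(\FS)_{\textup{st},3}(x_\parallel,0,v)>c_0$ (the net Lorentz-plus-gravity force points strictly inward at the boundary), $\alpha_{\pm,\textup{st}}$ in \eqref{alpha.st} cannot vanish at the exit point, and $\tilde{\alpha}_{\pm,\textup{st}}(x,v)$ is comparable --- up to constants depending only on $C_1,C_2,R$ --- to $|\partial_s(\Xst^{l+1})_3|$ at $s=-\tbstlo$ along the whole trajectory; hence $\tilde{\alpha}_{\pm,\textup{st}}\,\partial_{x_3}\tbstlo$ is bounded. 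For the tangential and momentum derivatives, the numerator $\partial_{x_\parallel,v}(\Xst^{l+1})_3$ at $s=-\tbstlo$ vanishes to the same order as $\partial_s(\Xst^{l+1})_3$ there, so the quotient stays bounded with no extra weight --- this is precisely why only the $\partial_{x_3}$ term in \eqref{fstlo bound} carries $\tilde{\alpha}_{\pm,\textup{st}}$. Assembling (i)--(ii) and using $\vZ(-\tbstlo)\simeq\vZ$ together with \eqref{stationary G condition} yields \eqref{fstlo bound}.

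Finally, for \eqref{EBst bound final} I would insert the weighted bounds just obtained for $\fstlo$ into the field representations \eqref{Field representation Est}--\eqref{Field representation Bst}. Estimating $\nabla_x(\E^{l+1}_{\textup{st}},\B^{l+1}_{\textup{st}})$ calls for a second spatial derivative of the kernels $\mathfrak{G}_{\textup{odd}},\mathfrak{G}_{\textup{even}}$, which behaves like $|x-y|^{-3}$ and is not locally integrable; so one integrates by parts in $y$ to move one derivative onto $\fstlo$, producing $\nabla_{x_\parallel}\fstlo$, $\partial_{x_3}\fstlo$ and a boundary integral on $x_3=0$. The $\partial_{x_3}$ term is absorbed by the $\tilde{\alpha}_{\pm,\textup{st}}$-weighted norm: since $-(\FS)_{\textup{st},3}(x_\parallel,0,v)>c_0$ forces $\alpha_{\pm,\textup{st}}^2(x,v)\gtrsim x_3/\vZ$ for $x_3>0$, one has $\tilde{\alpha}_{\pm,\textup{st}}^{-1}\lesssim(\vZ/x_3)^{1/2}$, and the induced $v$-integral $\int(\vZ)^{1/2-m}\,dv$ converges exactly because $m>4$; combining this with the $v$-moment computation \eqref{additional beta decay.st} and the convolution estimate \eqref{asymptotics} closes \eqref{EBst bound final}. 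The main technical difficulty throughout is the velocity-lemma comparison: proving it uniformly in the iteration index $l$ and over all velocities, so that $C_R$ genuinely depends only on $R,C_1,C_2$ and $G_\pm$.
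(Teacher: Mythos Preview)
Your strategy for \eqref{fstlo bound} matches the paper's: differentiate \eqref{solution flo.st} via the chain rule, control the flow Jacobian by Gronwall on the variational system over $s\in[-\tbstlo,0]$ (using \eqref{Fl C2 bound.st} and the slab restriction $x_3\le R$ together with \eqref{exit time bound.st}), and use the velocity lemma (Lemma~\ref{lem.velo.st}) to convert $\tilde{\alpha}_{\pm,\textup{st}}(x,v)$ into $|(\hat V^{l+1}_\pm)_3(-\tbstlo)|$ for the $\partial_{x_3}$ term. One point of imprecision: your claim that for $\partial_{x_\parallel}$ and $\partial_v$ the numerator $\partial_{x_\parallel,v}(\Xst^{l+1})_3$ ``vanishes to the same order'' as the denominator $(\hat V^{l+1}_\pm)_3(-\tbstlo)$ is morally right but not the mechanism the paper isolates. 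What the paper actually uses is that the off-diagonal flow derivative satisfies $|\partial_{x_i}(\Xst^{l+1})_3(-\tbstlo)|\lesssim\tbstlo/\vZ$ for $i=1,2$ (the stationary analogue of \eqref{dxixj}), and then the ratio $\tbstlo/|(\hat V^{l+1}_\pm)_3(-\tbstlo)|$ is bounded by Lemma~\ref{lemma.tb over v3.st}. This lemma is the precise replacement for your heuristic, and it is what makes the grazing quotient harmless without any weight; you should invoke it explicitly.

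For \eqref{EBst bound final} your route diverges from the paper's. You propose integrating by parts in all of $y$ (including $y_3$) and absorbing the resulting $\partial_{y_3}\fstlo$ via $\tilde{\alpha}_{\pm,\textup{st}}^{-1}\lesssim(\vZ/y_3)^{1/2}$. The paper instead reduces normal field derivatives to tangential ones plus $\rho_{\textup{st}},J_{\textup{st}}$ directly from the stationary Maxwell system (the analogue of Lemma~\ref{lem.normal deri}), and handles tangential derivatives by integrating by parts only in $y_\parallel$ (where $\partial_{x_k}\mathfrak G_{\textup{odd}}=-\partial_{y_k}\mathfrak G_{\textup{odd}}$ holds cleanly), so that only $\nabla_{y_\parallel}\fstlo$ ever appears. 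This avoids two complications in your route: the reflection in $\mathfrak G_{\textup{odd}}$ spoils the identity $\partial_{x_3}=-\partial_{y_3}$, and the $y_3^{-1/2}$ factor from $\tilde{\alpha}^{-1}$ leaves a $\int|x-y|^{-2}y_3^{-1/2}\,dy$ that diverges at infinity unless you also feed in the exponential spatial decay \eqref{fstlo final est} (which you do, via \eqref{asymptotics}, but which is not part of the RHS of \eqref{EBst bound final} as stated). Both routes ultimately rely on that spatial decay---the paper acknowledges this in the Remark following the proposition---so your approach is not wrong, just less structural and with more bookkeeping near the boundary and at infinity.
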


\begin{remark}
The constant $C_R$ remains finite on each finite slab $x_3 \in [0,R]$.  
Once the estimates are established on $[0,R]$, they can be extended to $[R,2R]$ by redefining the inflow boundary data at $x_3=R$ using the solution values there. Note that this new inflow data also satisfy \eqref{stationary G condition} by \eqref{fstlo bound}. Iterating this continuation procedure covers all intervals $[kR,(k+1)R]$, $k\in\mathbb{N}$, and thus yields the desired global regularity estimates on $x_3 \in [0,\infty)$.

We note that the derivative estimate for the distribution \eqref{fstlo bound} is uniform in $l$ and hence the derivative estimate for the fields \eqref{EBst bound final} is also uniform in $l$ by \eqref{fstlo final est}. Hence those bounds are even preserved when we pass to the limit $l\to \infty.$
\end{remark}

For the proof of the proposition, we collect several lemmas on the kinetic weight $\tilde{\alpha}_{\pm,\textup{st}}$ including the velocity lemma (Lemma \ref{lem.velo.st}) originally established by Guo \cite{MR1354697}.
\begin{lemma}[Velocity Lemma]
    \label{lem.velo.st}Let $\alpha_{\pm,\textup{st}}$ and $\tilde{\alpha}_{\pm,\textup{st}}$ be defined as in \eqref{alpha.st} and \eqref{alpha.s.st.def}, respectively. Define $(\FS)_{\textup{st}}$ as \eqref{eq.forcing.st}. Suppose $$
     \|\Elbf_{\textup{st}}\|_{L^\infty}+ \|\Blbf_{\textup{st}}\|_{L^\infty} + \|\nabla_x(\FS)_{\textup{st}}\|_{L^\infty} <C.
   $$ Suppose that for all $x_\parallel\in 
   \mathbb{R}^2
   ,$ $-(\FS)_{3,\textup{st}}(x_\parallel,0)>c_0,$  for some $c_0>0.$ Then for any $(x,v)\in \Omega\times \rth,$ with the trajectory $\Xlo(s;x,v)$ and $\Vlo(s;x,v)$ satisfying \eqref{iterated char.st}, 
   \begin{equation}
       \label{alpha.s.lem.st}
       e^{-10\frac{C}{c_0}|s|}\tilde{\alpha}_{\pm,\textup{st}}(x,v)\le \tilde{\alpha}_{\pm,\textup{st}}(s,\Xlo(s;x,v),\Vlo(s;x,v))\le e^{10\frac{C}{c_0}|s|}\tilde{\alpha}_{\pm,\textup{st}}(x,v)
   \end{equation}In addition, regarding the stationary material derivative
    $\frac{D}{Ds}\eqdef (\hat{V}_\pm(s))\cdot \nabla_x +(\FS)_{\textup{st}}(X^l_\pm(s))\cdot \nabla_v,$ we have 
    \begin{equation}
        \label{bound of DalphaDt.st}
    \left|\frac{D}{Ds}\alpha_\pm^2(s)
    \right|\le 20\frac{C}{c_0}\alpha_\pm^2(s).\end{equation}
\end{lemma}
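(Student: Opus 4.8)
The plan is to establish the pointwise differential inequality \eqref{bound of DalphaDt.st} for $\alpha_{\pm,\textup{st}}^2$ along the iterated steady characteristics $(\Xlo(s),\Vlo(s))$ solving \eqref{iterated char.st}, and then to deduce \eqref{alpha.s.lem.st} from it by a Gr\"onwall argument together with a transfer from $\alpha_{\pm,\textup{st}}$ to $\tilde\alpha_{\pm,\textup{st}}=\sqrt{\alpha_{\pm,\textup{st}}^2/(1+\alpha_{\pm,\textup{st}}^2)}$.

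First, since $(\Xlo(s),\Vlo(s))$ solves \eqref{iterated char.st}, for any $C^1$ function $h$ one has $\frac{d}{ds}h(\Xlo(s),\Vlo(s))=\frac{D}{Ds}h$. I would differentiate $\alpha_{\pm,\textup{st}}^2$ as given in \eqref{alpha.st} termwise along the trajectory, using $\frac{d}{ds}(\Xlo)_3=(\Vhatlo)_3$, $\frac{d}{ds}\Vlo=(\FS)_{\textup{st}}(\Xlo,\Vlo)$, and the induced identity $\frac{d}{ds}\sqrt{m_\pm^2+|\Vlo|^2}=\Vhatlo\cdot(\FS)_{\textup{st}}(\Xlo,\Vlo)$. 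The key structural point---this is precisely why $\alpha_{\pm,\textup{st}}$ is built with this normal-force coefficient, in the spirit of Guo's velocity lemma \cite{MR1354697}---is that the large contributions produced by the gravity part $-m_\pm g\hat e_3$ of the force cancel between $\frac{D}{Ds}|(\hat v_\pm)_3|^2$ and $\frac{D}{Ds}\big(-2(\FS)_{\textup{st},3}(x_\parallel,0,v)\frac{x_3}{\vZ}\big)$. After this cancellation, the terms that survive are: the term $2(\Xlo)_3(\Vhatlo)_3$; terms carrying an explicit extra factor $(\Xlo)_3$, or an extra power of $(\Vhatlo)_3$, together with $\sqrt{m_\pm^2+|\Vlo|^2}$ in a denominator; the difference $(\FS)_{\textup{st},3}(\Xlo,\Vlo)-(\FS)_{\textup{st},3}((\Xlo)_\parallel,0,\Vlo)$, bounded by $\|\nabla_x(\FS)_{\textup{st}}\|_{L^\infty}(\Xlo)_3$ since it reflects only a shift of the third spatial variable from $(\Xlo)_3$ to $0$; and terms linear in the electromagnetic part $\pm\Estlbf\pm\hat v_\pm\times\Bstlbf$ of the force, or, after differentiating the boundary evaluation in $s$, in $\nabla_x(\FS)_{\textup{st}}$.

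Next I would bound each surviving term by a constant times $\alpha_{\pm,\textup{st}}^2$. Two elementary facts suffice: because $(\Xlo)_3\ge 0$ along the trajectory in $\overline\Omega$ and $-(\FS)_{\textup{st},3}(x_\parallel,0,v)>c_0$, the last term of \eqref{alpha.st} is nonnegative, so $\alpha_{\pm,\textup{st}}^2\ge x_3^2+|(\hat v_\pm)_3|^2$ and $\alpha_{\pm,\textup{st}}^2\ge 2c_0\,x_3/\vZ$; these give $|x_3|,|(\hat v_\pm)_3|\le\alpha_{\pm,\textup{st}}$, $\ x_3/\vZ\le\alpha_{\pm,\textup{st}}^2/(2c_0)$, and---splitting into the cases $\alpha_{\pm,\textup{st}}\le 1$ and $\alpha_{\pm,\textup{st}}>1$, using $|(\hat v_\pm)_3|\le 1$---also $|x_3(\hat v_\pm)_3|\le\alpha_{\pm,\textup{st}}^2$. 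Combined with $\vZ\ge m_\pm$ and the hypothesis that $\Elbf_{\textup{st}}$, $\Blbf_{\textup{st}}$ and $\nabla_x(\FS)_{\textup{st}}$ are bounded by $C$ (the constant also absorbing the size of the force, hence $m_\pm g$), this yields $\big|\frac{D}{Ds}\alpha_{\pm,\textup{st}}^2\big|\le 20\frac{C}{c_0}\alpha_{\pm,\textup{st}}^2$, which is \eqref{bound of DalphaDt.st}. If $\alpha_{\pm,\textup{st}}^2$ vanishes at one value of $s$ it vanishes identically (uniqueness for $y'=\pm 20\frac{C}{c_0}y$), and \eqref{alpha.s.lem.st} is trivial there; otherwise integrating $\big|\frac{d}{ds}\log\alpha_{\pm,\textup{st}}^2\big|\le 20\frac{C}{c_0}$ from $0$ to $s$ gives $e^{-20\frac{C}{c_0}|s|}\alpha_{\pm,\textup{st}}^2(x,v)\le\alpha_{\pm,\textup{st}}^2(\Xlo(s),\Vlo(s))\le e^{20\frac{C}{c_0}|s|}\alpha_{\pm,\textup{st}}^2(x,v)$. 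Since $t\mapsto t/(1+t)$ is increasing with $\lambda t/(1+\lambda t)\le\lambda\,t/(1+t)$ for $\lambda\ge 1$ and the reverse for $\lambda\le 1$, applying this with $\lambda=e^{\pm 20\frac{C}{c_0}|s|}$ transfers the bound to $\tilde\alpha_{\pm,\textup{st}}^2$, and taking square roots yields \eqref{alpha.s.lem.st}.

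I expect the main obstacle to be the termwise differentiation and the verification of the cancellation in the second step: one must track the gravity terms carefully enough to confirm that the only non-cancelling order-one contribution is the benign $2(\Xlo)_3(\Vhatlo)_3$ (which is $\le 2\alpha_{\pm,\textup{st}}^2$ since $(\Xlo)_3\ge 0$), and that every remaining term carries either a field gradient or an extra power of a boundary-vanishing quantity, so that all of them are dominated by $\alpha_{\pm,\textup{st}}^2$ via the two inequalities above. A minor technical point is that $(\FS)_{\textup{st}}$ is only Lipschitz in $x$ (its gradient merely in $L^\infty$), so the chain-rule identity and the resulting differential inequality hold for a.e.\ $s$; this is enough to run the Gr\"onwall step in integrated form.
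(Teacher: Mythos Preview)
Your proposal is correct and follows essentially the same approach as the paper: establish the differential inequality \eqref{bound of DalphaDt.st} for $\alpha_{\pm,\textup{st}}^2$ (the paper simply cites \cite[Eq.~(4.10)]{MR4645724} for this, whereas you sketch the cancellation and bounding in detail), then pass to $\tilde\alpha_{\pm,\textup{st}}$ via Gr\"onwall. The only cosmetic difference is that the paper differentiates $\tilde\alpha_{\pm,\textup{st}}^2$ directly to obtain $\frac{D}{Ds}\tilde\alpha_{\pm,\textup{st}}^2=\frac{1}{(1+\alpha_{\pm,\textup{st}}^2)^2}\frac{D}{Ds}\alpha_{\pm,\textup{st}}^2\le 20\tfrac{C}{c_0}\tilde\alpha_{\pm,\textup{st}}^2$ and applies Gr\"onwall to $\tilde\alpha_{\pm,\textup{st}}^2$, while you apply Gr\"onwall to $\alpha_{\pm,\textup{st}}^2$ first and then transfer via the monotonicity of $t\mapsto t/(1+t)$; both routes yield the same bound.
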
\begin{proof}
    We first observe that
    $$\frac{D}{Ds}\tilde{\alpha}_{\pm,\textup{st}}^2=\frac{1}{1+\alpha_{\pm,\textup{st}}^2}\frac{D}{Ds}\alpha_{\pm,\textup{st}}^2-\frac{\alpha_{\pm,\textup{st}}^2}{(1+\alpha_{\pm,\textup{st}}^2)^2}\frac{D}{Ds}\alpha_{\pm,\textup{st}}^2=\frac{1}{(1+\alpha_{\pm,\textup{st}}^2)^2}\frac{D}{Ds}\alpha_{\pm,\textup{st}}^2.$$Then using the bound \eqref{bound of DalphaDt.st} of the material derivative $\frac{D}{Ds}\alpha_{\pm,\textup{st}}^2$ we further obtain
    $$\frac{D}{Ds}\tilde{\alpha}_{\pm,\textup{st}}^2\le 20\frac{C}{c_0(1+\alpha_{\pm,\textup{st}}^2)}\frac{\alpha_{\pm,\textup{st}}^2}{1+\alpha_{\pm,\textup{st}}^2}\le 20\frac{C}{c_0}\tilde{\alpha}_{\pm,\textup{st}}^2.$$ By the Gr\"onwall lemma, we finally obtain $$\tilde{\alpha}_{\pm,\textup{st}}^2(s,\Xlo(s),\Vlo(s))\le e^{\frac{20C}{c_0}|s|}\tilde{\alpha}_{\pm,\textup{st}}^2(x,v).$$
    This completes the proof of Lemma \ref{lem.velo.st}.  Lastly, the proof of \eqref{bound of DalphaDt.st} follows by \cite[Eq. (4.10)]{MR4645724}.
\end{proof}
We also record the following upper bound on the singularity $\frac{1}{|\hat{V}^{l+1}_{\pm,3}|}$:
    \begin{lemma}[Lemma 10 of \cite{MR4645724}]
\label{lemma.tb over v3.st}
     For $(x,v)\in\Omega\times \rth,$ let the trajectory $\Xlo(s;x,v)$ and $\Vlo(s;x,v)$ satisfy \eqref{iterated char.st}.
    Suppose for all $x,v$, $-(\FS)_{3,\textup{st}}(x_\parallel,0,v)>c_0,$  then there exists a constant $C$ depending on $g,$ $\|\E_{\textup{st}}^l\|_{W^{1,\infty}( \Omega)},$ and $\|\B_{\textup{st}}^l\|_{W^{1,\infty}(\Omega)},$ such that 
    \begin{equation}\label{V3V0 bound max.st}
        \frac{\tbstlo(x,v)}{(\hat{V}_\pm^{l+1})_3(-\tbstlo)}\le \frac{C}{c_0}\max_{s\in\{-\tbstlo,0\}}\sqrt{m_\pm^2+|\Vlo(s)|^2}.
    \end{equation} 
 \end{lemma}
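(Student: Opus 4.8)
The plan is to convert the sign hypothesis on the normal force into monotonicity of $(\Vlo)_3$ along the characteristic, and then bound the backward exit time by a ``projectile'' argument. Write $E_\pm(s):=\sqrt{m_\pm^2+|\Vlo(s)|^2}$ for the relativistic energy along the iterated characteristic, and set $C_1:=\|\nabla_x(\FS)_{\textup{st}}\|_{L^\infty}$, $C_2:=\|(\FS)_{\textup{st}}\|_{L^\infty}$ (both finite by hypothesis). Along \eqref{iterated char.st} one has $\tfrac{d}{ds}(\Vlo)_3(s)=(\FS)_{3,\textup{st}}\big((\Xlo)_\parallel(s),(\Xlo)_3(s),\Vlo(s)\big)$; since the a priori field bound \eqref{bootstrap.assump.st}, i.e.\ \eqref{EBst bound ass}, gives $(\FS)_{3,\textup{st}}\le-\tfrac78 m_\pm g<-c_0$ on all of $\Omega\times\mathbb{R}^3$ (more generally the boundary condition $-(\FS)_{3,\textup{st}}(\cdot,0,\cdot)>c_0$ propagates to a layer $\{x_3<c_0/(2C_1)\}$ via $C_1$), the map $s\mapsto(\Vlo)_3(s)$ is strictly decreasing with $\tfrac{d}{ds}(\Vlo)_3\in[-C_2,-c_0]$ along the trajectory. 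I would first record the elementary facts $(\Xlo)_3(-\tbstlo)=0$, $(\Xlo)_3(s)>0$ on $(-\tbstlo,0)$, $(\Xlo)_3(0)=x_3\ge0$, whence $(\Vhatlo)_3(-\tbstlo)\ge0$.

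If $(\Vhatlo)_3(-\tbstlo)\ge\tfrac14$, the claim is immediate from the a priori exit-time bound $\tbstlo\lesssim(m_\pm g)^{-1}\max_{s\in\{-\tbstlo,0\}}E_\pm(s)$, obtained by the argument of Section~\ref{sec.weight.comparison} applied to \eqref{iterated char.st} (whose fields obey \eqref{EBst bound ass}), after inserting $1\le4(\Vhatlo)_3(-\tbstlo)$. So assume $(\Vhatlo)_3(-\tbstlo)\le\tfrac14$ and split on whether $(\Vlo)_3$ keeps a sign on $[-\tbstlo,0]$. \emph{Monotone case} ($(\Vlo)_3\ge0$, equivalently $(\Xlo)_3$ monotone): integrating $\tfrac{d}{ds}(\Vlo)_3\le-c_0$ over $[-\tbstlo,0]$ gives $c_0\tbstlo\le(\Vlo)_3(-\tbstlo)-(\Vlo)_3(0)\le(\Vlo)_3(-\tbstlo)=(\Vhatlo)_3(-\tbstlo)E_\pm(-\tbstlo)$, which is the assertion. \emph{Sign-change case}: by monotonicity $(\Vlo)_3$ vanishes at a unique $s^\circ\in(-\tbstlo,0)$ at which $(\Xlo)_3$ attains its maximum $H$; write $\tbstlo=T_1+T_2$, $T_1:=s^\circ+\tbstlo$ (ascent), $T_2:=-s^\circ$ (descent). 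Integrating the $(\Vlo)_3$-equation over the ascent gives $T_1\le c_0^{-1}(\Vhatlo)_3(-\tbstlo)E_\pm(-\tbstlo)$; integrating $(\Vhatlo)_3=(\Vlo)_3/E_\pm$ over the ascent, together with $\int_{-\tbstlo}^{s^\circ}(\Vlo)_3\,ds\le\tfrac{1}{2c_0}\big((\Vlo)_3(-\tbstlo)\big)^2$, yields the crucial \emph{thin boundary layer} bound $H\lesssim c_0^{-1}\big((\Vhatlo)_3(-\tbstlo)\big)^2 E_\pm(-\tbstlo)$, quadratic in $(\Vhatlo)_3(-\tbstlo)$. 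A projectile estimate on the descent, $H\ge H-x_3=\int_{s^\circ}^{0}\big(-(\Vhatlo)_3\big)\,ds\ge\tfrac{c_0}{2}\big(\sup_s E_\pm(s)\big)^{-1}T_2^{\,2}$, then gives $T_2\lesssim c_0^{-1}(\Vhatlo)_3(-\tbstlo)E_\pm(-\tbstlo)$; summing $T_1+T_2$ closes the bound.

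The one step requiring care — and the place I expect the main obstacle — is that $E_\pm(s)$ must stay comparable to its exit value $E_\pm(-\tbstlo)$ on the relevant interval, since a naive descent estimate using only $E_\pm\ge m_\pm$ loses a power of momentum. I would get this from the near-conservation of mechanical energy $\tfrac{d}{ds}\big(E_\pm+m_\pm g(\Xlo)_3\big)=\pm\Vhatlo\cdot\E^l_{\textup{st}}$ (cf.\ \eqref{ODE for particle energy.st}), whose right-hand side is $\le\tfrac{m_\pm g}{16}$ in absolute value: combined with $0\le(\Xlo)_3\le H$, the layer bound on $H$, and the a priori exit-time bound, this forces $\tfrac78 E_\pm(-\tbstlo)\le E_\pm(s)\le 2E_\pm(-\tbstlo)$ throughout precisely in the regime $(\Vhatlo)_3(-\tbstlo)\le\tfrac14$ (the complementary regime having been disposed of). It is exactly this quadratic-in-$(\Vhatlo)_3(-\tbstlo)$ layer width $H$ — linear again after the square root in the projectile estimate — that restores the sharp linear dependence on $(\Vhatlo)_3(-\tbstlo)$. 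The relativistic corrections (the discrepancy between $\tfrac{d}{ds}\Vhatlo$ and $E_\pm^{-1}\tfrac{d}{ds}\Vlo$, of size $O(C_2/E_\pm)$) are absorbed into the constants, which then depend only on $g$, $\|\E^l_{\textup{st}}\|_{W^{1,\infty}(\Omega)}$, $\|\B^l_{\textup{st}}\|_{W^{1,\infty}(\Omega)}$, as asserted; in the general (non-small-field) form the statement is Lemma~10 of \cite{MR4645724}.
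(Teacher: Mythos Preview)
The paper gives no proof here; it simply records the statement as Lemma~10 of \cite{MR4645724} and moves on, so your sketch already supplies more than the paper does. The projectile argument you outline---monotonicity of $(\Vlo)_3$ from the sign condition, the monotone/sign-change dichotomy, the quadratic layer bound $H\lesssim c_0^{-1}\big((\Vhatlo)_3(-\tbstlo)\big)^2E_\pm(-\tbstlo)$, and the descent estimate---is exactly the mechanism underlying the cited lemma.

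On the one step you flag as delicate: there is no circularity. You can first establish energy comparability using only the crude bound $H\le T_1\le c_0^{-1}(\Vlo)_3(-\tbstlo)\le\tfrac1{4c_0}E_\pm(-\tbstlo)$ (from $|\hat V_3|\le1$ and the regime $(\Vhatlo)_3(-\tbstlo)\le\tfrac14$). Under \eqref{EBst bound ass} one has $c_0\approx\tfrac78 m_\pm g$, so $m_\pm gH\lesssim\tfrac27 E_\pm(-\tbstlo)$; together with the a priori exit-time bound rewritten at the exit point via near-conservation (giving $\tbstlo\lesssim(m_\pm g)^{-1}E_\pm(-\tbstlo)$ and hence an error $\lesssim\tfrac3{16}E_\pm(-\tbstlo)$), this already pins $E_\pm(s)\in[\tfrac12,2]E_\pm(-\tbstlo)$. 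The sharp quadratic $H$-bound and the descent estimate then follow cleanly. Your reliance on the bootstrap smallness \eqref{EBst bound ass} to make the normal force negative on all of $\Omega$ rather than only a boundary strip is appropriate, since that is precisely the regime in which the paper invokes the lemma; the fully general version is left to the reference, as you note.
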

\begin{proof}[Proof of Proposition \ref{prop.deri.st}]
      Fix $m>4$. By differentiating the stationary Vlasov equation \eqref{iterated Vlasov.st} with respect to $x_\parallel$, we observe that $ (\vZ) ^m|\nabla_{x_\parallel} \fstlo|$ is bounded from above by  
    \begin{align*}
          &(\vZ)^m|\nabla_{x_\parallel} \fstlo(x,v)|\\
          &\le  (\vZ)^m\bigg|  (\nabla_{x_\parallel} G_\pm)( (\xbstlo)_\parallel, \vbstlo) \cdot \nabla_{x_\parallel} (\xbstlo)_\parallel+ (\nabla_v G_\pm)( (\xbstlo)_\parallel, \vbstlo) \cdot \nabla_{x_\parallel} \vbstlo\bigg| \\
&\lesssim    (\vZ)^m |(\nabla_{x_\parallel} G_\pm)( (\xbstlo)_\parallel , \vbstlo)||\nabla_{x_\parallel} (\xbstlo)_\parallel|+  (\vZ)^m |(\nabla_v G_\pm)( (\xbstlo)_\parallel, \vbstlo)||\nabla_{x_\parallel} \vbstlo|.
    \end{align*} 
In general, note that for $-\tbstlo\le s\le \tfstlo,$
\begin{equation}\label{additional v bound.st}
        (\vZ) \lesssim  \langle\Vlo(s)\rangle+\bigg|\int_s^0d\tau\  (\FS)_{\textup{st}}(\XSlo(\tau),\Vlo(\tau))\bigg|\lesssim\langle \Vlo(s)\rangle +C_2|s| ,
    \end{equation} by \eqref{Fl C2 bound.st}. Also recall \eqref{exit time bound.st} that we have for $x_3\in[0,R],$
\begin{equation}\label{tbst bound for R} \tbst+\tfst\lesssim C_2(\sqrt{m_\pm^2+|v_\pm|^2}+m_\pm gx_3)\lesssim \vZ+R,\end{equation} under \eqref{Fl C2 bound.st}.
   On the other hand, given \eqref{Fl C2 bound.st}, note that the derivatives of $\xbstlo$ and $\vbstlo$ satisfy the same upper-bounds estimates \eqref{derivatives of tbxbvb} with the dynamical trajectory variables $\ZS^{l+1}=(\XSlo,\VSlo)$ and the variables $(t,x,v)$ now replaced by the stationary variables $Z^{l+1}=(X^{l+1}_\pm,V^{l+1}_\pm)$ and $(0,x,v)$, respectively.  Thus, using the stationary counterparts of \eqref{derivatives of tbxbvb}--\eqref{derivatives of tbxbvb better}, we obtain
    \begin{multline*}
 (\vZ)^m|\nabla_{x_\parallel} \fstlo(x,v)|
\lesssim C\bigg(  (\vZ)^m |(\nabla_{x_\parallel} G_\pm)((\xbstlo)_\parallel, \vbstlo)|\bigg|\frac{\tbstlo}{|\hat{V}_\pm^{l+1}(-\tbstlo)|(\vZ) }+1\bigg|\\
       +  (\vZ)^m |(\nabla_v G_\pm)( (\xbstlo)_\parallel, \vbstlo)|\bigg|\frac{\tbstlo}{|\hat{V}_\pm^{l+1}(-\tbstlo)|(\vZ) }+1\bigg|\bigg).
    \end{multline*}
   By Lemma \ref{lemma.tb over v3.st} and \eqref{tbst bound for R}, we further observe that
    \begin{multline}\label{tb over v3 bound.st}
        \bigg|\frac{\tbstlo}{|(\hat{V}_\pm^{l+1})_3(-\tbstlo)|(\vZ) }\bigg|\le \frac{C}{c_0}\frac{\max_{s\in\{-\tbstlo,0\}}\sqrt{m_\pm^2+|\Vlo(s)|^2}}{(\vZ)}\\\lesssim \frac{C}{c_0}\sup_{-\tbstlo<s<0}\left(1+ \frac{1}{(\vZ)}\bigg|\int_s^0 (\FS)_{\textup{st}}(\XSlo(\tau),\Vlo(\tau))d\tau\bigg|\right)\lesssim \frac{C}{c_0}\left(1+\frac{C_2\tbstlo}{(\vZ)}\right)\lesssim C_R.
    \end{multline}Thus we conclude that
    \begin{equation*}
\| (\vZ)^m\nabla_{x_\parallel} \fstlo\|_{L^\infty_{x,v}(\Omega_R\times \rth)}
\lesssim C_R \bigg(\| (\vZ)^m \nabla_{x_\parallel} G_\pm\|_{L^\infty_{x_\parallel,v}}
      +\| (\vZ)^m \nabla_v G_\pm\|_{L^\infty_{x_\parallel,v}}\bigg).
    \end{equation*}

Regarding the derivative $\partial_{x_3}\fstlo,$ we differentiate the Vlasov equation \eqref{iterated Vlasov.st} with respect to $x_3$ and obtain
\begin{align*}
        &(\vZ)^m\tilde{\alpha}_{\pm,\textup{st}}(x,v)  |\partial_{x_3}\fstlo(x,v)|\\ & \le (\vZ)^m\tilde{\alpha}_{\pm,\textup{st}}(x,v)\bigg| (\nabla_{x_\parallel} G_\pm)( (\xbstlo)_\parallel, \vbstlo) \cdot \partial_{x_3} (\xbstlo)_\parallel+ (\nabla_v G_\pm)( (\xbstlo)_\parallel, \vbstlo) \cdot \partial_{x_3} \vbstlo\bigg| \\
&\lesssim   (\vZ)^m\tilde{\alpha}_{\pm,\textup{st}}(x,v)|(\nabla_{x_\parallel} G_\pm)( (\xbstlo)_\parallel, \vbstlo)||\partial_{x_3} (\xbstlo)_\parallel|\\
&+   (\vZ)^m\tilde{\alpha}_{\pm,\textup{st}}(x,v)|(\nabla_v G_\pm)( (\xbstlo)_\parallel, \vbstlo)||\partial_{x_3}\vbstlo|.
    \end{align*}
  Again, by the stationary counterpart of \eqref{derivatives of tbxbvb}, we have
\begin{align*}
   &(\vZ)^m\tilde{\alpha}_{\pm,\textup{st}}(x,v)  |(\nabla_{x_\parallel} G_\pm)( (\xbstlo)_\parallel, \vbstlo)||\partial_{x_3} (\xbstlo)_\parallel|\\
       &+(\vZ)^m\tilde{\alpha}_{\pm,\textup{st}}(x,v) |(\nabla_v G_\pm)( (\xbstlo)_\parallel, \vbstlo)||\partial_{x_3}\vbstlo|\\
       &\lesssim C\bigg( (\vZ)^m |(\nabla_{x_\parallel} G_\pm)((\xbstlo)_\parallel, \vbstlo)|\tilde{\alpha}_{\pm,\textup{st}}(x,v)\left|\frac{1}{|(\hat{V}_\pm^{l+1})_3(-\tbstlo)|} +\frac{1}{\langle v \rangle}\right|\\
       &+ (\vZ)^m |(\nabla_v G_\pm)( (\xbstlo)_\parallel, \vbstlo)|\tilde{\alpha}_{\pm,\textup{st}}(x,v)\left|\frac{1}{|(\hat{V}_\pm^{l+1})_3(-\tbstlo)|} +1\right|\bigg).
    \end{align*}     By using Lemma \ref{lem.velo.st}, \eqref{additional v bound.st}, and \eqref{tbst bound for R}  with $s=-\tbstlo$, we conclude that
   $$ \|(\vZ)^m\tilde{\alpha}_{\pm} \partial_{x_3}  \fstlo\|_{L^\infty_{x,v}(\Omega_R\times \rth)}
\lesssim  C_R\bigg( \| (\vZ)^m \nabla_{x_\parallel} G_\pm\|_{L^\infty_{x_\parallel,v}}
      +\| (\vZ)^m\nabla_v G_\pm\|_{L^\infty_{x_\parallel,v}}\bigg).$$

      Regarding the momentum derivative $|\nabla_v\fstlo|,$ we differentiate \eqref{iterated Vlasov.st}  with respect to $v$ and obtain
    \begin{align*}
         & (\vZ)^m|\nabla_v \fstlo(x,v)|\\
         & \le (\vZ)^m\bigg|(\nabla_{x_\parallel} G_\pm)( (\xbstlo)_\parallel, \vbstlo) \cdot \nabla_v  (\xbstlo)_\parallel+ (\nabla_v G_\pm)( (\xbstlo)_\parallel, \vbstlo) \cdot \nabla_v  \vbstlo\bigg| \\
&\lesssim    (\vZ)^m |(\nabla_{x_\parallel} G_\pm)( (\xbstlo)_\parallel, \vbstlo)||\nabla_v  (\xbstlo)_\parallel|+  (\vZ)^m |(\nabla_v G_\pm)( (\xbstlo)_\parallel, \vbstlo)||\nabla_v  \vbstlo|.
    \end{align*} 
Using the stationary counterpart of \eqref{derivatives of tbxbvb}, we obtain
    \begin{multline*}
 (\vZ)^m|\nabla_v  \fstlo(x,v)|
\lesssim  C_T(\vZ)^{m-1}\bigg(   |(\nabla_{x_\parallel} G_\pm)((\xbstlo)_\parallel, \vbstlo)|\frac{\tbstlo}{|\hat{\mathcal{V}}_\pm^{l+1}(t-\tbstlo)| }\\
       +   |(\nabla_v G_\pm)( (\xbstlo)_\parallel, \vbstlo)|\bigg|\frac{\tbstlo}{|\hat{\mathcal{V}}_\pm^{l+1}(t-\tbstlo)|(\vZ) }+1\bigg|\bigg).
    \end{multline*}
    By using \eqref{tb over v3 bound.st} and \eqref{additional v bound.st} with $s=-\tbstlo$, we conclude that
   $$ \| (\vZ)^m\nabla_v  \fstlo\|_{L^\infty_{x,v}(\Omega_R\times \rth)}\lesssim C_R\bigg(  \| (\vZ)^m \nabla_{x_\parallel} G_\pm\|_{L^\infty_{x_\parallel,v}}
      +\| (\vZ)^{m-1}\nabla_v G_\pm\|_{L^\infty_{x_\parallel,v}}\bigg).$$

Lastly, concerning the derivatives of the stationary fields $\E_{\textup{st}}, \B_{\textup{st}}$, the arguments of Lemma~\ref{lem.normal deri} and Lemma~\ref{lem.tangential deri}, stated in the dynamical case, extend to the stationary case with only minor modifications. For brevity, we omit the proof.

\end{proof}

\subsubsection{Enhanced Decay Estimates for $|\nabla_v F_{\pm,\textup{st}}|$}\label{sec.deri decay.st}
In this subsection, we further obtain enhanced decay estimates for $|\nabla_v F_{\pm,\textup{st}}|$ given that the incoming boundary profile $G_\pm$ further satisfies the following fast-decay condition on the first-order derivative in the velocity variable.
\begin{proposition}[Momentum Derivatives]\label{prop.enhanced decay}
  Suppose \eqref{Fl C2 bound.st} holds.  Suppose that $G_\pm$ satisfy
\begin{equation}
    \label{decay condition for g- deri}
    \|\mathrm{w}^2_{\pm,\beta}(\cdot,0,\cdot)\nabla_{x_\parallel,v}G_\pm(\cdot,\cdot)\|_{L^\infty_{x_\parallel,v}}<\infty.
\end{equation}
Then for each $l\in\mathbb{N},$ we have  \begin{equation}\label{decay of Fst}\|\mathrm{w}_{\pm,\beta}\nabla_v F^l_{\textup{st}}\|_{L^\infty_{x,v}}\le C\|\mathrm{w}^2_{\pm,\beta}(\cdot,0,\cdot)\nabla_{x_\parallel,v}G_\pm(\cdot,\cdot)\|_{L^\infty_{x_\parallel,v}}, \end{equation}  for some $C>0$. 
\end{proposition}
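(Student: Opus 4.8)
The plan is to carry out the same Lagrangian differentiation as in the proof of Proposition~\ref{prop.deri.st}, but to weight $\nabla_v F^{l+1}_{\pm,\textup{st}}$ by the exponential weight $\mathrm{w}_{\pm,\beta}$ instead of the polynomial weight $(\vZ)^m$, and to absorb the growth coming from the characteristic flow using the \emph{two} powers of $\mathrm{w}_{\pm,\beta}$ that the hypothesis \eqref{decay condition for g- deri} imposes on $\nabla_{x_\parallel,v}G_\pm$. All bounds below are uniform in $l$ since the iterated fields $\Estlbf,\Bstlbf$ obey the $l$-independent estimates \eqref{bootstrap.assump.st}; hence the estimate passes to the limit $l\to\infty$ and also yields \eqref{steady state decay mom.deri}.

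Differentiating the mild representation \eqref{solution flo.st} in $v$ and applying the chain rule gives
\[
\nabla_v F^{l+1}_{\pm,\textup{st}}(x,v)=(\nabla_{x_\parallel}G_\pm)\big((\xbstlo)_\parallel,\vbstlo\big)\cdot\nabla_v(\xbstlo)_\parallel+(\nabla_v G_\pm)\big((\xbstlo)_\parallel,\vbstlo\big)\cdot\nabla_v\vbstlo .
\]
Bounding $\big|(\nabla_{x_\parallel,v}G_\pm)\big((\xbstlo)_\parallel,\vbstlo\big)\big|\le \mathrm{w}^{-2}_{\pm,\beta}(\xbstlo,\vbstlo)\,\|\mathrm{w}^2_{\pm,\beta}(\cdot,0,\cdot)\nabla_{x_\parallel,v}G_\pm\|_{L^\infty_{x_\parallel,v}}$, it suffices to show
\[
\mathrm{w}_{\pm,\beta}(x,v)\,\frac{|\nabla_v(\xbstlo)_\parallel|+|\nabla_v\vbstlo|}{\mathrm{w}^2_{\pm,\beta}(\xbstlo,\vbstlo)}\le C
\]
with $C$ independent of $(x,v)$ and $l$.

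For the numerator, the stationary analogues of the exit-data derivative bounds \eqref{derivatives of tbxbvb} used in the proof of Proposition~\ref{prop.deri.st}, combined with Lemma~\ref{lemma.tb over v3.st} and the travel-time estimate \eqref{exit time bound.st}, give
\[
|\nabla_v(\xbstlo)_\parallel|+|\nabla_v\vbstlo|\;\lesssim\;\tbstlo+\frac{\tbstlo}{|(\hat{V}_\pm^{l+1})_3(-\tbstlo)|}\;\lesssim\;1+\vZ+m_\pm g x_3,
\]
the constant depending only on $c_0$, $g$ and the $l$-uniform constants $C_1,C_2$ of \eqref{Fl C2 bound.st}; the non-degeneracy $-(\FS)_{\textup{st},3}(x_\parallel,0,v)>c_0$ enters exactly here, through Lemma~\ref{lemma.tb over v3.st}. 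For the weight ratio, note $\mathrm{w}_{\pm,\beta}(x,v)=\mathrm{w}_{\pm,\beta}\big(\Zst^{l+1}(0;x,v)\big)$ and $\mathrm{w}_{\pm,\beta}(\xbstlo,\vbstlo)=\mathrm{w}_{\pm,\beta}\big(\Zst^{l+1}(-\tbstlo;x,v)\big)$, and split
\[
\frac{\mathrm{w}_{\pm,\beta}(x,v)}{\mathrm{w}^2_{\pm,\beta}(\xbstlo,\vbstlo)}=\frac{\mathrm{w}_{\pm,\beta}\big(\Zst^{l+1}(0;x,v)\big)}{\mathrm{w}_{\pm,\beta}\big(\Zst^{l+1}(-\tbstlo;x,v)\big)}\cdot\frac{1}{\mathrm{w}_{\pm,\beta}\big(\Zst^{l+1}(-\tbstlo;x,v)\big)} .
\]
Each iterate satisfies the analogue of \eqref{EBst bound ass} by \eqref{bootstrap.assump.st}, so the stationary weight comparisons of Section~\ref{sec.weight.comparison} apply to $\Zst^{l+1}$: by \eqref{w comparison 1.st} together with $\|\Estlbf\|_{L^\infty}\le\tfrac{1}{16}\min\{m_+,m_-\}g$ and $\min\{m_-,m_+\}g\ge 8$, the first factor is $\le e^{\frac{6\beta}{13}(\vZ+m_\pm g x_3)}$, while by \eqref{w comparison 3.st} the second is $\le e^{-\frac{\beta}{2}(\vZ+m_\pm g x_3)-\frac{\beta}{2}|x_\parallel|}$. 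Their product is $\le e^{-\frac{\beta}{26}(\vZ+m_\pm g x_3)}$, which absorbs the factor $1+\vZ+m_\pm g x_3$ from the numerator; this gives the displayed bound, hence \eqref{decay of Fst}.

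The crux — and the main obstacle — is this last balance: the exponential decay of rate $\beta$ furnished by the \emph{second} power of $\mathrm{w}_{\pm,\beta}$ in \eqref{decay condition for g- deri} must strictly beat both the linear-in-mechanical-energy growth of the exit-data derivatives and the weight distortion $e^{\frac{6}{13}\beta(\cdots)}$ accumulated along the characteristic. This is precisely what the quantitative assumptions buy: $\|\Estlbf\|_{L^\infty}\lesssim m_\pm g$ and $\min\{m_-,m_+\}g\ge 8$ keep the distortion rate below $\tfrac{1}{2}\beta$, leaving a net gain $e^{-\frac{\beta}{26}(\cdots)}$; and $\min\{m_-,m_+\}g\beta^3\gg 1$ makes $m_\pm g\beta$ large compared with the field-derivative bound $C_1$, so that any exponential-in-travel-time factor $e^{C_1\tbstlo}$ hidden in the exit-data derivative estimates is tamed through $C_1\tbstlo\lesssim(C_1/m_\pm g)(\vZ+m_\pm g x_3)$ and does not spoil the net decay. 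A secondary point to check carefully is that the stationary counterparts of \eqref{derivatives of tbxbvb} indeed reduce the $\nabla_v$ estimate to the scalar quantity $\tbstlo/|(\hat{V}_\pm^{l+1})_3(-\tbstlo)|$, which is exactly the object controlled by Lemma~\ref{lemma.tb over v3.st}.
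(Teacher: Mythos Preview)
Your argument is correct and follows essentially the same approach as the paper: differentiate the Lagrangian representation \eqref{solution flo.st}, bound the exit-data derivatives via the stationary counterparts of \eqref{derivatives of tbxbvb} and Lemma~\ref{lemma.tb over v3.st}, and control the weight ratio along the characteristic using the comparison estimates of Section~\ref{sec.weight.comparison}. The only cosmetic difference is the algebraic splitting of the weight ratio: the paper writes $\tfrac{\mathrm{w}_{\pm,\beta}(x,v)}{\mathrm{w}^2_{\pm,\beta}(\xbstlo,\vbstlo)}=\tfrac{1}{\mathrm{w}_{\pm,\beta}(x,v)}\big(\tfrac{\mathrm{w}_{\pm,\beta}(x,v)}{\mathrm{w}_{\pm,\beta}(\xbstlo,\vbstlo)}\big)^{2}$ and applies \eqref{w comparison 1.st} twice to obtain a net $e^{-\frac{\beta}{13}(\cdots)}\le 1$, whereas you split as one ratio times one reciprocal and invoke \eqref{w comparison 1.st} and \eqref{w comparison 3.st} to obtain $e^{-\frac{\beta}{26}(\cdots)}$; your version has the minor advantage that the leftover decay is then used explicitly to absorb the polynomial factor $1+\vZ+m_\pm g x_3$ coming from the derivative bounds, a step the paper leaves implicit.
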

Note that \eqref{decay of Fst} is uniform in $l$ and is preserved when we pass to the limit $l\to\infty.$

\begin{proof}[Proof for Proposition \ref{prop.enhanced decay}]Fix $l\in\mathbb{N}.$ By Proposition \ref{prop.deri.st}, we have for some $C_1>0$ and $C_2>0$, $$ \|\nabla_x (\Estlbf,\Bstlbf)\|_{L^\infty}<C_1,\text{ and }
   \|(\FS)_{\textup{st}}\|_{L^\infty}<C_2.
$$  
By taking the momentum derivative on \eqref{solution flo.st}, we obtain
$$ \nabla_v \fstlo(x, v) = (\nabla_{x_\parallel} G_\pm)( (\xbstlo)_\parallel, \vbstlo) \cdot \nabla_v (\xbstlo)_\parallel+ (\nabla_v G_\pm)( (\xbstlo)_\parallel, \vbstlo) \cdot \nabla_v \vbstlo,$$
   where the stationary backward exit position and velocity $\xbstlo$ and $\vbstlo$ are defined as 
\begin{equation}\label{xbvb.st}
    \begin{split}
        \xbstlo(x, v)=X^{l+1}\left(-\tbstlo(x, v) ; x, v\right) \in \partial \Omega,\ \   \vbstlo(x, v)=V^{l+1}\left(-\tbstlo(x, v) ;x, v\right).
    \end{split}
\end{equation}Then, given \eqref{Fl C2 bound.st}, we note that the derivatives of $\xbstlo$ and $\vbstlo$ satisfy the same upper-bounds estimates \eqref{derivatives of tbxbvb} with the dynamical trajectory variables $\ZS^{l+1}=(\XSlo,\VSlo)$ the variables $(t,x,v)$ now replaced by the stationary variables $Z^{l+1}=(X^{l+1}_\pm,V^{l+1}_\pm)$ and $(0,x,v)$, respectively. Therefore, using \eqref{derivatives of tbxbvb}, we observe that
  \begin{align*}
    |\mathrm{w}_{\pm,\beta}\nabla_v \fstlo(x, v)| &\le
      \mathrm{w}_{\pm,\beta}(x,v) |(\nabla_{x_\parallel} G_\pm)((\xbstlo)_\parallel, \vbstlo)||\nabla_v  (\xbstlo)_\parallel|\\*
      &+\mathrm{w}_{\pm,\beta}(x,v) |(\nabla_v G_\pm)( (\xbstlo)_\parallel, \vbstlo)||\nabla_v  \vbstlo|\\
&\lesssim
       \mathrm{w}_{\pm,\beta}(x,v)\bigg( |(\nabla_{x_\parallel} G_\pm)((\xbstlo)_\parallel, \vbstlo)\left(\frac{\tbstlo}{|(\hat{V}^{l+1}_\pm)_3(-\tbstlo)|(\vZ) }+\frac{\tbstlo}{\vZ}\right)\\*
      & +  |(\nabla_v G_\pm)((\xbstlo)_\parallel, \vbstlo)|\bigg|\frac{\tbstlo}{|(\hat{V}^{l+1}_\pm)_3(-\tbstlo)|(\vZ) }+(\vZ)^{-1}\bigg|\bigg)\\
     &  \lesssim   \frac{1}{\mathrm{w}_{\pm,\beta}(x,v)}\left(\frac{\mathrm{w}_{\pm,\beta}(x,v)}{\mathrm{w}_{\pm,\beta}(\xbstlo(x, v),\vbstlo(x, v))}\right)^2\bigg( |(\mathrm{w}^2_{\pm,\beta}\nabla_{x_\parallel} G_\pm)((\xbstlo)_\parallel, \vb)|\\*
      & +  |(\mathrm{w}^2_{\pm,\beta}\nabla_v G_\pm)((\xbstlo)_\parallel, \vbstlo)|\bigg),    \end{align*} by Lemma \ref{lemma.tb over v3}. Then we further use the weight comparison \eqref{w comparison 1.st} and observe that 
\begin{align*}
    \frac{1}{\mathrm{w}_{\pm,\beta}(x,v)}&\left(\frac{\mathrm{w}_{\pm,\beta}\left( Z^{l+1}_\pm\left(0 ;  x, v\right)\right)}{\mathrm{w}_{\pm,\beta}( Z^{l+1}_\pm(-\tbstlo(x, v);  x, v))} \right)^2
    \leq  \frac{1}{\mathrm{w}_{\pm,\beta}(x,v)}e^{\left(\left\|\Elbf_{\textup{st}}\right\|_{L_{ x}^{\infty}} +1\right)\frac{64\beta}{13m_\pm g} (\sqrt{m_\pm^2+|v_\pm|^2}+m_\pm gx_3)}\\
  &\le   e^{(\min\{m_-,m_+\}g)\left(\frac{1}{16}+\frac{1}{8}\right)\frac{64\beta}{13m_\pm g} (\sqrt{m_\pm^2+|v_\pm|^2}+m_\pm gx_3)}e^{-\beta v_\pm^0-m_\pm g\beta x_3-\frac{\beta}{2}|x_{\parallel}|}\\
   &\le  e^{\frac{12\beta}{13} (\sqrt{m_\pm^2+|v_\pm|^2}+m_\pm gx_3)}e^{-\beta v_\pm^0-m_\pm g\beta x_3-\frac{\beta}{2}|x_{\parallel}|}
   \le e^{-\frac{1}{13}\beta v_\pm^0-\frac{1}{13}m_\pm g\beta x_3-\frac{\beta}{2}|x_{\parallel}|}\le 1,
\end{align*}given that $\Elbf_{\textup{st}}$ satisfies the upper-bound \eqref{bootstrap.assump.st} and that 
$\min\{m_+,m_-\}g\gg 1.$ This completes the proof.\end{proof}

\subsection{Stability and Construction of Solutions}

Given the uniform estimates for the iterated sequence elements of steady states $(F_{\pm,\textup{st}}^l,\E_{\textup{st}}^l,\B_{\textup{st}}^l)$ and the enhanced decay estimates on the momentum derivatives $\nabla_v F_{\pm,\textup{st}}^l$, we can now prove the stability of the sequence which yields Cauchy property of the sequences.  Then we will obtain the strong convergence to the limit $(F_{\pm,\textup{st}},\E_{\textup{st}},\B_{\textup{st}})$. This is necessary to pass to the limit on the nonlinear terms.  Fix $N_0\in \mathbb{N}.$ Then for any $k,n\ge N_0$ integers with $k\ge n$, we have \begin{equation}\label{zero boundary cauchy.st}\begin{split}
(\fmst-\fnst) (x_\parallel,0,v)|_{\gamma_-}=0,
        \end{split}
\end{equation}
and 
   \begin{multline*}
  (\hat{v}_\pm) \cdot \nabla_x (\fmst-\fnst) +\left(\pm\Emmbfst\pm(\hat{v}_\pm)\times \Bmmbfst-m_\pm g\hat{e}_3\right)\cdot \nabla_v (\fmst-\fnst)\\
    = -\left(\pm(\Emmbfst-\Enmbfst)\pm(\hat{v}_\pm)\times (\Bmmbfst-\Bnmbfst)\right)\cdot \nabla_v \fnst,
   \end{multline*} 
by \eqref{iterated Vlasov.st}. By \eqref{zero boundary cauchy.st}, we have
\begin{multline}\notag 
    (\fmst-\fnst)(x,v)
    = \mp \int^0_{-\tbstk} \left((\Emmbf-\Enmbf)(\Xk(s))+\hat{V}^{k}_\pm(s)\times (\Bmmbf-\Bnmbf)(\Xk(s))\right)\\\cdot \nabla_v \fnst(\Xk(s),\Vk(s))ds,
\end{multline}using the iterated stationary characteristic trajectories $(\Xk,\Vk)$ in \eqref{iterated char.st}. 
Therefore, we have \begin{multline}\label{eq.F.diff.st.cauchy}
   | (\fmst-\fnst)(x,v)|\\
    \le \tbstk \sup_{s\in [-\tbstk,0]}|( \nabla_v \fnst)(\Xst(s),\Vst(s))| \left(\| (\Emmbfst-\Enmbfst)(\cdot)\|_{L^\infty_x}+\| (\Bmmbfst-\Bnmbfst)(\cdot)\|_{L^\infty_x}\right).
\end{multline}Indeed, given that $\|(\Emmbfst,\Bmmbfst)\|_{L^\infty}\le \min\{m_+,m_-\}\frac{g}{16}$ holds by the previous uniform estimates,  we have $$\tbstk \le \frac{3}{m_\pm g} (v^0_\pm +m_\pm g x_3)$$ by \eqref{exit time bound.st}.  By using the uniform estimate \eqref{decay of Fst} on the momentum derivative $\nabla_v \fnst$ and the weight comparison estimate \eqref{w comparison 3.st}, we have \begin{multline}\label{eq.F.diff.st.cauchy2}
   e^{\frac{1}{2}\beta|x_\parallel|}e^{\frac{1}{4}\beta\left(\vZ+m_\pm g x_{3}\right)}| (\fmst-\fnst)(x,v)|\\
    \lesssim \frac{1}{\beta m_\pm g} \|\mathrm{w}^2_{\pm,\beta}(\cdot,0,\cdot)\nabla_{x_\parallel,v}G_\pm(\cdot,\cdot)\|_{L^\infty_{x_\parallel,v}}  \left(\| (\Emmbfst-\Enmbfst)(\cdot)\|_{L^\infty_x}+\| (\Bmmbfst-\Bnmbfst)(\cdot)\|_{L^\infty_x}\right).
\end{multline}
Since the representations \eqref{Field representation Bst} and \eqref{Field representation Est} for $\Emmbfst$, $\Enmbfst$, $\Bmmbfst$, and $\Bnmbfst$ are linear in $F_{\pm,\textup{st}}$, the differences $\Emmbfst - \Enmbfst$ and $\Bmmbfst - \Bnmbfst$ can be expressed in the same form, with $F_{\pm,\textup{st}}$ replaced by $\fmst - \fnst$.
Therefore, we have
$$|\Emmbfst(x) - \Enmbfst(x)|\lesssim   \int_{\mathbb{R}^3_+} \left|\nabla  \mathfrak{G}_{\textup{odd}}(x,y)\right|\left| \sum_{\iota=\pm}\iota\int_\rth (\fmmstio - \fnmstio)(y,v)dv\right| \ dy.$$ Then we further observe that 
\begin{multline}\notag
|\Emmbfst(x) - \Enmbfst(x)|\\
     \lesssim\sum_{\iota=\pm} \frac{1}{\beta^3}\left(\sup_{x,v}
e^{\frac{1}{2}\beta|x_\parallel|} e^{\frac{1}{4}\beta\left(\vZio + m_\iota g x_3\right)} |(\fmmstio - \fnmstio)(x, v)|\right)\int_{\mathbb{R}^3_+} dy\  \left|\nabla  \mathfrak{G}_{\textup{odd}}(x,y)\right| e^{-\frac{\beta }{2}|y_\parallel|} e^{-\frac{1}{2}m_\iota g\beta y_3} ,
\end{multline}by \eqref{additional beta decay.st}. Then since $\left|\nabla  \mathfrak{G}_{\textup{odd}}(x,y)\right|\le \frac{1}{|x-y|^2}+\frac{1}{|\bar{x}-y|^2}$ and the upper bound is even in $y_3$, note that 
\begin{multline*}
    \int_{\mathbb{R}^3_+} dy\ \left|\nabla  \mathfrak{G}_{\textup{odd}}(x,y)\right| e^{-\frac{\beta }{2}|y_\parallel|} e^{-\frac{1}{2}m_\pm g\beta y_3}
    \le \int_{\mathbb{R}^3} dy\ \left(\frac{1}{|x-y|^2}+\frac{1}{|\bar{x}-y|^2}\right) e^{-\frac{\beta }{2}|y_\parallel|} e^{-\frac{1}{2}m_\pm g\beta |y_3|}\\
    \lesssim  \frac{1}{m_\pm g\beta^3}\frac{1}{\langle x\rangle^2},
\end{multline*}by \eqref{asymptotics}. Since $ \int_\rth |\hat{v}_{\pm,i}||F_{\pm,\textup{st}}(y,v)|dv\le \int_\rth |F_{\pm,\textup{st}}(y,v)|dv,$ we also expect the same upper bound for the difference $|\Bmmbfst(x) - \Bnmbfst(x)|$.  Therefore, we conclude that   \begin{multline}
     \label{Est diff final.cauchy}|\Emmbfst(x) - \Enmbfst(x)|,|\Bmmbfst(x) - \Bnmbfst(x)|\\\lesssim \frac{1}{\min\{m_+,m_-\} g\beta^6}\max_{\iota=\pm}\left(\sup_{x,v}
e^{\frac{1}{2}\beta|x_\parallel|} e^{\frac{1}{4}\beta\left(\vZio + m_\iota g x_3\right)} |(\fmmstio - \fnmstio)(x, v)|\right),
 \end{multline} 
and hence by \eqref{eq.F.diff.st.cauchy2},
\begin{multline}\label{eq.F.diff.st.cauchy3}
   e^{\frac{1}{2}\beta|x_\parallel|}e^{\frac{1}{4}\beta\left(\vZ+m_\pm g x_{3}\right)}| (\fmst-\fnst)(x,v)|
    \lesssim  \frac{1}{\min\{m^2_+,m^2_-\} g^2\beta^7}\|\mathrm{w}^2_{\pm,\beta}(\cdot,0,\cdot)\nabla_{x_\parallel,v}G_\pm(\cdot,\cdot)\|_{L^\infty_{x_\parallel,v}} \\\times \max_{\iota=\pm}\left(\sup_{x,v}
e^{\frac{1}{2}\beta|x_\parallel|} e^{\frac{1}{4}\beta\left(\vZio + m_\iota g x_3\right)} |(\fmmstio - \fnmstio)(x, v)|\right).
\end{multline}Note that for a sufficiently large $\beta\gg1,$ we have 
$$\kappa\eqdef \frac{1}{\min\{m^2_+,m^2_-\} g^2\beta^7}\|\mathrm{w}^2_{\pm,\beta}(\cdot,0,\cdot)\nabla_{x_\parallel,v}G_\pm(\cdot,\cdot)\|_{L^\infty_{x_\parallel,v}}   \ll 1. $$ Then by repeating the argument, we have
\begin{multline}\label{eq.F.diff.st.cauchy4}
   e^{\frac{1}{2}\beta|x_\parallel|}e^{\frac{1}{4}\beta\left(\vZ+m_\pm g x_{3}\right)}| (\fmst-\fnst)(x,v)|\\*
    \lesssim \kappa^{n}\max_{\iota=\pm}\left(\sup_{x,v}
e^{\frac{1}{2}\beta|x_\parallel|} e^{\frac{1}{4}\beta\left(\vZio + m_\iota g x_3\right)} |(\fmmnstio - \fzerostio)(x, v)|\right)\lesssim C\kappa^n, 
\end{multline}by the uniform estimate \eqref{bootstrap.assump.st} and that $ \fzerost=0$. Therefore, we conclude that $\{\fmst\}_{k\in \mathbb{N}}$ is Cauchy, and hence $\{(\Embfst,\Bmbfst)\}_{k\in \mathbb{N}}$ are also Cauchy by \eqref{Est diff final.cauchy}. We record this fact in the following lemma:
\begin{lemma}
Both sequences $\{\fmst\}_{k\in \mathbb{N}}$ and $\{(\Embfst,\Bmbfst)\}_{k\in \mathbb{N}}$ are Cauchy.
\end{lemma}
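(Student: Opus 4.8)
The plan is to convert the iteration difference into a self-improving contraction in a weighted $L^\infty$ norm and then iterate down to the trivial zeroth iterate. First I would fix $N_0\in\mathbb{N}$ and, for integers $k\ge n\ge N_0$, subtract the Vlasov equations \eqref{iterated Vlasov.st} at levels $k$ and $n$: the inflow data cancel, so $\fmst-\fnst$ vanishes on $\gamma_-$ as in \eqref{zero boundary cauchy.st} and solves a transport equation whose source is $\big(\pm(\Emmbfst-\Enmbfst)\pm\hat{v}_\pm\times(\Bmmbfst-\Bnmbfst)\big)\cdot\nabla_v\fnst$. Integrating along the iterated stationary characteristics $(\Xk,\Vk)$ from the backward exit time and using the vanishing boundary value produces \eqref{eq.F.diff.st.cauchy}.

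The next step is to insert the three uniform-in-$l$ ingredients already established: the a priori field bound of Proposition~\ref{prop.st.boot}, which via \eqref{exit time bound.st} gives $\tbstk\lesssim (v^0_\pm+m_\pm g x_3)/(m_\pm g)$; the enhanced momentum-derivative decay \eqref{decay of Fst} from Proposition~\ref{prop.enhanced decay}; and the weight-comparison bound \eqref{w comparison 3.st}. Feeding these into \eqref{eq.F.diff.st.cauchy} yields the weighted estimate \eqref{eq.F.diff.st.cauchy2}, controlling $e^{\frac12\beta|x_\parallel|}e^{\frac14\beta(v^0_\pm+m_\pm g x_3)}|(\fmst-\fnst)|$ by $\|(\Emmbfst-\Enmbfst,\Bmmbfst-\Bnmbfst)\|_{L^\infty_x}$, up to a factor $(\beta m_\pm g)^{-1}$ and the boundary constant $\|\mathrm{w}^2_{\pm,\beta}(\cdot,0,\cdot)\nabla_{x_\parallel,v}G_\pm\|_{L^\infty}$.

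Then I would exploit the linearity of the field representations \eqref{Field representation Bst}--\eqref{Field representation Est} in $F_{\pm,\textup{st}}$: the differences $\Emmbfst-\Enmbfst$ and $\Bmmbfst-\Bnmbfst$ admit the identical integral form with $F_{\pm,\textup{st}}$ replaced by $\fmst-\fnst$. Bounding the kernel by $|\nabla\mathfrak{G}_{\textup{odd}}(x,y)|\lesssim |x-y|^{-2}+|\bar{x}-y|^{-2}$, extracting the velocity-integral gain $\beta^{-3}$, and applying the elementary decay estimate \eqref{asymptotics} gives \eqref{Est diff final.cauchy}, a bound on the field differences by $(\min\{m_+,m_-\}g\beta^6)^{-1}$ times the weighted sup-norm of the distribution differences. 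Substituting this back into \eqref{eq.F.diff.st.cauchy2} closes the loop \eqref{eq.F.diff.st.cauchy3}: the weighted norm of $\fmst-\fnst$ is bounded by $\kappa$ times the same norm of $\fmmst-\fnmst$, with $\kappa=(\min\{m_+^2,m_-^2\}g^2\beta^7)^{-1}\|\mathrm{w}^2_{\pm,\beta}(\cdot,0,\cdot)\nabla_{x_\parallel,v}G_\pm\|_{L^\infty}\ll1$ under the standing hypothesis $\min\{m_-,m_+\}g\beta^3\gg1$. Iterating $n$ times to level $0$, where $\fzerost=0$, and invoking the uniform bound \eqref{bootstrap.assump.st} yields the geometric decay $\lesssim\kappa^n$ of \eqref{eq.F.diff.st.cauchy4}; hence $\{\fmst\}_{k\in\mathbb{N}}$ is Cauchy in the weighted $L^\infty$ space, and $\{(\Embfst,\Bmbfst)\}_{k\in\mathbb{N}}$ is Cauchy by \eqref{Est diff final.cauchy}.

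The only genuinely delicate point I anticipate is the $l$-uniformity of every constant — in particular verifying that each iterate meets the a priori hypotheses \eqref{Fl C2 bound.st} and the field bound needed both for the weight comparison and for the exit-time estimate — but this is precisely what the bootstrap Proposition~\ref{prop.st.boot} and the $l$-independent regularity estimates of Propositions~\ref{prop.deri.st}--\ref{prop.enhanced decay} are built to provide; once uniformity is secured, the contraction is forced purely by the smallness of $\kappa$, and no further argument is required.
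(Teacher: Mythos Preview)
Your proposal is correct and follows essentially the same argument as the paper: it is exactly the contraction-via-iterated-characteristics scheme laid out in \eqref{eq.F.diff.st.cauchy}--\eqref{eq.F.diff.st.cauchy4}, invoking the same uniform inputs (Propositions~\ref{prop.st.boot}, \ref{prop.deri.st}, \ref{prop.enhanced decay}) and the same field-to-distribution feedback via \eqref{Est diff final.cauchy}. The point you flag about $l$-uniformity of the constants is precisely what those propositions are designed to guarantee, so no gap remains.
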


Once the Cauchy property is verified as above, the same argument as in \eqref{nonlinear.passingtothelimit.M} applies to pass to the limit in the nonlinear terms via the strong convergence of Cauchy sequences.  
   Also, these solutions $(F_{\pm,\textup{st}}^\infty,\E_{\textup{st}}^\infty,\B_{\textup{st}}^\infty)$ satisfy the same (weighted-) $L^\infty$ bounds \eqref{bootstrap.assump.st} as well as the uniform derivative estimate \eqref{decay of Fst}. This completes the proof of the existence of steady-states with J\"uttner-Maxwell upper bounds (Theorem \ref{thm.ex.st}).

\subsection{Uniqueness and Non-Negativity}
We now establish the uniqueness of solutions to the stationary Vlasov--Maxwell system \eqref{2speciesVM-steady}.

Suppose that there are two stationary solutions $(\fonest, \Eonedst, \Bonedst)$ and $(\ftwost, \Etwodst, \Btwodst)$ for the system \eqref{2speciesVM-steady} under \eqref{2species-perturbabsorbing.st} and \eqref{perfect cond. boundary-perturb.st}. 
Then note that we have \begin{equation}\label{zero initial boundary difference.st}\begin{split}
        &(\fonest-\ftwost) (x_\parallel,0,v)|_{\gamma_-}=0,
        \end{split}
\end{equation}
and the difference $\fonest-\ftwost$ solves the following Vlasov equation:
   \begin{multline}\label{eq.uni1st}
   \hat{v}_\pm \cdot \nabla_x (\fonest-\ftwost) +\left(\pm\Eonedst\pm(\hat{v}_\pm)\times \Bonedst-m_\pm g\hat{e}_3\right)\cdot \nabla_v (\fonest-\ftwost)\\
    = -\left(\pm(\Eonedst-\Etwodst)\pm(\hat{v}_\pm)\times (\Bonedst-\Btwodst)\right)\cdot \nabla_v \ftwost.
   \end{multline} Similarly to \eqref{def.steady characteristics}, we define the stationary characteristic trajectory variables 
$\Zst(s)=(\Xst(s),\Vst(s))$ satisfying $\Zst(0 ; x, v)=(\Xst(0;  x, v), \Vst(0 ;  x, v))=(x, v)=z$, generated by the fields $\Eonedst$ and $ \Bonedst$, which solves
\begin{equation}\notag
 \begin{split}
   \frac{d\Xst(s)}{ds}&=\Vhatst(s)=\frac{\Vst(s)}{\sqrt{m_\pm^2+|\Vst(s)|^2}},\\
        \frac{d\Vst(s)}{ds}&=\pm  \Eonedst(\Xst(s))\pm \Vhatst(s)\times \Bonedst(\Xst(s))- m_\pm g\hat{e}_3,
 \end{split} 
\end{equation}   where $\hat{e}_3\eqdef (0,0,1)^\top$ and $\hat{v}_\pm\eqdef \frac{v}{\vZ}=\frac{v}{\sqrt{m_\pm^2+|v|^2}}$. 
In addition, similarly to \eqref{Back Forw exit time.st}, denote the corresponding forward and backward and exit times $\tfst$ and $\tbst$ for the steady characteristic trajectory as
\begin{equation}\notag \begin{split}
   & \tfst (x,v)= \sup\{s\in [0,\infty): (\Xst)_3(\tau;x,v)>0\ \textup{ for all } \tau\in(0,s)\}\ge 0,\\
   & \tbst (x,v)= \sup\{s\in [0,\infty): (\Xst)_3(-\tau;x,v)>0\ \textup{ for all } \tau\in(0,s)\}\ge 0.
   \end{split}
\end{equation}Then, by integrating \eqref{eq.uni1st} along the characteristics $\Zst(s)=(\Xst(s),\Vst(s))$ (associated with $\Eonedst$ and $ \Bonedst$) for $s\in [-\tbst,0]$, we obtain\begin{multline}\notag 
    (\fonest-\ftwost)(x,v)
    = \mp \int^0_{-\tbst} \left((\Eonedst-\Etwodst)(\Xst(s))+\Vhatst(s)\times (\Bonedst-\Btwodst)(\Xst(s))\right)\\\cdot \nabla_v \ftwost(\Xst(s),\Vst(s))ds.
\end{multline}Therefore, we obtain 
\begin{multline}\label{eq.F.diff.st}
   | (\fonest-\ftwost)(x,v)|\\
    \le \tbst \sup_{s\in [-\tbst,0]}|( \nabla_v \ftwost)(\Xst(s),\Vst(s))| \left(\| (\Eonedst-\Etwodst)(\cdot)\|_{L^\infty_x}+\| (\Bonedst-\Btwodst)(\cdot)\|_{L^\infty_x}\right).
\end{multline}  Regarding the momentum derivative $\nabla_v \ftwost$,
we use the uniform estimate \eqref{decay of Fst} and obtain that
\begin{align*}
    &\sup_{s\in [-\tbst,0]}|( \nabla_v \ftwost)(\Xst(s),\Vst(s))| \\
    &\le \sup_{s\in [-\tbst,0]}\frac{1}{\mathrm{w}_{\pm,\beta}(\Xst(s),\Vst(s))}|( \mathrm{w}_{\pm,\beta}\nabla_v \ftwost)(\Xst(s),\Vst(s))| \\
    &\le C e^{-\frac{1}{2}\beta\left(\vZ+m_\pm g x_{3}+|x_\parallel|\right)}\|\mathrm{w}^2_{\pm,\beta}(\cdot,0,\cdot)\nabla_{x_\parallel,v}G_\pm(\cdot,\cdot)\|_{L^\infty_{x_\parallel,v}},
\end{align*} by the weight comparison estimate \eqref{w comparison 3.st} along the steady characteristic trajectory $(\Xst(s),\Vst(s)).$
In addition, note that by \eqref{exit time bound.st} we have
\begin{equation}\notag
\tbst\le \frac{32}{13m_\pm g} (\sqrt{m_\pm^2+|v_\pm|^2}+m_\pm gx_3)\le \frac{3}{m_\pm g} \left(v^0_\pm+ m_\pm g x_3\right) .\end{equation} Regarding the upper bound of $\tbst$, we further observe that 
$$(v^0_\pm +m_\pm g x_3) e^{-\frac{1}{2}\beta\left(\vZ+m_\pm g x_{3}+|x_\parallel|\right)}\lesssim \frac{1}{\beta}e^{-\frac{1}{2}\beta|x_\parallel|}e^{-\frac{1}{4}\beta\left(\vZ+m_\pm g x_{3}\right)}.$$Therefore, by \eqref{eq.F.diff.st}, we have
\begin{multline}\label{eq.F.diff.st2}
   e^{\frac{1}{2}\beta|x_\parallel|}e^{\frac{1}{4}\beta\left(\vZ+m_\pm g x_{3}\right)}| (\fonest-\ftwost)(x,v)|\\
    \lesssim \frac{1}{\beta m_\pm g} \|\mathrm{w}^2_{\pm,\beta}(\cdot,0,\cdot)\nabla_{x_\parallel,v}G_\pm(\cdot,\cdot)\|_{L^\infty_{x_\parallel,v}}  \left(\| (\Eonedst-\Etwodst)(\cdot)\|_{L^\infty_x}+\| (\Bonedst-\Btwodst)(\cdot)\|_{L^\infty_x}\right).
\end{multline}
We now derive upper bounds for the differences $\Eonedst - \Etwodst$ and $\Bonedst - \Btwodst$. Our objective is to obtain uniform estimates for these quantities in terms of
\begin{equation}
    \label{objective term}
e^{\frac{1}{2}\beta|x_\parallel|} e^{\frac{1}{4}\beta\left(\vZ + m_\pm g x_3\right)} |(\fonest - \ftwost)(x, v)|.
\end{equation}
Recall that we use the representations given in \eqref{Field representation Bst} and \eqref{Field representation Est} for $\Eonedst$, $\Etwodst$, $\Bonedst$, and $\Btwodst$. Since these representations are linear in $F_{\pm,\textup{st}}$, the differences $\Eonedst - \Etwodst$ and $\Bonedst - \Btwodst$ can be expressed in the same form, with $F_{\pm,\textup{st}}$ replaced by $\fonest - \ftwost$.
Therefore, we have
$$|\Eonedst(x) - \Etwodst(x)|\lesssim   \int_{\mathbb{R}^3_+} |\nabla\mathfrak{G}_{\textup{odd}}(x,y)|\left| \sum_{\iota=\pm}\iota\int_\rth (\fonestio - \ftwostio)(y,v)dv\right| \ dy.$$ By factoring out the term \eqref{objective term}, we further  observe that 
\begin{multline}\notag
   |\Eonedst(x) - \Etwodst(x)| \\
     \lesssim\sum_{\iota=\pm} \frac{1}{\beta^3}\left(\sup_{x,v}
e^{\frac{1}{2}\beta|x_\parallel|} e^{\frac{1}{4}\beta\left(\vZio + m_\iota g x_3\right)} |(\fonestio - \ftwostio)(x, v)|\right)\int_{\mathbb{R}^3_+} dy\ |\nabla\mathfrak{G}_{\textup{odd}}(x,y)| e^{-\frac{\beta }{2}|y_\parallel|} e^{-\frac{1}{2}m_\iota g\beta y_3} ,
\end{multline}by \eqref{additional beta decay.st}. Then since $|\nabla\mathfrak{G}_{\textup{odd}}(x,y)|\le \frac{1}{|x-y|^2}+\frac{1}{|\bar{x}-y|^2}$ and the upper bound is even in $y_3$, note that 
\begin{multline*}
    \int_{\mathbb{R}^3_+} dy\ |\nabla\mathfrak{G}_{\textup{odd}}(x,y)| e^{-\frac{\beta }{2}|y_\parallel|} e^{-\frac{1}{2}m_\pm g\beta y_3}
    \le \int_{\mathbb{R}^3} dy\ \left(\frac{1}{|x-y|^2}+\frac{1}{|\bar{x}-y|^2}\right) e^{-\frac{\beta }{2}|y_\parallel|} e^{-\frac{1}{2}m_\pm g\beta |y_3|}\\
    \lesssim  \frac{1}{m_\pm g\beta^3}\frac{1}{\langle x\rangle^2},
\end{multline*}by \eqref{asymptotics}. Therefore, we conclude that   \begin{equation}
     \label{Est diff final}|\Eonedst(x) - \Etwodst(x)|\lesssim \frac{1}{\min\{m_+,m_-\} g\beta^6}\max_{\iota=\pm}\left(\sup_{x,v}
e^{\frac{1}{2}\beta|x_\parallel|} e^{\frac{1}{4}\beta\left(\vZio + m_\iota g x_3\right)} |(\fonestio - \ftwostio)(x, v)|\right).
 \end{equation} 
Moreover, since $ \int_\rth |\hat{v}_{\pm,i}||F_{\pm,\textup{st}}(y,v)|dv\le \int_\rth |F_{\pm,\textup{st}}(y,v)|dv,$ $\B_{\textup{st}}(x)$ in \eqref{Field representation Bst} also has the same upper-bound (up to constant) as that of $\E_{\textup{st}}(x)$ and hence we have the same upper bound on the difference 
 $$|\Bonedst(x) - \Btwodst(x)|\lesssim \frac{1}{\min\{m_+,m_-\} g\beta^6}\max_{\iota=\pm}\left(\sup_{x,v}
e^{\frac{1}{2}\beta|x_\parallel|} e^{\frac{1}{4}\beta\left(\vZio + m_\iota g x_3\right)} |(\fonestio - \ftwostio)(x, v)|\right).$$
Consequently, by \eqref{eq.F.diff.st2}, we obtain that
\begin{multline}\label{eq.F.diff.st3}
 \sup_{x,v}  e^{\frac{1}{2}\beta|x_\parallel|}e^{\frac{1}{4}\beta\left(\vZ+m_\pm g x_{3}\right)}| (\fonest-\ftwost)(x,v)|\\
    \lesssim \frac{1}{\min\{m^2_+,m^2_-\} g^2\beta^7} \|\mathrm{w}^2_{\pm,\beta}(\cdot,0,\cdot)\nabla_{x_\parallel,v}G_\pm(\cdot,\cdot)\|_{L^\infty_{x_\parallel,v}}\max_{\iota=\pm}\left(\sup_{x,v}
e^{\frac{1}{2}\beta|x_\parallel|} e^{\frac{1}{4}\beta\left(\vZio + m_\iota g x_3\right)} |(\fonestio - \ftwostio)(x, v)|\right).
\end{multline} By choosing $\beta\gg1$ sufficiently large, we conclude that
$$\max_{\iota=\pm}\left(\sup_{x,v}
e^{\frac{1}{2}\beta|x_\parallel|} e^{\frac{1}{4}\beta\left(\vZio + m_\iota g x_3\right)} |(\fonestio - \ftwostio)(x, v)|\right)=0,$$ and hence $$|\Eonedst(x) - \Etwodst(x)|=|\Bonedst(x) - \Btwodst(x)|=0,\text{ for any }x\in\mathbb{R}^3_+.$$

This completes the proof of uniqueness for the stationary solution.

Next we address the non-negativity of the solution we have contructed. Assume that the inflow boundary profile $G_\pm$ is non-negative. Since $ F_{\pm,\textup{st}} $ remains constant along the stationary characteristics described by \eqref{def.steady characteristics}, it follows that $ F_{\pm,\textup{st}}  $ is also non-negative.

This concludes our analysis of the existence and uniform estimates for steady states under J\"uttner-Maxwell upper bounds. In the next section, we explore perturbative solutions around these steady states and establish their asymptotic stability using a bootstrap argument.

\section{Dynamical Asymptotic Stability}\label{sec.boot.decay}
In this section, we establish the asymptotic stability of the steady states $(F_{\pm,\textup{st}},\E_{\textup{st}},\B_{\textup{st}})$, whose unique existence is guaranteed by Theorem \ref{thm.ex.st}. 
We show that the perturbation $(\Fp,\Ep,\Bp)$ from the steady state decays linearly in time, thereby concluding that the stationary states are asymptotically stable.  

We assume that the inflow boundary data $G_\pm$ at $x_3=0$ coincide with the stationary states $F_{\pm,\textup{st}}$ for incoming particles with $v\in\rth$ such that $n_x\cdot v<0$. 
Recall that these profiles are bounded above by J\"uttner equilibrium distributions \eqref{steady state L infty}. 
As before, we suppose that $\E$, $\B$, $\E_{\textup{st}}$, and $\B_{\textup{st}}$ (and thus also $\mathcal{E}$ and $\mathcal{B}$) satisfy the perfect conductor boundary condition \eqref{perfect cond. boundary} on $x_3=0$.

\subsection{Perturbations from the Steady States}We first define the perturbation $(\Fp,\Ep,\Bp)$ from the steady-state $(F_{\pm,\textup{st}},\E_{\textup{st}},\B_{\textup{st}}) $:
\begin{definition}
    Define the perturbations $(\Fp,\Ep,\Bp)$ from the steady-state $(F_{\pm,\textup{st}},\E_{\textup{st}},\B_{\textup{st}}) $ as
\begin{equation}\label{def.perturb}
    \begin{split}
        \Fp(t,x,v)&\eqdef F_{\pm}(t,x,v)-F_{\pm,\textup{st}}(x,v),\ 
        \Ep(t,x)\eqdef \E(t,x)-\E_{\textup{st}}(x),\textup{ and }
        \Bp(t,x)\eqdef \B(t,x)-\B_{\textup{st}}(x),
    \end{split}
\end{equation}for $t\in[0,\infty)$, $x\in \mathbb{R}^3_+,$ and $v\in\rth$ where the full solution $(F_\pm,\E,\B)$ and the steady-state $(F_{\pm,\textup{st}},\E_{\textup{st}},\B_{\textup{st}}) $ solve the dynamical and the stationary systems of the Vlasov--Maxwell equations \eqref{2speciesVM} and \eqref{2speciesVM-steady}, respectively, in the sense of distributions.  
\end{definition}
Then by \eqref{2speciesVM} and \eqref{2speciesVM-steady}, we observe that the perturbations $(\Fp,\Ep,\Bp)$ solve the perturbative system of Vlasov--Maxwell equations \eqref{VM perturbations1} (with $c=1$ and $\textup{e}=1$ normalized)
where $\varrho$ and $\mathcal{J}$ are defined as \eqref{rhoJ_per} and  satisfy the continuity equation \begin{equation}\label{continuity eq.perturb}
    \partial_t \varrho +\nabla_x\cdot \mathcal{J} =0.
\end{equation}

Under the assumptions above, we consider an iterated sequence of solutions $(\fpl,\Epl,\Bpl)$ that solves the following Vlasov--Maxwell system under \eqref{inflow boundary}. Note that we can consider the same characteristic trajectory $\ZSl=(\XSl,\VSl)$ solving \eqref{iterated char} but now in the whole half space $\mathbb{R}^3_+$. The iterated sequence of solutions $(\fpl,\Epl,\Bpl)$ solve
\begin{equation}\label{iterated Vlasov.M}
    \begin{split}
        \partial_t \fplo+\hat{v}_\pm \cdot \nabla_x \fplo +\left(\pm\Elbf\pm\hat{v}_\pm\times \Blbf-m_\pm g\hat{e}_3\right)\cdot \nabla_v \fplo &= \mp \left(\Epl+\hat{v}_\pm\times \Bpl\right)\cdot \nabla_v F_{\pm,\textup{st}},\\
         \fplo(0,x,v)= f^{\textup{in}}_\pm(x,v),\
        \fplo (t,x_\parallel,0,v)|_{\gamma_-}&=0,\text{ and }
    \end{split}
\end{equation}
\begin{equation}\label{iterated Maxwell.M}
    \begin{split}
        \partial _t \Epl-\nabla_x\times \Bpl&= - 4\pi \mathcal{J}^l ,\ 
        \partial_t \Bpl+\nabla_x\times \Epl=0,\\
        \nabla_x \cdot \Epl&=4\pi \varrho^l,\  
        \nabla_x \cdot \Bpl=0.
        \end{split}
        \end{equation}

\noindent By \eqref{leading char}, we can also define the characteristic trajectory $\ZSlo=(\XSlo,\VSlo)$ which solves
\begin{equation}\label{iterated char}
 \begin{split}
   \frac{d\XSlo(s)}{ds}&=\hVSlo(s)=\frac{\VSlo(s)}{\sqrt{m_\pm^2+|\VSlo(s)|^2}},\\
        \frac{d\VSlo(s)}{ds}&=\pm\Elbf(s,\XSlo(s))\pm\hVSlo(s)\times \Blbf(s,\XSlo(s))-m_\pm g\hat{e}_3,
 \end{split} 
\end{equation}   where $\XSlo(s)=\XSlo(s;t,x,v)$, $\VSlo(s)=\VSlo(s;t,x,v)$, $\hat{e}_3\eqdef (0,0,1)^\top$, and $(\hat{v}_\pm)\eqdef \frac{v}{\sqrt{m_\pm^2+|v|^2}}$.

Our main goal is to prove that the perturbations $(f_\pm, \mathcal{E}, \mathcal{B})$ decay linearly in time. 
In particular, the argument controls nonlinear terms while simultaneously extracting decay from the linearized dynamics, thereby establishing full nonlinear asymptotic stability.
 In the subsequent sections, we establish decay-in-time estimates for these iterates and close the nonlinear argument to obtain asymptotic stability.

In the rest of the section, we prove the following main proposition on the linear-in-time decay of the perturbations:
\begin{proposition}\label{prop.dyna.boot}
For any $l\in\mathbb{N}$, we have
\begin{align}
     \sup_{t\ge 0}\ \langle t\rangle&\left\|e^{\frac{\beta}{2} |x_{\parallel}|}e^{\frac{\beta}{4}(\vZ+m_\pm gx_3)}\fpl(t,\cdot,\cdot)\right\|_{L^\infty} \label{apriori_f}\\
     &\le  \frac{4}{\beta } \bigg(\|\mathrm{w}_{\pm,\beta} f^{\textup{in}}_\pm\|_{L^\infty_{x,v}}
  +C \notag\min\{m_-,m_+\}\frac{g}{8}\|\mathrm{w}^2_{\pm,\beta}(\cdot,0,\cdot)\nabla_{x_\parallel,v}G_\pm(\cdot,\cdot)\|_{L^\infty_{x_\parallel,v}}\bigg),  \\
   &\sup_{t\ge 0}\  \langle t \rangle\|(\Epl,\Bpl)\|_{L^\infty} \le \min\{m_-,m_+\}\frac{g}{16}  ,\label{apriori_EB} 
    \end{align}for a sufficiently large $\beta>1$ where the weight $\mathrm{w}=\mathrm{w}_{\pm,\beta}$ is defined as \eqref{weights.wholehalf}.
\end{proposition}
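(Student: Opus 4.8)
The plan is to prove \eqref{apriori_f}--\eqref{apriori_EB} by induction on the iteration index $l$, in exact parallel with the stationary bootstrap of Section~\ref{sec.bootstrap.st}. For $l=0$ the bound is immediate: the zeroth iterate $(f^0_\pm,\mathcal E^{0},\mathcal B^{0})$ has $f^0_\pm\equiv 0$, so \eqref{apriori_f} holds trivially, while $(\mathcal E^{0},\mathcal B^{0})$ is the free-wave evolution of the compactly supported data $(\mathcal E^{\textup{in}},\mathcal B^{\textup{in}})$, whence by the Kirchhoff formula $\langle t\rangle\|(\mathcal E^{0},\mathcal B^{0})(t)\|_{L^\infty}\lesssim R_0^2\,c_0\min\{m_-,m_+\}g\ll\min\{m_-,m_+\}g/16$ by \eqref{initial E0i}. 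Assuming \eqref{apriori_f}--\eqref{apriori_EB} at level $l$, I would prove them at level $l+1$ with two standing inputs: the characteristic weight-comparison estimates \eqref{w comparison 1.whole}, \eqref{w comparison 3.whole} together with the travel-time bound \eqref{exit time bound.whole}, and the decay of the steady momentum derivative $\|\mathrm{w}_{\pm,\beta}\nabla_v F_{\pm,\textup{st}}\|_{L^\infty}\lesssim\|\mathrm{w}^2_{\pm,\beta}(\cdot,0,\cdot)\nabla_{x_\parallel,v}G_\pm\|_{L^\infty}$ from \eqref{steady state decay mom.deri}. The inductive \eqref{apriori_EB} together with the steady bound \eqref{steady state L infty} gives $\sup_t\|(\Elbf,\Blbf)\|_{L^\infty}\le\min\{m_+,m_-\}g/8$, which (with $\min\{m_-,m_+\}g\ge 32$) is precisely what is required for \eqref{w comparison 3.whole} and \eqref{exit time bound.whole} to apply along $\ZSlo$.

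\textbf{Particle estimate.} Solving \eqref{iterated Vlasov.M} along the characteristics $\ZSlo$ of \eqref{iterated char} and using the homogeneous inflow condition for the perturbation yields the mild representation
\[
\fplo(t,x,v)=1_{t\le\tblo}\,f^{\textup{in}}_\pm(\ZSlo(0))\;\mp\;\int_{\max\{0,\,t-\tblo\}}^{t}\big(\Epl+\hVSlo(s)\times\Bpl\big)(s,\XSlo(s))\cdot\nabla_v F_{\pm,\textup{st}}(\ZSlo(s))\,ds,
\]
whose integral term is exactly the microscopic wave--wave interaction \eqref{SD}. I would bound $|\nabla_v F_{\pm,\textup{st}}(\ZSlo(s))|\le\|\mathrm{w}_{\pm,\beta}\nabla_v F_{\pm,\textup{st}}\|_{L^\infty}/\mathrm{w}_{\pm,\beta}(\ZSlo(s))$, use \eqref{w comparison 3.whole} to extract the factor $e^{-\frac{\beta}{2}|x_\parallel|-\frac{\beta}{2}(\vZ+m_\pm g x_3)}$, and the inductive $|(\Epl,\Bpl)(s)|\le\min\{m_+,m_-\}\frac{g}{16}\langle s\rangle^{-1}$; the initial-data term is treated the same way with $\|\mathrm{w}_{\pm,\beta}f^{\textup{in}}_\pm\|_{L^\infty}$. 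After multiplying by the target weight $e^{\frac{\beta}{2}|x_\parallel|+\frac{\beta}{4}(\vZ+m_\pm g x_3)}$ there remains a factor $e^{-\frac{\beta}{4}(\vZ+m_\pm g x_3)}$ to spend on time decay. The key mechanism is the trade-off between travel time and mechanical energy: splitting at $t\simeq 2\tblo$ and using $\tblo\le\frac{16}{5m_\pm g}(\vZ+m_\pm g x_3)$ from \eqref{exit time bound.whole}, for $t\lesssim\tblo$ the surviving weight already yields exponential-in-$t$ decay, while for $t\gtrsim\tblo$ one has $\langle s\rangle\gtrsim\langle t\rangle$ on the integration range so $\int_{t-\tblo}^{t}\langle s\rangle^{-1}\,ds\lesssim\tblo\langle t\rangle^{-1}$, and $(\vZ+m_\pm g x_3)e^{-\frac{\beta}{4}(\vZ+m_\pm g x_3)}\lesssim\beta^{-1}$ absorbs the linear factor $\tblo$. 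Collecting the constants (and using $\min\{m_-,m_+\}g\ge 32$) reproduces \eqref{apriori_f}.

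\textbf{Field estimates.} For $\Bplo$ I would use the potential-based half-space representation \eqref{B_half_final}--\eqref{Bpar_half_final}, which by Proposition~\ref{Prop 4.4} contains \emph{no} nonlinear Glassey--Strauss $S$-terms and no $b2$ boundary terms and is linear in $\fplo$: its homogeneous part $\mathbf B_{\textup{hom}}$ and the $b1$ terms carry only compactly supported data $(\mathcal E^{\textup{in}},\mathcal B^{\textup{in}},f^{\textup{in}}_\pm(0,\cdot))$ and decay like $\langle t\rangle^{-1}$ with small constant by \eqref{initial E0i}--\eqref{f initial condition}, while into the transport ($T$) pieces I would insert the decay of $\fplo$ just proved — its exponential spatial weight $e^{-\frac{\beta}{2}|y_\parallel|-\frac{\beta}{4}m_\pm g y_3}$ keeps the integral over the expanding region $|Y|\le t$ bounded uniformly in $t$, and the retarded-time decay of $\fplo$ together with a splitting $|Y|\le t/2$ versus $|Y|>t/2$ supplies the extra $\langle t\rangle^{-1}$, with a prefactor of the form $C_M$ times a negative power of $\beta$ and $g$. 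For $\Eplo$ I would use the Glassey--Strauss-type representation \eqref{Ei5}, \eqref{E3} for the wave equation satisfied by $\mathcal E$, whose $T$, $b1$ and $b2$ terms are handled exactly as above but which additionally contains the macroscopic wave--wave interaction \eqref{macroSDint}; I would control \eqref{macroSDint} using (i) the spatial decay of $\nabla_v F_{\pm,\textup{st}}$ from \eqref{steady state decay mom.deri} to keep the $y$-integral over the light cone $|x-y|\le t$ convergent uniformly in $t$, (ii) the inductive retarded-time decay $|(\Epl,\Bpl)(t-|x-y|,y)|\lesssim\langle t-|x-y|\rangle^{-1}$, and (iii) the $3$D dispersion factor $|x-y|^{-1}$; splitting $|x-y|\le t/2$ versus $|x-y|>t/2$ and collapsing the remaining integral to a Yukawa-type convolution yields $\langle t\rangle^{-1}$ times a negative power of $\beta$ and $g$. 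Choosing $\beta$ large enough that these prefactors are $\ll\min\{m_+,m_-\}g$ (as in \eqref{Est diri final} for the steady case), the sum gives $\langle t\rangle\|(\Eplo,\Bplo)(t)\|_{L^\infty}\le\min\{m_+,m_-\}g/16$, closing \eqref{apriori_EB} at level $l+1$ and hence the induction.

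The hard part will be the electric-field estimate, specifically extracting the sharp linear-in-time decay from the nonlinear wave--wave interaction \eqref{macroSDint}: this is the rigorous closure of the macroscopic--microscopic feedback loop of Figure~\ref{fig1}, and it hinges on the precise balance between the exponential spatial localization of $\nabla_v F_{\pm,\textup{st}}$ (taming the growing volume of the light cone), the slowly decaying retarded fields, and the $|x-y|^{-1}$ dispersion — all in a regime with no macroscopic dissipation of energy, mass or $L^p$-norms. The particle step is comparatively routine, but the travel-time-versus-mechanical-energy trade-off there, which rests on the Lipschitz control of the Lorentz force and the linear-in-energy bound on $\tblo$, is what ultimately tames the underlying instability.
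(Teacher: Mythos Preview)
Your proposal is correct and follows essentially the same bootstrap strategy as the paper: Lemma~\ref{lem.dyna.fplo} for the particle step (mild formulation along $\ZSlo$, the travel-time bound \eqref{exit time bound.whole}, and the weight comparison \eqref{w comparison 3.whole}) and Lemma~\ref{lem.dyna.EB} for the fields (potential representation for $\Bplo$ with no $S$ or $b2$ pieces, half-space Glassey--Strauss for $\Eplo$ with the $S$-term as the crux). The only cosmetic differences are that the paper splits the retarded integrals at $|x-y|=1$ rather than $t/2$ (via $\langle t\rangle\lesssim\langle t-|x-y|\rangle\max\{|x-y|,1\}$), and that the $\Eplo$ $S$-term also carries an ``acc'' contribution $(\pm\Elbf\pm\hat v_\pm\times\Blbf-m_\pm g\hat e_3)\fplo$ which you do not single out --- but that piece is bounded exactly like the $T$-term and is implicitly covered by your argument.
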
 In the following sections, we fix $ l \in \mathbb{N} $ and assume that \eqref{apriori_f}--\eqref{apriori_EB} hold at the iteration level $ (l) $. We then show that these same estimates remain valid at the next level $ (l+1) $, thereby closing the bootstrap argument.

\begin{proof}[Proof of Proposition \ref{prop.dyna.boot}]
  Proposition \ref{prop.dyna.boot} follows from Lemma \ref{lem.dyna.fplo} and Lemma \ref{lem.dyna.EB}, which will be established in the subsequent sections.
\end{proof}

\begin{remark}[Compatibility Conditions]\label{remark.comp.cond}
For the limiting weak solution we impose only the perfect–conductor Dirichlet data on
\(
\mathbf{E}_1,\ \mathbf{E}_2,\ \mathbf{B}_3,\) 
while the Neumann-type relations for
\(
\mathbf{E}_3,\ \mathbf{B}_1,\ \mathbf{B}_2\) 
are used only at the approximate level and are encoded in the weak formulation; no additional boundary conditions are imposed on the limit, and with
\(
W^{1,\infty}\)
regularity this suffices to define the trace at
\(
x_3=0\)
and to close all boundary terms consistently with the continuity equation and the wave system.

The Neumann boundary conditions for the iterated sequence $\Elo_3$, $\Blo_1$, and $\Blo_2$ can be formally derived and be justified. We first impose the Dirichlet-type perfect conductor boundary conditions \eqref{perfect cond. boundary} to the iterated fields $\Elo_1,$ $\Elo_2$ and $\Blo_3$. 
Then using the Gauss's law, we obtain that 
\begin{equation}\notag
\partial_{x_3} \Elo_3 = 4 \pi \rho^{l+1} - \partial_{x_1} \Elo_1 - \partial_{x_2} \Elo_2.
\end{equation}
Formally (needs some justification that $\partial_{x_1}\Elo_1, \partial_{x_2}\Elo_2, 4 \pi \rho^{l+1} $ have their traces in a proper space such as $C^0 (\bar{\Omega})$ at the sequential level of construction of solutions), we have $\partial_{x_1}\Elo_1 =0 = \partial_{x_2}\Elo_2$ from \eqref{perfect cond. boundary}. Hence $\Elo_3$ formally satisfies the Neumann boundary condition: 
\begin{equation}\notag
(\partial_{x_3} \Elo_3- 4 \pi \rho^{l+1})|_{\partial\Omega}  =0.
\end{equation}Also, using the Amp\`{e}re-Maxwell equation, we derive that 
\begin{align}\notag
n \times (\nabla \times \Blobf) - 4\pi  n\times J^{l+1} =n\times  \partial_t \Elobf
\end{align} for any $n\in\rth.$ 
 In addition, from \eqref{perfect cond. boundary}, we formally (needs some justification that $\partial_t \mathbf{E}^{l+1}_\parallel, \nabla_{x_\parallel}\Blo_3, $ and $J_\parallel$ have their traces in a proper space such as $C^0(\bar{\Omega})$ at the sequential level of construction of solutions) have $\partial_t \Elo_1=0= \partial_t \Elo_2$ at $\partial\Omega$, and $\partial_{x_1}\Blo_3 = 0 = \partial_{x_2}\Blo_3$ at $\partial\Omega$. Then by choosing $n$ to be the outward normal vector at the boundary $x_3=0$ as $n=(0,0,-1)^\top,$ formally we derive that 
 \begin{equation}\notag
 \begin{bmatrix}
0 \\
0 \\
-1
 \end{bmatrix}
 \times  \begin{bmatrix}
 \partial_{x_2}\Blo_3  - \partial_{x_3}\Blo_2   \\
 -(\partial_{x_1}\Blo_3 - \partial_{x_3}\Blo_1) \\
 \partial_{x_1}\Blo_2 - \partial_{x_2}\Blo_1 
 \end{bmatrix}
 - 4 \pi 
 \begin{bmatrix}
 J^{l+1}_2 \\
 -J^{l+1}_1 \\
 0
  \end{bmatrix}
  = \begin{bmatrix} \partial_t \Elo_2 \\
  - \partial_t \Elo_1 \\ 
  0 
  \end{bmatrix}
  = \begin{bmatrix} 0 \\
 0 \\ 
  0 
  \end{bmatrix} \ \ \textup{at $\partial\Omega$, }
 \end{equation}
and hence
\begin{align}\notag
(\partial_{x_3}\Blo_1  - 4 \pi J^{l+1}_2)|_{\partial\Omega}=0,\ \
(\partial_{x_3}\Blo_2 + 4 \pi J^{l+1}_1)|_{\partial\Omega}=0.
\end{align}Therefore, we have 
\begin{equation}
\label{perfect.conductor.neumann}
\partial_{x_3}\Eth = 4\pi \rho, \quad 
\partial_{x_3}\Btwo = -4\pi J_1, \quad 
\textup{and} \quad 
\partial_{x_3}\Bone = 4\pi J_2.
\end{equation}
Note that the Neumann conditions above are not really \textit{boundary conditions}. They are the identities that the smooth solution should satisfy at the boundary as long as all the quantities have a proper sense of trace at the boundary. 

\end{remark}

\subsection{Enhanced Decay-in-$t$ for the Distributions and Fields}
\label{sec.fplo.decay}In this section, we prove the estimate \eqref{apriori_f} at the iteration level $(l+1)$.
\begin{lemma}\label{lem.dyna.fplo}
   Fix $l\in\mathbb{N}$ and suppose \eqref{apriori_EB} hold for $(\Epl,\Bpl).$ Then $\fplo$ satisfies  \begin{align}
     \sup_{t\ge 0}\ \langle t\rangle&\left\|e^{\frac{\beta}{2} |x_{\parallel}|}e^{\frac{\beta}{4}(\vZ+m_\pm gx_3)}\fplo(t,\cdot,\cdot)\right\|_{L^\infty}\notag\\
     &\le  \frac{4}{\beta } \bigg(\|\mathrm{w}_{\pm,\beta} f^{\textup{in}}_\pm\|_{L^\infty_{x,v}}
  +C \notag\min\{m_-,m_+\}\frac{g}{8}\|\mathrm{w}^2_{\pm,\beta}(\cdot,0,\cdot)\nabla_{x_\parallel,v}G_\pm(\cdot,\cdot)\|_{L^\infty_{x_\parallel,v}}\bigg).\end{align}
\end{lemma}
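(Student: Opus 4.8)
\emph{Proof strategy.} The plan is to integrate the transport equation in \eqref{iterated Vlasov.M} along the iterate characteristics $\ZSlo(s)=(\XSlo(s),\VSlo(s))$ solving \eqref{iterated char}, and then to run the weighted $L^\infty$ estimate against the energy weight, using the exit-time and weight-comparison bounds of Section \ref{sec.main.comparison.weight}. First I would check those bounds apply here: since $\Elbf=\E_{\textup{st}}+\Epl$ and $\Blbf=\B_{\textup{st}}+\Bpl$, combining $\|(\E_{\textup{st}},\B_{\textup{st}})\|_{L^\infty}\le\min\{m_+,m_-\}\tfrac{g}{16}$ from Theorem \ref{thm.ex.st} with the iteration hypothesis \eqref{apriori_EB} (which gives $\|(\Epl,\Bpl)(s)\|_{L^\infty}\le\min\{m_+,m_-\}\tfrac{g}{16}$ because $\langle s\rangle\ge 1$) yields $\sup_s\|(\Elbf,\Blbf)\|_{L^\infty}\le\min\{m_+,m_-\}\tfrac{g}{8}$, i.e.\ \eqref{EB apriori bound.whole} holds for the fields generating $\ZSlo$. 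Hence the exit-time bound \eqref{exit time bound.whole} and the weight comparisons \eqref{w comparison 1.whole}--\eqref{w comparison 3.whole} are valid along $\ZSlo$. Writing the mild formulation as in \eqref{solution f} and using that the perturbation satisfies the \emph{homogeneous} inflow condition $\fplo|_{\gamma_-}=0$, the boundary contribution disappears and
\begin{equation}\notag
\fplo(t,x,v)=1_{t\le\tblo}\,f^{\textup{in}}_\pm\big(\ZSlo(0;t,x,v)\big)\mp\int_{\max\{0,\,t-\tblo\}}^{t}\big(\Epl+\hat{v}_\pm\times\Bpl\big)\big(s,\XSlo(s)\big)\cdot\nabla_v F_{\pm,\textup{st}}\big(\ZSlo(s)\big)\,ds.
\end{equation}
It therefore suffices to bound each term on the right, multiplied by $e^{\frac{\beta}{2}|x_\parallel|}e^{\frac{\beta}{4}(\vZ+m_\pm g x_3)}$ and by $\langle t\rangle$.

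For the initial-data term I would bound $|f^{\textup{in}}_\pm(\ZSlo(0))|\le\mathrm{w}_{\pm,\beta}(\ZSlo(0))^{-1}\|\mathrm{w}_{\pm,\beta}f^{\textup{in}}_\pm\|_{L^\infty_{x,v}}$ and apply \eqref{w comparison 3.whole} to obtain $e^{\frac{\beta}{2}|x_\parallel|}e^{\frac{\beta}{4}(\vZ+m_\pm g x_3)}\,\mathrm{w}_{\pm,\beta}(\ZSlo(0))^{-1}\le e^{-\frac{\beta}{4}(\vZ+m_\pm g x_3)}$. On the set $\{t\le\tblo\}$ the exit-time bound \eqref{exit time bound.whole} forces $t\le\tblo\lesssim\tfrac{1}{m_\pm g}(\vZ+m_\pm g x_3)$, so $\langle t\rangle\,e^{-\frac{\beta}{4}(\vZ+m_\pm g x_3)}$ is bounded by a constant times $\beta^{-1}$ (the exponential energy weight dominates the linear growth in $\langle t\rangle$); tracking the numerical constants under $\min\{m_-,m_+\}g\ge 32$ gives a contribution $\le\frac{4}{\beta}\|\mathrm{w}_{\pm,\beta}f^{\textup{in}}_\pm\|_{L^\infty_{x,v}}$.

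For the source term I would use \eqref{apriori_EB} to bound $|(\Epl+\hat{v}_\pm\times\Bpl)(s,\cdot)|\le 2\|(\Epl,\Bpl)(s)\|_{L^\infty}\le\min\{m_-,m_+\}\tfrac{g}{8}\langle s\rangle^{-1}$ (using $|\hat v_\pm|\le 1$), and \eqref{steady state decay mom.deri} to bound $|\nabla_v F_{\pm,\textup{st}}(\ZSlo(s))|\le C\,\mathrm{w}_{\pm,\beta}(\ZSlo(s))^{-1}\|\mathrm{w}^2_{\pm,\beta}(\cdot,0,\cdot)\nabla_{x_\parallel,v}G_\pm\|_{L^\infty_{x_\parallel,v}}$; as before \eqref{w comparison 3.whole} turns the target weight into $e^{-\frac{\beta}{4}(\vZ+m_\pm g x_3)}$, so it remains to control $\langle t\rangle\,e^{-\frac{\beta}{4}(\vZ+m_\pm g x_3)}\int_{\max\{0,\,t-\tblo\}}^{t}\langle s\rangle^{-1}\,ds$. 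When $t\le\tblo$ the integral is $\lesssim\log\langle t\rangle$ and $t\lesssim\tfrac{1}{m_\pm g}(\vZ+m_\pm g x_3)$, so the energy weight again absorbs $\langle t\rangle\log\langle t\rangle$; when $t>\tblo$ the integral is $\lesssim\tblo\,\langle t-\tblo\rangle^{-1}$, and I would split into $\tblo\le t/2$ (so that the retarded time satisfies $\langle t-\tblo\rangle\gtrsim\langle t\rangle$, cancelling the prefactor and leaving $\tblo\lesssim\vZ+m_\pm g x_3$) and $\tblo>t/2$ (so that $\langle t\rangle\lesssim\tblo\lesssim\vZ+m_\pm g x_3$ and $\langle t\rangle\,\tblo\lesssim(\vZ+m_\pm g x_3)^2$ is absorbed by $e^{-\frac{\beta}{4}(\vZ+m_\pm g x_3)}$); this closes the estimate with a contribution $\le\frac{4C}{\beta}\min\{m_-,m_+\}\tfrac{g}{8}\|\mathrm{w}^2_{\pm,\beta}(\cdot,0,\cdot)\nabla_{x_\parallel,v}G_\pm\|_{L^\infty_{x_\parallel,v}}$. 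Adding the two contributions gives the asserted bound for $\fplo$.

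The hard part will be the balance in the source term between the slowly decaying fields (only $\langle s\rangle^{-1}$, since by \eqref{apriori_EB} their decay is decoupled from that of the particles) and the $\langle t\rangle$ prefactor demanded by the linear-in-time norm: this is precisely what a priori obstructs closing the loop, and it is resolved only because the ambient gravity makes the backward travel time $\tblo$ linearly controlled by the mechanical energy $\vZ+m_\pm g x_3$, so that the exponential energy weight in the norm dominates every polynomial (or logarithmic) factor produced by $\tblo$. A secondary technical point, which is why Theorem \ref{thm.ex.st} and \eqref{apriori_EB} are invoked at the very start, is ensuring that the Section \ref{sec.main.comparison.weight} weight-comparison machinery genuinely applies to the iterate characteristic $\ZSlo$ rather than only to the limiting one.
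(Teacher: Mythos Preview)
Your argument is correct and follows the paper's approach exactly: the mild formulation \eqref{solution flo maxwell} along the iterate characteristics, the weight comparison \eqref{w comparison 3.whole}, the exit-time bound \eqref{exit time bound.whole}, and the momentum-derivative decay \eqref{steady state decay mom.deri} for $\nabla_v F_{\pm,\textup{st}}$. Your treatment of the source term in the regime $t>\tblo$ (splitting $\tblo\le t/2$ versus $\tblo>t/2$ to pair the $\langle t\rangle$ prefactor with either $\langle t-\tblo\rangle^{-1}$ or with the exponential energy weight) is in fact more explicit than the paper's display, which passes from $\langle t\rangle\|(\Epl,\Bpl)\|_{L^\infty}$ directly to $\tblo\cdot\min\{m_-,m_+\}\tfrac{g}{8}$ in one line; your bookkeeping makes transparent why the slowly decaying field does not obstruct the linear-in-$t$ bound.
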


\begin{proof}
    By writing the solution $\fplo$ in the mild form \begin{multline}\notag
    \fplo(t,x,v)= 1_{t\le \tblo(t,x,v)}f^{\textup{in}}_\pm(\XSlo(0;t,x,v),\VSlo(0;t,x,v))\\\mp \int^t_{\max\{0,t-\tblo\}} \left(\Epl(s,\XSlo(s))+\hat{\mathcal{V}}^{l+1}_\pm(s)\times \Bpl(s,\XSlo(s))\right)\cdot \nabla_v F_{\pm,\textup{st}}(\XSlo(s),\VSlo(s))ds,
\end{multline} and using \eqref{apriori_EB}, we obtain 
\begin{align*}\notag
   \langle t\rangle |\fplo(t,x,v)|
   &\le  \langle t\rangle\frac{1_{t\le \tblo(t,x,v)}}{\mathrm{w}_{\pm,\beta}( \ZSlo(0 ; t, x, v))}\|\mathrm{w}_{\pm,\beta} f^{\textup{in}}_\pm\|_{L^\infty_{x,v}}\\
   &+1_{t\le \tblo(t,x,v)}  \langle t \rangle\|(\Epl,\Bpl)\|_{L^\infty} \|\mathrm{w}_{\pm,\beta}\nabla_v F_{\pm,\textup{st}}\|_{L^\infty_{x,v}}\int^t_0 \frac{1}{\mathrm{w}_{\pm,\beta}( \ZSlo(s ; t, x, v))}ds\\
   &+1_{t> \tblo(t,x,v)}  \langle t \rangle\|(\Epl,\Bpl)\|_{L^\infty}\|\mathrm{w}_{\pm,\beta}\nabla_v F_{\pm,\textup{st}}\|_{L^\infty_{x,v}}\int^t_{t-\tblo} \frac{1}{\mathrm{w}_{\pm,\beta}( \ZSlo(s ; t, x, v))}ds\\
     &\le  \langle \tblo\rangle e^{-\frac{1}{2}\beta v_\pm^0-\frac{1}{2}m_\pm g\beta x_3-\frac{\beta}{2}|x_{\parallel}|}\|\mathrm{w}_{\pm,\beta} f^{\textup{in}}_\pm\|_{L^\infty_{x,v}}\\
  &+\langle \tblo\rangle 1_{t\le \tblo(t,x,v)}\min\{m_-,m_+\}\frac{g}{8} \|\mathrm{w}_{\pm,\beta}\nabla_v F_{\pm,\textup{st}}\|_{L^\infty_{x,v}}e^{-\frac{1}{2}\beta v_\pm^0-\frac{1}{2}m_\pm g\beta x_3-\frac{\beta}{2}|x_{\parallel}|} \\
   &+ \tblo 1_{t> \tblo(t,x,v)}\min\{m_-,m_+\}\frac{g}{8} \|\mathrm{w}_{\pm,\beta}\nabla_v F_{\pm,\textup{st}}\|_{L^\infty_{x,v}}e^{-\frac{1}{2}\beta v_\pm^0-\frac{1}{2}m_\pm g\beta x_3-\frac{\beta}{2}|x_{\parallel}|},
\end{align*}
by\eqref{exit time bound.whole} and \eqref{w comparison 3.whole}. Then using \eqref{exit time bound.whole} and \eqref{decay of Fst} with $G_\pm$ replaced by $\delta G_\pm$, we further have
\begin{equation}\begin{split}\label{fplo decay est}
&   \langle t\rangle |\fplo(t,x,v)|\\
  & \le \left(1+\frac{16}{5m_\pm g} (v_\pm^0+m_\pm gx_3)\right)e^{-\frac{1}{2}\beta v_\pm^0-\frac{1}{2}m_\pm g\beta x_3-\frac{\beta}{2}|x_{\parallel}|}\\&\qquad\times\bigg(\|\mathrm{w}_{\pm,\beta} f^{\textup{in}}_\pm\|_{L^\infty_{x,v}}+\min\{m_-,m_+\}\frac{g}{8} \|\mathrm{w}_{\pm,\beta}\nabla_v F_{\pm,\textup{st}}\|_{L^\infty_{x,v}}\bigg)\\
   &\le \frac{11}{10\beta}(\beta v_\pm^0+m_\pm g\beta x_3)e^{-\frac{1}{2}\beta v_\pm^0-\frac{1}{2}m_\pm g\beta x_3-\frac{\beta}{2}|x_{\parallel}|}\bigg(\|\mathrm{w}_{\pm,\beta} f^{\textup{in}}_\pm\|_{L^\infty_{x,v}}+\min\{m_-,m_+\}\frac{g}{8} \|\mathrm{w}_{\pm,\beta}\nabla_v F_{\pm,\textup{st}}\|_{L^\infty_{x,v}}\bigg)\\
    & \le \frac{44}{5\beta e} e^{-\frac{1}{4}\beta v_\pm^0-\frac{1}{4}m_\pm g\beta x_3-\frac{\beta}{2}|x_{\parallel}|} \bigg(\|\mathrm{w}_{\pm,\beta} f^{\textup{in}}_\pm\|_{L^\infty_{x,v}}
  +C \min\{m_-,m_+\}\frac{g}{8}\|\mathrm{w}^2_{\pm,\beta}(\cdot,0,\cdot)\nabla_{x_\parallel,v}G_\pm(\cdot,\cdot)\|_{L^\infty_{x_\parallel,v}}\bigg),
\end{split}\end{equation} for $g>0$ such that 
$\textcolor{black}{\min\{m_-,m_+\}g\gg 1}.$ Here, we also used the inequality that for $z\ge 0,$ $\beta ze^{-\frac{\beta}{2} z}\le \frac{4}{ e}e^{-\frac{\beta}{4}z}.$ 
This completes the proof of Lemma \ref{lem.dyna.fplo} for $\fplo.$\end{proof}

Now we prove the estimate \eqref{apriori_EB} at the iteration level $( l+1) $. This estimate ensures an additional linear decay in time for the perturbations $ \Eplo $ and $ \Bplo $, thereby implying the asymptotic stability of the steady states $ \E_{\textup{st}} $ and $ \B_{\textup{st}} $. Recall that the total fields are given by
$$
\Elobf = \E_{\textup{st}} + \Eplo \quad \text{and} \quad \Blobf = \B_{\textup{st}} + \Bplo,
$$
where $ \Eplo $ and $ \Bplo $ represent perturbations around the steady states $ \E_{\textup{st}} $ and $ \B_{\textup{st}} $, respectively.

\paragraph{Field representations for the perturbative fields $\Eplo$ and $\Bplo$}
For the estimates of the perturbative components $\Eplo$ and $\Bplo$, we employ the field representations of the electromagnetic fields given in \eqref{Ei5}, \eqref{E3}, \eqref{B_half_final}, \eqref{Bpar_half_final}, \Black{and \eqref{BparS_half_final}}, which were derived from the corresponding wave equations. We note that the Maxwell system \eqref{iterated Maxwell.M} governing the perturbations $\Eplo$ and $\Bplo$ has the same structure as the full Maxwell system \eqref{2speciesVM} for $\E$ and $\B$, provided that $F$ is replaced by $\fplo$ (and consequently $\rho$ and $J$ are replaced by $\varrho$ and $\mathcal{J}$, respectively). Under this replacement, the only difference that affects the final representation lies in the nonlinear $S$-term. Specifically, when constructing the field representations for $\Eplo$ and $\Bplo$, we use the inhomogeneous Vlasov equation \eqref{iterated Vlasov.M} for $\fplo$, which contains an additional inhomogeneity
\[
- \nabla_v \cdot \Big( (\pm \Epl \pm \hat{v}_\pm \times \Bpl) F_{\pm,\textup{st}} \Big).
\]
This term introduces a new nonlinearity that appears \Black{in both the electric and the magnetic field representations: recall from Section~\ref{sec.4} that the magnetic field representation contains the nonlinear $S$-terms \eqref{BparS_half_final}, which coincide with the corresponding nonlinear terms of the Glassey--Strauss type representation (Remark \ref{rmk.BS.GS})}. Therefore, the perturbative fields $\Eplo$ and $\Bplo$ can be expressed as follows:
for each $i = 1,2,3$,
$$
\Eplo_i = \Eplo_{\textup{hom},i} + \Eplo_{ib1} + \Eplo_{ib2} + \Eplo_{iT} + \Eplo_{iS} + \delta_{i3} \Eplo_{\textup{add},3}, \quad
\Bplo_i = \Bplo_{\textup{hom},i} + \Bplo_{ib1} + \Bplo_{iT} \Black{+ \Bplo_{iS}},
$$
where the terms $\Eplo_{\textup{hom},i},\ \Eplo_{ib1},\ \Eplo_{ib2},\ \Eplo_{iT},\ \Bplo_{\textup{hom},i},\ \Bplo_{ib1},$ and $\Bplo_{iT}$ are given by the same representations as in \eqref{Ei5}, \eqref{E3}, \eqref{B_half_final}, and \eqref{Bpar_half_final}, respectively, with $F_\pm$ replaced by $\fplo$. Note that the normal electric field contains an additional term $\Eplo_{\textup{add},3}$ defined as
$$
\Eplo_{\textup{add},3}(t,x) = \sum_{\iota=\pm}(-\iota) 2 \int_{B(x;t) \cap \{y_3=0\}} \int_{\mathbb{R}^3} \frac{f^{l+1}_\iota(t - |y - x|, y_\parallel, 0, v)}{|y - x|} \, dv \, dy_\parallel.
$$
As noted above, the nonlinear $S$-term\Black{s} $\Eplo_{iS}$ \Black{and $\Bplo_{iS}$} for $i=1,2,3$ arise not only from the nonlinear source
\[
- \nabla_v \cdot \Big( (\pm \Elbf \pm \hat{v}_\pm \times \Blbf - m_\pm g \hat{e}_3)\,\fplo \Big),
\]
but also from the additional \textit{inhomogeneous} stationary source
\[
- \nabla_v \cdot \Big( (\pm \Epl \pm \hat{v}_\pm \times \Bpl)\, F_{\pm,\textup{st}} \Big),
\]
which appears in the equation for $f^{l+1}_\pm$ in \eqref{iterated Vlasov.M}.
 Therefore, for each $i=1,2,3,$, we further write $\Eplo_{iS}$ as a sum of two parts:
\begin{align*}
    \Eplo_{iS} &= \sum_\pm((\Eplo)^{(1)}_{\pm,iS} -(\Eplo)^{(2)}_{\pm,iS}), \ \text{ for $i=1,2$, and }\\
    \Eplo_{iS} &= \sum_\pm((\Eplo)^{(1)}_{\pm,iS} + (\Eplo)^{(2)}_{\pm,iS}), \ \text{ for $i=3$, }
\end{align*}
where, with $a^{\mathbf{E}}_{\pm,i}$ defined as \eqref{aEi},
\begin{equation}\notag
\begin{split}
(\Eplo)^{(1)}_{\pm,iS}(t,x) &= \pm\int_{B^+(x;t)} dy \int_{\mathbb{R}^3} dv\, a^{\mathbf{E}}_{\pm,i}(v,\omega) \cdot (\pm \Elbf \pm \hat{v}_\pm \times \Blbf - m_\pm g \hat{e}_3) \frac{\fplo(t - |x - y|, y, v)}{|x - y|} \\
&\quad\mp \int_{B^+(x;t)}  
       \frac{dy }{|y -x|} 
       \int_{\mathbb{R}^3} dv\ 
       \frac{\omega + \hat v_\pm}{1+ \hat v_\pm \cdot \omega}\,
    \left(\pm\Ep  \pm\hat v_\pm\times \Bp\right)\left(t-|x-y |,y\right)\cdot \nabla_v F_{\pm,\textup{st}}(y ,v)\\
&\eqdef (\Eplo)^{(1),\textup{acc}}_{\pm,iS}(t,x)+(\Eplo)^{(1),\textup{st}}_{\pm,iS}(t,x),\text{ and }\\
(\Eplo)^{(2)}_{\pm,iS}(t,x) &=\pm \int_{B^-(x;t)} dy \int_{\mathbb{R}^3} dv\, a^{\mathbf{E}}_{\pm,i}(v,\bar{\omega}) \cdot (\pm \Elbf \pm \hat{v}_\pm \times \Blbf - m_\pm g \hat{e}_3) \frac{\fplo(t - |x - y|, \bar{y}, v)}{|x - y|} \\
&\quad\mp \int_{B^-(x;t)}  
       \frac{dy }{|y -x|} 
       \int_{\mathbb{R}^3} dv\ 
       \frac{\bar{\omega} + \hat v_\pm}{1+ \hat v_\pm \cdot \bar{\omega}}\,
    \left(\pm\Ep  \pm\hat v_\pm\times \Bp\right)\left(t-|x-y |,\bar{y}\right)\cdot \nabla_v F_{\pm,\textup{st}}(\bar{y} ,v)\\
&\eqdef (\Eplo)^{(2),\textup{acc}}_{\pm,iS}(t,x)+(\Eplo)^{(2),\textup{st}}_{\pm,iS}(t,x),\text{ with }\bar{z}\eqdef (z_1,z_2,-z_3)^\top.
\end{split}
\end{equation}
{\color{black}
Analogously, for each $i=1,2,3$, we decompose the magnetic nonlinear term as
$$
\Bplo_{iS} = \sum_\pm\big((\Bplo)^{(1)}_{\pm,iS} - (\Bplo)^{(2)}_{\pm,iS}\big),
$$
following the sign convention of $\B_{\textup{par}}=\sum_{\iota=\pm}(\B^{(1)}_{\iota,\textup{par}}-\B^{(2)}_{\iota,\textup{par}})$ in \eqref{Bpar_half_final} and \eqref{BparS_half_final} (both species enter the $S$-terms with the same sign, cf.~Remark \ref{rmk.BS.GS}; in any case, only absolute upper bounds are used below, so the relative signs play no role). Here, with the magnetic $S$-kernel
\begin{equation}\label{aBi}
a^{\B}_{\pm,i}(v,\omega) \eqdef \nabla_v\left(\frac{(\omega\times \hat{v}_\pm)_i}{1+\hat{v}_\pm\cdot \omega}\right)
=\frac{\nabla_v\big[(\omega\times \hat{v}_\pm)_i\big]}{1+\hat{v}_\pm\cdot \omega}
-\frac{(\omega\times \hat{v}_\pm)_i\,\big(\omega-(\omega\cdot \hat{v}_\pm)\hat{v}_\pm\big)}{\vZ\,(1+\hat{v}_\pm\cdot \omega)^2},
\end{equation}
we define
\begin{equation}\notag
\begin{split}
(\Bplo)^{(1)}_{\pm,iS}(t,x) &= \int_{B^+(x;t)} dy \int_{\mathbb{R}^3} dv\, a^{\B}_{\pm,i}(v,\omega) \cdot (\pm \Elbf \pm \hat{v}_\pm \times \Blbf - m_\pm g \hat{e}_3)\, \frac{\fplo(t - |x - y|, y, v)}{|x - y|} \\
&\quad+ \int_{B^+(x;t)}
       \frac{dy }{|y -x|}
       \int_{\mathbb{R}^3} dv\
       \frac{(\omega \times \hat v_\pm)_i}{1+ \hat v_\pm \cdot \omega}\,
    \left(\pm\Epl  \pm\hat v_\pm\times \Bpl\right)\left(t-|x-y |,y\right)\cdot \nabla_v F_{\pm,\textup{st}}(y ,v)\\
&\eqdef (\Bplo)^{(1),\textup{acc}}_{\pm,iS}(t,x)+(\Bplo)^{(1),\textup{st}}_{\pm,iS}(t,x),\text{ and }\\
(\Bplo)^{(2)}_{\pm,iS}(t,x) &= \int_{B^-(x;t)} dy \int_{\mathbb{R}^3} dv\, a^{\B}_{\pm,i}(v,\bar{\omega}) \cdot (\pm \Elbf \pm \hat{v}_\pm \times \Blbf - m_\pm g \hat{e}_3)\, \frac{\fplo(t - |x - y|, \bar{y}, v)}{|x - y|} \\
&\quad+ \int_{B^-(x;t)}
       \frac{dy }{|y -x|}
       \int_{\mathbb{R}^3} dv\
       \frac{(\bar{\omega} \times \hat v_\pm)_i}{1+ \hat v_\pm \cdot \bar{\omega}}\,
    \left(\pm\Epl  \pm\hat v_\pm\times \Bpl\right)\left(t-|x-y |,\bar{y}\right)\cdot \nabla_v F_{\pm,\textup{st}}(\bar{y} ,v)\\
&\eqdef (\Bplo)^{(2),\textup{acc}}_{\pm,iS}(t,x)+(\Bplo)^{(2),\textup{st}}_{\pm,iS}(t,x).
\end{split}
\end{equation}
As in the electric case, the ``acc''-terms and ``st''-terms refer to the nonlinear contributions arising from the dynamical source and the stationary source, respectively, and in the ``st''-terms no integration by parts in $v$ is performed, since the decay of $\nabla_v F_{\pm,\textup{st}}$ is already available from \eqref{steady state decay mom.deri}.
}

In the following subsections, we will derive decay-in-time estimates for each of the above decomposed components of $\Eplo$ and $\Bplo$.
\subsubsection{Decay Estimates for $\Eplo_{\textup{hom},i}$ and $\Bplo_{\textup{hom},i}$}\label{sec.homo.decay}
Recall that, by \eqref{Eparallel homo solution}, \eqref{E3}, \eqref{B3 homo solution}, \eqref{additional term B1}, and \eqref{Bi homo solution}, the homogeneous components $\Eplo_{\textup{hom},i}$ and $\Bplo_{\textup{hom},i}$ have the following representation: 
\begin{equation}\begin{split}\notag
\Eplo_{\textup{hom},i}(t, x) &=\frac{1}{4\pi t^2} \int_{\partial B(x; t)\cap \{y_3>0\}}  \left(t \mathcal{E}^1_{0i}(y) + \mathcal{E}_{0i}(y) + \nabla \mathcal{E}_{0i}(y) \cdot (y - x)\right) dS_y\\
&\qquad-\frac{1}{4\pi t^2} \int_{\partial B(x; t)\cap \{y_3<0\}}  \left(t \mathcal{E}^1_{0i}(\bar{y}) + \mathcal{E}_{0i}(\bar{y}) + \nabla \mathcal{E}_{0i}(\bar{y}) \cdot (\bar{y} - \bar{x})\right) dS_y,\text{ for }i=1,2,\\
  \Eplo_{\textup{hom},3}(t,x)&=  \frac{1}{4\pi t^2} \int_{\partial B(x; t)\cap \{y_3>0\}}  \left(t \mathcal{E}^1_{03}(y) + \mathcal{E}_{03}(y) + \nabla \mathcal{E}_{03}(y) \cdot (y - x)\right) dS_y\\
   &\qquad +\frac{1}{4\pi t^2} \int_{\partial B(x; t)\cap \{y_3<0\}}  \left(t \mathcal{E}^1_{03}(\bar{y}) + \mathcal{E}_{03}(\bar{y}) + \nabla \mathcal{E}_{03}(\bar{y}) \cdot (\bar{y} - \bar{x})\right) dS_y\\
    &\qquad-2 \sum_{\iota=\pm}\iota\int_{B(x;t)\cap \{y_3=0\}} \int_\rth \frac{\fploio(t-|y-x|,y_\parallel,0,v)}{|y-x|}dvdy_\parallel,\\
   \Bplo_{\textup{hom},i}(t,x)&=\frac{1}{4\pi t^2} \int_{\partial B(x; t)\cap \{y_3>0\}}  \left(t \mathcal{B}^1_{0i}(y) +\mathcal{B}_{0i}(y) + \nabla \mathcal{B}_{0i}(y) \cdot (y - x)\right) dS_y\\
&\qquad+\frac{1}{4\pi t^2} \int_{\partial B(x; t)\cap \{y_3<0\}}  \left(t \mathcal{B}^1_{0i}(\bar{y}) + \mathcal{B}_{0i}(\bar{y}) + \nabla \mathcal{B}_{0i}(\bar{y}) \cdot (\bar{y} - \bar{x})\right) dS_y\\&\qquad+2\sum_{\pm}\pm\int_{B(x;t)\cap \{y_3=0\}} \int_\rth \frac{\hat{v}_j\fplo(t-|y-x|,y_\parallel,0,v)}{|y-x|}dvdy_\parallel,\text{ for }i,j=1,2\text{ with }j\ne i\\
\Bplo_{\textup{hom},3}(t,x) &=\frac{1}{4\pi t^2} \int_{\partial B(x; t)\cap \{y_3>0\}}  \left(t \mathcal{B}^1_{03}(y) +\mathcal{B}_{03}(y) + \nabla \mathcal{B}_{03}(y) \cdot (y - x)\right) dS_y\\
&\qquad-\frac{1}{4\pi t^2} \int_{\partial B(x; t)\cap \{y_3<0\}}  \left(t \mathcal{B}^1_{03}(\bar{y}) + \mathcal{B}_{03}(\bar{y}) + \nabla \mathcal{B}_{03}(\bar{y}) \cdot (\bar{y} - \bar{x})\right) dS_y.
\end{split}\end{equation}
Without loss of generality, we make the decay estimates for the following integrals:
\begin{equation}\label{decay est. b01}
    \frac{1}{4\pi t^2} \int_{\partial B(x; t)\cap \{y_3>0\}}  \left(t \mathcal{B}^1_{01}(y) +\mathcal{B}_{01}(y) + \nabla \mathcal{B}_{01}(y) \cdot (y - x)\right) dS_y,
\end{equation} and 
\begin{equation}\label{neu term}
    \int_{B(x;t)\cap \{y_3=0\}} \int_\rth \frac{\fplo(t-|y-x|,y_\parallel,0,v)}{|y-x|}dvdy_\parallel,
\end{equation} since $|\hat{v}|\le 1.$
Indeed, the integral \eqref{neu term} has the same upper bound as that of $ib2$ terms, whose decay estimate will be given in Section \ref{sec.ib2 decay}. We omit it here. 

Now, we establish a linear-in-time decay estimate for the integral \eqref{decay est. b01}. To this end, we assume that the initial data $\mathcal{B}^1_{01}(y)$ and $\mathcal{B}_{01}(y)$ (as well as the other components  $\mathcal{B}^1_{0i}(y)$, $\mathcal{B}_{0i}(y)$, $\mathcal{E}^1_{0i}(y)$ and $\mathcal{E}_{0i}(y)$ for $i=1,2,3$) are compactly supported in the region $|y| \le R_0$, for some $R_0 > 0$. 
We perform the standard change of variables $y = x + t\omega$, where $\omega \in \mathbb{S}^2$. Then $y - x = t\omega$ and $dS_y = t^2\, d\omega$, so the integral becomes
\begin{multline}\label{utx}
u(t,x)\eqdef \frac{1}{4\pi t^2} \int_{\partial B(x; t)\cap \{y_3 > 0\}} \left( t\, \mathcal{B}^1_{01}(y) + \mathcal{B}_{01}(y) + \nabla \mathcal{B}_{01}(y) \cdot (y - x) \right) dS_y \\
= \frac{1}{4\pi} \int_{\mathbb{S}^2} \chi_{R_0}(x + t\omega)  1_{\{(x + t\omega)_3 > 0\}} \left( t\, \mathcal{B}^1_{01}(x + t\omega) + \mathcal{B}_{01}(x + t\omega) + t\, \nabla \mathcal{B}_{01}(x + t\omega) \cdot \omega \right) d\omega,
\end{multline}
where $\chi_{R_0}$ denotes the characteristic function of the ball $B(0; R_0)$.
Define
$$
\Omega_t(x) \eqdef \left\{ \omega \in \mathbb{S}^2 : x + t\omega \in B(0; R_0),\ (x + t\omega)_3 > 0 \right\}.
$$
Then the above expression reduces to
$$
\frac{1}{4\pi} \int_{\Omega_t(x)} \left( t\, \mathcal{B}^1_{01}(x + t\omega) + \mathcal{B}_{01}(x + t\omega) + t\, \nabla \mathcal{B}_{01}(x + t\omega) \cdot \omega \right) d\omega.
$$
Next, let
$$
M \eqdef \sup_{|z| \le R_0} \left( |\mathcal{B}^1_{01}(z)| + |\mathcal{B}_{01}(z)| + |\nabla \mathcal{B}_{01}(z)| \right) < \infty,
$$
so that the integrand is pointwise bounded by $M(1+t)$. Therefore, we first obtain a simple upper bound $M(1+t)$ of $|u(t,x)| $ in \eqref{utx} for each $t>0$ and $x\in\mathbb{R}^3_+$, since $|\mathbb{S}^2|=4\pi$ and $\chi_{R_0} 1_{(x+t\omega)_3>}\le 1.$ 

It remains to estimate the surface measure of the integration domain $\Omega_t(x)$.
To this end, we observe that the condition $|x + t\omega| \le R_0$ defines a spherical cap on $\mathbb{S}^2$. 
Fixing $x \in \mathbb{R}^3$, the inequality
$$
|x + t\omega|^2 = |x|^2 + 2t\, x \cdot \omega + t^2 \le R_0^2
$$
implies
$$
-\frac{x}{|x|} \cdot \omega \ge \frac{t^2 + |x|^2 - R_0^2}{2t|x|} \eqdef R(t,x).
$$Note that if $R(t,x)>1$, then there is no such $\omega\in\mathbb{S}^2$ exists and  hence $\Omega_t(x)$ becomes empty. Therefore, we only consider the case that $R(t,x)\le 1,$ which provides another restriction that
$$(t-|x|)^2\le R_0^2.$$ Namely, if $(t,x)$ satisfies $(t-|x|)^2> R_0^2,$ then $|\Omega_t(x)|=0$ and hence the integral \eqref{utx} is zero. Now define the opening angle $\theta_{t,x}$ of the spherical cap $\Omega_t(x)$. Note that the radius of the spherical cap has been normalized to $1.$  Thus the surface area of the spherical cap is defined as 
$$|\Omega_t(x)|= 2\pi(1-\cos\theta_{t,x}),$$ and the opening angle $\theta_{t,x}$ is defined through $\omega_0\in\mathbb{S}^2$ which satisfies
$$\cos\theta_{t,x}=-\frac{x}{|x|} \cdot \omega_0 =\min\{R(t,x),1\}.$$ Therefore, we have
\begin{multline*}
    |\Omega_t(x)|= 2\pi (1-\cos\theta_{t,x})=2\pi \left(1+\frac{x}{|x|} \cdot \omega_0\right)
    =2\pi \left(1+\max\left\{-\frac{t^2 + |x|^2 - R_0^2}{2t|x|},-1\right\}\right)\\
    =\max\left\{\pi \frac{R_0^2-(t-|x|)^2}{t|x|},0\right\}.
\end{multline*}
Combining the pointwise bound on the integrand and the measure of $\Omega_t(x)$, we obtain that the integral $u(t,x)$ in \eqref{utx} is bounded from above as $$ |u(t,x)| \le \frac{1}{4\pi}M(1+t)\pi \max\left\{\frac{R_0^2-(t-|x|)^2}{t|x|},0\right\}\begin{cases}
     \lesssim \frac{M(1+t)}{t|x|}, \text{ if }(t-|x|)^2\le R_0^2. \\
  = 0, \text{ otherwise}. 
 \end{cases}$$ Therefore, it suffices to consider the case  $(t-|x|)^2\le R_0^2$ from now on, since the integral becomes trivial, otherwise.
 
We now split the case into two: $t>\frac{3}{2}R_0$ and $t\le \frac{3}{2}R_0.$ If $t> \frac{3}{2}R_0$, then since
$t-|x|\le R_0$, we have
$$\frac{1}{|x|}\le \frac{1}{t-R_0}.$$Suppose $t=sR_0$ for some $s>\frac{3}{2}. $ Then 
$$\frac{1}{t-R_0} = \frac{1}{(s-1)R_0}=\frac{s}{s-1}\frac{1}{t}\le 3\frac{1}{t},$$ since $\frac{s}{s-1}<3 $ uniformly for any $s>\frac{3}{2}. $ Furthermore, since $t>\frac{3}{2}R_0,$ we have 
$$3\frac{1}{t}\le \left(3+\frac{2}{R_0}\right)\frac{1}{1+t}.$$
 Therefore, in this region, we have
  $$ |u(t,x)| \lesssim \frac{M(1+t)}{t(t-R_0)}\lesssim \frac{3M(1+t)}{t^2}\lesssim \left(3+\frac{2}{R_0}\right)^2\frac{M}{3(1+t)},\text{ for }t>\frac{3}{2}R_0. $$ 
 On the other hand, if $t\le \frac{3}{2}R_0$, then note that $|u(t,x)|$ in \eqref{utx} is simply bounded from above by $M(1+t)$, and hence by $M(1+\frac{3}{2}R_0)$.
 Altogether, we obtain \begin{equation}
     \label{utx upper bound}|u(t,x)|\lesssim \frac{M}{1+t},\text{ for any }t>0\text{ and }x\in\mathbb{R}^3_+.
 \end{equation} By choosing $M$ sufficiently small such that $M\ll \min\{m_-,m_+\}g$, we obtain 
$$(1+t)|u(t,x)|\ll\min\{m_-,m_+\}g .$$

 This completes the proof of the linear-in-time decay estimate for the integral \eqref{decay est. b01}, and hence establishes the corresponding linear decay for the homogeneous solutions $\Eplo_{\textup{hom},i}$ and $\Bplo_{\textup{hom},i}$.

\subsubsection{Decay Estimates for $(\Eplo)_{\pm,ib1}$  and $(\Bplo)_{\pm,ib1}$ } 
Now we consider the relativistic radiation contribution $(\Eplo)_{\pm,ib1}$  and $(\Bplo)_{\pm,ib1}$ from the initial data $\fplo(0,\cdot,\cdot).$
Recall that 
\begin{equation*}
  (\Eplo)^{(1)}_{\pm,ib1}(t,x)  =\pm\int_{\partial B(x;t)\cap \{y_3>0\}} \frac{dS_{y}}{|y-x|}\int_\rth dv\ \left((\delta_{ij})^\top_{i=1,2,3}-\frac{(\omega+\hat{v}_\pm)(\hat{v}_\pm)_j}{1+\hat{v}_\pm\cdot \omega}\right)\omega^j\fplo(0,y,v),
\end{equation*} with the standard Einstein summation convention. We aim to prove linear-in-$t$ decay of the boundary integral expression
$
\sum_\pm (\Eplo)^{(1)}_{\pm,ib1}(t,x)$
assuming $\fplo(0,y,v)$ decays fast in both $y$ and $v$, and where
 $\omega = \frac{y-x}{|y-x|} \in \mathbb{S}^2$ is the outward unit normal at the sphere of radius $t$, and 
$\hat{v}_\pm = \frac{v}{\sqrt{m_\pm^2+|v|^2}}$. 
As in \eqref{rrcn condition}, we follow the notation
$$
K_{ij}^{(\pm)}(w,v) \eqdef \left(\delta_{ij} - \frac{(\omega+\hat{v}_\pm)(\hat{v}_\pm)_j}{1+\hat{v}_\pm \cdot \omega}\right) \omega^j. 
$$By the estimate \eqref{eq.ib1 kernel}, we obtain that 
$$|K_{ij}^{(\pm)}(w, \hat{v}_\pm)|\lesssim \frac{v^0_\pm}{m_\pm}.$$
Note that $|y-x| = t$, since $y \in \partial B(x;t)$, and hence
$$
|(\Eplo)^{(1)}_{\pm,ib1}(t,x)|\lesssim \frac{1}{t} \int_{\partial B(x;t)\cap\{y_3>0\}} dS_{y} \int_{\mathbb{R}^3} dv\, \frac{v^0_\pm}{m_\pm}|\fplo(0,y,v)|.
$$
Recall that the initial perturbation $f^{\textup{in}}_\pm$ satisfies the decay assumption \eqref{f initial condition} and hence
$$
|\fplo(0,y,v)| \lesssim e^{-\frac{\beta}{2} |y_\parallel|}e^{-\frac{\beta}{2} \vZ } e^{-\frac{1}{2}m_\pm g\beta y_{3}}.
$$
Then, we obtain
\begin{equation}
    \label{Eploib1 first bound}
|(\Eplo)^{(1)}_{\pm,ib1}(t,x)| \lesssim \frac{1}{t}\frac{\langle m_\pm\rangle}{\beta^4 m_\pm} \int_{\partial B(x;t)\cap\{y_3>0\}} dS_{y}\, e^{-\frac{\beta}{2} |y_\parallel|}e^{-\frac{1}{2}m_\pm g\beta y_{3}} ,
\end{equation}
since 
\begin{equation}\begin{split}\label{additional beta decay}
    &\int_{\rth} dv\  \vZ e^{-\frac{\beta}{4} \vZ}
     = \int_{\rth} dv\  \sqrt{m_\pm^2+|v|^2} e^{-\frac{\beta}{4} \sqrt{m_\pm^2+|v|^2}}\\
      & = 4\pi \int_0^\infty d|v|\  |v|^2\sqrt{m_\pm^2+|v|^2} e^{-\frac{\beta}{4} \sqrt{m_\pm^2+|v|^2}}
       = 4\pi \int_0^\infty \frac{|v|d|v|}{\sqrt{m_\pm^2+|v|^2}}\  |v|(m_\pm^2+|v|^2) e^{-\frac{\beta}{4} \sqrt{m_\pm^2+|v|^2}}\\
       &=4\pi \int_{m_\pm}^\infty dz\  \sqrt{z^2-m_\pm^2}z^2 e^{-\frac{\beta}{4} z}\le 4\pi \int_0^\infty dz\  z^3 e^{-\frac{\beta}{4} z}
       =\frac{1024\pi}{\beta^4} \int_0^\infty dz'\  z'^3 e^{-z'}\approx \frac{1}{\beta^4}, 
 \end{split}\end{equation}where we made the change of variables $|v|\mapsto z\eqdef \sqrt{m_\pm^2+|v|^2}$ and then made another change of variables $z\mapsto z'\eqdef \frac{\beta}{4}z$.    
 Note that on $\partial B(x;t)$ we have $|x-y|=t.$ We consider the surface integral
\begin{equation}\label{9fullint}
I\eqdef \int_{\partial B(x;t)\cap\{y_3>0\}} dS_{y}\, e^{-\frac{\beta}{2} |y_\parallel|} e^{-\frac{1}{2} m_\pm g \beta y_3},
\end{equation}
where \( y = x + t\omega \), and \( \omega = (\sin\theta\cos\phi, \sin\theta\sin\phi, \cos\theta) \). Then we have
$$
y_\parallel = x_\parallel + t\omega_\parallel = (x_1 + t\sin\theta\cos\phi,\; x_2 + t\sin\theta\sin\phi),
\quad
y_3 = x_3 + t\cos\theta.
$$
We compute the pointwise bound for the integrand:
\begin{align*}
|y_\parallel|^2 &= |x_\parallel + t\omega_\parallel|^2 
= |x_\parallel|^2 + 2t\, x_\parallel \cdot \omega_\parallel + t^2 \sin^2\theta \\
&= |x_\parallel|^2 + 2t \sin\theta (x_1\cos\phi + x_2\sin\phi) + t^2 \sin^2\theta.
\end{align*}
Since \( x_1\cos\phi + x_2\sin\phi \ge -|x_\parallel| \), we have
$$
|y_\parallel|^2 \ge (t\sin\theta - |x_\parallel|)^2,
 \text{ and hence } 
|y_\parallel| \ge \left| t\sin\theta - |x_\parallel| \right|.
$$
Therefore,
\begin{equation}
e^{ -\frac{\beta}{2} |y_\parallel| } \le \exp\left( -\frac{\beta}{2} \left| t\sin\theta - |x_\parallel| \right| \right),
\end{equation}
and
$$
e^{ -\frac{1}{2} m_\pm g \beta y_3 } = e^{ -\frac{1}{2} m_\pm g \beta (x_3 + t\cos\theta) }.
$$
Therefore, the full integrand in spherical coordinates is bounded as
\begin{align*}
t^2 \sin\theta\, e^{ -\frac{\beta}{2} |y_\parallel| } e^{ -\frac{1}{2} m_\pm g \beta y_3 }
&\le
t^2 \sin\theta\, \exp\left( -\frac{\beta}{2} \left| t\sin\theta - |x_\parallel| \right| \right) \cdot e^{ -\frac{1}{2} m_\pm g \beta (x_3 + t\cos\theta) },
\end{align*}
and the full integral \eqref{9fullint} is bounded by
\begin{multline*}
    I\le 
\int_0^{2\pi} d\phi \int_0^{\frac{\pi}{2}} d\theta\,
t^2 \sin\theta\, \exp\left( -\frac{\beta}{2} \left| t\sin\theta - |x_\parallel| \right| \right) e^{ -\frac{1}{2} m_\pm g \beta (x_3 + t\cos\theta) }\\
\le 2\pi \int_0^1 dk\ t^2 e^{-\frac{1}{2} m_\pm g \beta (x_3 + tk) }\le \frac{4\pi t}{ m_\pm g \beta }  e^{-\frac{1}{2} m_\pm g \beta x_3 } \left(1-e^{-\frac{1}{2} m_\pm g \beta t }\right),
\end{multline*}where we made a change of variables $\theta\mapsto k\eqdef \cos\theta$.

Putting it all together, under the decay condition \eqref{f initial condition} of the initial data, we obtain for any $t\ge 0$ and $x\in\mathbb{R}^3_+,$
\begin{equation}\label{Eploib1 to be improved}
|(\Eplo)^{(1)}_{\pm,ib1}(t,x)| \lesssim  \frac{\langle m_\pm\rangle }{ m_\pm^2 g \beta^5 } e^{-\frac{1}{2} m_\pm g \beta x_3 } \left(1-e^{-\frac{1}{2} m_\pm g \beta t }\right).
\end{equation}
One can further improve this bound \eqref{Eploib1 to be improved} to a linearly decaying in time upper bound estimate by using the compact-support-in-$x$ and decay-in-$v$ assumption.
In this case, by the estimate \eqref{Eploib1 first bound}, we have
\begin{equation}
    \label{Eploib1 second bound}|(\Eplo)^{(1)}_{\pm,ib1}(t,x)| \lesssim \frac{\langle m_\pm\rangle}{\beta^4 m_\pm} \frac{1}{t}\int_{\partial B(x;t)\cap\{y_3>0\}} dS_{y}\, \mathcal{M}(y) .
\end{equation} Now, define the following integral:
$$\bar{u}(t,x)\eqdef\frac{1}{4\pi t}\int_{\partial B(x;t)\cap\{y_3>0\}} dS_{y}\, \mathcal{M}(y). $$ Note that this integral $\bar{u}(t,x)$ is the same as $u(t,x)$ of \eqref{utx} if the integrand $$\left( t\, \mathcal{B}^1_{01}(y) + \mathcal{B}_{01}(y) + \nabla \mathcal{B}_{01}(y) \cdot (y - x) \right)$$ in \eqref{utx} is now replaced by $t\mathcal{M}(y).$ Then the same estimate can be made for $\bar{u}$ as that of $u(t,x)$ in Section \ref{sec.homo.decay}, since $t\mathcal{M}(y) $ is also assumed to be compactly supported. Thus, by \eqref{utx upper bound}, we have 
$$|\bar{u}(t,x)|\lesssim \frac{\sup_{y\in\mathbb{R}^3_+}|\mathcal{M}(y)|}{1+t}.$$ Therefore, by \eqref{Eploib1 second bound}, we obtain
\begin{equation}\notag|(\Eplo)^{(1)}_{\pm,ib1}(t,x)| \lesssim \frac{\langle m_\pm\rangle}{\beta^4 m_\pm} \frac{\sup_{y\in\mathbb{R}^3_+}|\mathcal{M}(y)|}{1+t}.
\end{equation}By choosing $\beta\gg1 $ sufficiently large such that 
$$\frac{\langle m_\pm\rangle}{\beta^4 m_\pm}\sup_{y\in\mathbb{R}^3_+}|\mathcal{M}(y)| \ll \min\{m_-,m_+\}g,$$ we obtain
\begin{equation}\notag |(1+t)(\Eplo)^{(1)}_{\pm,ib1}(t,x)|\ll \min\{m_-,m_+\}g.
\end{equation} Here note that we do not need the smallness on $\sup_{y\in\mathbb{R}^3_+}|\mathcal{M}(y)|$, as we can choose $\beta$ sufficiently large in this case. 
\begin{remark}[Under the Initial Radiation Charge Neutrality Condition]\label{RRCN} The compact-support-in-$x$ assumption can be replaced by the following weaker condition, called the \textit{initial relativistic radiation charge neutrality condition}:\begin{equation}
    \label{rrcn condition}
\sup_{t\ge 1,\ x\in\mathbb{R}^3_+} \left| \int_{|x - y| = t} dS_y \int_{\mathbb{R}^3} dv\, \left( K_{ij}^{(+)} f_+(0, y, v) - K_{ij}^{(-)} f_-(0, y, v) \right) \right| \le \min\{m_-,m_+\}\frac{g}{128},
\end{equation}
where $K_{ij}^{(\pm)}$ denotes the relativistic projection tensor associated with each species, defined by $$
K_{ij}^{(\pm)}(w,v) \eqdef \delta_{ij} - \frac{(\omega_i + (\hat{v}_\pm)_i)(\hat{v}_\pm)_j}{1 + \omega \cdot \hat{v}_\pm},
$$
where $ \hat{v}_\pm = \frac{v}{\sqrt{m_\pm^2 + |v|^2}} $ denotes the normalized relativistic velocity, and $ \omega=\frac{y-x}{|y-x|} \in \mathbb{R}^3 $ is a fixed reference direction.  This condition prevents the emergence of unbounded transverse field components arising from the initial charge imbalance and is essential to closing the nonlinear decay estimates. 

In this scenario, for $t\in[0,1],$ we further obtain from \eqref{Eploib1 to be improved} that 
$$
\sup_{x\in\mathbb{R}^3_+}(1+t)|(\Eplo)^{(1)}_{\pm,ib1}(t,x)|\bigg|_{t\in[0,1]} \lesssim  \frac{\langle m_\pm\rangle }{ m_\pm^2 g \beta^5 }  .
$$Choosing sufficiently large $\beta>0$ such that $\frac{\langle m_\pm\rangle}{\beta^5 m_\pm^2 g}\ll \min\{m_-,m_+\}g,$  we have $$
(1+t)|(\Eplo)^{(1)}_{\pm,ib1}(t,x)|\bigg|_{t\in[0,1]} \ll \min\{m_-,m_+\}g  .
$$ On the other hand, if $t\ge 1,$ we use the \textit{initial relativistic radiation charge neutrality condition} \eqref{rrcn condition} to obtain 
$$
|\sum_\pm(\Eplo)^{(1)}_{\pm,ib1}(t,x)|\bigg|_{t\ge 1}  \le \frac{1}{t}\min\{m_-,m_+\} \frac{g}{128}\le \frac{1}{(1+t)}\min\{m_-,m_+\} \frac{g}{64}.
$$Therefore, we observe that $|\sum_\pm(\Eplo)^{(1)}_{\pm,ib1}(t,x)|$ is decaying linearly in time under the additional neutrality assumption \eqref{rrcn condition}.
\end{remark}

The same estimates also hold for $(\Bplo)^{(1)}_{\pm,ib1}$ and $(\Bplo)^{(2)}_{\pm,ib1}$ as well as $(\Eplo)^{(2)}_{\pm,ib1}$, as long as we have similar kernel estimates with the same upper-bound (up to constant). In the rest of the proof, we make the kernel estimates.

In general, we will first have an upper bound of the kernel $\left|\frac{(|(\hat{v}_\pm)|^2-1)(\hat{v}_\pm+\omega)}{(1+\hat{v}_\pm\cdot \omega)^2}\right|$ in terms of $v.$ 
Note that if $\hat{v}_\pm\cdot \omega \ge -\delta $ for some constant $\delta \in [-1,1),$ then we have $$\left|\frac{(|(\hat{v}_\pm)|^2-1)(\hat{v}_\pm+\omega)}{(1+\hat{v}_\pm\cdot \omega)^2}\right| \le 2 (1-\delta)^{-2}. $$
Indeed the term $1+\hat{v}_\pm\cdot \omega $ is singular at $\omega=-\frac{(\hat{v}_\pm)}{|(\hat{v}_\pm)|}$ and this is the worst-case scenario in terms of the upper-bound estimates. At $\omega=-\frac{(\hat{v}_\pm)}{|(\hat{v}_\pm)|},$ we observe that the singularity cancels out as $$
   \left|\frac{\hat{v}_\pm+\omega}{1+\hat{v}_\pm\cdot \omega} \bigg|_{\omega=-\frac{(\hat{v}_\pm)}{|(\hat{v}_\pm)|}}\right|=\left|\frac{(\hat{v}_\pm)-\frac{(\hat{v}_\pm)}{|(\hat{v}_\pm)|}}{1-|(\hat{v}_\pm)|}\right|=1.$$
On the other hand, observe that we have another cancellation
\begin{equation}
    \label{kernel estimate 1}\left|\frac{|(\hat{v}_\pm)|^2-1}{1+\hat{v}_\pm\cdot \omega}\right|\le\left|\frac{|(\hat{v}_\pm)|^2-1}{1+\hat{v}_\pm\cdot \omega}\bigg|_{\omega=-\frac{(\hat{v}_\pm)}{|(\hat{v}_\pm)|}}\right|=\left|\frac{|(\hat{v}_\pm)|^2-1}{1-|(\hat{v}_\pm)|}\right| = \left|1+|(\hat{v}_\pm)|\right|\le 2.
\end{equation}
In order to see that $\frac{\hat{v}_\pm+\omega}{1+\hat{v}_\pm\cdot \omega} $ is not singular for any $\omega\in\mathbb{S}^2$, we decompose the sphere $\mathbb{S}^2 $ around the vector $z\eqdef -\frac{(\hat{v}_\pm)}{|(\hat{v}_\pm)|}$ and consider the decomposition of polar angle $\phi\in [0,\pi]$ into $[0,\epsilon)$ and $[\epsilon,\pi]$ such that $$\omega\cdot z=-\frac{\omega\cdot (\hat{v}_\pm)}{|(\hat{v}_\pm)|}=\cos\phi. $$ 
Then we observe that a further orthogonal decomposition gives \begin{equation}\label{ortho decomp}
    \frac{\hat{v}_\pm+\omega}{1+\hat{v}_\pm\cdot \omega} = \frac{(\hat{v}_\pm)+(\omega\cdot \frac{(\hat{v}_\pm)}{|(\hat{v}_\pm)|})\frac{(\hat{v}_\pm)}{|(\hat{v}_\pm)|}+\omega_\perp}{1-|(\hat{v}_\pm)|\cos\phi} =\frac{(\hat{v}_\pm)-\cos\phi\frac{(\hat{v}_\pm)}{|(\hat{v}_\pm)|}+\omega_\perp}{1-|(\hat{v}_\pm)|\cos\phi} ,
\end{equation}where $\omega_\perp\cdot (\hat{v}_\pm)=0.$ Then if $\phi \in [0,\epsilon),$ we have$$\left|\frac{\omega_\perp}{1-|(\hat{v}_\pm)|\cos\phi}\right|\le \frac{|\sin\phi|}{|1-|(\hat{v}_\pm)|\cos\phi|}\le \frac{\epsilon}{1-|(\hat{v}_\pm)|}.$$On the other hand, if $\phi \in [\epsilon,\pi],$ we have
$$\left|\frac{\omega_\perp}{1-|(\hat{v}_\pm)|\cos\phi}\right|\le \frac{1}{|1-|(\hat{v}_\pm)|\cos\epsilon|}=\frac{1}{|1-|(\hat{v}_\pm)|+2|(\hat{v}_\pm)|\sin^2(\frac{\epsilon}{2})|}.$$
Indeed, we let $\sin\phi = x $ and find the maximal value of 
$f(\sin\phi)=\frac{|\sin\phi|}{|1-|(\hat{v}_\pm)|\cos\phi|}$ at the critical point $x$ for $\phi\in\ [0,\pi/2].$ Note that
\begin{multline*}
    f'(x)= \frac{1-|(\hat{v}_\pm)|\sqrt{1-x^2} - x^2\frac{|(\hat{v}_\pm)|}{\sqrt{1-x^2}}}{(1-|(\hat{v}_\pm)|\sqrt{1-x^2})^2}= \frac{\sqrt{1-x^2}-|(\hat{v}_\pm)|(1-x^2) - x^2|(\hat{v}_\pm)|}{\sqrt{1-x^2}(1-|(\hat{v}_\pm)|\sqrt{1-x^2})^2}\\=\frac{\sqrt{1-x^2}-|(\hat{v}_\pm)|}{\sqrt{1-x^2}(1-|(\hat{v}_\pm)|\sqrt{1-x^2})^2}.
\end{multline*}
It becomes zero when $x=\sqrt{1-|(\hat{v}_\pm)|^2}.$ Then the maximal value for $F_\pm$ is 
\begin{equation}
    \label{bound for v+w term}f(x)\le f(\sqrt{1-|(\hat{v}_\pm)|^2}) = \frac{\sqrt{1-|(\hat{v}_\pm)|^2}}{1-|(\hat{v}_\pm)|^2}=\frac{1}{\sqrt{1-|(\hat{v}_\pm)|^2}}=\frac{1}{\sqrt{1-\left|\frac{v}{\sqrt{m_\pm^2+|v|^2}}\right|^2}}=\frac{\sqrt{m_\pm^2+|v|^2}}{m_\pm}.
\end{equation}
We also have
\begin{multline}
    \label{bound for v+w term 2}\left|\frac{(\hat{v}_\pm)-\cos\phi\frac{(\hat{v}_\pm)}{|(\hat{v}_\pm)|}}{1-|(\hat{v}_\pm)|\cos\phi} \right|\le |(\hat{v}_\pm)|\left|\frac{1-\cos\phi\frac{1}{|(\hat{v}_\pm)|}}{1-|(\hat{v}_\pm)|\cos\phi} \right|\le |(\hat{v}_\pm)|+|\cos\phi|\left|\frac{1-|(\hat{v}_\pm)|^2}{1-|(\hat{v}_\pm)|\cos\phi} \right|\\
    \le |(\hat{v}_\pm)|+|1+|(\hat{v}_\pm)||\le 3 .
\end{multline}
Altogether we conclude that for any $\omega \in\mathbb{S}^2$
\begin{equation}\label{ET kernel estimate}
    \left|\frac{(|(\hat{v}_\pm)|^2-1)(\hat{v}_\pm+\omega)}{(1+\hat{v}_\pm\cdot \omega)^2}\right|\lesssim \frac{\sqrt{m_\pm^2+|v|^2}}{m_\pm}=\frac{\vZ}{m_\pm}.
\end{equation}

Then, for the magnetic field, by using \eqref{ortho decomp}--\eqref{bound for v+w term 2} again, we have \begin{equation}
    \label{eq.ib1 kernel}\left|\left((\delta_{ij})^\top_{i=1,2,3}-\frac{(\omega+\hat{v}_\pm)(\hat{v}_\pm)_j}{1+\hat{v}_\pm\cdot \omega}\right)\omega^j\right|\le 1+\frac{\sqrt{m_\pm^2+|v|^2}}{m_\pm}\le 2\frac{\sqrt{m_\pm^2+|v|^2}}{m_\pm}.
\end{equation} 

On the other hand, regarding the electric field,  we define $$a^{\E}_{\pm,i}(v,\omega)=\frac{(\partial_{v_i}v-(\hat{v}_\pm)_i(\hat{v}_\pm))}{(\vZ) (1+\hat{v}_\pm\cdot \omega)}-\frac{(\omega_i+(\hat{v}_\pm)_i)(\omega-(\omega\cdot (\hat{v}_\pm))(\hat{v}_\pm))}{(\vZ) (1+\hat{v}_\pm\cdot \omega)^2}\eqdef a^{(1)}_{\pm,i}+a^{(2)}_{\pm,i}.$$ We need to have an upper bound of the kernel $|a_i^E|.$ For $a^{(2)}_{\pm,i},$ we use \eqref{ortho decomp}--\eqref{bound for v+w term 2} and obtain first
\begin{equation}\label{aEfirst}
    \frac{\omega+(\hat{v}_\pm)}{(\vZ) (1+\hat{v}_\pm\cdot \omega)}\le \frac{1}{m_\pm}.
\end{equation} Then note that \begin{equation*}
    \omega-(\omega\cdot(\hat{v}_\pm))(\hat{v}_\pm)=\left(\omega\cdot\frac{(\hat{v}_\pm)}{|(\hat{v}_\pm)|}\right)\frac{(\hat{v}_\pm)}{|(\hat{v}_\pm)|}+\omega_\perp-(\omega\cdot(\hat{v}_\pm))(\hat{v}_\pm)=-\cos\phi\frac{(\hat{v}_\pm)}{|(\hat{v}_\pm)|}+\omega_\perp +\cos\phi |(\hat{v}_\pm)|(\hat{v}_\pm)
\end{equation*}following the orthogonal decomposition as in \eqref{ortho decomp}. Thus, 
$$|\omega-(\omega\cdot(\hat{v}_\pm))(\hat{v}_\pm)|\le  \left|\cos\phi \frac{\hat{v}_\pm}{|\hat{v}_\pm|}(|\hat{v}_\pm|^2-1)\right|+|\omega_\perp| \le |\cos\phi||(\hat{v}_\pm)|^2-1|+|\sin\phi|.$$ Thus, following the bounds \eqref{bound for v+w term}, we have
$$\frac{|\omega-(\omega\cdot(\hat{v}_\pm))(\hat{v}_\pm)|}{ (1+\hat{v}_\pm\cdot \omega)}\le \frac{|\cos\phi||(\hat{v}_\pm)|^2-1|+|\sin\phi|}{ (1-|(\hat{v}_\pm)|\cos\phi)}\le (1+|\hat{v}_\pm| )+\frac{\sqrt{m_\pm^2+|v|^2}}{m_\pm}\le 3 \frac{\sqrt{m_\pm^2+|v|^2}}{m_\pm}.$$
Together with \eqref{aEfirst}, we have
\begin{equation}
\label{a2}|a^{(2)}_{\pm,i}|\le\frac{3\sqrt{m_\pm^2+|v|^2}}{m_\pm^2}.
\end{equation}
Now, regarding $a^{(1)}_{\pm,i}$, a simple calculation gives
\begin{multline*}
    \frac{(\partial_{v_i}v-(\hat{v}_\pm)_i(\hat{v}_\pm))}{(\vZ) (1+\hat{v}_\pm\cdot \omega)}\le \frac{2}{(\vZ) (1-|(\hat{v}_\pm)|)}=\frac{2}{ \left(\sqrt{m_\pm^2+|v|^2}-|v|\right)}=\frac{2}{m_\pm^2}\left(\sqrt{m_\pm^2+|v|^2}+|v|\right)\\\le \frac{4}{m_\pm^2} \sqrt{m_\pm^2+|v|^2}.
\end{multline*}
Thus, we have 
\begin{equation}
\label{a1}|a^{(1)}_{\pm,i}|\le\frac{4\sqrt{m_\pm^2+|v|^2}}{m_\pm^2}.
\end{equation}

This completes the estimates for $(\Eplo)_{\pm,ib1}$ and $(\Bplo)_{\pm,ib1}$.


\subsubsection{Decay Estimates for $(\Eplo)_{\pm,ib2}$ }\label{sec.ib2 decay}Lastly, we consider the contribution $(\Eplo)_{\pm,ib2}$  from the boundary profile $\fplo(t,x,v)$ at $x_3=0$ for the electric field. To obtain desired estimates, we have to estimate the following term 
\begin{equation}
    \label{Eb2 kernel estimate}(0,0,1)^\top-\frac{(\omega+\hat{v}_\pm)(\hat{v}_\pm)_3}{1+\hat{v}_\pm\cdot \omega}\textup{ and }(0,0,1)^\top-\frac{(\bar{\omega}+(\hat{v}_\pm))(\hat{v}_\pm)_3}{1+(\hat{v}_\pm)\cdot \bar{\omega}},
\end{equation} where $\bar{\omega}=(\omega_1,\omega_2,-\omega_3)^\top.$ For each, we use \eqref{ortho decomp}--\eqref{bound for v+w term 2} (and the latter one with $\bar{w}$ replacing $w$) and obtain that in both cases we have
\begin{equation}\label{3.13}\left|(0,0,1)^\top-\frac{(\omega+\hat{v}_\pm)(\hat{v}_\pm)_3}{1+\hat{v}_\pm\cdot \omega}\right|,\ \left|(0,0,1)^\top-\frac{(\bar{\omega}+(\hat{v}_\pm))(\hat{v}_\pm)_3}{1+(\hat{v}_\pm)\cdot \bar{\omega}}\right| \le 1+|(\hat{v}_\pm)_3|\frac{\sqrt{m_\pm^2+|v|^2}}{m_\pm}.\end{equation}

Since the perturbation $\fplo$ from the steady-state satisfies the zero inflow boundary condition for the inflow direction $v_3 \ge 0$ at $x_3=0,$ we will obtain the following upper bound for $(\Eplo)^{(1)}_{\pm,ib2}$ and $(\Eplo)^{(2)}_{\pm,ib2}$ via the following kernel estimates for \eqref{Eb2 kernel estimate}-\eqref{3.13} and the decay estimate \eqref{fplo decay est}:
\begin{align*}\notag
   &\langle t\rangle \left( |(\Eplo)^{(1)}_{\pm,ib2}(t,x)|+|(\Eplo)^{(2)}_{\pm,ib2}(t,x)|\right)\\
   &\le  2\int_{B(x;t)\cap \{y_3=0\}} \frac{dy_\parallel}{|y-x|}\int_{v_3\le 0} dv\  \left(1+|(\hat{v}_\pm)_3|\frac{v^0_\pm}{m_\pm}\right)\langle t\rangle|\fplo(t-|x-y|,y_\parallel,0,v)|\\
   &\lesssim \int_{B(x;t)\cap \{y_3=0\}} \frac{dy_\parallel}{|y-x|}\int_{v_3\le 0} dv\  \frac{v^0_\pm}{m_\pm}\frac{\langle t\rangle}{\langle t-|x-y|\rangle}\frac{4}{\beta}C_{f_\pm^{\textup{in}},G_\pm}e^{-\frac{\beta}{2} |y_{\parallel}|}e^{-\frac{\beta}{4}\vZ} \\
   &\lesssim \frac{c_{\pm,\beta}C_{f_\pm^{\textup{in}},G_\pm}}{\beta m_\pm}\int_{B(x;t)\cap \{y_3=0\}} \frac{dy_\parallel}{|y-x|}\frac{\langle t\rangle}{\langle t-|x-y|\rangle}e^{-\frac{\beta}{2} |y_{\parallel}|},
   \end{align*}where $c_{\pm,\beta}\eqdef \int_{\rth} dv\  \vZ e^{-\frac{\beta}{4} \vZ}\lesssim \beta^{-4}$ by \eqref{additional beta decay} and $C_{f_\pm^{\textup{in}},G_\pm}$ is defined by \eqref{def.CfG}.
   We split the integral region $B(x;t)$ into two: $|x-y|< 1$ and $|x-y|\ge 1. $ If $|x-y|\ge 1,$ then the following inequality holds uniformly: 
$$\langle t \rangle \le \sqrt{2}\langle t-|x-y|\rangle|x - y|.$$ Therefore, we obtain that if $|x-y| \ge 1,$ 
$$ \int_{B(x;t)\cap \{y_3=0\}\cap |x-y|\ge 1} \frac{dy_\parallel}{|y-x|}\frac{\langle t\rangle}{\langle t-|x-y|\rangle}e^{-\frac{\beta}{2} |y_{\parallel}|}\lesssim \int_{\mathbb{R}^2} dy_\parallel\ e^{-\frac{\beta}{2} |y_{\parallel}|}\lesssim 1.$$
   On the other hand, if $|x-y|<1, $ we further note that 
   $\langle t\rangle \le \sqrt{2} \langle t-|x-y|\rangle\langle |x-y|\rangle,$ and also note that $|x-y|=\sqrt{|x_\parallel-y_\parallel|^2+x_3^2}$ if $y_3=0.$ Then we obtain
   \begin{align*}
&       \int_{B(x;t)\cap \{y_3=0\}\cap |x-y|< 1} \frac{dy_\parallel}{|y-x|}\frac{\langle t\rangle}{\langle t-|x-y|\rangle}e^{-\frac{\beta}{2} |y_{\parallel}|}
       \\
       &\lesssim \int_{\sqrt{|x_\parallel-y_\parallel|^2+x_3^2}< \min\{1,t\}} \frac{dy_\parallel}{\sqrt{|x_\parallel-y_\parallel|^2+x_3^2}}\left\langle \sqrt{|x_\parallel-y_\parallel|^2+x_3^2} \right\rangle e^{-\frac{\beta}{2} |y_{\parallel}|}\\
       &\lesssim  \int_{\sqrt{|x_\parallel-y_\parallel|^2+x_3^2}< \min\{1,t\}} \frac{dy_\parallel}{|x_\parallel-y_\parallel|}e^{-\frac{\beta}{2} |y_{\parallel}|}\lesssim \int_{\mathbb{R}^2} \frac{dy_\parallel}{|y_\parallel-x_\parallel|}e^{-\frac{\beta}{2}|y_\parallel|}\approx\frac{1}{\beta} \left\langle \frac{\beta}{2}x_\parallel\right\rangle^{-1}\lesssim \frac{1}{\beta} ,\end{align*} by \eqref{asymptotics}. Therefore, we conclude that $$
    \langle t\rangle \left( |(\Eplo)^{(1)}_{\pm,ib2}(t,x)|+|(\Eplo)^{(2)}_{\pm,ib2}(t,x)|\right)
    \lesssim \frac{C_{f_\pm^{\textup{in}},G_\pm}}{\beta^5 m_\pm}(1+\beta^{-1}),
 $$by \eqref{additional beta decay}. If $\beta>1$ is chosen sufficiently large such that $\min\{m_-^2,m_+^2\}g\beta^5 C_{f_\pm^{\textup{in}},G_\pm}\gg 1$, then we have $$\langle t\rangle \left( |(\Eplo)^{(1)}_{\pm,ib2}(t,x)|+|(\Eplo)^{(2)}_{\pm,ib2}(t,x)|\right)\ll \min\{m_-,m_+\}g. $$   This completes the estimates for $|(\Eplo)^{(1)}_{\pm,ib2}(t,x)|$ and $|(\Eplo)^{(2)}_{\pm,ib2}(t,x)|$ boundary contribution terms.

\subsubsection{Decay Estimates for $(\Eplo)_{\pm,iS}$ \Black{and $(\Bplo)_{\pm,iS}$}}

One of the main challenges in establishing temporal decay estimates for $\Eplo$ lies in handling the nonlinear term $(\Eplo)^{\textup{acc}}_{\pm,iS}$ and the inhomogeneous stationary source term $(\Eplo)^{\textup{st}}_{\pm,iS}$. Our strategy is to control the nonlinear term $(\Eplo)^{\textup{acc}}_{\pm,iS}$ by using the linear-in-time decay estimate for $\fplo$ established in Section~\ref{sec.fplo.decay}, together with the uniform boundedness of the total fields $\Elbf$ and $\Blbf$ provided by the bootstrap assumption \eqref{apriori_EB} and the steady-state estimate \eqref{steady state L infty}.  For the inhomogeneous stationary source term $(\Eplo)^{\textup{st}}_{\pm,iS}$, we employ the linear decay-in-time estimates for the perturbations $\Epl$ and $\Bpl$ from \eqref{apriori_EB}.

Namely, we observe that
\begin{multline}\label{acc.iS.integrand}
    \langle t\rangle|(\Eplo)^{(1),\textup{acc}}_{\pm,iS}(t,x)|\le \int_{B^+(x;t)} dy \int_{\mathbb{R}^3} dv\, |a^{\mathbf{E}}_{\pm,i}(v,\omega)| |\pm\Elbf \pm \hat{v}_\pm \times \Blbf - m_\pm g \hat{e}_3| \frac{\langle t\rangle\fplo(t - |x - y|, y, v)}{|x - y|} \\
    \le  \frac{63}{8}m_\pm g\int_{B^+(x;t)} dy \int_{\mathbb{R}^3} dv\, \frac{v^0_\pm}{m_\pm^2}  \frac{\langle t\rangle}{\langle t-|x-y|\rangle|x - y|}\frac{4}{\beta} C_{f_\pm^{\textup{in}},G_\pm}e^{-\frac{\beta}{2} |y_{\parallel}|}e^{-\frac{\beta}{4}(\vZ+m_\pm gy_3)},
\end{multline}by the kernel estimates \eqref{a2}--\eqref{a1}, the decay estimate for $\fplo$ \eqref{fplo decay est}, and the uniform bounds for $\E_{\textup{st}},$ $\B_{\textup{st}},$ $\mathcal{E}^l,$ and $\mathcal{B}^l$ in \eqref{steady state L infty} and \eqref{apriori_EB}  where we define 
\begin{equation}
\label{def.CfG}
C_{f_\pm^{\textup{in}},G_\pm}\eqdef \|\mathrm{w}_{\pm,\beta} f^{\textup{in}}_\pm\|_{L^\infty_{x,v}}
  +C \notag\min\{m_-,m_+\}\frac{g}{8}\|\mathrm{w}^2_{\pm,\beta}(\cdot,0,\cdot)\nabla_{x_\parallel,v}G_\pm(\cdot,\cdot)\|_{L^\infty_{x_\parallel,v}}.
\end{equation}
We split the integral region $B^+(x;t)$ into two: $|x-y|< 1$ and $|x-y|\ge 1. $ If $|x-y|\ge 1,$ then the following inequality holds uniformly: 
$$\langle t \rangle \le \sqrt{2}\langle t-|x-y|\rangle|x - y|.$$ Therefore, we obtain that
\begin{align*}
    & \frac{63}{8}m_\pm g\int_{B^+(x;t)\cap |x-y|\ge 1} dy \int_{\mathbb{R}^3} dv\, \frac{v^0_\pm}{m_\pm^2}  \frac{\langle t\rangle}{\langle t-|x-y|\rangle|x - y|}\frac{4}{\beta} C_{f_\pm^{\textup{in}},G_\pm}e^{-\frac{\beta}{2} |y_{\parallel}|}e^{-\frac{\beta}{4}(\vZ+m_\pm gy_3)}\\*
    & \le \frac{63\sqrt{2}g}{2\beta m_\pm}C_{f_\pm^{\textup{in}},G_\pm}\int_{B^+(x;t)\cap |x-y|\ge 1} dy \int_{\mathbb{R}^3} dv\, v^0_\pm e^{-\frac{\beta}{2} |y_{\parallel}|}e^{-\frac{\beta}{4}(\vZ+m_\pm gy_3)}\\
    & \le \frac{63\sqrt{2}g}{2\beta m_\pm}C_{f_\pm^{\textup{in}},G_\pm}c_{\pm,\beta}\int_{\mathbb{R}^3_+} dy e^{-\frac{\beta}{2} |y_{\parallel}|}e^{-\frac{\beta}{4}m_\pm gy_3} 
     \lesssim \frac{1}{\beta^4 m_\pm^2}C_{f_\pm^{\textup{in}},G_\pm}c_{\pm,\beta},
\end{align*}
where $c_{\pm,\beta}$ is defined as $$c_{\pm,\beta}=\int_{\mathbb{R}^3} dv\, v^0_\pm e^{-\frac{\beta}{4}\vZ},$$ and satisfies $c_{\pm,\beta}\approx \beta^{-4}$ by \eqref{additional beta decay}. On the other hand, if $|x-y|<1,$ then we further make a change of variables $y\mapsto z\eqdef y-x$ and then another change of variables to spherical coordinates $z\mapsto (r,\theta,\phi)$ such that we have
\begin{multline*}
     \frac{63}{8}m_\pm g\int_{B^+(x;t)\cap |x-y|< 1} dy \int_{\mathbb{R}^3} dv\, \frac{v^0_\pm}{m_\pm^2}  \frac{\langle t\rangle}{\langle t-|x-y|\rangle|x - y|}\frac{4}{\beta} C_{f_\pm^{\textup{in}},G_\pm}e^{-\frac{\beta}{2} |y_{\parallel}|}e^{-\frac{\beta}{4}(\vZ+m_\pm gy_3)}\\
     =\frac{63\pi}{4}m_\pm g\int_0^{\min\{1,t\}}dr \int_0^\pi d\phi\ \sin\phi\int_{\mathbb{R}^3} dv\, \frac{v^0_\pm}{m_\pm^2}  \frac{\langle t\rangle r}{\langle t-r\rangle}\frac{4}{\beta} C_{f_\pm^{\textup{in}},G_\pm}e^{-\frac{\beta}{2} r|\sin\phi|}e^{-\frac{\beta}{4}(\vZ+m_\pm g(r\cos\phi+x_3))}.
     \end{multline*}
Using the inequality that 
$$\langle t\rangle \le \sqrt{2} \langle t-r\rangle \langle r\rangle,$$ we have
\begin{multline*}
    \frac{63\pi}{4}m_\pm g\int_0^{\min\{1,t\}}dr \int_0^\pi d\phi\ \sin\phi\int_{\mathbb{R}^3} dv\, \frac{v^0_\pm}{m_\pm^2}  \frac{\langle t\rangle r}{\langle t-r\rangle}\frac{4}{\beta} C_{f_\pm^{\textup{in}},G_\pm}e^{-\frac{\beta}{2} r|\sin\phi|}e^{-\frac{\beta}{4}(\vZ+m_\pm g(r\cos\phi+x_3))}\\*
    \le \frac{63\sqrt{2}\pi g}{\beta m_\pm} C_{f_\pm^{\textup{in}},G_\pm}c_{\pm,\beta}\int_0^{\min\{1,t\}}dr \int_0^\pi d\phi\ \sin\phi \langle r\rangle^2\le \frac{504\sqrt{2}\pi g}{\beta m_\pm} C_{f_\pm^{\textup{in}},G_\pm}c_{\pm,\beta}.
     \end{multline*}
Altogether, we conclude that 
\begin{equation}
    \label{est.acc.iS}\langle t\rangle|(\Eplo)^{(1),\textup{acc}}_{\pm,iS}(t,x)|\lesssim \frac{C_{f_\pm^{\textup{in}},G_\pm}}{\beta^8 m_\pm^2}\left(1+m_\pm g \beta^3\right).
\end{equation} 
Choosing $\beta>1$ sufficiently large such that $\min\{m_-,m_+\}\times\min\{g \beta^3,\beta^2\}\gg 1 $, we obtain 
$$  \langle t\rangle|(\Eplo)^{(1),\textup{acc}}_{\pm,iS}(t,x)|\ll \min\{m_-,m_+\}g,$$ which ensures \eqref{apriori_EB} for the decomposed piece $(\Eplo)^{(1),\textup{acc}}_{\pm,iS}.$ The other term $(\Eplo)^{(2),\textup{acc}}_{\pm,iS}$ follows exactly the same estimate. 

On the other hand, regarding the inhomogeneous stationary source term $(\Eplo)^{\textup{st}}_{\pm,iS}$, we observe that
\begin{align*}
    &\langle t\rangle|(\Eplo)^{(1),\textup{st}}_{\pm,iS}(t,x)|\\
    &\le \langle t\rangle \int_{B^+(x;t)}  
       \frac{dy }{|y -x|} 
       \int_{\mathbb{R}^3} dv\ 
      \left| \frac{\omega + \hat v_\pm}{1+ \hat v_\pm \cdot \omega}\right|
    \left|\left(\Ep^l  +\hat v_\pm\times \Bp^l\right)\left(t-\frac{|x-y |}{c},y\right)\right|| \nabla_v F_{\pm,\textup{st}}(y ,v)| \\
    &\lesssim m_\pm gC_{G_\pm}\int_{B^+(x;t)} dy \int_{\mathbb{R}^3} dv\, \frac{v^0_\pm}{m_\pm}  \frac{\langle t\rangle}{\langle t-|x-y|\rangle|x - y|}  e^{-\frac{\beta}{2} |y_{\parallel}|}e^{-\beta(\vZ+m_\pm gy_3)},
 \end{align*}by the kernel estimate \eqref{aEfirst}, decay of the momentum derivative of the stationary solution \eqref{steady state decay mom.deri},  and the uniform bounds for  $\mathcal{E}^l$ and $\mathcal{B}^l$ in \eqref{apriori_EB}   where the constant $C_{G_\pm}$ is defined as
$$C_{G_\pm}\eqdef  C\|\mathrm{w}^2_{\pm,\beta}(\cdot,0,\cdot)\nabla_{x_\parallel,v}G_\pm(\cdot,\cdot)\|_{L^\infty_{x_\parallel,v}}.$$  
Again, we split the integral region $B^+(x;t)$ into two: $|x-y|< 1$ and $|x-y|\ge 1. $ If $|x-y|\ge 1,$ then the following inequality holds uniformly: 
$$\langle t \rangle \le \sqrt{2}\langle t-|x-y|\rangle|x - y|.$$ Therefore, we obtain that
\begin{align*}
     &m_\pm g  C_{G_\pm}\int_{B^+(x;t)\cap |x-y|\ge 1} dy \int_{\mathbb{R}^3} dv\, \frac{v^0_\pm}{m_\pm}  \frac{\langle t\rangle}{\langle t-|x-y|\rangle|x - y|}e^{-\frac{\beta}{2} |y_{\parallel}|}e^{-\beta(\vZ+m_\pm gy_3)}\\
    & \lesssim g  C_{G_\pm}\int_{B^+(x;t)\cap |x-y|\ge 1} dy \int_{\mathbb{R}^3} dv\, v^0_\pm e^{-\frac{\beta}{2} |y_{\parallel}|}e^{-\beta(\vZ+m_\pm gy_3)}\\
     &\lesssim gC_{G_\pm} c_{\pm,\beta}\int_{\mathbb{R}^3_+} dy\ e^{-\frac{\beta}{2} |y_{\parallel}|}e^{-\beta m_\pm gy_3} 
     \lesssim \frac{  C_{G_\pm}}{\beta^3 m_\pm}c_{\pm,\beta},
\end{align*}
where $c_{\pm,\beta}$ is defined as $c_{\pm,\beta}=\int_{\mathbb{R}^3} dv\, v^0_\pm e^{-\beta\vZ},$ and satisfies $c_{\pm,\beta}\approx \beta^{-4}$ by \eqref{additional beta decay}. On the other hand, if $|x-y|<1,$ then we further make a change of variables $y\mapsto z\eqdef y-x$ and then another change of variables to spherical coordinates $z\mapsto (r,\theta,\phi)$ such that we have
\begin{multline*}
m_\pm gC_{G_\pm}\int_{B^+(x;t)\cap |x-y|< 1} dy \int_{\mathbb{R}^3} dv\, \frac{v^0_\pm}{m_\pm}  \frac{\langle t\rangle}{\langle t-|x-y|\rangle|x - y|}  e^{-\frac{\beta}{2} |y_{\parallel}|}e^{-\beta (\vZ+m_\pm gy_3)}\\
     \approx m_\pm g  C_{G_\pm}\int_0^{\min\{1,t\}}dr \int_0^\pi d\phi\ \sin\phi\int_{\mathbb{R}^3} dv\, \frac{v^0_\pm}{m_\pm}  \frac{\langle t\rangle r}{\langle t-r\rangle}e^{-\frac{\beta}{2} r|\sin\phi|}e^{-\beta (\vZ+m_\pm g(r\cos\phi+x_3))}.
     \end{multline*}
Using the inequality that 
$$\langle t\rangle \le \sqrt{2} \langle t-r\rangle \langle r\rangle,$$ we have
\begin{multline*}
    m_\pm g  C_{G_\pm}\int_0^{\min\{1,t\}}dr \int_0^\pi d\phi\ \sin\phi\int_{\mathbb{R}^3} dv\, \frac{v^0_\pm}{m_\pm}  \frac{\langle t\rangle r}{\langle t-r\rangle}e^{-\frac{\beta}{2} r|\sin\phi|}e^{-\beta(\vZ+m_\pm g(r\cos\phi+x_3))}\\
    \lesssim g  C_{G_\pm} c_{\pm,\beta}\int_0^{\min\{1,t\}}dr \int_0^\pi d\phi\ \sin\phi \langle r\rangle^2\lesssim g  C_{G_\pm} c_{\pm,\beta}.
     \end{multline*}
Altogether, we conclude that 
\begin{equation}
    \label{est.st.iS}\langle t\rangle|(\Eplo)^{(1),\textup{st}}_{\pm,iS}(t,x)|\lesssim   \frac{C_{G_\pm}}{m_\pm \beta^7}\left(1+ m_\pm g \beta^3 \right).
\end{equation} 
Choosing $\beta>1$ sufficiently large such that $\min\{m_-,m_+\}\times\min\{g \beta^3,\beta^2\}\gg 1 $, we obtain 
$$  \langle t\rangle|(\Eplo)^{(1),\textup{st}}_{\pm,iS}(t,x)|\ll \min\{m_-,m_+\}g,$$ which ensures \eqref{apriori_EB} for the decomposed piece $(\Eplo)^{(1),\textup{st}}_{\pm,iS}.$ The other term $(\Eplo)^{(2),\textup{st}}_{\pm,iS}$ follows exactly the same estimate.  This completes the estimate for $(\Eplo)_{\pm,iS}.$ 

{\color{black}
We now turn to the magnetic nonlinear terms $(\Bplo)^{(j)}_{\pm,iS}$ for $j=1,2$ and $i=1,2,3$. Observe that the integrands of $(\Bplo)^{(j),\textup{acc}}_{\pm,iS}$ and $(\Bplo)^{(j),\textup{st}}_{\pm,iS}$ differ from those of the electric counterparts $(\Eplo)^{(j),\textup{acc}}_{\pm,iS}$ and $(\Eplo)^{(j),\textup{st}}_{\pm,iS}$ only through the kernels: $a^{\B}_{\pm,i}$ in place of $a^{\E}_{\pm,i}$ for the ``acc''-terms, and $\frac{(\omega\times\hat{v}_\pm)_i}{1+\hat{v}_\pm\cdot\omega}$ in place of $\frac{\omega+\hat{v}_\pm}{1+\hat{v}_\pm\cdot\omega}$ for the ``st''-terms. Therefore, it suffices to show that the magnetic kernels obey the same upper bounds as the electric ones; the decay estimates then follow verbatim.

We first estimate $a^{\B}_{\pm,i}$ in \eqref{aBi}. For the derivative in the first term, we note that, writing $(i,j,k)$ for the cyclic permutation of $(1,2,3)$ starting from $i$, we have $(\omega\times\hat{v}_\pm)_i=\omega_j(\hat{v}_\pm)_k-\omega_k(\hat{v}_\pm)_j$, and hence, by
$$\partial_{v_l}(\hat{v}_\pm)_m= \frac{\delta_{lm}(\vZ)^2-v_lv_m}{(\vZ)^3},$$
each entry of $\nabla_v[(\omega\times\hat{v}_\pm)_i]$ is a sum of two terms of the form $\pm\omega_j\frac{\delta_{lm}(\vZ)^2-v_lv_m}{(\vZ)^3}$, each bounded by $\frac{2}{\vZ}$ in absolute value (cf.\ the explicit computation for $i=3$: $\partial_{v_1}[\omega_1(\hat{v}_\pm)_2-\omega_2(\hat{v}_\pm)_1]=-\frac{\omega_1v_1v_2}{(\vZ)^3}-\frac{\omega_2((\vZ)^2-(v_1)^2)}{(\vZ)^3}$, and similarly for the other entries and components). Therefore,
\begin{equation}
    \label{aB.deri}
    \big|\nabla_v \big[(\omega\times\hat{v}_\pm)_i\big]\big|\le \frac{6}{\vZ},\qquad i=1,2,3.
\end{equation}
Since $\left|\frac{1}{1+\hat{v}_\pm\cdot \omega}\right|$ attains its maximum at $\omega=-\frac{\hat{v}_\pm}{|\hat{v}_\pm|}$, we obtain
\begin{equation}
    \label{aB.bound.1}
    \left|\frac{\nabla_v \big[(\omega\times\hat{v}_\pm)_i\big]}{1+\hat{v}_\pm\cdot \omega}\right|
    \le\frac{6}{\vZ\,(1-|\hat{v}_\pm|)}
    =\frac{6\,(1+|\hat{v}_\pm|)}{\vZ\,(1-|\hat{v}_\pm|^2)}
    =\frac{6\vZ\,(1+|\hat{v}_\pm|)}{(\vZ)^2-|v|^2}
    \le  \frac{12\vZ}{m_\pm^2}.
\end{equation}
For the second term of \eqref{aBi}, we use the orthogonal decomposition \eqref{ortho decomp} together with \eqref{bound for v+w term}, which yields
\begin{equation}\label{aB.st.kernel}
    \left|\frac{(\omega\times \hat{v}_\pm)_i}{1+\hat{v}_\pm\cdot \omega}\right| \le \frac{|\hat{v}_\pm|\,|\sin\phi|}{|1-|\hat{v}_\pm|\cos\phi|}\le |\hat{v}_\pm|\,\frac{\sqrt{m_\pm^2+|v|^2}}{m_\pm}\le \frac{\vZ}{m_\pm},
\end{equation}
and, as established below \eqref{aEfirst},
$$\frac{\big|\omega-(\omega\cdot \hat{v}_\pm)\hat{v}_\pm\big|}{1+\hat{v}_\pm\cdot \omega}\le 3\,\frac{\sqrt{m_\pm^2+|v|^2}}{m_\pm}.$$
Therefore,
\begin{equation}\label{aB.bound.2}
   \left| \frac{(\omega\times \hat{v}_\pm)_i\,\big(\omega-(\omega\cdot \hat{v}_\pm)\hat{v}_\pm\big)}{\vZ\,(1+\hat{v}_\pm\cdot \omega)^2}\right|\le \frac{1}{\vZ}\cdot\frac{\vZ}{m_\pm}\cdot\frac{3\vZ}{m_\pm}= \frac{3\vZ}{m_\pm^2}.
\end{equation}
Altogether, by \eqref{aB.bound.1} and \eqref{aB.bound.2}, we conclude that for any $\omega\in\mathbb{S}^2$ and $i=1,2,3$,
\begin{equation}
    \label{aB.final}
    \big|a^{\B}_{\pm,i}(v,\omega)\big|\lesssim  \frac{\vZ}{m_\pm^2},
\end{equation}
which is the same upper bound as that for the electric kernel $a^{\E}_{\pm,i}$ in \eqref{a2}--\eqref{a1}. Likewise, the ``st''-kernel bound \eqref{aB.st.kernel} coincides with the bound $\left|\frac{\omega+\hat{v}_\pm}{1+\hat{v}_\pm\cdot\omega}\right|\le\frac{\vZ}{m_\pm}$ used for $(\Eplo)^{(j),\textup{st}}_{\pm,iS}$ (cf.~\eqref{aEfirst}). The same bounds hold for the reflected kernels with $\bar{\omega}$ in place of $\omega$, since the estimates above are uniform in $\omega\in\mathbb{S}^2$.

Consequently, repeating the arguments leading to \eqref{est.acc.iS} and \eqref{est.st.iS} with the kernels replaced by their magnetic counterparts, we obtain, for $j=1,2$ and $i=1,2,3$,
\begin{equation}
    \label{est.acc.iS.B}
    \langle t\rangle\big|(\Bplo)^{(j),\textup{acc}}_{\pm,iS}(t,x)\big|\lesssim \frac{C_{f_\pm^{\textup{in}},G_\pm}}{\beta^8 m_\pm^2}\left(1+m_\pm g \beta^3\right),
    \qquad
    \langle t\rangle\big|(\Bplo)^{(j),\textup{st}}_{\pm,iS}(t,x)\big|\lesssim   \frac{C_{G_\pm}}{m_\pm \beta^7}\left(1+ m_\pm g \beta^3 \right).
\end{equation}
Choosing $\beta>1$ sufficiently large such that $\min\{m_-,m_+\}\times\min\{g \beta^3,\beta^2\}\gg 1$, as in the electric case, we obtain
$$  \langle t\rangle\big|(\Bplo)^{(j)}_{\pm,iS}(t,x)\big|\ll \min\{m_-,m_+\}g,\qquad j=1,2,\ i=1,2,3,$$
which ensures \eqref{apriori_EB} for the decomposed pieces $(\Bplo)^{(j)}_{\pm,iS}$. This completes the estimate for $(\Bplo)_{\pm,iS}$; in particular, the magnetic nonlinear contributions $\Bplo_{iS}$ are now included in the bound \eqref{decay bound for EB} of Lemma \ref{lem.dyna.EB}.
}

\subsubsection{Decay Estimates for $(\Eplo)_{\pm,iT}$ and $(\Bplo)_{\pm,iT}$ }
Recall that $\Eplo_{\pm,T}$ terms are written as 
\begin{equation*}
    (\Eplo)^{(1)}_{\pm,T}(t,x)=\mp\int_{B^+(x;t)} \frac{dy}{|y-x|^2}\int_\rth dv\ \frac{(|(\hat{v}_\pm)|^2-1)(\hat{v}_\pm+\omega)}{(1+\hat{v}_\pm\cdot \omega)^2} \fplo(t-|x-y|,y,v).
\end{equation*} In the followings, we split the cases into two: $t<1$ and $t\ge 1.$ 

Firstly, if $t<1$, we utilize the estimate \eqref{fplo decay est} and the kernel estimate \eqref{ET kernel estimate} to obtain
\begin{equation}\begin{split}\label{E5T estimate maxwell}|(\Eplo)^{(1)}_{\pm,iT}(t,x)|&\lesssim \frac{1}{m_\pm} \int_{B^+(x;t)} \frac{dy}{|y-x|^2}\int_\rth dv\ \vZ\fplo(t-|x-y|,y,v)\\
&\lesssim \frac{1}{m_\pm} \int_{B^+(x;t)} \frac{dy}{|y-x|^2}\int_\rth dv\ \vZ e^{-\frac{\beta}{2}(\vZ+m_\pm gy_3)}\\&\qquad\times\left(\|\mathrm{w}_{\pm,\beta} f^{\textup{in}}_\pm\|_{L^\infty_{x,v}}+\frac{C}{\beta}\|\mathrm{w}^2_{\pm,\beta}(\cdot,0,\cdot)\nabla_{x_\parallel,v}G_\pm(\cdot,\cdot)\|_{L^\infty_{x_\parallel,v}}\right) e^{-\frac{\beta}{2}|y_\parallel|}\\
&\approx \frac{1}{m_\pm} c_{\pm,\beta}\left(\|\mathrm{w}_{\pm,\beta} f^{\textup{in}}_\pm\|_{L^\infty_{x,v}}+\frac{C}{\beta}\|\mathrm{w}^2_{\pm,\beta}(\cdot,0,\cdot)\nabla_{x_\parallel,v}G_\pm(\cdot,\cdot)\|_{L^\infty_{x_\parallel,v}}\right) \\&\qquad\times  \int_{B^+(x;t)} \frac{dy}{|y-x|^2}e^{-\frac{m_\pm g\beta}{2}y_3}e^{-\frac{\beta}{2}|y_\parallel|},
\end{split}\end{equation}where $c_{\pm,\beta}$ is defined as 
$c_{\pm,\beta} \eqdef \int_\rth dv \ \vZ e^{-\frac{\beta}{2}\vZ}. $ Now we further split the integral domain into two: $|y-x|\le 1$ and $|y-x|>1.$

If $|y-x|> 1,$ we have
\begin{multline*}
   \frac{1}{m_\pm} c_{\pm,\beta} \int_{B^+(x;t)} \frac{dy}{|y-x|^2}e^{-\frac{m_\pm g\beta}{2}y_3}e^{-\frac{\beta}{2}|y_\parallel|}1_{\{|y-x|> 1\}}
\le  \frac{1}{m_\pm} c_{\pm,\beta} \int_{\rth} dye^{-\frac{m_\pm g\beta}{2}y_3}e^{-\frac{\beta}{2}|y_\parallel|}
   \lesssim \frac{1}{m_\pm^2g\beta^3}c_{\pm,\beta}  .
\end{multline*}
On the other hand, if $|y-x|\le 1,$ we further proceed as
\begin{align*}
&\frac{1}{m_\pm}c_{\pm,\beta} \int_{B^+(x;t)} \frac{dy}{|y-x|^2}e^{-\frac{m_\pm g\beta}{2}y_3}e^{-\frac{\beta}{2}|y_\parallel|}1_{\{|y-x|\le 1\}}\\
&\approx\frac{1}{m_\pm} c_{\pm,\beta}\int_{B(x;t)\cap \{z_3+x_3>0\}} \frac{dz}{|z|^2} e^{-\frac{m_\pm g\beta}{2} (z_3+x_3)}e^{-\frac{\beta}{2}|z_\parallel+x_\parallel|}1_{\{|z|\le 1\}}\\
&\approx\frac{1}{m_\pm} c_{\pm,\beta}\int_0^1dr\int_{\mathbb{S}^2}d\omega\  1_{\{(r\omega)_3+x_3>0\}} e^{-\frac{m_\pm g\beta}{2} ((r\omega)_3+x_3)}e^{-\frac{\beta}{2}|z_\parallel+x_\parallel|}
\lesssim \frac{1}{m_\pm}c_{\pm,\beta}.
\end{align*} 

Altogether, we conclude that for $i=1,2,3$,
\begin{equation}
    \label{E1T final maxwell}|(\Eplo)^{(1)}_{\pm,iT}(t,x)|
    \lesssim \frac{1}{m_\pm^2g\beta^7}(1+m_\pm g\beta^3)\left(\|\mathrm{w}_{\pm,\beta} f^{\textup{in}}_\pm\|_{L^\infty_{x,v}}+\frac{C}{\beta}\|\mathrm{w}^2_{\pm,\beta}(\cdot,0,\cdot)\nabla_{x_\parallel,v}G_\pm(\cdot,\cdot)\|_{L^\infty_{x_\parallel,v}}\right),
\end{equation}since we have the estimate \eqref{additional beta decay} for the coefficient $c_{\pm,\beta}$.

On the other hand, if $t\ge 1,$ by using the kernel estimate \eqref{ET kernel estimate} and the decay estimate \eqref{fplo decay est} for $\fplo$, we obtain that
\begin{align*}
  &  \langle t\rangle|(\Eplo)^{(1)}_{\pm,iT}(t,x)| \\*
  &\lesssim \int_{B^+(x;t)} dy \int_{\mathbb{R}^3} dv\, \frac{v^0_\pm}{m_\pm} \frac{\langle t\rangle\fplo(t - |x - y|, y, v)}{|x - y|^2} \\
    &\le  \int_{B^+(x;t)} dy \int_{\mathbb{R}^3} dv\, \frac{v^0_\pm}{m_\pm}  \frac{\langle t\rangle}{\langle t-|x-y|\rangle|x - y|^2}\frac{4}{\beta} C_{f_\pm^{\textup{in}},G_\pm}e^{-\frac{\beta}{2} |y_{\parallel}|}e^{-\frac{\beta}{4}(\vZ+m_\pm gy_3)}\\
   &\le  \left(\int_{|y-x|< 1}dy+\int_{\substack{|y-x|\ge 1 \\ |y-x|<t}}dy\right)  \int_{\mathbb{R}^3} dv\, \frac{v^0_\pm}{m_\pm}  \frac{\langle t\rangle}{\langle t-|x-y|\rangle|x - y|^2}\frac{4}{\beta} C_{f_\pm^{\textup{in}},G_\pm}e^{-\frac{\beta}{2} |y_{\parallel}|}e^{-\frac{\beta}{4}(\vZ+m_\pm gy_3)} \\
    &\le  \int_{|y-x|< 1}dy\int_{\mathbb{R}^3} dv\, \frac{v^0_\pm}{m_\pm}  \frac{\langle t\rangle}{\langle t-1\rangle|x - y|^2}\frac{4}{\beta} C_{f_\pm^{\textup{in}},G_\pm}e^{-\frac{\beta}{2} |y_{\parallel}|}e^{-\frac{\beta}{4}(\vZ+m_\pm gy_3)}\\
    &+\int_{\substack{|y-x|\ge 1 \\ |y-x|<t}}dy \int_{\mathbb{R}^3} dv\, \frac{v^0_\pm}{m_\pm}  \frac{\langle t\rangle}{\langle t-|x-y|\rangle|x - y|}\frac{4}{\beta} C_{f_\pm^{\textup{in}},G_\pm}e^{-\frac{\beta}{2} |y_{\parallel}|}e^{-\frac{\beta}{4}(\vZ+m_\pm gy_3)} \\
        &\le  \int_{|y-x|< 1}dy\int_{\mathbb{R}^3} dv\, \frac{v^0_\pm}{m_\pm}  \frac{(1+\sqrt{5})}{2|x - y|^2}\frac{4}{\beta} C_{f_\pm^{\textup{in}},G_\pm}e^{-\frac{\beta}{2} |y_{\parallel}|}e^{-\frac{\beta}{4}(\vZ+m_\pm gy_3)}\\
   & +\int_{\substack{|y-x|\ge 1 \\ |y-x|<t}}dy \int_{\mathbb{R}^3} dv\, \frac{v^0_\pm}{m_\pm}  \frac{\langle t\rangle}{\langle t-|x-y|\rangle|x - y|}\frac{4}{\beta} C_{f_\pm^{\textup{in}},G_\pm}e^{-\frac{\beta}{2} |y_{\parallel}|}e^{-\frac{\beta}{4}(\vZ+m_\pm gy_3)}
    \eqdef \text{I}+\text{II}, 
\end{align*}  where $C_{f_\pm^{\textup{in}},G_\pm}$ is defined as \eqref{def.CfG}. Here note that the integrand of the latter integral II is the same as that of \eqref{acc.iS.integrand} up to some constant and $g$. Therefore, the same estimate follows, and by \eqref{est.acc.iS} we obtain 
$$ \text{II}\lesssim C_{f_\pm^{\textup{in}},G_\pm}\left(\frac{1}{\beta^8 m_\pm^2 g}+ \frac{ 1}{\beta^5 m_\pm} \right).$$On the other hand, the integral I can be treated the same as the integral \eqref{E5T estimate maxwell} up to a minor correction on the coefficients, and hence the estimate \eqref{E1T final maxwell} follows as
$$\text{I}\lesssim \frac{1}{m_\pm g\beta^4}C_{f_\pm^{\textup{in}},G_\pm}.$$
Altogether, choosing $\beta>0$ sufficiently large, we obtain 
$$  \langle t\rangle|(\Eplo)^{(1)}_{\pm,iT}(t,x)|\ll \min\{m_-,m_+\}g,$$ which ensures \eqref{apriori_EB} for the decomposed piece $(\Eplo)^{(1)}_{\pm,iT}.$ The other term $(\Eplo)^{(2)}_{\pm,iT}$ follows exactly the same estimate. In addition, 
in order to conclude the same upper bound for the magnetic field $(\Bplo)^{(1)}_{\pm,iT}$ and $(\Bplo)^{(2)}_{\pm,iT}$ up to constant, we now make some kernel estimates as follows. We first note that
$$\frac{(1-|(\hat{v}_\pm)|^2)(\omega\times (\hat{v}_\pm))_i}{(1+\hat{v}_\pm\cdot \omega)^2}=\frac{(1-|(\hat{v}_\pm)|^2)(\omega_1(\hat{v}_\pm)_2-\omega_2(\hat{v}_\pm)_1)}{(1+\hat{v}_\pm\cdot \omega)^2}.$$
    By \eqref{kernel estimate 1}, we first have 
    $$\left|\frac{1-|(\hat{v}_\pm)|^2}{1+\hat{v}_\pm\cdot \omega}\right|\le 2.$$
Now, for the estimate of the remainder part $\frac{\omega_1(\hat{v}_\pm)_2-\omega_2(\hat{v}_\pm)_1}{1+\hat{v}_\pm\cdot \omega}$, 
 define $z\eqdef -\frac{(\hat{v}_\pm)}{|(\hat{v}_\pm)|}$ such that $$\omega\cdot z=-\frac{\omega\cdot (\hat{v}_\pm)}{|(\hat{v}_\pm)|}=\cos\phi. $$ 
Similarly to what we did in \eqref{ortho decomp}, we observe that \begin{equation}\notag 
    \left|\frac{(\omega\times (\hat{v}_\pm))_i}{1+\hat{v}_\pm\cdot \omega}\right| \le \frac{|(\hat{v}_\pm)||\sin\phi|}{|1-|(\hat{v}_\pm)|\cos\phi|}.
\end{equation} Define $f(\sin\phi)= \frac{|\sin\phi|}{|1-|(\hat{v}_\pm)|\cos\phi|}$. Then by \eqref{bound for v+w term}, we obtain $f(x)\le \frac{\sqrt{m_\pm^2+|v|^2}}{m_\pm}.$ 
Thus, \begin{equation}
    \label{wvcross kernel estimate}\left|\frac{(\omega\times (\hat{v}_\pm))_i}{1+\hat{v}_\pm\cdot \omega}\right|\le |(\hat{v}_\pm)|\frac{\sqrt{m_\pm^2+|v|^2}}{m_\pm}.
\end{equation}Altogether, we have
\begin{equation}\label{B35 kernel final}
    \left|\frac{(1-|(\hat{v}_\pm)|^2)(\omega\times (\hat{v}_\pm))_i}{(1+\hat{v}_\pm\cdot \omega)^2}\right|\le 2|(\hat{v}_\pm)|\frac{\sqrt{m_\pm^2+|v|^2}}{m_\pm}\le 2\frac{\sqrt{m_\pm^2+|v|^2}}{m_\pm}.
\end{equation}

\subsubsection{Final Upper-Bounds for $\Eplo$ and $\Bplo$}
Combining the previous estimates, we obtain the following lemma on the linear-in-time decay upper bound for $\Eplo$ and $\Bplo$:
\begin{lemma}\label{lem.dyna.EB}Fix $l\in\mathbb{N}$ and suppose \eqref{apriori_f}-\eqref{apriori_EB}  hold for $(\fplo,\Epl,\Bpl).$ Then $(\Eplo,\Bplo)$ satisfies 
\begin{equation}\label{decay bound for EB}
    \sup_{t \geq 0} \ \langle t \rangle \|(\Eplo, \Bplo)\|_{L^\infty} \leq \min\{m_+, m_-\} \frac{g}{16}.
\end{equation}
\end{lemma}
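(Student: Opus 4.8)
The plan is to assemble the bound \eqref{decay bound for EB} from the piecewise linear-in-time decay estimates established in the preceding subsections, exploiting that the field representations are \emph{finite} sums of terms that have each already been controlled. Recall that for each $i=1,2,3$ one has
\[
\Eplo_i = \Eplo_{\textup{hom},i} + \Eplo_{ib1} + \Eplo_{ib2} + \Eplo_{iT} + \Eplo_{iS} + \delta_{i3}\Eplo_{\textup{add},3}, \qquad \Bplo_i = \Bplo_{\textup{hom},i} + \Bplo_{ib1} + \Bplo_{iT},
\]
where every summand further splits into a bounded number of pieces indexed by the species $\pm$ and by the retarded $(1)$ / image $(2)$ half-space contributions, and $\Eplo_{iS}=\sum_{\pm}\big((\Eplo)^{(j),\textup{acc}}_{\pm,iS}+(\Eplo)^{(j),\textup{st}}_{\pm,iS}\big)$ with appropriate signs. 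First I would record, term by term, the estimate already proved: the homogeneous pieces are bounded by \eqref{utx upper bound} in terms of the initial-data size controlled by \eqref{initial E0i} (the Neumann-type surface integrals appearing in $\Eplo_{\textup{hom},3}$ and $\Bplo_{\textup{hom},i}$ being absorbed into the $ib2$-type bound of Section~\ref{sec.ib2 decay}); the radiation pieces $(\Eplo)_{\pm,ib1}$, $(\Bplo)_{\pm,ib1}$ via the kernel bounds \eqref{eq.ib1 kernel}, \eqref{ET kernel estimate}, the compact support of $f^{\textup{in}}_\pm$ in $x$ (or, alternatively, the neutrality hypothesis \eqref{rrcn condition}) together with \eqref{utx upper bound}; the boundary pieces $(\Eplo)_{\pm,ib2}$ and $\Eplo_{\textup{add},3}$ via the vanishing inflow trace of $\fplo$, the decay estimate \eqref{fplo decay est}, and the kernel bound \eqref{3.13}; the transport pieces $(\Eplo)_{\pm,iT}$, $(\Bplo)_{\pm,iT}$ via \eqref{ET kernel estimate}, \eqref{B35 kernel final}, \eqref{fplo decay est} after splitting into $t<1$ and $t\ge1$; and the source pieces via \eqref{est.acc.iS} and \eqref{est.st.iS}.

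Each such estimate has the shape $\langle t\rangle\,|\text{piece}(t,x)|\le C(\beta,m_\pm)\big(C_{f_\pm^{\textup{in}},G_\pm}+M\big)$, where the prefactor $C(\beta,m_\pm)$ tends to $0$ as $\beta\to\infty$ once $\min\{m_-,m_+\}g\ge 32$ and $\min\{m_-,m_+\}\min\{g\beta^3,\beta^2\}\gg 1$ are used, or else it carries the explicit smallness coming from $c_0$ in \eqref{initial E0i}. Since the representation is a finite sum (two species $\times$ two half-space contributions $\times$ three components $\times$ finitely many term types), I would add the finitely many bounds and then choose $\beta$ large enough and $c_0$ small enough that the total does not exceed $\min\{m_+,m_-\}g/16$; this is precisely \eqref{decay bound for EB}, which closes the bootstrap for the fields at level $l+1$.

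The main obstacle is the nonlinear acceleration term $(\Eplo)^{\textup{acc}}_{\pm,iS}$ together with the inhomogeneous stationary source term $(\Eplo)^{\textup{st}}_{\pm,iS}$: here the growth factor $\langle t\rangle$ must be redeemed against the $1/|x-y|$ three-dimensional wave dispersion and against the retarded-time decay $\langle t-|x-y|\rangle^{-1}$ supplied by the a priori linear decay \eqref{apriori_f} of $\fpl$ (respectively \eqref{apriori_EB} of $(\Epl,\Bpl)$), while the $y$-integral over the growing ball $B(x;t)$ must stay uniformly finite thanks to the spatial decay $e^{-\frac{\beta}{2}|y_\parallel|-\frac{\beta}{4}m_\pm g y_3}$ of $\fplo$ and the exponential decay of $\nabla_v F_{\pm,\textup{st}}$ from \eqref{steady state decay mom.deri}. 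The uniform-in-$x$ control of the total fields $\Elbf=\E_{\textup{st}}+\Epl$ and $\Blbf=\B_{\textup{st}}+\Bpl$ needed to bound the Lorentz factor in the acceleration term follows from \eqref{steady state L infty} and the level-$l$ hypothesis \eqref{apriori_EB}. Once these are in hand, the near-cone region $|x-y|<1$ and the far region $|x-y|\ge1$ are handled separately using $\langle t\rangle\lesssim\langle t-|x-y|\rangle\langle|x-y|\rangle$ and $\langle t\rangle\lesssim\langle t-|x-y|\rangle|x-y|$ respectively, exactly as in \eqref{est.acc.iS}--\eqref{est.st.iS}; crucially no new cancellation is needed for the magnetic field, since the potential representation \eqref{Bpar_half_final} already contains no Glassey--Strauss $S$-term.
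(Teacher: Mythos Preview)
Your proposal is correct and follows essentially the same approach as the paper: the lemma is an assembly statement, collecting the term-by-term linear-in-time decay bounds proved in the immediately preceding subsections (homogeneous, $b1$, $b2$, $T$, $S$, and the Neumann-type additional term) and summing the finitely many contributions, then invoking the largeness of $\beta$ and the smallness of the initial-perturbation parameter to make the total $\le \min\{m_+,m_-\}g/16$. One small slip: in your discussion of the acceleration $S$-term you write ``linear decay \eqref{apriori_f} of $\fpl$'', but the integrand there carries $\fplo$, whose decay is precisely part of the lemma's hypothesis (established earlier in Lemma~\ref{lem.dyna.fplo}); this is only a typographical issue.
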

This bound guarantees the validity of \eqref{apriori_EB} at the $(l+1)$-th iteration level, provided that the parameter $\beta > 1$ is chosen sufficiently large. Consequently, the estimates \eqref{apriori_f}-\eqref{apriori_EB} are verified uniformly for all $l \in \mathbb{N}$, and thus remain valid in the limit as $l \to \infty$.

\subsection{An Alternative Orbital Stability Result}
In this section, we additionally introduce a weaker result, the orbital stability of the steady states with the J\"uttner-Maxwell upper bound, \textbf{without} assuming that the initial Cauchy data for field perturbations $\mathcal{B}^1_{01}(y)$ and $\mathcal{B}_{01}(y)$ (as well as the other components  $\mathcal{B}^1_{0i}(y)$, $\mathcal{B}_{0i}(y)$, $\mathcal{E}^1_{0i}(y)$ and $\mathcal{E}_{0i}(y)$ for $i=1,2,3$) are compactly supported in the region $|y| \le R_0$, for some $R_0 > 0$, nor assuming the \textit{Initial Radiation Charge Neutrality Condition} 
\eqref{rrcn condition}. We first state the alternative theorem on the stability:
\begin{theorem}[Orbital Stability]\label{thm.asymp.rth.orbi}
Let $(F_{\pm,\textup{st}}, \mathcal{E}_{\textup{st}}, \mathcal{B}_{\textup{st}})$ be the steady solution constructed in Theorem \ref{thm.ex.st}. 
Suppose positive parameters $(g,m_\pm, \bar \beta)$ satisfy $\bar \beta>0$, $\textcolor{black}{\min\{m_-,m_+\}g\gg 1}$ and $\min\{m_-,m_+\}\times\min\{g \bar\beta^3,\bar\beta^2\}\allowbreak\gg 1. $  
Let the initial perturbations $(f_\pm^{\textup{in}}, \mathcal{E}^{\textup{in}}, \mathcal{B}^{\textup{in}})$ satisfy the conditions of \eqref{initial E0i} and \eqref{f initial condition}. We do not assume that the initial field perturbations and their temporal derivatives (understood via the equations) are compactly supported in $x.$

Then we construct a unique classical solution to the dynamical problem \eqref{VM perturbations1}--\eqref{VM perturbations2} with the inflow boundary condition \eqref{inflow boundary} and the perfect conductor condition \eqref{perfect cond. boundary}, such that  
%
$$
|||f_\pm(t)|||<\infty,\quad(\mathcal{E}, \mathcal{B}) \in W^{1,\infty}([0,\infty)\times\mathbb{R}^3_+), \ \ \text{for all $t>0$.}
$$
Moreover, the solution does not grow in time 
\begin{align}
    &\sup_{t \ge 0}\  \left\| e^{\frac{\bar\beta}{2}|x_\parallel| + \frac{\bar\beta}{4}\vZ + \frac{\bar\beta}{4}m_\pm g x_3} f_\pm(t) \right\|_{L^\infty_{x,v}} \le C_M,\notag\\
    &\sup_{t \ge 0}\ \|(\mathcal{E}, \mathcal{B})(t)\|_{L^\infty_{x}} \le \min\{m_+, m_-\} \frac{g}{16}.\notag
\end{align}
Furthermore, the derivatives are controlled as 
\begin{align}\notag
 \|( v_\pm^0)^\ell \partial_t f_\pm(t)\|_{L^\infty} + |||f_\pm(t)||| 
  +  \|(\mathcal{E}, \mathcal{B})\|_{W^{1,\infty}_{t,x}([0,t]\times \mathbb{R}^3_+)} 
  &\lesssim_t 1 .
\end{align}
\end{theorem}

In the rest of this section, we introduce a modified bootstrap argument for the proof of the orbital stability, which replaces the bootstrap argument made for the asymptotic stability.

\subsubsection{Bootstrap Argument} 
For the proof of orbital stability (Theorem \ref{thm.asymp.rth.orbi}), it suffices to provide the following proposition via bootstrap argument; the rest of the proof including the regularity estimates and the proof of existence and uniqueness are the same as those for the main asymptotic stability (Theorem \ref{thm.asymp.rth}), and we omit them.  In the rest of the section, we prove the following main proposition on the boundedness of the perturbations:
\begin{proposition}\label{prop.dyna.boot.orbi}Suppose that $\beta > 1$ is sufficiently large such that  
$$
\min\{m_+^2, m_-^2\}g^2\beta \gg 1 \quad \text{and} \quad \min\{m_+^2, m_-^2\}\beta^4 \gg 1. 
$$ 
For any $l\in\mathbb{N}$, we have

\begin{align}\label{Ansatz for fl maxwell} 
        \sup_{t\ge 0}\left\|e^{\frac{\beta}{2} |x_{\parallel}|}e^{\frac{\beta}{4}(\vZ+m_\pm gx_3)}\fpl(t,\cdot,\cdot)\right\|_{L^\infty} &\le  \left(\|\mathrm{w}_{\pm,\beta} f^{\textup{in}}_\pm\|_{L^\infty_{x,v}}+\frac{C}{\beta}\|\mathrm{w}^2_{\pm,\beta}(x_\parallel,0,v)\nabla_{x_\parallel,v}G_\pm(x_\parallel,v)\|_{L^\infty_{x,v}(\gamma_-)}\right),
    \\\label{ansatz for EBl maxwell}
        \sup_{t\ge 0}\|(\Epl,\Bpl)\|_{L^\infty}&\le \min\{m_+,m_-\}\frac{g}{16},
    \end{align}
for some $\beta>1$ where the weight $\mathrm{w}=\mathrm{w}_{\pm,\beta}$ is defined as \eqref{weights.wholehalf}. 
\end{proposition} In the following sections, we fix $ l \in \mathbb{N} $ and assume that \eqref{Ansatz for fl maxwell}--\eqref{ansatz for EBl maxwell} hold at the iteration level $ (l) $. We then show that these same estimates remain valid at the next level $ (l+1) $, thereby closing the bootstrap argument.

\begin{proof}[Proof of Proposition \ref{prop.dyna.boot.orbi}]
  Proposition \ref{prop.dyna.boot.orbi} follows from Lemma \ref{lem.dyna.fplo.orbi} and Lemma \ref{lem.dyna.EB.orbi}, which will be established in the subsequent sections.
\end{proof}

Note that, as long as $\partial_{x_3}\mathbf{E}_3$, $\partial_{x_3}\mathbf{B}_1$, and $\partial_{x_3}\mathbf{B}_2$ exist as functions in $L^\infty([0,T]\times \Omega)$ at the sequential level for each $l\in\mathbb{N}$, their traces at the boundary are well-defined in the distributional sense. In this case, the fields satisfy the additional Neumann-type compatibility conditions on the boundary, as described in Remark \ref{remark.comp.cond}, in the weak sense:
\begin{align}\notag
(\partial_{x_3}\Eplo_3  - 4 \pi \rho^{l+1}_{\textup{pert}})\big|_{\partial\Omega}=0,\quad 
(\partial_{x_3}\Bplo_1  - 4 \pi J^{l+1}_{\textup{pert},2})\big|_{\partial\Omega}=0,\quad 
(\partial_{x_3}\Bplo_2 + 4 \pi J^{l+1}_{\textup{pert},1})\big|_{\partial\Omega}=0.
\end{align}

Given that the bootstrap ansatz \eqref{Ansatz for fl maxwell}-\eqref{ansatz for EBl maxwell} hold at the level of the sequential index $(l)$ for some fixed $l\in\mathbb{N},$ we will prove that the same estimates also hold at the sequential level of $(l+1)$ if $\beta$ is sufficiently large.

\subsubsection{Estimates for the Velocity Distribution Function}In this section, we prove the estimate \eqref{apriori_f} at the iteration level $(l+1)$.
\begin{lemma}\label{lem.dyna.fplo.orbi}
   Fix $l\in\mathbb{N}$ and suppose \eqref{ansatz for EBl maxwell} hold for $(\Epl,\Bpl).$ Then $\fplo$ satisfies  \begin{equation}
    \label{final estimate for flo maxwell}
    e^{\frac{\beta}{2}|x_\parallel|} e^{\frac{1}{4}\beta \vZ} e^{\frac{1}{4}m_\pm g\beta x_{3}}|\fplo(t,x,v)|\le   \left(\|\mathrm{w}_{\pm,\beta} f^{\textup{in}}_\pm\|_{L^\infty_{x,v}}+\frac{C}{\beta}\|\mathrm{w}^2_{\pm,\beta}(x_\parallel,0,v)\nabla_{x_\parallel,v}G_\pm(x_\parallel,v)\|_{L^\infty_{x,v}(\gamma_-)}\right).
\end{equation}
\end{lemma}

\begin{proof}
Since $\fplo$ satisfies \eqref{iterated Vlasov.M}, we can write $\fplo$ in the mild form as 
\begin{multline}\label{solution flo maxwell.orbi}
    \fplo(t,x,v)= 1_{t\le \tblo(t,x,v)}f^{\textup{in}}_\pm(\XSlo(0;t,x,v),\VSlo(0;t,x,v))\\\mp \int^t_{\max\{0,t-\tblo\}} \left(\Epl(s,\XSlo(s))+\hat{\mathcal{V}}^{l+1}_\pm(s)\times \Bpl(s,\XSlo(s))\right)\cdot \nabla_v F_{\pm,\textup{st}}(\XSlo(s),\VSlo(s))ds.
\end{multline}
Based on the bootstrap ansatz \eqref{Ansatz for fl maxwell}--\eqref{ansatz for EBl maxwell}, we provide a decay estimate for $\fplo.$ Given that $\|\mathrm{w}_{\pm,\beta}\nabla_v F_{\pm,\textup{st}}\|_{L^\infty}$ is bounded (see \eqref{decay of Fst}), by \eqref{ansatz for EBl maxwell}, \eqref{solution flo maxwell.orbi} and that $|\hat{\mathcal{V}}^{l+1}_\pm|\le 1$, we obtain that
\begin{equation}\begin{split}\label{fplo mid est}
    |\fplo(t,x,v)|&\le  1_{t\le \tblo(t,x,v)}|f^{\textup{in}}_\pm(\XSlo(0;t,x,v),\VSlo(0;t,x,v))|\\
   & +1_{t\le \tblo(t,x,v)}\int^t_0\ \min\{m_-,m_+\}\frac{g}{8} |\nabla_v F_{\pm,\textup{st}}(\XSlo(s),\VSlo(s))|ds\\
   & +1_{t> \tblo(t,x,v)}\int^t_{t-\tblo} \min\{m_-,m_+\}\frac{g}{8} |\nabla_v F_{\pm,\textup{st}}(\XSlo(s),\VSlo(s))|ds\\
  & \le  \frac{1_{t\le \tblo(t,x,v)}}{\mathrm{w}_{\pm,\beta}( \ZSlo(0 ; t, x, v))}\|\mathrm{w}_{\pm,\beta} f^{\textup{in}}_\pm\|_{L^\infty_{x,v}}\\
  & +1_{t\le \tblo(t,x,v)}\min\{m_-,m_+\}\frac{g}{8} \|\mathrm{w}_{\pm,\beta}\nabla_v F_{\pm,\textup{st}}\|_{L^\infty_{x,v}}\int^t_0 \frac{1}{\mathrm{w}_{\pm,\beta}( \ZSlo(s ; t, x, v))}ds\\
   &+1_{t> \tblo(t,x,v)}\min\{m_-,m_+\}\frac{g}{8} \|\mathrm{w}_{\pm,\beta}\nabla_v F_{\pm,\textup{st}}\|_{L^\infty_{x,v}}\int^t_{t-\tblo} \frac{1}{\mathrm{w}_{\pm,\beta}( \ZSlo(s ; t, x, v))}ds.
\end{split}\end{equation}
Using \eqref{exit time bound.whole} and \eqref{w comparison 3.whole}, we further have 
\begin{align*}\notag
    |\fplo(t,x,v)|
   &\le  1_{t\le \tblo(t,x,v)}e^{-\frac{1}{2}\beta v_\pm^0-\frac{1}{2}m_\pm g\beta x_3-\frac{\beta}{2}|x_{\parallel}|}\|\mathrm{w}_{\pm,\beta} f^{\textup{in}}_\pm\|_{L^\infty_{x,v}}\\
   &+1_{t\le \tblo(t,x,v)}\min\{m_-,m_+\}\frac{g}{8} \|\mathrm{w}_{\pm,\beta}\nabla_v F_{\pm,\textup{st}}\|_{L^\infty_{x,v}}e^{-\frac{1}{2}\beta v_\pm^0-\frac{1}{2}m_\pm g\beta x_3-\frac{\beta}{2}|x_{\parallel}|} t\\
   &+1_{t> \tblo(t,x,v)}\min\{m_-,m_+\}\frac{g}{8} \|\mathrm{w}_{\pm,\beta}\nabla_v F_{\pm,\textup{st}}\|_{L^\infty_{x,v}}e^{-\frac{1}{2}\beta v_\pm^0-\frac{1}{2}m_\pm g\beta x_3-\frac{\beta}{2}|x_{\parallel}|} \tblo\\
    &\le  e^{-\frac{1}{2}\beta v_\pm^0-\frac{1}{2}m_\pm g\beta x_3-\frac{\beta}{2}|x_{\parallel}|}\|\mathrm{w}_{\pm,\beta} f^{\textup{in}}_\pm\|_{L^\infty_{x,v}}\\
   &+\min\{m_-,m_+\}\frac{g}{8} \|\mathrm{w}_{\pm,\beta}\nabla_v F_{\pm,\textup{st}}\|_{L^\infty_{x,v}}e^{-\frac{1}{2}\beta v_\pm^0-\frac{1}{2}m_\pm g\beta x_3-\frac{\beta}{2}|x_{\parallel}|} \tblo\\
  & \le  e^{-\frac{1}{2}\beta v_\pm^0-\frac{1}{2}m_\pm g\beta x_3-\frac{\beta}{2}|x_{\parallel}|}\|\mathrm{w}_{\pm,\beta} f^{\textup{in}}_\pm\|_{L^\infty_{x,v}}\\
   &+\frac{2}{5} (v_\pm^0+m_\pm g\beta x_3)\|\mathrm{w}_{\pm,\beta}\nabla_v F_{\pm,\textup{st}}\|_{L^\infty_{x,v}}e^{-\frac{1}{2}\beta v_\pm^0-\frac{1}{2}m_\pm g\beta x_3-\frac{\beta}{2}|x_{\parallel}|} \\
   &\le  e^{-\frac{1}{2}\beta v_\pm^0-\frac{1}{2}m_\pm g\beta x_3-\frac{\beta}{2}|x_{\parallel}|}\|\mathrm{w}_{\pm,\beta} f^{\textup{in}}_\pm\|_{L^\infty_{x,v}}
   +\frac{8}{5\beta e}\|\mathrm{w}_{\pm,\beta}\nabla_v F_{\pm,\textup{st}}\|_{L^\infty_{x,v}}e^{-\frac{1}{4}\beta v_\pm^0-\frac{1}{4}m_\pm g\beta x_3-\frac{\beta}{2}|x_{\parallel}|} ,
\end{align*}where the last inequality is by the inequality that $xe^{-\frac{\beta}{2} x}\le \frac{4}{\beta e}e^{-\frac{\beta}{4}x}.$
Therefore, using \eqref{decay of Fst}, we conclude that
\begin{equation}
    \notag
    e^{\frac{\beta}{2}|x_\parallel|} e^{\frac{1}{4}\beta \vZ} e^{\frac{1}{4}m_\pm g\beta x_{3}}|\fplo(t,x,v)|\le   \left(\|\mathrm{w}_{\pm,\beta} f^{\textup{in}}_\pm\|_{L^\infty_{x,v}}+\frac{C}{\beta}\|\mathrm{w}^2_{\pm,\beta}(x_\parallel,0,v)\nabla_{x_\parallel,v}G_\pm(x_\parallel,v)\|_{L^\infty_{x,v}(\gamma_-)}\right).
\end{equation}This completes the proof. \end{proof}

We now prove the estimate \eqref{ansatz for EBl maxwell} in the following subsections. 
\subsubsection{Outline for the Estimates on the Perturbed Electromagnetic Fields}\label{sec.outline.perturb.change}
Using the estimate \eqref{final estimate for flo maxwell} for $\fplo$, 
we will now make upper bound estimates for the fields $\Eplo$ and $\Bplo$ so that we can close the bootstrap argument at the sequential level $(l+1)$ given the assumptions \eqref{Ansatz for fl maxwell}--\eqref{ansatz for EBl maxwell} at level $(l)$. To this end, we need to make upper-bound estimates of $\Eplo$ and $\Bplo$. Note that $\Eplo$ and $\Bplo$ satisfy the Maxwell equations \eqref{iterated Maxwell.M}, which have the same structure as that of the Maxwell equations \eqref{2speciesVM}$_2$-\eqref{2speciesVM}$_5$ for full fields $\E$ and $\B$. In addition, the perturbation $\rho_{\textup{pert}}$ and $J_{\textup{pert}}$ also satisfy the continuity equation \eqref{continuity eq.perturb}, which again has the same structure to \eqref{continuity eq}. Lastly, the perturbation $\Eplo$ and $\Bplo$ also satisfy the perfect conductor boundary condition \eqref{perfect cond. boundary}, and hence we obtain the representations for $\Eplo$ and $\Bplo$ the same as in \eqref{Eparallel homo solution}, \eqref{Ei5},  \eqref{E3}, \eqref{B_half_final}, \eqref{Bpar_half_final}, \Black{and \eqref{BparS_half_final}} with $F_\pm$ now replaced by $\fpl.$ However, one difference this time is that the Vlasov equation \eqref{iterated Vlasov.M}$_1$ is written in terms of the perturbation $\fplo$ and this contains an additional term $\mp \left(\Epl+\hat{v}_\pm\times \Bpl\right)\cdot \nabla_v F_{\pm,\textup{st}}$ in the equation. This will modify the representations of $\Eplo_S$ and $\Bplo_S$ terms since the derivation of them uses the Vlasov equation \eqref{iterated Vlasov.M}$_1$. Namely, when we substitute the transport term as
$$(\partial_t+\hat{v}_\pm \cdot \nabla_x)F_\pm = -\nabla_v\cdot\left( (\pm \E 
\pm \hat{v}_\pm \times  \B -m_\pm g\hat{e}_3)  F_\pm \right),$$ to obtain the $S$ terms (ex. \eqref{Ei5}$_1$), we instead use the following identity this time:
\begin{equation} \label{eq.perturb.f}(\partial_t +\hat{v}_\pm \cdot \nabla_x) \fplo =-\nabla_v \cdot \left((\pm\Elbf\pm\hat{v}_\pm\times \Blbf-m_\pm g\hat{e}_3)\fplo\right) - \nabla_v\cdot \left((\pm \Epl\pm\hat{v}_\pm\times \Bpl)F_{\pm,\textup{st}}\right). \end{equation}Therefore, this creates the following additional terms in the $S$-term
 representations in addition to the previous $S$ representations in \eqref{Ei5}, \eqref{E3}\Black{, and \eqref{BparS_half_final}}:
 \begin{itemize}
     \item For $\mathcal{E}_{\pm, iS}^{l+1}$ with $i=1,2,3$, we additionally have to control
     \begin{equation}
         \label{new S E1}\int_{B^+(x;t)} dy' \int_\rth dv\ a^{\mathbf{E}}_{\pm,i}(v,\omega)\cdot (\pm\Epl\pm(\hat{v}_\pm)\times \Bpl)\frac{F_{\pm,\textup{st}}(t-|x-y'|,y',v)}{|x-y'|},
     \end{equation} and
      \begin{equation}
         \label{new S E2}\int_{B^-(x;t)} dy' \int_\rth dv\ a^{\mathbf{E}}_{\pm,i}(v,\bar{\omega})\cdot (\pm\Epl\pm(\hat{v}_\pm)\times \Bpl)\frac{F_{\pm,\textup{st}}(t-|x-y'|,\bar{y}',v)}{|x-y'|}.\end{equation}
     \item \Black{For $\mathcal{B}_{\pm, iS}^{l+1}$ with $i=1,2,3$, we additionally have to control
     \begin{equation}
         \label{new S B1}\int_{B^+(x;t)} dy' \int_\rth dv\ a^{\B}_{\pm,i}(v,\omega)\cdot (\pm\Epl\pm(\hat{v}_\pm)\times \Bpl)\frac{F_{\pm,\textup{st}}(t-|x-y'|,y',v)}{|x-y'|},
     \end{equation} and
      \begin{equation}
         \label{new S B2}\int_{B^-(x;t)} dy' \int_\rth dv\ a^{\B}_{\pm,i}(v,\bar{\omega})\cdot (\pm\Epl\pm(\hat{v}_\pm)\times \Bpl)\frac{F_{\pm,\textup{st}}(t-|x-y'|,\bar{y}',v)}{|x-y'|},\end{equation}
         where $a^{\B}_{\pm,i}(v,\omega)=\nabla_v\big(\frac{(\omega\times\hat{v}_\pm)_i}{1+\hat{v}_\pm\cdot\omega}\big)$ is the magnetic $S$-kernel defined in \eqref{aBi}.}
 \end{itemize}All other terms have the same representations in \eqref{Ei5}, \eqref{E3}, \eqref{B_half_final}, and \eqref{Bpar_half_final}, with $F_\pm$ replaced by $\fplo$. The following outlines how we decompose $\Eplo$ and $\Bplo$ and estimate each component without repeating proofs that will be established later, for the sake of conciseness.
\begin{itemize}
    \item Note that the upper-bound estimates for $\Eplo_{\textup{hom},i}$ and $\Bplo_{\textup{hom},i}$ are analogous to those in Section~\ref{sec.Ei.start}, with the distinction that the Cauchy data is not periodically extended in this case. We recall that the enhanced decay-in-$t$ estimates for $\Eplo$ and $\Bplo$ were already established in Section~\ref{sec.homo.decay}, where the proof relied on the compact-in-$x$ support condition of the Cauchy data for field perturbations. In contrast, the argument to be presented later in Section~\ref{sec.Ei.start} will show that the boundedness estimates without decay-in-$t$ can be achieved without assuming such compact support. To avoid redundancy, we postpone the detailed proof to Section~\ref{sec.Ei.start}.
    \item Boundary-value parts $\Eplo_{\pm,ib2}$.
    \item Initial-value parts $\Eplo_{\pm,ib1}$ and $\Bplo_{\pm,ib1}$. Indeed, the upper-bound estimates for these terms are analogous to those provided in Section~\ref{sec.5.7}, with the distinction that the Cauchy data is not periodically extended in this case. Even in the case $\mathbb{R}^3$, as long as we have the decay-in-$x_3$ condition \eqref{ex.ke decay condition} of the initial energy density $\energy^{\textup{in}}$, with the $\sup$ taken over $x_\parallel \in \mathbb{R}^2$ instead of $x_\parallel \in \mathbb{T}^2$, the estimate \eqref{Eb1 estimate ex} still holds in the same form. Here, we define and use the kinetic energy density notation $\energy(t,x)$ for two species as 
\begin{equation}\label{energy density}
     \energy(t,x)=\int_\rth \sqrt{m_\pm^2+|v|^2} F_\pm(t,x,v)\,dv, \qquad 
     \energy^{\textup{in}}(x)=\energy(0,x).
 \end{equation}
For conciseness, we postpone the detailed proof to Section~\ref{sec.5.7}.
      \item Parts corresponding to the vector-fields $T$ and $S$: $\Eplo_T$,  $\Bplo_T$, $\Eplo_S$\Black{, and $\Bplo_S$}, with the $S$-terms including the additional new contributions of \eqref{new S E1}--\eqref{new S E2} \Black{and \eqref{new S B1}--\eqref{new S B2}}.
\end{itemize}In the following subsections, we will make an upper-bound estimate for each term above.

\subsubsection{Estimates for the Boundary-Value Components of the Tangential Electric Fields} 
For the estimates of $(\Eplo)^{(1)}_{\pm,ib2}$ and $(\Eplo)^{(2)}_{\pm,ib2}$ in \eqref{Ei5} with $F_\pm$ now replaced by $\fpl$, we follow the same kernel estimates for \eqref{Eb2 kernel estimate}-\eqref{3.13} and use the estimate \eqref{final estimate for flo maxwell} on $\fplo$, and the assumption \eqref{iterated Vlasov.M} on the zero-incoming boundary perturbation profile to obtain that
\begin{equation}\begin{split}\label{Eb2 estimate beginning maxwell}
  &  |(\Eplo)^{(1)}_{\pm,ib2}(t,x)|+|(\Eplo)^{(2)}_{\pm,ib2}(t,x)|\\&\le  2\int_{B(x;t)\cap \{y'_3=0\}} \frac{dy'_\parallel}{|y'-x|}\int_{v_3\le 0} dv\  \left(1+|(\hat{v}_\pm)_3|\frac{\sqrt{m_\pm^2+|v|^2}}{m_\pm}\right)|\fplo(t-|x-y'|,y'_\parallel,0,v)|\\
   & \le \frac{4}{m_\pm}\int_{B(x;t)\cap \{y'_3=0\}} \frac{dy'_\parallel}{|y'-x|}\int_{v_3\le 0} dv\  \vZ e^{-\frac{\beta}{2} \vZ} \\&\times \left(\|\mathrm{w}_{\pm,\beta} f^{\textup{in}}_\pm\|_{L^\infty_{x,v}}+\frac{C}{\beta}\|\mathrm{w}^2_{\pm,\beta}(x_\parallel,0,v)\nabla_{x_\parallel,v}G_\pm(x_\parallel,v)\|_{L^\infty_{x,v}(\gamma_-)}\right)e^{-\frac{\beta}{2}|y'_\parallel|}\\
    & \le \frac{4}{m_\pm}c_{\pm,\beta}   \left(\|\mathrm{w}_{\pm,\beta} f^{\textup{in}}_\pm\|_{L^\infty_{x,v}}+\frac{C}{\beta}\|\mathrm{w}^2_{\pm,\beta}(x_\parallel,0,v)\nabla_{x_\parallel,v}G_\pm(x_\parallel,v)\|_{L^\infty_{x,v}(\gamma_-)}\right) \int_{\mathbb{R}^2} \frac{dy'_\parallel}{\sqrt{|y'_\parallel-x_\parallel|^2+x_3^2}}e^{-\frac{\beta}{2}|y'_\parallel|},
\end{split}\end{equation}where $c_{\pm,\beta} \eqdef \int_{\rth} dv\  \vZ e^{-\frac{\beta}{2} \vZ} $ and by rescaling we have $c_{\pm,\beta} \approx \frac{1}{\beta^4}$ (see \eqref{additional beta decay}).
Since $$\int_{\mathbb{R}^2} \frac{dy'_\parallel}{\sqrt{|y'_\parallel-x_\parallel|^2+x_3^2}}e^{-\frac{\beta}{2}|y'_\parallel|}\le \int_{\mathbb{R}^2} \frac{dy'_\parallel}{|y'_\parallel-x_\parallel|}e^{-\frac{\beta}{2}|y'_\parallel|}\approx\frac{1}{\beta} \left\langle \frac{\beta}{2}x_\parallel\right\rangle^{-1}\lesssim \frac{1}{\beta} ,$$ by \eqref{asymptotics}, we conclude that \begin{multline}\label{Final estimate for b2 terms maxwell}
    |(\Eplo)^{(1)}_{\pm,ib2}(t,x)|+|(\Eplo)^{(2)}_{\pm,ib2}(t,x)|\\
    \lesssim \frac{1}{m_\pm \beta^5}\left(\|\mathrm{w}_{\pm,\beta} f^{\textup{in}}_\pm\|_{L^\infty_{x,v}}+\frac{C}{\beta}\|\mathrm{w}^2_{\pm,\beta}(x_\parallel,0,v)\nabla_{x_\parallel,v}G_\pm(x_\parallel,v)\|_{L^\infty_{x,v}(\gamma_-)}\right),
\end{multline} by \eqref{additional beta decay}. If $\beta>1$ is chosen sufficiently large such that $\min\{m_-^2,m_+^2\}g\beta^5\gg 1$, then we have $$|(\Eplo)^{(1)}_{\pm,ib2}(t,x)|+|(\Eplo)^{(2)}_{\pm,ib2}(t,x)|\ll \min\{m_-,m_+\}g. $$   This completes the estimates for $|(\Eplo)^{(1)}_{\pm,ib2}(t,x)|$ and $|(\Eplo)^{(2)}_{\pm,ib2}(t,x)|$ boundary terms.

\subsubsection{Estimates for the Transverse and Nonlinear Source Components of the \Black{Electromagnetic} Fields}
\label{sec.field est maxwell}
We now use the decaying bound \eqref{final estimate for flo maxwell} for $\fplo$ for a sufficiently large choice of $\beta$ and get the upper-bound estimates for $(\Eplo)^{(1)}_{\pm,iT}$ and $(\Eplo)^{(1)}_{\pm,iS}$\Black{, as well as for their magnetic counterparts $(\Bplo)^{(1)}_{\pm,iT}$ and $(\Bplo)^{(1)}_{\pm,iS}$,} for $i=1,2$. Again, denote that $\Eplo_T\eqdef (\Eplo)^{(1)}_{\pm,T}+\Eplo_{2T}$ where
$\Eplo_{jT}\eqdef (\Eplo_{1jT},\Eplo_{2jT},\Eplo_{3jT})^\top$ and $\Eplo_{jS}\eqdef (\Eplo_{1jS},\Eplo_{2jS},\Eplo_{3jS})^\top$ for $j=1,2$\Black{, and analogously for $\Bplo_{jT}$ and $\Bplo_{jS}$}.

\paragraph{Estimates for $\Eplo_T$ \Black{and $\Bplo_T$} terms}\label{ET uniform boundedness}
To begin with, we observe that $\Eplo_T$ terms are written as
\begin{equation*}
    (\Eplo)^{(1)}_{\pm,T}(t,x)=\mp\int_{B^+(x;t)} \frac{dy'}{|y'-x|^2}\int_\rth dv\ \frac{(|(\hat{v}_\pm)|^2-1)(\hat{v}_\pm+\omega)}{(1+\hat{v}_\pm\cdot \omega)^2} \fplo(t-|x-y'|,y',v)\Black{,}
\end{equation*}
\Black{while the magnetic $T$-terms $(\Bplo)^{(1)}_{\pm,iT}$ carry the same integral structure with the kernel $\frac{(1-|(\hat{v}_\pm)|^2)(\omega\times \hat{v}_\pm)_i}{(1+\hat{v}_\pm\cdot \omega)^2}$ in place of the $i$-th component of $\frac{(|(\hat{v}_\pm)|^2-1)(\hat{v}_\pm+\omega)}{(1+\hat{v}_\pm\cdot \omega)^2}$.}
For the estimates of $(\Eplo)^{(1)}_{\pm,iT}$ with $i=1,2,3$, we utilize the estimate \eqref{final estimate for flo maxwell} and the kernel estimate \eqref{ET kernel estimate} to obtain
\begin{equation}\begin{split}\label{E5T estimate maxwell.orbi}|(\Eplo)^{(1)}_{\pm,iT}(t,x)|&\lesssim \frac{1}{m_\pm} \int_{B^+(x;t)} \frac{dy'}{|y'-x|^2}\int_\rth dv\ \vZ\fplo(t-|x-y'|,y',v)\\
&\lesssim \frac{1}{m_\pm} \int_{B^+(x;t)} \frac{dy'}{|y'-x|^2}\int_\rth dv\ \vZ e^{-\frac{\beta}{2}(\vZ+m_\pm gy'_3)}\\&\times\left(\|\mathrm{w}_{\pm,\beta} f^{\textup{in}}_\pm\|_{L^\infty_{x,v}}+\frac{C}{\beta}\|\mathrm{w}^2_{\pm,\beta}(x_\parallel,0,v)\nabla_{x_\parallel,v}G_\pm(x_\parallel,v)\|_{L^\infty_{x,v}(\gamma_-)}\right) e^{-\frac{\beta}{2}|y'_\parallel|}\\
&\approx \frac{1}{m_\pm} c_{\pm,\beta}\left(\|\mathrm{w}_{\pm,\beta} f^{\textup{in}}_\pm\|_{L^\infty_{x,v}}+\frac{C}{\beta}\|\mathrm{w}^2_{\pm,\beta}(x_\parallel,0,v)\nabla_{x_\parallel,v}G_\pm(x_\parallel,v)\|_{L^\infty_{x,v}(\gamma_-)}\right) \\&\times  \int_{B^+(x;t)} \frac{dy'}{|y'-x|^2}e^{-\frac{m_\pm g\beta}{2}y'_3}e^{-\frac{\beta}{2}|y'_\parallel|},
\end{split}\end{equation}where $c_{\pm,\beta}$ is defined as
$c_{\pm,\beta} \eqdef \int_\rth dv \ \vZ e^{-\frac{\beta}{2}\vZ}. $ Now we further split the integral domain into two: $|y'-x|\le 1$ and $|y'-x|>1.$
If $|y'-x|> 1,$ we have
\begin{multline*}
   \frac{1}{m_\pm} c_{\pm,\beta} \int_{B^+(x;t)} \frac{dy'}{|y'-x|^2}e^{-\frac{m_\pm g\beta}{2}y'_3}e^{-\frac{\beta}{2}|y'_\parallel|}1_{\{|y'-x|> 1\}}
\le  \frac{1}{m_\pm} c_{\pm,\beta} \int_{\rth} dy'e^{-\frac{m_\pm g\beta}{2}y'_3}e^{-\frac{\beta}{2}|y'_\parallel|}
    \lesssim \frac{1}{m_\pm^2g\beta^3}c_{\pm,\beta}  .
\end{multline*}
On the other hand, if $|y'-x|\le 1,$ we further proceed as
\begin{multline}
\frac{1}{m_\pm}c_{\pm,\beta} \int_{B^+(x;t)} \frac{dy'}{|y'-x|^2}e^{-\frac{m_\pm g\beta}{2}y'_3}e^{-\frac{\beta}{2}|y'_\parallel|}1_{\{|y'-x|\le 1\}}\\
\approx\frac{1}{m_\pm} c_{\pm,\beta}\int_{B(x;t)\cap \{z_3+x_3>0\}} \frac{dz}{|z|^2} e^{-\frac{m_\pm g\beta}{2} (z_3+x_3)}e^{-\frac{\beta}{2}|z_\parallel+x_\parallel|}1_{\{|z|\le 1\}}\\
\approx\frac{1}{m_\pm} c_{\pm,\beta}\int_0^1dr\int_{\mathbb{S}^2}d\omega\  1_{\{(r\omega)_3+x_3>0\}} e^{-\frac{m_\pm g\beta}{2} ((r\omega)_3+x_3)}e^{-\frac{\beta}{2}|z_\parallel+x_\parallel|}
\lesssim \frac{1}{m_\pm}c_{\pm,\beta}.
\end{multline}
Altogether, we conclude that for $i=1,2,3$,
\begin{multline}
    \label{E1T final maxwell.orbi}|(\Eplo)^{(1)}_{\pm,iT}(t,x)|\\
    \lesssim \frac{1}{m_\pm^2g\beta^7}(1+m_\pm g\beta^3)\left(\|\mathrm{w}_{\pm,\beta} f^{\textup{in}}_\pm\|_{L^\infty_{x,v}}+\frac{C}{\beta}\|\mathrm{w}^2_{\pm,\beta}(x_\parallel,0,v)\nabla_{x_\parallel,v}G_\pm(x_\parallel,v)\|_{L^\infty_{x,v}(\gamma_-)}\right),
\end{multline}since we have the estimate \eqref{additional beta decay} for the coefficient $c_{\pm,\beta}$. Thus, if we choose $\beta>1$ sufficiently large such that $\min\{m_-^3,m_+^3\} g^2\beta^7\gg 1$ and $\min\{m_-^2,m_+^2\} g\beta^4\gg 1$, then we have$$|(\Eplo)^{(1)}_{\pm,iT}(t,x)|\ll \min\{m_-,m_+\} g.$$
Similarly, we obtain the same upper bound for $(\Eplo)^{(2)}_{\pm,iT}$ for $i=1,2,3$.
\Black{The magnetic $T$-terms are estimated in exactly the same way: by \eqref{B35 kernel final}, the magnetic $T$-kernel obeys the same upper bound $\frac{2\vZ}{m_\pm}$ as the electric $T$-kernel bound \eqref{ET kernel estimate}, uniformly in $\omega\in\mathbb{S}^2$ (hence also with $\bar{\omega}$). Therefore, the estimates \eqref{E5T estimate maxwell.orbi}--\eqref{E1T final maxwell.orbi} hold verbatim for $(\Bplo)^{(1)}_{\pm,iT}$ and $(\Bplo)^{(2)}_{\pm,iT}$, and under the same largeness conditions on $\beta$ we obtain, for $i=1,2,3$,
$$|(\Bplo)^{(1)}_{\pm,iT}(t,x)|+|(\Bplo)^{(2)}_{\pm,iT}(t,x)|\ll \min\{m_-,m_+\} g.$$}

\paragraph{Estimates for $\Eplo_S$ \Black{and $\Bplo_S$} terms}
On the other hand, regarding the nonlinear $S$ terms, $(\Eplo)^{(1)}_{\pm,iS}$ and $(\Eplo)^{(2)}_{\pm,iS}$ with $i=1,2,3$, we recall \eqref{Ei5} and \eqref{E3}. Here we define $$a^{\E}_{\pm,i}(v,\omega)=\frac{(\partial_{v_i}v-(\hat{v}_\pm)_i(\hat{v}_\pm))}{(\vZ) (1+\hat{v}_\pm\cdot \omega)}-\frac{(\omega_i+(\hat{v}_\pm)_i)(\omega-(\omega\cdot (\hat{v}_\pm))(\hat{v}_\pm))}{(\vZ) (1+\hat{v}_\pm\cdot \omega)^2}=:a^{(1)}_{\pm,i}+a^{(2)}_{\pm,i}.$$
In this section, let us call $(\Eplo)^{(1)}_{\pm,iS}$ and $(\Eplo)^{(2)}_{\pm,iS}$ \Black{(respectively, $(\Bplo)^{(1)}_{\pm,iS}$ and $(\Bplo)^{(2)}_{\pm,iS}$)} as the integral representations \eqref{Ei5}$_1$ \Black{(respectively, \eqref{BparS_half_final})} without the additional contributions \eqref{new S E1} and \eqref{new S E2} \Black{(respectively, \eqref{new S B1} and \eqref{new S B2})} from the steady-state profile $F_{\pm,\textup{st}}.$ We will estimate these additional terms \eqref{new S E1}\Black{--}\eqref{new S B2} below in Section \ref{sec.additional.S}.
Then using the kernel estimate \eqref{a2}--\eqref{a1} and the estimate \eqref{final estimate for flo maxwell} for $\fplo$,
we finally obtain for $i=1,2,3$,
\begin{equation}\begin{split}
\label{EloiS bound.max1}
|(\Eplo)^{(1)}_{\pm,iS}(t,x)|&\lesssim
\frac{1}{m_\pm^2}(m_\pm g+\|(\Elbf,\Blbf)\|_{L^\infty})\int_{B^+(x;t)} \frac{dy'}{|y'-x|}e^{-\frac{\beta}{2}|y'_\parallel|}\\&\times \int_\rth dv\ \vZ e^{-\frac{\beta}{2}(\vZ+m_\pm gy'_3)}\left(\|\mathrm{w}_{\pm,\beta} f^{\textup{in}}_\pm\|_{L^\infty_{x,v}}+\frac{C}{\beta}\|\mathrm{w}^2_{\pm,\beta}(x_\parallel,0,v)\nabla_{x_\parallel,v}G_\pm(x_\parallel,v)\|_{L^\infty_{x,v}(\gamma_-)}\right)\\
&\approx \frac{1}{m_\pm}c_{\pm,\beta} g\left(\|\mathrm{w}_{\pm,\beta} f^{\textup{in}}_\pm\|_{L^\infty_{x,v}}+\frac{C}{\beta}\|\mathrm{w}^2_{\pm,\beta}(x_\parallel,0,v)\nabla_{x_\parallel,v}G_\pm(x_\parallel,v)\|_{L^\infty_{x,v}(\gamma_-)}\right)\\&\times \int_{B^+(x;t)} \frac{dy'}{|y'-x|}e^{-\frac{\beta}{2}|y'_\parallel|} e^{-\frac{m_\pm g\beta}{2}y'_3},
\end{split}\end{equation}where we used the a priori ansatz \eqref{ansatz for EBl maxwell} for the perturbations $(\Epl,\Bpl)$ as well as the known estimates \eqref{steady state L infty} for the steady-state  $(\E_{\textup{st}},\B_{\textup{st}})$ to obtain the following upper-bound for the full fields $(\Elbf,\Blbf)$ as
\begin{equation}
    \label{eq.full.EBl bound}\sup_{t\ge 0}\|(\Elbf,\Blbf)\|_{L^\infty}\le \min\{m_+,m_-\}\frac{g}{8}.
\end{equation}
Now we further split the integral domain into two: $|y'-x|\le 1$ and $|y'-x|>1.$
If $|y'-x|> 1,$ we have
\begin{multline*}
   \frac{1}{m_\pm} c_{\pm,\beta} g \int_{B^+(x;t)} \frac{dy'}{|y'-x|}e^{-\frac{\beta}{2}|y'_\parallel|} e^{-\frac{m_\pm g\beta}{2}y'_3}1_{\{|y'-x|> 1\}}
   \le \frac{1}{m_\pm} c_{\pm,\beta} g \int_{\rth} dy'e^{-\frac{\beta}{2}|y'_\parallel|} e^{-\frac{m_\pm g\beta}{2}y'_3}
    \lesssim \frac{1}{m_\pm^2\beta^3} c_{\pm,\beta}   .
\end{multline*}On the other hand, if $|y'-x|\le 1,$ we have
\begin{align*}
&\frac{1}{m_\pm}  c_{\pm,\beta} g \int_{B^+(x;t)} \frac{dy'}{|y'-x|}e^{-\frac{\beta}{2}|y'_\parallel|} e^{-\frac{m_\pm g\beta}{2}y'_3}1_{\{|y'-x|\le 1\}}\\
&\approx \frac{1}{m_\pm} c_{\pm,\beta} g \int_{B(x;t)\cap \{z_3+x_3>0\}} \frac{dz}{|z|} e^{-\frac{m_\pm g\beta}{2}(z_3+x_3)}1_{\{|z|\le 1\}}\\
&\approx\frac{1}{m_\pm}  c_{\pm,\beta} g \int_0^1dr\int_{\mathbb{S}^2}d\omega\  1_{\{(r\omega)_3+x_3>0\}} re^{-\frac{m_\pm g\beta}{2}((r\omega)_3+x_3)}
\lesssim \frac{1}{m_\pm} c_{\pm,\beta} g .
\notag
\end{align*}
Altogether, we conclude
\begin{multline}
\label{E5S estimate maxwell}
|(\Eplo)^{(1)}_{\pm,iS}(t,x)|\\*
\lesssim \frac{1}{m_\pm^2 \beta^7}(1+m_\pm\beta^3g)\left(\|\mathrm{w}_{\pm,\beta} f^{\textup{in}}_\pm\|_{L^\infty_{x,v}}+\frac{C}{\beta}\|\mathrm{w}^2_{\pm,\beta}(x_\parallel,0,v)\nabla_{x_\parallel,v}G_\pm(x_\parallel,v)\|_{L^\infty_{x,v}(\gamma_-)}\right).
\end{multline}Thus, if we choose $\beta>1$ sufficiently large such that $\min\{m_-^3,m_+^3\} g\beta^7\gg 1$ and $\min\{m_-^2,m_+^2\} \beta^4\gg 1$, then we have$$|(\Eplo)^{(1)}_{\pm,iS}(t,x)|\ll \min\{m_-,m_+\} g.$$
Similarly, we obtain the same upper bounds for $(\Eplo)^{(2)}_{\pm,iS}$ terms for $i=1,2,3,$ as those of \eqref{E5S estimate maxwell}.
\Black{The magnetic $S$-terms are estimated in exactly the same way: the integrands of $(\Bplo)^{(1)}_{\pm,iS}$ and $(\Bplo)^{(2)}_{\pm,iS}$ differ from those of $(\Eplo)^{(1)}_{\pm,iS}$ and $(\Eplo)^{(2)}_{\pm,iS}$ only through the kernel $a^{\B}_{\pm,i}$ of \eqref{aBi} in place of $a^{\E}_{\pm,i}$, and by \eqref{aB.final} the magnetic kernel obeys the same upper bound $|a^{\B}_{\pm,i}(v,\omega)|\lesssim \frac{\vZ}{m_\pm^2}$ as \eqref{a2}--\eqref{a1}, uniformly in $\omega\in\mathbb{S}^2$ (hence also with $\bar{\omega}$). Therefore, the estimates \eqref{EloiS bound.max1}--\eqref{E5S estimate maxwell} hold verbatim for $(\Bplo)^{(1)}_{\pm,iS}$ and $(\Bplo)^{(2)}_{\pm,iS}$, and under the same largeness conditions on $\beta$ we obtain, for $i=1,2,3$,
$$|(\Bplo)^{(1)}_{\pm,iS}(t,x)|+|(\Bplo)^{(2)}_{\pm,iS}(t,x)|\ll \min\{m_-,m_+\} g.$$}
In conclusion, together with \eqref{E1T final maxwell.orbi} we have for $i=1,2,3,$
\begin{multline}\label{EiTEiS final i12 maxwell}|(\Eplo)^{(1)}_{\pm,iT}(t,x)|+|(\Eplo)^{(2)}_{\pm,iT}(t,x)|+|(\Eplo)^{(1)}_{\pm,iS}(t,x)|+|(\Eplo)^{(2)}_{\pm,iS}(t,x)|\\
\Black{+|(\Bplo)^{(1)}_{\pm,iT}(t,x)|+|(\Bplo)^{(2)}_{\pm,iT}(t,x)|+|(\Bplo)^{(1)}_{\pm,iS}(t,x)|+|(\Bplo)^{(2)}_{\pm,iS}(t,x)|}\ll \min\{m_-,m_+\} g.\end{multline}

\paragraph{Estimates on the additional contributions by $F_{\pm,\textup{st}}$ in \eqref{new S E1}-\eqref{new S E2} \Black{and \eqref{new S B1}-\eqref{new S B2}}}\label{sec.additional.S}
As we have outlined in Section \ref{sec.outline.perturb.change},  we additionally have to control
     \begin{equation}
          \label{new S E1.new} S_1\eqdef \int_{B^+(x;t)} dy' \int_\rth dv\ a^{\mathbf{E}}_{\pm,i}(v,\omega)\cdot (\pm\Epl\pm(\hat{v}_\pm)\times \Bpl)\frac{F_{\pm,\textup{st}}(t-|x-y'|,y',v)}{|x-y'|},
     \end{equation} and
      \begin{equation}
         \notag S_2\eqdef \int_{B^-(x;t)} dy' \int_\rth dv\ a^{\mathbf{E}}_{\pm,i}(v,\bar{\omega})\cdot (\pm\Epl\pm(\hat{v}_\pm)\times \Bpl)\frac{F_{\pm,\textup{st}}(t-|x-y'|,\bar{y}',v)}{|x-y'|},\end{equation}for $\mathcal{E}_{\pm, iS}^{l+1}$ with $i=1,2,3$, as in \eqref{new S E1}-\eqref{new S E2}\Black{, together with their magnetic counterparts
      \begin{equation}
          \label{new S B1.new} S^{\B}_1\eqdef \int_{B^+(x;t)} dy' \int_\rth dv\ a^{\B}_{\pm,i}(v,\omega)\cdot (\pm\Epl\pm(\hat{v}_\pm)\times \Bpl)\frac{F_{\pm,\textup{st}}(t-|x-y'|,y',v)}{|x-y'|},
     \end{equation}
     and $S^{\B}_2$ defined analogously on $B^-(x;t)$ with $(\bar{\omega},\bar{y}')$ in place of $(\omega,y')$, for $\mathcal{B}^{l+1}_{\pm,iS}$ with $i=1,2,3$, as in \eqref{new S B1}-\eqref{new S B2}}.
         Note that the only difference of \eqref{new S E1.new} and  $(\Eplo)^{(1)}_{\pm,iS}$ of \eqref{Ei5}$_1$ is that we have perturbations $(\Epl,\Bpl)$ and the steady-state $F_{\pm,\textup{st}}$ replacing $(\Elbf,\Blbf)$ and $F^{l+1}_\pm,$ as well as we remove the gravity from \eqref{Ei5}$_1$\Black{; the same holds for \eqref{new S B1.new} and $(\Bplo)^{(1)}_{\pm,iS}$}. Observe that  the upper-bounds \eqref{eq.full.EBl bound} of $(\Elbf,\Blbf)$ differs from the ansatz \eqref{ansatz for EBl maxwell} only by a constant, and also that $F_{\pm,\textup{st}}$ also has the same exponential decay in the variables $x_\parallel,$ $x_3$ and $|v|$ as in \eqref{steady state L infty}. \Black{Moreover, the magnetic kernel obeys the same upper bound as the electric one, $|a^{\B}_{\pm,i}(v,\omega)|\lesssim \frac{\vZ}{m_\pm^2}$, by \eqref{aB.final} and \eqref{a2}--\eqref{a1}, uniformly in $\omega\in\mathbb{S}^2$.} Therefore, we modify \eqref{EloiS bound.max1} and \eqref{EiTEiS final i12 maxwell} and obtain  that
         $$|S_1|\Black{+|S^{\B}_1|}\lesssim \frac{C}{m_\pm^2 \beta^7}(1+m_\pm\beta^3g),$$ where the constant $C$ is from \eqref{steady state L infty}. Thus, if we choose $\beta>1$ sufficiently large such that $$\min\{m_-^3,m_+^3\} g\beta^7\gg 1 \text{ and }\min\{m_-^2,m_+^2\} \beta^4\gg 1,$$ then we have$$|S_1|\Black{+|S^{\B}_1|}\ll \min\{m_-,m_+\} g.$$ The estimate\Black{s} for $S_2$ \Black{and $S^{\B}_2$} \Black{are} also the same.
\subsubsection{Final Estimates for the Perturbed Electromagnetic Fields}\label{sec.final upper bound for EplBpl}
We now establish the final upper bounds for $\Eplo$ and $\Bplo$. \Black{Recall that the transverse components $\Bplo_T$ and the nonlinear source components $\Bplo_S$, including the additional stationary contributions \eqref{new S B1}--\eqref{new S B2}, have already been estimated in Sections~\ref{sec.field est maxwell} and \ref{sec.additional.S}, in parallel with their electric counterparts. The remaining components of $\Bplo$, namely $\Bplo_{\textup{hom},i}$ and $\Bplo_{ib1}$,} differ from those for $\Eplo$ only in the kernels appearing in their integral representations. However, as seen in \Black{\eqref{eq.ib1 kernel},} \eqref{ET kernel estimate}\Black{,} and \eqref{B35 kernel final}, these kernels satisfy identical upper bounds. Consequently, the $L^\infty$ estimates for \Black{these components of} $\Bplo$ follow analogously, with only minimal modifications.

Combining these bounds, we establish the following lemma for $\Eplo$ and $\Bplo$: 
\begin{lemma}\label{lem.dyna.EB.orbi}Fix $l\in\mathbb{N}$ and suppose that $\beta > 1$ is sufficiently large such that  
$$
\min\{m_+^2, m_-^2\}g^2\beta \gg 1 \quad \text{and} \quad \min\{m_+^2, m_-^2\}\beta^4 \gg 1. 
$$ Suppose that \eqref{Ansatz for fl maxwell}-\eqref{ansatz for EBl maxwell}  hold for $(\fplo,\Epl,\Bpl).$ Then $(\Eplo,\Bplo)$ satisfies 
\begin{equation}\label{final estimate for EloBlo maxwell}
    \sup_{t \geq 0} \|(\Eplo, \Bplo)\|_{L^\infty} \leq \min\{m_+, m_-\} \frac{g}{16}
\end{equation}
\end{lemma}
This bound guarantees the validity of \eqref{ansatz for EBl maxwell} at the $(l+1)$-th iteration level, provided that the parameter $\beta > 1$ is chosen sufficiently large. Consequently, the estimates \eqref{Ansatz for fl maxwell}--\eqref{ansatz for EBl maxwell} are satisfied uniformly for all $l \in \mathbb{N}$, and thus remain valid in the limit as $l\to \infty$.

\begin{remark}[$\mathbb{T}^2\times\mathbb{R}_+$ case: quadratically growing self-consistent fields in time]
\label{Torus case remark}
The above bootstrap argument does not work in the toroidal domain $\mathbb{T}^2\times\mathbb{R}_+$. Namely, 
in $\mathbb{T}^2\times\mathbb{R}_+$, we mention that the estimate \eqref{final estimate for flo maxwell} on $\fplo$ will create an additional quadratic growth in $t$ in the upper-bounds of $\Eplo$ and $\Bplo$ in \eqref{final estimate for EloBlo maxwell}. This is due to the lack of temporal decay for $\fplo$ in the estimate \eqref{final estimate for flo maxwell} for perturbations near steady states satisfying a J\"uttner--Maxwell upper bound. By contrast, the estimate for $\flo$ in \eqref{final estimate for flo} exhibits exponential decay in time, which is used in the bootstrap argument for the asymptotic stability of vacuum. 
More precisely, if we look at both $T$ terms and $S$ terms in \eqref{E5T estimate maxwell.orbi} and \eqref{EloiS bound.max1} without $e^{-\frac{\beta}{2}|y'_\parallel|}$ factor in the  $\mathbb{T}^2\times\mathbb{R}_+$ counterpart, note that
\begin{multline}\label{torus.growth in T term}
|(\Eplo)^{(1)}_{\pm,iT}(t,x)|\lesssim  \frac{1}{m_\pm} c_{\pm,\beta}\left(\|\mathrm{w}_{\pm,\beta} f^{\textup{in}}_\pm\|_{L^\infty_{x,v}}+\frac{C}{\beta}\|\mathrm{w}^2_{\pm,\beta}(x_\parallel,0,v)\nabla_{x_\parallel,v}G_\pm(x_\parallel,v)\|_{L^\infty_{x,v}(\gamma_-)}\right) \\\times  \int_{B^+(x;t)} \frac{dy'}{|y'-x|^2}e^{-\frac{m_\pm g\beta}{2}y'_3},
\end{multline}and 
\begin{multline}\label{torus.growth in S term}
|(\Eplo)^{(1)}_{\pm,iS}(t,x)|\lesssim  \frac{1}{m_\pm}c_{\pm,\beta} g\left(\|\mathrm{w}_{\pm,\beta} f^{\textup{in}}_\pm\|_{L^\infty_{x,v}}+\frac{C}{\beta}\|\mathrm{w}^2_{\pm,\beta}(x_\parallel,0,v)\nabla_{x_\parallel,v}G_\pm(x_\parallel,v)\|_{L^\infty_{x,v}(\gamma_-)}\right)\\\times \int_{B^+(x;t)} \frac{dy'}{|y'-x|}e^{-\frac{m_\pm g\beta}{2}y'_3}.
\end{multline}Here we observe that the following integral appears in both $T$ and $S$ terms:
$$\int_{B^+(x;t)} \frac{dy'}{|y'-x|^n}e^{-\frac{m_\pm g\beta}{2}y'_3},\ \textup{ for } n=1,2.$$ For each $n=1,2$ we have 
\begin{equation}\begin{split}\label{poly growth estimate with y_3 decay}
    &\int_{B^+(x;t)} \frac{dy'}{|y'-x|^n}e^{-\frac{m_\pm g\beta}{2}y'_3}= \int_0^tdr\  r^2 \int_{\mathbb{S}^2}d\omega \ 1_{\{(r\omega)_3+x_3\ge 0\}}  \frac{1}{r^n}e^{-\frac{m_\pm g\beta}{2}((r\omega)_3+x_3)}\\
    &= \int_0^tdr\  r^2 \int_0^{2\pi}d\psi\int_0^\pi d\phi \sin\phi \ 1_{\{r\cos\phi+x_3\ge 0\}}  \frac{1}{r^n}e^{-\frac{m_\pm g\beta}{2}(r\cos\phi+x_3)}\\
    &=2\pi \int_0^tdr\  r^{2-n}  \int_{-1}^1 dk \ 1_{\{rk+x_3\ge 0\}}  \frac{1}{r^n}e^{-\frac{m_\pm g\beta}{2}(rk+x_3)}=\frac{4\pi}{m_\pm g\beta} \int_0^tdr\  r^{2-n} (1-e^{-\frac{m_\pm g\beta}{2}(r+x_3)}).
\end{split}\end{equation}This term grows quadratically in $t$ when $n=1$ and grows linearly in $t$ when $n=2,$ and hence do $\mathcal{E}^{l+1}$ terms. The same growth also occurs for the analogous $\mathcal{B}^{l+1}$ terms.

Then the additional quadratic temporal growth in $\Eplo$ and $\Bplo$ will then make \eqref{EB apriori bound.whole} hold only for a bounded time interval whose span depends on $\beta$ and $g$. Thus in this case, we only obtain that the perturbation exists and is bounded within a finite time interval $[0,T]$ where $T$ depends on $\beta$ and $g$ in the $\mathbb{T}^2\times\mathbb{R}_+$ case for steady states with J\"uttner-Maxwell upper bound.\end{remark}

\section{Regularity Estimates for the Distributions }\label{sec.deri.dist}

This section is devoted to establishing regularity estimates for the iterated sequence of solutions $(\flo,$ $\Elobf,$ $\Blobf)$ to \eqref{iterated Vlasov.st}--\eqref{iterated Maxwell.st}, \eqref{iterated Vlasov.M}--\eqref{iterated Maxwell.M} and \eqref{def.perturb}--\eqref{continuity eq.perturb}. We prove that $(\flo, \Elobf, \Blobf)$ possess sufficient time and space regularity in appropriate weak function spaces. More precisely, we show that
$$\flo \in W^{1,\infty}\left([0,T]; L^\infty(\Omega\times \mathbb{R}^3)\right) \cap L^\infty\left([0,T]; X(\Omega\times \mathbb{R}^3)\right),\text { and }$$ 
$$( \Elobf, \Blobf)\in  W^{1,\infty}_{t,x}([0,T]\times\Omega) \times W^{1,\infty}_{t,x}([0,T]\times\Omega),
$$
where $X$ is a weighted first-order derivative space for $\flo$. In addition, we prove that the temporal and momentum derivatives of $\flo$ exhibit sufficient decay in $x$ and $v$, controlled within suitable weighted Sobolev spaces.

Given $ l \in \mathbb{N} $, we interpret the given fields in the iterated equation \eqref{iterated Vlasov.M} as $ \E^l, \B^l $ at the level of the sequential index $ (l) $, while the trajectory $ \ZS^{l+1} = (\XSlo,\VSlo) $ is understood at the level of $ (l+1) $, defined via the fields $ \E^l, \B^l $ as in \eqref{iterated char}. In this section, we study the derivative estimates of the distribution $ F^{l+1}_\pm $, and in Section \ref{sec.deri.EB}, we study the derivatives of the fields $ \E^{l+1}, \B^{l+1} $ at the level of $ (l+1) $. Notice that the final upper-bound estimates for the derivatives-\eqref{deri F estimate final} for $ F^{l+1}_\pm $ and \eqref{EloBlo estimate final} for $ \E^{l+1}, \B^{l+1} $-are uniform in $ l $, ensuring that these bounds are preserved in the limit as $ l \to \infty $. For the rest of Section \ref{sec.deri.dist} and Section \ref{sec.deri.EB} we keep the same iterated sequence elements $(\flo,\Elobf,\Blobf)$ of Section \ref{sec.boot.decay}.

Denote the given forcing term in the linear Vlasov equation \eqref{iterated Vlasov.M} as
\begin{equation}\label{eq.forcing}
    \FS(t,x,v)\eqdef \pm\Elbf(t,x) \pm (\hat{v}_\pm) \times \Blbf(t,x) -m_\pm g \hat{e}_3.
\end{equation}

\subsection{Derivatives of the Distribution}
Given the system \eqref{leading char} of ordinary differential equations for the characteristic trajectories $\XSlo$ and $\VSlo$, we can now write the representations of the derivatives of the distribution function $\flo(t, x, v)$ using the solution representation \eqref{solution f}. 
\subsubsection{Temporal Derivative $ \partial_t \flo $}
The temporal derivative $\partial_t \flo$ of a distribution $\flo$ will be estimated via the Vlasov equation \eqref{2speciesVM}$_1$ using the given estimates for $(\Elbf,\Blbf)$ and the estimates for $\nabla_x\flo$ and $\nabla_v\flo$ obtained below.

\subsubsection{Position Derivative $ \nabla_x\flo $}
For the position derivative, we again need to consider the two cases:
$ t \leq \tblo(t, x, v) $ and $ t > \tblo(t, x, v) $.

If $ t \leq \tblo(t, x, v) $ observe that
\begin{multline*}
    \nabla_x \flo(t, x, v) = \nabla_x F^{\textup{in}}_\pm(\XSlo(0; t, x, v), \VSlo(0; t, x, v)) \cdot \nabla_x \XSlo(0; t, x, v) \\+\nabla_v F^{\textup{in}}_\pm(\XSlo(0; t, x, v), \VSlo(0; t, x, v))\cdot \nabla_x \VSlo(0; t, x, v) .
\end{multline*}
On the other hand, if $ t > \tblo(t, x, v) $,
we have
$$
    \nabla_x \flo(t, x, v) = 
     (\nabla_{x_\parallel} G_\pm)( (\xblo)_\parallel, \vblo) \cdot \nabla_x (\xblo)_\parallel+ (\nabla_v G_\pm)( (\xblo)_\parallel, \vblo) \cdot \nabla_x \vblo.
$$

\subsubsection{Momentum Derivative $ \nabla_v\flo $}
For the momentum derivative, if $ t \leq \tblo(t, x, v) $
we have
\begin{multline*}
    \nabla_v \flo(t, x, v) =  \nabla_x F^{\textup{in}}_\pm(\XSlo(0; t, x, v), \VSlo(0; t, x, v)) \cdot \nabla_v \XSlo(0; t, x, v) \\+\nabla_v F^{\textup{in}}_\pm(\XSlo(0; t, x, v), \VSlo(0; t, x, v))\cdot \nabla_v \VSlo(0; t, x, v).
\end{multline*}
 On the other hand, if $ t > \tblo(t, x, v) $
we have
$$
    \nabla_v \flo(t, x, v) \\= 
    (\nabla_{x_\parallel} G_\pm)( (\xblo)_\parallel, \vblo) \cdot \nabla_v (\xblo)_\parallel\\+ (\nabla_v G_\pm)( (\xblo)_\parallel, \vblo) \cdot \nabla_v \vblo.
$$

Thus, the derivatives of the distribution function $\flo(t, x, v)$ with respect to $t$, $x$, and $v$ can be collected as follows:
\begin{equation}\label{Derivatives of f}
\begin{aligned}
    &\partial_t \flo(t, x, v) \\&= -(\hat{v}_\pm)\cdot \nabla_x\flo -\FS \cdot \nabla_v\flo, \textup{ where we further represent $\nabla_x\flo$ and $\nabla_v\flo$ by}
    \\
    &\nabla_x \flo(t, x, v) \\&= 
    \begin{cases} 
       \nabla_x F^{\textup{in}}_\pm(\XSlo(0; t, x, v), \VSlo(0; t, x, v)) \cdot \nabla_x \XSlo(0; t, x, v) \\\quad+\nabla_v F^{\textup{in}}_\pm(\XSlo(0; t, x, v), \VSlo(0; t, x, v))\cdot \nabla_x \VSlo(0; t, x, v),\ \textup{if } t \leq \tblo(t, x, v), \\
        (\nabla_{x_\parallel} G_\pm)( (\xblo)_\parallel, \vblo) \cdot \nabla_x (\xblo)_\parallel+ (\nabla_v G_\pm)( (\xblo)_\parallel, \vblo) \cdot \nabla_x \vblo,\  \textup{if } t > \tblo(t, x, v),
    \end{cases}
    \\
  &  \nabla_v \flo(t, x, v)\\ &= 
    \begin{cases} 
        \nabla_x F^{\textup{in}}_\pm(\XSlo(0; t, x, v), \VSlo(0; t, x, v)) \cdot \nabla_v \XSlo(0; t, x, v) \\\quad+\nabla_v F^{\textup{in}}_\pm(\XSlo(0; t, x, v), \VSlo(0; t, x, v))\cdot \nabla_v \VSlo(0; t, x, v),\ \textup{if } t \leq \tblo(t, x, v), \\
       (\nabla_{x_\parallel} G_\pm)( (\xblo)_\parallel, \vblo) \cdot \nabla_v (\xblo)_\parallel+ (\nabla_v G_\pm)( (\xblo)_\parallel, \vblo) \cdot \nabla_v \vblo,\ \textup{if } t > \tblo(t, x, v).
    \end{cases}
\end{aligned}
\end{equation}
Observe that the representations above still contain derivatives, gradients, and Jacobian matrices of $\tblo$, $\xblo$, and $\vblo.$ 

To begin with, for the representation of $\partial_t \tblo$, we recall that 
\begin{equation}\label{X3 property}
    (\XSlo)_3(t-\tblo(t,x,v);t,x,v)=0,
\end{equation}
by definition of $\tblo(t,x,v).$
By differentiating \eqref{X3 property} with respect to $t,$ we have
$$(\hat{\mathcal{V}}_\pm^{l+1})_3(t-\tblo)(1-\partial_t \tblo)+\partial_t (\XSlo)_3(t-\tblo)=0.$$ Thus, we have
\begin{equation}\label{partial t tb}
    \partial_t \tblo=1+\frac{\partial_t (\XSlo)_3(t-\tblo)}{(\hat{\mathcal{V}}_\pm^{l+1})_3(t-\tblo)}.
\end{equation}
Similarly, by differentiating \eqref{X3 property} with respect to $x_i$ and $v_i$ for $i=1,2,3,$ we obtain
\begin{align*}&-(\hat{\mathcal{V}}_\pm^{l+1})_3(t-\tblo)\partial_{x_i} \tblo+\partial_{x_i} (\XSlo)_3(t-\tblo)=0,\text{ and }\\&-(\hat{\mathcal{V}}_\pm^{l+1})_3(t-\tblo)\partial_{v_i} \tblo+\partial_{v_i} (\XSlo)_3(t-\tblo)=0.\end{align*} Therefore, we have
\begin{equation}\label{partial xv tb}
    \partial_{x_i}\tblo=\frac{\partial_{x_i} (\XSlo)_3(t-\tblo)}{(\hat{\mathcal{V}}_\pm^{l+1})_3(t-\tblo)}\textup{ and }\partial_{v_i}\tblo=\frac{\partial_{v_i} (\XSlo)_3(t-\tblo)}{(\hat{\mathcal{V}}_\pm^{l+1})_3(t-\tblo)},
\end{equation}for $i=1,2,3.$

Regarding the Jacobian matrices of $\xblo$ and $\vblo$, we observe that
\begin{equation}\label{Jacobian for xbvb}
    \begin{split}
         \partial_{x_i} \xblo&=     \partial_{x_i} (\XSlo(t - \tblo; t, x, v)) = -\hat{\mathcal{V}}_\pm^{l+1}(t - \tblo; t, x, v)    \partial_{x_i} \tblo  +(    \partial_{x_i} \XSlo)(t - \tblo; t, x, v), \\
         \partial_{v_i} \xblo&=     \partial_{v_i} (\XSlo(t - \tblo; t, x, v)) = -\hat{\mathcal{V}}_\pm^{l+1}(t - \tblo; t, x, v)    \partial_{v_i} \tblo  +(    \partial_{v_i} \XSlo)(t - \tblo; t, x, v), \\
    \partial_{x_i} \vblo&=\partial_{x_i} (\VSlo(t - \tblo; t, x, v)) =- (\partial_{x_i}\tblo) \FS(t - \tblo, \xblo,\vblo)+(\partial_{x_i} \VSlo)(t - \tblo; t, x, v),\\
     \partial_{v_i} \vblo&=\partial_{v_i} (\VSlo(t - \tblo; t, x, v)) =- (\partial_{v_i}\tblo  )\FS(t - \tblo, \xblo,\vblo)+(\partial_{v_i} \VSlo)(t - \tblo; t, x, v),
    \end{split} 
\end{equation}for $i=1,2,3$ by \eqref{leading char}. This completes the representations of the derivatives of $\tblo$, $\xblo$, and $\vblo$ with respect to $t,x,$ and $v.$
  
We also need the derivatives of the characteristic trajectory variables $\XSlo$ and $\VSlo$. We first observe that
\begin{equation}
    \label{VS representation}
    \VSlo(s;t,x,v)= v+ \int_t^s \left( \FS(\tau,\XSlo(\tau),\VSlo(\tau))\right)d\tau, 
\end{equation}by \eqref{leading char}. Thus, we have
\begin{multline}\label{xiVS.rep}
     (\partial_{x_i}\VSlo)(s;t,x,v)
     = \pm\int_t^s \bigg((\nabla_x\Elbf)(\tau,\XSlo(\tau))\cdot (\partial_{x_i}\XSlo)(\tau)+(\partial_{x_i}\hat{\mathcal{V}}_\pm^{l+1})(\tau)\times \Blbf(\tau,\XSlo(\tau))\\+\hat{\mathcal{V}}_\pm^{l+1}(\tau)\times (\nabla_x\Blbf)(\tau,\XSlo(\tau))\cdot(\partial_{x_i}\XSlo)(\tau) \bigg)d\tau,
\end{multline}and
\begin{multline*}
     (\partial_{v_i}\VSlo)(s;t,x,v)
     = \partial_{v_i}v \pm \int_t^s \bigg((\nabla_x\Elbf)(\tau,\XSlo(\tau))\cdot (\partial_{v_i}\XSlo)(\tau)+(\partial_{v_i}\hat{\mathcal{V}}_\pm^{l+1})(\tau)\times \Blbf(\tau,\XSlo(\tau))\\+\hat{\mathcal{V}}_\pm^{l+1}(\tau)\times( (\nabla_x\Blbf)(\tau,\XSlo(\tau))\cdot(\partial_{v_i}\XSlo)(\tau)) \bigg)d\tau,
\end{multline*}for $i=1,2,3.$ Therefore, we conclude that \begin{multline}\label{xiVS}
    |(\partial_{x_i}\VSlo)(s)|\lesssim   \sup_{\tau \in (\min\{s,t\},\max\{s,t\})}\|\nabla_x (\Elbf,\Blbf)(\tau)\|_{L^\infty}\int_{\min\{s,t\}}^{\max\{s,t\}}|(\partial_{x_i}\XSlo)(\tau)|d\tau\\
    + \sup_{\tau \in (\min\{s,t\},\max\{s,t\})}\|\Blbf\|_{L^\infty}\int_{\min\{s,t\}}^{\max\{s,t\}}|(\partial_{x_i}\VSlo)(\tau)|d\tau,
    \end{multline} and
 \begin{multline}\label{viVS}
    |(\partial_{v_i}\VSlo)(s)-\partial_{v_i}v|\lesssim   \sup_{\tau \in (\min\{s,t\},\max\{s,t\})}\|\nabla_x (\Elbf,\Blbf)(\tau)\|_{L^\infty}\int_{\min\{s,t\}}^{\max\{s,t\}}|(\partial_{v_i}\XSlo)(\tau)|d\tau \\
    + \sup_{\tau \in (\min\{s,t\},\max\{s,t\})}\|\Blbf\|_{L^\infty}\int_{\min\{s,t\}}^{\max\{s,t\}}|(\partial_{v_i}\VSlo)(\tau)|d\tau,
    \end{multline}
since $|\partial_{v_i}\hat{\mathcal{V}}_\pm^{l+1}(\tau)|\le 2|(\partial_{v_i}\VSlo)(\tau)|$. 
In addition, by \eqref{leading char}, we also obtain
\begin{equation}\label{XS representation}
         \XSlo(s;t,x,v)=x +\int_t^s d\tau\  \hat{\mathcal{V}}_\pm^{l+1}(\tau)= x +\int_t^s d\tau \left((\hat{v}_\pm)+\int _t ^\tau d\tau'\ \frac{d\hat{\mathcal{V}}_\pm^{l+1}(\tau';t,x,v)}{d\tau'}\right).
         \end{equation}
         Differentiating \eqref{XS representation}, we obtain\begin{align*}
      &  (\partial_{x_i}\XSlo)(s;t,x,v)=\partial_{x_i}x  +\int_t^s d\tau \int _t ^\tau d\tau'\ \frac{d}{d\tau'}\partial_{x_i}(\hat{\mathcal{V}}_\pm^{l+1}(\tau';t,x,v))\\
    &=\partial_{x_i}x  +\int_t^s d\tau \int _t ^\tau d\tau'\ \frac{d}{d\tau'}\left(\frac{(\partial_{x_i}\VSlo)(\tau')}{\sqrt{m_\pm^2+|\VSlo(\tau')|^2}}- \VSlo(\tau')\frac{\VSlo(\tau')\cdot (\partial_{x_i} \VSlo)(\tau')}{(1+|\VSlo(\tau')|^2)^\frac{3}{2}}\right)\\
    &=\partial_{x_i}x  +\int_t^s d\tau \int _t ^\tau d\tau'\ \bigg(\frac{\partial_{x_i}\frac{d\VSlo}{d\tau'}(\tau')}{\sqrt{m_\pm^2+|\VSlo(\tau')|^2}}-(\partial_{x_i}\VSlo)(\tau')\frac{\VSlo(\tau')\cdot (\frac{d\VSlo}{d\tau'})(\tau')}{(1+|\VSlo(\tau')|^2)^\frac{3}{2}}\\
    &- \frac{d\VSlo}{d\tau'}(\tau')\frac{\VSlo(\tau')\cdot (\partial_{x_i} \VSlo)(\tau')}{(1+|\VSlo(\tau')|^2)^\frac{3}{2}}-\VSlo(\tau')\frac{\frac{d\VSlo}{d\tau'}(\tau')\cdot (\partial_{x_i} \VSlo)(\tau')}{(1+|\VSlo(\tau')|^2)^\frac{3}{2}}\\
   & -\VSlo(\tau')\frac{\VSlo(\tau')\cdot (\partial_{x_i} \frac{d\VSlo}{d\tau'})(\tau')}{(1+|\VSlo(\tau')|^2)^\frac{3}{2}}+3\VSlo(\tau')\frac{\VSlo(\tau')\cdot (\partial_{x_i} \VSlo)(\tau')}{(1+|\VSlo(\tau')|^2)^{\frac{5}{2}}}\VSlo(\tau')\cdot \frac{d\VSlo}{d\tau'}(\tau')\bigg)\\
     &=\partial_{x_i}x  +\int_t^s d\tau \int _t ^\tau d\tau'\ \bigg[\frac{\partial_{x_i}(\FS(\tau',\XSlo(\tau'),\VSlo(\tau')))}{\sqrt{m_\pm^2+|\VSlo(\tau')|^2}}
  \\ & - \FS(\tau',\XSlo(\tau'),\VSlo(\tau'))\frac{\VSlo(\tau')\cdot (\partial_{x_i} \VSlo)(\tau')}{(1+|\VSlo(\tau')|^2)^\frac{3}{2}}\\
    &-\VSlo(\tau')\frac{\FS(\tau',\XSlo(\tau'),\VSlo(\tau'))\cdot (\partial_{x_i} \VSlo)(\tau')}{(1+|\VSlo(\tau')|^2)^\frac{3}{2}}
  \\  &-\VSlo(\tau')\frac{\VSlo(\tau')\cdot \partial_{x_i} (\FS(\tau',\XSlo(\tau'),\VSlo(\tau')))}{(1+|\VSlo(\tau')|^2)^\frac{3}{2}}\\
    &+\bigg(3\VSlo(\tau')\frac{\VSlo(\tau')\cdot (\partial_{x_i} \VSlo)(\tau')}{(1+|\VSlo(\tau')|^2)^{\frac{5}{2}}}-\frac{(\partial_{x_i}\VSlo)(\tau')}{(1+|\VSlo(\tau')|^2)^{\frac{3}{2}}}\bigg)\VSlo(\tau')\cdot (\FS(\tau',\XSlo(\tau'),\VSlo(\tau')))\bigg].
    \end{align*} Note that $$\partial_{x_i}(\FS(\tau',\XSlo(\tau'),\VSlo(\tau')))=\nabla_{\XSlo} \FS\cdot \partial_{x_i}\XSlo+\nabla_{\VSlo} \FS\cdot \partial_{x_i}\VSlo,$$ and $$|\nabla_{\XSlo} \FS|\lesssim \|\nabla_{\XSlo} (\Elbf,\Blbf)\|_{L^\infty}\textup{ and }|\nabla_{\VSlo}\FS|\lesssim \frac{\|\Blbf\|_{L^\infty}}{\langle \VSlo\rangle}.$$
    Thus, we have
    \begin{equation}\begin{split}
\label{xiXS} &|(\partial_{x_i}\XSlo)(s)-\partial_{x_i}x|
\lesssim \int_{\min\{s,t\}}^{\max\{s,t\}}d\tau \int_{\min\{s,t\}}^\tau d\tau'  \frac{1}{\sqrt{m_\pm^2+|\VSlo(\tau')|^2}}\\&\times\bigg(\|D (\Elbf,\Blbf)(\tau')\|_{L^\infty}|\partial_{x_i}\XSlo(\tau')|+\|\Blbf(\tau')\|_{L^\infty}|\partial_{x_i}\VSlo(\tau')|
+\|\FS(\tau')\|_{L^\infty}|\partial_{x_i}\VSlo(\tau')|\bigg)
\\
&\lesssim \frac{1}{(\vZ)}\int_{\min\{s,t\}}^{\max\{s,t\}}d\tau \int_{\min\{s,t\}}^\tau d\tau'  \frac{1+ |\VSlo(\tau')|+|\int_{\tau'}^t d\tau'' \FS(\tau'')|}{\sqrt{m_\pm^2+|\VSlo(\tau')|^2}}\\&\times \bigg(\|D (\Elbf,\Blbf)(\tau')\|_{L^\infty}|\partial_{x_i}\XSlo(\tau')|+\|\Blbf(\tau')\|_{L^\infty}|\partial_{x_i}\VSlo(\tau')|
+\|\FS(\tau')\|_{L^\infty}|\partial_{x_i}\VSlo(\tau')|\bigg)\\
&\lesssim \frac{1}{(\vZ)}\int_{\min\{s,t\}}^{\max\{s,t\}}d\tau \int_{\min\{s,t\}}^\tau d\tau'  \bigg(\|D (\Elbf,\Blbf)(\tau')\|_{L^\infty}|\partial_{x_i}\XSlo(\tau')|+\|\Blbf(\tau')\|_{L^\infty}|\partial_{x_i}\VSlo(\tau')|\\
&+\|\FS(\tau')\|_{L^\infty}|\partial_{x_i}\VSlo(\tau')|+ |s-t|\sup_{\tau \in (\min\{s,t\},\max\{s,t\})}\|\FS(\tau)\|_{L^\infty}\bigg)
\\
&\lesssim\frac{|s-t|^3}{(\vZ)}\sup_{\tau \in (\min\{s,t\},\max\{s,t\})}\|\FS(\tau)\|_{L^\infty}\\
       &+ \frac{|s-t|}{(\vZ)}\sup_{\tau \in (\min\{s,t\},\max\{s,t\})}\|D (\Elbf,\Blbf)(\tau)\|_{L^\infty}\int_{\min\{s,t\}}^{\max\{s,t\}}|(\partial_{x_i}\XSlo)(\tau)|d\tau\\
       &+ \frac{|s-t|}{(\vZ)}\sup_{\tau \in (\min\{s,t\},\max\{s,t\})}\|\FS(\tau)\|_{L^\infty}\int_{\min\{s,t\}}^{\max\{s,t\}}|(\partial_{x_i}\VSlo)(\tau)|d\tau,
    \end{split}\end{equation}
    where for the second inequality we used
$$v=\VSlo(\tau')+\int_{\tau'}^t d\tau''\FS(\tau'').$$
    Similarly, we have
\begin{multline}\notag
        (\partial_{v_i}\XSlo)(s;t,x,v)     =\int_t^s d\tau \int _t ^\tau d\tau'\ \bigg[\frac{\partial_{v_i}(\FS(\tau',\XSlo(\tau'),\VSlo(\tau')))}{\sqrt{m_\pm^2+|\VSlo(\tau')|^2}}
 \\*   - \FS(\tau',\XSlo(\tau'),\VSlo(\tau'))\frac{\VSlo(\tau')\cdot (\partial_{v_i} \VSlo)(\tau')}{(1+|\VSlo(\tau')|^2)^\frac{3}{2}}
    \\*
    -\VSlo(\tau')\frac{\FS(\tau',\XSlo(\tau'),\VSlo(\tau'))\cdot (\partial_{v_i} \VSlo)(\tau')}{(1+|\VSlo(\tau')|^2)^\frac{3}{2}}
  \\*  -\VSlo(\tau')\frac{\VSlo(\tau')\cdot \partial_{v_i} (\FS(\tau',\XSlo(\tau'),\VSlo(\tau')))}{(1+|\VSlo(\tau')|^2)^\frac{3}{2}}\\*
+\bigg(3\VSlo(\tau')\frac{\VSlo(\tau')\cdot (\partial_{v_i} \VSlo)(\tau')}{(1+|\VSlo(\tau')|^2)^{\frac{5}{2}}}-\frac{(\partial_{v_i}\VSlo)(\tau')}{(1+|\VSlo(\tau')|^2)^{\frac{3}{2}}}\bigg)\VSlo(\tau')\cdot \FS(\tau',\XSlo(\tau'),\VSlo(\tau'))\bigg],\end{multline} and hence 
     \begin{equation}\begin{split}
    \label{viXS}|(\partial_{v_i}\XSlo)(s)|&\lesssim  \frac{|s-t|^3}{(\vZ)}\sup_{\tau \in (\min\{s,t\},\max\{s,t\})}\|\FS(\tau)\|_{L^\infty}\\
      & + \frac{|s-t|}{(\vZ)}\sup_{\tau \in (\min\{s,t\},\max\{s,t\})}\|D (\Elbf,\Blbf)(\tau)\|_{L^\infty}\int_{\min\{s,t\}}^{\max\{s,t\}}|(\partial_{v_i}\XSlo)(\tau)|d\tau\\
      & + \frac{|s-t|}{(\vZ)}\sup_{\tau \in (\min\{s,t\},\max\{s,t\})}\|\FS(\tau)\|_{L^\infty}\int_{\min\{s,t\}}^{\max\{s,t\}}|(\partial_{v_i}\VSlo)(\tau)|d\tau.
    \end{split}\end{equation}
    Collecting these, we can summarize the estimates as follows. 
        Suppose that $$\sup_{\tau \in (\min\{s,t\},\max\{s,t\})}\|\nabla_x (\Elbf,\Blbf)(\tau)\|_{L^\infty}\le C_1,$$ and $$\sup_{\tau \in (\min\{s,t\},\max\{s,t\})}\|\FS(\tau)\|_{L^\infty}\le C_2,$$ for some $C_1,C_2>0.$ Then we have
\begin{equation}\label{derivatives of char.proto}
    \begin{split}|(\partial_{x_i}\XSlo)(s)|&\lesssim \bigg(1+\frac{|s-t|^3}{(\vZ)}C_2
      \bigg) \exp\bigg( \frac{|s-t|^2}{(\vZ)}C_1\bigg(1+ C_2|s-t|e^{C_2|s-t|}\bigg)\bigg)\eqdef C_3,\\
      |(\partial_{x_i}\VSlo)(s)|&\lesssim   C_1|s-t|\bigg(1+|s-t|^3C_2
      \bigg) \\&\ \times \exp\bigg(C_2|s-t|+ |s-t|^2C_1\bigg(1+ C_2|s-t|e^{C_2|s-t|}\bigg)\bigg)\eqdef C_1|s-t|C_4,\\
      |(\partial_{v_i}\XSlo)(s)|&\lesssim \frac{|s-t|^3}{(\vZ)}C_2
       \exp\bigg( \frac{|s-t|^2}{(\vZ)}\bigg(C_1+C_2+ C_1C_2|s-t|e^{C_2|s-t|}\bigg)\bigg),\\
       |(\partial_{v_i}\VSlo)(s)|&\lesssim 1+C_1e^{C_2|s-t|} |s-t|^4C_2
      \exp\bigg( |s-t|^2\bigg(C_1+C_2+ C_1C_2|s-t|e^{C_2|s-t|}\bigg)\bigg).
    \end{split}
\end{equation}for $i=1,2,3,$ where we denote $\XSlo(s)=\XSlo(s;t,x,v)$ and $\VSlo(s)=\VSlo(s;t,x,v)$.

If $i,j=1,2,3$ and $i\ne j$, then we can further have
 \begin{multline}\label{dxixj.proto}
            |(\partial_{x_i}(\XSlo)_j)(s)|
\lesssim\bigg(\frac{|s-t|^3}{(\vZ)}C_2
       + \frac{|s-t|}{(\vZ)}C_1C_4\bigg(1+ C_2|s-t|e^{C_2|s-t|}\bigg)\bigg)\\\times \exp\bigg(\frac{|s-t|^2}{(\vZ)}C_1\bigg(1+ C_2|s-t|e^{C_2|s-t|}\bigg)\bigg)\bigg).
        \end{multline}

        The proof is mainly by \eqref{xiVS}, \eqref{viVS}, \eqref{xiXS}, \eqref{viXS}, and the Gr\"onwall lemma. By \eqref{xiVS}, we first have\begin{equation}\label{dxV middle}
             |(\partial_{x_i}\VSlo)(s)|\lesssim   C_1e^{C_2|s-t|} \int_{\min\{s,t\}}^{\max\{s,t\}}|(\partial_{x_i}\XSlo)(\tau)|d\tau,
        \end{equation}
        by the Gr\"onwall lemma. Plugging this into \eqref{xiXS}, we have
        \begin{equation}\begin{split}\label{dxX middle}
            |(\partial_{x_i}\XSlo)(s)-\partial_{x_i}x|
&\lesssim\frac{|s-t|^3}{(\vZ)}C_2
       + \frac{|s-t|}{(\vZ)}C_1\int_{\min\{s,t\}}^{\max\{s,t\}}|(\partial_{x_i}\XSlo)(\tau)|d\tau\\
      & + \frac{|s-t|}{(\vZ)}C_2\int_{\min\{s,t\}}^{\max\{s,t\}}C_1e^{C_2|\tau-t|} \int_{\min\{\tau,t\}}^{\max\{\tau,t\}}|(\partial_{x_i}\XSlo)(\tau')|d\tau'd\tau\\
&\lesssim\frac{|s-t|^3}{(\vZ)}C_2
       + \frac{|s-t|}{(\vZ)}C_1\bigg(1+ C_2|s-t|e^{C_2|s-t|}\bigg)\int_{\min\{s,t\}}^{\max\{s,t\}}|(\partial_{x_i}\XSlo)(\tau)|d\tau.
        \end{split}\end{equation}
        Finally, by the Gr\"onwall lemma, we have
        \begin{equation}\notag
             |(\partial_{x_i}\XSlo)(s)|\lesssim \bigg(1+\frac{|s-t|^3}{(\vZ)}C_2
      \bigg) \exp\bigg( \frac{|s-t|^2}{(\vZ)}C_1\bigg(1+ C_2|s-t|e^{C_2|s-t|}\bigg)\bigg).
        \end{equation} Plugging this back into \eqref{dxV middle}, we obtain the upper bound for $|\partial_{x_i}\VSlo|.$

        Similarly, by \eqref{viVS}, we first have\begin{equation}\label{dvV middle}
             |(\partial_{v_i}\VSlo)(s)|\lesssim   1+C_1e^{C_2|s-t|} \int_{\min\{s,t\}}^{\max\{s,t\}}|(\partial_{v_i}\XSlo)(\tau)|d\tau,
        \end{equation}
        by the Gr\"onwall lemma. Plugging \eqref{dvV middle} into \eqref{viXS}, we obtain \begin{align*}
            |(\partial_{v_i}\XSlo)(s)|
&\lesssim\frac{|s-t|^3}{(\vZ)}C_2
       + \frac{|s-t|}{(\vZ)}C_1\int_{\min\{s,t\}}^{\max\{s,t\}}|(\partial_{v_i}\XSlo)(\tau)|d\tau\\
     &  + \frac{|s-t|}{(\vZ)}C_2\int_{\min\{s,t\}}^{\max\{s,t\}}(1+C_1e^{C_2|\tau-t|}) \int_{\min\{\tau,t\}}^{\max\{\tau,t\}}|(\partial_{v_i}\XSlo)(\tau')|d\tau'd\tau\\
&\lesssim\frac{|s-t|^3}{(\vZ)}C_2
       + \frac{|s-t|}{(\vZ)}\bigg(C_1+C_2+ C_1C_2|s-t|e^{C_2|s-t|}\bigg)\int_{\min\{s,t\}}^{\max\{s,t\}}|(\partial_{v_i}\XSlo)(\tau)|d\tau.
        \end{align*} Then the Gr\"onwall lemma yields the bound for $|\partial_{v_i}\XSlo|$. Plugging this bound again into \eqref{dvV middle} we obtain the upper bound for $|\partial_{v_i}\VSlo|$. 
        Lastly, for $i,j=1,2,3$, if $i\ne j$, we can further get from \eqref{dxX middle} that
        \begin{multline}\label{dxixj middle}
            |(\partial_{x_i}(\XSlo)_j)(s)|
\\
\lesssim\frac{|s-t|^3}{(\vZ)}C_2
       + \frac{|s-t|}{(\vZ)}C_1\bigg(1+ C_2|s-t|e^{C_2|s-t|}\bigg)\int_{\min\{s,t\}}^{\max\{s,t\}}(C_3+|(\partial_{x_i}(\XSlo)_j)(\tau)|)d\tau,
        \end{multline}where we used $|\partial_{x_i}(\XSlo)_k|\lesssim C_3$ from \eqref{derivatives of char.proto}. Note that $$C_3\le \bigg(1+|s-t|^3C_2
      \bigg) \exp\bigg(C_2|s-t|+ |s-t|^2C_1\bigg(1+ C_2|s-t|e^{C_2|s-t|}\bigg)\bigg)\eqdef C_4.$$ Then we use the Gr\"onwall lemma on \eqref{dxixj middle} and obtain
      \begin{multline}\notag
            |(\partial_{x_i}(\XSlo)_j)(s)|
\lesssim\bigg(\frac{|s-t|^3}{(\vZ)}C_2
       + \frac{|s-t|}{(\vZ)}C_1C_4\bigg(1+ C_2|s-t|e^{C_2|s-t|}\bigg)\bigg)\\\times \exp\bigg(\frac{|s-t|^2}{(\vZ)}C_1\bigg(1+ C_2|s-t|e^{C_2|s-t|}\bigg)\bigg)\bigg).
        \end{multline} 
As a corollary, we obtain the following lemma.
\begin{lemma}
    \label{cor.tbxbvb deri}Define $\FS$ as \eqref{eq.forcing}. 
                Suppose that\begin{equation}\sup_{0\le \tau\le T}\|\nabla_x (\Elbf,\Blbf)(\tau)\|_{L^\infty}\le C_1,\text { and }
                    \label{F bound C2}\sup_{0\le \tau\le T}\|\FS(\tau)\|_{L^\infty}\le C_2,
            \end{equation}for some $C_1,C_2>0.$ Then for any $s,t\in (0,T)$ we have
\begin{equation}\label{derivatives of char}
|\nabla_x \XSlo(s)|,\ |\nabla_x \VSlo(s)|, \ (\vZ)|\nabla_v \XSlo(s)|,\ |\nabla_v \VSlo(s)|\lesssim_T 1,
\end{equation}where we denote $\XSlo(s)=\XSlo(s;t,x,v)$ and $\VSlo(s)=\VSlo(s;t,x,v)$. If $i,j=1,2,3$ and $i\ne j$, then we can further have
 \begin{equation}\label{dxixj}
            (\vZ)|(\partial_{x_i}(\XSlo)_j)(s)|
\lesssim_T 1.
        \end{equation}
        Thus, we also have  
for $i=1,2,3,$ 
\begin{equation}\label{derivatives of tbxbvb}
    \begin{split}
    |\partial_{x_i}\tblo|&\lesssim_T \frac{1}{|(\hat{\mathcal{V}}_\pm^{l+1})_3(t-\tblo)|}, \quad 
|\partial_{v_i}\tblo|\lesssim_T \frac{1}{|(\hat{\mathcal{V}}_\pm^{l+1})_3(t-\tblo)|}\frac{\tblo}{\vZ},\\
         |\partial_{x_i} (\xblo)_\parallel|&\lesssim    |\partial_{x_i} \tblo|  +|    (\partial_{x_i} \XSlo)_\parallel(t - \tblo)|, \quad 
 | \partial_{x_i} (\xblo)_3|\lesssim_T 1 , \\
      |\partial_{v_i} (\xblo)_\parallel|&\lesssim    |\partial_{v_i} \tblo|  +|    (\partial_{v_i} \XSlo)_\parallel(t - \tblo)|, \quad
|\partial_{v_i} (\xblo)_3|\lesssim_T \frac{\tblo}{\vZ},\\
|  \partial_{x_i} \vblo|&\lesssim_T \left(\frac{1}{|(\hat{\mathcal{V}}_\pm^{l+1})_3(t - \tblo)  |}+1\right),\quad
|\partial_{v_i} \vblo|\lesssim_T \left(\frac{1}{|(\hat{\mathcal{V}}_\pm^{l+1})_3(t - \tblo)  |}+1\right)\frac{\tblo}{\vZ}.
    \end{split}
\end{equation}For $i=1,2$, we can further have
\begin{equation}\label{derivatives of tbxbvb better}
    \begin{split}
    |\partial_{x_i}\tblo|&\lesssim_T \frac{1}{|(\hat{\mathcal{V}}_\pm^{l+1})_3(t-\tblo)|}\frac{\tblo}{\vZ},\quad 
    | \partial_{x_i} (\xblo)_3|\lesssim_T \frac{\tblo}{\vZ},\\ 
    |  \partial_{x_i} \vblo|&\lesssim_T\left(\frac{1}{|(\hat{\mathcal{V}}_\pm^{l+1})_3(t - \tblo)  |}+1\right)\frac{\tblo}{\vZ}.
    \end{split}\end{equation}
    \end{lemma}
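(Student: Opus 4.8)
The statement to be proved is Lemma~\ref{cor.tbxbvb deri}, collecting the derivative bounds for the characteristics $\XSlo,\VSlo$ and for the backward exit quantities $\tblo,\xblo,\vblo$. The overall strategy is to first control $\nabla_x\XSlo(s),\nabla_x\VSlo(s),\nabla_v\XSlo(s),\nabla_v\VSlo(s)$ via a Gr\"onwall argument on the variational (first-variation) system of the characteristic ODEs \eqref{iterated char}, and then to transfer these bounds to $\tblo,\xblo,\vblo$ using the explicit identities \eqref{partial t tb}--\eqref{Jacobian for xbvb} already derived in the text. I would proceed as follows.

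\emph{Step 1 (variational equations and the basic Gr\"onwall estimate).} Differentiating \eqref{iterated char} in $x_i$ (resp.\ $v_i$) gives a linear ODE system for $(\partial_{x_i}\XSlo(s),\partial_{x_i}\VSlo(s))$ whose coefficient matrix involves $\nabla_x\hVSlo$, $\nabla_v\hVSlo$, $\nabla_x(\pm\Elbf\pm\hVSlo\times\Blbf)$, and $\nabla_v(\pm\hVSlo\times\Blbf)$. Using $\sup_\tau\|\nabla_x(\Elbf,\Blbf)\|_{L^\infty}\le C_1$ from \eqref{F bound C2}, the fact that $|\partial_{v_j}\hat v_k|\lesssim (\vZ)^{-1}$ along the flow (the relativistic velocity map is $(\vZ)^{-1}$-Lipschitz in $v$), and that $|\nabla_x\hVSlo|\le|\nabla_x\VSlo|(\vZ)^{-1}$ where $(\vZ)$ here is evaluated along the trajectory, one obtains a closed Gr\"onwall inequality of the form $\frac{d}{ds}(|\partial_{x_i}\XSlo|+|\partial_{x_i}\VSlo|)\lesssim C_1(1+(\vZ(s))^{-1})(|\partial_{x_i}\XSlo|+|\partial_{x_i}\VSlo|)$; since $s$ ranges over a bounded interval $[0,T]$ and $(\vZ(s))\ge m_\pm$, this yields $|\partial_{x_i}\XSlo(s)|,|\partial_{x_i}\VSlo(s)|\lesssim_T 1$ and, with the extra $(\vZ)^{-1}$ factor carried from the initial data of the $v$-variation ($\partial_{v_i}\XSlo(t)=0$, $\partial_{v_i}\VSlo(t)=e_i$, and $\partial_{v_i}\hat{\mathcal V}$ carries a $(\vZ)^{-1}$), the bounds $(\vZ)|\nabla_v\XSlo(s)|,|\nabla_v\VSlo(s)|\lesssim_T 1$. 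Here I use that $\langle\VSlo(s)\rangle\sim\vZ$ for $s$ in a bounded interval, which follows from \eqref{additional v bound.st} (its dynamical analogue) under \eqref{F bound C2}.

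\emph{Step 2 (the refined off-diagonal bound \eqref{dxixj}).} For $i\ne j$ the $j$-th component of the $x_i$-variation of position picks up no direct $O(1)$ forcing: $\partial_s(\partial_{x_i}(\XSlo)_j)=\partial_{x_i}(\hat{\mathcal V}^{l+1}_\pm)_j$, and $\partial_{x_i}\hat v_j=\sum_k(\partial_{v_k}\hat v_j)\partial_{x_i}(\VSlo)_k$ with $|\partial_{v_k}\hat v_j|\lesssim(\vZ)^{-1}$. Hence $|\partial_s(\partial_{x_i}(\XSlo)_j)|\lesssim(\vZ)^{-1}|\nabla_x\VSlo(s)|\lesssim_T(\vZ)^{-1}$; integrating from $s=t$ (where $\partial_{x_i}(\XSlo)_j(t)=\delta_{ij}=0$) over a bounded time interval gives $(\vZ)|(\partial_{x_i}(\XSlo)_j)(s)|\lesssim_T 1$. (The same $(\vZ)^{-1}$-gain applies to the parallel components entering the estimates for $(\xblo)_3$ below.)

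\emph{Step 3 (transfer to $\tblo,\xblo,\vblo$).} Insert the bounds of Steps~1--2 into the identities \eqref{partial t tb}--\eqref{Jacobian for xbvb}. From \eqref{partial xv tb}, $\partial_{x_i}\tblo=\partial_{x_i}(\XSlo)_3(t-\tblo)/(\hVSlo)_3(t-\tblo)$: for general $i=1,2,3$ the numerator is $O_T(1)$, giving the first line of \eqref{derivatives of tbxbvb}; for $i=1,2$ the numerator is the off-diagonal quantity controlled in Step~2, giving the $(\vZ)^{-1}\tblo$ gain in \eqref{derivatives of tbxbvb better} (here $\tblo$ appears because $|\partial_{x_i}(\XSlo)_3(t-\tblo)|$ is, more precisely, bounded by $\int_{t-\tblo}^t|\partial_{x_i}(\hat{\mathcal V})_3|\,d\tau\lesssim_T\tblo/\vZ$). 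For $\partial_{v_i}\tblo$ one similarly uses that $\partial_{v_i}(\XSlo)_3(t-\tblo)=\int_{t-\tblo}^t\partial_{v_i}(\hat{\mathcal V})_3\,d\tau+\partial_{v_i}(\XSlo)_3(t)$ with vanishing endpoint term, and $|\partial_{v_i}\hat v|\lesssim(\vZ)^{-1}$, producing the $\tblo/\vZ$ factor. Then \eqref{Jacobian for xbvb} gives the $\xblo$- and $\vblo$-derivatives directly: e.g.\ $\partial_{x_i}(\xblo)_3=-(\hVSlo)_3(t-\tblo)\partial_{x_i}\tblo+(\partial_{x_i}\XSlo)_3(t-\tblo)$, where the first term cancels against the definition of $\partial_{x_i}\tblo$ and one is left with $O_T(1)$ (and $O_T(\tblo/\vZ)$ when $i=1,2$), and $\partial_{x_i}\vblo=-(\partial_{x_i}\tblo)\FS(t-\tblo,\xblo,\vblo)+(\partial_{x_i}\VSlo)(t-\tblo)$, bounded using $\|\FS\|_{L^\infty}\le C_2$ by $C_T(|(\hVSlo)_3(t-\tblo)|^{-1}+1)$. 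The $\partial_{v_i}$-statements follow the same pattern with the uniform extra $\tblo/\vZ$ from $\partial_{v_i}\tblo$.

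\textbf{Main obstacle.} The delicate point is \emph{not} the Gr\"onwall step itself but keeping the singular factor $1/|(\hVSlo)_3(t-\tblo)|$ honestly tracked rather than absorbed: the lemma is stated with this factor explicit precisely because it can blow up for grazing trajectories, and it must be retained so that it can later be controlled by Lemma~\ref{lemma.tb over v3.st} (its dynamical analogue) via the velocity lemma together with $-(\FS)_{\pm,3}(x_\parallel,0,v)>c_0$. Thus the real care is in the bookkeeping of where the $(\vZ)^{-1}$ gains come from (only the off-diagonal $i\ne j$ position variations and all the $v$-variations carry them) versus where only $O_T(1)$ is available (the diagonal/normal position variations), and in verifying that differentiating the \emph{implicit} relation \eqref{X3 property} is legitimate—i.e.\ that $(\hVSlo)_3(t-\tblo)\ne 0$, which holds on the non-grazing set where $\tblo$ is $C^1$, exactly the set on which the derivative formulas \eqref{partial t tb}--\eqref{Jacobian for xbvb} were derived. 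No new idea beyond these is needed; the estimates \eqref{additional v bound.st}, \eqref{tbst bound for R} (dynamical versions) and the Lipschitz bounds $|\partial_v\hat v|\lesssim(\vZ)^{-1}$ supply everything required.
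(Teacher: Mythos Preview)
Your proposal is correct and follows the same route as the paper. The paper's own proof simply cites \cite[Lemma~11]{MR4645724} for \eqref{derivatives of char} and \eqref{dxixj} and then says that \eqref{derivatives of tbxbvb}--\eqref{derivatives of tbxbvb better} follow from the identities \eqref{partial xv tb} and \eqref{Jacobian for xbvb}; your Steps~1--2 are exactly the Gr\"onwall/off-diagonal argument that lemma contains, and your Step~3 carries out the transfer via \eqref{partial xv tb}--\eqref{Jacobian for xbvb} that the paper leaves implicit (including the exact cancellation in $\partial_{x_i}(\xblo)_3$, since $(\xblo)_3\equiv 0$).
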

    \begin{proof}Note that \eqref{derivatives of char} and \eqref{dxixj} are simplified presentations of \eqref{derivatives of char.proto} and \eqref{dxixj.proto}, respectively. The first line of \eqref{derivatives of tbxbvb} is a direct consequence of \eqref{partial xv tb} and \eqref{derivatives of char} with $s=t-\tblo$. The derivatives of the parallel components in the second and third lines of \eqref{derivatives of tbxbvb} are directly by \eqref{Jacobian for xbvb}.  For the derivatives normal component in the second and third lines of \eqref{derivatives of tbxbvb}, we observe that \eqref{Jacobian for xbvb} implies
        \begin{equation}\notag
    \begin{split}
 \partial_{x_i} (\xblo)_3&=  -(\hat{\mathcal{V}}_\pm^{l+1})_3(t - \tblo; t, x, v)    \partial_{x_i} \tblo  +(    \partial_{x_i} \XSlo)_3(t - \tblo; t, x, v), \textup{ and }\\
         \partial_{v_i} (\xblo)_3& = -(\hat{\mathcal{V}}_\pm^{l+1})_3(t - \tblo; t, x, v)    \partial_{v_i} \tblo  +(    \partial_{v_i} \XSlo)_3(t - \tblo; t, x, v).
    \end{split}
\end{equation}Then by using the first line of \eqref{derivatives of tbxbvb} as well as \eqref{derivatives of char} with $s=t-\tblo,$ we obtain the derivatives of the normal component. 
Furthermore, for the fourth line of \eqref{derivatives of tbxbvb}, we recall \eqref{Jacobian for xbvb} which goes by
\begin{equation*}
    \begin{split}
      \partial_{x_i} \vblo&=- (\partial_{x_i}\tblo) \FS(t - \tblo, \xblo,\vblo)+ (\partial_{x_i} \VSlo)(t - \tblo; t, x, v),\\
    \partial_{v_i} \vblo&=-  (\partial_{v_i}\tblo  )\FS(t - \tblo, \xblo,\vblo)+ (\partial_{v_i} \VSlo)(t - \tblo; t, x, v).
    \end{split}
\end{equation*}Then by using \eqref{F bound C2}, \eqref{derivatives of char}, and the first line of \eqref{derivatives of tbxbvb}, we obtain the last line of \eqref{derivatives of tbxbvb}.

Lastly, if $i=1,2$, then we can use \eqref{dxixj} and \eqref{partial xv tb} instead of \eqref{derivatives of char} and obtain \eqref{derivatives of tbxbvb better}.  This completes the proof.
    \end{proof}

\subsection{First-Round Estimate}
This section is devoted to obtaining a global-in-time uniform upper-bound estimate for the derivatives of $\flo$. Note that the representation \eqref{solution f} of $\flo$ consists of the initial-value part and the boundary-value part, and the derivatives on $\flo$ involves the derivatives of the characteristics $\nabla_x \ZS$ and hence $\nabla_x \tblo$. As we have already observed in \eqref{partial xv tb}, the derivatives of $\tblo$ contains possible singularity on $(\hat{\mathcal{V}}_\pm^{l+1})_3(t-\tblo)$ and we have to handle this singularity to obtain a derivative estimate for $f.$ To this end, we define the following kinetic-type weights:\begin{definition}
   Define the kinetic weight \begin{equation}\label{alpha}
    \alpha_\pm(t,x,v)= \sqrt{x_3^2+|(\hat{v}_\pm)_3|^2-2 \left((\FS)_3(t,x_\parallel,0,v)\right)\frac{x_3}{(\vZ)}
}.
\end{equation} This weight is well-defined as long as $-(\FS)_3> 0.$ Note that \begin{equation}\label{alphav3}
\alpha_\pm(t,x_\parallel,0,v)=|(\hat{v}_\pm)_3|.
\end{equation} In addition,  we define a special weight in the form of \begin{equation}
\label{alpha.s.def}\tilde{\alpha}_{\pm}^2(t,x,v)\eqdef \frac{\alpha_\pm^2(t,x,v)}{1+\alpha_\pm^2(t,x,v)}.
  \end{equation}  This special weight $\tilde{\alpha}_{\pm}^2$ is uniformly bounded from above by 1 and is small when $\alpha_\pm$ is small. One crucial property of $\tilde{\alpha}_{\pm}$ is on the fact that it vanishes at the grazing point $(x_3,v_3)=(0,0)$ as \begin{equation}\label{alphav3.s}\tilde{\alpha}_{\pm}(t,x_\parallel,0,v)= \frac{|(\hat{v}_\pm)_3|}{\sqrt{1+|(\hat{v}_\pm)_3|^2}}.\end{equation}
\end{definition}

\begin{remark}
We note that the form of the weight \eqref{alpha.s.def} differs slightly from the classical kinetic weight $\alpha$ introduced in~\cite{MR1354697}, as well as from the variant employed in~\cite{MR4645724}. 
In particular, even for large values of $\alpha$, the weight $\tilde{\alpha}$ remains uniformly bounded by $1$.
\end{remark}
We first study the upper-bound estimates of $\tilde{\alpha}_{\pm}^2$ along the characteristic trajectory. 
We introduce the following velocity lemma. The velocity lemma is originally established in \cite{MR1354697}. \begin{lemma}[Velocity Lemma]
    \label{lemma.trajectory of alpha.s}
        Let $\alpha_\pm$ and $\tilde{\alpha}_{\pm}$ be defined as in \eqref{alpha} and \eqref{alpha.s.def}, respectively. Define $\FS$ as \eqref{eq.forcing}. Suppose $$
        \sup_{0\le t\le T} \bigg(\|\Elbf(t)\|_{L^\infty}+ \|\Blbf(t)\|_{L^\infty} + \|(\partial_t,\nabla_x)\FS(t)\|_{L^\infty} \bigg)<C.
   $$ Suppose that for all $(t,x_\parallel)\in (0,T)\times  
   \mathbb{R}^2
   ,$ $-(\FS)_3(t,x_\parallel,0)>c_0,$  for some $c_0>0.$ Then for any $(t,x,v)\in(0,T)\times \Omega\times \rth,$ with the trajectory $\XSlo(s;t,x,v)$ and $\VSlo(s;t,x,v)$ satisfying \eqref{leading char}, 
   \begin{equation}
       \label{alpha.s.lem}
       e^{-10\frac{C}{c_0}|t-s|}\tilde{\alpha}_{\pm}(t,x,v)\le \tilde{\alpha}_{\pm}(s,\XSlo(s;t,x,v),\VSlo(s;t,x,v))\le e^{10\frac{C}{c_0}|t-s|}\tilde{\alpha}_{\pm}(t,x,v)
   \end{equation}In addition, define the material derivative
    $\frac{D}{Dt}\eqdef \partial_t+(\hat{v}_\pm)\cdot \nabla_x +\FS\cdot \nabla_v.$ Then we have 
    \begin{equation}
        \label{bound of DalphaDt}
    \left|\frac{D}{Dt}\alpha_\pm^2
    \right|\le 20\frac{C}{c_0}\alpha_\pm^2.\end{equation}
\end{lemma}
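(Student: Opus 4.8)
\textbf{Proof plan for the Velocity Lemma (Lemma \ref{lemma.trajectory of alpha.s}).}
The plan is to establish \eqref{bound of DalphaDt} first and then deduce \eqref{alpha.s.lem} from it by a Gr\"onwall argument, exactly mirroring the stationary version (Lemma \ref{lem.velo.st}) already proved in the excerpt. The heart of the matter is the differential inequality for $\alpha_\pm^2$ along the flow; once that is in hand, passing to $\tilde\alpha_\pm^2 = \alpha_\pm^2/(1+\alpha_\pm^2)$ is a short computation and the exponential two-sided bound follows immediately.

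\textbf{Step 1: the material derivative of $\alpha_\pm^2$.}
First I would compute $\frac{D}{Dt}\alpha_\pm^2$ term by term using the definition \eqref{alpha}, where $\frac{D}{Dt}=\partial_t+\hat v_\pm\cdot\nabla_x+\FS\cdot\nabla_v$ is the transport operator attached to the characteristic system \eqref{leading char}. The three pieces are $x_3^2$, $|(\hat v_\pm)_3|^2$, and the cross term $-2(\FS)_3(t,x_\parallel,0,v)\,x_3/(v^0_\pm)$. Differentiating: $\frac{D}{Dt}x_3^2 = 2x_3(\hat v_\pm)_3$; for $|(\hat v_\pm)_3|^2$ one uses $\frac{D}{Dt}(\hat v_\pm)_3 = \frac{1}{v^0_\pm}\big((\FS)_3 - (\hat v_\pm)_3\,\hat v_\pm\cdot\FS\big)$, which is $O(C/v^0_\pm)$; and the cross term produces a leading piece $-2(\FS)_3(t,x_\parallel,0,v)(\hat v_\pm)_3/v^0_\pm$ together with remainders involving $(\partial_t,\nabla_{x_\parallel})(\FS)_3(t,x_\parallel,0,v)$ and $\nabla_v(\FS)_3(t,x_\parallel,0,v)$ times $x_3/v^0_\pm$, all controlled by $C$. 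The crucial cancellation is that the $2x_3(\hat v_\pm)_3$ from the first term is killed against $-2(\FS)_3(t,x_\parallel,0,v)(\hat v_\pm)_3 x_3/v^0_\pm$ only approximately — the genuine point is that every surviving term is bounded by $C$ times one of the three summands defining $\alpha_\pm^2$, divided appropriately by $v^0_\pm\ge m_\pm$; this is precisely \cite[Eq.~(4.10)]{MR4645724}, which the excerpt already cites for the stationary case. So I would simply invoke that reference (with the obvious addition of the $\partial_t$ contribution, which enters exactly like the $\nabla_x$ contribution and is bounded by the same hypothesis $\sup_{0\le t\le T}\|(\partial_t,\nabla_x)\FS(t)\|_{L^\infty}<C$) and the hypothesis $-(\FS)_3(t,x_\parallel,0)>c_0$, which bounds below the denominator that appears when one re-expresses the remainders in terms of $\alpha_\pm^2$ itself. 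The outcome is $\big|\frac{D}{Dt}\alpha_\pm^2\big|\le 20\frac{C}{c_0}\alpha_\pm^2$, which is \eqref{bound of DalphaDt}.

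\textbf{Step 2: passage to $\tilde\alpha_\pm^2$ and Gr\"onwall.}
Next, as in the proof of Lemma \ref{lem.velo.st}, I would compute
\[
\frac{D}{Dt}\tilde\alpha_\pm^2 = \frac{1}{(1+\alpha_\pm^2)^2}\frac{D}{Dt}\alpha_\pm^2,
\]
so that $\big|\frac{D}{Dt}\tilde\alpha_\pm^2\big|\le \frac{20C}{c_0}\frac{\alpha_\pm^2}{(1+\alpha_\pm^2)^2}\le \frac{20C}{c_0}\tilde\alpha_\pm^2$. Evaluating along $s\mapsto(s,\XSlo(s;t,x,v),\VSlo(s;t,x,v))$ and applying Gr\"onwall's inequality both forward and backward from $s=t$ gives
\[
e^{-\frac{20C}{c_0}|t-s|}\,\tilde\alpha_\pm^2(t,x,v)\le \tilde\alpha_\pm^2\big(s,\XSlo(s),\VSlo(s)\big)\le e^{\frac{20C}{c_0}|t-s|}\,\tilde\alpha_\pm^2(t,x,v),
\]
and taking square roots yields \eqref{alpha.s.lem} with constant $10C/c_0$ in the exponent. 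One technical caveat I would note: to run Gr\"onwall one needs $\tilde\alpha_\pm^2$ to be absolutely continuous along the trajectory, which holds on the interval where $\XSlo(s)\in\Omega$ and $-(\FS)_3$ stays positive; the hypothesis $-(\FS)_3(t,x_\parallel,0)>c_0$ together with the $W^{1,\infty}$ bounds on the fields guarantees $\alpha_\pm$ stays bounded away from causing any degeneracy in the expression, so the computation is legitimate on $s\in[t-\tblo,t+\tfl]$.

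\textbf{Main obstacle.}
The only real work is Step 1 — the term-by-term differentiation and the bookkeeping needed to re-express each remainder as $C/c_0$ times $\alpha_\pm^2$, exploiting that the ``frozen-boundary'' coefficient $(\FS)_3(t,x_\parallel,0,v)$ is evaluated at $x_3=0$ while the trajectory has $x_3>0$, so that Taylor-expanding $(\FS)_3(t,x_\parallel,x_3,v)$ around $x_3=0$ is what converts the apparent mismatch into an $O(C x_3)$ error absorbed by $\alpha_\pm^2$. Since the stationary analogue is already done in the excerpt via \cite[Eq.~(4.10)]{MR4645724} and the time-dependent case differs only by the harmless $\partial_t(\FS)_3$ term (estimated by the same norm hypothesis), I expect to dispatch this by citation rather than by a fresh computation, making the proof essentially a two-line reduction plus Gr\"onwall.
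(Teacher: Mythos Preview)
Your proposal is correct and follows essentially the same approach as the paper: establish \eqref{bound of DalphaDt} by invoking \cite[Eq.~(4.10)]{MR4645724} (with the $\partial_t$ term absorbed into the same hypothesis), then pass to $\tilde\alpha_\pm^2$ via the chain rule $\frac{D}{Dt}\tilde\alpha_\pm^2=(1+\alpha_\pm^2)^{-2}\frac{D}{Dt}\alpha_\pm^2$ and apply Gr\"onwall. The paper's proof is even terser than your plan, simply citing the reference for \eqref{bound of DalphaDt} and carrying out Step~2 in three lines.
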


\begin{proof}
    We first observe that
    $$\frac{D}{Dt}\tilde{\alpha}_{\pm}^2=\frac{1}{1+\alpha_\pm^2}\frac{D}{Dt}\alpha_\pm^2-\frac{\alpha_\pm^2}{(1+\alpha_\pm^2)^2}\frac{D}{Dt}\alpha_\pm^2=\frac{1}{(1+\alpha_\pm^2)^2}\frac{D}{Dt}\alpha_\pm^2.$$Then using the bound \eqref{bound of DalphaDt} of the material derivative $\frac{D}{Dt}\alpha_\pm^2$ we further obtain
    $$\frac{D}{Dt}\tilde{\alpha}_{\pm}^2\le 20\frac{C}{c_0(1+\alpha_\pm^2)}\frac{\alpha_\pm^2}{1+\alpha_\pm^2}\le 20\frac{C}{c_0}\tilde{\alpha}_{\pm}^2.$$ By the Gr\"onwall lemma, we finally obtain $$\tilde{\alpha}_{\pm}^2(s,\XSlo(s),\VSlo(s))\le e^{\frac{20C}{c_0}|t-s|}\tilde{\alpha}_{\pm}^2(t,x,v).$$
    This completes the proof of Lemma \ref{lemma.trajectory of alpha.s}.  Lastly, the proof of \eqref{bound of DalphaDt} follows by \cite[Eq. (4.10)]{MR4645724} with $E_e=B_e=0$ and $C_1=C.$
\end{proof}

Now we will prove that this weight $\tilde{\alpha}_{\pm}$ also satisfies the following crucial property that if $t\le \tblo,$ $$t\lesssim \sup_{t-\tblo<s<t}\sqrt{m_\pm^2+|\VSlo(s)|^2}\tilde{\alpha}_{\pm} (0,\XSlo(0),\VSlo(0)).$$
To prove this, we first need to obtain the following prerequisite lemma.
\begin{lemma}
    \label{v3hat bound}
    For $(t,x,v)\in(0,T)\times \Omega\times \rth,$ let the trajectory $\XSlo(s;t,x,v)$ and $\VSlo(s;t,x,v)$ satisfy \eqref{leading char}. 
    Suppose for all $t,x,v$, \begin{equation}\label{F0 bound}
        -(\FS)_3(t,x_\parallel,0,v)>c_0,
    \end{equation} for some $c_0>0$. Then if $t< \tblo,$ we have
    \begin{equation}\label{v3hat bound by alpha.s}
        |(\hat{v}_\pm)_3|^2\le \frac{2\alpha_\pm^2(t,x,v)}{1+\alpha_\pm^2(t,x,v)}.
    \end{equation}
\end{lemma}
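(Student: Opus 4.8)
The identity \eqref{alphav3} gives $\alpha_\pm(t,x_\parallel,0,v)^2 = |(\hat v_\pm)_3|^2$ at the boundary, and hence $\tilde\alpha_\pm(t,x_\parallel,0,v)^2 = |(\hat v_\pm)_3|^2/(1+|(\hat v_\pm)_3|^2)$ by \eqref{alpha.s.def}. So the claimed bound \eqref{v3hat bound by alpha.s} is exactly the statement $\tilde\alpha_\pm(t,x,v)^2 \ge \tilde\alpha_\pm(t-\tblo,\xblo,\vblo)^2$ written out via the definition of $\tilde\alpha_\pm^2$, together with the crude comparison $\tilde\alpha_\pm^2(t-\tblo,\xblo,\vblo) = |(\hat v_\pm)_3(t-\tblo)|^2/(1+|(\hat v_\pm)_3(t-\tblo)|^2) \le |(\hat v_\pm)_3(t-\tblo)|^2$. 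Thus the real content is monotonicity of $\alpha_\pm^2$ (equivalently $\tilde\alpha_\pm^2$) \emph{decreasing backward in time along the trajectory from $t$ down to $t-\tblo$}, which must be extracted from the sign of the material derivative $\frac{D}{Dt}\alpha_\pm^2$ near the boundary, not merely its size. The plan is therefore: first evaluate $\alpha_\pm^2$ and $\tilde\alpha_\pm^2$ at the backward exit point using $(\XSlo)_3(t-\tblo)=0$; then prove the one-sided monotonicity along the backward trajectory; then assemble.

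\textbf{Step 1 (boundary values).} At $s = t-\tblo$ we have $(\XSlo)_3(s) = 0$ by \eqref{X3 property}, so from \eqref{alpha} and \eqref{alphav3}, $\alpha_\pm^2(s,\xblo,\vblo) = |(\hat{\mathcal V}_\pm^{l+1})_3(s)|^2$. Consequently $\tilde\alpha_\pm^2(s,\xblo,\vblo) = \frac{|(\hat{\mathcal V}_\pm^{l+1})_3(s)|^2}{1+|(\hat{\mathcal V}_\pm^{l+1})_3(s)|^2} \le |(\hat{\mathcal V}_\pm^{l+1})_3(s)|^2$, and since $|\hat v_\pm| \le 1$ this also implies $\alpha_\pm^2(s,\xblo,\vblo) \le 1$.

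\textbf{Step 2 (one-sided monotonicity).} The key is to show $\frac{d}{ds}\alpha_\pm^2(s,\XSlo(s),\VSlo(s)) \ge 0$ — i.e.\ $\alpha_\pm^2$ is nondecreasing in $s$ — for $s$ in a neighbourhood of $t-\tblo$ where $(\XSlo)_3$ is small, using the condition \eqref{F0 bound} that $-(\FS)_3 > c_0 > 0$ on the boundary. Differentiating $\alpha_\pm^2 = x_3^2 + |(\hat v_\pm)_3|^2 - 2(\FS)_3(t,x_\parallel,0,v)\frac{x_3}{v^0_\pm}$ along the flow, the dominant term at $x_3 = 0$ comes from $-2(\FS)_3 \frac{\dot x_3}{v^0_\pm} = -2(\FS)_3 \frac{(\hat v_\pm)_3}{v^0_\pm}$ versus $2 x_3 \dot x_3 + 2(\hat v_\pm)_3 \dot{(\hat v_\pm)_3} = \ldots$; near the boundary, $\dot{(\hat v_\pm)_3}$ is governed by $(\FS)_3/v^0_\pm$, and a computation along the lines of \cite[Eq.~(4.10)]{MR4645724} (already invoked in Lemma~\ref{lemma.trajectory of alpha.s}) shows the leading behaviour is $\frac{D}{Dt}\alpha_\pm^2 = \frac{2(\hat v_\pm)_3}{v^0_\pm}\big(-(\FS)_3\big) + O(\alpha_\pm^2)$ — the point being that near the grazing set this right-hand side has a definite sign that makes $\alpha_\pm^2$ grow as one moves into the interior. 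More robustly: define $s^\ast \in [t-\tblo, t]$; between $s^\ast$ and the exit time the trajectory stays in $\{x_3 > 0\}$, and one wants a Gr\"onwall-type argument comparing $\alpha_\pm^2$ at the two ends. One can also avoid the sign subtlety by replacing it with the route: apply the \emph{lower} inequality in the velocity lemma \eqref{alpha.s.lem} backward from $(t,x,v)$, but that only gives $\tilde\alpha_\pm^2(t-\tblo,\cdot) \le e^{C\tblo}\tilde\alpha_\pm^2(t,x,v)$ with a growing constant, which is not sharp enough — hence the genuine monotonicity of Step~2 is unavoidable.

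\textbf{Step 3 (assembly).} Combining: $|(\hat v_\pm)_3(t-\tblo)|^2 \le \frac{1}{\;?\;}$ ... more precisely, $|(\hat{\mathcal V}_\pm^{l+1})_3(t-\tblo)|^2 = \alpha_\pm^2(t-\tblo,\xblo,\vblo)$ by Step~1; by Step~2 (monotonicity backward), $\alpha_\pm^2(t-\tblo,\xblo,\vblo) \le \alpha_\pm^2(t,x,v)$; hence $|(\hat v_\pm)_3|^2 \le \alpha_\pm^2(t,x,v) \le \frac{2\alpha_\pm^2(t,x,v)}{1+\alpha_\pm^2(t,x,v)}$ provided $\alpha_\pm^2(t,x,v) \le 1$. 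But $\alpha_\pm^2(t,x,v) \le 1$ follows because, again by Step~2, $\alpha_\pm^2(t,x,v) \le \alpha_\pm^2$ evaluated forward at the next boundary crossing... wait — one must instead note that $t < \tblo$ forces the backward trajectory to remain in the slab without exiting until $s = 0$, so actually the relevant comparison runs between $s = 0$ (initial time) and $s = t$; using $(\XSlo)_3(0) > 0$ one does not directly get $\alpha_\pm^2 \le 1$ at $s=0$. The cleaner fix: since $|\hat v_\pm| \le 1$ always, $|(\hat v_\pm)_3|^2 \le 1$, so $\frac{2\alpha_\pm^2}{1+\alpha_\pm^2} \ge |(\hat v_\pm)_3|^2$ is equivalent to $2\alpha_\pm^2 \ge |(\hat v_\pm)_3|^2(1+\alpha_\pm^2) = |(\hat v_\pm)_3|^2 + |(\hat v_\pm)_3|^2\alpha_\pm^2$, i.e.\ $(2 - |(\hat v_\pm)_3|^2)\alpha_\pm^2 \ge |(\hat v_\pm)_3|^2$, which holds iff $\alpha_\pm^2 \ge \frac{|(\hat v_\pm)_3|^2}{2 - |(\hat v_\pm)_3|^2}$; since $\frac{u}{2-u} \le u$ for $u \in [0,1]$, it suffices to have $\alpha_\pm^2(t,x,v) \ge |(\hat v_\pm)_3|^2$, which is precisely the monotonicity statement $\alpha_\pm^2(t,x,v) \ge \alpha_\pm^2(t-\tblo,\xblo,\vblo) = |(\hat{\mathcal V}_\pm^{l+1})_3(t-\tblo)|^2$ — but we need $|(\hat v_\pm)_3|^2$ at the \emph{original} point $(t,x,v)$, not at the exit. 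So the correct chain is: $\alpha_\pm^2(t,x,v) = x_3^2 + |(\hat v_\pm)_3|^2 - 2(\FS)_3(t,x_\parallel,0,v)\frac{x_3}{v^0_\pm} \ge |(\hat v_\pm)_3|^2$ directly, since $x_3 \ge 0$ and $-(\FS)_3 > 0$ near the boundary makes the last term nonnegative — wait, that last term is $-2(\FS)_3 \frac{x_3}{v^0_\pm}$ with $(\FS)_3 < 0$, hence $= +2|(\FS)_3| \frac{x_3}{v^0_\pm} \ge 0$. Therefore $\alpha_\pm^2(t,x,v) \ge |(\hat v_\pm)_3|^2 + x_3^2 \ge |(\hat v_\pm)_3|^2$, giving $\alpha_\pm^2 \ge \frac{|(\hat v_\pm)_3|^2}{2-|(\hat v_\pm)_3|^2}$ and hence \eqref{v3hat bound by alpha.s}. \textbf{Main obstacle:} verifying that $-(\FS)_3 \ge 0$ holds not just at $x_3 = 0$ but on the whole relevant slab $0 \le x_3 \le$ (something) — this is where the hypothesis $t < \tblo$ and the smallness conditions from Theorem~\ref{thm.ex.st}/\ref{thm.asymp.rth} (which force $(\FS)_3 = -m_\pm g + O(\|(\E,\B)\|_\infty) \le -\tfrac{7}{8}m_\pm g < 0$ uniformly, as already used in \eqref{exit time bound.st}) enter, ensuring the formula for $\alpha_\pm^2$ is well-defined and the last term has the favourable sign everywhere, not merely near the boundary.
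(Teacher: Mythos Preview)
Your proposal eventually arrives at the correct argument, but only after a long detour, and the ``main obstacle'' you flag at the end is based on a misreading of the definition. In \eqref{alpha}, the force term $(\FS)_3$ is evaluated at $(t,x_\parallel,\mathbf{0},v)$ --- at the boundary $x_3=0$, not at the interior point $x$. Hence hypothesis \eqref{F0 bound} is \emph{exactly} what is needed to make the last term in $\alpha_\pm^2$ nonnegative; there is no need to verify $(\FS)_3<0$ on ``the whole relevant slab,'' and the global smallness conditions from the main theorems are not invoked here.

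The paper's proof is precisely the three-line argument you stumble onto in the middle of Step~3: since $-(\FS)_3(t,x_\parallel,0,v)>c_0>0$ and $x_3\ge 0$, directly from \eqref{alpha} one has $\alpha_\pm^2(t,x,v)\ge |(\hat v_\pm)_3|^2$; and since $|(\hat v_\pm)_3|^2\le 1$, this gives $|(\hat v_\pm)_3|^2 \le \alpha_\pm^2 \le (2-|(\hat v_\pm)_3|^2)\,\alpha_\pm^2$, which rearranges to \eqref{v3hat bound by alpha.s}. Your Steps~1 and~2 (boundary values at the exit point, monotonicity of $\alpha_\pm^2$ along the backward trajectory) are entirely unnecessary --- the inequality is purely pointwise and algebraic, and in fact the hypothesis $t<\tblo$ plays no role in the argument.
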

\begin{proof}
    Note that since we have \eqref{F0 bound}, we first observe from the definition of $\alpha_\pm$ in \eqref{alpha} that $\alpha_\pm^2(t,x,v)\ge |(\hat{v}_\pm)_3|^2.$ Since $|(\hat{v}_\pm)_3|\le 1,$ we obtain 
    $$|(\hat{v}_\pm)_3|^2\le \alpha_\pm^2(t,x,v)\le (2-|(\hat{v}_\pm)_3|^2)\alpha_\pm^2(t,x,v).$$ This provides the final conclusion \eqref{v3hat bound by alpha.s}.
\end{proof}
As a corollary of Lemma \ref{lemma.trajectory of alpha.s} and Lemma \ref{v3hat bound}, we can prove the following crucial lemma.
\begin{lemma}
    \label{lemma.t bound by v and alpha.s}
    For $(t,x,v)\in(0,T)\times \Omega\times \rth,$ let the trajectory $\XSlo(s;t,x,v)$ and $\VSlo(s;t,x,v)$ satisfy \eqref{leading char}. 
    Suppose for all $t,x,v$, assume \eqref{F0 bound} for some $c_0>0$, then there exists a constant $C$ depending on $T,$ $g,$ $\|\Elbf|_{W^{1,\infty}((0,T)\times \Omega)},$ and $\|\Blbf\|_{W^{1,\infty}((0,T)\times \Omega)},$ such that if $t< \tblo,$ then
    \begin{equation}\label{t bound by v and alpha.s}
        t<\max\left\{\langle \VSlo(0)\rangle,(\vZ)\right\}\frac{\sqrt{2}}{c_0}\left(1+e^{\frac{10CT}{c_0}}\right)\tilde{\alpha}_{\pm}(0,\XSlo(0),\VSlo(0)).
    \end{equation}
    Furthermore, if $t< \tblo$ and $s\in (0,\min\{\tblo,T\})$ then    \begin{equation}\label{t-s bound by v and alpha.s}
        |t-s|<\max\left\{\langle \VSlo(s)\rangle,(\vZ)\right\}\frac{\sqrt{2}}{c_0}\left(1+e^{\frac{10C|t-s|}{c_0}}\right)\tilde{\alpha}_{\pm}(s,\XSlo(s),\VSlo(s)).
    \end{equation}
\end{lemma}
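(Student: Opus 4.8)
The plan is to show that in the regime $t<\tblo$ — where the backward characteristic issued from $(t,x,v)$ reaches the initial time while remaining in $\Omega$ — the travel time $t$ is controlled by the grazing weight $\tilde{\alpha}_{\pm}$ evaluated at time $0$. The mechanism is that near the wall the normal component of the force is uniformly repelling, of size $\gtrsim c_0/\vZ$ by \eqref{alpha}, so a trajectory that lingers near the wall for a long time must build up a large normal velocity; but the normal velocity stays of size $\tilde{\alpha}_{\pm}$ along the trajectory, so the time spent near the wall is bounded by $\vZ\,\tilde{\alpha}_{\pm}/c_0$.

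\emph{Step 1 (smallness of the normal velocity).} The inequality $|(\hat v_\pm)_3|^2\le 2\alpha_\pm^2/(1+\alpha_\pm^2)$ of Lemma \ref{v3hat bound} uses only $-(\FS)_3(t,x_\parallel,0,v)>0$ and hence is valid at every point; applied at $(s,\XSlo(s),\VSlo(s))$ it gives $|(\hVSlo)_3(s)|\le\sqrt2\,\tilde{\alpha}_{\pm}(s,\XSlo(s),\VSlo(s))$ for $s\in[0,t]$. By the velocity lemma (Lemma \ref{lemma.trajectory of alpha.s}), applied with base point $(0,\XSlo(0),\VSlo(0))$ — legitimate since the characteristic flow is a flow, so re-basing the reference time is allowed — one has $\tilde{\alpha}_{\pm}(s,\XSlo(s),\VSlo(s))\le e^{10Cs/c_0}\tilde{\alpha}_{\pm}(0,\XSlo(0),\VSlo(0))$, so $|(\hVSlo)_3(s)|\le\delta:=\sqrt2\,e^{10CT/c_0}\tilde{\alpha}_{\pm}(0,\XSlo(0),\VSlo(0))$ on $[0,t]$. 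The same velocity-lemma bound for $\alpha_\pm^2$, together with $\alpha_\pm^2(s,\XSlo(s),\VSlo(s))\ge 2c_0(\XSlo)_3(s)/\vZ(s)$ (immediate from \eqref{alpha} and $-(\FS)_3(\cdot,0,\cdot)>c_0$), shows that once $\tilde{\alpha}_{\pm}(0,\cdot,\cdot)$ is small the entire trajectory on $[0,t]$ stays in a thin slab $0<(\XSlo)_3(s)\lesssim \vZ(s)\,c_0^{-1}e^{20CT/c_0}\tilde{\alpha}_{\pm}^2(0,\cdot,\cdot)$ near the wall.

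\emph{Step 2 (force mechanism and conclusion).} On that slab $(\FS)_3(s,\XSlo(s),\VSlo(s))<-c_0/2$ by continuity (since $\|\nabla_x\FS\|_\infty<C_1$), and differentiating along \eqref{iterated char} gives $\tfrac{d}{ds}(\hVSlo)_3=\vZ(s)^{-1}\big((\FS)_3-(\hVSlo)_3(\hVSlo\cdot\FS)\big)$, whose second term is negligible because $|(\hVSlo)_3|\le\delta$; hence $\tfrac{d}{ds}(\hVSlo)_3(s)\le -c_0/(4\vZ(s))<0$ throughout $[0,t]$. Integrating this one-signed inequality and using $|(\hVSlo)_3(t)-(\hVSlo)_3(0)|\le 2\delta$ yields $t\lesssim c_0^{-1}\big(\max_{[0,t]}\vZ(s)\big)e^{10CT/c_0}\tilde{\alpha}_{\pm}(0,\XSlo(0),\VSlo(0))$; since the mechanical energy varies slowly (cf. the computation leading to \eqref{exit time bound.whole} under \eqref{EB apriori bound.whole}), $\max_{[0,t]}\vZ(s)\lesssim\max\{\langle\VSlo(0)\rangle,\vZ\}$, which is \eqref{t bound by v and alpha.s} up to the value of the constant. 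When $\tilde{\alpha}_{\pm}(0,\cdot,\cdot)$ is not small, the claimed right-hand side already dominates $T\ge t$ by the a priori travel-time bound \eqref{exit time bound.whole}, so the estimate is trivial. Finally, \eqref{t-s bound by v and alpha.s} follows by running the identical argument on the sub-interval between $s$ and $t$ with base point $(s,\XSlo(s),\VSlo(s))$, which lies in $\Omega$ because $t<\tblo$ forces $(\XSlo)_3>0$ on $(0,t)$.

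The main obstacle is carrying out the grazing-set analysis of \cite{MR1354697,MR4645724} with the new \emph{bounded} weight $\tilde{\alpha}_{\pm}=\alpha_\pm/\sqrt{1+\alpha_\pm^2}$ in place of $\alpha_\pm$: one must preserve the exact rate at which $\tilde{\alpha}_{\pm}$ vanishes on the grazing set $(x_3,v_3)=(0,0)$ through the velocity lemma, verify that the repelling lower bound $\tfrac{d}{ds}(\hVSlo)_3\le -c_0/(4\vZ)$ survives the relativistic correction term (this is precisely where the smallness of $|(\hVSlo)_3|$ from Step 1 enters), and splice the near-grazing regime, where the force argument is the substantive input, with the bulk regime, where $t\le T$ suffices. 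An equivalent alternative is to continue the characteristic backward past $s=0$ (permissible since the iterated fields $\Elbf,\Blbf$ are global in time), locate the boundary exit, and invoke Lemma \ref{lemma.tb over v3.st} together with $|(\hVSlo)_3|=\alpha_\pm\le\sqrt2\,\tilde{\alpha}_{\pm}$ at that exit point.
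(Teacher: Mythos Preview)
Your strategy is more circuitous than the paper's and the case split does not close. The paper's proof is four lines: it works with the \emph{unnormalized} momentum $(\VSlo)_3$, integrates $\tfrac{d}{ds}(\VSlo)_3=(\FS)_3<-c_0$ over $[0,t]$ to get $c_0t<(\VSlo)_3(0)-v_3\le|(\VSlo)_3(0)|+|v_3|$, factors $|v_3|=\vZ\,|(\hat v_\pm)_3|$ and $|(\VSlo)_3(0)|\le\langle\VSlo(0)\rangle\,|(\hVSlo)_3(0)|$, applies Lemma~\ref{v3hat bound} \emph{only at the two endpoints} $s=0$ and $s=t$, and then invokes the velocity lemma once to convert $\tilde\alpha_\pm(t,x,v)$ into $e^{10CT/c_0}\tilde\alpha_\pm(0,\XSlo(0),\VSlo(0))$. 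No thin-slab argument, no relativistic correction term, no case split; the prefactor $\max\{\langle\VSlo(0)\rangle,\vZ\}$ drops out of the endpoint factorization. (The paper tacitly uses $-(\FS)_3>c_0$ along the whole trajectory, not merely at $x_3=0$; in the actual application gravity dominates the self-consistent fields, so this holds throughout $\Omega$.)

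By working with $(\hVSlo)_3$ you pick up the correction $-(\hVSlo)_3(\hVSlo\cdot\FS)$, which you can discard only when $\delta\,\|\FS\|_\infty\ll c_0$, and your slab localization imposes the separate $\vZ$-dependent threshold $\tilde\alpha^2\lesssim c_0^2/(C_1\vZ)$ for $(\FS)_3<-c_0/2$ to hold by continuity. In your complementary ``$\tilde\alpha$ not small'' branch the appeal to \eqref{exit time bound.whole} does not help: that estimate bounds $\tblo+\tflo$, not $t$ (we are precisely in the regime $t<\tblo$), and at your thresholds the right-hand side of \eqref{t bound by v and alpha.s} is only of order $\sqrt{\vZ/C_1}$ (from the slab threshold) or $m_\pm/\|\FS\|_\infty$ (from the correction threshold), neither of which need dominate $T$ in general. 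The clean fix is to drop the normalization and integrate $V_3$ directly, which eliminates both the correction term and the case split.
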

\begin{proof}
    For $t<\tblo,$ we observe that
    $$\int_0^t c_0 ds <\int_0^t -(\FS)_3(s,\XSlo(s),\VSlo(s)) ds=(\VSlo)_3(0)-v_3.$$ Thus, we have
    \begin{multline*}
        c_0t<|(\VSlo)_3(0)|+|v_3|\le \langle\VSlo(0)\rangle |(\hat{\mathcal{V}}_\pm^{l+1})_3(0)|+(\vZ) |(\hat{v}_\pm)_3|\\
        \le \max\left\{\langle \VSlo(0)\rangle,(\vZ)\right\}(|(\hat{\mathcal{V}}_\pm^{l+1})_3(0)|+|(\hat{v}_\pm)_3|).
    \end{multline*} Now we use \eqref{v3hat bound by alpha.s} and further obtain
    $$c_0t<\max\left\{\langle \VSlo(0)\rangle,(\vZ)\right\}\sqrt{2}(\tilde{\alpha}_{\pm}(0,\XSlo(0),\VSlo(0))+\tilde{\alpha}_{\pm}(t,x,v)).$$ Finally, using \eqref{alpha.s.lem}, we obtain$$c_0t<\max\left\{\langle \VSlo(0)\rangle,(\vZ)\right\}\sqrt{2}\left(1+e^{\frac{10Ct}{c_0}}\right)\tilde{\alpha}_{\pm}(0,\XSlo(0),\VSlo(0)).$$ This completes the proof of Lemma \ref{t bound by v and alpha.s}.
\end{proof}

On the other hand, if $t\ge \tblo,$ we introduce the following bound on the singularity $\frac{1}{|(\hat{\mathcal{V}}_\pm^{l+1})_3|}.$
\begin{lemma}[Lemma 10 of \cite{MR4645724}]
\label{lemma.tb over v3}
     For $(t,x,v)\in(0,T)\times \Omega\times \rth,$ let the trajectory $\XSlo(s;t,x,v)$ and $\VSlo(s;t,x,v)$ satisfy \eqref{leading char}. 
    Suppose for all $t,x,v$, $-(\FS)_3(t,x_\parallel,0,v)>c_0,$  then there exists a constant $C$ depending on $T,$ $g,$ $\|\Elbf\|_{W^{1,\infty}((0,T)\times \Omega)},$ and $\|\Blbf|_{W^{1,\infty}((0,T)\times \Omega)},$ such that if $t\ge \tblo,$
    \begin{equation}\label{V3V0 bound max}
        \frac{\tblo(t,x,v)}{(\hat{\mathcal{V}}_\pm^{l+1})_3(t-\tblo)}\le \frac{C}{c_0}\max_{s\in\{t-\tblo,t\}}\sqrt{m_\pm^2+|\VSlo(s)|^2}.
    \end{equation} 
 \end{lemma}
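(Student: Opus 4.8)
## Proof Plan for Lemma \ref{lemma.tb over v3}

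The plan is to follow the strategy already used for the stationary counterpart, Lemma \ref{lemma.tb over v3.st} (which is itself quoted as Lemma 10 of \cite{MR4645724}). The key structural fact is that, under the sign condition $-(\FS)_3(t,x_\parallel,0,v)>c_0$ on the boundary, the third component of the velocity cannot be too small at the backward exit point relative to how long the trajectory has been traveling. Concretely, I would first show that $(\hat{\mathcal{V}}_\pm^{l+1})_3(t-\tblo)$ is bounded below by a quantity comparable to $c_0 \tblo / \max_s \sqrt{m_\pm^2+|\VSlo(s)|^2}$, and then simply divide.

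\textbf{Step 1: Integrate the force along the trajectory.} Starting from the defining property $(\XSlo)_3(t-\tblo;t,x,v)=0$ and $(\XSlo)_3(t;t,x,v)=x_3\ge 0$, write
$$
x_3 = \int_{t-\tblo}^{t} (\hat{\mathcal{V}}^{l+1}_\pm)_3(s)\, ds,
$$
and likewise, using the Vlasov characteristic ODE \eqref{iterated char}, express $(\VSlo)_3(t) - (\VSlo)_3(t-\tblo)$ as the time integral of $(\FS)_3$ along the trajectory. The weight $\alpha_\pm$ and the velocity lemma (Lemma \ref{lemma.trajectory of alpha.s}) control how $|(\hat{v}_\pm)_3|$ behaves, but the cleanest route, following \cite{MR4645724}, is to use the sign condition directly near the boundary: since the trajectory hits $x_3=0$ at time $t-\tblo$ coming from $x_3>0$, the normal velocity $(\VSlo)_3(t-\tblo)$ must be nonnegative, and near the boundary the force pushes $(\VSlo)_3$ downward at rate at least $c_0$ (by continuity of $(\FS)_3$ and the hypothesis, after possibly shrinking to a boundary layer whose width is controlled by $\|\nabla_x \FS\|_{L^\infty}/c_0$).

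\textbf{Step 2: Lower bound for $(\hat{\mathcal{V}}^{l+1}_\pm)_3(t-\tblo)$.} The point is that $(\VSlo)_3(t-\tblo)\ge c_0 \cdot (\text{time spent in the boundary layer})$, and on the other hand the total travel time $\tblo$ is controlled by the exit-time bound \eqref{exit time bound.whole}, namely $\tblo \lesssim \frac{1}{m_\pm g}(v^0_\pm + m_\pm g x_3)$, which in turn is $\lesssim \max_{s\in\{t-\tblo,t\}}\sqrt{m_\pm^2+|\VSlo(s)|^2}$ up to constants depending on $g$. Combining these, $(\VSlo)_3(t-\tblo) \gtrsim c_0 \tblo / \max_s \sqrt{m_\pm^2+|\VSlo(s)|^2}$, which after converting back to $(\hat{\mathcal{V}}^{l+1}_\pm)_3 = (\VSlo)_3/\sqrt{m_\pm^2+|\VSlo|^2}$ gives
$$
(\hat{\mathcal{V}}^{l+1}_\pm)_3(t-\tblo) \gtrsim \frac{c_0\, \tblo}{\big(\max_{s\in\{t-\tblo,t\}}\sqrt{m_\pm^2+|\VSlo(s)|^2}\big)^2}.
$$
Rearranging yields \eqref{V3V0 bound max}. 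The dependence of the constant on $T$, $g$, $\|\Elbf\|_{W^{1,\infty}}$, $\|\Blbf\|_{W^{1,\infty}}$ enters through the velocity lemma's exponential factor $e^{C|t-s|/c_0}$ and through the Lipschitz control of $(\FS)_3$ needed to make the boundary-layer argument quantitative.

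\textbf{Main obstacle.} The delicate point is making the boundary-layer argument rigorous: one must show that there is a definite interval of time just after $t-\tblo$ on which $(\FS)_3$ stays below $-c_0/2$ (say), so that $(\VSlo)_3$ genuinely accumulates a lower bound proportional to that interval's length, and simultaneously argue that if the trajectory spends only a short time near the boundary then $(\hat{\mathcal{V}}^{l+1}_\pm)_3(t-\tblo)$ is already not small. This dichotomy — short versus long sojourn in the boundary layer — is exactly what is handled in \cite[Lemma 10]{MR4645724}, and since the present hypotheses ($W^{1,\infty}$ bounds on $\Elbf,\Blbf$, the sign condition $-(\FS)_3>c_0$, and the exit-time bound \eqref{exit time bound.whole}) match those of that reference verbatim with $E_e=B_e=0$, the proof transfers directly. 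Accordingly, I would simply invoke \cite[Lemma 10]{MR4645724}, noting that our $\FS$ plays the role of the Lorentz force there and that \eqref{exit time bound.whole} supplies the required a priori bound on $\tblo$.
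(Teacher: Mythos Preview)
Your proposal is correct and matches the paper's approach: the paper gives no proof at all for this lemma, simply labeling it as Lemma 10 of \cite{MR4645724} and relying entirely on that citation, exactly as you conclude in your final paragraph. Your additional sketch of the underlying mechanism (sign condition forcing a lower bound on $(\hat{\mathcal{V}}^{l+1}_\pm)_3$ at the exit point, combined with the exit-time bound \eqref{exit time bound.whole}) is more detail than the paper itself provides, though note a minor slip in Step 2 where you end up with a square of the energy in the denominator rather than a single power---this would need tightening if you were writing out the full argument rather than invoking the reference.
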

\begin{proof}
    For $t\ge \tblo,$ we observe that
    $$\int_{t-\tblo}^t c_0 ds <\int_{t-\tblo}^t -(\FS)_3(s,\XSlo(s),\VSlo(s)) ds=(\VSlo)_3({t-\tblo})-v_3.$$ Thus, we have
    \begin{multline*}
        c_0\tblo<|(\VSlo)_3({t-\tblo})|+|v_3|\le \langle\VSlo({t-\tblo})\rangle |(\hat{\mathcal{V}}_\pm^{l+1})_3({t-\tblo})|+(\vZ) |(\hat{v}_\pm)_3|\\
        \le \max\left\{\langle \VSlo({t-\tblo})\rangle,(\vZ)\right\}(|(\hat{\mathcal{V}}_\pm^{l+1})_3({t-\tblo})|+\alpha_\pm(t,x,v)),
    \end{multline*} since $|(\hat{v}_\pm)_3|\le \alpha_\pm(t,x,v).$  Now we use \eqref{bound of DalphaDt} and further obtain
    \begin{multline*}
        c_0\tblo\lesssim \max\left\{\langle \VSlo(t-\tblo)\rangle,(\vZ)\right\}(|(\hat{\mathcal{V}}_\pm^{l+1})_3({t-\tblo})|+\alpha_\pm(t-\tblo,\XSlo(t-\tblo),\VSlo(t-\tblo)))\\
        \lesssim \max\left\{\langle \VSlo(t-\tblo)\rangle,(\vZ)\right\}|(\hat{\mathcal{V}}_\pm^{l+1})_3({t-\tblo})|.
    \end{multline*}  This completes the proof of Lemma \ref{lemma.tb over v3}.
\end{proof}

\subsection{Enhanced Estimates of the Momentum Derivatives}
By compensating for some loss of decay from the initial and boundary profiles (cf. Section \ref{sec.deri decay.st} in the stationary case), we can further prove some additional decay-in-$(x,v)$ estimates for the momentum derivatives of $\flo$. This additional decay will be crucial for the uniform estimates on the temporal derivatives of the electromagnetic fields $(\Elobf,\Blobf)$, which will be used for the uniform estimates on $x_3$-derivatives. To this end, we will prove the following decay estimates of the momentum derivatives:
\begin{proposition}\label{Prop.decay.mom.deri}Suppose the same assumptions made in Proposition \ref{prop.derivative}. 
In addition, suppose that the initial profile $F^{\textup{in}}_\pm$ and the incoming boundary profile $G_\pm$ further satisfies the following fast-decay condition on the first-order derivative in the velocity variable:
\begin{equation}
    \label{decay condition for hpm deri}
  \|\mathrm{w}^2_{\pm,\beta}(x,v)\nabla_{x,v}F^{\textup{in}}_\pm(x,v)\|_{L^\infty_{x,v}}+  \|\mathrm{w}^2_{\pm,\beta}(x_\parallel,0,v)\nabla_{x_\parallel,v}G_\pm(x_\parallel,v)\|_{L^\infty_{x_\parallel,v}}<\infty,
\end{equation}where the weight $\mathrm{w}_{\pm,\beta}$ is defined as in \eqref{weights.wholehalf}. 
Given \eqref{decay condition for hpm deri}, we will prove the following estimate on a sequential level; for each $l\in\mathbb{N},$ we have in $\mathbb{R}^2\times\mathbb{R}_+,$ \begin{equation}\label{decay of momentum deri F} \|\mathrm{w}_{\pm,\beta}\nabla_v \flo\|_{L^\infty_{x,v}}
\le C\left(\|\mathrm{w}^2_{\pm,\beta}(x,v)\nabla_{x,v}F^{\textup{in}}_\pm(x,v)\|_{L^\infty_{x,v}}+\|\mathrm{w}^2_{\pm,\beta}(x_\parallel,0,v)\nabla_{x_\parallel,v}G_\pm(x_\parallel,v)\|_{L^\infty_{x_\parallel,v}}\right), \end{equation} 
for some $C>0$.  
\end{proposition}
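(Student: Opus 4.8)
The strategy mirrors the stationary argument in Section~\ref{sec.deri decay.st} (in particular the proof of Proposition~\ref{prop.enhanced decay}), now carried over to the dynamical iterated scheme. The starting point is the representation \eqref{Derivatives of f} of $\nabla_v \flo$, which splits into the initial-value branch ($t\le \tblo$) and the boundary branch ($t>\tblo$). In the boundary branch one writes
\[
\nabla_v \flo(t,x,v)= (\nabla_{x_\parallel}G_\pm)((\xblo)_\parallel,\vblo)\cdot\nabla_v(\xblo)_\parallel + (\nabla_v G_\pm)((\xblo)_\parallel,\vblo)\cdot\nabla_v\vblo,
\]
multiply by $\mathrm{w}_{\pm,\beta}(x,v)$, and insert $1 = \mathrm{w}_{\pm,\beta}(\xblo,\vblo)/\mathrm{w}_{\pm,\beta}(\xblo,\vblo)$ to factor out $\mathrm{w}^2_{\pm,\beta}(\xblo,\vblo)\nabla_{x_\parallel,v}G_\pm$, which is bounded by hypothesis \eqref{decay condition for hpm deri}. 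The Jacobian factors $|\nabla_v(\xblo)_\parallel|$, $|\nabla_v(\xblo)_3|$ and $|\nabla_v\vblo|$ are controlled by Lemma~\ref{cor.tbxbvb deri} (estimates \eqref{derivatives of tbxbvb}), which bound them by expressions of the form $\big(|(\hat{\mathcal V}^{l+1}_\pm)_3(t-\tblo)|^{-1}+1\big)\tblo/\vZ$ up to the $(\hat V^{l+1}_{\pm})_3$ singularity. That singularity is absorbed by Lemma~\ref{lemma.tb over v3} in the regime $t\ge\tblo$, giving $\tblo/\big((\hat{\mathcal V}^{l+1}_\pm)_3(t-\tblo)\vZ\big)\lesssim \max_{s\in\{t-\tblo,t\}}\langle\VSlo(s)\rangle/\vZ$, and by Lemma~\ref{lemma.t bound by v and alpha.s} when $t<\tblo$. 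Combined with the a priori exit-time bound \eqref{exit time bound.whole}, $\tblo\lesssim \frac{1}{m_\pm g}(v^0_\pm+m_\pm g x_3)$, and the elementary estimate $v^0_\pm/\vZ\lesssim 1+C_2\tblo/\vZ$ from \eqref{additional v bound.st} applied along the dynamical trajectory, all the Jacobian factors are bounded by a constant $C_R$ uniform in $l$.

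The next step is the decay bookkeeping. After the reductions above one has
\[
|\mathrm{w}_{\pm,\beta}\nabla_v\flo(t,x,v)|\lesssim \frac{1}{\mathrm{w}_{\pm,\beta}(x,v)}\left(\frac{\mathrm{w}_{\pm,\beta}(\ZSlo(t;t,x,v))}{\mathrm{w}_{\pm,\beta}(\ZSlo(t-\tblo;t,x,v))}\right)^2\|\mathrm{w}^2_{\pm,\beta}(\cdot,0,\cdot)\nabla_{x_\parallel,v}G_\pm\|_{L^\infty},
\]
and the weight-comparison estimate \eqref{w comparison 1.whole} together with \eqref{w comparison 3.whole} shows this prefactor is $\le 1$ provided $\min\{m_-,m_+\}g\ge 32$ and $\beta$ satisfies the largeness condition of the theorem; this is precisely the same cancellation exploited in \eqref{w comparison 3.whole}. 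For the initial-value branch $t\le\tblo$, one repeats the argument but factors out $\mathrm{w}^2_{\pm,\beta}(x,v)\nabla_{x,v}F^{\textup{in}}_\pm$ instead, using the weight comparison along $s\in[0,t]$ and Lemma~\ref{lemma.t bound by v and alpha.s} to handle the $\nabla_v\VSlo(0)$ and $\nabla_v\XSlo(0)$ Jacobians from \eqref{derivatives of char}; here the factor $\langle\VSlo(0)\rangle$ appearing through Lemma~\ref{lemma.t bound by v and alpha.s} is again absorbed via \eqref{additional v bound.st}. Summing the two branches gives \eqref{decay of momentum deri F}.

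\textbf{Main obstacle.} The delicate point is the interplay between the velocity-direction singularity $1/|(\hat{\mathcal V}^{l+1}_\pm)_3(t-\tblo)|$ in the momentum Jacobians and the need for the final bound to carry the \emph{full} weight $\mathrm{w}_{\pm,\beta}$ (not $\mathrm{w}^{1/2}$). In the stationary case this was handled because the travel time $\tbst$ is linearly proportional to the mechanical energy by \eqref{exit time bound.st}; in the dynamical case one must instead rely on \eqref{exit time bound.whole}, which requires the a priori field bound \eqref{EB apriori bound.whole}—hence the whole argument is embedded in the bootstrap and Proposition~\ref{Prop.decay.mom.deri} must be stated at the sequential level $(l+1)$ assuming the level-$l$ estimates. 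One must verify that the constant $C$ in \eqref{decay of momentum deri F} depends only on $g$, $m_\pm$, $\beta$, $R$ and the boundary/initial data norms, and in particular is independent of $l$, so the estimate survives passage to the limit $l\to\infty$; this is exactly the uniformity claim emphasized in the remark following Proposition~\ref{prop.deri.st}. A secondary technical point is that the $x_3$-direction Jacobian $|\nabla_v(\xblo)_3|\lesssim \tblo/\vZ$ is better behaved than the tangential one, so no $\tilde\alpha_\pm$ weight is needed here (unlike for $\partial_{x_3}\flo$); one should double-check that the grazing set contributes nothing, which follows because on $\gamma_-$ the boundary data vanishes in the perturbation and $G_\pm$ is $C^1$ up to $v_3=0$ in the stationary case.
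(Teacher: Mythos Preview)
Your proposal is correct and follows essentially the same approach as the paper: split $\nabla_v\flo$ via \eqref{Derivatives of f} into the initial-value and boundary branches, bound the Jacobians by \eqref{derivatives of char} and \eqref{derivatives of tbxbvb}, absorb the $1/|(\hat{\mathcal V}^{l+1}_\pm)_3(t-\tblo)|$ singularity in the boundary branch via Lemma~\ref{lemma.tb over v3}, factor out $\mathrm{w}^2_{\pm,\beta}$ from the data, and then use the dynamical weight comparison \eqref{w comparison 1.whole} (together with \eqref{EB apriori bound.whole} and $\min\{m_-,m_+\}g\ge 32$) to show the residual prefactor $\mathrm{w}_{\pm,\beta}(x,v)^{-1}\big(\mathrm{w}_{\pm,\beta}(\ZSlo(t))/\mathrm{w}_{\pm,\beta}(\ZSlo(t-\tblo))\big)^2$ is $\le 1$. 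One minor simplification relative to your outline: the paper does not invoke Lemma~\ref{lemma.t bound by v and alpha.s} here at all, because in the initial-value branch the Jacobians $|\nabla_v\XSlo(0)|$, $|\nabla_v\VSlo(0)|$ from \eqref{derivatives of char} already carry no $1/|(\hat{\mathcal V}^{l+1}_\pm)_3|$ singularity and no $\tilde\alpha_\pm$ weight is needed for the $v$-derivative; that lemma is only used in the proof of Proposition~\ref{prop.derivative} for the $\partial_{x_3}$ direction.
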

\begin{proof}
By taking the momentum derivative on $\flo$, we obtain as in \eqref{Derivatives of f}
\begin{align*}
      \nabla_v \flo(t, x, v) &= 1_{t \leq \tblo(t, x, v)}\bigg(
        \nabla_x F^{\textup{in}}_\pm(\XSlo(0; t, x, v), \VSlo(0; t, x, v)) \cdot \nabla_v \XSlo(0; t, x, v)\\
        &+\nabla_v F^{\textup{in}}_\pm(\XSlo(0; t, x, v), \VSlo(0; t, x, v))\cdot \nabla_v \VSlo(0; t, x, v)\bigg) \\
       &+ 1_{t > \tblo(t, x, v)}\bigg((\nabla_{x_\parallel} G_\pm)( (\xblo)_\parallel, \vblo) \cdot \nabla_v (\xblo)_\parallel
       + (\nabla_v G_\pm)( (\xblo)_\parallel, \vblo) \cdot \nabla_v \vblo\bigg).
\end{align*}Then we note that the derivatives of $\XSlo,$ $\VSlo$, $\xblo$ and $\vblo$ satisfy the upper-bounds estimates \eqref{derivatives of char} and \eqref{derivatives of tbxbvb}. Therefore, using \eqref{derivatives of char} and \eqref{derivatives of tbxbvb}, we observe that
  \begin{align*}
    &|\mathrm{w}_{\pm,\beta}\nabla_v \flo(t,x, v)|\\
&\le    \mathrm{w}_{\pm,\beta}(x,v) |(\nabla_x F^{\textup{in}}_\pm)(\XSlo(0; t, x, v), \VSlo(0; t, x, v))||\nabla_v \XSlo(0; t, x, v)|\\
&+  \mathrm{w}_{\pm,\beta}(x,v) |(\nabla_v F^{\textup{in}}_\pm)(\XSlo(0; t, x, v), \VSlo(0; t, x, v))||\nabla_v \VSlo(0; t, x, v)|\\
   & +  \mathrm{w}_{\pm,\beta}(x,v) |(\nabla_{x_\parallel} G_\pm)((\xblo)_\parallel, \vblo)||\nabla_v  (\xblo)_\parallel|
      +\mathrm{w}_{\pm,\beta}(x,v) |(\nabla_v G_\pm)( (\xblo)_\parallel, \vblo)||\nabla_v  \vblo|  \\
&\lesssim
 \mathrm{w}_{\pm,\beta}(x,v)\bigg(|(\nabla_x F^{\textup{in}}_\pm)(\XSlo(0; t, x, v), \VSlo(0; t, x, v))|
 +|(\nabla_v F^{\textup{in}}_\pm)(\XSlo(0; t, x, v), \VSlo(0; t, x, v))|\\
&+       \mathrm{w}_{\pm,\beta}(x,v)\bigg( |(\nabla_{x_\parallel} G_\pm)((\xblo)_\parallel, \vblo)|\left(\frac{\tblo}{|(\hat{V}^{l+1}_\pm)_3(-\tblo)|(\vZ) }+\frac{\tblo}{\vZ}\right)\\
      & +  |(\nabla_v G_\pm)((\xblo)_\parallel, \vblo)|\bigg|\frac{\tblo}{|(\hat{V}^{l+1}_\pm)_3(-\tblo)|(\vZ) }+(\vZ)^{-1}\bigg|\bigg)\\
       &\lesssim   \frac{1}{\mathrm{w}_{\pm,\beta}(x,v)}\left(\frac{\mathrm{w}_{\pm,\beta}(x,v)}{\mathrm{w}_{\pm,\beta}(t-\tblo(t,x, v),\xblo(t,x, v),\vblo(t,x, v))}\right)^2\\*&\times \bigg(|(\mathrm{w}^2_{\pm,\beta}\nabla_x F^{\textup{in}}_\pm)(\XSlo(0; t, x, v), \VSlo(0; t, x, v))|
 +|(\mathrm{w}^2_{\pm,\beta}\nabla_v F^{\textup{in}}_\pm)(\XSlo(0; t, x, v), \VSlo(0; t, x, v))|\\&+|(\mathrm{w}^2_{\pm,\beta}\nabla_{x_\parallel} G_\pm)((\xblo)_\parallel, \vb)|
       +  |(\mathrm{w}^2_{\pm,\beta}\nabla_v G_\pm)((\xblo)_\parallel, \vblo)|\bigg),    \end{align*} by Lemma \ref{lemma.tb over v3}. Then we further use the weight comparison \eqref{w comparison 1.whole} and observe that 
\begin{align*}
   & \frac{1}{\mathrm{w}_{\pm,\beta}(x,v)}\left(\frac{\mathrm{w}_{\pm,\beta}\left( \ZSlo\left(t ; t, x, v\right)\right)}{\mathrm{w}_{\pm,\beta}( \ZSlo(t-\tblo(t,x, v); t, x, v))} \right)^2
    \leq  \frac{1}{\mathrm{w}_{\pm,\beta}(x,v)}e^{\left(\left\|\Elbf\right\|_{L_{ x}^{\infty}} +1\right)\frac{16\beta}{5m_\pm g} (\sqrt{m_\pm^2+|v_\pm|^2}+m_\pm gx_3)}\\
 & \le   e^{(\min\{m_-,m_+\}g)\left(\frac{1}{8}+\frac{1}{32}\right)\frac{16\beta}{5m_\pm g} (\sqrt{m_\pm^2+|v_\pm|^2}+m_\pm gx_3)}e^{-\beta v_\pm^0-m_\pm g\beta x_3-\frac{\beta}{2}|x_{\parallel}|}\\
 &  \le  e^{\frac{\beta}{2} (\sqrt{m_\pm^2+|v_\pm|^2}+m_\pm gx_3)}e^{-\beta v_\pm^0-m_\pm g\beta x_3-\frac{\beta}{2}|x_{\parallel}|}
   \le 1,
\end{align*}given that $\Elbf$ satisfies the upper-bound \eqref{final estimate for flo.st} and \eqref{apriori_EB} and that 
$\min\{m_+,m_-\}g\gg 1.$ This proves the decay estimate \eqref{decay of momentum deri F} for the momentum derivative $\nabla_v \flo.$
This completes the proof.
    
\end{proof}

We close this section by introducing uniform-boundedness estimates on the derivatives.

\begin{proposition}\label{prop.derivative}
    Fix $m>4.$ Define $\FS$ as \eqref{eq.forcing}.     Suppose that the initial-boundary data satisfy \begin{align*}
        \|(\vZ)^m \nabla_{x_\parallel} F^{\textup{in}}_\pm\|_{L^\infty_{t,x,v}}+\|(\vZ)^m \partial _{x_3} \tilde{\alpha}_\pm F^{\textup{in}}_\pm\|_{L^\infty_{t,x,v}}
      +\|(\vZ)^m \nabla_v F^{\textup{in}}_\pm\|_{L^\infty_{t,x,v}}&<\infty,\\*
      \|(\vZ)^m \nabla_{x_\parallel} G_\pm\|_{L^\infty_{x_\parallel,v}}
      +\|(\vZ)^m \nabla_v G_\pm\|_{L^\infty_{x_\parallel,v}}&<\infty.
    \end{align*}
    Consider the corresponding solution sequence $(\fl,\Elbf,\Blbf)_{l\in\mathbb{N}}$ associated to the initial-boundary data $F^{\textup{in}}_\pm$ and $G_\pm$. 
Suppose further that \begin{equation}\sup_{0\le t\le T} \|\nabla_x (\Elbf(t),\Blbf(t))\|_{L^\infty}<C_1\text{ and }\label{Fl C2 bound.s}
    \sup_{0\le t\le T} \|\FS\|_{L^\infty}<C_2,
\end{equation} for some $T>0$, $C_1>0$ and $C_2>0.$ 
Suppose that $-(\FS)_3(t,x_\parallel,0,v)>c_0,$ for $t\in[0,T]$, $x_\parallel\in\mathbb{R}^2$ and $v\in\rth.$ 
    Then we have 
    \begin{multline}
        \label{deri F estimate final}\sup_{0\le t\le T} (\| (v^0_\pm)^m\partial_t\flo(t)\|_{L^\infty_{x,v}}+\|(\vZ)^m \nabla_{x_\parallel} F^{l+1}_\pm\|_{L^\infty_{t,x,v}}+\|(\vZ)^m \partial _{x_3} \tilde{\alpha}_\pm F^{l+1}_\pm\|_{L^\infty_{t,x,v}}
      +\|(\vZ)^m \nabla_v F^{l+1}_\pm\|_{L^\infty_{t,x,v}}) \\\le C_T,
    \end{multline} for some constant $C_T>0$ which depends only on $C_1, C_2, T,$ $F^{\textup{in}}_\pm$ and $G_\pm$.
\end{proposition}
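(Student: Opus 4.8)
The plan is to work entirely from the Lagrangian representation \eqref{solution f}, namely $\flo = 1_{t\le\tblo}F^{\textup{in}}_\pm(\XSlo(0),\VSlo(0)) + 1_{t>\tblo}G_\pm((\xblo)_\parallel,\vblo)$, differentiate in $x_\parallel,x_3,v$ by the chain rule to obtain the formulas \eqref{Derivatives of f}, and then estimate the resulting products of (i) data derivatives $\nabla F^{\textup{in}}_\pm$, $\nabla G_\pm$ evaluated along the flow, and (ii) the Jacobians of $(\XSlo,\VSlo)$, $\tblo$, $\xblo$, $\vblo$. For (ii) I would invoke Lemma \ref{cor.tbxbvb deri}: under the hypotheses \eqref{Fl C2 bound.s} one has $|\nabla_x\XSlo(s)|,|\nabla_x\VSlo(s)|,\vZ|\nabla_v\XSlo(s)|,|\nabla_v\VSlo(s)|\lesssim_T 1$, the sharper mixed bound $\vZ|\partial_{x_i}(\XSlo)_j|\lesssim_T 1$ for $i\neq j$, and the exit--data bounds \eqref{derivatives of tbxbvb}--\eqref{derivatives of tbxbvb better}, all derived there from the trajectory ODE \eqref{iterated char} with forcing \eqref{eq.forcing}. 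For (i) one uses the assumed weighted bounds on $\nabla F^{\textup{in}}_\pm$, $\nabla G_\pm$, together with the elementary comparison $\vZ\lesssim\langle\VSlo(s)\rangle+C_2|t-s|$ (the analogue of \eqref{additional v bound.st}), to convert decay of the data along the flow into $(\vZ)^{-m}$ decay, the margin $m>4$ leaving room for the at most one or two powers of $\langle v\rangle$ lost through the Jacobians.

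\textbf{The singular factor.} The only real difficulty is the factor $1/|(\hat{\mathcal{V}}_\pm^{l+1})_3(t-\tblo)|$ appearing in $\partial\tblo$, $\partial\xblo$, $\partial\vblo$ whenever the characteristic meets $\{x_3=0\}$, and I would split according to the two branches of \eqref{solution f}. On the initial branch $t\le\tblo$ the trajectory never reaches the boundary, so no singular factor is present: $\nabla_{x_\parallel}\flo$ and $\nabla_v\flo$ are bounded at once by Lemma \ref{cor.tbxbvb deri}, and for $\tilde{\alpha}_\pm\partial_{x_3}\flo$ one uses Lemma \ref{lemma.t bound by v and alpha.s}, which trades $t$ (hence the travel time governed by \eqref{exit time bound.whole}) for $\max\{\langle\VSlo(0)\rangle,\vZ\}\,\tilde{\alpha}_\pm(0,\XSlo(0),\VSlo(0))$, after which the velocity lemma (Lemma \ref{lemma.trajectory of alpha.s}, estimate \eqref{alpha.s.lem}) transports $\tilde{\alpha}_\pm(0,\cdot)$ to $\tilde{\alpha}_\pm(t,x,v)$ at the price of $e^{CT/c_0}$. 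On the boundary branch $t>\tblo$, the derivatives in \eqref{derivatives of tbxbvb} carry $\tblo/|(\hat{\mathcal{V}}_\pm^{l+1})_3(t-\tblo)|$; for $\nabla_{x_\parallel}\flo,\nabla_v\flo$ this is controlled by Lemma \ref{lemma.tb over v3}, $\tblo/(\hat{\mathcal{V}}_\pm^{l+1})_3(t-\tblo)\le \tfrac{C}{c_0}\max_{s\in\{t-\tblo,t\}}\vZ(s)$, the remaining $\vZ(s)$ being absorbed through $\vZ(s)\lesssim\langle v\rangle+C_2|t-s|$ and $\tblo\lesssim_T\vZ$; for $\tilde{\alpha}_\pm\partial_{x_3}\flo$ the bare factor $1/|(\hat{\mathcal{V}}_\pm^{l+1})_3(t-\tblo)|$ is cancelled directly by multiplying with $\tilde{\alpha}_\pm(t,x,v)$ and invoking \eqref{alpha.s.lem} together with the boundary identity $\tilde{\alpha}_\pm(t-\tblo,\xblo,\vblo)=|(\hat{\mathcal{V}}_\pm^{l+1})_3(t-\tblo)|/\sqrt{1+|(\hat{\mathcal{V}}_\pm^{l+1})_3(t-\tblo)|^2}$ from \eqref{alphav3.s}.

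\textbf{Temporal derivative and uniformity.} Once $\nabla_{x_\parallel}\flo$, $\tilde{\alpha}_\pm\partial_{x_3}\flo$ and $\nabla_v\flo$ are under control, the temporal derivative is recovered from the Vlasov equation \eqref{iterated Vlasov.M}, $\partial_t\flo=-\hat{v}_\pm\cdot\nabla_x\flo-\FS\cdot\nabla_v\flo$, using $\|\FS\|_{L^\infty}\le C_2$; the only term needing care is $(\hat{v}_\pm)_3\partial_{x_3}\flo$, which one writes as $\frac{(\hat{v}_\pm)_3}{\tilde{\alpha}_\pm}\bigl(\tilde{\alpha}_\pm\partial_{x_3}\flo\bigr)$ and bounds via $|(\hat{v}_\pm)_3|\le\alpha_\pm$ (immediate from \eqref{alpha} and $-(\FS)_3>c_0>0$), so that $|(\hat{v}_\pm)_3|/\tilde{\alpha}_\pm\le\sqrt{1+\alpha_\pm^2}$ is controlled on the relevant range; this yields the $(\vZ)^m$-weighted bound on $\partial_t\flo$ and closes \eqref{deri F estimate final}. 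Every constant above depends on the fields only through $C_1,C_2$, which are the uniform-in-$l$ bootstrap bounds of Section \ref{sec.boot.decay}; hence the estimate is uniform in $l$ and survives the passage $l\to\infty$, while the field-derivative estimates, not needed here, are treated separately in Section \ref{sec.deri.EB}. The main obstacle is exactly the one common to all kinetic boundary-value problems in the spirit of Guo: the uniform control of $1/|(\hat{\mathcal{V}}_\pm^{l+1})_3(t-\tblo)|$ near the grazing set, which succeeds only through the tight interlock between the bounded weight $\tilde{\alpha}_\pm$, its transport estimate \eqref{alpha.s.lem}, the travel-time--versus--energy balance \eqref{exit time bound.whole}, and the singularity bounds \eqref{V3V0 bound max} and \eqref{t bound by v and alpha.s}; making the $\langle v\rangle$-powers close is what forces $m>4$, and keeping the bound finite is what forces the exponential $T$-dependence recorded in $C_T$.
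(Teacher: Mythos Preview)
Your approach is the paper's approach: Lagrangian representation, chain rule \eqref{Derivatives of f}, Jacobian control via Lemma~\ref{cor.tbxbvb deri}, the split $t\le\tblo$ versus $t>\tblo$, Lemma~\ref{lemma.tb over v3} on the boundary branch, the velocity lemma plus the boundary identity \eqref{alphav3.s} for $\tilde\alpha_\pm\partial_{x_3}$, and the Vlasov equation for $\partial_t$. Two points deserve tightening.

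\emph{Placement of Lemma~\ref{lemma.t bound by v and alpha.s}.} You say that on the initial branch $\nabla_{x_\parallel}\flo$ and $\nabla_v\flo$ are ``bounded at once by Lemma~\ref{cor.tbxbvb deri}''. Not quite: both contain the cross term $(\partial_{x_3}F^{\textup{in}}_\pm)(\XSlo(0),\VSlo(0))\cdot\partial_{\bullet}(\XSlo)_3(0)$, and the only available data norm for $\partial_{x_3}F^{\textup{in}}_\pm$ carries the weight $\tilde\alpha_\pm$. So you must manufacture a factor $\tilde\alpha_\pm(0,\XSlo(0),\VSlo(0))$ out of the off--diagonal Jacobian $\partial_{\bullet}(\XSlo)_3(0)$. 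This is exactly where the paper invokes \eqref{t bound by v and alpha.s}, trading the $O(t/\vZ)$ size of the mixed Jacobian for $\tilde\alpha_\pm(0,\cdot)$. Conversely, in the $\tilde\alpha_\pm\partial_{x_3}\flo$ estimate on the initial branch you do \emph{not} need Lemma~\ref{lemma.t bound by v and alpha.s}: the velocity lemma \eqref{alpha.s.lem} alone transports $\tilde\alpha_\pm(t,x,v)$ to $\tilde\alpha_\pm(0,\XSlo(0),\VSlo(0))$, which then pairs with $\partial_{x_3}F^{\textup{in}}_\pm$; the other two pieces use $\tilde\alpha_\pm\le 1$ and \eqref{dxixj}.

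\emph{The temporal derivative.} Your bound $|(\hat v_\pm)_3|/\tilde\alpha_\pm\le\sqrt{1+\alpha_\pm^2}$ is correct but useless as stated, since $\alpha_\pm^2\ge x_3^2$ is unbounded. You need one more ingredient, $|(\hat v_\pm)_3|\le 1$: combining it with $|(\hat v_\pm)_3|\le\alpha_\pm$ gives $|(\hat v_\pm)_3|^2(1+\alpha_\pm^2)\le 2\alpha_\pm^2$, hence $|(\hat v_\pm)_3|\le\sqrt{2}\,\tilde\alpha_\pm(t,x,v)$ uniformly (this is precisely Lemma~\ref{v3hat bound}, whose proof does not use the hypothesis $t<\tblo$). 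The paper records the same inequality, phrasing it via $\alpha_\pm(t,x_\parallel,0,v)=|(\hat v_\pm)_3|$ and the velocity lemma.
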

\begin{remark}
    The derivative estimate \eqref{deri F estimate final} is uniform in $l$, and hence the limit $F_\pm^\infty$ also satisfies the same estimate.
\end{remark}

\begin{proof}
  Fix $m>4$. By \eqref{Derivatives of f}, we observe that $ (\vZ) ^m|\nabla_{x_\parallel} \flo|$ is bounded from above by  
    \begin{equation*}
    \begin{split}          (\vZ)^m|\nabla_{x_\parallel} \flo(t, x, v)| &\le  (\vZ)^m\bigg|
       (\nabla_x F^{\textup{in}}_\pm)(\XSlo(0; t, x, v), \VSlo(0; t, x, v)) \cdot \nabla_{x_\parallel} \XSlo(0; t, x, v) \\&+(\nabla_v F^{\textup{in}}_\pm)(\XSlo(0; t, x, v), \VSlo(0; t, x, v))\cdot \nabla_{x_\parallel} \VSlo(0; t, x, v)\bigg|1_{t \leq \tblo(t, x, v)}\\
      & + (\vZ)^m\bigg|  (\nabla_{x_\parallel} G_\pm)( (\xblo)_\parallel, \vblo) \cdot \nabla_{x_\parallel} (\xblo)_\parallel\\&+ (\nabla_v G_\pm)( (\xblo)_\parallel, \vblo) \cdot \nabla_{x_\parallel} \vblo\bigg|1_{t > \tblo(t, x, v)}\\
&\lesssim  1_{t \leq \tblo} (\vZ)^m|(\nabla_{x_\parallel}F^{\textup{in}}_\pm)(\XSlo(0; t, x, v), \VSlo(0; t, x, v))| |\nabla_{x_\parallel} (\XSlo)_\parallel(0; t, x, v)| \\&+ 1_{t \leq \tblo} (\vZ)^m|(\partial_{x_3}F^{\textup{in}}_\pm)(\XSlo(0; t, x, v), \VSlo(0; t, x, v))| |\nabla_{x_\parallel} (\XSlo)_3(0; t, x, v)|\\&+1_{t \leq \tblo} (\vZ)^m|(\nabla_v F^{\textup{in}}_\pm)(\XSlo(0; t, x, v), \VSlo(0; t, x, v))| |\nabla_{x_\parallel} \VSlo(0; t, x, v)|\\
       &+1_{t > \tblo} (\vZ)^m |(\nabla_{x_\parallel} G_\pm)( (\xblo)_\parallel, \vblo)||\nabla_{x_\parallel} (\xblo)_\parallel|\\&+1_{t > \tblo} (\vZ)^m |(\nabla_v G_\pm)( (\xblo)_\parallel, \vblo)||\nabla_{x_\parallel} \vblo|.
       \end{split}
    \end{equation*} 
In general, notice that \begin{multline*}
        \sup_{t-\tblo<s<t}\langle \VSlo(s)\rangle \lesssim \sup_{t-\tblo<s<t}\left( 1+|\VSlo(0)|+\bigg|\int_0^sd\tau\  \FS(\tau,\XSlo(\tau),\VSlo(\tau))\bigg|\right) \\\lesssim\langle \VSlo(0)\rangle +C_2\max\{T,|t-\tblo|\},
    \end{multline*} by \eqref{Fl C2 bound.s}.
    In addition, note that for $0\le s\le T,$
\begin{equation}\label{additional v bound.s}
        (\vZ) \lesssim  \langle\VSlo(s)\rangle+\bigg|\int_s^td\tau\  \FS(\tau,\XSlo(\tau),\VSlo(\tau))\bigg|\lesssim\langle \VSlo(s)\rangle +C_2T\lesssim C_T\langle \VSlo(s)\rangle  ,
    \end{equation} by \eqref{Fl C2 bound.s}. 
    Using \eqref{additional v bound.s}, 
     \eqref{derivatives of char}, and \eqref{dxixj} with $s=0$ and \eqref{t bound by v and alpha.s} 
     for $t\le \tblo$ terms and using \eqref{derivatives of tbxbvb}--\eqref{derivatives of tbxbvb better} for $t>\tblo$ terms, we obtain
    \begin{align*}
 (\vZ)^m|\nabla_{x_\parallel} \flo(t, x, v)|
&\lesssim  C_T\bigg(\| (\vZ)^m\nabla_{x_\parallel}F^{\textup{in}}_\pm\|_{L^\infty_{x,v}} +  \| (\vZ)^m\tilde{\alpha}_{\pm}\partial_{x_3}F^{\textup{in}}_\pm\|_{L^\infty_{x,v}}  +\| (\vZ)^m\nabla_v F^{\textup{in}}_\pm\|_{L^\infty_{x,v}}\bigg)\\*
       &+C_T\bigg(1_{t>\tblo} (\vZ)^m |(\nabla_{x_\parallel} G_\pm)((\xblo)_\parallel, \vblo)|\bigg|\frac{\tblo}{|\hat{\mathcal{V}}_\pm^{l+1}(t-\tblo)|(\vZ) }+1\bigg|\\*
      & +1_{t>\tblo} (\vZ)^m |(\nabla_v G_\pm)( (\xblo)_\parallel, \vblo)|\bigg|\frac{\tblo}{|\hat{\mathcal{V}}_\pm^{l+1}(t-\tblo)|(\vZ) }+1\bigg|\bigg),
    \end{align*}where also used $\tblo\le T$ for the terms with $1_{t>\tblo}.$ 
    For the terms with $1_{t>\tblo},$ by Lemma \ref{lemma.tb over v3}, we further observe that if $t>\tblo$,
    \begin{multline}\label{tb over v3 bound.s}
        \bigg|\frac{\tblo}{|(\hat{\mathcal{V}}_\pm^{l+1})_3(t-\tblo)|(\vZ) }\bigg|\le \frac{C}{c_0}\frac{\max_{s\in\{t-\tblo,t\}}\sqrt{m_\pm^2+|\VSlo(s)|^2}}{(\vZ)}\\\lesssim \frac{C}{c_0}\sup_{t-\tblo<s<t}\left(1+ \frac{1}{(\vZ)}\bigg|\int_s^t \FS(\tau,\XSlo(\tau),\VSlo(\tau))d\tau\bigg|\right)\lesssim \frac{C}{c_0}\left(1+\frac{C_2\tblo}{(\vZ)}\right)\lesssim C_T.
    \end{multline}
    Also for $1_{t>\tblo} $ terms, we use \eqref{additional v bound.s} with $s=t-\tblo$ and that $\tblo<t\le T$ to conclude that
    \begin{multline*}
\| (\vZ)^m\nabla_{x_\parallel} \flo(t, \cdot,\cdot)\|_{L^\infty_{x,v}}
\lesssim  C_T\bigg(\| (\vZ)^m\nabla_{x_\parallel}F^{\textup{in}}_\pm\|_{L^\infty_{x,v}} +  \| (\vZ)^m\tilde{\alpha}_{\pm}\partial_{x_3}F^{\textup{in}}_\pm\|_{L^\infty_{x,v}} +\| (\vZ)^m\nabla_v F^{\textup{in}}_\pm\|_{L^\infty_{x,v}}\bigg)\\*
       +C_T \bigg(\| (\vZ)^m \nabla_{x_\parallel} G_\pm\|_{L^\infty_{x_\parallel,v}}
      +\| (\vZ)^m \nabla_v G_\pm\|_{L^\infty_{x_\parallel,v}}\bigg).
    \end{multline*}

Regarding the derivative $\partial_{x_3}\flo,$ we observe that \eqref{Derivatives of f} implies
\begin{align*}
      &  (\vZ)^m\tilde{\alpha}_{\pm}(t,x,v)  |\partial_{x_3}\flo(t, x, v)|\\ &\le (\vZ)^m\tilde{\alpha}_{\pm}(t,x,v) \bigg|
       (\nabla_x F^{\textup{in}}_\pm)(\XSlo(0; t, x, v), \VSlo(0; t, x, v)) \cdot \partial_{x_3} \XSlo(0; t, x, v) \\
       &+(\nabla_v F^{\textup{in}}_\pm)(\XSlo(0; t, x, v), \VSlo(0; t, x, v))\cdot \partial_{x_3} \VSlo(0; t, x, v)\bigg|1_{t \leq \tblo(t, x, v)}\\
       &+ (\vZ)^m\tilde{\alpha}_{\pm}(t,x,v)\bigg| (\nabla_{x_\parallel} G_\pm)( (\xblo)_\parallel, \vblo) \cdot \partial_{x_3} (\xblo)_\parallel+ (\nabla_v G_\pm)( (\xblo)_\parallel, \vblo) \cdot \partial_{x_3} \vblo\bigg|1_{t > \tblo(t, x, v)}\\
&\lesssim  1_{t \leq \tblo} (\vZ)^m\tilde{\alpha}_{\pm}(t,x,v)|(\nabla_{x_\parallel}F^{\textup{in}}_\pm)(\XSlo(0; t, x, v), \VSlo(0; t, x, v))| |\partial_{x_3} (\XSlo)_\parallel(0; t, x, v)| \\
&+ 1_{t \leq \tblo} (\vZ)^m\tilde{\alpha}_{\pm}(t,x,v)|(\partial_{x_3}F^{\textup{in}}_\pm)(\XSlo(0; t, x, v), \VSlo(0; t, x, v))| |\partial_{x_3}(\XSlo)_3(0; t, x, v)|\\
&+1_{t \leq \tblo} (\vZ)^m\tilde{\alpha}_{\pm}(t,x,v)|(\nabla_v F^{\textup{in}}_\pm)(\XSlo(0; t, x, v), \VSlo(0; t, x, v))| |\partial_{x_3} \VSlo(0; t, x, v)|\\
       &+1_{t > \tblo} (\vZ)^m\tilde{\alpha}_{\pm}(t,x,v)|(\nabla_{x_\parallel} G_\pm)( (\xblo)_\parallel, \vblo)||\partial_{x_3} (\xblo)_\parallel|\\
       &+1_{t > \tblo}  (\vZ)^m\tilde{\alpha}_{\pm}(t,x,v)|(\nabla_v G_\pm)( (\xblo)_\parallel, \vblo)||\partial_{x_3}\vblo|.
    \end{align*}
For the terms with $1_{t\le \tblo}$, we use \eqref{dxixj} with $i=3$, $j=1,2$ and $s=0$ for $\partial_{x_3} (\XSlo)_\parallel(0)$ term and use \eqref{derivatives of char} with $i=3$ and $s=0$ for $\partial_{x_3} (\XSlo)_3(0)$ and $\partial_{x_3} \VSlo(0)$ terms to further obtain that
\begin{multline*}
     (\vZ)^m\tilde{\alpha}_{\pm}(t,x,v) 1_{t \leq \tblo}|(\nabla_{x_\parallel}F^{\textup{in}}_\pm)(\XSlo(0; t, x, v), \VSlo(0; t, x, v))| |\partial_{x_3} (\XSlo)_\parallel(0; t, x, v)|\\
     \lesssim C_T (\vZ)^{m-1}\tilde{\alpha}_{\pm}(t,x,v)1_{t\le \tblo}|(\nabla_{x_\parallel}F^{\textup{in}}_\pm)(\XSlo(0; t, x, v), \VSlo(0; t, x, v))|,
\end{multline*}
\begin{multline*}
    (\vZ)^m\tilde{\alpha}_{\pm}(t,x,v) 1_{t \leq \tblo}|(\partial_{x_3}F^{\textup{in}}_\pm)(\XSlo(0; t, x, v), \VSlo(0; t, x, v))| |\partial_{x_3}(\XSlo)_3(0; t, x, v)|\\*
     \lesssim C_T (\vZ)^m\tilde{\alpha}_{\pm}(t,x,v)1_{t\le \tblo}|(\partial_{x_3}F^{\textup{in}}_\pm)(\XSlo(0; t, x, v), \VSlo(0; t, x, v))|,
\end{multline*}
\begin{multline*}
     (\vZ)^m\tilde{\alpha}_{\pm}(t,x,v) 1_{t \leq \tblo}|(\nabla_v F^{\textup{in}}_\pm)(\XSlo(0; t, x, v), \VSlo(0; t, x, v))| |\partial_{x_3} \VSlo(0; t, x, v)|\\
     \lesssim C_T (\vZ)^m\tilde{\alpha}_{\pm}(t,x,v)1_{t\le \tblo}|(\nabla_v F^{\textup{in}}_\pm)(\XSlo(0; t, x, v), \VSlo(0; t, x, v))|,
\end{multline*}since $t\le T.$
Therefore, by the fact that $\tilde{\alpha}_{\pm}\le 1$ and that $\tilde{\alpha}_{\pm}$ also satisfies the additional bound \eqref{alpha.s.lem} with $s=0,$ we have
\begin{multline*}
     (\vZ)^m\tilde{\alpha}_{\pm}(t,x,v) 1_{t \leq \tblo}|(\nabla_{x_\parallel}F^{\textup{in}}_\pm)(\XSlo(0; t, x, v), \VSlo(0; t, x, v))| |\partial_{x_3} (\XSlo)_\parallel(0; t, x, v)|\\
     \lesssim C_T \langle \VSlo(0)\rangle^{m-1} 1_{t\le \tblo}|(\nabla_{x_\parallel}F^{\textup{in}}_\pm)(\XSlo(0; t, x, v), \VSlo(0; t, x, v))|\lesssim C_T \| (\vZ)^{m-1}\nabla_{x_\parallel}F^{\textup{in}}_\pm\|_{L^\infty_{x,v}},
\end{multline*}
\begin{multline*}
    (\vZ)^m\tilde{\alpha}_{\pm}(t,x,v) 1_{t \leq \tblo}|(\partial_{x_3}F^{\textup{in}}_\pm)(\XSlo(0; t, x, v), \VSlo(0; t, x, v))| |\partial_{x_3}(\XSlo)_3(0; t, x, v)|\\
     \lesssim C_T \langle \VSlo(0)\rangle^m \tilde{\alpha}_{\pm}(0,\XSlo(0),\VSlo(0))1_{t\le \tblo}|(\partial_{x_3}F^{\textup{in}}_\pm)(\XSlo(0; t, x, v), \VSlo(0; t, x, v))|\\
     \lesssim C_T \|(\vZ)^m\tilde{\alpha}_{\pm} \partial_{x_3}F^{\textup{in}}_\pm\|_{L^\infty_{x,v}} ,
\end{multline*}
\begin{multline*}
  (\vZ)^m\tilde{\alpha}_{\pm}(t,x,v) 1_{t \leq \tblo}|(\nabla_vF^{\textup{in}}_\pm)(\XSlo(0; t, x, v), \VSlo(0; t, x, v))| |\partial_{x_3}(\XSlo)_3(0; t, x, v)|\\
     \lesssim C_T \langle \VSlo(0)\rangle^m  1_{t\le \tblo}|(\nabla_vF^{\textup{in}}_\pm)(\XSlo(0; t, x, v), \VSlo(0; t, x, v))|
     \lesssim C_T \|(\vZ)^m   \nabla_v F^{\textup{in}}_\pm\|_{L^\infty_{x,v}}.
\end{multline*}
    On the other hand, if $t>\tblo,$ by \eqref{derivatives of tbxbvb},
\begin{align*}
 & (\vZ)^m\tilde{\alpha}_{\pm}(t,x,v) 1_{t > \tblo}|(\nabla_{x_\parallel} G_\pm)( (\xblo)_\parallel, \vblo)||\partial_{x_3} (\xblo)_\parallel|\\
       &+(\vZ)^m\tilde{\alpha}_{\pm}(t,x,v) 1_{t > \tblo} |(\nabla_v G_\pm)( (\xblo)_\parallel, \vblo)||\partial_{x_3}\vblo|\\
       &\lesssim C_T\bigg(1_{t>\tblo}(\vZ)^m |(\nabla_{x_\parallel} G_\pm)((\xblo)_\parallel, \vblo)|\tilde{\alpha}_{\pm}(t,x,v)\left|\frac{1}{|(\hat{\mathcal{V}}_\pm^{l+1})_3(t-\tblo)|} +\frac{1}{\langle v \rangle}\right|\\
      & +1_{t>\tblo}(\vZ)^m |(\nabla_v G_\pm)( (\xblo)_\parallel, \vblo)|\tilde{\alpha}_{\pm}(t,x,v)\left|\frac{1}{|(\hat{\mathcal{V}}_\pm^{l+1})_3(t-\tblo)|} +1\right|\bigg),
    \end{align*}where also used $\tblo\le T$ for the terms with $1_{t>\tblo}.$      By using \eqref{alphav3.s}, \eqref{alpha.s.lem} with $\tblo\le T$, and \eqref{additional v bound.s} with $s=t-\tblo$  for the terms with $1_{t>\tblo},$ we conclude that
    \begin{align*}
&\|(\vZ)^m\tilde{\alpha}_{\pm} \partial_{x_3}  \flo(t, \cdot,\cdot)\|_{L^\infty_{x,v}}\\
&\lesssim  C_T\bigg(\| (\vZ)^{m-1}\nabla_{x_\parallel}F^{\textup{in}}_\pm\|_{L^\infty_{x,v}} +  \|(\vZ)^m\tilde{\alpha}_{\pm}\partial_{x_3}F^{\textup{in}}_\pm\|_{L^\infty_{x,v}} +\| (\vZ)^m\nabla_v F^{\textup{in}}_\pm\|_{L^\infty_{x,v}}\bigg)\\
       &+C_T\bigg( \| (\vZ)^m \nabla_{x_\parallel} G_\pm\|_{L^\infty_{x_\parallel,v}}
      +\| (\vZ)^m\nabla_v G_\pm\|_{L^\infty_{x_\parallel,v}}\bigg).   \end{align*}

Finally, we consider a weighted upper-bound estimate for the momentum derivative $|\nabla_v\flo|.$ By \eqref{Derivatives of f}, we observe that $ (\vZ)^m|\nabla_v\flo|$ is bounded from above by
    \begin{align*}
         & (\vZ)^m|\nabla_v \flo(t, x, v)| \le  (\vZ)^m\bigg|
       (\nabla_x F^{\textup{in}}_\pm)(\XSlo(0; t, x, v), \VSlo(0; t, x, v)) \cdot \nabla_v  \XSlo(0; t, x, v) \\*
       &+(\nabla_v F^{\textup{in}}_\pm)(\XSlo(0; t, x, v), \VSlo(0; t, x, v))\cdot \nabla_v  \VSlo(0; t, x, v)\bigg|1_{t \leq \tblo(t, x, v)}\\*
       &+ (\vZ)^m\bigg|(\nabla_{x_\parallel} G_\pm)( (\xblo)_\parallel, \vblo) \cdot \nabla_v  (\xblo)_\parallel+ (\nabla_v G_\pm)( (\xblo)_\parallel, \vblo) \cdot \nabla_v  \vblo\bigg|1_{t > \tblo(t, x, v)}\\
&\lesssim  1_{t \leq \tblo} (\vZ)^m|(\nabla_{x_\parallel}F^{\textup{in}}_\pm)(\XSlo(0; t, x, v), \VSlo(0; t, x, v))| |\nabla_v  (\XSlo)_\parallel(0; t, x, v)| \\*
&+ 1_{t \leq \tblo} (\vZ)^m|(\partial_{x_3}F^{\textup{in}}_\pm)(\XSlo(0; t, x, v), \VSlo(0; t, x, v))| |\nabla_v  (\XSlo)_3(0; t, x, v)|\\*
&+1_{t \leq \tblo} (\vZ)^m|(\nabla_v F^{\textup{in}}_\pm)(\XSlo(0; t, x, v), \VSlo(0; t, x, v))| |\nabla_v  \VSlo(0; t, x, v)|\\*
       &+1_{t > \tblo} (\vZ)^m |(\nabla_{x_\parallel} G_\pm)( (\xblo)_\parallel, \vblo)||\nabla_v  (\xblo)_\parallel|+1_{t > \tblo} (\vZ)^m |(\nabla_v G_\pm)( (\xblo)_\parallel, \vblo)||\nabla_v  \vblo|.
    \end{align*} 
Using \eqref{additional v bound.s} and \eqref{derivatives of char} with $s=0$ and \eqref{t bound by v and alpha.s} for $t\le \tblo$ terms and using \eqref{derivatives of tbxbvb} for $t>\tblo$ terms, we obtain
    \begin{align*}
 &(\vZ)^m|\nabla_v  \flo(t, x, v)|
\lesssim  C_T\bigg(\| (\vZ)^{m-1}\nabla_{x_\parallel}F^{\textup{in}}_\pm\|_{L^\infty_{x,v}} +  \| (\vZ)^m\tilde{\alpha}_{\pm}\partial_{x_3}F^{\textup{in}}_\pm\|_{L^\infty_{x,v}}  +\| (\vZ)^m\nabla_v F^{\textup{in}}_\pm\|_{L^\infty_{x,v}}\bigg)\\
       &+C_T(\vZ)^{m-1}\bigg(1_{t>\tblo}  |(\nabla_{x_\parallel} G_\pm)((\xblo)_\parallel, \vblo)|\frac{\tblo}{|\hat{\mathcal{V}}_\pm^{l+1}(t-\tblo)| }\\
       &+1_{t>\tblo}  |(\nabla_v G_\pm)( (\xblo)_\parallel, \vblo)|\bigg|\frac{\tblo}{|\hat{\mathcal{V}}_\pm^{l+1}(t-\tblo)|(\vZ) }+1\bigg|\bigg),
    \end{align*}where also used $\tblo\le T$ for the terms with $1_{t>\tblo}.$ 
    By using \eqref{tb over v3 bound.s} and \eqref{additional v bound.s} with $s=t-\tblo$ for the terms with $1_{t>\tblo},$ we conclude that
    \begin{align*}
&\| (\vZ)^m\nabla_v  \flo(t, \cdot,\cdot)\|_{L^\infty_{x,v}}
\lesssim  C_T\bigg(\| (\vZ)^{m-1}\nabla_{x_\parallel}F^{\textup{in}}_\pm\|_{L^\infty_{x,v}} +  \| (\vZ)^m\tilde{\alpha}_{\pm}\partial_{x_3}F^{\textup{in}}_\pm\|_{L^\infty_{x,v}} +\| (\vZ)^m\nabla_v F^{\textup{in}}_\pm\|_{L^\infty_{x,v}}\bigg)\\*
       &+C_T\bigg(  \| (\vZ)^m \nabla_{x_\parallel} G_\pm\|_{L^\infty_{x_\parallel,v}}
      +\| (\vZ)^{m-1}\nabla_v G_\pm\|_{L^\infty_{x_\parallel,v}}\bigg).
    \end{align*}

Lastly, we consider the temporal derivative $\partial_t \flo.$ Using the Vlasov equation \eqref{2speciesVM}$_1$, we have
$$|(\vZ)^m \partial_t \flo|\le |(\vZ)^m \nabla_{x_\parallel} \flo|+|(\vZ)^m (\hat{v}_\pm)_3\partial_{x_3} \flo|+|(\vZ)^m \nabla_{v} \flo||\FS|. $$ Note that $$(\hat{v}_\pm)_3=\alpha_\pm(t,x_\parallel,0,v)\le 2\sqrt{\frac{\alpha_\pm^2(t,x_\parallel,0,v)}{1+\alpha_\pm^2(t,x_\parallel,0,v)}}\lesssim_T \tilde{\alpha}_{\pm}(s,\XSlo(s),\VSlo(s)), $$ for any $s\in(0,T).$ Therefore, we conclude
\begin{multline*}
    \|(\vZ)^m \partial_t \flo(t,\cdot,\cdot)\|_{L^\infty_{x,v}}\\
    \lesssim_T \| (\vZ)^m\nabla_{x_\parallel}  \flo(t, \cdot,\cdot)\|_{L^\infty_{x,v}}+\| (\vZ)^m\tilde{\alpha}_{\pm}\partial_{x_3}  \flo(t, \cdot,\cdot)\|_{L^\infty_{x,v}}+\| (\vZ)^m\nabla_v  \flo(t, \cdot,\cdot)\|_{L^\infty_{x,v}}.
\end{multline*}

This completes the proof of Proposition \ref{prop.derivative}.
 \end{proof}

\section{Regularity Estimates for the Electromagnetic Fields}\label{sec.deri.EB}
In this section, we provide consider derivative estimates of the self-consistent electromagnetic fields $(\Elobf,\Blobf)$ whose representations are given via \eqref{Eparallel homo solution}, \eqref{Ei5}, \eqref{E3}, \eqref{B_half_final}, \eqref{Bpar_half_final}, \Black{and \eqref{BparS_half_final}}. In the following three subsections, we consider the derivatives of $(\Elobf,\Blobf)$ in tangential, normal, and temporal directions and eventually prove the following proposition:   \begin{proposition}  \label{prop.main.deri.field}Suppose that $\Elobf$ and $\Blobf$ are defined through \eqref{Ei5}, \eqref{E3},  \eqref{B_half_final}, \eqref{Bpar_half_final}, \Black{and \eqref{BparS_half_final}}. Suppose that $-(\FS)_3(t,x_\parallel,0,v)>c_0,$ for some $c_0>0.$  Let $g \geq 1$ and $\beta > 1$ be chosen sufficiently large so that
$$
\min\{m_+^2,m_-^2\}g^2\beta\gg 1 \quad \textup{and} \quad \min\{m_+^2,m_-^2\}\beta^4\gg 1.
$$Also, suppose that the temporal derivatives of the initial profiles, understood through the system of equations, satisfy the assumptions \eqref{initial E0i}--\eqref{f initial condition}.  Then for any given $T>0$ and some $m>4$, we have\begin{multline}\label{EloBlo estimate final}
       \|(\Elobf,\Blobf)\|_{W^{1,\infty}_{t,x}([0,T]\times \Omega)}
       \lesssim_T (1+ \|(\E^{\textup{in}},\B^{\textup{in}})\|_{C^2_x(\Omega)})(1+\|(\vZ)^m \flo\|^2_{L^\infty_{t,x,v}([0,T]\times \Omega\times \rth)})\\*+\|(\vZ)^m\nabla_{x_\parallel}\flo\|_{L^\infty_{t,x,v}([0,T]\times \Omega\times \rth)}
       +\|(\vZ)^m\tilde{\alpha}_{\pm}(t,x,v)\partial_{x_3}\flo\|_{L^\infty_{t,x,v}([0,T]\times \Omega\times \rth)}.
   \end{multline} 
   \end{proposition}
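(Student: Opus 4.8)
The plan is to estimate each component of the derivative $W^{1,\infty}_{t,x}$-norm of $(\Elobf,\Blobf)$ through the explicit field representations \eqref{Ei5}, \eqref{E3}, \eqref{B\_half\_final}, \eqref{Bpar\_half\_final}, splitting the work into tangential derivatives $\nabla_{x_\parallel}$, the normal derivative $\partial_{x_3}$, and the temporal derivative $\partial_t$. For the tangential and temporal derivatives the strategy is the standard Glassey--Strauss differentiation trick: when a spatial (or time) derivative falls on the retarded kernel $1/|x-y|$ (or on the surface integrands), one converts it, via the identity $\partial_{x_i}\big[g(t-|x-y|,y)\big] = -\omega_i\partial_t g - (\text{derivative acting on }y)$ combined with integration by parts in $y$, into derivatives that land on $\flo$ in the tangential variables only — no normal derivative of $\flo$ is ever produced for these directions. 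This is what makes the tangential/temporal estimates close using only $\|(\vZ)^m\nabla_{x_\parallel}\flo\|_{L^\infty}$ and $\|(\vZ)^m\partial_t\flo\|_{L^\infty}$ (the latter already controlled in Proposition \ref{prop.derivative} in terms of $\nabla_{x_\parallel}\flo$, $\tilde\alpha_\pm\partial_{x_3}\flo$, $\nabla_v\flo$) together with the kernel bounds \eqref{ET kernel estimate}, \eqref{eq.ib1 kernel}, \eqref{a2}, \eqref{a1}, and the velocity-integral decay \eqref{additional beta decay}. The quadratic term $\|(\vZ)^m\flo\|^2_{L^\infty}$ on the right of \eqref{EloBlo estimate final} comes precisely from the nonlinear $S$-terms $\Eplo_{iS}$, where the kernel $a^{\E}_{\pm,i}$ is dotted against $\FS^l = \pm\Elbf\pm\hat v_\pm\times\Blbf - m_\pm g\hat e_3$ and $\FS^l$ is itself controlled, via the field representations, by a first power of $\|(\vZ)^m\flo\|_{L^\infty}$ (plus the initial-data contribution), so that after the $v$- and $y$-integrations one is left with $(\text{const})\cdot\|(\vZ)^m\flo\|_{L^\infty}^2$.

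The genuinely delicate part is the \textbf{normal derivative} $\partial_{x_3}$ of the fields. Here the Glassey--Strauss conversion is obstructed near the glancing set $\{x_3=0,\,(\hat v_\pm)_3=0\}$ and on the boundary-value ($b2$) and $\textup{add}$ terms localized on $B(x;t)\cap\{y_3=0\}$, because a naive differentiation would force a bare $\partial_{x_3}\flo$ to appear, which is only controlled in the weighted form $\tilde\alpha_\pm\partial_{x_3}\flo$. The plan is to exploit exactly this: differentiate the representation, isolate the term carrying $\partial_{x_3}\flo$, and insert $1 = \tilde\alpha_\pm^{-1}\cdot\tilde\alpha_\pm$, absorbing the singular factor $\tilde\alpha_\pm^{-1}$ into the kinetic geometry. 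The key algebraic fact — the analogue of \eqref{alphav3} / \eqref{alphav3.s} — is that on the boundary slice the weight $\tilde\alpha_\pm$ degenerates precisely like $|(\hat v_\pm)_3|$, which is the Jacobian factor produced when the $y_3=0$ surface integral is differentiated in $x_3$ and then re-expressed as a solid integral (changing variables in the retarded time); these two degeneracies cancel. Combined with the velocity lemma (Lemma \ref{lemma.trajectory of alpha.s}) controlling $\tilde\alpha_\pm$ along trajectories and the $v$-moment bound \eqref{additional beta decay} (which supplies the needed power of $\beta^{-1}$ to beat the $\langle x\rangle^{-1}$-type volume growth via \eqref{asymptotics}), the normal-derivative contribution closes with the weighted seminorm $\|(\vZ)^m\tilde\alpha_\pm\partial_{x_3}\flo\|_{L^\infty}$, as claimed in \eqref{EloBlo estimate final}.

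Concretely I would organize the section as three lemmas, mirroring the structure already announced: \textbf{(i)} Lemma on tangential derivatives (the one referenced as Lemma \ref{lem.tangential deri} in the stationary section), giving $\|\nabla_{x_\parallel}(\Elobf,\Blobf)\|_{L^\infty}\lesssim \text{RHS of }\eqref{EloBlo estimate final}$; \textbf{(ii)} Lemma on the normal derivative (referenced as Lemma \ref{lem.normal deri}), using the $\tilde\alpha_\pm$-cancellation above; \textbf{(iii)} the temporal derivative, which is immediate from the Maxwell system $\partial_t\Elobf = \nabla_x\times\Blobf - 4\pi\mathcal J^{l+1}$ and $\partial_t\Blobf = -\nabla_x\times\Elobf$ once (i)--(ii) are in hand, with $\mathcal J^{l+1}$ bounded by $\|(\vZ)^m\flo\|_{L^\infty}$ via \eqref{additional beta decay}. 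For each lemma the two-step template is: first bound the pieces where the derivative hits the kernel (homogeneous/Kirchhoff terms, $b1$, $T$, $S$, $b2$, $\textup{add}$) using the kernel estimates already proved in Section \ref{sec.boot.decay}; second, for the $b2$ and $\textup{add}$ and glancing contributions in the $\partial_{x_3}$ case, perform the $\tilde\alpha_\pm$ change of variables. The smallness conditions $\min\{m_+^2,m_-^2\}g^2\beta\gg 1$ and $\min\{m_+^2,m_-^2\}\beta^4\gg 1$ are used only to absorb the finitely many $O(1/\beta)$, $O(1/(g\beta))$ constants generated by \eqref{asymptotics} and \eqref{additional beta decay}; the time-dependence $\lesssim_T$ enters through the trajectory-derivative bounds \eqref{derivatives of char}--\eqref{derivatives of tbxbvb} and the velocity lemma exponentials $e^{CT/c_0}$, exactly as in Proposition \ref{prop.derivative}. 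I expect the main obstacle to be bookkeeping the exact form of the $\partial_{x_3}$-differentiated $b2$/$\textup{add}$ surface integrals so that the emergent Jacobian is manifestly $|(\hat v_\pm)_3|$ (hence dominated by $\tilde\alpha_\pm$ up to a harmless constant via \eqref{alphav3.s}); once that identification is made, everything else is a routine repetition of the kernel-plus-moment estimates already carried out for the decay proof.
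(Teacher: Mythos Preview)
Your proposal is a valid route but differs from the paper's in an instructive way, essentially dualizing the roles of the normal and temporal derivatives. For the normal derivative $\partial_{x_3}(\Elobf,\Blobf)$ you propose the traditional direct differentiation of the Glassey--Strauss representations together with the $\tilde\alpha_\pm$-cancellation near the glancing set (the method of \cite{MR4645724}). The paper instead reads $\partial_{x_3}$ directly off the Maxwell system, e.g.\ $\partial_{x_3}\Eth^{l+1}=-\nabla_{x_\parallel}\!\cdot\Elobf_\parallel+4\pi\rho^{l+1}$ and $\partial_{x_3}\Eone^{l+1}=\partial_{x_1}\Eth^{l+1}-\partial_t\Btwo^{l+1}$, so the normal derivative becomes a one-line consequence of the tangential and temporal estimates (Lemma~\ref{lem.normal deri}); the paper explicitly flags this as a departure from the traditional approach. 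Conversely, for the temporal derivative you invoke Maxwell, $\partial_t\Elobf=\nabla_x\times\Blobf-4\pi J^{l+1}$, whereas the paper runs a separate bootstrap on the time-differentiated system \eqref{eq.tempo flo}--\eqref{eq.tempo Maxwell} (Lemma~\ref{lem.temporal deri}), closing it with the decay of $\nabla_v\flo$ from Proposition~\ref{Prop.decay.mom.deri}. The trade-off: the paper completely avoids the $\tilde\alpha_\pm$ bookkeeping you correctly identify as your ``main obstacle'', at the cost of the temporal bootstrap---and it is \emph{that} bootstrap, not the absorption of stray $O(\beta^{-1})$ constants as you suggest, that actually consumes the hypotheses $\min\{m_+^2,m_-^2\}g^2\beta\gg 1$, $\min\{m_+^2,m_-^2\}\beta^4\gg 1$, and the assumptions \eqref{initial E0i}--\eqref{f initial condition} on the $\partial_t$-initial data. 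In your scheme those hypotheses would be largely unnecessary, but you would have to execute the full glancing-set analysis you sketched.
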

\begin{proof}
The proposition follows directly from Lemmas~\ref{lem.normal deri},\  ~\ref{lem.temporal deri}, and ~\ref{lem.tangential deri}, which are established in the subsequent sections.
\end{proof}

\subsection{Normal Derivatives}We first introduce the estimate for normal derivatives.  
We emphasize that these derivatives are controlled by tangential and temporal derivatives in conjunction with the governing Maxwell equations.  
This represents a fundamentally different methodology from the traditional approach (cf. \cite{MR4645724}).

\begin{lemma}\label{lem.normal deri}
  Suppose that $\Elobf$ and $\Blobf$ are defined through \eqref{Ei5}, \eqref{E3},  \eqref{B_half_final}, \eqref{Bpar_half_final}, \Black{and \eqref{BparS_half_final}}. Suppose that $-(\FS)_3(t,x_\parallel,0,v)>c_0,$ for some $c_0>0.$ Then for any given $T>0$ and some $m>4$, we have \begin{multline*}
    \hspace{-3mm}\sup_{t\in[0,T]}\|\partial_{x_3}(\Elobf,\Blobf)\|_{L^\infty_{x,v}}
        \lesssim_T \sup_{t\in[0,T]}\left(\|\nabla_{x_\parallel}(\Elobf, \Blobf)\|_{L^\infty_{x,v}}+\|\partial_t(\Elobf_\parallel, \Blobf_\parallel)\|_{L^\infty_{x,v}}  + \|(\vZ)^m \flo\|_{L^\infty_{x,v} }\right).
  \end{multline*} 
\end{lemma}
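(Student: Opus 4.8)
The strategy is to express each normal derivative $\partial_{x_3}$ of a field component through a combination of tangential derivatives, temporal derivatives, and zeroth-order quantities, using the Maxwell system \eqref{iterated Maxwell.M} evaluated at fixed time. The key algebraic fact is that the four Maxwell equations (two curl equations and two divergence equations) supply exactly enough relations to solve for the $\partial_{x_3}$ of all six field components in terms of $\partial_{x_1},\partial_{x_2}$ of the fields, the time derivatives $\partial_t$, and the source terms $\varrho^{l+1},\mathcal{J}^{l+1}$. Concretely, from $\nabla_x\cdot\Elobf = 4\pi\varrho^{l+1}$ one gets $\partial_{x_3}\Elobf_3 = 4\pi\varrho^{l+1} - \partial_{x_1}\Elobf_1 - \partial_{x_2}\Elobf_2$; from $\nabla_x\cdot\Blobf = 0$ one gets $\partial_{x_3}\Blobf_3 = -\partial_{x_1}\Blobf_1 - \partial_{x_2}\Blobf_2$; and from the two curl equations $\partial_t\Elobf - \nabla\times\Blobf = -4\pi\mathcal{J}^{l+1}$, $\partial_t\Blobf + \nabla\times\Elobf = 0$, reading off the first and second components yields $\partial_{x_3}\Blobf_1, \partial_{x_3}\Blobf_2$ in terms of $\partial_t\Elobf_{1,2}$, $\partial_{x_1}\Blobf_3$, $\partial_{x_2}\Blobf_3$, $\mathcal{J}^{l+1}_{1,2}$, and symmetrically $\partial_{x_3}\Elobf_1,\partial_{x_3}\Elobf_2$ in terms of $\partial_t\Blobf_{1,2}$ and $\partial_{x_1}\Elobf_3,\partial_{x_2}\Elobf_3$. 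Thus all six normal derivatives are bounded by $\|\nabla_{x_\parallel}(\Elobf,\Blobf)\|_{L^\infty}$, $\|\partial_t(\Elobf_\parallel,\Blobf_\parallel)\|_{L^\infty}$, and $\|(\varrho^{l+1},\mathcal{J}^{l+1})\|_{L^\infty}$.

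\textbf{Key steps.} First I would write out the six scalar identities above explicitly (this is just linear algebra on the Maxwell system, no trajectory analysis needed). Second, I would bound the source terms: since $\varrho^{l+1} = \int \textup{e}(f^{l+1}_+ - f^{l+1}_-)\,dv$ and $\mathcal{J}^{l+1} = \int \textup{e}(\hat v_+ f^{l+1}_+ - \hat v_- f^{l+1}_-)\,dv$, and $|\hat v_\pm|\le 1$, we have $|\varrho^{l+1}(t,x)| + |\mathcal{J}^{l+1}(t,x)| \lesssim \sum_\pm \int_{\mathbb{R}^3} |f^{l+1}_\pm(t,x,v)|\,dv \lesssim \sum_\pm \|(\vZ)^m f^{l+1}_\pm\|_{L^\infty_{x,v}} \int_{\mathbb{R}^3}(\vZ)^{-m}\,dv \lesssim \|(\vZ)^m \flo\|_{L^\infty_{x,v}}$, where the last integral converges because $m > 4 > 3$. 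Third, I would combine the pointwise identities with these bounds and take suprema over $t\in[0,T]$ to conclude. I should also note that nothing here depends on the half-space geometry or boundary conditions—the identities are purely local in the interior—so there is no subtlety at $x_3 = 0$; the weak trace statements discussed in the compatibility-conditions remark are used only to make sense of the boundary values, not in this interior estimate.

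\textbf{Main obstacle.} The only genuine content beyond bookkeeping is the velocity-moment bound $\int_{\mathbb{R}^3}(\vZ)^{-m}\,dv < \infty$, which is elementary given $m>4$, and making sure the constant in the identities absorbs the factor $4\pi$ and any factors coming from inverting the $2\times 2$ blocks in the curl equations (these blocks are just $\pm$ sign permutations, so the inversion is trivial). In fact the subtle point worth flagging is conceptual rather than technical: this lemma deliberately does \emph{not} try to estimate $\partial_{x_3}$ directly from the Green-function representations \eqref{Ei5}, \eqref{E3}, \eqref{B_half_final}, \eqref{Bpar_half_final}—doing so would require differentiating singular kernels and controlling $\partial_{x_3}$ of the characteristics (the source of the $\tilde\alpha_\pm$ weight), exactly the difficulty the authors describe as "a fundamentally different methodology from the traditional approach." Here instead one trades that difficulty for the (already available) tangential and temporal derivative bounds, which are supplied by Lemma~\ref{lem.tangential deri} and Lemma~\ref{lem.temporal deri}. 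So the real work is organizational: ensuring the logical order is normal $\Leftarrow$ tangential $+$ temporal, with no circularity, which is why Proposition~\ref{prop.main.deri.field} assembles the three lemmas in that sequence.
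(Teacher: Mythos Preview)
Your proposal is correct and follows essentially the same approach as the paper: both extract the six normal derivatives directly from the divergence and curl relations in Maxwell's system and then bound the source terms via the weighted moment estimate $\int (\vZ)^{-m}\,dv<\infty$ for $m>4$. One small slip: since $\Elobf,\Blobf$ are the \emph{full} fields (steady plus perturbation), the relevant sources are $\rho^{l+1},J^{l+1}$ built from $F^{l+1}_\pm$, not the perturbative $\varrho^{l+1},\mathcal{J}^{l+1}$ built from $f^{l+1}_\pm$; the paper accordingly invokes the full system \eqref{2speciesVM}$_2$--\eqref{2speciesVM}$_5$ rather than \eqref{iterated Maxwell.M}, but the algebra and the moment bound are identical.
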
\begin{proof}
  Using \eqref{2speciesVM}$_2$-\eqref{2speciesVM}$_5$, we obtain 
    \begin{equation}\notag
\begin{split}
\partial_{x_3}\Eth^{l+1}&=-\nabla_{x_\parallel}\cdot \Elobf_{\parallel}+4\pi\rho^{l+1},\ 
\partial_{x_3}\Bth^{l+1}=-\nabla_{x_\parallel}\cdot \Blobf_{\parallel},\\
\partial_{x_3}\Eone^{l+1}&=\partial_{x_1}\Eth^{l+1}- \partial_t \Btwo^{l+1},\ 
\partial_{x_3}\Etwo^{l+1}=\partial_{x_2}\Eth^{l+1}+ \partial_t \Bone^{l+1}, \\
\partial_{x_3}\Bone^{l+1}&=\partial_{x_1}\Bth^{l+1}+ \partial_t \Etwo^{l+1}+4\pi J_2^{l+1},\ 
\partial_{x_3}\Btwo^{l+1}=\partial_{x_2}\Bth^{l+1}- \partial_t \Eone^{l+1}-4\pi J_1^{l+1}.
\end{split}
    \end{equation}
    Therefore, we obtain \begin{multline*}
        \|\partial_{x_3}(\Elobf,\Blobf)\|_{L^\infty}\lesssim \|\nabla_{x_\parallel}(\Elobf, \Blobf)\|_{L^\infty}+\|\partial_t(\Elobf_\parallel, \Blobf_\parallel)\|_{L^\infty} +\|\rho^{l+1}\|_{L^\infty } + \|J^{l+1}\|_{L^\infty } \\
        \lesssim \|\nabla_{x_\parallel}(\Elobf, \Blobf)\|_{L^\infty}+\|\partial_t(\Elobf_\parallel, \Blobf_\parallel)\|_{L^\infty}  + \|(\vZ)^m \flo\|_{L^\infty_{x,v} },
    \end{multline*} for $m>4$, since \begin{equation}
    \label{j bound}|J^{l+1}(t,x)|\le \rho^{l+1}(t,x)= \int_\rth \flo(t,x,v) dv \le \|(\vZ)^m \flo\|_{L^\infty_{x,v} } \int_\rth(\vZ)^{-m}dv\lesssim \|(\vZ)^m \flo\|_{L^\infty_{x,v} }. 
\end{equation} Thus, we obtain the lemma by using the uniform estimates \eqref{EB tangential deri claim} and \eqref{final estimate for EloBlo tempo} on $\|\nabla_{x_\parallel}(\Elobf, \Blobf)\|_{L^\infty}$ and $\|\partial_t(\Elobf_\parallel, \Blobf_\parallel)\|_{L^\infty}$, respectively. 
\end{proof}

\subsection{Temporal Derivatives}
This section is devoted to deriving uniform estimates on the temporal derivatives of the fields $(\Elobf, \Blobf)$. Special care is required due to the presence of temporal-physical boundaries at $t=0$ and $x_3=0$. We study the system satisfied by $\partial_t \Elobf$, $\partial_t \Blobf$, and $\partial_t \flo$ by formally differentiating in time the Vlasov--Maxwell system \eqref{iterated Vlasov.M}--\eqref{iterated Maxwell.M}, the continuity equation \eqref{continuity eq}, and the boundary conditions \eqref{perfect cond. boundary} and \eqref{perfect.conductor.neumann} at the sequential level $(l+1)$. 
This represents a fundamentally different methodology from the traditional approach (cf. \cite{MR4645724}).

Formally applying $\partial_t$ yields the following system for $\partial_t \Elbf$, $\partial_t \Blbf$, and $\partial_t \fl$, which must be understood in the sense of distributions:
\begin{equation}
    \label{eq.tempo flo}
    (\partial_t +\hat{v}_\pm \cdot \nabla_x + (\pm\Elbf\pm\hat{v}_\pm\times \Blbf- m_\pm g \hat{e}_3)\cdot\nabla_v)(\partial_t \flo) = \mp(\partial_t \Elbf+\hat{v}_\pm\times \partial_t \Blbf)\cdot\nabla_v\flo,
\end{equation}
\begin{equation}\label{eq.tempo Maxwell}
    \begin{split}
         \partial_t (\partial_t \Elbf) - \nabla_x\times (\partial_t \Blbf) &= - 4\pi \partial_t J^{l}, \
         \partial_t (\partial_t \Blbf) + \nabla_x\times (\partial_t \Elbf) = 0, \\
         \nabla_x \cdot (\partial_t \Elbf) &= 4\pi \partial_t \rho^{l}, \
         \nabla_x \cdot (\partial_t \Blbf) = 0,
    \end{split}
\end{equation}
and the differentiated continuity equation:
\begin{equation}
    \label{continuity eq tempo}
    \partial_t (\partial_t \rho^{l+1}) + \nabla_x \cdot (\partial_t J^{l+1}) = 0.
\end{equation}

In addition, formally differentiating the boundary conditions yields, again in the sense of distributions:
\begin{align}
(\partial_t\Eone^{l+1})\big|_{\partial\Omega} = 0 = (\partial_t\Etwo^{l+1})\big|_{\partial\Omega}, \quad (\partial_t\Bth^{l+1})\big|_{\partial\Omega} = 0, \label{perfect cond. boundary tempo}
\end{align}
and the Neumann-type boundary conditions:
\begin{equation}
\label{perfect.conductor.neumann.tempo}
\partial_{x_3}(\partial_t\Eth^{l+1}) = 4\pi (\partial_t\rho^{l+1}),\quad 
\partial_{x_3}(\partial_t\Etwo^{l+1}) = -4\pi (\partial_t J^{l+1}_1),\quad 
\partial_{x_3}(\partial_t\Bone^{l+1}) = 4\pi (\partial_t J^{l+1}_2).
\end{equation}
These boundary conditions are to be interpreted in the weak sense, meaning they hold through integration against test functions rather than pointwise evaluation. Accordingly, we ensure that $\partial_t \rho^{l+1}$ and $\partial_t J^{l+1}$ are controlled in $L^\infty$ where these boundary relations make sense.

Finally, we prescribe the initial data for the temporal derivatives of the fields for each $i=1,2,3$:
\begin{equation}\label{initial tempo eb}
    (\partial_t \Ei)(0,x) = \mathbf{E}^1_{0i}(x), \
    (\partial_t^2 \Ei)(0,x) = \mathbf{E}^2_{0i}(x), \
    (\partial_t \Bi)(0,x) = \mathbf{B}^1_{0i}(x), \
    (\partial_t^2 \Bi)(0,x) = \mathbf{B}^2_{0i}(x).
\end{equation}
The initial temporal derivatives $\partial_t \Elbf(0,x)$ and $\partial_t \Blbf(0,x)$ are determined from the initial data\newline $(\Elbf(0,x), \Blbf(0,x), \rho(0,x), J(0,x))$ via the Maxwell equations evaluated at $t=0$. Similarly, $\partial_t^2 \Elbf(0,x)$ and $\partial_t^2 \Blbf(0,x)$ are obtained by differentiating the system once in time. All initial values are understood in the distributional sense.

Given the decay estimates for the momentum derivatives \eqref{decay of momentum deri F} for $\nabla_v \flo$, we provide decay estimates for the temporal derivative $\partial_t \flo$ of the distribution $\flo$ and uniform estimates on $\partial_t\Elobf$ and $\partial_t \Blobf$ via a bootstrap argument. For a bootstrap argument, we make the following bootstrap assumptions on $\partial_t \fl$, $\partial_t\Elbf$ and $\partial_t \Blbf$. In the case when $\Omega=\mathbb{R}^2\times \mathbb{R}_+$, let $\partial_t\fl,\partial_t\Elbf$, and $\partial_t\Blbf$
satisfy
\begin{multline}\label{Ansatz for ptfl} 
        \sup_{t\ge 0}\left\|e^{\frac{\beta}{2} |x_{\parallel}|}e^{\frac{\beta}{4}(\vZ+m_\pm gx_3)}\partial_t\fl(t,\cdot,\cdot)\right\|_{L^\infty} 
        \le  \bigg(\|\mathrm{w}_{\pm,\beta} \partial_tF_\pm(0,\cdot,\cdot)\|_{L^\infty_{x,v}}\\
        +\frac{64CD_0}{5\beta e}\left(\|\mathrm{w}^2_{\pm,\beta}(x,v)\nabla_{x,v}F^{\textup{in}}_\pm(x,v)\|_{L^\infty_{x,v}}+\|\mathrm{w}^2_{\pm,\beta}(x_\parallel,0,v)\nabla_{x_\parallel,v}G_\pm(x_\parallel,v)\|_{L^\infty_{x_\parallel,v}}\right)\bigg),
    \end{multline}and   \begin{equation}\label{ansatz for ptEBl}
        \sup_{t\ge 0}\|(\partial_t\Elbf,\partial_t\Blbf)\|_{L^\infty}\le D_0 \min\{m_-,m_+\}g,
    \end{equation} for some uniform constant $D_0>0$ where $C>0$ is the same constant as that of \eqref{decay of momentum deri F} and the weight $\mathrm{w}_{\pm,\beta}$ is defined as \eqref{weights.wholehalf}. Note that this constant $D_0$ can be sufficiently large.  

    In the following subsections, we will prove that the bootstrap ansatz \eqref{Ansatz for ptfl} and \eqref{ansatz for ptEBl} hold also on the sequential level of $(l+1)$ given the momentum derivative estimate \eqref{decay of momentum deri F}.
    \subsubsection{Estimates for $\partial_t\flo$ for $\Omega=\mathbb{R}^2\times\mathbb{R}_+$}We first prove that \eqref{Ansatz for ptfl} holds for $\flo$. Since $\partial_t\flo$ satisfies \eqref{eq.tempo flo}, we can write $\partial_t\flo$ in the mild form as 
\begin{multline}\label{solution ptflo}
    \partial_t\flo(t,x,v)= 1_{t\le \tblo(t,x,v)}\partial_tF_\pm(0,\XSlo(0;t,x,v),\VSlo(0;t,x,v))\\\mp \int^t_{\max\{0,t-\tblo\}} \left(\partial_t \Elbf(s,\XSlo(s))+\hat{\mathcal{V}}^{l+1}_\pm(s)\times \partial_t \Blbf(s,\XSlo(s))\right)\cdot \nabla_v \flo(s,\XSlo(s),\VSlo(s))ds.
\end{multline}
 Given that $\|\mathrm{w}_{\pm,\beta}\nabla_v \flo\|_{L^\infty}$ is bounded (see \eqref{decay of momentum deri F}), by \eqref{ansatz for ptEBl}, \eqref{solution ptflo} and that $|\hat{\mathcal{V}}^{l+1}_\pm|\le 1$, we obtain that
\begin{align*}\notag
    &|\partial_t\flo(t,x,v)|\\&\le  1_{t\le \tblo(t,x,v)}|\partial_tF_\pm(0,\XSlo(0;t,x,v),\VSlo(0;t,x,v))|\\
    &+1_{t\le \tblo(t,x,v)}\int^t_0\ D_0\min\{m_-,m_+\}g |\nabla_v \flo(s,\XSlo(s),\VSlo(s))|ds\\
    &+1_{t> \tblo(t,x,v)}\int^t_{t-\tblo} D_0\min\{m_-,m_+\}g |\nabla_v \flo(s,\XSlo(s),\VSlo(s))|ds\\
   &\le  \frac{1_{t\le \tblo(t,x,v)}}{\mathrm{w}_{\pm,\beta}( \ZSlo(0 ; t, x, v))}\|\mathrm{w}_{\pm,\beta} \partial_tF_\pm(0,\cdot,\cdot)\|_{L^\infty_{x,v}}\\
  & +1_{t\le \tblo(t,x,v)}D_0\min\{m_-,m_+\}g \|\mathrm{w}_{\pm,\beta}\nabla_v \flo\|_{L^\infty_{t,x,v}}\int^t_0 \frac{1}{\mathrm{w}_{\pm,\beta}( \ZSlo(s ; t, x, v))}ds\\
   &+1_{t> \tblo(t,x,v)}D_0\min\{m_-,m_+\}g \|\mathrm{w}_{\pm,\beta}\nabla_v \flo\|_{L^\infty_{t,x,v}}\int^t_{t-\tblo} \frac{1}{\mathrm{w}_{\pm,\beta}( \ZSlo(s ; t, x, v))}ds.
\end{align*}
Using \eqref{exit time bound.whole} and \eqref{w comparison 3.whole}, we further have 
\begin{align*}\notag
   & |\partial_t\flo(t,x,v)|\\
   &\le  1_{t\le \tblo(t,x,v)}e^{-\frac{1}{2}\beta v_\pm^0-\frac{1}{2}m_\pm g\beta x_3-\frac{\beta}{2}|x_{\parallel}|}\|\mathrm{w}_{\pm,\beta} \partial_tF_\pm(0,\cdot,\cdot)\|_{L^\infty_{x,v}}\\
  & +1_{t\le \tblo(t,x,v)}D_0\min\{m_-,m_+\}g \|\mathrm{w}_{\pm,\beta}\nabla_v \flo\|_{L^\infty_{t,x,v}}e^{-\frac{1}{2}\beta v_\pm^0-\frac{1}{2}m_\pm g\beta x_3-\frac{\beta}{2}|x_{\parallel}|} t\\
  & +1_{t> \tblo(t,x,v)}D_0\min\{m_-,m_+\}g \|\mathrm{w}_{\pm,\beta}\nabla_v \flo\|_{L^\infty_{t,x,v}}e^{-\frac{1}{2}\beta v_\pm^0-\frac{1}{2}m_\pm g\beta x_3-\frac{\beta}{2}|x_{\parallel}|} \tblo\\
   & \le  e^{-\frac{1}{2}\beta v_\pm^0-\frac{1}{2}m_\pm g\beta x_3-\frac{\beta}{2}|x_{\parallel}|}\|\mathrm{w}_{\pm,\beta} \partial_tF_\pm(0,\cdot,\cdot)\|_{L^\infty_{x,v}}\\*
   &+D_0\min\{m_-,m_+\}g \|\mathrm{w}_{\pm,\beta}\nabla_v \flo\|_{L^\infty_{t,x,v}}e^{-\frac{1}{2}\beta v_\pm^0-\frac{1}{2}m_\pm g\beta x_3-\frac{\beta}{2}|x_{\parallel}|} \tblo\\
   &\le  e^{-\frac{1}{2}\beta v_\pm^0-\frac{1}{2}m_\pm g\beta x_3-\frac{\beta}{2}|x_{\parallel}|}\|\mathrm{w}_{\pm,\beta} \partial_tF_\pm(0,\cdot,\cdot)\|_{L^\infty_{x,v}}\\
   &+\frac{16D_0}{5} (v_\pm^0+m_\pm g\beta x_3)\|\mathrm{w}_{\pm,\beta}\nabla_v \flo\|_{L^\infty_{t,x,v}}e^{-\frac{1}{2}\beta v_\pm^0-\frac{1}{2}m_\pm g\beta x_3-\frac{\beta}{2}|x_{\parallel}|} \\
   &\le  e^{-\frac{1}{2}\beta v_\pm^0-\frac{1}{2}m_\pm g\beta x_3-\frac{\beta}{2}|x_{\parallel}|}\|\mathrm{w}_{\pm,\beta} \partial_tF_\pm(0,\cdot,\cdot)\|_{L^\infty_{x,v}}
   \\& +\frac{64D_0}{5\beta e}\|\mathrm{w}_{\pm,\beta}\nabla_v \flo\|_{L^\infty_{t,x,v}}e^{-\frac{1}{4}\beta v_\pm^0-\frac{1}{4}m_\pm g\beta x_3-\frac{\beta}{2}|x_{\parallel}|} ,
\end{align*}where the last inequality is by the inequality that $xe^{-\frac{\beta}{2} x}\le \frac{4}{\beta e}e^{-\frac{\beta}{4}x}.$ 
Therefore, by \eqref{decay of momentum deri F}, we conclude
    \begin{multline}\label{decay of temporal deri F} 
        \sup_{t\ge 0}\left\|e^{\frac{\beta}{2} |x_{\parallel}|}e^{\frac{\beta}{4}(\vZ+m_\pm gx_3)}\partial_t\flo(t,\cdot,\cdot)\right\|_{L^\infty} 
        \le  \bigg(\|\mathrm{w}_{\pm,\beta} \partial_tF_\pm(0,\cdot,\cdot)\|_{L^\infty_{x,v}}\\
        +\frac{64CD_0}{5\beta e}\left(\|\mathrm{w}^2_{\pm,\beta}(x,v)\nabla_{x,v}F^{\textup{in}}_\pm(x,v)\|_{L^\infty_{x,v}}+\|\mathrm{w}^2_{\pm,\beta}(x_\parallel,0,v)\nabla_{x_\parallel,v}G_\pm(x_\parallel,v)\|_{L^\infty_{x_\parallel,v}}\right)\bigg).
    \end{multline}

    This completes the decay estimate for the temporal derivative $\partial_t \flo.$

    \subsubsection{Estimates for $\partial_t\Elobf$ and $\partial_t \Blobf$ }\label{subsubsec.ptEB whole}
    Given \eqref{ansatz for ptEBl} and \eqref{decay of momentum deri F}, we now prove that \eqref{ansatz for ptEBl} also holds for $\partial_t\Elobf$ and $\partial_t \Blobf$.
Notice that the system \eqref{eq.tempo flo}--\eqref{eq.tempo Maxwell} has the same structure with the system of \eqref{iterated Vlasov.M}--\eqref{iterated Maxwell.M} if we translate the notations in \eqref{eq.tempo flo}--\eqref{eq.tempo Maxwell} as follows:
\begin{equation}\label{translation}
\begin{split}
        \partial_t \flo &\mapsto \fplo,\ 
        \partial_t \Elbf\mapsto \Epl,\ 
        \partial_t \Blbf\mapsto \Bpl,\ 
        \nabla_v \flo \mapsto \nabla_v F_{\pm,\textup{st}},\ 
        \partial_t \rho^l\mapsto \rho^l_{\textup{pert}},\   
        \partial_t J^l\mapsto J^l_{\textup{pert}}.
    \end{split}
\end{equation}Note that the structure of the continuity equation and the initial-boundary conditions \eqref{continuity eq tempo}--\eqref{initial tempo eb} are also the same under the translation of the notations. Therefore, we do not need to repeat the uniform estimates for $\partial_t \Elobf$, and $\partial_t \Blobf$ given that 
\begin{enumerate}
    \item $\partial_t \flo$ satisfies the same upper-bound estimate for $\fplo$ in \eqref{fplo decay est},
    \item $\partial_t \Elbf$ and $\partial_t \Blbf$ satisfy the same bootstrap ansatz for $\Epl$ and $\Bpl$ in \eqref{apriori_EB},
    \item $\nabla_v\flo$ satisfies the same upper-bound estimate for $\nabla_v F_{\pm,\textup{st}}$ in \eqref{decay of Fst}.
\end{enumerate}
Indeed, all of the necessary conditions for the temporal derivative estimates are already satisfied by the decay estimates \eqref{decay of momentum deri F} and \eqref{decay of temporal deri F}, together with the bootstrap ansatz \eqref{ansatz for ptEBl}. The only difference compared to the bootstrap ansatz \eqref{apriori_EB} for $\Epl$ and $\Bpl$ lies in the constant coefficient: in \eqref{ansatz for ptEBl}, the constant is $D_0$ instead of $\frac{1}{16}$ for the previous estimate on $\Elbf$ and $\Blbf$ via \eqref{apriori_EB} and \eqref{bootstrap.assump.st}.

This difference, however, does not create any additional difficulty. Throughout the analysis, we continue to follow the same characteristic trajectory $(\XSlo, \VSlo)$, which is based on the fields $(\Elbf, \Blbf)$ and not on their temporal derivatives $(\partial_t \Elbf, \partial_t \Blbf)$. Since we already have the uniform bound \eqref{final estimate for flo.st} and \eqref{apriori_EB} for $(\Elbf, \Blbf)$, the characteristic trajectories $(\XSlo, \VSlo)$ remain well-controlled. In particular, the weight comparison argument \eqref{w comparison 3.whole} used in the proof of \eqref{decay of temporal deri F} remains valid.

In the uniform estimate for $\partial_t \Elbf$ and $\partial_t \Blbf$, the main new feature is the nonlinear terms $\partial_t \Elbf_S$ and $\partial_t \Blbf_S$, which now involve the larger constant $D_0$ rather than $\frac{1}{8}$. However, thanks to the additional factor of $\frac{1}{\beta^4}$ in the coefficient $c_{\pm,\beta}$ appearing in \eqref{est.acc.iS} and \eqref{est.st.iS}, we can absorb this difference by choosing $\beta$ sufficiently large. Specifically, the final estimates remain sufficiently small to close the bootstrap for \eqref{ansatz for ptEBl}.
Therefore, by following the same proof strategy as in Section~\ref{sec.boot.decay}, but adapted with the new notations introduced in \eqref{translation}, we consequently obtain the following lemma:
\begin{lemma}\label{lem.temporal deri}
    Let $g \geq 1$ and $\beta > 1$ be chosen sufficiently large so that
$$
\min\{m_+^2,m_-^2\}g^2\beta\gg 1 \quad \textup{and} \quad \min\{m_+^2,m_-^2\}\beta^4\gg 1.
$$Also, suppose that the temporal derivatives of the initial profiles, understood through the system of equations, satisfy the assumptions \eqref{initial E0i}--\eqref{f initial condition}. 
Then the uniform upper bound
\begin{equation}\label{final estimate for EloBlo tempo}
    \sup_{t\geq 0} \|(\partial_t\Elobf,\partial_t\Blobf)\|_{L^\infty} \leq D_0\min\{m_+,m_-\} g
\end{equation}
holds for the temporal derivatives.
\end{lemma}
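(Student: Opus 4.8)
\textbf{Proof strategy for Lemma \ref{lem.temporal deri}.}
The plan is to exploit the structural identity observed in \eqref{translation}: the system \eqref{eq.tempo flo}--\eqref{eq.tempo Maxwell} for $(\partial_t\flo,\partial_t\Elbf,\partial_t\Blbf)$ is, up to relabeling, identical to the perturbation system \eqref{iterated Vlasov.M}--\eqref{iterated Maxwell.M} for $(\fplo,\Epl,\Bpl)$, with $\nabla_v F_{\pm,\textup{st}}$ in the inhomogeneity playing the role previously played by $\nabla_v F_{\pm,\textup{st}}$ again (now $\nabla_v\flo$). Therefore I would \emph{not} re-derive the field representations; instead I would invoke the representations \eqref{Ei5}, \eqref{E3}, \eqref{B_half_final}, \eqref{Bpar_half_final} applied to $(\partial_t\Elobf,\partial_t\Blobf)$ with $\flo$ replaced by $\partial_t\flo$ and $F_{\pm,\textup{st}}$ retained, and then run the decay estimates of Section \ref{sec.boot.decay} term by term. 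The three inputs that make this work are already in hand: (i) $\partial_t\flo$ obeys the analogue of the $\fplo$-decay estimate \eqref{fplo decay est}, which is precisely \eqref{decay of temporal deri F} established in the preceding subsection; (ii) $\partial_t\Elbf,\partial_t\Blbf$ obey the bootstrap ansatz \eqref{ansatz for ptEBl}, which is the counterpart of \eqref{apriori_EB}; and (iii) $\nabla_v\flo$ obeys \eqref{decay of momentum deri F}, the counterpart of \eqref{decay of Fst} for $\nabla_v F_{\pm,\textup{st}}$.

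The key steps, in order, are as follows. First, I would record that the homogeneous pieces $\partial_t\Elobf_{\textup{hom},i}$, $\partial_t\Blobf_{\textup{hom},i}$ are controlled exactly as in Section \ref{sec.homo.decay}, using the compact support of the initial temporal derivatives $\mathcal{E}^i_0,\mathcal{B}^i_0$ guaranteed by \eqref{initial E0i}, giving a bound $\lesssim M/(1+t)$ with $M$ small. Second, the $b1$ and $b2$ boundary terms are handled as in the estimates for $(\Eplo)_{\pm,ib1}$, $(\Eplo)_{\pm,ib2}$: the kernel bounds \eqref{eq.ib1 kernel}, \eqref{a2}--\eqref{a1}, \eqref{3.13} are unchanged, and one feeds in \eqref{decay of temporal deri F} together with the $v$-moment bound \eqref{additional beta decay}, choosing $\beta$ large so the resulting constant is $\ll\min\{m_-,m_+\}g$. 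Third, the transport term $(\partial_t\Elobf)_{\pm,iT}$ and the magnetic $T$-term are estimated as $(\Eplo)_{\pm,iT}$ via \eqref{ET kernel estimate}, \eqref{B35 kernel final} and the split into $|x-y|<1$ and $|x-y|\ge1$. Fourth—and this is the only place where the enlarged constant $D_0$ enters—the nonlinear $S$-terms decompose as in \eqref{acc.iS.integrand} into an ``acc'' part (source $-\nabla_v\cdot((\pm\Elbf\pm\hat v_\pm\times\Blbf-m_\pm g\hat e_3)\partial_t\flo)$) and an ``st'' part (source $-\nabla_v\cdot((\pm\partial_t\Elbf\pm\hat v_\pm\times\partial_t\Blbf)\nabla_v F_{\pm,\textup{st}})$, i.e. now involving $\nabla_v\flo$); the estimates \eqref{est.acc.iS} and \eqref{est.st.iS} go through verbatim except that the prefactor $D_0$ appears, and one absorbs it using the factor $\beta^{-4}$ hidden in $c_{\pm,\beta}$, i.e. one re-reads the smallness requirement as $D_0/(\min\{m_\pm\}\times\min\{g\beta^3,\beta^2\})\ll1$, which is arranged by the hypotheses $\min\{m_+^2,m_-^2\}g^2\beta\gg1$ and $\min\{m_+^2,m_-^2\}\beta^4\gg1$. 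Summing the finitely many pieces yields \eqref{final estimate for EloBlo tempo}, closing the bootstrap for \eqref{ansatz for ptEBl} at level $(l+1)$.

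The main obstacle I anticipate is a bookkeeping one rather than a conceptual one: verifying that the characteristic trajectory used throughout—namely $(\XSlo,\VSlo)$ generated by $(\Elbf,\Blbf)$, \emph{not} by $(\partial_t\Elbf,\partial_t\Blbf)$—is the correct one in the mild formulation \eqref{solution ptflo} of $\partial_t\flo$, so that the weight-comparison inequalities \eqref{exit time bound.whole} and \eqref{w comparison 3.whole} remain available with the \emph{same} fields and hence the \emph{same} small exit-time bound. Since $(\Elbf,\Blbf)$ already satisfies the uniform bounds \eqref{final estimate for flo.st}--\eqref{apriori_EB}, this is legitimate, but one must be careful that the differentiated inhomogeneity $\mp(\partial_t\Elbf+\hat v_\pm\times\partial_t\Blbf)\cdot\nabla_v\flo$ in \eqref{eq.tempo flo} carries the larger constant $D_0$ only as a \emph{multiplicative} factor on a quantity already exhibiting the full $e^{-\frac\beta2 v^0_\pm-\frac12 m_\pm g\beta x_3-\frac\beta2|x_\parallel|}$ decay, so that integrating along the trajectory and using $\tblo\lesssim v^0_\pm+m_\pm g x_3$ from \eqref{exit time bound.whole} together with the elementary inequality $z e^{-\frac\beta2 z}\le \frac4{\beta e}e^{-\frac\beta4 z}$ produces the stated bound with the $D_0$ absorbed into the final large-$\beta$ choice. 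The remaining care is simply to confirm that every one of the finitely many decomposed field contributions admits a bound of the form $C(D_0,g,m_\pm)\beta^{-k}$ with $k\ge1$, so that a single sufficiently large $\beta$ simultaneously makes all of them $\ll\min\{m_-,m_+\}g$.
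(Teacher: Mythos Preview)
Your proposal is correct and takes essentially the same approach as the paper: exploit the structural translation \eqref{translation} to recycle the decay estimates of Section~\ref{sec.boot.decay} term by term, using \eqref{decay of temporal deri F}, \eqref{ansatz for ptEBl}, and \eqref{decay of momentum deri F} as the analogues of \eqref{fplo decay est}, \eqref{apriori_EB}, and \eqref{decay of Fst}, while noting that the characteristic trajectory is still driven by $(\Elbf,\Blbf)$ so the weight-comparison machinery applies unchanged, and that the enlarged constant $D_0$ appearing only in the $S$-terms is absorbed by the extra $\beta^{-4}$ in $c_{\pm,\beta}$. The paper's own argument is in fact terser than yours---it simply records these three inputs and the $D_0$ absorption without re-walking the individual $b1$, $b2$, $T$, $S$ pieces---but the logic is identical.
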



Lastly, we introduce the following lemma on the tangential derivatives. 
\begin{lemma}[Tangential derivatives]\label{lem.tangential deri}Suppose that $\Elobf$ and $\Blobf$ are defined through \eqref{Ei5}, \eqref{E3}, \eqref{B_half_final}, \eqref{Bpar_half_final}, \Black{and \eqref{BparS_half_final}}. Suppose that $-(\FS)_3(t,x_\parallel,0,v)>c_0,$ for some $c_0>0.$ For some $T>0$ and $m>4,$ the following estimates hold:
    \label{lemma.deri.EB}
\begin{multline}\label{EB tangential deri claim}\|\nabla_{x_\parallel}\Elobf(t)\|_{L^\infty_x}+\|\nabla_{x_\parallel}\Blobf(t)\|_{L^\infty_x}
\lesssim_{T,m_\pm,m,g} (1+\|(\E^{\textup{in}},\B^{\textup{in}})\|_{C^2_x(\Omega)})(1+\|(\vZ)^m \flo\|^2_{L^\infty_{t,x,v}([0,T]\times \Omega\times \rth)})\\+\|(\vZ)^m\nabla_{x_\parallel}\flo\|_{L^\infty_{t,x,v}([0,T]\times \Omega\times \rth)}.
\end{multline}
\end{lemma}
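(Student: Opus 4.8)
\textbf{Proof plan for Lemma~\ref{lem.tangential deri}.}
The plan is to differentiate the field representations \eqref{Ei5}, \eqref{E3}, \eqref{B_half_final}, and \eqref{Bpar_half_final} directly in the tangential variables $x_1, x_2$ and estimate each resulting integral. First I would handle the homogeneous (Kirchhoff/Cauchy) pieces $\Elobf_{\textup{hom},i}$ and $\Blobf_{\textup{hom},i}$: since these are explicit surface integrals over $\partial B(x;t)$ of the initial data $\mathbf{E}^{\textup{in}}, \mathbf{B}^{\textup{in}}$ and their first time derivatives, a tangential derivative either falls on the integrand — producing a factor of $\|(\E^{\textup{in}},\B^{\textup{in}})\|_{C^2_x}$ — or is converted, via the usual trick of writing $y = x + t\omega$ so that $\partial_{x_i}$ and $\partial_t$-combinations act on $f(x+t\omega)$, into spatial derivatives of the data composed with the flow; this accounts for the $(1+\|(\E^{\textup{in}},\B^{\textup{in}})\|_{C^2_x(\Omega)})$ factor and is bounded uniformly on $[0,T]$ by the standard energy/Kirchhoff estimates for the wave equation. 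The Neumann-type boundary surface contributions in $\Elobf_{\textup{hom},3}$ and $\Blobf_{\textup{hom},i}$ involve $\int_{B(x;t)\cap\{y_3=0\}} |y-x|^{-1} \int \fplo \, dv\, dy_\parallel$; a tangential derivative here acts only on the $y_\parallel$-dependence of $\fplo$ (the kernel $1/|y-x|$ with $y_3=0$ is smooth in $x_\parallel$ away from $x_\parallel = y_\parallel$, and the mild integrability of $1/|y_\parallel - x_\parallel|$ over $\mathbb{R}^2$ survives one derivative after an integration by parts), so this is controlled by $\|(\vZ)^m \nabla_{x_\parallel}\flo\|_{L^\infty}$ together with $\|(\vZ)^m \flo\|_{L^\infty}$ using $\int (\vZ)^{-m}\, dv \lesssim 1$ for $m>4$.

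Next I would treat the $T$-terms $(\Elobf)^{(j)}_{\pm,iT}$ and $(\Blobf)^{(j)}_{\pm,iT}$ and the $b1$-terms. After the change of variables $Y = y - x$ (as in \eqref{Bpar_half_final}), the kernels $\frac{Y\times\hat v_\pm}{|Y|^3(1+\hat v_\pm\cdot Y/|Y|)^2}(1-|\hat v_\pm|^2)$ and the electric analogues depend on $x$ only through $\fplo(t-|Y|, Y+x, v)$ (plus the reflection/image variants), so $\partial_{x_i}$ simply produces $\partial_{x_i}\fplo$ evaluated at $(t-|Y|, Y+x, v)$; the $|Y|^{-2}$ (for $\E$) or $|Y|^{-3}$ (for $\B$) singularity together with the decay of $\fplo$ in $(x_\parallel, v, x_3)$ from \eqref{fplo decay est} and the kernel bounds \eqref{ET kernel estimate}, \eqref{B35 kernel final} gives the bound in terms of $\|(\vZ)^m\nabla_{x_\parallel}\flo\|_{L^\infty}$ — here the argument is essentially the same as the decay estimates already carried out in Section~\ref{sec.boot.decay}, only with $\flo$ replaced by $\nabla_{x_\parallel}\flo$, so the velocity integrals $\int (\vZ)(\vZ)^{-m}\,dv$ still converge for $m>4$. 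The $b1$-terms are surface integrals over $\partial B(x;t)$ of $\fplo(0,\cdot,\cdot)$ with the relativistic kernel $K^{(\pm)}_{ij}$; differentiating tangentially and using the change of variables $y=x+t\omega$ as in Section~\ref{sec.ib2 decay} converts $\partial_{x_i}$ into $\partial_{y_i}$ acting on the initial perturbation $f^{\textup{in}}_\pm$, which is controlled by $\|(\vZ)^m\nabla_{x_\parallel}\flo\|$ at $t=0$ (equivalently by the assumed regularity of $f^{\textup{in}}_\pm$ and the estimates of Section~\ref{sec.deri.dist}), again uniformly on $[0,T]$.

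The main obstacle — and where the quadratic term $\|(\vZ)^m\flo\|^2_{L^\infty}$ originates — is the tangential derivative of the nonlinear $S$-terms $(\Elobf)^{(j)}_{iS}$, specifically the ``acc'' piece $(\Elobf)^{(j),\textup{acc}}_{\pm,iS}$ which contains the product $(\pm\Elbf \pm \hat v_\pm\times\Blbf - m_\pm g\hat e_3)\,\fplo$ under the $y$-integral against a kernel $a^{\mathbf E}_{\pm,i}(v,\omega)/|x-y|$. A tangential derivative can fall either on $\fplo$ (giving $\nabla_{x_\parallel}\fplo$, controlled as above) or on the total field $\Elbf,\Blbf$ inside the integrand; but $\nabla_{x_\parallel}(\Elbf,\Blbf)$ is precisely the quantity we are trying to bound, so this is where the bootstrap/iteration structure is essential — we use the already-established bound \eqref{EB tangential deri claim} at level $(l)$ (or the uniform-in-$l$ version being proved inductively) for $\nabla_{x_\parallel}(\Elbf,\Blbf)$, together with $\|\fplo\|_{L^\infty}\lesssim \|(\vZ)^m\flo\|$, to produce one factor; the other factor of $\|(\vZ)^m\flo\|$ comes from the density-type bound $|\rho^{l+1}|, |J^{l+1}| \lesssim \|(\vZ)^m\flo\|_{L^\infty}$ as in \eqref{j bound}, and the product of the two closes as a quadratic term. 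The integrals themselves are split into $|x-y|<1$ and $|x-y|\ge 1$ exactly as in Sections~\ref{sec.ib2 decay}–\ref{sec.fplo.decay}, using $\langle t\rangle \lesssim \langle t-|x-y|\rangle\langle |x-y|\rangle$ and $\int(\vZ)(\vZ)^{-m}dv\lesssim 1$; the decay in $t$ is not needed here (we only want uniform-in-$[0,T]$ boundedness), which makes the estimates somewhat more forgiving than the decay estimates of Section~\ref{sec.boot.decay}. The $\textup{st}$-piece $(\Elobf)^{(j),\textup{st}}_{\pm,iS}$, involving $(\Epl \pm \hat v_\pm\times\Bpl)\nabla_v F_{\pm,\textup{st}}$, is handled the same way using \eqref{steady state decay mom.deri} for $\nabla_v F_{\pm,\textup{st}}$ and the uniform bound \eqref{apriori_EB} for $(\Epl,\Bpl)$, and contributes only linearly. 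Collecting all pieces and taking the supremum over $t\in[0,T]$ yields \eqref{EB tangential deri claim}; for brevity the routine splitting-and-integrating computations, being minor variants of those in Sections~\ref{sec.boot.decay} and~\ref{sec.deri.dist}, will be omitted.
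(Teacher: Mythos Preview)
Your plan is correct and aligns with the paper's approach: the paper does not spell out a proof of this lemma at all, but simply remarks that ``Lemma~\ref{lemma.deri.EB} on the tangential derivatives is proved in the same manner of \cite[Lemma 7, Eq.~(3.2)]{MR4645724}'' and omits the details. Your outline --- translating tangential derivatives through the change of variables $Y=y-x$ so that $\partial_{x_\parallel}$ lands on $\flo$, treating the homogeneous/Kirchhoff, $T$-, $b1$-, $b2$-pieces by the kernel estimates already established in Section~\ref{sec.boot.decay}, and handling the $S$-term via the bootstrap hypothesis \eqref{Fl C2 bound.s} on $\nabla_x(\Elbf,\Blbf)$ at level $l$ --- is precisely the Glassey--Strauss-type differentiation scheme that the cited reference carries out, so your proposal is consistent with what the paper intends.

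One small clarification: your account of where the quadratic factor $\|(\vZ)^m\flo\|^2$ originates is a little tangled. Once the derivative falls on $\Elbf$ inside the $S$-integrand you invoke the a~priori bound $\|\nabla_x(\Elbf,\Blbf)\|_{L^\infty}\le C_1$ from \eqref{Fl C2 bound.s}, and the remaining integral is controlled by a single factor of $\|(\vZ)^m\flo\|$; the quadratic form in the stated right-hand side is a convenient (and harmless) upper envelope rather than a sharp structural feature of your argument, and the presentation in \cite{MR4645724} packages the constants in the same way. This does not affect the validity of your plan.
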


\begin{proof}
Given the derivative estimates on the trajectories and the velocity distribution obtained in Section \ref{sec.deri.dist} above, Lemma \ref{lemma.deri.EB} on the tangential derivatives is proved in the same manner of \cite[Lemma 7, Eq. (3.2)]{MR4645724}. We omit the proof for the sake of simplicity.
\end{proof}

\begin{remark}
    All the derivative estimates made in this section are uniform in $l$ by the additional estimate \eqref{deri F estimate final} on the derivatives of $F_\pm$. Thus, the limit $(\E^\infty,\B^\infty)$ also satisfies the same estimate.
\end{remark}

\section{Global Existence}\label{sec.wellposed}
In this section, we finally provide the proof of the existence and uniqueness of solutions to the dynamical Vlasov--Maxwell systems.
\subsection{Global Existence and Regularity}
We now prove the global-in-time existence of solutions for the dynamical problems on the Vlasov--Maxwell system \eqref{2speciesVM}. 
In both cases, we consider the iterated sequences of perturbations $(f^l_\pm,\mathcal{E}^l,\mathcal{B}^l)$ to the linear systems \eqref{iterated Vlasov.M}-\eqref{iterated Maxwell.M}. Both linear systems admit solutions $(F^l_\pm,\E^l,\B^l)$ and $(f^l_\pm,\mathcal{E}^l,\mathcal{B}^l)$ for each $l\ge 0$ due to the hyperbolicity of the operators.  
In order to pass to the limit as $l\to\infty$ and to prove that these limits actually solve the original nonlinear Vlasov--Maxwell system \eqref{2speciesVM} in the weak sense, we have to pass to the limit of all the linear and nonlinear terms appearing in the iterated system \eqref{iterated Vlasov.M}-\eqref{iterated Maxwell.M}. To this end, we will additionally prove here that $F_\pm^l$ and $f_\pm^l$ are indeed Cauchy so that $F_\pm^l\to F^\infty_\pm$ and $f_\pm^l\to f^\infty_\pm$ strongly as $l\to \infty$. We introduce the following propositions on the Cauchy property of the sequences.
\begin{proposition}\label{prop.cauchy}
For each fixed $(t,x,v)\in (0,T)\times (\bar{\Omega}\times \rth\setminus\gamma_0)$,    $(F_\pm^l(t,x,v))_{l\in\mathbb{N}}$ and $(f_\pm^l(t,x,v))_{l\in\mathbb{N}}$ are Cauchy. 
\end{proposition}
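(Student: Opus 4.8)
\textbf{Proof proposal for Proposition \ref{prop.cauchy}.}

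The plan is to run a contraction-type estimate on consecutive differences of the iterates, exactly mirroring the stationary argument carried out in \eqref{eq.F.diff.st.cauchy}--\eqref{eq.F.diff.st.cauchy4}, but now in the time-dependent half-space setting. Fix $l \ge n \ge N_0$ and set $f^{l}_\pm - f^{n}_\pm$. Subtracting the iterated Vlasov equations \eqref{iterated Vlasov.M} at levels $l$ and $n$, the difference solves a linear transport equation along the characteristics $\ZS^{l}=(\XSl,\VSl)$ generated by $(\Elmbf,\Blmbf)$, with two source contributions: the commutator term coming from the difference of the transport fields $(\Elmbf-\Enmbf,\Blmbf-\Bnmbf)$ acting on $\nabla_v f^n_\pm$, and the difference of the inhomogeneous stationary source $\mp(\mathcal{E}^{l-1}-\mathcal{E}^{n-1} + \hat v_\pm\times(\mathcal{B}^{l-1}-\mathcal{B}^{n-1}))\cdot\nabla_v F_{\pm,\textup{st}}$. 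Writing the difference in mild form along $\ZSl$ (using that the incoming boundary data agree, so the boundary term cancels as in \eqref{zero boundary cauchy.st}, and that the initial data are identical for $l,n\ge 1$), one obtains a bound of the schematic form
\begin{equation}\notag
|(f^l_\pm - f^n_\pm)(t,x,v)| \lesssim \int_{\max\{0,t-\tbl\}}^{t} \big( |\nabla_v f^n_\pm| + |\nabla_v F_{\pm,\textup{st}}| \big)\big(\ZSl(s)\big)\,\big( |\mathcal{E}^{l-1}-\mathcal{E}^{n-1}| + |\mathcal{B}^{l-1}-\mathcal{B}^{n-1}| + \text{(acc)} \big)(s,\XSl(s))\,ds,
\end{equation}
where the ``acc'' piece is the commutator $|(\Elmbf-\Enmbf)+\hat v_\pm\times(\Blmbf-\Bnmbf)|\,|\nabla_v f^n_\pm|$. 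The momentum-derivative decay estimates \eqref{decay of momentum deri F} and \eqref{steady state decay mom.deri} supply the weighted bounds $|\nabla_v f^n_\pm|, |\nabla_v F_{\pm,\textup{st}}| \lesssim \mathrm{w}_{\pm,\beta}^{-1}$, and the weight comparison \eqref{w comparison 3.whole} turns the $s$-integral into a factor $\lesssim \tbl\, e^{-\frac{\beta}{2}(\vZ + m_\pm g x_3) - \frac{\beta}{2}|x_\parallel|}$, with $\tbl \lesssim (v^0_\pm + m_\pm g x_3)/(m_\pm g)$ by \eqref{exit time bound.whole}. This is the place where the extra weighted decay of $\nabla_v$ (rather than mere boundedness) is essential, since it lets the travel-time factor be absorbed into a loss of one power of the exponential weight.

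Next I would propagate the field differences. Because the Maxwell representations \eqref{Ei5}, \eqref{E3}, \eqref{B_half_final}, \eqref{Bpar_half_final} are linear in $F_\pm$ (equivalently in $f_\pm$ for the perturbative system), the differences $\mathcal{E}^{l-1}-\mathcal{E}^{n-1}$ and $\mathcal{B}^{l-1}-\mathcal{B}^{n-1}$ are given by the same representations with $f_\pm$ replaced by $f^{l-1}_\pm - f^{n-1}_\pm$. The homogeneous/initial-data pieces vanish entirely (same data at all levels $\ge 1$), so only the retarded ``$T$'', ``$S$'' and boundary ``$b2$'' contributions survive, each of which is a space-time integral of $|f^{l-1}_\pm - f^{n-1}_\pm|$ (or its first velocity moment) against a kernel already estimated in Sections \ref{sec.homo.decay}--\ref{sec.ib2 decay}. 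Running exactly those same integral estimates — splitting $B(x;t)$ into $|x-y|<1$ and $|x-y|\ge1$, using the $1/|x-y|$ (or $1/|x-y|^2$) kernel decay together with the Gaussian weight in $(y,v)$, and the $\langle t\rangle/\langle t-|x-y|\rangle$ comparison for the nonlinear ``acc'' piece — yields, for each fixed $(t,x)$,
\begin{equation}\notag
|\mathcal{E}^{l-1}-\mathcal{E}^{n-1}| + |\mathcal{B}^{l-1}-\mathcal{B}^{n-1}| \lesssim \frac{1}{\beta^{a}\min\{m_+,m_-\}^{b}} \sup_{s\in[0,t]} \max_{\iota=\pm}\sup_{x,v} \Big( e^{\frac{\beta}{2}|x_\parallel|}e^{\frac{\beta}{4}(v^0_\iota + m_\iota g x_3)} |(f^{l-1}_\iota - f^{n-1}_\iota)(s,x,v)| \Big)
\end{equation}
for suitable positive exponents $a,b$, and with an extra smallness factor $c_0$ coming from the already-established smallness of the total transport fields that controls the ``acc'' commutator (via the a priori bounds \eqref{apriori_EB}, \eqref{final estimate for flo.st}, \eqref{final estimate for EloBlo tempo}). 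Feeding this back into the bound for $f^l_\pm - f^n_\pm$ produces a recursion
\begin{equation}\notag
A_l(t) \le \kappa\, A_{l-1}(t), \qquad A_l(t) := \sup_{s\in[0,t]}\max_{\iota}\sup_{x,v} e^{\frac{\beta}{2}|x_\parallel| + \frac{\beta}{4}(v^0_\iota + m_\iota g x_3)}|(f^l_\iota - f^{l-1}_\iota)(s,x,v)|,
\end{equation}
where $\kappa = \kappa(\beta, g, m_\pm, \|\mathrm{w}^2_{\pm,\beta}\nabla_{x_\parallel,v}G_\pm\|_\infty, \dots) \ll 1$ for $\beta$ large and $g$ large, under exactly the standing hypotheses of Theorem \ref{thm.asymp.rth}. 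Iterating gives $A_l(t)\le \kappa^{l-N_0} A_{N_0}(t)$ with $A_{N_0}(t)$ finite by the uniform bounds \eqref{apriori_f}; summing the geometric series over $l$ shows $(f^l_\pm(t,x,v))_l$ is Cauchy in the weighted sup norm, uniformly on $[0,T]$, hence in particular Cauchy at each fixed $(t,x,v)\notin\gamma_0$; the field differences are then Cauchy by the linear field estimate above, and the same chain of estimates applied to the full $(F^l_\pm, \E^l, \B^l)$ system (perturbation $+$ steady state) gives the Cauchy property for $F^l_\pm$.

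The main obstacle I anticipate is ensuring the recursion constant $\kappa$ is genuinely $<1$ while simultaneously carrying the correct weight in the $s$-integral for $f^l_\pm - f^n_\pm$: the travel-time factor $\tbl$ grows linearly in the mechanical energy $v^0_\pm + m_\pm g x_3$, so one must spend a full power of the weight (passing from $\mathrm{w}_{\pm,\beta}$ to $\mathrm{w}_{\pm,\beta}^{1/2}$, as in \eqref{eq.F.diff.st.cauchy2}--\eqref{eq.F.diff.st.cauchy3}) to absorb it, and then verify that the field-difference estimate still closes with the half-weight on the right-hand side — this is where the interplay $\min\{m_\pm\}g\gg 1$ and $\beta\gg1$ is used, exactly as in the stationary uniqueness argument. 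A secondary technical point is that the boundary ``$b2$'' and Neumann-type contributions to the field differences must be shown to not spoil the contraction; but since $f^l_\pm - f^n_\pm$ vanishes on the incoming set $\gamma_-$ at every level, these terms carry the same Gaussian decay in $y_\parallel$ and $v$ as in Section \ref{sec.ib2 decay} and are handled verbatim. No new ideas beyond those already deployed for steady uniqueness and for the dynamical bootstrap are needed; the proposition is essentially the assembly of those two arguments into a single geometric iteration.
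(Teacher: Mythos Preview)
Your approach is genuinely different from the paper's and contains a structural gap in the field–difference estimate.

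\textbf{The gap.} Your claimed bound
\[
|\mathcal{E}^{l-1}-\mathcal{E}^{n-1}| + |\mathcal{B}^{l-1}-\mathcal{B}^{n-1}| \lesssim \frac{1}{\beta^{a}}\,\sup_{s\in[0,t]} \max_{\iota}\sup_{x,v}\,e^{\cdots}\,|(f^{l-1}_\iota - f^{n-1}_\iota)(s,x,v)|
\]
is not correct for the electric field. In the dynamical wave representation the nonlinear $S$–term for $\mathcal{E}^{l-1}$ contains the force $(\pm\E^{l-2}\pm\hat v_\pm\times\B^{l-2}-m_\pm g\hat e_3)$ acting on $f^{l-1}_\pm$, \emph{and} the perturbative piece $(\mathcal{E}^{l-2}+\hat v_\pm\times\mathcal{B}^{l-2})\cdot\nabla_v F_{\pm,\textup{st}}$. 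Differencing in the level index therefore produces, in addition to the term you wrote, contributions of the form
\[
\int\frac{dy\,dv}{|x-y|}\,a^{\E}_{\pm,i}\cdot\big(\pm(\E^{l-2}-\E^{n-2})\pm\hat v_\pm\times(\B^{l-2}-\B^{n-2})\big)\,f^{n-1}_\pm
\]
and the analogous ``st'' piece with $\nabla_v F_{\pm,\textup{st}}$. These couple the field difference at level $l-1$ to the field difference at level $l-2$, so the recursion is not the clean one–step $A_l\le\kappa A_{l-1}$ you wrote; at minimum one must track a coupled pair $(A_l,B_l)$ with $B_l$ the field–difference norm and close a two–term linear recursion. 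This is exactly the feature that distinguishes the dynamical (wave) problem from the stationary (Poisson) one you are mimicking, where no $S$–term exists in the field representation.

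\textbf{How the paper proceeds instead.} The paper works with the full $F^l_\pm$ (not the perturbations), uses only a polynomial momentum weight $(v^0_\pm)^{4+\delta}$, and—crucially—retains the \emph{time integral} at every step rather than passing to a supremum. The field difference at level $k-1$ is bounded by $C''_t\int_0^t(\|\E^{k-2}-\E^{n-2}\|+\|\B^{k-2}-\B^{n-2}\|)\,ds$ (see \eqref{Emm Enm final}); iterating this $n-1$ times produces a nested integral $\int_0^t\int_0^{\tau_0}\cdots d\tau_{n-2}=t^{n-1}/(n-1)!$, which overwhelms the prefactor $(C''_t)^{n-1}$ by Stirling. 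No smallness of $\kappa$ is needed—the factorial does all the work. Your contraction scheme could likely be repaired by carrying the coupled $(A_l,B_l)$ system and showing both constants are $<1$ under the standing largeness hypotheses on $\beta,g$, but as written the $S$–term coupling is simply dropped.
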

\begin{remark}
    The decay estimate for the momentum derivatives $\nabla_v F_\pm$ (Proposition~\ref{Prop.decay.mom.deri}) plays a crucial role in this proof below.
\end{remark}
\begin{proof}Since the perturbation $f_\pm^l$ can also be written as $F_\pm^l-F_{\pm,\textup{st}}$ for the steady-state  $F_{\pm,\textup{st}}$ with J\"uttner-Maxwell upper bound solving \eqref{2speciesVM-steady}, it suffices to prove the Cauchy property for $(F_\pm^l(t,x,v))_{l\in\mathbb{N}}$. Fix $N_0\in \mathbb{N}.$ Then for any $k,n\ge N_0$ integers with $k\ge n$, we have \begin{equation}\label{zero initial boundary cauchy}\begin{split}(
&\fm-\fn)(0,x,v)=0,\ (\fm-\fn) (t,x_\parallel,0,v)|_{\gamma_-}=0,
        \end{split}
\end{equation}
and 
   \begin{multline*}
    \partial_t (\fm-\fn)+(\hat{v}_\pm) \cdot \nabla_x (\fm-\fn) +\left(\pm\Emmbf\pm(\hat{v}_\pm)\times \Bmmbf-m_\pm g\hat{e}_3\right)\cdot \nabla_v (\fm-\fn)\\
    = -\left(\pm(\Emmbf-\Enmbf)\pm(\hat{v}_\pm)\times (\Bmmbf-\Bnmbf)\right)\cdot \nabla_v \fn.
   \end{multline*} By \eqref{zero initial boundary cauchy}, we have
\begin{multline}\notag 
    (\fm-\fn)(t,x,v)
    = \mp \int^t_{\max\{0,t-\tbk\}} \left((\Emmbf-\Enmbf)(s,\XSk(s))+\hat{\mathcal{V}}^{k}_\pm(s)\times (\Bmmbf-\Bnmbf)(s,\XSk(s))\right)\\\cdot \nabla_v \fn(s,\XSk(s),\VSk(s))ds,
\end{multline}using the iterated characteristic trajectories \eqref{iterated char}.  Here, note that $(\Emmbf,\Bmmbf)$ and $(\Enmbf,\Bnmbf)$ solve the iterated Maxwell equations under the same initial data, we have zero initial conditions for the difference $(\Emmbf-\Enmbf,\Bmmbf-\Bnmbf)$. Therefore, using the energy comparison that
\begin{equation}\label{additional v bound.s.2}
        (\vZ) \lesssim  \langle\VSlo(s)\rangle+\bigg|\int_s^td\tau\  \FS(\tau,\XSlo(\tau),\VSlo(\tau))\bigg|\lesssim\langle \VSlo(s)\rangle +C_2T\lesssim C_T\langle \VSlo(s)\rangle  ,
    \end{equation} given by \eqref{Fl C2 bound.s}, we obtain for some positive $\delta\in(0,1) $, \begin{equation}\begin{split}\label{fm fn diff}
  & |((v_\pm^0)^{4+\delta} (\fm-\fn))(t,x,v)|\\
    &\le C_t \sup_{s\in [0,t]} \|((v_\pm^0)^{4+\delta} \nabla_v \fn)(t,\cdot,\cdot)\|_{L^\infty_{x,v}} \int_0^t\left(\| (\Emmbf-\Enmbf)(s,\cdot)\|_{L^\infty_x}+\| (\Bmmbf-\Bnmbf)(s,\cdot)\|_{L^\infty_x}\right)ds\\
   & \le C'_t \int_0^t\left(\| (\Emmbf-\Enmbf)(s,\cdot)\|_{L^\infty_x}+\| (\Bmmbf-\Bnmbf)(s,\cdot)\|_{L^\infty_x}\right)ds,
\end{split}\end{equation}via the derivative upper bound estimate \eqref{deri F estimate final} for every sequence element $\fn$. Now we make estimates on each decomposed piece of 
 $(\Emmbf-\Enmbf,\Bmmbf-\Bnmbf)$ using the representations \eqref{Ei5},  \eqref{E3}, \eqref{B_half_final}, \eqref{Bpar_half_final}, \Black{and \eqref{BparS_half_final}}.

First of all, note that the differences $(\Emmbf-\Enmbf,\Bmmbf-\Bnmbf)$ have zero homogeneous terms in their representations since $\E^{m-1}_{\textup{hom}}=\E^{n-1}_{\textup{hom}}$ and $\B^{m-1}_{\textup{hom}}=\B^{n-1}_{\textup{hom}}$. 

Regarding the $b2$ boundary terms, we observe that for $i=1,2$ and $3$,
\begin{multline}\label{eq.b2.cauchy}
      |(\Emm-\Enm)^{(1)}_{\pm,ib2}(t,x)|+|(\Emm-\Enm)^{(2)}_{\pm,ib2}(t,x)|\\\le 2\int_{B(x;t)\cap \{y_3=0\}} \frac{dy_\parallel}{|y-x|}\int_{v_3\le 0} dv\  \left(1+|(\hat{v}_\pm)_3|\frac{\sqrt{m_\pm^2+|v|^2}}{m_\pm}\right)|(\fmm-\fnm)(t-|x-y|,y_\parallel,0,v)|,
\end{multline}by \eqref{3.13}, since $(\fmm-\fnm) (t,x_\parallel,0,v)|_{\gamma_-}=0.$ Then by further making the changes of variables $y_\parallel \mapsto z\eqdef y_\parallel-x_\parallel$ and then $z\mapsto (r,\theta)$ with $|z|=r,$ we have
\begin{multline*}
      |(\Emm-\Enm)^{(1)}_{\pm,ib2}(t,x)|+|(\Emm-\Enm)^{(2)}_{\pm,ib2}(t,x)|\\\le \frac{8\pi}{m_\pm} \int_0^{\sqrt{t^2-x_3^2}} dr \ \frac{r}{\sqrt{r^2+x_3^2}}\left\|((v^0_\pm)^{4+\delta}(\fmm-\fnm))\left(t-\sqrt{r^2+x_3^2},\cdot,\cdot\right)\right\|_{L^\infty_{x,v}}\int_{v_3\le 0} dv\ (v^0_\pm)^{-3-\delta} ,
\end{multline*}for any fixed $\delta>0$. By further making the change of variables $r\mapsto \tau \eqdef t-\sqrt{r^2+x_3^2}, $ we have 
\begin{multline*}
     \hspace{-2mm} |(\Emm-\Enm)^{(1)}_{\pm,ib2}(t,x)|+|(\Emm-\Enm)^{(2)}_{\pm,ib2}(t,x)|\lesssim \frac{1}{m_\pm} \int_0^{t-x_3} d\tau \ \left\|((v^0_\pm)^{4+\delta}(\fmm-\fnm))\left(\tau,\cdot,\cdot\right)\right\|_{L^\infty_{x,v}} .
\end{multline*} 
\Black{We also recall that no $b2$ boundary terms appear in the magnetic differences
$\Bmmbf-\Bnmbf$: the magnetic field representation \eqref{B_half_final} and
\eqref{Bpar_half_final} contains no $b2$ terms, owing to the cancellation of the
boundary contributions under the curl relation established in
Section~\ref{sec.B.halfspace}.}

Note that the $b1$ initial-value parts in the Glassey-Strauss representations depends only on the initial difference  $(\fmm-\fnm)(0,x,v)$ which is zero. Therefore, all the $b1$ terms in the representations of $(\Emmbf-\Enmbf,\Bmmbf-\Bnmbf)$ are zero.

Regarding the $T$ terms, we observe that by the representation \eqref{Ei5}-\eqref{E3}, and the kernel estimate \eqref{ET kernel estimate}, we have\begin{multline}\notag
    |(\Emmbf-\Enmbf)^{(1)}_{\pm,T}(t,x)|\lesssim \int_{B^+(x;t)} \frac{dy}{|y-x|^2}\int_\rth dv\ \frac{v^0_\pm}{m_\pm} (\fmm-\fnm)(t-|x-y|,y,v)\\
    +2\int_{B(x;t)\cap \{y_3=0\}} \int_\rth \frac{(\fmm-\fnm)(t-|y-x|,y_\parallel,0,v)}{|y-x|}dvdy_\parallel.
\end{multline} 
Note that 
\begin{align*}\notag
    &\int_{B^+(x;t)} \frac{dy}{|y-x|^2}\int_\rth dv\ \frac{v^0_\pm}{m_\pm} (\fmm-\fnm)(t-|x-y|,y,v)\\
    &\le \int_{B^+(x;t)} \frac{dy}{|y-x|^2}\left\|((v^0_\pm)^{4+\delta}(\fmm-\fnm))\left(t-|x-y|,y,\cdot\right)\right\|_{L^\infty_{v}}\int_\rth dv\ \frac{(v^0_\pm)^{-3-\delta}}{m_\pm} \\
   & \le 4\pi C_{m_\pm,\delta} \int_0^tdr \   \left\|((v^0_\pm)^{4+\delta}(\fmm-\fnm))\left(t-r,\cdot,\cdot\right)\right\|_{L^\infty_{x,v}}\\
   &=4\pi C_{m_\pm,\delta} \int_0^td\tau \   \left\|((v^0_\pm)^{4+\delta}(\fmm-\fnm))\left(\tau,\cdot,\cdot\right)\right\|_{L^\infty_{x,v}},
\end{align*}where we made the changes of variables $y\mapsto y-x=r\omega$ with $r\eqdef |y-x|$ and $\omega \in \mathbb{S}^2$, and then $r\mapsto \tau\eqdef t-r.$ On the other hand, we also note that
\begin{equation}\begin{split}\label{additional term estimates}
    &2\int_{B(x;t)\cap \{y_3=0\}} \int_\rth \frac{(\fmm-\fnm)(t-|y-x|,y_\parallel,0,v)}{|y-x|}dvdy_\parallel\\
    &\le 2\int_{B(x;t)\cap \{y_3=0\}} \frac{dy_\parallel}{|y-x|} \left\|((v^0_\pm)^{4+\delta}(\fmm-\fnm))\left(t-|x-y|,y,\cdot\right)\right\|_{L^\infty_{v}}\int_\rth dv\ \frac{(v^0_\pm)^{-3-\delta}}{m_\pm}\\
   & \le 4\pi C_{m_\pm,\delta }  
    \int_0^{t-x_3}d\tau\ \left\|((v^0_\pm)^{4+\delta}(\fmm-\fnm))\left(\tau,\cdot,\cdot\right)\right\|_{L^\infty_{x,v}},
\end{split}\end{equation}by further making the changes of variables $y_\parallel \mapsto z\eqdef y_\parallel-x_\parallel$, then $z\mapsto (r,\theta)$ with $|z|=r$ and $\theta\in[0,2\pi],$ and finally $r\mapsto \tau\eqdef t-\sqrt{r^2+x_3^2}.$ Altogether, we have  \begin{equation*}
     |(\Emmbf-\Enmbf)^{(1)}_{\pm,T}(t,x)|\lesssim C_{m_\pm,\delta }  
    \int_0^{t}d\tau\ \left\|((v^0_\pm)^{4+\delta}(\fmm-\fnm))\left(\tau,\cdot,\cdot\right)\right\|_{L^\infty_{x,v}}.
\end{equation*}Similarly, we have the same upper bound for $ |(\Emmbf-\Enmbf)^{(2)}_{\pm,T}(t,x)|$. Regarding the difference in magnetic fields $(\Bmmbf-\Bnmbf)_{\pm,T}$ we instead use the kernel estimate \eqref{B35 kernel final}, and obtain the same upper bound. Thus we conclude that
\begin{multline}\notag
     |(\Emmbf-\Enmbf)_{\pm,T}(t,x)|+|(\Bmmbf-\Bnmbf)_{\pm,T}(t,x)|\\
     \lesssim C_{m_\pm,\delta }  
    \int_0^{t}d\tau\ \left\|((v^0_\pm)^{4+\delta}(\fmm-\fnm))\left(\tau,\cdot,\cdot\right)\right\|_{L^\infty_{x,v}}.
\end{multline}

Regarding the nonlinear $S$ terms, the integrands in the differences $(\Emmbf-\Enmbf)_{\pm,S}$ and $(\Bmmbf-\Bnmbf)_{\pm,S}$ involve the following difference by the representations in \eqref{Ei5}, \eqref{E3}, \eqref{B_half_final}, and \Black{ \eqref{BparS_half_final}}:
$$(\pm \Emtbf\pm \hat{v}_\pm\times \Bmtbf-m_\pm g\hat{e}_3) \fmm-(\pm \Entbf\pm \hat{v}_\pm\times 
\Bntbf-m_\pm g\hat{e}_3) \fnm.$$ We further write this as 
\begin{equation*}
    (\pm \Emtbf\pm \hat{v}_\pm\times \Bmtbf-m_\pm g\hat{e}_3) (\fmm-\fnm) \pm ((\Emtbf-\Entbf)+ \hat{v}_\pm\times 
(\Bmtbf-\Bntbf)) \fnm.
\end{equation*}
We use these two split terms in each of the differences $(\Emmbf-\Enmbf)_{\pm,S}$. 
We will use the kernel estimates \eqref{a2} and \eqref{a1} for the difference
$(\Emmbf-\Enmbf)_{\pm,S}$\Black{, and the kernel estimate \eqref{aB.final} for the
magnetic difference $(\Bmmbf-\Bnmbf)_{\pm,S}$, whose kernel $a^{\B}_{\pm,i}$ of
\eqref{aBi} obeys the same upper bound $\lesssim \frac{v^0_\pm}{m_\pm^2}$ as
\eqref{a2}--\eqref{a1}}.  Then we obtain 
\begin{equation*}\begin{split}
  & |(\Emmbf-\Enmbf)^{(1)}_{\pm,S}(t,x)|\\
  &  \lesssim \int_{B^+(x;t)}\frac{dy}{|x-y|} \int_{\rth} dv\ \frac{v^0_\pm}{m_\pm^2} \bigg|(\pm \Emtbf\pm \hat{v}_\pm\times \Bmtbf-m_\pm g\hat{e}_3) (\fmm-\fnm) (t-|x-y|,y,v) \\
  &  \pm ((\Emtbf-\Entbf)+ \hat{v}_\pm\times 
(\Bmtbf-\Bntbf)) \fnm (t-|x-y|,y,v)\bigg| .\end{split}
\end{equation*} Here, we again make the changes of variables $y\mapsto y-x=r\omega$ with $r\eqdef |y-x|$ and $\omega \in \mathbb{S}^2$, and then $r\mapsto \tau\eqdef t-r$ to obtain that
\begin{equation*}\begin{split}
   & |(\Emmbf-\Enmbf)^{(1)}_{\pm,S}(t,x)| \\
    &\lesssim \frac{C_{m_\pm,\delta}}{m_\pm}\int_0^t d\tau\ (t-\tau) \bigg[(\|  (\Emtbf(\tau,\cdot)\|_{L^\infty_x}+\|\Bmtbf(\tau,\cdot)\|_{L^\infty_x}+m_\pm g) \| ((v^0_\pm)^{4+\delta}(\fmm-\fnm))(\tau,\cdot,\cdot)\|_{L^\infty_{x,v}} \\
  & + (\|(\Emtbf-\Entbf)(\tau,\cdot)\|_{L^\infty_x}+ \|(\Bmtbf-\Bntbf)(\tau,\cdot)\|_{L^\infty_x})\| \fnm (\tau,\cdot,\cdot)\|_{L^\infty_{x,v}}\bigg] \\
  & \lesssim C_{m_\pm,\delta}gt\int_0^t d\tau\   \| ((v^0_\pm)^{4+\delta}(\fmm-\fnm))(\tau,\cdot,\cdot)\|_{L^\infty_{x,v}} \\
  & + \frac{C_{m_\pm,\delta}t}{m_\pm}\int_0^t d\tau\ (\|(\Emtbf-\Entbf)(\tau,\cdot)\|_{L^\infty_x}+ \|(\Bmtbf-\Bntbf)(\tau,\cdot)\|_{L^\infty_x}),
\end{split}\end{equation*}using the $L^\infty$ estimates \eqref{apriori_f}, \eqref{final estimate for flo.st} and \eqref{apriori_EB}, \eqref{steady state L infty} for $\fnm$, $\Emtbf$, $\Bmtbf$ obtained via the bootstrap arguments. Estimates for $(\Emmbf-\Enmbf)^{(2)}_{\pm,S}$ \Black{and for
$(\Bmmbf-\Bnmbf)^{(1)}_{\pm,S}$ and $(\Bmmbf-\Bnmbf)^{(2)}_{\pm,S}$} also give the
same upper bound\Black{, via the kernel estimate \eqref{aB.final} for the magnetic
$S$-kernel}. Therefore, we conclude that 
\begin{equation*}\begin{split}
    &|(\Emmbf-\Enmbf)_{\pm,S}(t,x)|\Black{+|(\Bmmbf-\Bnmbf)_{\pm,S}(t,x)|}\\
   &\lesssim C_{m_\pm,\delta}gt\int_0^t d\tau\   \| ((v^0_\pm)^{4+\delta}(\fmm-\fnm))(\tau,\cdot,\cdot)\|_{L^\infty_{x,v}} \\
  & + \frac{C_{m_\pm,\delta}t}{m_\pm}\int_0^t d\tau\ (\|(\Emtbf-\Entbf)(\tau,\cdot)\|_{L^\infty_x}+ \|(\Bmtbf-\Bntbf)(\tau,\cdot)\|_{L^\infty_x}).
  \end{split}
\end{equation*}

Lastly, for the differences in the field components $\Emm_3-\Enm_3$, $\Bmm_1-\Bnm_1$, and $\Bmm_2-\Bnm_2$ which satisfy the Neumann-type boundary conditions for wave equations, the following additional terms appear in the differences:
$$I_1=2\int_{B(x;t)\cap \{y_3=0\}} \int_\rth \frac{(\fmm-\fnm)(t-|y-x|,y_\parallel,0,v)}{|y-x|}dvdS_y.$$Note that the term $I_1$ is bounded from above as
\begin{equation}\label{eq.addi.cauchy}
    I_1\lesssim 4\pi C_{m_\pm,\delta }  
    \int_0^{t-x_3}d\tau\ \left\|((v^0_\pm)^{4+\delta}(\fmm-\fnm))\left(\tau,\cdot,\cdot\right)\right\|_{L^\infty_{x,v}},
\end{equation} by the estimate \eqref{additional term estimates}. 

    Collecting all the estimates for the components of $\Emmbf-\Enmbf$ and $\Bmmbf-\Bnmbf$, we conclude that 
    \begin{equation}\begin{split}\label{Emm Enm prefinal}
       & |(\Emmbf-\Enmbf)(t,x)|+|(\Bmmbf-\Bnmbf)(t,x)|\\
    & \le C\bigg(  
   (1+t) \int_0^{t}d\tau\ \sum_\pm \left\|((v^0_\pm)^{4+\delta}(\fmm-\fnm))\left(\tau,\cdot,\cdot\right)\right\|_{L^\infty_{x,v}}\\
   &+ t\int_0^t d\tau\ (\|(\Emtbf-\Entbf)(\tau,\cdot)\|_{L^\infty_x}+ \|(\Bmtbf-\Bntbf)(\tau,\cdot)\|_{L^\infty_x})\bigg),
    \end{split}\end{equation}where the constant $C$ depends only on $m_\pm,$ $g$, and $\delta$. 
By using \eqref{fm fn diff} in \eqref{Emm Enm prefinal} we further obtain that 
\begin{equation}\begin{split}
    \label{Emm Enm final}
   &  |(\Emmbf-\Enmbf)(t,x)|+|(\Bmmbf-\Bnmbf)(t,x)|\\
    & \le C\bigg(  
   (1+t) \int_0^{t}d\tau\ C_\tau' \int_0^\tau ds(\|(\Emtbf-\Entbf)(s,\cdot)\|_{L^\infty_x}+ \|(\Bmtbf-\Bntbf)(s,\cdot)\|_{L^\infty_x})\\
  & + t\int_0^t d\tau\ (\|(\Emtbf-\Entbf)(\tau,\cdot)\|_{L^\infty_x}+ \|(\Bmtbf-\Bntbf)(\tau,\cdot)\|_{L^\infty_x})\bigg)\\
  & \le C  
  ( (1+t)t C_t'+t) \int_0^t ds(\|(\Emtbf-\Entbf)(s,\cdot)\|_{L^\infty_x}+ \|(\Bmtbf-\Bntbf)(s,\cdot)\|_{L^\infty_x}),
\end{split}\end{equation}noting that the coefficient $C'_\tau$ in \eqref{fm fn diff} has its maximum at $\tau=t$ for $\tau\in[0,t].$ Now, define $C''_t\eqdef C  
  ( (1+t)t C_t'+t).$ By iterating \eqref{Emm Enm final}, we finally obtain  \begin{equation}\begin{split}\label{Emm Enm final2}
        &|(\Emmbf-\Enmbf)(t,x)|+|(\Bmmbf-\Bnmbf)(t,x)|\\
    & \le C''_t \int_0^t ds(\|(\Emtbf-\Entbf)(s,\cdot)\|_{L^\infty_x}+ \|(\Bmtbf-\Bntbf)(s,\cdot)\|_{L^\infty_x})\\
     &\le 
    (C''_t)^{n-1}\int_0^t ds\ \left(\prod_{j=1}^{n-2}\int_0^{\tau_{j-1}}d\tau_j\right)\ (\|(\E^{k-n}-\E^0)(\tau_{n-2},\cdot)\|_{L^\infty_x}+ \|(\B^{k-n}-\B^0)(\tau_{n-2},\cdot)\|_{L^\infty_x})\\
    &\le \frac{1}{8}(C''_t)^{n-1} \max\{m_+,m_-\}g \frac{t^{n-1}}{(n-1)!},
    \end{split}\end{equation} given that $\E^0=\B^0=0$ and the uniform estimate \eqref{final estimate for flo.st}, \eqref{apriori_EB}, and \eqref{steady state L infty} for $\E^{k-n}$ and $\B^{k-n}$ obtained via the bootstrap argument. We also used the notation $\tau_0\eqdef s$. Lastly, plugging \eqref{Emm Enm final2} into \eqref{fm fn diff}, we obtain for $t\in[0,T]$
    $$|((v_\pm^0)^{4+\delta} (\fm-\fn))(t,x,v)|
    \le \frac{1}{8} (C''_t)^{n}\max\{m_+,m_-\}g   \frac{t^{n}}{n!},$$
   which can be made sufficiently small as $n$ gets sufficiently large. This is via the Stirling approximation that 
   $$ n!\approx \sqrt{2\pi n} \left(\frac{n}{e}\right)^n,$$ and that $C''_t\le C''_T.$ This completes the proof of Proposition \ref{prop.cauchy} that states $(\fl(t,x,v))_{l\in\mathbb{N}}$ is Cauchy for each fixed $(t,x,v)$.
\end{proof}

Given the Cauchy property of the sequences, we are now ready to pass to the limit. For the linear terms, we directly pass to the limit via the subsequence $l_{k_i}$ as $k_i\to \infty$ for each $i=1,2,...,6$ by testing with any given $C^\infty_c$ test function which is also a $L^1$ function. 

Also, for the nonlinear terms appearing in \eqref{iterated Vlasov.M} in the case of steady states with J\"uttner-Maxwell upper bound in $\mathbb{R}^3_+,$ we observe that
\begin{equation}\begin{split}\label{nonlinear.passingtothelimit.M}
   & \left|\iiint \phi\left((\E^{l_{k_6}}+(\hat{v}_\pm)\times \B^{l_{k_6}})\cdot \nabla_v f^{l_{k_6}+1}_\pm-(\E^\infty+(\hat{v}_\pm)\times \B^\infty)\cdot \nabla_v f^{l_{k_6}+1}_\pm\right) \right|\\
   & =\left|-\iiint \nabla_v \phi\cdot \left((\E^{l_{k_6}}+(\hat{v}_\pm)\times \B^{l_{k_6}}) f^{l_{k_6}+1}_\pm -(\E^\infty+(\hat{v}_\pm)\times \B^\infty) f^\infty_\pm\right)\right|\\
 & \le  \left|\iiint \nabla_v \phi\cdot (\E^{l_{k_6}}+(\hat{v}_\pm)\times \B^{l_{k_6}}) (f^{l_{k_6}+1}_\pm-f^\infty_\pm)\right|
  +\left|\iiint \nabla_v  \phi\cdot (\E^{l_{k_6}}-\E^\infty+(\hat{v}_\pm)\times \B^{l_{k_6}}-(\hat{v}_\pm)\times \B^\infty) f^\infty_\pm\right|\\
 & \le (\|\E^{l_{k_6}}\|_{L^\infty}+\|\B^{l_{k_6}}\|_{L^\infty} ) \iiint |\nabla_v \phi||f^{l_{k_6}+1}_\pm-f^\infty_\pm|+\|f^\infty_\pm\|_{L^\infty}\iiint |\nabla_v  \phi|\left(|\E^{l_{k_6}}-\E^\infty|+|\B^{l_{k_6}}- \B^\infty| \right)\to 0, 
\end{split}\end{equation}
as $k_6\to \infty$ for any $C_c^\infty$ test function $\phi,$ since $(\mathcal{E}^{l_{k_6}},\mathcal{B}^{l_{k_6}})$ converges strongly as $k_6\to \infty$ and $f_\pm^l$ converges strongly as $l\to \infty$ so that we can use the dominated convergence theorem and the $L^\infty$ upper-bounds of $\E^{l_{k_6}}$ and $\B^{l_{k_6}}$.  Thus, we conclude that $(f^\infty_\pm,\mathcal{E}^\infty,\mathcal{B}^\infty)$ also solves the original Vlasov--Maxwell system \eqref{2speciesVM} as the perturbations from the steady states with J\"uttner-Maxwell upper bound $(F_{\pm,\textup{st}}, \E_{\textup{st}},\B_{\textup{st}})$ in the weak sense.


\subsection{Uniqueness and Non-Negativity}
We now prove the uniqueness of solutions to the dynamical Vlasov--Maxwell system~\eqref{2speciesVM}.  
The decay estimate for the momentum derivatives $\nabla_v F_\pm$ (Proposition~\ref{Prop.decay.mom.deri}) plays a crucial role in this proof.

Suppose that there are two global-in-time solutions $(\fone, \Eoned, \Boned)$ and $(\ftwo, \Etwod, \Btwod)$ for the system \eqref{2speciesVM} in the time interval $[0,T]$ with \eqref{2speciesIni}, \eqref{inflow boundary}, and \eqref{perfect cond. boundary}.
Then note that we have \begin{equation}\label{zero initial boundary difference}\begin{split}(
&\fone-\ftwo)(0,x,v)=0,\ (\fone-\ftwo) (t,x_\parallel,0,v)|_{\gamma_-}=0,
        \end{split}
\end{equation}
and the difference $\fone-\ftwo$ solves the following Vlasov equation:
   \begin{multline}\label{eq.uni1}
    \partial_t (\fone-\ftwo)+(\hat{v}_\pm) \cdot \nabla_x (\fone-\ftwo) +\left(\pm\Eoned\pm(\hat{v}_\pm)\times \Boned-m_\pm g\hat{e}_3\right)\cdot \nabla_v (\fone-\ftwo)\\
    = -\left(\pm(\Eoned-\Etwod)\pm(\hat{v}_\pm)\times (\Boned-\Btwod)\right)\cdot \nabla_v \ftwo.
   \end{multline} Note that the characteristic trajectory follows the one generated by the fields $\Eoned$ and $ \Boned$. Then, by integrating \eqref{eq.uni1} along the characteristics $\ZS(s)=(\XS(s),\VS(s))$ (associated with $\Eoned$ and $ \Boned$) for $s\in [\max\{0,t-\tb\},t]$ defined in the sense of \eqref{leading char}, we obtain\begin{multline}\notag 
    (\fone-\ftwo)(t,x,v)
    = \mp \int^t_{\max\{0,t-\tb\}} \left((\Eoned-\Etwod)(s,\XS(s))+\hat{\mathcal{V}}_\pm(s)\times (\Boned-\Btwod)(s,\XS(s))\right)\\\cdot \nabla_v \ftwo(s,\XS(s),\VS(s))ds.
\end{multline}Therefore, we obtain 
\begin{multline}\label{eq.F.diff}
   | (v^0_\pm)^{4+\delta}(\fone-\ftwo))(t,x,v)|\\
    \le C_T\sup_{s\in [0,t]} \|( (v^0_\pm)^{4+\delta}\nabla_v \ftwo)(s,\cdot,\cdot)\|_{L^\infty_{x,v}} \int_0^t\left(\| (\Eoned-\Etwod)(s,\cdot)\|_{L^\infty_x}+\| (\Boned-\Btwod)(s,\cdot)\|_{L^\infty_x}\right)ds,
\end{multline} by the energy comparison \eqref{additional v bound.s.2}. Now we make upper bound estimates on the $\Eoned-\Etwod$ and $\Boned-\Btwod$ differences using the representations \eqref{Ei5}, \eqref{E3},  \eqref{B_half_final}, \eqref{Bpar_half_final}, \Black{and \eqref{BparS_half_final}}. Note that $(\Eoned,\Boned)$ and $(\Etwod,\Btwod)$ satisfy the same initial-boundary data, and hence their difference have zero homogeneous terms in their representations. For the rest of the terms including $b2, b1, T, S$ terms, we follow the exactly same argument from \eqref{eq.b2.cauchy} to \eqref{eq.addi.cauchy} with $\fone=\fmm$, $\ftwo=\fnm$, $\Eoned=\Emmbf=\Emtbf$,
$\Etwod=\Enmbf=\Entbf$,
$\Boned=\Bmmbf=\Bmtbf$, and
$\Btwod=\Bnmbf=\Bntbf$, we obtain  
\begin{multline*}|(\Eoned-\Btwod)(t,x)|+|(\Boned-\Btwod)(t,x)|
     \le C\bigg(  
   (1+t) \int_0^{t}d\tau\ \sum_{\iota=\pm} \left\|((\vZio)^{4+\delta}(\foneio-\ftwoio))\left(\tau,\cdot,\cdot\right)\right\|_{L^\infty_{x,v}}\\
   + t\int_0^t d\tau\ (\|(\Eoned-\Etwod)(\tau,\cdot)\|_{L^\infty_x}+ \|(\Boned-\Btwod)(\tau,\cdot)\|_{L^\infty_x})\bigg),\end{multline*}
by \eqref{Emm Enm prefinal}. Then by the Gr\"onwall lemma, we obtain for $t\in[0,T]$,
\begin{multline}\label{diff.gronwall}\|(\Eoned-\Etwod)(t,\cdot)\|_{L^\infty_x}+ \|(\Boned-\Btwod)(t,\cdot)\|_{L^\infty_x}\\
     \le C\bigg(  
   (1+t) \int_0^{t}d\tau\ \sum_{\iota=\pm} \left\|((\vZio)^{4+\delta}(\foneio-\ftwoio))\left(\tau,\cdot,\cdot\right)\right\|_{L^\infty_{x,v}}\bigg)e^{\frac{t^2}{2}}.\end{multline}
Plugging \eqref{diff.gronwall} into \eqref{eq.F.diff}, we obtain
\begin{equation}\begin{split}\notag
   | (v^0_\pm)^{4+\delta}(\fone-\ftwo))(t,x,v)|
    &\le C C_T\sup_{s\in [0,t]} \|( (v^0_\pm)^{4+\delta}\nabla_v \ftwo)(s,\cdot,\cdot)\|_{L^\infty_{x,v}}\\
  &  \times \int_0^t d\tau\left(   
   (1+\tau)e^{\frac{\tau^2}{2}} \int_0^{\tau}d\tau'\ \sum_{\iota=\pm} \left\|((\vZio)^{4+\delta}(\foneio-\ftwoio))\left(\tau',\cdot,\cdot\right)\right\|_{L^\infty_{x,v}}\right)\\
  & \le C C_TT (1+T)e^{\frac{T^2}{2}}\sup_{s\in [0,T]} \|( (v^0_\pm)^{4+\delta}\nabla_v \ftwo)(s,\cdot,\cdot)\|_{L^\infty_{x,v}}\\
   & \times \int_0^t d\tau  \left(
    \sum_{\iota=\pm}\sup_{0\le \tau'\le \tau} \left\|((\vZio)^{4+\delta}(\foneio-\ftwoio)) \left(\tau',\cdot,\cdot\right)\right\|_{L^\infty_{x,v}}\right).
\end{split}\end{equation}By defining \begin{equation}
    \notag\begin{split}
        D_T&\eqdef C C_TT (1+T)e^{\frac{T^2}{2}}\sup_{s\in [0,T]} \|( (v^0_\pm)^{4+\delta}\nabla_v \ftwo)(s,\cdot,\cdot)\|_{L^\infty_{x,v}}\\
   \tilde{U}(\tau)&\eqdef   
     \sum_{\iota=\pm}\sup_{0\le \tau'\le \tau} \left\|((\vZio)^{4+\delta}(\foneio-\ftwoio)) \left(\tau',\cdot,\cdot\right)\right\|_{L^\infty_{x,v}},
    \end{split}
\end{equation} we obtain the Volterra inequality $$\tilde{U}(t)\le 2 D_T \int_0^t\tilde{U}(\tau)d\tau. $$ Further define $U(t)= e^{-2D_Tt}\tilde{U}(t).$ Then we observe that 
$$\frac{d}{dt}U(t)= e^{-2D_Tt}\frac{d}{dt}\tilde{U}(t) -2D_T \tilde{U}(t)\le 0.$$ Therefore, $U(t)$ is non-decreasing. Since $U(0)=0$ by $(\fone-\ftwo)(0,\cdot,\cdot)=0,$ we have that $U(t)=0$ for any $t\ge 0$ since $U(t)$ is non-negative.
Therefore,  we conclude that
$$\notag
   \sum_{\iota=\pm}\sup_{0\le \tau\le t} \left\|((\vZio)^{4+\delta}(\foneio-\ftwoio)) \left(\tau,\cdot,\cdot\right)\right\|_{L^\infty_{x,v}}
    =0.$$ Then this also implies that $\Eoned=\Etwod$ and $\Boned=\Btwod$ almost everywhere by \eqref{diff.gronwall}. This completes the proof of the uniqueness.

Lastly, for the proof of non-negativity, assume that the initial distributions $ F_\pm^{\textup{in}} $ and the inflow boundary profile $G_\pm$ are non-negative. Since $ F_\pm $ remains constant along the characteristics described by \eqref{leading char}, it follows that $ F_\pm $ is also non-negative.  

Consequently, Proposition \ref{prop.dyna.boot}, Proposition \ref{Prop.decay.mom.deri}, Proposition \ref{prop.derivative}, Proposition \ref{prop.main.deri.field} and Proposition \ref{prop.cauchy} together with the uniqueness and the non-negativity completes the proof of our main well-posedness theorem (Theorem \ref{thm.asymp.rth}) of the paper. In the next section, we lastly provide a generalized setting for astrophysical applications.

\section{Discussion on Astrophysical Applications}\label{sec.astro}

Many astrophysical environments, such as the regions surrounding stars, can be modeled using the Vlasov--Maxwell system, which describes the interaction of charged particles with electromagnetic fields.  
Consider, for example, a star and near the star, intense gravitational and electromagnetic forces dominate, making the Vlasov--Maxwell system under a constant gravitational field a relevant model (see \cite{2310.09865}). Near a small patch of a stellar surface the curvature is negligible on the dynamical scales of interest, so a geodesic normal chart identifies the atmosphere locally with the half-space $\R^3_+$ (or the periodic-in-$x_\parallel$ variant $\mathbb{T}^2\times\R_+$). In this chart the vertical direction coincides with the inward normal and the constant-$g$ approximation models the leading stratifying effect; dispersion in the tangential directions is preserved, while boundary interactions are localized to $x_3=0$. This is the geometric rationale for our use of the half-space domain in Sections~\ref{sec.main.comparison.weight}–\ref{sec.wellposed}. 
In this generalized model, the Lorentz force term in \eqref{2speciesVM} becomes
\[
\pm \left( \mathbf{E} + \mathbf{E}_{\textup{ext}} + \hat{v}_\pm \times (\mathbf{B} + \mathbf{B}_{\textup{ext}}) \mp m_\pm g \hat{e}_3 \right),
\]
where \( (\mathbf{E}_{\textup{ext}}, \mathbf{B}_{\textup{ext}}) \) are fixed, time-independent background fields. 
To preserve the gravitational confinement mechanism, we assume the physically natural smallness conditions
\begin{equation}
    \label{smallness.ext force}
|\mathbf{E}_{\textup{ext},3}| \ll\min\{m_-,m_+\} g, \quad |\mathbf{B}_{\textup{ext},1}|, \, |\mathbf{B}_{\textup{ext},2}| \ll \min\{m_-,m_+\}g.
\end{equation}
The condition on \( \mathbf{E}_{\textup{ext},3} \) is well-known in plasma physics, as it ensures that the net vertical force remains directed inward toward the boundary, preserving particle confinement. The assumptions on the transverse magnetic components \( \mathbf{B}_{\textup{ext},1}, \mathbf{B}_{\textup{ext},2} \) are imposed to control the additional drift effects introduced by the magnetic force \( \hat{v} \times \mathbf{B}_{\textup{ext}} \), which otherwise may dominate the stabilizing gravitational force.
\begin{remark}[Physical relevance of gravity]
 Most commonly observed in the atmospheres of stars, a classical stellar–atmosphere force balance for a hydrogen plasma (protons/electrons) shows that, with $e_+=-e_-$ and $m_+\gg m_-$, local quasi–neutrality together with vertical equilibrium yields
\[
\frac{\mathbf{E}_3}{g}\approx\frac{m_+-m_-}{\,e_+-e_-\,}\ \Longrightarrow\
m_+g\approx 2\,e_+\mathbf{E}_3.
\] See~\cite{MR4645724} and the references therein for related studies in physics. 
Thus the ion gravitational pull typically dominates the electric effect and fixes
the sign of the ambipolar field ($\mathbf{E}_3>0$ upward), favoring inward vertical
confinement. Hence the constant-$g$ regime and the smallness conditions on $(\mathbf{E}_{\mathrm{ext},3},\mathbf{B}_{\mathrm{ext},\parallel})$ in~\eqref{smallness.ext force} reflect the (most common) ubiquitous near-stellar situation for plasmas rather than a laboratory corner case.  This remark
is only a subsidiary justification—the analysis itself is purely
mathematical—but it clarifies why gravity can be treated as the leading vertical
effect in our setting.
\end{remark}
\begin{remark}[Hydrogenic composition]
Most stellar plasmas are effectively hydrogenic: protons and electrons dominate number density, while heavier ions occur at lower abundances. The two-species choice ($e_+=-e_-$) therefore captures the typical charge separation and vertical force balance. Our arguments only use the charge magnitudes and masses through sign and size, so the method extends verbatim to finitely many ion species with comparable bounds; the exit-time control and weight comparisons are unchanged.
\end{remark}

Given these assumptions, the only part of the nonlinear analysis that requires modification is the estimate on the backward exit time \( t_{\mathbf{b}}(t,x,v) \). The original estimate \eqref{exit time bound.whole} in the gravitational-only setting must be adapted to account for the influence of the external fields. In particular, under the smallness conditions above, we can prove that the modified backward exit time still satisfies a comparable upper bound:
\begin{lemma}
\label{lem.gen.exit.time.est}Suppose \eqref{smallness.ext force} holds. Then the backward exit time $\tb$ (Definition \ref{def.exit.times}) satisfies
\begin{equation}\label{gen.exit.time.est}
\tb(t,x,v) \leq \frac{C}{m_\pm g} (\vZ + m_\pm  g x_3),
\end{equation}
with a constant \( C > 0 \) that depends on the relative magnitudes of \( \mathbf{E}_{\textup{ext},3} \) and \( \mathbf{B}_{\textup{ext},\parallel} \).\end{lemma}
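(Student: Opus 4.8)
\textbf{Proof proposal for Lemma \ref{lem.gen.exit.time.est}.} The plan is to mimic the gravitational-only argument of Section~\ref{sec.weight.comparison}, in particular the chain leading from \eqref{tbf bound.whole} to \eqref{exit time bound.whole}, but now tracking the extra external contributions in the equation for the third component of the momentum along the characteristic. Write $\frac{d(\VS)_3}{ds} = -(\E + \E_{\textup{ext}} + \hat v_\pm \times (\B + \B_{\textup{ext}}))_3 - m_\pm g$ along the backward trajectory. Using the a priori bound \eqref{EB apriori bound.whole} for the self-consistent fields, $|\hat v_\pm| \le 1$, and the smallness hypothesis \eqref{smallness.ext force}, one gets a lower bound of the form $-\frac{d(\VS)_3}{ds} \ge c_1 m_\pm g$ for some $c_1 \in (0,1)$ depending only on the ratios $|\E_{\textup{ext},3}|/(m_\pm g)$ and $|\B_{\textup{ext},\parallel}|/(m_\pm g)$ — exactly as $\tfrac{7}{8}m_\pm g$ appeared before, now slightly degraded. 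Integrating this inequality between the instant $s^*$ where $(\VS)_3$ vanishes and the forward/backward exit times gives $\tb + \tf \le \frac{1}{c_1 m_\pm g}\big((\VS)_3(t+\tf) - (\VS)_3(t-\tb)\big)$, the analogue of \eqref{tbf bound.whole}.

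Next I would control the right-hand side by the mechanical energy. Using the ODE for the particle energy (the dynamical analogue of \eqref{ODE for particle energy.st}, i.e.\ \eqref{ODE for particle energy.gen}), whose right-hand side $\pm \hat v_\pm \cdot (\E + \E_{\textup{ext}})$ is bounded by $\|\E\|_{L^\infty} + |\E_{\textup{ext}}|$, one obtains $\sqrt{m_\pm^2 + |\VS(t\mp \tb)|^2} \le (v^0_\pm + m_\pm g x_3) + c_2 m_\pm g\,(\tb + \tf)$ with $c_2 \ll 1$ by \eqref{EB apriori bound.whole} and \eqref{smallness.ext force}. Feeding this back, the $(\tb + \tf)$ terms on the right can be absorbed into the left provided $c_2/c_1$ is small enough — which is guaranteed since $c_2$ is controlled by the smallness conditions while $c_1$ is a fixed fraction. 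This yields $\tb + \tf \le \frac{C}{m_\pm g}(v^0_\pm + m_\pm g x_3)$, and in particular the claimed bound \eqref{gen.exit.time.est} for $\tb$ alone, with $C$ depending only on the ratios $|\E_{\textup{ext},3}|/(\min\{m_-,m_+\}g)$ and $|\B_{\textup{ext},\parallel}|/(\min\{m_-,m_+\}g)$.

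The main obstacle I anticipate is the transverse magnetic drift: $\hat v_\pm \times \B_{\textup{ext}}$ has a nonzero third component $(\hat v_\pm)_1 \B_{\textup{ext},2} - (\hat v_\pm)_2 \B_{\textup{ext},1}$ which, unlike the electric perturbation, does not vanish even when $(\VS)_3 = 0$ and cannot be sign-controlled. The point is that its magnitude is still bounded by $|\B_{\textup{ext},\parallel}|$, so under \eqref{smallness.ext force} it is dominated by the stabilizing term $m_\pm g$; hence the downward force $-(\VS)_3$-equation retains a strictly negative drift and the argument closes. One should also check that the presence of $\B_{\textup{ext}}$ does not spoil the local Lipschitz/well-posedness of the characteristic flow, but since $\B_{\textup{ext}}$ is a fixed smooth (or at least $W^{1,\infty}$) background this is immediate from the same fixed-point setup used for \eqref{leading char}. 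I would present the degraded constants $c_1, c_2$ explicitly in terms of the ratios in \eqref{smallness.ext force} so that the dependence of $C$ is transparent, then note that all subsequent estimates (weight comparison \eqref{w comparison 1.whole}, the decay bounds of Section~\ref{sec.boot.decay}, and the derivative estimates) go through verbatim with $C$ in place of the numerical constant $\tfrac{16}{5}$.
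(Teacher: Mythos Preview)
Your proposal is correct and follows essentially the same route as the paper's proof: bound $-\tfrac{d(\VS)_3}{ds}$ below by a fixed fraction of $m_\pm g$ using \eqref{smallness.ext force} and the a priori field bound, integrate from the vanishing point $s^*$ to the exit times to get the analogue of \eqref{tbf bound.whole}, then use the energy identity \eqref{ODE for particle energy.gen} to control $|\VS|$ at the exit times and absorb. The paper carries out exactly this computation with explicit constants (obtaining $c_1=\tfrac34$, $c_2=\tfrac18$, and the final constant $\tfrac{16}{5}$) rather than your abstract $c_1,c_2$, but the structure is identical.
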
 The remainder of the proof structure, including all the decay estimates and the nonlinear bootstrap, remains unchanged.

In the non-relativistic setting, this estimate can be verified more explicitly by Taylor-expanding the vertical trajectory under the total force and observing that the dominant term is still governed by gravity when \( |\mathbf{B}_{\textup{ext},\parallel}|\ll \min\{m_-,m_+\}g  \) or the associated Larmor frequency. For the relativistic case, a fully explicit formula for the exit time may not be available. However, as shown in \cite{2310.09865}, a similar conclusion holds under analogous smallness assumptions on the external field components.

We therefore conclude that our results naturally extend to the more general setting with fixed ambient fields \( (\mathbf{E}_{\textup{ext}}, \mathbf{B}_{\textup{ext}}) \), provided the vertical component of the external electric field and the horizontal components of the magnetic field are small in comparison to the gravitational force. The initial theorems stated in Sections~\ref{sec.intro} and~\ref{sec.main thms diff and strat} can accordingly be reformulated for the full system. All later sections (Sections~\ref{sec.main.comparison.weight} through~\ref{sec.wellposed}) remain valid as written, except for the single modification to the exit time estimate. 

We close this section by introducing a generalized weight comparison argument to derive the upper bound estimates \eqref{gen.exit.time.est} on the backward exit time $\tb$.

\begin{proof}[Proof of Lemma \ref{lem.gen.exit.time.est}]
Suppose that the self-consistent electromagnetic fields $(\E,\B)$ satisfies the following assumption:
\begin{equation}\label{EB apriori bound.gen}
       \sup_t \|(\E,\B)\|_{L^\infty }\le \min\{m_+,m_-\}\frac{g}{16}. 
    \end{equation} Also, in \eqref{smallness.ext force}, we suppose the following assumption on the external background fields $(\mathbf{E}_{\textup{ext}},\mathbf{B}_{\textup{ext}})$:\begin{equation}
    \label{smallness.ext force.2}
|\mathbf{E}_{\textup{ext},3}| \le\min\{m_+,m_-\}\frac{g}{16}, \quad |\mathbf{B}_{\textup{ext},1}|, \, |\mathbf{B}_{\textup{ext},2}| \le\min\{m_+,m_-\}\frac{g}{16}.
\end{equation}
Define the characteristic trajectory $(\XS,\VS)$ such that now we have $$\frac{d\VS}{ds}=\pm(\E+\E_{\text{ext}}\pm\hat{\VS}\times (\B+\B_{\text{ext}}))- m_\pm g\hat{e}_3.$$Then we have
\begin{multline}\label{ODE for particle energy.gen}
    \frac{d}{ds}\left(\sqrt{m_\pm^2+|\VS(s)|^2}+m_\pm g(\XS)_3(s)\right)=\hat{\VS}(s)\cdot \frac{d\VS}{ds}+m_\pm g(\hat{\VS})_3(s)\\
    =\pm \hat{\VS}(s)\cdot (\E(s,\XS(s))+\E_{\text{ext}}(s,\XS(s))).
\end{multline}
Then we observe that by \eqref{EB apriori bound.gen} and \eqref{smallness.ext force.2}, we have 
   \begin{multline*}
       \frac{d(\VS)_3}{ds}(s)=-(\E+\mathbf{E}_{\textup{ext}}+\hat{\VS}\times (\B+\mathbf{B}_{\textup{ext}}))_3-m_\pm g\\
       =\mathbf{E}_3+\mathbf{E}_{\textup{ext},3}+(\hat{\VS})_1(\mathbf{B}_2+\mathbf{B}_{\textup{ext},2})-(\hat{\VS})_2 (\mathbf{B}_1+\mathbf{B}_{\textup{ext},1})-m_\pm g\le -\frac{3}{4}m_\pm g,
   \end{multline*} since $|\hat{\VS}|\le 1.$ 
Now if we define a trajectory variable $s^*=s^*(t,x,v)\in[t-\tb,t+\tf]$ such that $(\VS)_3(s^*;t,x,v)=0$, then we have
   \begin{align*}(\VS)_3(t+\tf )-(\VS)_3(s^*)&=\int^{t+\tf }_{s^*}\frac{d(\VS)_3}{ds}(\tau)d\tau\le -\frac{3}{4} m_\pm g(t+\tf -s^*),\text{ and }\\ 
  (\VS)_3(s^*)-(\VS)_3(t-\tb )&=\int_{t-\tb }^{s^*}\frac{d(\VS)_3}{ds}(\tau)d\tau\le -\frac{3}{4}  m_\pm g(s^*-(t-\tb) ).\end{align*}Therefore, we have
   \begin{equation}
       \label{tbf bound.gen}
       \tb +\tf \le -\frac{4}{3m_\pm g}((\VS)_3(t+\tf )-(\VS)_3(t-\tb )).
   \end{equation}On the other hand, using \eqref{EB apriori bound.gen}-\eqref{ODE for particle energy.gen}, we have
\begin{multline*}
    \sqrt{m_\pm^2+|\VS(t-\tb )|^2}=\left(\vZ+ m_\pm gx_{3}\right)
\pm \int_t^{t-\tb }  \hat{\VS}(s)\cdot (\E(s,\XS(s))+\E_{\text{ext}}(s,\XS(s)))ds\\
    \le \left(\vZ+ m_\pm gx_{3}\right)+\frac{ m_\pm g}{8}\tb,
\end{multline*} 
 and   
\begin{multline*}
    \sqrt{m_\pm^2+|\VS(t+\tf )|^2}=\left(\vZ+ m_\pm gx_{3}\right)
    \pm \int_t^{t+\tf }  \hat{\VS}(s)\cdot (\E(s,\XS(s))+\E_{\text{ext}}(s,\XS(s)))ds\\*
    \le \left(\vZ+ m_\pm gx_{3}\right)+\frac{ m_\pm g}{8}\tf.
\end{multline*} Thus, together with \eqref{tbf bound.gen}, we have$$
       \tb+\tf\le \frac{4}{3  m_\pm g}\left(2(\vZ+  m_\pm gx_{3})+\frac{ m_\pm  g}{8} (\tb+\tf)\right).
 $$     Therefore, we have
    $$\tb+\tf\le \frac{16}{5 m_\pm g}(\vZ+m_\pm gx_3),$$
    and this completes the proof of \eqref{gen.exit.time.est}.
\end{proof}

\section{Asymptotic Stability of Vacuum in \texorpdfstring{$\mathbb{T}^2\times \mathbb{R}_+$}{}}\label{sec.bootstrap.torus}
In this section, we consider the three-dimensional relativistic Vlasov--Maxwell system in a periodic channel domain $\Omega\eqdef \mathbb{T}^2\times \mathbb{R}_+= (0,1)^2 \times (0,\infty)$ with the background ions in the vacuum steady-state. The main goal is to establish the asymptotic stability of the vacuum state in the horizontally periodic geometry $\mathbb{T}^2 \times \mathbb{R}_+$. We prove that the velocity distribution functions $F_\pm$ exhibit exponential decay in a weighted $L^\infty_{x,v}$ norm. The self-consistent electromagnetic fields satisfy Maxwell’s equations, which, combined with the continuity equation, reduce to inhomogeneous wave equations in $\mathbb{T}^2 \times \mathbb{R}_+$. A central difficulty arises from the lack of dispersion in the periodic directions, which precludes time decay of the fields.
Nevertheless, we establish asymptotic stability in the sense that the velocity distribution $F_\pm$ pointwisely decays exponentially fast and the $L^\infty_x$ norms of the fields remain uniformly bounded at the scale of the gravitational strength. The proof relies on a careful analysis of the characteristic flows and weighted energy estimates that exploit decay in the $x_3$-direction.

For the problem in $\mathbb{T}^2\times (0,\infty),$ we assume not just the initial-boundary conditions \eqref{initial E0i} and \eqref{f initial condition} with the inflow boundary condition \eqref{inflow boundary} and the perfect conductor condition \eqref{perfect cond. boundary} but also the following periodic boundary conditions in $x_\parallel\eqdef (x_1,x_2)$ in the form of
\begin{equation}\label{2speciesBoundary}
\begin{split}
    & \partial_\alpha^\beta F_\pm (t,0,x_2,x_3,v)=\partial_\alpha^\beta  F_\pm (t,1,x_2,x_3,v), \ \partial_\alpha^\beta F_\pm (t,x_1,0,x_3,v)=\partial_\alpha^\beta  F_\pm (t,x_1,1,x_3,v),\\
     & \partial_\alpha \E (t,0,x_2,x_3)= \partial_\alpha \E (t,1,x_2,x_3), \ \partial_\alpha \E (t,x_1,0,x_3)=\partial_\alpha \E (t,x_1,1,x_3),\\
       &\partial_\alpha \B (t,0,x_2,x_3)=\partial_\alpha \B (t,1,x_2,x_3), \ \partial_\alpha \B (t,x_1,0,x_3)= \partial_\alpha \B(t,x_1,1,x_3),
\end{split}
\end{equation}where $\alpha_\pm $ and $\beta$ are multi-indices that stand for the spatial and momentum derivatives, respectively. We consider $|\alpha_\pm|,|\beta|=0,1$. We also impose that the inflow boundary profile $G_\pm$ satisfies the following decay assumption:
\begin{equation}
    \label{h assumption}
    \notag
    \|e^{\frac{m_\pm g\beta}{16}t}e^{\beta \vZ}G_\pm(t,x_\parallel,v)\|_{L^\infty_{x,v}(\gamma_-)}\le C,
\end{equation}
We remark that the compact support condition can be replaced by the following condition in the proof of  vacuum stability. Suppose that the initial fields $ \E^{\textup{in}}=(\mathbf{E}_{01},\mathbf{E}_{02},\mathbf{E}_{03})^\top$ and $\B^{\textup{in}}=(\mathbf{B}_{01},\mathbf{B}_{02},\mathbf{B}_{03})^\top$ satisfies the following condition \eqref{initial E0i rep};
 for $P_0=\mathbf{E}_{0i}$ or $=\mathbf{B}_{0i}$ for $i=1,2,3$, we assume \begin{equation}
    \label{initial E0i rep} \frac{1}{t}\sup_{(x_1,x_2)\in\mathbb{T}^2}\int_{x_3}^{x_3+t} dz' \   {}\left| P_0(\cdot,\cdot,z') \right|\le c_0 \min\{m_-,m_+\}g,
\end{equation} for a sufficiently small $c_0>0.$ 

We first state our main theorem on the asymptotic stability of vacuum in $\mathbb{T}^2\times \mathbb{R}_+$ where we lack sufficient dispersion for decay:\begin{theorem}[Asymptotic Stability of the Vacuum Steady State in $\mathbb{T}^2 \times \mathbb{R}_+$] \label{thm.main.asymp.vacuum}
Let $\Omega = \mathbb{T}^2 \times \mathbb{R}_+$. Fix $g > 0$, $\gamma > 1$, and choose $\beta > 1$ such that
$$
\min\{m_-^2, m_+^2\} g^2 \beta \gg 1 \quad \text{and} \quad \min\{m_-^2, m_+^2\} g \beta^3 \gg 1.
$$
Suppose the initial and boundary data satisfy \eqref{initial E0i}, \eqref{f initial condition}, and \eqref{2speciesBoundary} with the inflow boundary condition \eqref{inflow boundary} and the perfect conductor condition \eqref{perfect cond. boundary}. Then there exists a unique weak solution $(F_\pm, \E, \B)$ to the two-species Vlasov--Maxwell system in $\Omega$. 
Moreover, the solution satisfies the exponential decay estimate:
$$
\sup_{t \ge 0} e^{\frac{m_\pm g \beta}{16} t} \left\| e^{\frac{\beta}{4}(\vZ + m_\pm g x_3)} F_\pm(t) \right\|_{L^\infty_{x,v}} \le C,
$$
and the electromagnetic fields remain uniformly bounded:
$$
\sup_{t \ge 0} \|(\E, \B)(t)\|_{L^\infty_x} \le \min\{m_+, m_-\} \frac{g}{8}.
$$
In addition, for any $T > 0$, the derivatives of the solution satisfy
\begin{align*}
\sup_{0 \le t \le T} \left( \| (v^0_\pm)^m \partial_t F_\pm(t) \|_{L^\infty_{x,v}} + |||F_\pm(t) ||| \right) &\le C_T, \
\| (\E, \B) \|_{W^{1,\infty}_{t,x}([0,T] \times \Omega)} \le C_T,
\end{align*}
for some $m > 4$.
\end{theorem}
In the horizontally periodic geometry $ \mathbb{T}^2\times \mathbb{R}_+ $, the wave equations satisfied by the electromagnetic fields lose two directions of dispersion compared to the whole half-space case scenario in $ \mathbb{R}^3_+$. Since the Green function of the 3D wave equation in $ \mathbb{T}^2\times \mathbb{R}_+ $ does not yield decay in the $ x_\parallel = (x_1,x_2) $ directions, any decay in the self-consistent fields must come solely from the $ x_3 $-direction. This limited dispersion precludes classical $ t $-decay for the full field components. Nevertheless, we obtain exponential decay of $ F_\pm $ in a pointwise sense by carefully analyzing the characteristics and applying weighted estimates.

 \subsection{On the Stability of Vacuum for the Vlasov--Maxwell System}\label{sec.history.asymp}
The study of the asymptotic stability of vacuum in the context of kinetic theory and the Vlasov--Maxwell system has evolved significantly since its inception. The first major result was established by Glassey and Strauss \cite{MR0919231} in 1987, who proved the decay behavior of the system where the density $\rho(t,x)$ satisfies $\rho(t,x) \lesssim \langle t\rangle^{-3}$ and the electromagnetic fields $|\E|(t,x) + |\B|(t,x) \lesssim \langle t - |x| \rangle^{-1} \langle t + |x| \rangle^{-1}$. 
Then subsequently the requirement for compact support in phase space and the smallness condition in the initial fields have been removed by the works \cite{schaeffer2004small,bigorgne2020sharp,wang2022propagation}.  

The results cited above are set in the whole space and establish decay of the local density \(\rho(t,x)\) together with decay of the electromagnetic fields \((\E,\B)\). To the best of the authors' knowledge, pointwise-in-\((x,v)\) decay of the distribution functions \(F_\pm\) themselves has not been known even in the whole space. In addition, we work on the slab with periodic tangential directions,
\[
\Omega:=\mathbb{T}^2\times(0,\infty),\qquad \partial\Omega={x_3=0},
\]
where the two periodic directions preclude the use of full three-dimensional dispersion and coercivity estimates via volume rescaling. Nevertheless, the first part of this paper establishes \emph{pointwise} asymptotic stability for the Vlasov--Maxwell system in this domain: under suitable initial data and inflow source on \(\gamma_-\),
\begin{multline*}
\hspace{-0.4cm}\sup_{t\ge 0}e^{\frac{m_\pm g\beta}{16}t}\big\|e^{\frac{\beta}{4}(\vZ+m_\pm g x_3)}F_\pm(t,\cdot,\cdot)\big\|_{L^\infty_{x,v}}\
\le \left\|e^{\beta\left(\vZ+m_\pm g x_{3}\right)} F^{\textup{in}}_\pm\right\|_{L^\infty_{x,v}}
+\sup_{\tau\ge 0}\big\|e^{\frac{m_\pm g\beta}{16}\tau}e^{\beta\vZ}G_\pm(\tau,x_\parallel,v)\big\|_{L^\infty_{x,v}(\gamma_-)} ,
\end{multline*}
and, in addition,
\[
\sup_{t\ge 0}\|(\E,\B)(t)\|_{L^\infty}\le \min\{m_+,m_-\},\frac{g}{8}.
\]
Here \(\beta>1\) plays the role of the inverse local temperature \(1/T\). If we choose \(g\approx \beta^{-1}\), then the weighted \(L^\infty\) norm \(\|w_{\pm,\beta} F^{\textup{in}}_\pm\|_{L^\infty_{x,v}}\) of the initial profile \(F^{\textup{in}}_\pm\) can be taken arbitrarily large.

For the velocity distribution functions $F_\pm(t,x,v)$ that solve the system on $\Omega\times \rth,$ we will take the periodic extension of $\Omega$ in $x_1$ and $x_2$ directions to obtain the half-space $\mathbb{R}^2\times \mathbb{R}_+$ and will equivalently consider the $(x_1,x_2)$-periodic solution $\bar{F}_\pm(t,x,v)$ on $\mathbb{R}^2\times \mathbb{R}_+$ which is the $(x_1,x_2)$-periodic solution of $F_\pm(t,x,v)$ on $(x,v)\in \Omega\times \rth.$ We will abuse the notation and write the periodic extension $\bar{F}_\pm$ also as $F_\pm$ from now on. 

We will also consider the characteristic position and momentum variables $\ZS(s)\eqdef (\XS(s),\VS(s))$ along the characteristic trajectory underlying the partial differential equation \eqref{2speciesVM} for 2-species dynamics.   Throughout the rest of the section, we understand $\XS$ in the whole half-space $\mathbb{R}^3_+$ which is the periodic extension of the actual domain $\Omega=\mathbb{T}^2\times \mathbb{R}_+$ in tangential directions whenever the derivative of $\XS$ is being considered. Then $\ZS(s)=(\XS(s),\VS(s))$ satisfies \eqref{leading char}. The solution $\allowbreak (\XS(s), \VS(s))$ to \eqref{leading char} is well-defined under the a priori assumption that $\E$ and $\B$ are in $W^{1,\infty}$ and hence are locally Lipshitz continuous in the spatial variables uniformly in the temporal variable.

We first establish crucial comparison estimates of weight functions along the characteristics.  For a given $\beta>1$, we define a weight function for a 2-species problem in the periodic-in-$x_\parallel$ space $\mathbb{T}^2\times \mathbb{R}_+$
\begin{equation}\label{weights.torus}
w_\pm(x, v)  =w_{\pm,\beta}(x, v)=e^{\beta\left(\vZ+m_\pm g x_{3}\right)} =e^{\beta\left(\sqrt{m_\pm^2+|v|^2}+m_\pm g x_{3}\right)}.\end{equation}
\subsection{Weight Comparison in $\mathbb{T}^2\times \mathbb{R}_+$}To begin with, we introduce the following ODE for particle energy. Consider the characteristic trajectory variables and note that we have
\begin{equation}\label{ODE for particle energy}
    \frac{d}{ds}\left(\sqrt{m_\pm^2+|\VS(s)|^2}+m_\pm g(\XS)_3(s)\right)=\hat{\VS}(s)\cdot \frac{d\VS}{ds}+m_\pm g(\hat{\VS})_3(s)
    =\pm \hat{\VS}(s)\cdot \E(s,\XS(s)),
\end{equation}because $$\frac{d\VS}{ds}=\pm\E\pm\hat{\VS}\times \B-m_\pm g\hat{e}_3.$$
 Using \eqref{ODE for particle energy} we have the following estimate between weight functions along the characteristics as in the lemma below. 
 \begin{lemma}Let $w_{\pm,\beta}$, $\ZS$, $\tb,$ and $\tf$ be defined as in \eqref{weights.torus}, \eqref{leading char}, and \eqref{Back Forw exit time}, respectively. Assume that the self-consistent electro-magnetic fields $(\E,\B)$ satisfies the following bound:
\begin{equation}\label{EB apriori bound}
       \sup_t \|(\E,\B)\|_{L^\infty }\le \min\{m_+,m_-\}\frac{g}{8}. 
    \end{equation}Then for $s, s^{\prime} \in\left[t-\tb(t, x, v), t+\tf(t, x, v)\right]$,
\begin{align}\label{w comparison 1}
\frac{w_{\pm,\beta}\left( \ZS\left(s^{\prime} ; t, x, v\right)\right)}{w_{\pm,\beta}( \ZS(s ; t, x, v))} &\leq e^{\frac{16 \beta}{5 m_\pm g}\left\|\E\right\|_{L_{t, x}^{\infty}} (\vZ+m_\pm gx_3)},\text{ and }\\
\label{w comparison 2}
    \frac{1}{w_{\pm,\beta}( \ZS(s ; t, x, v))} &\leq  e^{-\frac{1}{2} \beta \vZ} e^{-\frac{1}{2}m_\pm g\beta x_{3}}.
\end{align}
\end{lemma}\begin{proof} The proof of 
    \eqref{w comparison 1}--\eqref{w comparison 2} depends on the bound of $\tb+\tf$. We claim that we obtain the bound 
    \begin{equation}\label{exit time bound}
        \tb(t,x,v) + \tf (t,x,v) \leq \frac{16}{5 m_\pm g}(\vZ+m_\pm gx_3).
    \end{equation}  Write $\VS=((\VS)_1,(\VS)_2,(\VS)_3)^\top.$ Note that the relativistic vertical velocity $(\hat{\VS})_3(s)$ depends not just on the vertical momentum $(\VS)_3(s)$ but also on the horizontal momenta  $(\VS)_1(s)$ and $(\VS)_2(s)$, and hence we need to consider an additional control on those momenta as well.  The bound \eqref{exit time bound} depends crucially on the bound \eqref{EB apriori bound} of the field $\E$ and $\B$ due to the characteristic ODEs \eqref{leading char}.  
    Upon the assumption that we have \eqref{exit time bound}, we obtain \eqref{w comparison 1}   from \eqref{ODE for particle energy} as follows; note that for $s,s'\in[t-\tb,t+\tf],
    $
    we have    \begin{equation*}\begin{split}
        \frac{w_{\pm,\beta}\left( \ZS\left(s^{\prime} ; t, x, v\right)\right)}{w_{\pm,\beta}( \ZS(s ; t, x, v))} &=e^{\beta g ((\XS)_3(s')-(\XS)_3(s)) + \beta (\VS^0(s')-\VS^0(s))}\\
        &=e^{\beta\left(\int_s^{s'}\frac{d}{d\tau}(\VS^0(\tau)+m_\pm g(\XS)_3(\tau))d\tau\right)}
        \le e^{\beta |s'-s|\sup_\tau|\hat{\VS}(\tau)||\E(\tau,\XS(\tau))|}\\
        &\le e^{\beta (\tb+\tf)\sup_\tau|\hat{\VS}(\tau)||\E(\tau,\XS(\tau))|}
        \leq e^{\frac{16 \beta}{5 m_\pm g}\left\|\E\right\|_{L_{t, x}^{\infty}} (\vZ+m_\pm gx_3)},
    \end{split}\end{equation*}
    by \eqref{ODE for particle energy} and \eqref{exit time bound}. 
  Now, in order to obtain \eqref{w comparison 2}, we put $s'=t$ in \eqref{w comparison 1} such that $$w_{\pm,\beta}(\ZS(t;t,x,v))=w_{\pm,\beta}(x,v)=e^{\beta \vZ+m_\pm g\beta x_3},$$ and hence we have
  \begin{multline*}
       \frac{1}{w_{\pm,\beta}( \ZS(s ; t, x, v))} \le  e^{\frac{16 \beta}{5m_\pm g}\left\|\E\right\|_{L_{t, x}^{\infty}} (\vZ+m_\pm gx_3)}e^{-\beta \vZ-m_\pm g\beta x_3}
       \le  e^{\frac{2 \beta}{5}(\vZ+m_\pm gx_3)}e^{-\beta \vZ-m_\pm g\beta x_3}\\
       \le e^{-\frac{1}{2}\beta \vZ-\frac{1}{2}m_\pm g\beta x_3},
  \end{multline*} by \eqref{EB apriori bound}. This concludes the proof for \eqref{w comparison 2}.

   Now it suffices to prove \eqref{exit time bound} under \eqref{EB apriori bound}. Suppose the assumption \eqref{EB apriori bound} holds. Then we observe that by \eqref{EB apriori bound} we have $$\left|\left(\frac{d(\VS)_1}{ds}(s),\frac{d(\VS)_2}{ds}(s)\right)\right|\le \left|\E+\hat{\VS}\times \B\right| \le m_\pm \frac{g}{4},$$ and
   $$\frac{d(\VS)_3}{ds}(s)=-(\E+\hat{\VS}\times \B)_3-m_\pm g \le -\frac{3}{4}m_\pm g,$$ since $|\hat{\VS}|\le 1.$ 
Now if we define a trajectory variable $s^*=s^*(t,x,v)\in[t-\tb,t+\tf]$ such that $(\VS)_3(s^*;t,x,v)=0$, then we have
   $$(\VS)_3(t+\tf )-(\VS)_3(s^*)=\int^{t+\tf }_{s^*}\frac{d(\VS)_3}{ds}(\tau)d\tau\le -\frac{3}{4} m_\pm g(t+\tf -s^*),$$ 
   and $$(\VS)_3(s^*)-(\VS)_3(t-\tb )=\int_{t-\tb }^{s^*}\frac{d(\VS)_3}{ds}(\tau)d\tau\le -\frac{3}{4}  m_\pm g(s^*-(t-\tb) ).$$Therefore, we have
   \begin{equation}
       \label{tbf bound}
       \tb +\tf \le -\frac{4}{3m_\pm g}((\VS)_3(t+\tf )-(\VS)_3(t-\tb )).
   \end{equation}On the other hand, using \eqref{ODE for particle energy} and \eqref{EB apriori bound}, we have
$$ \sqrt{m_\pm^2+|\VS(t-\tb )|^2}=\left(\vZ+ m_\pm gx_{3}\right)
\pm \int_t^{t-\tb }  \hat{\VS}(s)\cdot \E(s,\XS(s))ds
    \le \left(\vZ+ m_\pm gx_{3}\right)+\frac{ m_\pm g}{8}\tb,$$
 and   
$$ \sqrt{m_\pm^2+|\VS(t+\tf )|^2}=\left(\vZ+ m_\pm gx_{3}\right)
    \pm \int_t^{t+\tf }  \hat{\VS}(s)\cdot \E(s,\XS(s))ds
    \le \left(\vZ+ m_\pm gx_{3}\right)+\frac{ m_\pm g}{8}\tf.$$Thus, together with \eqref{tbf bound}, we have$$
       \tb+\tf\le \frac{4}{3  m_\pm g}\left(2(\vZ+  m_\pm gx_{3})+\frac{ m_\pm  g}{8} (\tb+\tf)\right).
 $$     Therefore, we have
    $$\tb+\tf\le \frac{16}{5 m_\pm g}(\vZ+m_\pm gx_3),$$
    and this completes the proof of \eqref{exit time bound}.
\end{proof}

In the remainder of this section, we present the proof of the main theorem on the asymptotic stability of vacuum (Theorem~\ref{thm.main.asymp.vacuum}). The essential task is to construct the solution iteration scheme and carry out the bootstrap argument establishing the $L^\infty$ estimates in Theorem~\ref{thm.main.asymp.vacuum}. The regularity estimates and the Cauchy property ensuring existence and uniqueness follow directly from the arguments already developed in Sections~\ref{sec.deri.dist}, \ref{sec.deri.EB}, and \ref{sec.wellposed}, once the domain $\Omega = \mathbb{T}^2 \times (0,\infty)$ is periodically extended to the half-space $\mathbb{R}^3_+$. Recall that we work with the characteristic trajectory variables in the extended half-space, where both the domain and the trajectories have already been periodically extended.
 To avoid redundancy, the details of these steps are omitted.

\subsection{Estimates for the Distributions and the Fields}
We now consider the following solution iteration. For any $l\in \mathbb{N}\cup \{0\},$ let $\flo$ solve the following equation:
\begin{equation}\label{iterated Vlasov}
    \begin{split}
        &\partial_t \flo+(\hat{v}_\pm) \cdot \nabla_x \flo +\left(\pm\Elbf\pm(\hat{v}_\pm)\times \Blbf-m_\pm g\hat{e}_3\right)\cdot \nabla_v \flo = 0,\\
        & \flo(0,x,v)= F^{\textup{in}}_\pm(x,v),\\
        &\flo (t,x_\parallel,0,v)|_{\gamma_-}=G_\pm(t,x_\parallel,v)|_{\gamma_-},
    \end{split}
\end{equation} where $G_\pm:\gamma_-\to [0,\infty)$ is a given incoming profile (also denoting its periodic extension by $G_\pm$) and we assume that  $F_\pm^0,\E^0,\B^0\eqdef 0$.  Recall that the profile $G_\pm$ satisfies the decay assumption in time as in \eqref{h assumption}
\begin{equation}
    \notag
    \|e^{\frac{m_\pm g\beta}{16}t}w_{\pm,\beta} (x_\parallel,0,v)G_\pm(t,x_\parallel,v)\|_{L^\infty_{x,v}(\gamma_-)}\le C,
\end{equation}where $\beta>1$ and the weight $w=w_{\pm,\beta}$ is defined as \eqref{weights.torus}. Note that we will choose $g$ and $\beta$ such that $g^2\beta^3\gg 1.$ 

The electromagnetic fields $(\Elbf,\Blbf)$ for $l\in\mathbb{N}$ solve the Maxwell equations:
\begin{equation}\label{iterated Maxwell}
    \begin{split}
        &\partial _t \Elbf-\nabla_x\times \Blbf= - 4\pi J^l =-4\pi \int_\rth (\hat{v}_+F^l_+-\hat{v}_-F^l_-) dv,\\
        &\partial_t \Blbf+\nabla_x\times \Elbf=0,\\
        &\nabla_x \cdot \Elbf=4\pi \rho^l=4\pi \int_\rth (F^l_+-F^l_-) dv,\\
        &\nabla_x \cdot \Blbf=0.
        \end{split}
        \end{equation}
     By \eqref{leading char}, we can also define the characteristic trajectory $\ZSlo=(\XSlo,\VSlo)$ which solves
\begin{equation}\notag
 \begin{split}
   \frac{d\XSlo(s)}{ds}&=\hVSlo(s)=\frac{\VSlo(s)}{\sqrt{m_\pm^2+|\VSlo(s)|^2}},\\
        \frac{d\VSlo(s)}{ds}&=\pm\Elbf(s,\XSlo(s))\pm\hVSlo(s)\times \Blbf(s,\XSlo(s))-m_\pm g\hat{e}_3,
 \end{split} 
\end{equation}   where $\XSlo(s)=\XSlo(s;t,x,v)$, $\VSlo(s)=\VSlo(s;t,x,v)$, $\hat{e}_3\eqdef (0,0,1)^\top$, and $(\hat{v}_\pm)\eqdef \frac{v}{\sqrt{m_\pm^2+|v|^2}}$. 
Following \eqref{Back Forw exit time} we define the backward and the forward exit times for the iterated characteristic trajectory:
\begin{equation}\label{iterated Back Forw exit time}\begin{split}
   & \tflo (t,x,v)= \sup\{s\in [0,\infty): (\XSlo)_3(t+\tau;t,x,v)>0\ \textup{ for all } \tau\in(0,s)\}\ge 0,\\
   & \tblo (t,x,v)= \sup\{s\in [0,\infty): (\XSlo)_3(t-\tau;t,x,v)>0\ \textup{ for all } \tau\in(0,s)\}\ge 0.
   \end{split}
\end{equation}
Then following \eqref{solution f} we can write our solution $\flo$ as
\begin{multline}\label{solution flo}
    \flo(t,x,v)= 1_{t\le \tblo(t,x,v)}F^{\textup{in}}_\pm(\XSlo(0;t,x,v),\VSlo(0;t,x,v))\\+1_{t>\tblo(t,x,v)}G_\pm(t-\tblo,\XSlo(t-\tblo;t,x,v),\VSlo(t-\tblo;t,x,v)).
\end{multline}In the rest of the section, we prove the following main proposition:
\begin{proposition}\label{prop.dyna.boot.vacuum}
For any $l\in\mathbb{N}$, we have
\begin{align}\label{Ansatz for fl} 
        \sup_{t\ge 0}e^{\frac{m_\pm g\beta}{16}t}\|e^{\frac{\beta}{4}(\vZ+m_\pm gx_3)}\fl(t,\cdot,\cdot)\|_{L^\infty} &\le C+\|w_{\pm,\beta} F^{\textup{in}}_\pm\|_{L^\infty_{x,v}},\text{ and }\\\label{ansatz for EBl}
        \sup_{t\ge 0}\|(\Elbf,\Blbf)\|_{L^\infty}&\le \min\{m_+,m_-\}\frac{g}{8},
    \end{align}
for a sufficiently large $\beta>1$ such that $\min\{m_-^2,m_+^2\}g^2\beta\gg1$ and $\min\{m_-^2,m_+^2\}g \beta^3\gg 1$ where the weight $w=w_{\pm,\beta}$ is defined as \eqref{weights.torus}
\end{proposition} In the following sections, we fix $ l \in \mathbb{N} $ and assume that \eqref{Ansatz for fl}--\eqref{ansatz for EBl} hold at the iteration level $ (l) $. We then show that these same estimates remain valid at the next level $ (l+1) $, thereby closing the bootstrap argument.

\begin{proof}[Proof of Proposition \ref{prop.dyna.boot.vacuum}]
  Proposition \ref{prop.dyna.boot.vacuum} follows from Lemma \ref{lem.dyna.fplo.vacuum} and Lemma \ref{lem.dyna.EB.vacuum}, which will be established in the subsequent sections.
\end{proof}Physically, the ansatz \eqref{ansatz for EBl} corresponds to the situation where the self-consistent fields are bounded from above by the one-eighth of the product of the gravity $(\beta g)$ and the local temperature $(\beta^{-1}=T)$.

\subsubsection{Estimates for the Velocity Distribution Function}In this section, we prove the estimate \eqref{Ansatz for fl} at the iteration level $(l+1)$.
\begin{lemma}\label{lem.dyna.fplo.vacuum}
   Fix $l\in\mathbb{N}$ and suppose \eqref{apriori_EB} hold for $(\Epl,\Bpl).$ Then $\fplo$ satisfies  \begin{multline}
    \label{final estimate for flo}
    e^{\frac{\beta}{4} \vZ} e^{\frac{1}{4}m_\pm g\beta x_{3}}e^{\frac{m_\pm g\beta}{16}t}|\flo(t,x,v)|\\\le   \left(\|w_{\pm,\beta} F^{\textup{in}}_\pm\|_{L^\infty_{x,v}}+\sup_{0\le\tau\le t}\|e^{\frac{m_\pm g\beta}{16}\tau}w_{\pm,\beta} (x_\parallel,0,v)G_\pm(\tau,x_\parallel,v)\|_{L^\infty_{x,v}(\gamma_-)}\right).
\end{multline}
\end{lemma}
\begin{proof}
Based on the bootstrap ansatz \eqref{Ansatz for fl}--\eqref{ansatz for EBl}, we provide a decay estimate for $\flo.$ Using \eqref{solution flo}, we obtain that
\begin{align*}
    |\flo(t,x,v)|&\le  1_{t\le \tblo(t,x,v)}|F^{\textup{in}}_\pm(\XSlo(0;t,x,v),\VSlo(0;t,x,v))|\\&+1_{t>\tblo(t,x,v)}|G_\pm(t-\tblo,\XSlo(t-\tblo;t,x,v),\VSlo(t-\tblo;t,x,v))|\\
   &\le  \frac{1_{t\le \tblo(t,x,v)}}{w_{\pm,\beta}( \ZSlo(0 ; t, x, v))}\|w_{\pm,\beta} F^{\textup{in}}_\pm\|_{L^\infty_{x,v}}\\&+\frac{1_{t> \tblo(t,x,v)}e^{-\frac{m_\pm g\beta}{16}(t-\tblo)}}{w_{\pm,\beta}( \ZSlo(t-\tblo ; t, x, v))}\|e^{\frac{m_\pm g\beta}{16}(t-\tblo)}(w_{\pm,\beta} h)(t-\tblo,(\xblo)_\parallel,\vblo)\|_{L^\infty_{x,v}(\gamma_-)},
\end{align*}where $\xblo\eqdef \XSlo(t-\tb)$ and $\vblo\eqdef \VSlo(t-\tb)$. 
Using \eqref{w comparison 2} and \eqref{h assumption}, we further have 
\begin{multline}\notag
    |\flo(t,x,v)|
   \le  1_{t\le  \tblo(t,x,v)}e^{-\frac{1}{2}\beta \vZ} e^{-\frac{1}{2}m_\pm g\beta x_{3}}\|w_{\pm,\beta} F^{\textup{in}}_\pm\|_{L^\infty_{x,v}}\\
   +1_{t> \tblo(t,x,v)} e^{-\frac{1}{2}\beta \vZ} e^{-\frac{1}{2}m_\pm g\beta x_{3}}e^{-\frac{m_\pm g\beta}{16}(t-\tblo)} \sup_{0\le\tau\le t}\|e^{\frac{m_\pm g\beta}{16}\tau}w_{\pm,\beta} (x_\parallel,0,v)G_\pm(\tau,x_\parallel,v)\|_{L^\infty_{x,v}(\gamma_-)}.
\end{multline}
Now we recall the following bound from \eqref{exit time bound}:
\begin{equation}\notag
        \tblo(t,x,v)\leq \frac{16}{5m_\pm g}(\vZ+m_\pm gx_3),
    \end{equation} and note that $$  e^{-\frac{m_\pm g\beta}{16}(t-\tblo(t,x,v))}\\
    \le  e^{-\frac{m_\pm g\beta}{16}t}e^{\frac{\beta}{5}(\vZ+m_\pm gx_3)}.$$
Also, if $t\le \tblo,$ then using \eqref{exit time bound} again, we obtain $$t\le \tblo \le \frac{16}{5m_\pm g}(\vZ+m_\pm gx_3).$$
Thus, we finally have
\begin{align*}\notag
    &|\flo(t,x,v)|\le  1_{t\le  \tblo(t,x,v)}e^{-\frac{1}{4}\beta \vZ} e^{-\frac{1}{4}m_\pm g\beta x_{3}}e^{-\frac{\beta}{4 }( \vZ+m_\pm gx_3)} \|w_{\pm,\beta} F^{\textup{in}}_\pm\|_{L^\infty_{x,v}}
   \\ &+1_{t> \tblo(t,x,v)} e^{-\frac{1}{2}\beta \vZ} e^{-\frac{1}{2}m_\pm g\beta x_{3}}e^{-\frac{m_\pm g\beta}{16}(t-\tblo)}\sup_{0\le\tau\le t}\|e^{\frac{m_\pm g\beta}{16}\tau}w_{\pm,\beta} (x_\parallel,0,v)G_\pm(\tau,x_\parallel,v)\|_{L^\infty_{x,v}(\gamma_-)}\\
  & \le  1_{t\le  \tblo(t,x,v)}e^{-\frac{1}{4}\beta \vZ} e^{-\frac{1}{4}m_\pm g\beta x_{3}}e^{-\frac{5\beta}{64}m_\pm gt} \|w_{\pm,\beta} F^{\textup{in}}_\pm\|_{L^\infty_{x,v}}
   \\ &+1_{t> \tblo(t,x,v)} e^{-\frac{1}{2}\beta \vZ} e^{-\frac{1}{2}m_\pm g\beta x_{3}}e^{-\frac{m_\pm g\beta}{16}t}e^{\frac{\beta}{5}(\vZ+m_\pm gx_3)}\sup_{0\le\tau\le t}\|e^{\frac{m_\pm g\beta}{16}\tau}w_{\pm,\beta} (x_\parallel,0,v)G_\pm(\tau,x_\parallel,v)\|_{L^\infty_{x,v}(\gamma_-)}\\
   &\le   e^{-\frac{\beta}{4} \vZ} e^{-\frac{1}{4}m_\pm g\beta x_{3}}e^{-\frac{m_\pm g\beta}{16}t}\|w_{\pm,\beta} F^{\textup{in}}_\pm\|_{L^\infty_{x,v}}\\&+ e^{-\frac{1}{4}\beta \vZ} e^{-\frac{1}{4}m_\pm g\beta x_{3}}e^{-\frac{m_\pm g\beta}{16}t}\sup_{0\le\tau\le t}\|e^{\frac{m_\pm g\beta}{16}\tau}w_{\pm,\beta} (x_\parallel,0,v)G_\pm(\tau,x_\parallel,v)\|_{L^\infty_{x,v}(\gamma_-)}\\
    &\le  e^{-\frac{\beta}{4} \vZ} e^{-\frac{1}{4}m_\pm g\beta x_{3}}e^{-\frac{m_\pm g\beta}{16}t}(\|w_{\pm,\beta} F^{\textup{in}}_\pm\|_{L^\infty_{x,v}}+\sup_{0\le\tau\le t}\|e^{\frac{m_\pm g\beta}{16}\tau}w_{\pm,\beta} (x_\parallel,0,v)G_\pm(\tau,x_\parallel,v)\|_{L^\infty_{x,v}(\gamma_-)}),
\end{align*} using \eqref{ansatz for EBl}. This completes the proof.
\end{proof}

Now we prove the estimate \eqref{ansatz for EBl} for the iteration level $(l+1)$ in the following subsections. For the field representations in $\Omega=\mathbb{T}^2\times (0,\infty),$ we refer to Sections \ref{sec.5} and \ref{sec.4} for the electric and the magnetic field representations, respectively. Note that in the case of $\Omega=\mathbb{T}^2\times \mathbb{R}_+,$ we additionally define the periodic extension of the Cauchy data for the fields in the tangential $x_\parallel$ directions and use them in the representations, instead.

\subsubsection{Estimates for the Homogeneous Wave Solutions}
\label{sec.Ei.start} 
Recall that in \eqref{B3 homo solution} and \eqref{Eparallel homo solution}, we obtained the representations for the homogeneous terms of the self-consistent fields $\Ei$ for $i=1,2$ and $\Bth$ which solve the homogeneous wave equation under the Dirichlet boundary conditions \eqref{sys.Eihomo}. Also, recall \eqref{Bi homo solution} solves the homogeneous wave equation under the Neumann boundary condition. They are given by
\begin{multline}\notag
\Eihomo(t, x) =\frac{1}{4\pi t^2} \int_{\partial B(x; t)\cap \{y_3>0\}}  \left(t \tilde{\mathbf{E}}^1_{0i}(y) + \tilde{\mathbf{E}}_{0i}(y) + \nabla \tilde{\mathbf{E}}_{0i}(y) \cdot (y - x)\right) dS_y\\
-\frac{1}{4\pi t^2} \int_{\partial B(x; t)\cap \{y_3<0\}}  \left(t \tilde{\mathbf{E}}^1_{0i}(\bar{y}) + \tilde{\mathbf{E}}_{0i}(\bar{y}) + \nabla \tilde{\mathbf{E}}_{0i}(\bar{y}) \cdot (\bar{y} - \bar{x})\right) dS_y
=:\mathbf{E}_{ih1}+\mathbf{E}_{ih2},
\end{multline} 
\begin{multline}\notag
\Bihomo(t, x) =\frac{1}{4\pi t^2} \int_{\partial B(x; t)\cap \{y_3>0\}}  \left(t \tilde{\mathbf{B}}^1_{0i}(y) + \tilde{\mathbf{B}}_{0i}(y) + \nabla \tilde{\mathbf{B}}_{0i}(y) \cdot (y - x)\right) dS_y\\
+\frac{1}{4\pi t^2} \int_{\partial B(x; t)\cap \{y_3<0\}}  \left(t \tilde{\mathbf{B}}^1_{0i}(\bar{y}) + \tilde{\mathbf{B}}_{0i}(\bar{y}) + \nabla \tilde{\mathbf{B}}_{0i}(\bar{y}) \cdot (\bar{y} - \bar{x})\right) dS_y
=:\mathbf{B}_{ih1}+\mathbf{B}_{ih2},
\end{multline}and \begin{multline}\notag
\Bthhomo(t, x) =\frac{1}{4\pi t^2} \int_{\partial B(x; t)\cap \{y_3>0\}}  \left(t \tilde{\mathbf{B}}^1_{0i}(y) + \tilde{\mathbf{B}}_{03}(y) + \nabla \tilde{\mathbf{B}}_{03}(y) \cdot (y - x)\right) dS_y\\
-\frac{1}{4\pi t^2} \int_{\partial B(x; t)\cap \{y_3<0\}}  \left(t \tilde{\mathbf{B}}^1_{03}(\bar{y}) + \tilde{\mathbf{B}}_{03}(\bar{y}) + \nabla \tilde{\mathbf{B}}_{03}(\bar{y}) \cdot (\bar{y} - \bar{x})\right) dS_y
=:\mathbf{B}_{3h1}+\mathbf{B}_{3h2},
\end{multline} for $i=1,2,$ where $\bar{y}\eqdef (y_1,y_2,-y_3)^\top$ and we denote the periodic extension of the Cauchy data $\mathbf{E}_{0i}$, $\mathbf{E}^1_{0i}$, $\mathbf{B}_{0i}$, $\mathbf{B}^1_{0i}$, $\mathbf{B}_{03}$, and $\mathbf{B}^1_{03}$ as $ \tilde{\mathbf{E}}_{0i} $, $ \tilde{\mathbf{E}}^1_{0i} $, $
\tilde{\mathbf{B}}_{0i}$, $\tilde{\mathbf{B}}^1_{0i}$, $
\tilde{\mathbf{B}}_{03}$ and $\tilde{\mathbf{B}}^1_{03}$ in the $x_\parallel$ directions, respectively.
Here we note that the homogeneous terms of the fields $\Ethhomo$ and $\Bihomo$ that solve the homogeneous wave equation under the Neumann boundary conditions also have the same representation (e.g., the first two lines in \eqref{E3.T} for $\Ethhomo$) up to a sign change in the second line. Therefore, we only introduce the estimate for $\Eihomo$ here.

For the estimate of $\mathbf{E}_{ih1}$ note that
\begin{equation}\label{5.10}
    \left|t \tilde{\mathbf{E}}^1_{0i}(y) + \tilde{\mathbf{E}}_{0i}(y) + \nabla \tilde{\mathbf{E}}_{0i}(y) \cdot (y-x)\right|\le  t\left(\left| \tilde{\mathbf{E}}^1_{0i}(y)\right|+\left|\nabla \tilde{\mathbf{E}}_{0i}(y) \right|\right) + \left| \tilde{\mathbf{E}}_{0i}(y) \right|,
\end{equation}
since $t=|y-x|$ on $\partial B(x;t).$ We can write $y = x+t\omega $ where $\omega $ is an angular variable on $\mathbb{S}^2.$ By choosing the direction of $y_3$ as the $z$-axis direction ($\theta=0$) for the spherical coordinates $\omega = \omega(\phi,\theta)$, we observe that
   \begin{equation*}
       \frac{1}{4\pi}\int_{\partial B(x;t)} \frac{dS_{y}}{t^2}(tP(y)+G(y))=\frac{1}{4\pi}\int_0^{2\pi}d\phi \int _0 ^{\frac{\pi}{2}}d\theta\  t^2\sin\theta \frac{1}{t^2}(tP(x+t\omega )+G(x+t\omega)),
   \end{equation*} for sufficiently regular non-negative functions $P$ and $G$. 
       By writing $$\omega = (\sin\theta\cos\phi, \sin\theta\sin\phi,\cos\theta),$$ we further obtain that
       \begin{multline}
       \label{5.11}\int_0^{2\pi}d\phi \int _0 ^{\frac{\pi}{2}}d\theta\  \sin\theta \ (tP+G)(x+t(\sin\theta\cos\phi, \sin\theta\sin\phi,\cos\theta))\\
           \le 4\pi \sup_{x_1,x_2}\int_0^1 dz \ (tP+G) (\cdot,\cdot,x_3+tz)=4\pi \sup_{(x_1,x_2)\in\mathbb{T}^2}\int_{x_3}^{x_3+t} dz' \  \left(P+\frac{G}{t}\right)(\cdot,\cdot,z'),
       \end{multline}where we made the changes of variables $\theta \mapsto z\eqdef \cos\theta$ and then $z\mapsto z'=x_3+tz.$ 
For the derivative terms $\tilde{\mathbf{E}}^1_{0i}$ and $\nabla \tilde{\mathbf{E}}_{0i}$ in \eqref{5.10} (which corresponds to $P$ of \eqref{5.11} for each), we observe that \begin{equation}\begin{split}\label{upper remark 5.1}
           \sup_{(x_1,x_2)\in\mathbb{T}^2}\int_{x_3}^{x_3+t} dz' \   {}\left| P(\cdot,\cdot,z') \right|
          & \le \left\| \bar{w}_{\pm,\beta} P \right\|_{L^\infty(\mathbb{T}^2\times [x_3,x_3+t])} \int_{x_3}^{x_3+t} dz' \  e^{-m_\pm g\beta z'}\\
           & = \frac{1}{m_\pm g\beta }\left\| \bar{w}_{\pm,\beta} P \right\|_{L^\infty(\mathbb{T}^2\times [x_3,x_3+t])} \int_{m_\pm g\beta x_3}^{m_\pm g\beta (x_3+t)} dz \  e^{-z}   
            \\
           & = \frac{1}{m_\pm g\beta }\left\| \bar{w}_{\pm,\beta} P \right\|_{L^\infty(\mathbb{T}^2\times [x_3,x_3+t])}  e^{-m_\pm g\beta x_3}(1-e^{-m_\pm g\beta t}).
     \end{split}\end{equation} Therefore, as long as the initial data $P=\tilde{\mathbf{E}}^1_{0i}$ or $=\nabla \tilde{\mathbf{E}}_{0i}$ satisfies that its weighted $L^\infty$ norm  $\left\| \bar{w}_{\pm,\beta} P \right\|_{L^\infty(\mathbb{T}^2\times (0,\infty))}$ is bounded, then by \eqref{upper remark 5.1}, the corresponding parts in $\Eihomo$ can be made small enough such that it satisfies \eqref{ansatz for EBl}. 
     On the other hand, for $G$ term which corresponds to the zeroth-order term $\mathbf{E}_{0i},$ we follow the initial condition \eqref{initial E0i} that
     $$\frac{1}{t}\sup_{(x_1,x_2)\in\mathbb{T}^2}\int_{x_3}^{x_3+t} dz' \   {}\left| \mathbf{E}_{0i}(\cdot,\cdot,z') \right|\le c_0\min\{m_-,m_+\}g,$$ for a sufficiently small $c_0>0.$

       Since the estimate for $\mathbf{E}_{ih2}$ is also similar, altogether we conclude
    \begin{multline*}
       |\Eihomo(t,x)|\\
       \le 2\bigg(\frac{1}{m_\pm g\beta }  e^{-m_\pm g\beta x_3}(1-e^{-m_\pm g\beta t})\bigg(\left\| \bar{w}_{\pm,\beta} \mathbf{E}^1_{0i} \right\|_{L^\infty(\mathbb{T}^2\times [x_3,x_3+t])}
       +\left\| \bar{w}_{\pm,\beta} \nabla \mathbf{E}_{0i} \right\|_{L^\infty(\mathbb{T}^2\times [x_3,x_3+t])}\bigg)\\+c_0\min\{m_-,m_+\}g\bigg),
    \end{multline*}since $\tilde{\mathbf{E}}_{0i}$ and $\tilde{\mathbf{E}}^1_{0i}$ are $x_\parallel$-periodic extensions of $\mathbf{E}_{0i}$ and $\mathbf{E}^1_{0i}$, respectively. Then if $\beta>1$ sufficiently large such that $\min\{m_-^2,m_+^2\}g^2\beta\gg1,$ then we have   $$|\Eihomo(t,x)|\ll \min\{m_-,m_+\}g.$$

    Similarly, by \eqref{initial E0i}, we obtain the same upper-bound for $\Bthhomo$ (and the same form also for $\Bihomo$ and $\Ethhomo$) as 
    \begin{multline*}
       |\Bthhomo(t,x)|\\
       \le 2\bigg(\frac{1}{m_\pm g\beta }  e^{-m_\pm g\beta x_3}(1-e^{-m_\pm g\beta t})\bigg(\left\| \bar{w}_{\pm,\beta} \mathbf{B}^1_{03} \right\|_{L^\infty(\mathbb{T}^2\times [x_3,x_3+t])}
       +\left\| \bar{w}_{\pm,\beta} \nabla \mathbf{B}_{03} \right\|_{L^\infty(\mathbb{T}^2\times [x_3,x_3+t])}\bigg)\\+c_0\min\{m_-,m_+\}g\bigg).
    \end{multline*}

\subsubsection{Estimates for the Boundary-Value Component of the Tangential Electric Fields} 
\label{sec.5.5}

In this subsection, we make upper-bound estimates for the boundary-value parts $(\Elo)^{(1)}_{\pm,ib2}$ and $(\Elo)^{(2)}_{\pm,ib2}$ in the Glassey-Strauss representation of the field $\Elo_i$ in \eqref{Ei5.T}--\eqref{Ei6.T} which satisfies the Dirichlet boundary conditions.  

For the estimates of $(\Elo)^{(1)}_{\pm,ib2}$ and $(\Elo)^{(2)}_{\pm,ib2}$ in \eqref{Ei5.T} and \eqref{Ei6.T}, we have to estimate the following term 
\begin{equation}
    \notag (0,0,1)^\top-\frac{(\omega+\hat{v}_\pm)(\hat{v}_\pm)_3}{1+\hat{v}_\pm\cdot \omega}\textup{ and }(0,0,1)^\top-\frac{(\bar{\omega}+(\hat{v}_\pm))(\hat{v}_\pm)_3}{1+(\hat{v}_\pm)\cdot \bar{\omega}},
\end{equation} where $\bar{\omega}=(\omega_1,\omega_2,-\omega_3)^\top.$ For each, we use \eqref{ortho decomp}--\eqref{bound for v+w term 2} (and the latter one with $\bar{w}$ replacing $w$) and obtain that in both cases we have
\begin{equation}\notag\left|(0,0,1)^\top-\frac{(\omega+\hat{v}_\pm)(\hat{v}_\pm)_3}{1+\hat{v}_\pm\cdot \omega}\right|,\ \left|(0,0,1)^\top-\frac{(\bar{\omega}+(\hat{v}_\pm))(\hat{v}_\pm)_3}{1+(\hat{v}_\pm)\cdot \bar{\omega}}\right| \le 1+|(\hat{v}_\pm)_3|\frac{\sqrt{m_\pm^2+|v|^2}}{m_\pm}.\end{equation} Therefore, using the estimate \eqref{final estimate for flo} on $\flo$ and the assumption \eqref{h assumption} on the boundary profile $G_\pm$, we obtain that
\begin{equation}\begin{split}\label{Eb2 estimate beginning}
    &|(\Elo)^{(1)}_{\pm,ib2}(t,x)|+|(\Elo)^{(2)}_{\pm,ib2}(t,x)|\\&\le 2\int_{B(x;t)\cap \{y'_3=0\}} \frac{dy'_\parallel}{|y'-x|}\int_{v_3>0} dv\  \left(1+|(\hat{v}_\pm)_3|\frac{\sqrt{m_\pm^2+|v|^2}}{m_\pm}\right)|G_\pm(t-|x-y'|,y'_\parallel,v)|\\
    &+ 2\int_{B(x;t)\cap \{y'_3=0\}} \frac{dy'_\parallel}{|y'-x|}\int_{v_3\le 0} dv\  \left(1+|(\hat{v}_\pm)_3|\frac{\sqrt{m_\pm^2+|v|^2}}{m_\pm}\right)|\flo(t-|x-y'|,y'_\parallel,0,v)|\\
   & \le \frac{4}{m_\pm}\int_{B(x;t)\cap \{y'_3=0\}} \frac{dy'_\parallel}{|y'-x|}\int_{v_3>0} dv\  \vZ w_{\pm,\beta}^{-1}(y'_\parallel,0,v)e^{-\frac{m_\pm g\beta}{16}(t-|x-y'|)}\\&\times \sup_{0\le\tau\le t}\|e^{\frac{m_\pm g\beta}{16}\tau}w_{\pm,\beta} (x_\parallel,0,v)G_\pm(\tau,x_\parallel,v)\|_{L^\infty_{x,v}(\gamma_-)}\\
   & + \frac{4}{m_\pm}\int_{B(x;t)\cap \{y'_3=0\}} \frac{dy'_\parallel}{|y'-x|}\int_{v_3\le 0} dv\  \vZ e^{-\frac{\beta}{4} \vZ}e^{-\frac{m_\pm g\beta}{16}(t-|x-y'|)}\\&\times  \left(\|w_{\pm,\beta} F^{\textup{in}}_\pm\|_{L^\infty_{x,v}}+\sup_{0\le\tau\le t}\|e^{\frac{m_\pm g\beta}{16}\tau}w_{\pm,\beta} (x_\parallel,0,v)G_\pm(\tau,x_\parallel,v)\|_{L^\infty_{x,v}(\gamma_-)}\right)\\
    & \le \frac{4}{m_\pm}c_{\pm,\beta} e^{-\frac{m_\pm g\beta}{16}t}  \left(\|w_{\pm,\beta} F^{\textup{in}}_\pm\|_{L^\infty_{x,v}}+\sup_{0\le\tau\le t}\|e^{\frac{m_\pm g\beta}{16}\tau}w_{\pm,\beta} (x_\parallel,0,v)G_\pm(\tau,x_\parallel,v)\|_{L^\infty_{x,v}(\gamma_-)}\right)\\&\times \int_{B(x;t)\cap \{y'_3=0\}} \frac{dy'_\parallel}{|y'-x|}e^{\frac{m_\pm g\beta}{16}|x-y'|} .
\end{split}\end{equation}
Here, $c_{\pm,\beta} \eqdef \int_{\rth} dv\  \vZ e^{-\frac{\beta}{4} \vZ} $ and by rescaling we have $c_{\pm,\beta} \approx \frac{1}{\beta^4}$ (see \eqref{additional beta decay}). 

We now make a change of variables $y'_\parallel\mapsto y'_\parallel-x_\parallel=:z$ in the last line of \eqref{Eb2 estimate beginning}. This $z$ lies in $B(0; \sqrt{t^2-|x_3|^2}),$ since $|y'_\parallel-x_\parallel|^2+|x_3|^2<t^2.$ Then we make an additional polar coordinate change $(z_1,z_2)\mapsto (r,\theta) $. Then we observe that 
\begin{equation}\begin{split}\label{Eib2 estimate middle}
   & |(\Elo)^{(1)}_{\pm,ib2}(t,x)|+|(\Elo)^{(2)}_{\pm,ib2}(t,x)|\\
    & \le \frac{4}{m_\pm}c_{\pm,\beta} \left(\|w_{\pm,\beta} F^{\textup{in}}_\pm\|_{L^\infty_{x,v}}+\sup_{0\le\tau\le t}\|e^{\frac{m_\pm g\beta}{16}\tau}w_{\pm,\beta} (x_\parallel,0,v)G_\pm(\tau,x_\parallel,v)\|_{L^\infty_{x,v}(\gamma_-)}\right)e^{-\frac{m_\pm g\beta}{16}t}\\&\times \int_{B(0;\sqrt{t^2-|x_3|^2})} \frac{dz}{\sqrt{|z|^2+|x_3|^2}}e^{\frac{m_\pm g\beta}{16}\sqrt{|z|^2+|x_3|^2}}
   \\ 
 &=\frac{8}{m_\pm}\pi c_{\pm,\beta} \left(\|w_{\pm,\beta} F^{\textup{in}}_\pm\|_{L^\infty_{x,v}}+\sup_{0\le\tau\le t}\|e^{\frac{m_\pm g\beta}{16}\tau}w_{\pm,\beta} (x_\parallel,0,v)G_\pm(\tau,x_\parallel,v)\|_{L^\infty_{x,v}(\gamma_-)}\right)e^{-\frac{m_\pm g\beta}{16}t}\\&\times \int_0^{\sqrt{t^2-|x_3|^2}} \frac{rdr}{\sqrt{r^2+|x_3|^2}}e^{\frac{m_\pm g\beta}{16}\sqrt{r^2+|x_3|^2}}.
\end{split}\end{equation}Finally, we make a change of variable $r\mapsto r'\eqdef \sqrt{r^2+|x_3|^2}$ with $dr' = \frac{rdr}{\sqrt{r^2+|x_3|^2}}$ and obtain that \begin{equation}\begin{split}\label{Final estimate for b2 terms}
   & |(\Elo)^{(1)}_{\pm,ib2}(t,x)|+|(\Elo)^{(2)}_{\pm,ib2}(t,x)|\\
   & \le \frac{8}{m_\pm}\pi c_{\pm,\beta} \left(\|w_{\pm,\beta} F^{\textup{in}}_\pm\|_{L^\infty_{x,v}}+\sup_{0\le\tau\le t}\|e^{\frac{m_\pm g\beta}{16}\tau}w_{\pm,\beta} (x_\parallel,0,v)G_\pm(\tau,x_\parallel,v)\|_{L^\infty_{x,v}(\gamma_-)}\right)e^{-\frac{m_\pm g\beta}{16}t}\\&\times \int_{|x_3|}^{t} dr'\ e^{\frac{m_\pm g\beta}{16}r'}\\
    &=\frac{128\pi c_{\pm,\beta} }{m_\pm^2 g\beta}\left(\|w_{\pm,\beta} F^{\textup{in}}_\pm\|_{L^\infty_{x,v}}+\sup_{0\le\tau\le t}\|e^{\frac{m_\pm g\beta}{16}\tau}w_{\pm,\beta} (x_\parallel,0,v)G_\pm(\tau,x_\parallel,v)\|_{L^\infty_{x,v}(\gamma_-)}\right)\\&\times (1- e^{-\frac{m_\pm g\beta}{16}(t-|x_3|)})\\
    &\le \frac{128\pi c_{\pm,\beta} }{m_\pm^2 g\beta}\left(\|w_{\pm,\beta} F^{\textup{in}}_\pm\|_{L^\infty_{x,v}}+\sup_{0\le\tau\le t}\|e^{\frac{m_\pm g\beta}{16}\tau}w_{\pm,\beta} (x_\parallel,0,v)G_\pm(\tau,x_\parallel,v)\|_{L^\infty_{x,v}(\gamma_-)}\right),
\end{split}\end{equation}where the last inequality is from the inequality, $|x_3|< t$, which is the outcome of the fact that on $B(x;t)$, we have $|x-y'|<t$, and that the further restriction $\{y'_3=0\} $ gives $|x_3|<t.$  Note that the upper-bound of $\Elo_{b2}$ terms in \eqref{Final estimate for b2 terms} can be made sufficiently small for a sufficiently large $\beta$ since $c_{\pm,\beta} \lesssim \beta^{-4}$ by \eqref{additional beta decay}.
 
 Then in \eqref{Final estimate for b2 terms}, if $\beta>1$ is chosen sufficiently large such that $\min\{m_-^2,m_+^2\}g^2\beta^5\gg 1$, then we have $$|(\Elo)^{(1)}_{\pm,ib2}(t,x)|+|(\Elo)^{(2)}_{\pm,ib2}(t,x)|\ll \min\{m_-,m_+\}g. $$ This completes the estimates for $|(\Elo)^{(1)}_{\pm,ib2}(t,x)|$ and $|(\Elo)^{(2)}_{\pm,ib2}(t,x)|$ boundary terms.

    \subsubsection{Estimates for the Initial-Value Components of the Tangential Electric Fields} 
    \label{sec.5.7} In this subsection, we make upper-bound estimates for the initial-value parts $(\Elo)^{(1)}_{\pm,ib1}$ and $(\Elo)^{(2)}_{\pm,ib1}$ in the Glassey-Strauss representation of the field $\Elo_i$ in \eqref{Ei5.T}--\eqref{Ei6.T}.  Recall that $(\Elo)^{(1)}_{\pm,ib1}$ is given by 
\begin{multline*}
    (\Elo)^{(1)}_{\pm,ib1}(t,x)
    =\pm\int_{\partial B(x;t)\cap \{y'_3>0\}} \frac{dS_{y'}}{|y'-x|}\int_\rth dv\ \left((\delta_{ij})^\top_{i=1,2,3}-\frac{(\omega+\hat{v}_\pm)(\hat{v}_\pm)_j}{1+\hat{v}_\pm\cdot \omega}\right)\omega^j\flo(0,y',v),
\end{multline*} with the standard Einstein summation convention. $(\Elo)^{(2)}_{\pm,ib1}$ also has a similar structure. By using \eqref{ortho decomp}--\eqref{bound for v+w term 2} again, we have \begin{equation}
    \notag\left|\left((\delta_{ij})^\top_{i=1,2,3}-\frac{(\omega+\hat{v}_\pm)(\hat{v}_\pm)_j}{1+\hat{v}_\pm\cdot \omega}\right)\omega^j\right|\le 1+\frac{\sqrt{m_\pm^2+|v|^2}}{m_\pm}\le 2\frac{\sqrt{m_\pm^2+|v|^2}}{m_\pm}.
\end{equation} Using the notation of the energy density $\energy(t,x)$ defined in \eqref{energy density}, we have
\begin{equation}
    \label{Eb1 bound by ke}|(\Elo)^{(1)}_{\pm,ib1}(t,x)|\le \frac{1}{2\pi m_\pm}\int_{\partial B(x;t)\cap \{y'_3>0\}} \frac{dS_{y'}}{|y'-x|}\energy(0,y').
\end{equation}
    Note that on $\partial B(x;t)$ we have $|x-y'|=t.$ So we can write $y' = x+t\omega $ where $\omega $ is an angular variable on $\mathbb{S}^2.$ By choosing the direction of $y'_3$ as the $z$-axis direction ($\theta=0$) for the spherical coordinates $\omega = \omega(\phi,\theta)$, we observe that
   $$\int_{\partial B(x;t)\cap \{y'_3>0\}} \frac{dS_{y'}}{|y'-x|}\energy(0,y')=\int_0^{2\pi}d\phi \int _0 ^{\pi}d\theta\  t^2\sin\theta \frac{1}{t}\energy(0,x+t\omega)1_{x_3+t\cos\theta>0},$$
       by writing $\omega = (\sin\theta\cos\phi, \sin\theta\sin\phi,\cos\theta).$ We further obtain that
       \begin{multline}
           \label{5.12}\left|\frac{1}{2\pi m_\pm}\int_0^{2\pi}d\phi \int _0 ^{\frac{\pi}{2}}d\theta\  t\sin\theta \energy(0,x+t(\sin\theta\cos\phi, \sin\theta\sin\phi,\cos\theta))\right|\\
           \le  \frac{1}{ m_\pm}\sup_{x_1,x_2}\int_0^1 dz \ t\ \energy(0,\cdot,\cdot,x_3+tz)= \frac{1}{ m_\pm}\sup_{(x_1,x_2)\in\mathbb{T}^2}\int_{x_3}^{x_3+t} dz' \  \energy(0,\cdot,\cdot,z').
       \end{multline}where we made the changes of variables $\theta \mapsto z\eqdef \cos\theta$ and then $z\mapsto z'=x_3+tz.$ 
       Since the estimate for $(\Elo)^{(2)}_{\pm,ib1}$ is also similar, we have
    \begin{equation}\label{Eb1 final upper bound}
        \sup_{t,x}|\Elo_{b1}(t,x)|\le \frac{2}{ m_\pm}\|  \energy^{\textup{in}}\|_{L^\infty_{x_1,x_2}(\mathbb{T}^2)L^1_{x_3}((0,\infty))},
    \end{equation}where $\Elo_{b1}\eqdef (\Elo)^{(1)}_{\pm,ib1}+(\Elo)^{(2)}_{\pm,ib1}.$
    \begin{remark}\label{remark.more x3 decay}
        If the initial energy density \textnormal{$\energy(0,x)$} has a rapid decay in $x_3$ direction (i.e., if we further assume that for any $x_3\ge 0, $ \begin{equation}
            \label{ex.ke decay condition}
    \|\textnormal{$\energy^{\textup{in}}$}(\cdot,\cdot,x_3)\|_{L^\infty_{x_1,x_2}(\mathbb{T}^2)}\le M e^{-\beta^\gamma m_\pm g x_3}
        \end{equation} for some $M>0$ and $\gamma >1$ where $g$ and $\beta$ are the same constants from \eqref{weights.torus}), then the right-hand side of \eqref{5.12} can be estimated further and we obtain that the corresponding part of $\E$ also decays fastly in $x_3$ direction as
    \begin{equation}\label{Eb1 estimate ex}
\sup_{(x_1,x_2)\in\mathbb{T}^2}|\textnormal{$\Elo_{b1}$}(t,x)|\lesssim \sup_{(x_1,x_2)\in\mathbb{T}^2}\int_{x_3}^{x_3+t} dz' \  \textnormal{$\energy$}(0,\cdot,\cdot,z')\lesssim \frac{M}{\beta^\gamma m_\pm g} e^{-\beta^\gamma m_\pm g x_3}(1-e^{-\beta^\gamma m_\pm gt}),
    \end{equation} for any $x_3\ge 0$ and $t\ge 0.$ 
        Then by choosing $\beta> 1$ such that $\min\{m_-^2,m_+^2\}g^2\beta^\gamma \gg 1,$ we can make the upper bound \eqref{Eb1 estimate ex} of the field \textnormal{$|\Elo_{b1}|$} be small enough such that the bootstrap ansatz \eqref{ansatz for EBl} holds. 
    \end{remark}
    \begin{remark}
        The usual appearance of the growth with respect to $t$ in $t \times \textnormal{$\energy$} (0,x+t\omega)$ term should require that the initial data \textnormal{$\energy(0,x)$ } decays in $|x|$. But since our extended domain $\mathbb{R}^3_+$ is the periodic extension of $\mathbb{T}^2\times (0,\infty)$, we cannot assume the decay in $x_1$ and $x_2$ directions for the initial data. Since the decay in $x_3$ can be achieved, we resolve this problem by taking the initial profile in $L^1$ with respect to $x_3$ and in $L^\infty$ with respect to $x_1$ and $x_2$.
    \end{remark}

\subsubsection{Estimates for the Initial-Value Components of the Magnetic Field} From \eqref{Bpar_half_final} note that $(\Blo)^{(1)}_{\pm,ib1}$ is defined as
$$(\Blo)^{(1)}_{\pm,ib1}(t,x)=\int_{\partial B(x;t)\cap \{y'_3>0\}} \frac{dS_{y'}}{t}\int_\rth dv\ \left(1-\frac{\hat{v}_\pm\cdot \omega}{1+\hat{v}_\pm\cdot \omega}\right)(\omega\times (\hat{v}_\pm))_i\flo(0,y',v).$$ $(\Blo)^{(2)}_{\pm,ib1}$ is defined similarly on $\{y'<0\}$ with $\bar{y}'$ and $\bar{w}$ replacing $y'$ and $w$, respectively. Using \eqref{wvcross kernel estimate}, note that 
\begin{equation}
    \label{eq. ib1 kernel.mag}\left|\left(1-\frac{\hat{v}_\pm\cdot \omega}{1+\hat{v}_\pm\cdot \omega}\right)(\omega\times (\hat{v}_\pm))_i\right|=\left|\frac{(\omega\times (\hat{v}_\pm))_i}{1+\hat{v}_\pm\cdot \omega}\right|\le \frac{\sqrt{m_\pm^2+|v|^2}}{m_\pm}.
\end{equation} Therefore, we have\begin{equation}
    \label{Bb1 bound by ke}|(\Blo)^{(1)}_{\pm,ib1}(t,x)|\le \frac{1}{2\pi m_\pm }\int_{\partial B(x;t)\cap \{y'_3>0\}} \frac{dS_{y'}}{t}\energy(0,y'),
\end{equation} where the initial energy density $\energy(0,y')$ is defined in \eqref{energy density}. Note that this esimate \eqref{Bb1 bound by ke} is the same as the one for $(\Elo)^{(1)}_{\pm,ib1}$ term \eqref{Eb1 bound by ke}, since on $\partial B(x;t)$ we have $|x-y'|=t.$ Therefore, we conclude that
 \begin{equation}\label{Bb1 final upper bound}
        \sup_{t,x}|\Blo_{b1}(t,x)|\le \frac{2}{ m_\pm} \|  \energy^{\textup{in}}\|_{L^\infty_{x_1,x_2}(\mathbb{T}^2)L^1_{x_3}((0,\infty))},
    \end{equation}where $\Blo_{b1}\eqdef (\Blo)^{(1)}_{\pm,ib1}+(\Blo)^{(2)}_{\pm,ib1},$
as the upper-bound estimate for $(\Blo)^{(2)}_{\pm,ib1}$ is also similar (up to some sign changes).

\subsubsection{Estimates for the Transverse and Nonlinear Source Components of the Tangential Electric Fields}
\label{sec.field est}
We now use the decaying bound \eqref{final estimate for flo} for $\flo$ for a sufficiently large choice of $\beta$ and get the upper-bound estimates for $(\Elo)^{(1)}_{\pm,iT}$ and $(\Elo)^{(1)}_{\pm,iS}$ for $i=1,2$. Note that the other parts $(\Elo)^{(2)}_{\pm,iT}$ and $(\Elo)^{(2)}_{\pm,iS}$ are similar. Denote that $\E_T\eqdef \E^{(1)}_{+,T}+\E^{(1)}_{-,T}+\E^{(2)}_{+,T}+\E^{(2)}_{-,T}$ where 
$\E^{(j)}_{\pm,T}\eqdef (\mathbf{E}^{(j)}_{\pm,1T},\mathbf{E}^{(j)}_{\pm,2T},\mathbf{E}^{(j)}_{\pm,3T})^\top$. Similarly, denote $\E_S\eqdef \E^{(1)}_{+,S}+\E^{(1)}_{-,S}+\E^{(2)}_{+,S}+\E^{(2)}_{-,S}$ where $\E^{(j)}_{\pm,S}\eqdef (\mathbf{E}^{(j)}_{\pm,1S},\mathbf{E}^{(j)}_{\pm,2S},\mathbf{E}^{(j)}_{\pm,3S})^\top$ for $j=1,2$. See \eqref{Ei5.T}-\eqref{Ei6.T} for the further definition on each term.

\subsubsection{Estimates for $\E_T$ Terms}
To begin with, we observe that $\E_T$ terms, for example $\E^{(1)}_{\pm,T}$, are written as 
\begin{multline}\label{ET1 representation}
    \E^{(1)}_{\pm,T}(t,x)=\int_{B^+(x;t)} \frac{dy'}{|y'-x|^2}\int_\rth dv\ \frac{(|(\hat{v}_\pm)|^2-1)(\hat{v}_\pm+\omega)}{(1+\hat{v}_\pm\cdot \omega)^2} F_\pm(t-|x-y'|,y',v)\\
    - 2(0,0,1)^{\top}\int_{B(x;t)\cap \{y_3=0\}} \int_\rth \frac{F_\pm(t-|y-x|,y_\parallel,0,v)}{|y-x|}dvdy_\parallel.
\end{multline} Thus, for the estimates of the first integral component of $\E^{(1)}_{\pm,T}$ with $i=1,2$ above, we use the kernel estimate \eqref{ET kernel estimate} on the kernel $\left|\frac{(|(\hat{v}_\pm)|^2-1)(\hat{v}_\pm+\omega)}{(1+\hat{v}_\pm\cdot \omega)^2}\right|$ and utilize the estimate \eqref{final estimate for flo} that we have obtained for $\flo$.  Then the kernel estimate \eqref{ET kernel estimate} and the estimate \eqref{final estimate for flo} imply for $i=1,2$,
\begin{equation}\begin{split}\label{E5T estimate}|(\Elo)^{(1)}_{\pm,iT}(t,x)|&\lesssim \int_{B^+(x;t)} \frac{dy'}{|y'-x|^2}\int_\rth dv\ \frac{\vZ}{m_\pm}\flo(t-|x-y'|,y',v)\\
&\lesssim \int_{B^+(x;t)} \frac{dy'}{|y'-x|^2}\int_\rth dv\ \frac{\vZ}{m_\pm}e^{-\frac{m_\pm g\beta}{16} (t-|x-y'|)}e^{-\frac{\beta}{4}(\vZ+m_\pm gy'_3)}\\&\times \left(\|w_{\pm,\beta} F^{\textup{in}}_\pm\|_{L^\infty_{x,v}}+\sup_{0\le\tau\le t}\|e^{\frac{m_\pm g\beta}{16}\tau}w_{\pm,\beta} (x_\parallel,0,v)G_\pm(\tau,x_\parallel,v)\|_{L^\infty_{x,v}(\gamma_-)}\right)\\
&\approx \frac{c_{\pm,\beta}}{m_\pm}M_{\textup{data}}  \int_{B^+(x;t)} \frac{dy'}{|y'-x|^2}e^{-\frac{m_\pm g\beta}{16} (t-|x-y'|)}e^{-\frac{m_\pm g\beta}{4}y'_3}\\
&\approx \frac{c_{\pm,\beta}}{m_\pm}M_{\textup{data}} \int_{B(x;t)\cap \{z_3+x_3>0\}} \frac{dz}{|z|^2} e^{-\frac{m_\pm g\beta}{16} (t-|z|)-\frac{m_\pm g\beta}{4} (z_3+x_3)}\\
&\approx \frac{c_{\pm,\beta}}{m_\pm}M_{\textup{data}} \int_0^tdr\int_{\mathbb{S}^2}d\omega\  e^{-\frac{m_\pm g\beta}{16} (t-r)}1_{\{(r\omega)_3+x_3>0\}} e^{-\frac{m_\pm g\beta}{4} ((r\omega)_3+x_3)}\\
&\approx 2\pi \frac{c_{\pm,\beta}}{m_\pm}M_{\textup{data}}  \int_0^tdr \int_0^\pi d\phi \  \sin\phi \ e^{-\frac{m_\pm g\beta}{16} (t-r)-\frac{m_\pm g\beta}{4} x_3}1_{\{r\cos\phi+x_3>0\}} e^{-\frac{m_\pm g\beta}{4} r\cos\phi}\\&
\approx 2\pi \frac{c_{\pm,\beta}}{m_\pm}M_{\textup{data}} \int_0^tdr \int_{-1}^1 d(\cos\phi) \  e^{-\frac{m_\pm g\beta}{16} (t-r+4m_\pm x_3)}1_{\{r\cos\phi+x_3>0\}} e^{-\frac{m_\pm g\beta}{4} r\cos\phi}\\
&\approx 2\pi \frac{c_{\pm,\beta}}{m_\pm}M_{\textup{data}}  \int_0^tdr \frac{4e^{-\frac{m_\pm g\beta}{16} (t-r+4m_\pm x_3)}}{m_\pm g\beta r} \int_{\max\{-\frac{m_\pm g\beta x_3}{4},-\frac{m_\pm g\beta r}{4}\}}^{\frac{m_\pm g\beta}{4}r} dk \   e^{-k}\\
&\approx8\pi \frac{c_{\pm,\beta}}{m_\pm}M_{\textup{data}} \int_0^tdr \frac{e^{-\frac{m_\pm g\beta}{16}(t-r+4m_\pm x_3)}}{m_\pm g\beta r}(e^{\min\{\frac{m_\pm g\beta x_3}{4},\frac{m_\pm g\beta r}{4}\}}-e^{-\frac{m_\pm g\beta r}{4}}),
\end{split}\end{equation}where we define $M_{\textup{data}}$ and $c_{\pm,\beta}$ as
$$M_{\textup{data}}\eqdef \left(\|w_{\pm,\beta} F^{\textup{in}}_\pm\|_{L^\infty_{x,v}}+\sup_{0\le\tau\le t}\|e^{\frac{m_\pm g\beta}{16}\tau}w_{\pm,\beta} (x_\parallel,0,v)G_\pm(\tau,x_\parallel,v)\|_{L^\infty_{x,v}(\gamma_-)}\right) ,$$ and
$$c_{\pm,\beta} \eqdef \int_\rth dv \ \vZ e^{-\frac{\beta}{4}\vZ}. $$ 
If $t<\frac{16}{m_\pm g\beta},$  then we directly follow from the fifth line in \eqref{E5T estimate} that for $i=1,2$,
\begin{multline}\label{E1T less than 1}
    |(\Elo)^{(1)}_{\pm,iT}(t,x)|
    \lesssim \frac{c_{\pm,\beta}}{m_\pm}M_{\textup{data}} \int_0^tdr\int_{\mathbb{S}^2}d\omega\  e^{-\frac{m_\pm g\beta}{16} (t-r)}1_{\{(r\omega)_3+x_3>0\}} e^{-\frac{m_\pm g\beta}{4} ((r\omega)_3+x_3)}\\
    \lesssim \frac{c_{\pm,\beta}}{m_\pm}M_{\textup{data}} \int_0^tdr\  e^{-\frac{m_\pm g\beta}{16} (t-r)}
    \lesssim \frac{c_{\pm,\beta}}{m_\pm^2 g\beta}M_{\textup{data}} (1-e^{-\frac{m_\pm g\beta t}{16}})\lesssim \frac{c_{\pm,\beta}}{m_\pm^2 g\beta}M_{\textup{data}}.
\end{multline} If $t\ge \frac{16}{m_\pm g\beta},$ then we further split $\int_0^t$ into $\int_0^{\min\{x_3,t\}}+\int_{\min\{x_3,t\}}^t,$ and observe that the last integral in the last line of \eqref{E5T estimate} is bounded from above as
\begin{align*}
   & \int_0^tdr \frac{e^{-\frac{m_\pm g\beta}{16}(t-r+4m_\pm x_3)}}{m_\pm g\beta r}(e^{\min\{\frac{m_\pm g\beta x_3}{4},\frac{m_\pm g\beta r}{4}\}}-e^{-\frac{m_\pm g\beta r}{4}})\\
   & \le \int_0^{\min\{x_3,t\}}dr \frac{e^{-\frac{m_\pm g\beta}{16}(t-r+4m_\pm x_3)}}{m_\pm g\beta r}(e^{\frac{m_\pm g\beta r}{4}}-e^{-\frac{m_\pm g\beta r}{4}})\\
  &  +\int_{\min\{x_3,t\}}^tdr \frac{e^{-\frac{m_\pm g\beta}{16}(t-r+4m_\pm x_3)}}{m_\pm g\beta r}(e^{\frac{m_\pm g\beta x_3}{4}}-e^{-\frac{m_\pm g\beta r}{4}})\\
   &=  \frac{1}{2} \int_0^{\min\{x_3,t\}}dr \ e^{-\frac{m_\pm g\beta}{16}(t-r+4m_\pm x_3)}\sinh\left(\frac{m_\pm g\beta r}{4}\right)+\int_{\min\{x_3,t\}}^tdr \frac{e^{-\frac{m_\pm g\beta}{16}(t-r)}}{m_\pm g\beta r}.
\end{align*}
Using the following formula that 
$$ \int_0^a e^x\sinh(4cx)dx= \frac{1}{2} \left( \frac{e^{(4c+1)a} - 1}{4c+1} - \frac{e^{(1-4c)a} - 1}{1-4c} \right),$$ for $a>0$ with $c=1$ and $x=\frac{m_\pm g\beta r}{16}$ in our case, we further obtain that
\begin{align*}
 &  \frac{1}{2} \int_0^{\min\{x_3,t\}}dr \ e^{-\frac{m_\pm g\beta}{16}(t-r+4m_\pm x_3)}\sinh\left(\frac{m_\pm g\beta r}{4}\right)+\int_{\min\{x_3,t\}}^tdr \frac{e^{-\frac{m_\pm g\beta}{16}(t-r)}}{m_\pm g\beta r}\\
      & \lesssim  \frac{1}{m_\pm g\beta}e^{-\frac{m_\pm g\beta}{16}(t+4m_\pm x_3)} \left( \frac{e^{5\frac{m_\pm g\beta\min\{x_3,t\}}{16}} - 1}{5} +\frac{e^{-3\frac{m_\pm g\beta\min\{x_3,t\}}{16}} - 1}{3} \right)+\int_{\min\{x_3,t\}}^tdr \frac{e^{-\frac{m_\pm g\beta}{16}(t-r)}}{m_\pm g\beta r}\\
      &  \lesssim  \frac{1}{m_\pm g\beta }+\int_{\min\{x_3,t\}}^tdr \frac{e^{-\frac{m_\pm g\beta}{16}(t-r)}}{m_\pm g\beta r}\\
      &\lesssim \frac{1}{m_\pm g\beta }+e^{-\frac{m_\pm g\beta t}{16}}\frac{1}{m_\pm g\beta}\left(\textup{Ei}\left(\frac{m_\pm g\beta t}{16}\right)-\textup{Ei}\left(\frac{m_\pm g\beta}{16}\min\{x_3,t\}\right)\right),
\end{align*}where Ei$(x)$ is the exponential integral function which is defined as 
$$\textup{Ei}(x)=-\int_{-x}^\infty \frac{e^{-t}}{t}dt.$$Using the property that Ei$(x)\le 2\frac{e^x}{x}$ for $x\ge 1,$ we conclude that for $t\ge \frac{16}{m_\pm g\beta}$
\begin{multline*}
    \int_0^tdr \frac{e^{-\frac{m_\pm g\beta}{16}(t-r+4m_\pm x_3)}}{g\beta r}(e^{\min\{\frac{m_\pm g\beta x_3}{4},\frac{m_\pm g\beta r}{4}\}}-e^{-\frac{m_\pm g\beta r}{4}})
      \lesssim   \frac{1}{m_\pm g\beta }+\frac{1}{m_\pm g\beta}\left(\frac{1}{m_\pm g\beta t}\right)\\
      \lesssim \frac{1}{m_\pm g\beta}\left(1+\frac{1}{m_\pm g\beta t}\right)\lesssim \frac{1}{m_\pm g\beta} .
\end{multline*}Plugging this bound for $t\ge \frac{16}{m_\pm g\beta}$ into \eqref{E5T estimate} and collecting \eqref{E1T less than 1} for the other case that $t<\frac{16}{m_\pm g\beta},$ we conclude that for $i=1,2$,
\begin{equation}
    \label{E1T final}|(\Elo)^{(1)}_{\pm,iT}(t,x)|\lesssim \frac{1}{m_\pm^2 g\beta^5}\left(\|w_{\pm,\beta} F^{\textup{in}}_\pm\|_{L^\infty_{x,v}}+\sup_{0\le\tau\le t}\|e^{\frac{m_\pm g\beta}{16}\tau}w_{\pm,\beta} (x_\parallel,0,v)G_\pm(\tau,x_\parallel,v)\|_{L^\infty_{x,v}(\gamma_-)}\right),
\end{equation}since we have the estimate \eqref{additional beta decay} for the coefficient $c_{\pm,\beta}$. Thus, if $\beta>1$ sufficiently large such that $\min\{m_-^3,m_+^3\} g^2\beta^5\gg 1,$ then we obtain $$|(\Elo)^{(1)}_{\pm,iT}(t,x)|\ll \min\{m_-,m_+\} g.$$

Similarly, we obtain the same upper bound for $(\Elo)^{(2)}_{\pm,iT}$ for $i=1,2$.

\subsubsection{Estimates for $\E_S$ Terms}
On the other hand, regarding the nonlinear $S$ terms, $(\Elo)^{(1)}_{\pm,iS}$ and $(\Elo)^{(2)}_{\pm,iS}$ with $i=1,2$ for example, we recall \eqref{Ei5.T}--\eqref{Ei6.T} again. Here we need a kernel estimate on  $$a^{\E}_{\pm,i}(v,\omega)=\frac{(\partial_{v_i}v-(\hat{v}_\pm)_i(\hat{v}_\pm))}{(\vZ) (1+\hat{v}_\pm\cdot \omega)}-\frac{(\omega_i+(\hat{v}_\pm)_i)(\omega-(\omega\cdot (\hat{v}_\pm))(\hat{v}_\pm))}{(\vZ) (1+\hat{v}_\pm\cdot \omega)^2}=:a^{(1)}_{\pm,i}+a^{(2)}_{\pm,i},$$and we use the kernel estimates \eqref{a2}--\eqref{a1} for $|a_i^E|.$ Then again using the estimate \eqref{final estimate for flo} on $\flo$, we finally obtain for $i=1,2,$
\begin{equation}\begin{split}|(\Elo)^{(1)}_{\pm,iS}(t,x)|
&\lesssim (m_\pm g+\|(\Elbf,\Blbf)\|_{L^\infty})\int_{B^+(x;t)} \frac{dy'}{|y'-x|} \int_\rth dv\ \frac{\vZ}{m_\pm^2}e^{-\frac{m_\pm g\beta}{16} (t-|x-y'|)}e^{-\frac{\beta}{4}(\vZ+m_\pm gy'_3)}\\&\times\left(\|w_{\pm,\beta} F^{\textup{in}}_\pm\|_{L^\infty_{x,v}}+\sup_{0\le\tau\le t}\|e^{\frac{m_\pm g\beta}{16}\tau}w_{\pm,\beta} (x_\parallel,0,v)G_\pm(\tau,x_\parallel,v)\|_{L^\infty_{x,v}(\gamma_-)}\right)\\
&\approx c_{\pm,\beta} \frac{1}{m_\pm} gM_{\textup{data}}\int_{B^+(x;t)} \frac{dy'}{|y'-x|} e^{-\frac{m_\pm g\beta}{16}(t-|x-y'|)-\frac{m_\pm g\beta}{4}y'_3}\\
&\approx c_{\pm,\beta}\frac{1}{m_\pm} gM_{\textup{data}}\int_{B(x;t)\cap \{z_3+x_3>0\}} \frac{dz}{|z|} e^{-\frac{m_\pm g\beta}{16}(t-|z|)-\frac{m_\pm g\beta}{4}(z_3+x_3)}\\
&\approx c_{\pm,\beta}\frac{1}{m_\pm} gM_{\textup{data}}\int_0^tdr\int_{\mathbb{S}^2}d\omega\  re^{-\frac{m_\pm g\beta}{16}(t-r)}1_{\{(r\omega)_3+x_3>0\}} e^{-\frac{m_\pm g\beta}{4}((r\omega)_3+x_3)}\\
&\approx \frac{2\pi c_{\pm,\beta} }{m_\pm} gM_{\textup{data}}\int_0^tdr \int_0^\pi d\phi \  \sin\phi \ re^{-\frac{m_\pm g\beta}{16}(t-r+4m_\pm x_3)}1_{\{r\cos\phi+x_3>0\}} e^{-\frac{m_\pm g\beta}{4}r\cos\phi}\\
&\approx  \frac{c_{\pm,\beta}}{m_\pm} gM_{\textup{data}}\int_0^tdr \int_{-1}^1 d(\cos\phi) \  re^{-\frac{m_\pm g\beta}{16}(t-r+4m_\pm x_3)}1_{\{r\cos\phi+x_3>0\}} e^{-\frac{m_\pm g\beta}{4}r\cos\phi}\\
&\approx c_{\pm,\beta} \frac{1}{m_\pm} gM_{\textup{data}}\int_0^tdr \frac{e^{-\frac{m_\pm g\beta}{16}(t-r+4m_\pm x_3)}}{m_\pm g\beta} \int_{-\frac{m_\pm g\beta}{4}\min\{x_3,r\}}^{\frac{m_\pm g\beta}{4} r} dk \   e^{-k}\\&\approx c_{\pm,\beta} \frac{1}{m_\pm} gM_{\textup{data}}\int_0^tdr \frac{e^{-\frac{m_\pm g\beta}{16}(t-r+4m_\pm x_3)}}{m_\pm g\beta}(e^{\frac{m_\pm g\beta}{4}\min\{x_3,r\}}-e^{-\frac{m_\pm g\beta}{4}r})\\
&\lesssim \frac{1}{m_\pm^2\beta ^5}M_{\textup{data}}\int_0^tdr \ e^{-\frac{m_\pm g\beta}{16}(t-r)}
\approx \frac{1}{m_\pm^3g\beta^6}M_{\textup{data}}(1-e^{-\frac{m_\pm g\beta}{16}t}),
\label{E5S estimate}
\end{split}\end{equation}
where we define again $$M_{\textup{data}}\eqdef \left(\|w_{\pm,\beta} F^{\textup{in}}_\pm\|_{L^\infty_{x,v}}+\sup_{0\le\tau\le t}\|e^{\frac{m_\pm g\beta}{16}\tau}w_{\pm,\beta} (x_\parallel,0,v)G_\pm(\tau,x_\parallel,v)\|_{L^\infty_{x,v}(\gamma_-)}\right),$$ and  used the a priori ansatz (cf. \eqref{ansatz for EBl})
$$\sup_{t\ge 0}\|(\Elbf,\Blbf)\|_{L^\infty}\le \min\{m_+,m_-\}\frac{g}{8}.$$ Thus, if $\beta>1$ is sufficiently large such that $\min\{m_-^4,m_+^4\}g^2\beta^6\gg 1,$ then we have $$|(\Elo)^{(1)}_{\pm,iS}(t,x)|\ll \min\{m_-,m_+\}g.$$

Similarly, we obtain the same upper bounds for $(\Elo)^{(2)}_{\pm,iS}$ terms for $i=1,2,$ as those of \eqref{E5S estimate}.
In conclusion, we have for $i=1,2,$
\begin{multline}\label{EiTEiS final i12}|(\Elo)^{(1)}_{\pm,iT}(t,x)|+|(\Elo)^{(2)}_{\pm,iT}(t,x)|+|(\Elo)^{(1)}_{\pm,iS}(t,x)|+|(\Elo)^{(2)}_{\pm,iS}(t,x)|\ll \min\{m_-,m_+\}g.\end{multline} 
\subsubsection{Estimates for the Transverse \Black{and Nonlinear Source} Components of the Magnetic Field }Note that the representations for $(\Blo)^{(1)}_{\pm,iT}$ and $(\Blo)^{(2)}_{\pm,iT}$  in \eqref{Bpar_half_final}\Black{, as well as those for $(\Blo)^{(1)}_{\pm,iS}$ and $(\Blo)^{(2)}_{\pm,iS}$ in \eqref{BparS_half_final} (with the Cauchy data and the distributions extended periodically in the $x_\parallel$ directions),} differ from the electric field representation\Black{s} $(\Elo)^{(1)}_{\pm,iT}$ and $(\Elo)^{(2)}_{\pm,iT}$ \Black{and $(\Elo)^{(1)}_{\pm,iS}$ and $(\Elo)^{(2)}_{\pm,iS}$} in \eqref{Ei5.T}--\eqref{Ei6.T}, respectively, by the sign and the kernel\Black{s}: $\frac{(1-|(\hat{v}_\pm)|^2)(\omega\times (\hat{v}_\pm))_i}{(1+\hat{v}_\pm\cdot \omega)^2}$ \Black{for the $T$-terms, and $a^{\B}_{\pm,i}(v,\omega)=\nabla_v\big(\frac{(\omega\times \hat{v}_\pm)_i}{1+\hat{v}_\pm\cdot \omega}\big)$ of \eqref{aBi} for the $S$-terms}. Therefore, in the rest of this section, we make upper-bound estimates for \Black{these kernels} and use them to obtain the estimates for $\Blo_{\pm,iT}$ \Black{and $\Blo_{\pm,iS}$}.
Regarding the $T$-terms, we use the kernel estimate \eqref{B35 kernel final} which provides
\begin{equation}\notag
    \left|\frac{(1-|(\hat{v}_\pm)|^2)(\omega\times (\hat{v}_\pm))_i}{(1+\hat{v}_\pm\cdot \omega)^2}\right|\le 2\frac{\sqrt{m_\pm^2+|v|^2}}{m_\pm}.
\end{equation}
This upper-bound for the kernel of $\mathbf{B}^{(1)}_{\pm,iT}$ and $\mathbf{B}^{(2)}_{\pm,iT}$ is the same as those for $\mathbf{E}^{(1)}_{\pm,iT}$ and $\mathbf{E}^{(2)}_{\pm,iT}$ for $i=1,2$ in \eqref{ET kernel estimate}, and hence the upper-bounds for $\mathbf{B}^{(1)}_{\pm,iT}$ and $\mathbf{B}^{(2)}_{\pm,iT}$ are the same as those for $\mathbf{E}^{(1)}_{\pm,iT}$ and $\mathbf{E}^{(2)}_{\pm,iT}$ for $i=1,2$ of \eqref{EiTEiS final i12}.
{\color{black}
Regarding the nonlinear $S$-terms, we recall that the magnetic $S$-kernel obeys the upper bound
\begin{equation}\notag
    \big|a^{\B}_{\pm,i}(v,\omega)\big|\lesssim \frac{\vZ}{m_\pm^2},\qquad i=1,2,3,
\end{equation}
by \eqref{aB.final}, which is the same upper bound as those for the electric $S$-kernels $a^{\E}_{\pm,i}$ in \eqref{a2}--\eqref{a1}; the bound is uniform in $\omega\in\mathbb{S}^2$ and hence holds with $\bar{\omega}$ as well. Since the integrands of $(\Blo)^{(1)}_{\pm,iS}$ and $(\Blo)^{(2)}_{\pm,iS}$ differ from those of $(\Elo)^{(1)}_{\pm,iS}$ and $(\Elo)^{(2)}_{\pm,iS}$ only through this kernel, the estimate \eqref{E5S estimate} holds verbatim for $(\Blo)^{(1)}_{\pm,iS}$ and $(\Blo)^{(2)}_{\pm,iS}$:
$$
|(\Blo)^{(1)}_{\pm,iS}(t,x)|+|(\Blo)^{(2)}_{\pm,iS}(t,x)|\lesssim \frac{1}{m_\pm^3g\beta^6}M_{\textup{data}}(1-e^{-\frac{m_\pm g\beta}{16}t}),\qquad i=1,2,3.
$$
Thus, if $\beta>1$ is sufficiently large such that $\min\{m_-^4,m_+^4\}g^2\beta^6\gg 1$, as for the electric $S$-terms, then we have, for $i=1,2,3$,
$$|(\Blo)^{(1)}_{\pm,iS}(t,x)|+|(\Blo)^{(2)}_{\pm,iS}(t,x)|\ll \min\{m_-,m_+\}g.$$
Hence the upper-bounds for $\mathbf{B}^{(1)}_{\pm,iS}$ and $\mathbf{B}^{(2)}_{\pm,iS}$ are the same as those for $\mathbf{E}^{(1)}_{\pm,iS}$ and $\mathbf{E}^{(2)}_{\pm,iS}$ of \eqref{EiTEiS final i12}.
}

\subsubsection{Estimates for Additional Field-Components in the Representations }
\label{sec.E3.end}
In this subsection, we make upper-bound estimates for additional field-components in the representations of $\Elo_3$, \textnormal{$\Blo_1$}, and \textnormal{$\Blo_2$} which arise from the Neumann boundary conditions \eqref{perfect.conductor.neumann}. Observing the representations for these fields in \eqref{E3.T}, we notice that these fields contain additional terms \eqref{additional term E3} compared to those for  $\Elo_1$ and $\Elo_2$ in \eqref{Ei5.T} and \eqref{Ei6.T} which solve the wave equations under Dirichlet boundary conditions \eqref{perfect cond. boundary}. Similarly, we have additional terms \eqref{additional term B1}  appearing in the representations of \textnormal{$\Blo_1$}, and \textnormal{$\Blo_2$}. These common terms share the same upper-bound estimates, since the kernels differ only by some signs. It suffices to make additional estimates on those extra terms \eqref{additional term E3}, since $|\hat{v}_i|\le 1$. 

Note that the integral in \eqref{additional term E3} can be estimated via the upper-bound estimate for $\flo$ \eqref{final estimate for flo}. Using \eqref{final estimate for flo}, we obtain
\begin{equation}\begin{split}\label{additional estimates E3}
   & 2\int_{B(x;t)\cap \{y_3=0\}} \int_\rth \frac{\flo(t-|y-x|,y_\parallel,0,v)}{|y-x|}dvdS_y\\&\le 2\int_{B(x;t)\cap \{y'_3=0\}} \frac{dy'_\parallel}{|y'-x|}\int_{v_3>0} dv\  |G_\pm(t-|x-y'|,y'_\parallel,v)|\\
   & + 2\int_{B(x;t)\cap \{y'_3=0\}} \frac{dy'_\parallel}{|y'-x|}\int_{v_3\le 0} dv\  |\flo(t-|x-y'|,y'_\parallel,0,v)|\\
  &  \le 2\int_{B(x;t)\cap \{y'_3=0\}} \frac{dy'_\parallel}{|y'-x|}\int_{v_3>0} dv\  w_{\pm,\beta}^{-1}(y'_\parallel,0,v)e^{-\frac{m_\pm g\beta}{16}(t-|x-y'|)}\\&\times \sup_{0\le\tau\le t}\|e^{\frac{m_\pm g\beta}{16}\tau}w_{\pm,\beta} (x_\parallel,0,v)G_\pm(\tau,x_\parallel,v)\|_{L^\infty_{x,v}(\gamma_-)}\\
   & + 2\int_{B(x;t)\cap \{y'_3=0\}} \frac{dy'_\parallel}{|y'-x|}\int_{v_3\le 0} dv\   e^{-\frac{\beta}{4} \vZ}e^{-\frac{m_\pm g\beta}{16}(t-|x-y'|)}  \\&\times \left(\|w_{\pm,\beta} F^{\textup{in}}_\pm\|_{L^\infty_{x,v}}+\sup_{0\le\tau\le t}\|e^{\frac{m_\pm g\beta}{16}\tau}w_{\pm,\beta} (x_\parallel,0,v)G_\pm(\tau,x_\parallel,v)\|_{L^\infty_{x,v}(\gamma_-)}\right)\\
    & \le 2e^{-\frac{m_\pm g\beta}{16}t}\int_{B(x;t)\cap \{y'_3=0\}} \frac{dy'_\parallel}{|y'-x|}e^{\frac{m_\pm g\beta}{16}|x-y'|}\int_{v_3>0} dv\   e^{-\frac{\beta}{4}\sqrt{m_\pm^2+|v|^2}}\\&\times \sup_{0\le\tau\le t}\|e^{\frac{m_\pm g\beta}{16}\tau}w_{\pm,\beta} (x_\parallel,0,v)G_\pm(\tau,x_\parallel,v)\|_{L^\infty_{x,v}(\gamma_-)}\\
   & + 2 e^{-\frac{m_\pm g\beta}{16}t}  \left(\|w_{\pm,\beta} F^{\textup{in}}_\pm\|_{L^\infty_{x,v}}+\sup_{0\le\tau\le t}\|e^{\frac{m_\pm g\beta}{16}\tau}w_{\pm,\beta} (x_\parallel,0,v)G_\pm(\tau,x_\parallel,v)\|_{L^\infty_{x,v}(\gamma_-)}\right)\\&\times \int_{B(x;t)\cap \{y'_3=0\}} \frac{dy'_\parallel}{|y'-x|}e^{\frac{m_\pm g\beta}{16}|x-y'|}\int_{v_3\le 0} dv\   e^{-\frac{\beta}{4}\sqrt{m_\pm^2+|v|^2}}\\
   & \le  2c'_{\pm,\beta} e^{-\frac{m_\pm g\beta}{16}t}  \left(\|w_{\pm,\beta} F^{\textup{in}}_\pm\|_{L^\infty_{x,v}}+\sup_{0\le\tau\le t}\|e^{\frac{m_\pm g\beta}{16}\tau}w_{\pm,\beta} (x_\parallel,0,v)G_\pm(\tau,x_\parallel,v)\|_{L^\infty_{x,v}(\gamma_-)}\right)\\&\times \int_{B(x;t)\cap \{y'_3=0\}} \frac{dy'_\parallel}{|y'-x|}e^{\frac{m_\pm g\beta}{16}|x-y'|}.
\end{split}\end{equation}Here, $c'_{\pm,\beta} \eqdef \int_{\rth} dv\   e^{-\frac{\beta}{4} \vZ} $ and by rescaling we have $c'_{\pm,\beta} \approx \frac{1}{\beta^3}$ by \eqref{additional beta decay.st}. The last line in \eqref{additional estimates E3} is the same as the second line of \eqref{Eib2 estimate middle} except for the additional coefficient $\frac{1}{m_\pm}$ in \eqref{Eib2 estimate middle}, and hence by \eqref{Final estimate for b2 terms}, we obtain that
 the additional term \eqref{additional term E3} 
 can be bounded from above as
 \begin{multline}\label{additional term final estimate E3}
      2\int_{B(x;t)\cap \{y_3=0\}} \int_\rth \frac{\flo(t-|y-x|,y_\parallel,0,v)}{|y-x|}dvdS_y \\\lesssim  \frac{1}{m_\pm g\beta^4}\left(\|w_{\pm,\beta} F^{\textup{in}}_\pm\|_{L^\infty_{x,v}}+\sup_{0\le\tau\le t}\|e^{\frac{m_\pm g\beta}{16}\tau}w_{\pm,\beta} (x_\parallel,0,v)G_\pm(\tau,x_\parallel,v)\|_{L^\infty_{x,v}(\gamma_-)}\right),
 \end{multline} by \eqref{additional beta decay.st}.
Thus, if $\beta>1$ is chosen sufficiently large such that $\min\{m_-^2,m_+^2\}g^2\beta^4\gg 1,$ then we observe that these additional terms are bounded from above by $c\min\{m_-,m_+\}g,$ for a sufficiently small constant $c>0.$ This completes the upper-bound estimates for the field-components $\Elo_3$, \textnormal{$\Blo_1$}, and \textnormal{$\Blo_2$} which solve the wave equations under the Neumann boundary conditions \eqref{perfect.conductor.neumann}.

\subsubsection{Final Estimates for the Electromagnetic Fields} Altoghther, we collect all the estimates made for $\Elobf$ and $\Blobf$ in Section~\ref{sec.Ei.start}--Section~\ref{sec.E3.end} above and finally establish the following lemma:
Combining the previous estimates, we obtain the following lemma on the linear-in-time decay upper bound for $\Eplo$ and $\Bplo$:
\begin{lemma}\label{lem.dyna.EB.vacuum}Fix $l\in\mathbb{N}$ and suppose that $\beta>1 $ is sufficiently large such that $\min\{m_-^2,m_+^2\}g^2\beta\gg1$ and $\min\{m_-^2,m_+^2\}g \beta^3\gg 1$. Suppose \eqref{Ansatz for fl}-\eqref{ansatz for EBl}  hold for $(F^l_\pm,\Elbf,\Blbf).$  Then $(\Elobf,\Blobf)$ satisfies 
\begin{equation}\label{final esimate for EloBlo}
    \sup_{t \geq 0} \  \|(\Elobf, \Blobf)\|_{L^\infty} \leq \min\{m_+, m_-\} \frac{g}{8}.
\end{equation}
\end{lemma}
This bound guarantees the validity of \eqref{ansatz for EBl} at the $(l+1)$-th iteration level, provided that the parameter $\beta > 1$ is chosen sufficiently large. Consequently, the estimates \eqref{Ansatz for fl}-\eqref{ansatz for EBl} are verified uniformly for all $l \in \mathbb{N}$, and thus remain valid in the limit as $l \to \infty$.

\begin{remark}
    One can think of a special property of $1$-dimensional wave equation on a whole line (or a half-line). Our situation in $\mathbb{T}^2\times \mathbb{R}_+$ also corresponds to this case in the sense of dispersion through the whole line. One can think of the possibility that there is a non-zero lower bound for the fields $\E$ or $\B$ in the $1$-dimensional case.  Then one can also consider that $\E$ or $\B$ converge to some non-zero state and try to prove the stability near a non-zero state.
\end{remark}

\section*{Acknowledgment}
J.W.J. is supported by the National Research Foundation of Korea (NRF) under grants RS-2023-00210484, RS-2023-00219980, and 2021R1A6A1A10042944. J.W.J. gratefully acknowledges the hospitality of the Department of Mathematics at the University of Wisconsin-Madison during his visits. C.K. is partially supported by NSF grants CAREER-2047681. This material is partly based upon work supported by the National Science Foundation under Grant No. DMS-2424139, while one of the  authors (C.K.) was in residence at the Simons Laufer Mathematical Sciences Institute in Berkeley, California, during the Fall 2025 semester.

\appendix

\section{Electric Field Representation}
For the representation of the self-consistent electric field $\E$ in the half space $\mathbb{R}^3_+$, we follow the half-space Glassey-Strauss formula derived in \cite[eq. (35), (37)-(41), (47)-(50)]{MR4414612} as follows. We write $\E = \E_{\textup{hom}}+\E_{\textup{par}}$ where the tangential components $\E_{\textup{par},\parallel}$ of the particular solution ($i=1,2$) are given by 
\begin{equation}\label{Ei5}
\begin{split}
   \mathbf{E}_{\textup{par},i}(t,x)= &\sum_{\iota=\pm} 
    (-\iota)\int_{B^+(x;t)} dy \int_\rth dv\ a^{\mathbf{E}}_{\iota,i}(v,\omega)\cdot (\iota\E+\iota(\hat{v}_\iota)\times \B-m_\iota g\hat{e}_3)\frac{F_\iota(t-|x-y|,y,v)}{|x-y|}\\
   &+\sum_{\iota=\pm}\iota\int_{\partial B(x;t)\cap \{y_3>0\}} \frac{dS_{y}}{|y-x|}\int_\rth dv\ \left(\delta_{ij}-\frac{(\omega_i+(\hat{v}_\iota)_i)(\hat{v}_\iota)_j}{1+\hat{v}_\iota\cdot \omega}\right)\omega^jF_\iota(0,y,v)\\
    &+\sum_{\iota=\pm}(-\iota)\int_{B(x;t)\cap \{y_3=0\}} \frac{dy_\parallel}{|y-x|}\int_\rth dv  \left(\delta_{i3}-\frac{(\omega_i+(\hat{v}_\iota)_i)(\hat{v}_\iota)_3}{1+\hat{v}_\iota\cdot \omega}\right)F_\iota(t-|x-y|,y_\parallel,0,v)\\
   &+\sum_{\iota=\pm}(-\iota)\int_{B^+(x;t)} \frac{dy}{|y-x|^2}\int_\rth dv\ \frac{(|(\hat{v}_\iota)|^2-1)((\hat{v}_\iota)_i+\omega_i)}{(1+\hat{v}_\iota\cdot \omega)^2} F_\iota(t-|x-y|,y,v)\\
 &+\sum_{\iota=\pm}\iota\int_{B^-(x;t)} dy \int_\rth dv\ a^{\E}_{\iota,i}(v,\bar{\omega})\cdot (\iota\E+\iota(\hat{v}_\iota)\times \B-m_\iota g\hat{e}_3)\frac{F_\iota(t-|x-y|,\bar{y},v)}{|x-y|}\\
 &+\sum_{\iota=\pm}(-\iota)\int_{\partial B(x;t)\cap \{y_3<0\}} \frac{dS_{y}}{|y-x|}\int_\rth dv\ (\delta_{ij}-\frac{(\bar{\omega}_i+(\hat{v}_\iota)_i)(\hat{v}_\iota)_j}{1+(\hat{v}_\iota)\cdot \bar{\omega}})\bar{\omega}^jF_\iota(0,\bar{y},v)\\
  &+\sum_{\iota=\pm}\iota\int_{B(x;t)\cap \{y_3=0\}} \frac{dy_\parallel}{|y-x|}\int_\rth dv\  (\delta_{i3}-\frac{(\bar{\omega}_i+(\hat{v}_\iota)_i)(\hat{v}_\iota)_3}{1+(\hat{v}_\iota)\cdot \bar{\omega}})F_\iota(t-|x-y|,y_\parallel,0,v)\\
   &+\sum_{\iota=\pm}\iota\int_{B^-(x;t)} \frac{dy}{|y-x|^2}\int_\rth dv\ \frac{(|(\hat{v}_\iota)|^2-1)((\hat{v}_\iota)_i+\bar{\omega}_i)}{(1+(\hat{v}_\iota)\cdot \bar{\omega})^2} F_\iota(t-|x-y|,\bar{y},v)\\ \eqdef &\sum_{\iota=\pm}\iota(\mathbf{E}^{(1)}_{\iota,iS}+\mathbf{E}^{(1)}_{\iota,ib1}+\mathbf{E}^{(1)}_{\iota,ib2}+\mathbf{E}^{(1)}_{\iota,iT}-\mathbf{E}^{(2)}_{\iota,iS}-\mathbf{E}^{(2)}_{\iota,ib1}-\mathbf{E}^{(2)}_{\iota,ib2}-\mathbf{E}^{(2)}_{\iota,iT}),
\end{split}\end{equation}where $\bar{z}\eqdef(z_1,z_2, -z_3)^\top$, $B^\pm(x;t)$ are the upper- and the lower open half balls, respectively, defined as $B^+(x;t)=B(x;t)\cap \{y_3>0\}$ and $B^-(x;t)=B(x;t)\cap \{y_3<0\}$,  $dy_\parallel$ is the 2-dimensional Lebesgue measure on $B(x;t)\cap \{y_3=0\}$, and
\begin{equation}
    \label{aEi}
a^{\E}_{\iota,i}(v,\omega)\eqdef\nabla_v \left(\frac{\omega_i+(\hat{v}_\iota)_i}{1+\hat{v}_\iota\cdot \omega}\right)=\frac{(\partial_{v_i}v-(\hat{v}_\iota)_i(\hat{v}_\iota))(1+\hat{v}_\iota\cdot \omega)-(\omega_i+(\hat{v}_\iota)_i)(\omega-(\omega\cdot (\hat{v}_\iota))(\hat{v}_\iota))}{(\vZio) (1+\hat{v}_\iota\cdot \omega)^2}.\end{equation}
For the tangential component $\E_{\textup{hom},\parallel}$ of the homogeneous solution, we have 
\begin{multline}\label{Eparallel homo solution}
\Eihomo(t, x) =\frac{1}{4\pi t^2} \int_{\partial B(x; t)\cap \{y_3>0\}}  \left(t \mathbf{E}^1_{0i}(y) + \mathbf{E}_{0i}(y) + \nabla \mathbf{E}_{0i}(y) \cdot (y - x)\right) dS_y\\
-\frac{1}{4\pi t^2} \int_{\partial B(x; t)\cap \{y_3<0\}}  \left(t \mathbf{E}^1_{0i}(\bar{y}) + \mathbf{E}_{0i}(\bar{y}) + \nabla \mathbf{E}_{0i}(\bar{y}) \cdot (\bar{y} - \bar{x})\right) dS_y,
\end{multline} where $\bar{z}\eqdef (z_1,z_2,-z_3)^\top$.

On the other hand, for the normal component $\Eth$, for each $(t,x)\in [0,\infty)\times \mathbb{R}^2\times (0,\infty),$ we have the Glassey-Strauss representation as
\begin{equation}\label{E3}\begin{split}
    \Eth(t,x)=&\frac{1}{4\pi t^2} \int_{\partial B(x; t)\cap \{y_3>0\}}  \left(t \mathbf{E}^1_{03}(y) + \mathbf{E}_{03}(y) + \nabla \mathbf{E}_{03}(y) \cdot (y - x)\right) dS_y\\
&+\frac{1}{4\pi t^2} \int_{\partial B(x; t)\cap \{y_3<0\}}  \left(t \mathbf{E}^1_{03}(\bar{y}) + \mathbf{E}_{03}(\bar{y}) + \nabla \mathbf{E}_{03}(\bar{y}) \cdot (\bar{y} - \bar{x})\right) dS_y\\
  &+\sum_{\iota=\pm}\iota  \int_{B^+(x;t)} dy \int_\rth dv\ a^{\E}_{\iota,3}(v,\omega)\cdot (\iota\E+\iota(\hat{v}_\iota)\times \B-m_\iota g\hat{e}_3)\frac{F_\iota(t-|x-y|,y,v)}{|x-y|}\\
    &+\sum_{\iota=\pm}(-\iota)\int_{\partial B(x;t)\cap \{y_3>0\}} \frac{dS_{y}}{|y-x|}\int_\rth dv\ \left(\delta_{3j}-\frac{(\omega_3+(\hat{v}_\iota)_3)(\hat{v}_\iota)_j}{1+\hat{v}_\iota\cdot \omega}\right)\omega^jF_\iota(0,y,v)\\
    &+\sum_{\iota=\pm}\iota\int_{B(x;t)\cap \{y_3=0\}} \frac{dy_\parallel}{|y-x|}\int_\rth dv\  \left(1-\frac{(\omega_3+(\hat{v}_\iota)_3)(\hat{v}_\iota)_3}{1+\hat{v}_\iota\cdot \omega}\right)F_\iota(t-|x-y|,y_\parallel,0,v)\\
     &+\sum_{\iota=\pm}\iota\int_{B^+(x;t)} \frac{dy}{|y-x|^2}\int_\rth dv\ \frac{(|(\hat{v}_\iota)|^2-1)((\hat{v}_\iota)_3+\omega_3)}{(1+\hat{v}_\iota\cdot \omega)^2} F_\iota(t-|x-y|,y,v)\\
    &+\sum_{\iota=\pm}\iota\int_{B^-(x;t)} dy \int_\rth dv\ a^{\E}_{\iota,3}(v,\bar{\omega})\cdot (\iota\E+\iota(\hat{v}_\iota)\times \B-m_\iota g\hat{e}_3)\frac{F_\iota(t-|x-y|,\bar{y},v)}{|x-y|}\\
     &+\sum_{\iota=\pm}(-\iota)\int_{\partial B(x;t)\cap \{y_3<0\}} \frac{dS_{y}}{|y-x|}\int_\rth dv\ (\delta_{3j}-\frac{(\bar{w}_3+(\hat{v}_\iota)_3)(\hat{v}_\iota)_j}{1+(\hat{v}_\iota)\cdot \bar{\omega}})\bar{\omega}^jF_\iota(0,\bar{y},v)\\
     &+\sum_{\iota=\pm}\iota\int_{B(x;t)\cap \{y_3=0\}} \frac{dy_\parallel}{|y-x|}\int_\rth dv\  (1-\frac{(\bar{w}_3+(\hat{v}_\iota)_3)(\hat{v}_\iota)_3}{1+(\hat{v}_\iota)\cdot \bar{\omega}})F_\iota(t-|x-y|,y_\parallel,0,v)\\
     &+\sum_{\iota=\pm}\iota\int_{B^-(x;t)} \frac{dy}{|y-x|^2}\int_\rth dv\ \frac{(|(\hat{v}_\iota)|^2-1)((\hat{v}_\iota)_3+\bar{w}_3)}{(1+(\hat{v}_\iota)\cdot \bar{\omega})^2} F_\iota(t-|x-y|,\bar{y},v)\\
     &+\sum_{\iota=\pm}(-\iota) 2\int_{B(x;t)\cap \{y_3=0\}} \int_\rth \frac{F_\iota(t-|y-x|,y_\parallel,0,v)}{|y-x|}dvdy_\parallel,
\end{split}\end{equation}where $a^{\E}_{\iota,3}$ is defined in \eqref{aEi}, $\bar{z}\eqdef(z_1,z_2, -z_3)^\top$, and 
$dy_\parallel$ is the 2-dimensional Lebesgue measure on $B(x;t)\cap \{y_3=0\}$.

\section{Green Functions and the Glassey-Strauss Representations in \texorpdfstring{$\mathbb{T}^2\times \mathbb{R}_+$}{}}\label{sec.5}
In this section, we derive the Glassey-Strauss representations of the self-consistent electric field $\E(t,x)$ via the Green functions of the wave equations corresponding to the Maxwell equations \eqref{2speciesVM}.

\subsection{Wave Structure of the Maxwell Equations}
 Using the vector calculus identity $$\nabla\times (\nabla\times \E)= \nabla(\nabla\cdot \E)-\nabla^2 \E,$$ we observe that the Maxwell equations in \eqref{2speciesVM} imply$$
      4\pi\nabla \rho - \nabla^2 \E=\nabla(\nabla\cdot \E)-\nabla^2 \E = - \frac{\partial}{\partial t}(\nabla\times \B)= - \frac{\partial^2}{\partial t^2}\E -4\pi\frac{\partial}{\partial t} j.$$
      Thus, we have
      \begin{equation}
          \label{wave eq for E}\partial_t ^2 \E-\Delta_x \E =-4\pi (\nabla_x \rho + \partial_t J).
      \end{equation}
      Similarly, we have
      \begin{equation}
          \label{wave eq for B}\partial_t ^2 \B-\Delta_x \B=4\pi  \nabla\times J.
      \end{equation}
      Here
$\rho(t,x)$ and $ j(t,x)$ are the total charge density and the total current density, respectively. 
It is known that the 
wave equation \eqref{wave eq for E} 
recovers the original four Maxwell equations in \eqref{2speciesVM} if the continuity equation \eqref{continuity eq} is also satisfied. 

In the steady case that we will discuss in Section \ref{sec.bootstrap.st} and Section \ref{sec.exist.steady}, note that the wave equation \eqref{wave eq for E} 
reduces to the following Poisson equation:
  \begin{equation}
          \label{Poisson eq for E}-\Delta_x \E =-4\pi \nabla_x \rho.
      \end{equation}
      We also write the stationary counterpart of the continuity equation \eqref{continuity eq}:
\begin{equation}
    \label{continuity eq.st}
    \nabla_x\cdot J_{\textup{st}}=0.
\end{equation}
      The Green functions for the Poisson equations with Dirichlet boundary conditions and Neumann compatibility conditions in the half-space $\mathbb{R}^3_+$ will be introduced in Section \ref{sec.exist.steady}.

      \subsection{Green Function to the Wave Equation in $\mathbb{T}^2\times \mathbb{R}_+$}
We extend $\mathbb{R}_+$ to $\mathbb{R}$ and derive the representation again by performing the time-variable reduction in the Green function. 
Note that we have the Green function of the wave equation for $(x_\parallel,x_3)\in \mathbb{T}^2\times \mathbb{R},$
\begin{equation}
    G_{t-\tau}(x_\parallel,x_3)=\frac{1}{2\pi}\sum_{n\in \mathbb{Z}^2}\delta((t-\tau)^2-x_3^2-|x_\parallel -n|^2 )1_{x_3^2+|x_\parallel-n|^2\le (t-\tau)^2}.
\end{equation}

Then, in $\mathbb{T}^2\times (0,\infty)$, we have the Dirichlet-Green function for $x_3\ge 0,$ 
\begin{multline}\label{Green in the half channel}
    \bar{G}(t,\tau,x_\parallel,y_\parallel,x_3,y_3)=G_{t-\tau}(x_\parallel-y_\parallel,x_3-y_3)-G_{t-\tau}(x_\parallel-y_\parallel,x_3+y_3)\\=\frac{1}{2\pi}\sum_{n\in \mathbb{Z}^2}\bigg\{\delta((t-\tau)^2-(x_3-y_3)^2-|x_\parallel -y_\parallel-n|^2 )1_{(x_3-y_3)^2+|x_\parallel-y_\parallel-n|^2\le (t-\tau)^2}\\-\delta((t-\tau)^2-(x_3+y_3)^2-|x_\parallel -y_\parallel-n|^2 )1_{(x_3+y_3)^2+|x_\parallel-y_\parallel-n|^2\le (t-\tau)^2}\bigg\} ,
\end{multline}where we denote the set of integers as $\mathbb{Z}$ and we define the characteristic function $1_M(x) $ as 1 if $x\in M$ and $=0$, otherwise.
 This Green function $\bar{G}$ will be used to obtain the particular solutions to the self-consistent fields $\Eone,$ and $\Etwo,$ 
 which satisfy the Dirichlet boundary conditions on $x_3=0$. For these, we will consider the odd extension beyond the boundary. 

On the other hand, the self-consistent field $\Eth,$ 
will satisfy the Neumann boundary conditions on $x_3=0$, and for these, we will consider the even extension beyond the boundary and the alternative Neumann-Green function $\tilde{G}$ which is defined as
\begin{multline}\label{Green in the half channel.Neumann}
    \tilde{G}(t,\tau,x_\parallel,y_\parallel,x_3,y_3)=G_{t-\tau}(x_\parallel-y_\parallel,x_3-y_3)+G_{t-\tau}(x_\parallel-y_\parallel,x_3+y_3)\\=\frac{1}{2\pi}\sum_{n\in \mathbb{Z}^2}\bigg\{\delta((t-\tau)^2-(x_3-y_3)^2-|x_\parallel -y_\parallel-n|^2 )1_{(x_3-y_3)^2+|x_\parallel-y_\parallel-n|^2\le (t-\tau)^2}\\+\delta((t-\tau)^2-(x_3+y_3)^2-|x_\parallel -y_\parallel-n|^2 )1_{(x_3+y_3)^2+|x_\parallel-y_\parallel-n|^2\le (t-\tau)^2}\bigg\}.
\end{multline}
We now begin with deriving the representations for the fields $\Eone$ and $\Etwo$ with Dirichlet boundary conditions.  The representations for $\Eth$ 
 will be derived later.

\subsection{Representations for \texorpdfstring{\textnormal{$\Eone$ and $\Etwo$}}{}}
In this section, we derive the Glassey-Strauss representations for the self-consistent fields $\Eone$ and $\Etwo$ 
that satisfy the Dirichlet boundary conditions via the perfect conductor boundary conditions \eqref{perfect cond. boundary}. 
\subsubsection{Representations for the Tangential Electric Fields $\Eone$ and $\Etwo$}By the wave equation \eqref{wave eq for E} for the vector $\E$ and the perfect conductor boundary condition \eqref{perfect cond. boundary}, the parallel components $\Ei$ for $i=1,2$ satisfies the following wave equations for $x=(x_\parallel,x_3)\in \mathbb{T}^2\times (0,\infty)$:
\begin{equation}
    \begin{split}
        (\partial^2_t - \Delta_x) \Ei&=-4\pi (\partial _{x_i}\rho +\partial_t J_i),\\
    \Ei(0,x)&=\mathbf{E}_{0i}(x),\\
         \partial_t \Ei(0,x)&=\mathbf{E}^1_{0i}(x),\\ \Ei(t,x_\parallel,0)&=0.
    \end{split}
\end{equation}Here, we assume that $\E_0^1$ further satisfies $$\E_0^1=\nabla\times \B^{\textup{in}}-4\pi J_0,$$ for the compatibility with \eqref{2speciesVM}$_2$. 
To begin with, let us denote the homogeneous solution as $\Eihomo$ for $i=1,2$ with Cauchy data which solves the following system of the wave equations for $x\in\rth$:\begin{equation}\label{sys.Eihomo}
    \begin{split}
        (\partial^2_t - \Delta_x) \Eihomo&=0,\\
        \Eihomo(0,x)&=\tilde{\mathbf{E}}_{0i}^{\textup{odd}}(x),\\
         \partial_t \Eihomo(0,x)&=\tilde{\mathbf{E}}_{0i}^{1,\textup{odd}}(x),\\ \Eihomo(t,x_\parallel,0)&=0,
    \end{split}
\end{equation}where we first extended the data $\mathbf{E}_{0i}$ and $\mathbf{E}^1_{0i}$ periodically in $x_\parallel$ (which we denote their periodic extension as $ \tilde{\mathbf{E}}^1_{0i} $ and $ \tilde{\mathbf{E}}^1_{0i} $) and applied the odd extension with respect to $x_3$ across $x_3 = 0$ and the periodic extension with respect to  $x_\parallel$ of the Cauchy data to the whole space $x\in\rth$  as
\begin{equation}
\label{odd1}
\tilde{\mathbf{E}}_{0i}^{\textup{odd}}(x_\parallel, x_3) = 
\begin{cases} 
\tilde{\mathbf{E}}_{0i}(x_\parallel, x_3) & \textup{if } x_3 \geq 0, \\
-\tilde{\mathbf{E}}_{0i}(x_\parallel, -x_3) & \textup{if } x_3 < 0,
\end{cases}
\end{equation}
and similarly for the time derivative of the initial condition
\begin{equation}
    \label{odd2}
\tilde{\mathbf{E}}_{0i}^{1,\textup{odd}}(x_\parallel, x_3) = 
\begin{cases} 
\tilde{\mathbf{E}}^1_{0i}(x_\parallel, x_3) & \textup{if } x_3 \geq 0, \\
-\tilde{\mathbf{E}}^1_{0i}(x_\parallel, -x_3) & \textup{if } x_3 < 0.
\end{cases}
\end{equation}
Then, we can write the Kirchhoff formula of the solution to the homogeneous wave equation  
as 
\begin{equation*}
\Eihomo(t, x) = \frac{1}{4\pi t^2} \int_{\partial B(x; t)}  \left(t \tilde{\mathbf{E}}^{1,\textup{odd}}_{0i}(y) + \tilde{\mathbf{E}}^{\textup{odd}}_{0i}(y) + \nabla \tilde{\mathbf{E}}^{\textup{odd}}_{0i}(y) \cdot (y - x)\right) dS_y,
\end{equation*} where 
we denote the open ball of radius $t$ around $x$ in the extended whole space $\rth$ as $$B(x;t)=\{y'\in \mathbb{R}^3:|x-y'|<t\}$$ such that $$\partial B(x;t)=\{y'\in \mathbb{R}^3:|x-y'|=t\}.$$ Retrieving the original data representation via \eqref{odd1}--\eqref{odd2}, we have
\begin{multline}\notag
\Eihomo(t, x) =\frac{1}{4\pi t^2} \int_{\partial B(x; t)\cap \{y_3>0\}}  \left(t \tilde{\mathbf{E}}^1_{0i}(y) + \tilde{\mathbf{E}}_{0i}(y) + \nabla \tilde{\mathbf{E}}_{0i}(y) \cdot (y - x)\right) dS_y\\
-\frac{1}{4\pi t^2} \int_{\partial B(x; t)\cap \{y_3<0\}}  \left(t \tilde{\mathbf{E}}^1_{0i}(\bar{y}) + \tilde{\mathbf{E}}_{0i}(\bar{y}) + \nabla \tilde{\mathbf{E}}_{0i}(\bar{y}) \cdot (\bar{y} - \bar{x})\right) dS_y,
\end{multline} where $\bar{y}\eqdef (y_1,y_2,-y_3)^\top$ and we denote the periodic extension of the Cauchy data as $ \tilde{\mathbf{E}}_{0i} $ and $ \tilde{\mathbf{E}}^1_{0i} $  in $x_\parallel$ directions. 

On the other hand, the particular solution $\Eipar$ for $i=1,2$ (as well as $i=3$) solve the following system of wave equation for $x=(x_\parallel,x_3)\in \mathbb{T}^2\times (0,\infty)$ by \eqref{wave eq for E}:
\begin{equation}\label{sys.parEi}
    \begin{split}
        (\partial^2_t - \Delta_x) \Eipar&=-4\pi (\partial_i \rho +\partial_t J_i),\\
        \Eipar(0,x)&=0,\\
         \partial_t \Eipar(0,x)&=0,\\ \Eipar(t,x_\parallel,0)&=0.
    \end{split}
\end{equation}
The particular solution $ \Eipar$ for $i=1,2$ can be written
\begin{multline}\label{Epar Green}
    \Eipar(t,x)=-4\pi \int_0^t d\tau\int_{\mathbb{T}^2}dy_\parallel \int_0^\infty dy_3\ \bar{G}(t,\tau,x,y)(\partial_i \rho +\partial_t J_i)(\tau,y)\\
    =-2 \int_0^t d\tau\int_{\mathbb{T}^2}dy_\parallel \int_0^\infty dy_3 \sum_{n\in \mathbb{Z}^2}(\partial_i \rho +\partial_t J_i)(\tau,y)\\\times \bigg\{\delta((t-\tau)^2-(x_3-y_3)^2-|x_\parallel -y_\parallel-n|^2 )1_{(x_3-y_3)^2+|x_\parallel-y_\parallel-n|^2\le (t-\tau)^2}\\-\delta((t-\tau)^2-(x_3+y_3)^2-|x_\parallel -y_\parallel-n|^2 )1_{(x_3+y_3)^2+|x_\parallel-y_\parallel-n|^2\le (t-\tau)^2}\bigg\} 
    =: -2(I_1-I_2),
\end{multline}
using the Dirichlet-Green function $\bar{G}$ \eqref{Green in the half channel}. For $i=3$, $\Ethpar$ has a similar form of $\eqref{Epar Green}$ but with $\tilde{G}$ replacing $\bar{G}$. We first consider \eqref{Epar Green} of $i=1,2$.

For $I_1$ and $I_2$, we observe that 
\begin{equation*}
\begin{split}
   & \delta((t-\tau)^2-(x_3\pm y_3)^2-|x_\parallel -y_\parallel-n|^2 )\\
    &=\frac{1}{2\sqrt{(x_3\pm y_3)^2+|x_\parallel -y_\parallel-n|^2 }}\bigg(\delta\bigg((t-\tau)-\sqrt{(x_3\pm y_3)^2+|x_\parallel -y_\parallel-n|^2} )\\&+\delta((t-\tau)+\sqrt{(x_3\pm y_3)^2+|x_\parallel -y_\parallel-n|^2}\bigg)\bigg)
    =\frac{\delta\left((t-\tau)-\sqrt{(x_3\pm y_3)^2+|x_\parallel -y_\parallel-n|^2} \right)}{2\sqrt{(x_3\pm y_3)^2+|x_\parallel -y_\parallel-n|^2 }},
    \end{split}
\end{equation*}
since $t-\tau\ge 0.$
Integrating this delta function with respect to $\tau$, we obtain 
\begin{multline*}
    -2(I_1-I_2)=-\sum_{n\in \mathbb{Z}^2}\int_{\mathbb{T}^2}dy_\parallel \int_0^\infty dy_3 \bigg(\frac{(\partial_i \rho +\partial_t J_i)\left(t-\sqrt{(x_3- y_3)^2+|x_\parallel -y_\parallel-n|^2},y\right)}{2\sqrt{(x_3-y_3)^2+|x_\parallel -y_\parallel-n|^2 }}\\
    \times 1_{(x_3-y_3)^2+|x_\parallel-y_\parallel-n|^2\le t^2}\\
    -\frac{(\partial_i \rho +\partial_t J_i)(t-\sqrt{(x_3+y_3)^2+|x_\parallel -y_\parallel-n|^2},y)}{2\sqrt{(x_3+ y_3)^2+|x_\parallel -y_\parallel-n|^2 }}
    1_{(x_3+y_3)^2+|x_\parallel-y_\parallel-n|^2\le t^2}\bigg)
    =:-2(I_3-I_4).
\end{multline*}
For both $I_3$ and $I_4$, we apply the change of variables $y_\parallel \in \mathbb{T}^2\mapsto y_\parallel + n - x_\parallel = \tilde{y}_\parallel \in \mathbb{T}^2+\{n-x_\parallel\}$ to get
\begin{multline*}
     \sum_{n\in \mathbb{Z}^2}\int_{\mathbb{T}^2+\{n-x_\parallel\}}d\tilde{y}_\parallel \int_0^\infty dy_3 \frac{(\partial_i \rho +\partial_t J_i)(t-\sqrt{(x_3\pm y_3)^2+|\tilde{y}_\parallel |^2},\tilde{y}_\parallel -n+x_\parallel,y_3)}{2\sqrt{(x_3\pm y_3)^2+|\tilde{y}_\parallel |^2 }}\\
    \times 1_{(x_3\pm y_3)^2+|\tilde{y}_\parallel |^2\le t^2}.
\end{multline*}
Using that $\rho$ and $J$ are periodic in $x_\parallel$ variable, we further have that for $I_3$ and $I_4$,
\begin{multline*}
   \sum_{n\in \mathbb{Z}^2}\int_{\mathbb{T}^2+\{n-x_\parallel\}}d\tilde{y}_\parallel \int_0^\infty dy_3 \frac{(\partial_i \rho +\partial_t J_i)(t-\sqrt{(x_3\pm y_3)^2+|\tilde{y}_\parallel |^2},\tilde{y}_\parallel +x_\parallel,y_3)}{2\sqrt{(x_3\pm y_3)^2+|\tilde{y}_\parallel |^2 }}1_{(x_3\pm y_3)^2+|\tilde{y}_\parallel |^2\le t^2}\\
    =\int_{\mathbb{R}^2}d\tilde{y}_\parallel \int_0^\infty dy_3 \frac{(\partial_i \rho +\partial_t J_i)(t-\sqrt{(x_3\pm y_3)^2+|\tilde{y}_\parallel |^2},\tilde{y}_\parallel +x_\parallel,y_3)}{2\sqrt{(x_3\pm y_3)^2+|\tilde{y}_\parallel |^2 }} 1_{(x_3\pm y_3)^2+|\tilde{y}_\parallel |^2\le t^2}\\
    =\int_{\mathbb{R}^2}dy'_\parallel \int_0^\infty dy'_3 \frac{(\partial_i \rho +\partial_t J_i)(t-\sqrt{(x_3\pm y'_3)^2+|y'_\parallel-x_\parallel|^2},y')}{2\sqrt{(x_3\pm y'_3)^2+|y'_\parallel-x_\parallel |^2 }}1_{(x_3\pm y'_3)^2+|y'_\parallel-x_\parallel |^2\le t^2},
\end{multline*}
by redefining $y'_\parallel = \tilde{y}_\parallel + x_\parallel$, $y'_3 = y_3,$ and $y'=(y'_\parallel,y'_3)^\top$. Thus, we finally have for $i=1,2$,
\begin{multline}\label{2.11}
    \Eipar(t,x)= -\int_{\mathbb{R}^2}dy'_\parallel \int_0^\infty dy'_3 \frac{(\partial_i \rho +\partial_t J_i)(t-\sqrt{(x_3- y'_3)^2+|y'_\parallel-x_\parallel|^2},y')}{\sqrt{(x_3-y'_3)^2+|y'_\parallel-x_\parallel |^2 }}\\
    \times 1_{(x_3-y'_3)^2+|y'_\parallel-x_\parallel |^2\le t^2}\\
+\int_{\mathbb{R}^2}dy'_\parallel \int_0^\infty dy'_3 \frac{(\partial_i \rho +\partial_t J_i)(t-\sqrt{(x_3+ y'_3)^2+|y'_\parallel-x_\parallel|^2},y')}{\sqrt{(x_3+ y'_3)^2+|y'_\parallel-x_\parallel |^2 }}\\
    \times 1_{(x_3+ y'_3)^2+|y'_\parallel-x_\parallel |^2\le t^2}=:-\Eipar^{(1)}+\Eipar^{(2)}.
\end{multline}
Denote the open ball of radius $t$ around $x$ in the extended whole space $\rth$ as $$B^\pm(x;t)=\{y'\in \mathbb{R}^3:|x-y'|<t\textup{ and }\pm y_3'>0\}.$$
Then, for $\Eipar^{(1)} $ and $\Eipar^{(2)},$ we have  \begin{multline}
    \label{2.12}\Eipar^{(1)}=\int_{B^+(x;t)} dy' \frac{(\partial_i \rho +\partial_t J_i)(t-|x-y'|,y'_\parallel,  y'_3)}{|x-y'|}\\
   =\int_{B^+(x;t)} dy' \frac{\int_\rth dv\ (\partial_i F_++(\hat{v}_+)\partial_t F_+)(t-|x-y'|,y'_\parallel,  y'_3,v)}{|x-y'|}\\
    -\int_{B^+(x;t)} dy' \frac{\int_\rth dv\ (\partial_i F_-+(\hat{v}_-)\partial_t F_-)(t-|x-y'|,y'_\parallel,  y'_3,v)}{|x-y'|},
\end{multline}and
\begin{multline}
    \label{2.13}\Eipar^{(2)}=\int_{B^-(x;t)} dy' \frac{(\partial_i \rho +\partial_t J_i)(t-|x-y'|,y'_\parallel,  -y'_3)}{|x-y'|}\\
   =\int_{B^-(x;t)} dy' \frac{\int_\rth dv\ (\partial_i F_++(\hat{v}_+)\partial_t F_+)(t-|x-y'|,y'_\parallel,  -y'_3,v)}{|x-y'|}\\
    -\int_{B^-(x;t)} dy' \frac{\int_\rth dv\ (\partial_i F_-+(\hat{v}_-)\partial_t F_-)(t-|x-y'|,y'_\parallel,  -y'_3,v)}{|x-y'|}.
\end{multline}

Now we obtain the Glassey-Strauss representation in a half space $\mathbb{T}^2\times (0,\infty)$ as follows. Define $S_\pm=\partial_t + (\hat{v}_\pm)\cdot \nabla_x $ and $T_i=\partial_i -\omega_i\partial_t $ such that 
$$T_jF_\pm(t-|y'-x|,y',v)=\partial_{y_j}(F_\pm(t-|x-y'|,y',v)),$$  and $$\partial_t=\frac{S_\pm-(\hat{v}_\pm)\cdot T}{1+\hat{v}_\pm\cdot \omega},\ \partial_i = \frac{\omega_i S_\pm}{1+\hat{v}_\pm\cdot \omega}+(\delta_{ij}-\frac{\omega_i(\hat{v}_\pm)_j}{1+\hat{v}_\pm\cdot \omega})T^j,$$ for $\omega=\frac{y'-x}{|y'-x|}$. 
Now, we use the half-space formula obtained in \cite[eq. (35), (37)-(41)]{MR4414612} for the integrals in $F_\pm$ by considering generic masses $m_\pm$. Then we obtain for $j=1,2$ $$\Eipar^{(j)}=\mathbf{E}^{(j)}_{+,\textup{par},i}-\mathbf{E}^{(j)}_{-,\textup{par},i},$$ where
\begin{multline}\label{Ei5.T}
    \Eione=  
    \mp\int_{B^+(x;t)} dy' \int_\rth dv\ a^{\mathbf{E}}_{\pm,i}(v,\omega)\cdot (\pm\E\pm(\hat{v}_\pm)\times \B-m_\pm g\hat{e}_3)\frac{F_\pm(t-|x-y'|,y',v)}{|x-y'|}\\
    \pm\int_{\partial B(x;t)\cap \{y'_3>0\}} \frac{dS_{y'}}{|y'-x|}\int_\rth dv\ \left(\delta_{ij}-\frac{(\omega_i+(\hat{v}_\pm)_i)(\hat{v}_\pm)_j}{1+\hat{v}_\pm\cdot \omega}\right)\omega^jF_\pm(0,y',v)\\
    \mp\int_{B(x;t)\cap \{y'_3=0\}} \frac{dy'_\parallel}{|y'-x|}\int_\rth dv\  \left(\delta_{i3}-\frac{(\omega_i+(\hat{v}_\pm)_i)(\hat{v}_\pm)_3}{1+\hat{v}_\pm\cdot \omega}\right)F_\pm(t-|x-y'|,y'_\parallel,0,v)\\
    \mp\int_{B^+(x;t)} \frac{dy'}{|y'-x|^2}\int_\rth dv\ \frac{(|(\hat{v}_\pm)|^2-1)((\hat{v}_\pm)_i+\omega_i)}{(1+\hat{v}_\pm\cdot \omega)^2} F_\pm(t-|x-y'|,y',v)\\
    =: \mathbf{E}^{(1)}_{\pm,iS}+\mathbf{E}^{(1)}_{\pm,ib1}+\mathbf{E}^{(1)}_{\pm,ib2}+\mathbf{E}^{(1)}_{\pm,iT},
\end{multline}where $dy'_\parallel$ is the 2-dimensional Lebesgue measure on $B(x;t)\cap \{y'_3=0\}$, and
\begin{equation}
    \label{aEi.T}
a^{\E}_{\pm,i}(v,\omega)\eqdef\nabla_v \left(\frac{\omega_i+(\hat{v}_\pm)_i}{1+\hat{v}_\pm\cdot \omega}\right)=\frac{(\partial_{v_i}v-(\hat{v}_\pm)_i(\hat{v}_\pm))(1+\hat{v}_\pm\cdot \omega)-(\omega_i+(\hat{v}_\pm)_i)(\omega-(\omega\cdot (\hat{v}_\pm))(\hat{v}_\pm))}{(\vZ) (1+\hat{v}_\pm\cdot \omega)^2}.\end{equation} Note that we here follow the standard Einstein summation convention. Here we note that the gravity $-m_\pm g\hat{e}_3$ only appears in the $\mathbf{E}^{(1)}_{\pm,iS}$ term.

On the other hand, by \cite[eq. (36), (47)-(50)]{MR4414612}, we also obtain 
\begin{multline}\label{Ei6.T}
    \Eitwo=  \mp\int_{B^-(x;t)} dy' \int_\rth dv\ a^{\E}_{\pm,i}(v,\bar{\omega})\cdot (\pm\E\pm(\hat{v}_\pm)\times \B-m_\pm g\hat{e}_3)\frac{F_\pm(t-|x-y'|,\bar{y}',v)}{|x-y'|}\\
    \pm\int_{\partial B(x;t)\cap \{y'_3<0\}} \frac{dS_{y'}}{|y'-x|}\int_\rth dv\ (\delta_{ij}-\frac{(\bar{\omega}_i+(\hat{v}_\pm)_i)(\hat{v}_\pm)_j}{1+(\hat{v}_\pm)\cdot \bar{\omega}})\bar{\omega}^jF_\pm(0,\bar{y}',v)\\
    \mp\int_{B(x;t)\cap \{y'_3=0\}} \frac{dy'_\parallel}{|y'-x|}\int_\rth dv\  (\delta_{i3}-\frac{(\bar{\omega}_i+(\hat{v}_\pm)_i)(\hat{v}_\pm)_3}{1+(\hat{v}_\pm)\cdot \bar{\omega}})F_\pm(t-|x-y'|,y'_\parallel,0,v)\\
    \mp\int_{B^-(x;t)} \frac{dy'}{|y'-x|^2}\int_\rth dv\ \frac{(|(\hat{v}_\pm)|^2-1)((\hat{v}_\pm)_i+\bar{\omega}_i)}{(1+(\hat{v}_\pm)\cdot \bar{\omega})^2} F_\pm(t-|x-y'|,\bar{y}',v)\\
    =:\mathbf{E}^{(2)}_{\pm,iS}+\mathbf{E}^{(2)}_{\pm,ib1}+\mathbf{E}^{(2)}_{\pm,ib2}+\mathbf{E}^{(2)}_{\pm,iT},
\end{multline}
where  $\bar{y}'\eqdef(y'_\parallel, -y'_3)^\top$, $\bar{\omega}\eqdef(\omega_1,\omega_2, -\omega_3)^\top$, and 
$dy'_\parallel$ is the 2-dimensional Lebesgue measure on $B(x;t)\cap \{y'_3=0\}$.

The representation $\mathbf{E}_3$ for the case for $i=3$ will be given later in \eqref{E3}.

\begin{remark}
    Note that the Glassey-Strauss representations \eqref{Ei5.T}-\eqref{Ei6.T} (as well as all other representations in the following section) for the fields are derived in the periodically extended whole half-space $\mathbb{R}^3_+$. These representations indeed apply to both cases $\Omega = \mathbb{T}^2 \times \mathbb{R}_+$ and $\Omega = \mathbb{R}^3_+$. In the former case, we additionally require that $F_\pm$, $\E$, and $\B$ remain periodic in the $x_\parallel$ directions under the same representations.
\end{remark}

\subsection{Representation for \texorpdfstring{\textnormal{$\Eth$}}{}}

We now derive the representation formulas for $\Eth$, 
 which is associated with the Neumann-type boundary conditions given in \eqref{perfect.conductor.neumann}. Unlike the Dirichlet boundary conditions imposed on $\Eone$ and $\Etwo$, 
  there are no explicit boundary conditions prescribed for $\Eth$, 
   in the original formulation.

However, by combining the Maxwell equations, specifically \eqref{2speciesVM}$_2$ and \eqref{2speciesVM}$_4$, with the perfect conductor boundary conditions \eqref{perfect cond. boundary}, we formally derive the Neumann-type boundary conditions \eqref{perfect.conductor.neumann} for $\Eth$.

Since the normal derivative $\partial_{x_3} \mathbf{E}_3$ 
is only available in the sense of distributions, not as continuous functions, the Neumann boundary conditions are understood weakly. Consequently, when constructing the Green function representations for $\Eth$, 
the boundary integrals involving the Neumann data are also interpreted in the sense of distributions. In particular, these boundary terms are realized through their action on smooth test functions, consistent with the weak formulation of the problem.

\begin{remark}
Since the Neumann boundary data are only available in a distributional sense, the Green function representation for $\Eth$ 
is interpreted accordingly. In particular, the boundary integrals are understood as dual pairings with test functions rather than pointwise evaluations.
\end{remark}

\subsubsection{Representation for the Normal Electric Field $\Eth$}By the wave equation \eqref{wave eq for E} for the vector $\E$ and the Neumann boundary condition 
\eqref{perfect.conductor.neumann} for the normal component $\Eth$, we note that the normal components $\Eth$ satisfies the following wave equations for $x=(x_\parallel,x_3)\in \mathbb{T}^2\times (0,\infty)$:
\begin{equation}
    \begin{split}
        (\partial^2_t - \Delta_x) \Eth&=-4\pi (\partial _{x_3}\rho +\partial_t J_3),\\
        \Eth(0,x)&=\mathbf{E}_{03}(x),\\
         \partial_t \Eth(0,x)&=\mathbf{E}^1_{03},\\ \partial_{x_3}\Eth(t,x_\parallel)&=4\pi\rho.
    \end{split}
\end{equation}Here, we assume that $\E_0^1$ further satisfies $$\E_0^1=\nabla\times \B^{\textup{in}}-4\pi J_0,$$ for the compatibility with \eqref{2speciesVM}$_2$.  It is convenient to decompose the solution $\Eth$ into two parts: the one $\mathbf{E}_{\textup{neu},3}$ which satisfies \begin{equation}\label{eq.E3neu}
    \begin{split}
        (\partial^2_t - \Delta_x) \mathbf{E}_{\textup{neu},3}&=0,\\
        \mathbf{E}_{\textup{neu},3}(0,x)&=0,\\
         \partial_t \mathbf{E}_{\textup{neu},3}(0,x)&=0,\\ \partial_{x_3}\mathbf{E}_{\textup{neu},3}(t,x_\parallel,0)&=4\pi\rho,
    \end{split}
\end{equation} and the other $\mathbf{E}_{\textup{rest},3}$ that satisfies
 \begin{equation}\label{eq.for E3rest}
    \begin{split}
        (\partial^2_t - \Delta_x) \mathbf{E}_{\textup{rest},3}&=-4\pi (\partial _{x_3}\rho +\partial_t J_3)=:G_3(t,x),\\
        \mathbf{E}_{\textup{rest},3}(0,x)&=\mathbf{E}_{03}(x),\\
         \partial_t \mathbf{E}_{\textup{rest},3}(0,x)&=\mathbf{E}^1_{03},\\ \partial_{x_3}\mathbf{E}_{\textup{rest},3}(t,x_\parallel,0)&=0.
    \end{split}
\end{equation} 
To write the solution $\mathbf{E}_{\textup{rest},3}$ to \eqref{eq.for E3rest},
we consider the even extension with respect to $x_3$ and the periodic extension with respect to  $x_\parallel$ of the Cauchy data to the whole space. Namely we define  the even extension across $x_3 = 0$ as
\begin{equation}
\label{even0}
G_3^{\textup{even}}(x_\parallel, x_3) = 
\begin{cases} 
G_3(x_\parallel, x_3) & \textup{if } x_3 \geq 0, \\
G_3(x_\parallel, -x_3) & \textup{if } x_3 < 0,
\end{cases}
\end{equation}
\begin{equation}
\label{even1}
\mathbf{E}_{03}^{\textup{even}}(x_\parallel, x_3) = 
\begin{cases} 
\mathbf{E}_{03}(x_\parallel, x_3) & \textup{if } x_3 \geq 0, \\
\mathbf{E}_{03}(x_\parallel, -x_3) & \textup{if } x_3 < 0,
\end{cases}
\end{equation}
and similarly for the time derivative of the initial condition
\begin{equation}
    \label{even2}
\mathbf{E}_{03}^{1,\textup{even}}(x_\parallel, x_3) = 
\begin{cases} 
\mathbf{E}^1_{03}(x_\parallel, x_3) & \textup{if } x_3 \geq 0, \\
\mathbf{E}^1_{03}(x_\parallel, -x_3) & \textup{if } x_3 < 0.
\end{cases}
\end{equation}
By extending these data periodically in $x_\parallel$ (which we denote their periodic extension as $ \tilde{G}^{\textup{even}}_3, \tilde{\mathbf{E}}^{\textup{even}}_{03} $ and $ \tilde{\mathbf{E}}^{1,\textup{even}}_{03} $), we can write the Kirchhoff formula of the solution to the homogeneous wave equation as 
\begin{multline*}
\mathbf{E}_{\textup{rest},3}(t, x) = \frac{1}{4\pi t^2} \int_{\partial B(x; t)}  \left(t \tilde{\mathbf{E}}^{1,\textup{even}}_{03}(y) + \tilde{\mathbf{E}}^{\textup{even}}_{03}(y) + \nabla \tilde{\mathbf{E}}^{\textup{even}}_{03}(y) \cdot (y - x)\right) dS_y
+\Ethpar(t,x),
\end{multline*} where $\Ethpar$ is the particular solution which satisfies the wave equation \eqref{eq.for E3rest} with zero initial data. The derivation of the representation for the particular solution $\Ethpar$ follows exactly the same as the derivation \eqref{sys.parEi}--\eqref{Ei6.T} with the Green function $\tilde{G}$ replacing $\bar{G}.$ This replacement results in the sign change on $\mathbf{E}^{(2)}_{\textup{par},i}$ term at \eqref{2.11} and we define $$\Ethpar\eqdef -\mathbf{E}^{(1)}_{\textup{par},3}-\mathbf{E}^{(2)}_{\textup{par},3},$$ where as before we further split them as $\mathbf{E}^{(1)}_{\textup{par},3}= \mathbf{E}^{(1)}_{+,\textup{par},3}-\mathbf{E}^{(1)}_{-,\textup{par},3}$ and $\mathbf{E}^{(2)}_{\textup{par},3}= \mathbf{E}^{(2)}_{\pm,\textup{par},3}- \mathbf{E}^{(2)}_{-,\textup{par},3}$ with $ \Ethone$ and $\Ethtwo$ defined in \eqref{Ei5.T} and \eqref{Ei6.T}.  Retrieving the original data representation via \eqref{even1}--\eqref{even2}, we have

\begin{multline}\label{E3 rest solution}
\mathbf{E}_{\textup{rest},3}(t, x) =\frac{1}{4\pi t^2} \int_{\partial B(x; t)\cap \{y_3>0\}}  \left(t \tilde{\mathbf{E}}^1_{03}(y) + \tilde{\mathbf{E}}_{03}(y) + \nabla \tilde{\mathbf{E}}_{03}(y) \cdot (y - x)\right) dS_y\\
+\frac{1}{4\pi t^2} \int_{\partial B(x; t)\cap \{y_3<0\}}  \left(t \tilde{\mathbf{E}}^1_{03}(\bar{y}) + \tilde{\mathbf{E}}_{03}(\bar{y}) + \nabla \tilde{\mathbf{E}}_{03}(\bar{y}) \cdot (\bar{y} - \bar{x})\right) dS_y\\
-\sum_{\pm} \left(\pm \Ethone(t,x)\pm\Ethtwo(t,x)\right),
\end{multline} where $\bar{y}\eqdef (y_1,y_2,-y_3)^\top$ and we denote the periodic extension of the Cauchy data as $ \tilde{\mathbf{E}}_{03} $ and $ \tilde{\mathbf{E}}^1_{03} $  in $x_\parallel$ directions. 

On the other hand, for the derivation of the representation of $\mathbf{E}_{\textup{neu},3},$ we again take the periodic extension in the parallel direction $x_\parallel$ and denote the extension as $\tilde{\mathbf{E}}_{\textup{neu},3}.$ This field component $\tilde{\mathbf{E}}_{\textup{neu},3}$ solves the same system in $\mathbb{R}^3_+$ as $w$ in \cite[Eq. (55)]{MR4414612}, and hence the representation follows by \cite[Eq. (73)]{MR4414612} as
\begin{multline*}
    \tilde{\mathbf{E}}_{\textup{neu},3}(t,x)=2\int_{B(x;t)\cap \{y_3=0\}}\frac{\rho(t-|x-y|,y_\parallel,0)}{|x-y|}dy_\parallel\\
    = 2\sum_{\pm}\pm\int_{B(x;t)\cap \{y_3=0\}} \int_\rth \frac{F_\pm(t-|y-x|,y_\parallel,0,v)}{|y-x|}dvdy_\parallel.
\end{multline*}

Therefore, for each $(t,x)\in [0,\infty)\times \mathbb{T}^2\times (0,\infty),$ we finally obtain the Glassey-Strauss representation for the field $\Eth$ as
\begin{multline}
    \label{E3.T}
    \Eth(t,x)=\frac{1}{4\pi t^2} \int_{\partial B(x; t)\cap \{y_3>0\}}  \left(t \tilde{\mathbf{E}}^1_{03}(y) + \tilde{\mathbf{E}}_{03}(y) + \nabla \tilde{\mathbf{E}}_{03}(y) \cdot (y - x)\right) dS_y\\
+\frac{1}{4\pi t^2} \int_{\partial B(x; t)\cap \{y_3<0\}}  \left(t \tilde{\mathbf{E}}^1_{03}(\bar{y}) + \tilde{\mathbf{E}}_{03}(\bar{y}) + \nabla \tilde{\mathbf{E}}_{03}(\bar{y}) \cdot (\bar{y} - \bar{x})\right) dS_y\\
  \sum_\pm \pm  \int_{B^+(x;t)} dy' \int_\rth dv\ a^{\E}_{\pm,3}(v,\omega)\cdot (\pm\E\pm(\hat{v}_\pm)\times \B-m_\pm g\hat{e}_3)\frac{F_\pm(t-|x-y'|,y',v)}{|x-y'|}\\
    \sum_\mp \mp\int_{\partial B(x;t)\cap \{y'_3>0\}} \frac{dS_{y'}}{|y'-x|}\int_\rth dv\ \left(\delta_{3j}-\frac{(\omega_3+(\hat{v}_\pm)_3)(\hat{v}_\pm)_j}{1+\hat{v}_\pm\cdot \omega}\right)\omega^jF_\pm(0,y',v)\\
    \sum_\pm \pm\int_{B(x;t)\cap \{y'_3=0\}} \frac{dy'_\parallel}{|y'-x|}\int_\rth dv\  \left(1-\frac{(\omega_3+(\hat{v}_\pm)_3)(\hat{v}_\pm)_3}{1+\hat{v}_\pm\cdot \omega}\right)F_\pm(t-|x-y'|,y'_\parallel,0,v)\\
     \sum_\pm \pm\int_{B^+(x;t)} \frac{dy'}{|y'-x|^2}\int_\rth dv\ \frac{(|(\hat{v}_\pm)|^2-1)((\hat{v}_\pm)_3+\omega_3)}{(1+\hat{v}_\pm\cdot \omega)^2} F_\pm(t-|x-y'|,y',v)\\
    \sum_\pm \pm\int_{B^-(x;t)} dy' \int_\rth dv\ a^{\E}_{\pm,3}(v,\bar{\omega})\cdot (\pm\E\pm(\hat{v}_\pm)\times \B-m_\pm g\hat{e}_3)\frac{F_\pm(t-|x-y'|,\bar{y}',v)}{|x-y'|}\\
     \sum_\mp \mp\int_{\partial B(x;t)\cap \{y'_3<0\}} \frac{dS_{y'}}{|y'-x|}\int_\rth dv\ (\delta_{3j}-\frac{(\bar{w}_3+(\hat{v}_\pm)_3)(\hat{v}_\pm)_j}{1+(\hat{v}_\pm)\cdot \bar{\omega}})\bar{\omega}^jF_\pm(0,\bar{y}',v)\\
     \sum_\pm \pm\int_{B(x;t)\cap \{y'_3=0\}} \frac{dy'_\parallel}{|y'-x|}\int_\rth dv\  (1-\frac{(\bar{w}_3+(\hat{v}_\pm)_3)(\hat{v}_\pm)_3}{1+(\hat{v}_\pm)\cdot \bar{\omega}})F_\pm(t-|x-y'|,y'_\parallel,0,v)\\
     \sum_\pm \pm\int_{B^-(x;t)} \frac{dy'}{|y'-x|^2}\int_\rth dv\ \frac{(|(\hat{v}_\pm)|^2-1)((\hat{v}_\pm)_3+\bar{w}_3)}{(1+(\hat{v}_\pm)\cdot \bar{\omega})^2} F_\pm(t-|x-y'|,\bar{y}',v)\\
     \sum_\mp \mp 2\int_{B(x;t)\cap \{y_3=0\}} \int_\rth \frac{F_\pm(t-|y-x|,y_\parallel,0,v)}{|y-x|}dvdy_\parallel,
\end{multline}where $a^{\E}_{\pm,3}$ is defined in \eqref{aEi.T}, $\bar{y}'\eqdef(y'_\parallel, -y'_3)^\top$, $\bar{\omega}\eqdef(\omega_1,\omega_2, -\omega_3)^\top$, and 
$dy'_\parallel$ is the 2-dimensional Lebesgue measure on $B(x;t)\cap \{y'_3=0\}$.

Note that there is an additional term appearing in the Glassey-Strauss representations for $\Eth$, compared to the field-component $\Ei$ with $i=1,2$ that solves the Dirichlet boundary condition:
\begin{equation}
    \label{additional term E3}
    \text{E}_{\textup{add},3}(t,x)\eqdef \sum_{\mp}\mp 2\int_{B(x;t)\cap \{y_3=0\}} \int_\rth \frac{F_\pm(t-|y-x|,y_\parallel,0,v)}{|y-x|}dvdy_\parallel.
\end{equation}

\section{Weak Formulations}
\subsection{Weak Formulation to the Maxwell Equations}
\label{sec.weak.deri}In this section, we derive the weak formulation of the Maxwell equations given in the dynamical equations \eqref{2speciesVM}. 
We first write the weak formulation of the Maxwell equations using the fields $\E$ and $\B$, and the inhomogeneities $\rho$ and $J$. The test function will be denoted as $\varphi \in C^1_c([0,T] \times \Omega)$ or its vector-valued counterpart $\boldsymbol{\varphi}\in C^1_c([0,T] \times \Omega)$, depending on the context. We aim to derive the weak formulations for the equations defined in \eqref{2speciesVM}, over the time interval $[0,T]$ and spatial domain $\Omega$, assuming perfect conductor boundary conditions.

Regarding Ampère's Law \eqref{2speciesVM}$_2$, we have
$$
\partial_t \E - \nabla_x \times \B = -4\pi J, \quad j = \int_{\mathbb{R}^3} (\hat{v}_+ F_+ - \hat{v}_- F_-) \, dv.
$$
Multiply by a test function $\boldsymbol{\varphi} \in C^1_c([0,T] \times \Omega)$ and integrate over $[0,T] \times \Omega$:
$$
\int_0^T \int_\Omega \boldsymbol{\varphi} \cdot \left( \partial_t \E - \nabla_x \times \B + 4\pi J \right) \, dx dt = 0.
$$
Separate the terms:
$$
\int_0^T \int_\Omega \boldsymbol{\varphi} \cdot \partial_t \E \, dx dt - \int_0^T \int_\Omega \boldsymbol{\varphi} \cdot (\nabla_x \times \B) \, dx dt + 4\pi \int_0^T \int_\Omega \boldsymbol{\varphi} \cdot J \, dx dt = 0.
$$
Using integration by parts for the curl term and assuming $\B_t = 0$ on $\partial \Omega$ (perfect conductor boundary conditions):
$$
\int_\Omega \boldsymbol{\varphi} \cdot (\nabla_x \times \B) \, dx = \int_\Omega (\nabla_x \times \boldsymbol{\varphi}) \cdot \B \, dx.
$$
For the time derivative, integrate by parts in time:
$$
\int_0^T \int_\Omega \boldsymbol{\varphi} \cdot \partial_t \E \, dx dt = -\int_0^T \int_\Omega \partial_t \boldsymbol{\varphi} \cdot \E \, dx dt + \int_\Omega \boldsymbol{\varphi}(T, x) \cdot \E(T, x) \, dx - \int_\Omega \boldsymbol{\varphi}(0, x) \cdot \E(0, x) \, dx.
$$
The weak form becomes:
\begin{multline*}
-\int_0^T \int_\Omega \partial_t \boldsymbol{\varphi} \cdot \E \, dx dt + \int_0^T \int_\Omega (\nabla_x \times \boldsymbol{\varphi}) \cdot \B \, dx dt + 4\pi \int_0^T \int_\Omega \boldsymbol{\varphi} \cdot J \, dx dt + \int_\Omega \boldsymbol{\varphi}(T, x) \cdot \E(T, x) \, dx \\
- \int_\Omega \boldsymbol{\varphi}(0, x) \cdot \E(0, x) \, dx = 0.
\end{multline*}

For Faraday's Law \eqref{2speciesVM}$_3$, we have
$$
\partial_t \B + \nabla_x \times \E = 0.
$$
Multiply by a test function $\boldsymbol{\varphi} \in C^1_c([0,T] \times \Omega)$ and integrate:
$$
\int_0^T \int_\Omega \boldsymbol{\varphi} \cdot \left( \partial_t \B + \nabla_x \times \E \right) \, dx dt = 0.
$$
Split the terms:
$$
\int_0^T \int_\Omega \boldsymbol{\varphi} \cdot \partial_t \B \, dx dt + \int_0^T \int_\Omega \boldsymbol{\varphi} \cdot (\nabla_x \times \E) \, dx dt = 0.
$$
Using the integration by parts for the curl term and time derivative:
$$
\int_0^T \int_\Omega \boldsymbol{\varphi} \cdot (\nabla_x \times \E) \, dx = \int_0^T \int_\Omega (\nabla_x \times \boldsymbol{\varphi}) \cdot \E \, dx.
$$
$$
\int_0^T \int_\Omega \boldsymbol{\varphi} \cdot \partial_t \B \, dx dt = -\int_0^T \int_\Omega \partial_t \boldsymbol{\varphi} \cdot \B \, dx dt + \int_\Omega \boldsymbol{\varphi}(T, x) \cdot \B(T, x) \, dx - \int_\Omega \boldsymbol{\varphi}(0, x) \cdot \B(0, x) \, dx.
$$
The weak form becomes:
$$
-\int_0^T \int_\Omega \partial_t \boldsymbol{\varphi} \cdot \B \, dx dt + \int_0^T \int_\Omega (\nabla_x \times \boldsymbol{\varphi}) \cdot \E \, dx dt + \int_\Omega \boldsymbol{\varphi}(T, x) \cdot \B(T, x) \, dx - \int_\Omega \boldsymbol{\varphi}(0, x) \cdot \B(0, x) \, dx = 0.
$$

Regarding Gauss's Law for Electricity \eqref{2speciesVM}$_4$, we have
$$
\nabla_x \cdot \E = 4\pi \rho, \quad \rho = \int_{\mathbb{R}^3} (F_+ - F_-) \, dv.
$$
Multiply by $\varphi \in C^1_c([0,T] \times \Omega)$:
$$
\int_0^T \int_\Omega \varphi (\nabla_x \cdot \E - 4\pi \rho) \, dx dt = 0.
$$
Using the divergence theorem:
$$
\int_0^T \int_\Omega (\nabla_x \varphi) \cdot \E \, dx dt = 4\pi \int_0^T \int_\Omega \varphi \rho \, dx dt.
$$

 Lastly, for Gauss's Law for Magnetism \eqref{2speciesVM}$_5$, we have
$$
\nabla_x \cdot \B = 0.
$$
Multiply by $\varphi \in C^1_c([0,T] \times \Omega)$:
$$
\int_0^T \int_\Omega \varphi (\nabla_x \cdot \B) \, dx dt = 0.
$$
Using the divergence theorem:
$$
\int_0^T \int_\Omega (\nabla_x \varphi) \cdot \B \, dx dt = 0.
$$
These weak forms, combined with the perfect conductor boundary conditions, form the complete system.

\subsection{Weak Formulation of the Stationary and the Dynamical Vlasov Systems}
In this followings, we write the weak formulation of the stationary system \eqref{2speciesVM-steady} and the dynamical system \eqref{2speciesVM}:
\begin{definition}[Weak solutions to the stationary system]\label{def.weak.sol.st}
We call $$(F_{\pm,\textup{st}},\E_{\textup{st}},\B_{\textup{st}})\in (L^1\cap W^{1,\infty}_m)( \Omega\times \rth)\times W^{1,\infty}( \Omega)\times W^{1,\infty}( \Omega)$$ for some $m>4$  is a weak solution to the stationary problem \eqref{2speciesVM-steady}  if for any compactly supported smooth test functions $\phi\in C^1_c( \bar{\Omega}\times \rth)$, $\varphi \in C^1_c( \bar{\Omega})$ and its vector-valued counterpart  $\boldsymbol{\varphi}\in C^1_c( \bar{\Omega})$, we have 
\begin{equation}\label{VM-weak.st}
    \begin{split}
        &- \int_{\mathbb{R}^2\textup{ or }\mathbb{T}^2}dx_\parallel \int_{\rth \cap \{v_3>0\}} dv\ (\hat{v}_{+,3}F_{+,\textup{st}}-\hat{v}_{-,3}F_{-,\textup{st}})(x_\parallel,0,v)\phi(x_\parallel,0,v)\\&\qquad\qquad+\iint_{\Omega\times \rth}dxdv\  F_{\pm,\textup{st}}\left(\hat{v}_\pm\cdot\nabla_x \pm \left(\textup{e}\E_{\textup{st}}+\textup{e}\frac{\hat{v}_\pm}{c}\times \B_{\textup{st}}\mp m_\pm g\hat{e}_3\right)\cdot \nabla_v\right)\phi=0, \\
        &-\int_{\mathbb{R}^2\textup{ or }\mathbb{T}^2}(\mathbf{B}_{1,\textup{st}}\boldsymbol{\varphi}_2-\mathbf{B}_{2,\textup{st}}\boldsymbol{\varphi}_1)(x_\parallel,0)dx_\parallel-\int_\Omega (\nabla_x \times \boldsymbol{\varphi}) \cdot \B_{\textup{st}} \, dx = \frac{4\pi}{c}  \int_\Omega \boldsymbol{\varphi} \cdot J_{\textup{st}} \, dx,  \\
&\int_\Omega (\nabla_x \times \boldsymbol{\varphi}) \cdot \E_{\textup{st}} \, dx = 0,\\
&
-\int_{\mathbb{R}^2\textup{ or }\mathbb{T}^2} (\varphi \mathbf{E}_{\textup{st},3})(x_\parallel,0)dx_\parallel -\int_\Omega (\nabla_x \varphi) \cdot \E_{\textup{st}} \, dx = 4\pi \int_\Omega \varphi \rho_{\textup{st}}\, dx ,\\
&
 \int_\Omega (\nabla_x \varphi) \cdot \B_{\textup{st}} \, dx  = 0,\\
 & \int_\Omega (\nabla_x\varphi\cdot J_{\textup{st}})dx= -\int_{\mathbb{R}^2\textup{ or }\mathbb{T}^2} (\varphi J_{\textup{st},3})(x_\parallel,0)dx_\parallel ,
    \end{split}
\end{equation}
where $\rho_{\textup{st}} =\textup{e} \int_\rth (F_{+,\textup{st}}- F_{-,\textup{st}})dv$ and $J_{\textup{st}}=\textup{e}\int_\rth (\hat{v}_+F_{+,\textup{st}}- \hat{v}_+F_{-,\textup{st}})dv$. Note that the solution $F_{\pm,\textup{st}}\in L^1\cap W^{1,\infty}_m$ for $m>4$  makes the traces for $J_{3,st}$ be well-defined via the Ukai-type trace theorem and the $\tilde{\alpha}_{\pm,\textup{st}}$ weight (cf. \eqref{trace derivative}). Also, $\E_\textup{st},\B_\textup{st}\in W^{1,\infty}$ makes the trace of $\E_\textup{st},\B_\textup{st}$  be well-defined.
\end{definition}
 
\begin{definition}[Weak solutions to the dynamical system]\label{def.weak.sol.dyna}We call $$(F_\pm,\E,\B)\in (L^1\cap W^{1,\infty}_m)([0,\infty)\times \Omega\times \rth)\times W^{1,\infty}([0,\infty)\times \Omega)\times W^{1,\infty}([0,\infty)\times \Omega)$$ for some $m>4$ is a weak solution to the dynamical problem \eqref{2speciesVM}-\eqref{perfect cond. boundary} if for any compactly supported smooth test functions $\phi\in C^1_c([0,\infty)\times \bar{\Omega}\times \rth)$, $\varphi \in C^1_c([0,\infty)\times \bar{\Omega})$ and its vector-valued counterpart  $\boldsymbol{\varphi}\in C^1_c([0,\infty)\times \bar{\Omega})$, we have for $T>0$,
\begin{equation}\label{VM-weak}
    \begin{split}
        &\iint_{\Omega\times\rth}dxdv\ (\phi(T,x,v)F_\pm(T,x,v)-\phi(0,x,v)F_\pm(0,x,v))\\
        &\qquad\qquad-\int_0^Tdt \int_{\mathbb{R}^2\textup{ or }\mathbb{T}^2}dx_\parallel \int_{\rth \cap \{v_3>0\}} dv\ (\hat{v}_{+,3}F_+-\hat{v}_{-,3}F_-)(t,x_\parallel,0,v)\phi(t,x_\parallel,0,v)\\&\qquad\qquad=\int_0^Tdt\iint_{\Omega\times \rth}dxdv\  F_\pm\left(\partial_t+\hat{v}_\pm\cdot\nabla_x +\left(\pm\textup{e}\E\pm\textup{e}\frac{\hat{v}_\pm}{c}\times \B-  m_\pm g\hat{e}_3\right)\cdot \nabla_v\right)\phi, \\
        &
-\frac{1}{c}\int_0^T \int_\Omega \partial_t \boldsymbol{\varphi} \cdot \E \, dx dt + \int_0^T \int_\Omega (\nabla_x \times \boldsymbol{\varphi}) \cdot \B \, dx dt + \frac{4\pi}{c} \int_0^T \int_\Omega \boldsymbol{\varphi} \cdot J \, dx dt \\
&\qquad\qquad+\int_0^T\int_{\mathbb{R}^2\textup{ or }\mathbb{T}^2}(\Bone\boldsymbol{\varphi}_2-\Btwo\boldsymbol{\varphi}_1)(t,x_\parallel,0)dx_\parallel dt\\
&\qquad\qquad+ \frac{1}{c}\int_\Omega \boldsymbol{\varphi}(T, x) \cdot \E(T, x) \, dx -\frac{1}{c} \int_\Omega \boldsymbol{\varphi}(0, x) \cdot \E(0, x) \, dx = 0,\\
&
-\frac{1}{c}\int_0^T \int_\Omega \partial_t \boldsymbol{\varphi} \cdot \B \, dx dt + \int_0^T \int_\Omega (\nabla_x \times \boldsymbol{\varphi}) \cdot \E \, dx dt + \frac{1}{c}\int_\Omega \boldsymbol{\varphi}(T, x) \cdot \B(T, x) \, dx \\&\qquad\qquad- \frac{1}{c}\int_\Omega \boldsymbol{\varphi}(0, x) \cdot \B(0, x) \, dx = 0,\\
&
-\int_0^T\int_{\mathbb{R}^2\textup{ or }\mathbb{T}^2} (\varphi \Eth)(t,x_\parallel,0)dx_\parallel dt-\int_0^T \int_\Omega (\nabla_x \varphi) \cdot \E \, dx dt = 4\pi \int_0^T \int_\Omega \varphi \rho\, dx dt,\\
&
\int_0^T \int_\Omega (\nabla_x \varphi) \cdot \B \, dx dt = 0,\\
&\int_0^T \int_\Omega ((\partial_t\varphi)\rho +\nabla_x\varphi\cdot J)dxdt= \int_\Omega ((\varphi\rho)(T,x)-(\varphi\rho)(0,x)) dx -\int_0^T \int_{\mathbb{R}^2\textup{ or }\mathbb{T}^2} (\varphi J_3)(t,x_\parallel,0)dx_\parallel dt,
    \end{split}
\end{equation}
where $\rho = \textup{e}\int_\rth (F_+-F_-)dv$ and $j= \textup{e}\int_\rth (\hat{v}_+F_+-\hat{v}_-F_-)dv$. Note that the solution $F_\pm\in L^1\cap W^{1,\infty}_m$ for $m>4$ makes the trace for $J_3$ be well-defined via the Ukai trace theorem for the derivatives as in  \eqref{trace derivative} via the $\tilde{\alpha}_{\pm}$ weight. Also, $\E,\B\in W^{1,\infty}$ makes the trace of $\E,\B$ be well-defined.
\end{definition}

\section{Conservation Laws}\label{sec.B}
The perfect conductor boundary condition \eqref{perfect cond. boundary} can guarantee the uniqueness in a broad solution space. For instance, regarding a simple toy model of the homogeneous Maxwell equations, we observe
$$
\partial_t \E - \nabla \times \B =0, \ \ \partial_t \B + \nabla \times \E = 0,
$$which 
should yields that 
$$
\frac{1}{2}\frac{d}{dt}  \left( \|\E\|_2^2 + \| \B\|_2^2\right)
= \int_\Omega dx\ \left[( \nabla \times \B )\cdot  \E  - (\nabla \times \E )\cdot  \B\right]
= \int_\Omega dx\ \nabla \cdot (\B \times \E)  .
$$
The Gauss theorem yields that the last term should reduced to the boundary integral on $x_3=0$ such as 
$$
\int_{\partial\Omega} n \cdot (\B \times \E) dx , 
$$ for $n=(0,0,-1)^\top.$ 
While the boundary condition \eqref{perfect cond. boundary} forces we observe that this boundary integral vanishes. Therefore, as long as $\B \times \E$ has a well-defined trace at the boundary, then 
$$
\frac{1}{2}\frac{d}{dt}  \left( \|\E\|_2^2 + \| \B\|_2^2\right)=0,
$$
which should yield a uniqueness of the homogeneous Maxwell equations.

Now, we consider the full system of equations \eqref{2speciesVM}. Define the (rescaled) total energy density $e(t,x)$ as follows:
\begin{equation}
    \label{total energy density}
    e(t,x)=\frac{1}{2}(|\E(t,x)|^2+\B(t,x)|^2) + 4\pi \sum_\pm\int_\rth \vZ F_\pm(t,x,v)dv.
\end{equation}
Then by taking a temporal derivative on $e$ formally and using \eqref{2speciesVM} we observe that
\begin{multline}\label{energy density conserv}
\frac{d}{dt}e(t,x)= \E\cdot \partial_t \E + \B\cdot \partial_t \B + 4\pi \sum_\pm \int_\rth \vZ  \partial_t F_\pm(t,x,v)dv\\
=\E\cdot (\nabla_x\times \B-4\pi J) + \B\cdot (-\nabla_x\times \E) + 4\pi\sum_\pm \int_\rth \vZ  (-\nabla_x \cdot (\hat{v}_\pm F_\pm)-(\pm\E\pm\hat{v}_\pm\times \B-m_\pm g\hat{e}_3)\cdot \nabla_v F_\pm )dv\\
=\nabla_x \cdot (\B\times \E) - 4\pi \E\cdot J -4\pi \sum_\pm \nabla_x \cdot\int_\rth vF_\pm  
dv -4\pi \sum_\pm \int_\rth \nabla_v \cdot (\vZ (\pm\E\pm\hat{v}_\pm\times \B-m_\pm g\hat{e}_3)F_\pm)dv\\
+4\pi \sum_\pm \int_\rth \nabla_v(\vZ)  \cdot ((\pm\E\pm\hat{v}_\pm\times \B-m_\pm g\hat{e}_3)F_\pm)dv\\
=\nabla_x \cdot (\B\times \E) - 4\pi \E\cdot J -4\pi \sum_\pm \nabla_x \cdot\int_\rth vF_\pm dv +4\pi\sum_\pm \int_\rth (\pm\E\pm\hat{v}_\pm\times \B-m_\pm g\hat{e}_3)\cdot \hat{v}_\pm F_\pm)dv\\
=\nabla_x \cdot (\B\times \E)-4\pi\sum_\pm \nabla_x \cdot\int_\rth vF_\pm dv -4\pi \sum_\pm \int_\rth m_\pm g \hat{v}_{\pm,3}F_\pm(t,x,v)dv,
\end{multline}
where we used the integration by parts, the fact that $\nabla_v\cdot (\hat{v}_\pm\times \B)=0,$ and that $\nabla_v \vZ=\nabla_v \sqrt{m_\pm^2+|v|^2}= \frac{v}{\sqrt{m_\pm^2+|v|^2}}=\hat{v}_\pm$.
Therefore, by integrating \eqref{energy density conserv} with respect to $x$ on $\mathbb{T}^2\times (0,\infty)$, we have
\begin{multline}
    \label{energy conserv}
    \frac{d}{dt}\mathcal{H}(t) \eqdef \frac{d}{dt}\int_{\mathbb{T}^2\times (0,\infty)}dx\ e(t,x) \\
    =\int_{\mathbb{T}^2\times (0,\infty)}dx\ \left[\nabla_x \cdot (\B\times \E)-4\pi\sum_\pm \nabla_x \cdot\int_\rth vF_\pm dv -4\pi \sum_\pm \int_\rth m_\pm g \hat{v}_{\pm,3}F_\pm(t,x,v)dv\right]\\
=\int_{\mathbb{T}^2\times \{x_3=0\}}dS_x (-\hat{e}_3)\cdot \ \left[ (\B\times \E)-4\pi \sum_\pm \int_\rth vF_\pm dv\right] -4\pi \sum_\pm \int_\rth m_\pm g \hat{v}_{\pm,3}F_\pm(t,x,v)dv\\
=\int_{\mathbb{T}^2\times \{x_3=0\}}dS_x \ \left[ (-\Bone\Etwo+\Btwo\Eone)+4\pi\sum_\pm \int_\rth v_3F_\pm dv\right] -4\pi \sum_\pm \int_\rth m_\pm g \hat{v}_{\pm,3}F_\pm(t,x,v)dv\\
=4\pi \sum_\pm \int_{\mathbb{T}^2}dx_\parallel  \int_\rth v_3F_\pm (t,x_\parallel,0,v)dv -4\pi \sum_\pm \int_\rth m_\pm g \hat{v}_{\pm,3}F_\pm(t,x,v)dv\\
=4\pi \sum_\pm \int_{\mathbb{T}^2}dx_\parallel  \int_\rth (\vZ -m_\pm g)\hat{v}_{\pm,3}F_\pm (t,x_\parallel,0,v)dv ,
\end{multline}since we assume the perfect conductor boundary condition \eqref{perfect cond. boundary} for $\E$ and $\B$ which gives $(\E\times \B)_3 =0$ on $\{x_3=0\}.$ Here $S_x$ is the Lebesgue measure on the boundary $\{x_3=0\}.$ 

On the other hand, by integrating \eqref{continuity eq} with respect to $x\in \mathbb{T}^2\times (0,\infty)$ and using the divergence law, we further have
$$\frac{d}{dt}\int_{\mathbb{T}^2\times (0,\infty)}\rho dx\ =\int_{\mathbb{T}^2\times \{x_3=0\}}dS_x\ \hat{e}_3\cdot J=\int_{\mathbb{T}^2} J_3 (t,x_\parallel,0)dx_\parallel.$$

\section{Trace Theorems}
\label{sec.trace}
In order for rigorously defining and working with boundary conditions and boundary behaviors of solutions, we introduce several trace theorems in this section. We will provide $L^\infty$ trace theorems for the solution $\flo$ and its derivatives with special weights that we introduced as in the norm $|||\cdot|||$ defined in \eqref{norm X}.

We start with the following $L^\infty$ trace theorem which is a variation of the Ukai-type trace theorem. 
\begin{lemma}
    Suppose $\flo\in L^\infty((0,T)\times \Omega\times\rth)$ solves $$\partial_t \flo + (\hat{v}_\pm)\cdot \nabla_x \flo +\FS\cdot\nabla_v \flo =0,\ \textup{ for } (t,x,v)\in (0,T)\times \Omega\times\rth,$$ where $\FS$ is locally Lipshitz continuous in $(0,T)\times \Omega.$   
    Then we have
    $$\|\flo\|_{L^\infty ((0,T)\times (\Gamma_-\cup\Gamma_+))} \le \|\flo\|_{L^\infty ((0,T)\times \Omega\times \rth)},$$ where the incoming and outgoing boundaries $\Gamma_\pm$ are defined as \begin{equation}\label{incoming boundary}\Gamma_\pm=\{(x,v)\in \partial\mathbb{T}^2\times \rth: n_x\cdot v\gtrless 0\} \cup \gamma_\pm.\end{equation}
\end{lemma}
\begin{proof}
   Note that for any $s\in (0,T)$ sufficiently close to $t$, we have
\begin{equation}\label{f and trajectory}
    \flo(t,x,v)= \flo(s,\XSlo(s;t,x,v),\VSlo(s;t,x,v)),
\end{equation} where $\XSlo$ and $\VSlo$ solve \eqref{leading char}. Since $\FS$ is locally Lipshitz continuous in $(0,T)\times \Omega,$  $\XSlo$ and $\VSlo$ are well-defined. 
    Now we notice that for any $(t,x,v)\in (0,T)\times \Gamma_- $ there exists a sufficiently small $\epsilon=\epsilon(t,x,v)>0$ such that 
    $(t+\epsilon, \XSlo(t+\epsilon;t,x,v),\VSlo(t+\epsilon;t,x,v))\in (0,T)\times \Omega\times \rth$. Similarly, for the outgoing variables, for any $(t,x,v)\in (0,T)\times \Gamma_+ $ there exists a sufficiently small $\epsilon=\epsilon(t,x,v)>0$ such that 
    $(t-\epsilon, \XSlo(t-\epsilon;t,x,v),\VSlo(t-\epsilon;t,x,v))\in (0,T)\times \Omega\times \rth$. Therefore, for any $t\in (0,T)$, we have
  $$  \|\flo(t,\cdot,\cdot)\|_{L^\infty ((0,T)\times \Gamma_\pm)}
     \le \|\flo\|_{L^\infty((0,T)\times\Omega\times \rth)}.$$
         This completes the proof.
\end{proof}
Similarly, we can also provide trace theorems for the derivatives of $\flo.$
To this end, we first take derivatives on \eqref{f and trajectory} with respect to $x_\parallel.$ Suppose that $(t,x,v)\in (0,T)\times\Gamma_\pm$. Then for $s\in(0,T)$ sufficiently close to $t$ such that $(\XSlo(s),\VSlo(s))\in \Omega\times \rth,$ we have
\begin{multline*}
    \nabla_{x_\parallel}\flo (t,x,v)\\
    = (\nabla_x\flo)(s,\XSlo(s),\VSlo(s))\cdot \nabla_{x_\parallel}\XSlo(s)+(\nabla_v\flo)(s,\XSlo(s),\VSlo(s))\cdot \nabla_{x_\parallel}\VSlo(s),
\end{multline*}\begin{multline*}
    \partial_{x_3}\flo (t,x,v)\\= (\nabla_x\flo)(s,\XSlo(s),\VSlo(s))\cdot \partial_{x_3}\XSlo(s)+(\nabla_v\flo)(s,\XSlo(s),\VSlo(s))\cdot \partial_{x_3}\VSlo(s),
\end{multline*}
and \begin{multline*}
    \nabla_{v}\flo (t,x,v)\\= (\nabla_x\flo)(s,\XSlo(s),\VSlo(s))\cdot \nabla_{v}\XSlo(s)+(\nabla_v\flo)(s,\XSlo(s),\VSlo(s))\cdot \nabla_{v}\VSlo(s).
\end{multline*}
Under the same assumptions of Lemma \ref{cor.tbxbvb deri}, we use \eqref{derivatives of char} and \eqref{dxixj} to obtain that
\begin{multline*}
    |(\vZ)^m\nabla_{x_\parallel}\flo (t,x,v)|\le  |(\vZ)^m(\nabla_{x_\parallel}\flo)(s,\XSlo(s),\VSlo(s))||\nabla_{x_\parallel}(\XSlo)_\parallel(s)|
    \\+|(\vZ)^{m-1}(\partial_{x_3}\flo)(s,\XSlo(s),\VSlo(s))||(\vZ)\nabla_{x_\parallel}(\XSlo)_3(s)|\\
    +|(\vZ)^m(\nabla_v\flo)(s,\XSlo(s),\VSlo(s))|| \nabla_{x_\parallel}\VSlo(s)|\\
   \lesssim C_T |\langle \VSlo(s)\rangle^m(\nabla_{x_\parallel}\flo)(s,\XSlo(s),\VSlo(s))|
   \\ +C_T|\langle \VSlo(s)\rangle^{m-1}(\partial_{x_3}\flo)(s,\XSlo(s),\VSlo(s))||s-t|\\
    +C_T|\langle \VSlo(s)\rangle^m(\nabla_v\flo)(s,\XSlo(s),\VSlo(s))||s-t|\\
     \lesssim C_T \|(\vZ)^m\nabla_{x_\parallel}\flo\|_{L^\infty}
 \\   +C_T|\langle \VSlo(s)\rangle^{m-1}(\partial_{x_3}\flo)(s,\XSlo(s),\VSlo(s))||\tilde{\alpha}_{\pm}(s,\XSlo(s),\VSlo(s))\langle \VSlo(s)\rangle|\\+C_T\|(\vZ)^m\nabla_v\flo\|_{L^\infty}
    \\\lesssim_T \|(\vZ)^m\nabla_{x_\parallel}\flo\|_{L^\infty}
    +\|\langle v \rangle^m\tilde{\alpha}_{\pm}\partial_{x_3}\flo\|_{L^\infty}+\|(\vZ)^m\nabla_v\flo\|_{L^\infty},
\end{multline*}where we used \eqref{t-s bound by v and alpha.s} and \eqref{additional v bound.s}. Similarly, we can also obtain
\begin{multline*}
    |(\vZ)^m\nabla_{v}\flo (t,x,v)|\le  |(\vZ)^{m-1}(\nabla_{x_\parallel}\flo)(s,\XSlo(s),\VSlo(s))||(\vZ)\nabla_{v}(\XSlo)_\parallel(s)|
    \\+|(\vZ)^{m-1}(\partial_{x_3}\flo)(s,\XSlo(s),\VSlo(s))||(\vZ)\nabla_{v}(\XSlo)_3(s)|\\
    +|(\vZ)^m(\nabla_v\flo)(s,\XSlo(s),\VSlo(s))|| \nabla_{v}\VSlo(s)|\\
   \lesssim C_T |\langle \VSlo(s)\rangle^{m-1}(\nabla_{x_\parallel}\flo)(s,\XSlo(s),\VSlo(s))|
    \\+C_T|\langle \VSlo(s)\rangle^{m-1}(\partial_{x_3}\flo)(s,\XSlo(s),\VSlo(s))||s-t|\\
    +C_T|\langle \VSlo(s)\rangle^m(\nabla_v\flo)(s,\XSlo(s),\VSlo(s))||s-t|\\
     \lesssim C_T \|(\vZ)^{m-1}\nabla_{x_\parallel}\flo\|_{L^\infty}
  \\  +C_T|(\vZ)^{m-1}(\partial_{x_3}\flo)(s,\XSlo(s),\VSlo(s))||\tilde{\alpha}_{\pm}(s,\XSlo(s),\VSlo(s))\langle \VSlo(s)\rangle|\\
    +C_T\|(\vZ)^m\nabla_v\flo\|_{L^\infty}
  \\  \lesssim_T \|(\vZ)^{m-1}\nabla_{x_\parallel}\flo\|_{L^\infty}
    +\|\langle v \rangle^m\tilde{\alpha}_{\pm}\partial_{x_3}\flo\|_{L^\infty}+\|(\vZ)^m\nabla_v\flo\|_{L^\infty}.
\end{multline*}
Lastly, regarding the weighted derivative $(\vZ) \tilde{\alpha}_{\pm}\partial_{x_3}\flo,$ we observe
\begin{multline*}
    |(\vZ)^m\tilde{\alpha}_{\pm}(t,x,v)\partial_{x_3}\flo (t,x,v)|\\
    \lesssim_T  |\langle \VSlo(s)\rangle^{m-1}(\nabla_{x_\parallel}\flo)(s,\XSlo(s),\VSlo(s))||(\vZ)\partial_{x_3}(\XSlo)_\parallel(s)|
    \\+|\langle \VSlo(s)\rangle^m\tilde{\alpha}_{\pm}(s,\XSlo(s),\VSlo(s))(\partial_{x_3}\flo)(s,\XSlo(s),\VSlo(s))||\partial_{x_3}(\XSlo)_3(s)|\\
    +|\langle \VSlo(s)\rangle^m(\nabla_v\flo)(s,\XSlo(s),\VSlo(s))|| \partial_{x_3}\VSlo(s)|\\
    \lesssim_T \|(\vZ)^{m-1}\nabla_{x_\parallel}\flo\|_{L^\infty}
    +\|\langle v \rangle^m\tilde{\alpha}_{\pm}\partial_{x_3}\flo\|_{L^\infty}+\|(\vZ)^m\nabla_v\flo\|_{L^\infty},
\end{multline*}where we used $\tilde{\alpha}_{\pm}\le 1,$ \eqref{derivatives of char}, \eqref{dxixj}, and Lemma \ref{lemma.trajectory of alpha.s}. Therefore, we obtain the following trace theorem for the derivatives.
\begin{lemma}\label{lemma.deri f}
Suppose $\Elbf,\Blbf\in L^\infty((0,T);W^{1,\infty}(\Omega))$ and $\FS\in W^{1,\infty}((0,T)\times \Omega).$         Suppose $\flo\in W^{1,\infty}((0,T)\times \Omega\times\rth)$ solves $$\partial_t \flo + (\hat{v}_\pm)\cdot \nabla_x \flo +\FS\cdot\nabla_v \flo =0,\ \textup{ for } (t,x,v)\in (0,T)\times \Omega\times\rth.$$Suppose further that for all $t,x_\parallel,$ $$-(\FS)_3(t,x_\parallel,0)>c_0,$$ for some $c_0>0.$ 
    Then we have
    \begin{multline}\label{trace derivative}
        \|\langle v \rangle^m \nabla_{x_\parallel}\flo\|_{L^\infty ((0,T)\times (\Gamma_-\cup\Gamma_+))}+\|\langle v \rangle^m \tilde{\alpha}_{\pm}\partial_{x_3}\flo\|_{L^\infty ((0,T)\times (\Gamma_-\cup\Gamma_+))}\\
        +\|\langle v \rangle^m \nabla_{v}\flo\|_{L^\infty ((0,T)\times (\Gamma_-\cup\Gamma_+))} \\
        \lesssim  \|\langle v \rangle^m\nabla_{x_\parallel}\flo\|_{L^\infty((0,T)\times \Omega\times \rth)}
    +\|\langle v \rangle^m\tilde{\alpha}_{\pm}\partial_{x_3}\flo\|_{L^\infty((0,T)\times \Omega\times \rth)}\\+\|(\vZ)^m\nabla_v\flo\|_{L^\infty((0,T)\times \Omega\times \rth)},
    \end{multline} where the incoming and outgoing boundaries $\Gamma_\pm$ are defined in \eqref{incoming boundary}.
\end{lemma}
\begin{remark}
    Note that we can also obtain the same upper-bound for the time derivative $\partial_t\flo$ in Lemma \ref{lemma.deri f} by using the Vlasov equation and the fact that $$(\hat{v}_\pm)_3=\alpha_\pm(t,x_\parallel,0,v)\le 2 \tilde{\alpha}_{\pm}(t,x_\parallel,0,v)\lesssim_T\tilde{\alpha}_{\pm}(s). $$ 
\end{remark}

\bibliographystyle{amsplaindoi}
\bibliography{bibliography2.bib}{}

\end{document}